\definecolor{darkgreen}{rgb}{0,0.5,0}
\definecolor{darkblue}{rgb}{0,0,0.7}
\definecolor{darkred}{rgb}{0.9,0.1,0.1}
\def\chaptermark#1{}
\def\chapter{%
  \if@openright\cleardoublepage\else\clearpage\fi
  \thispagestyle{plain}\global\@topnum\z@
  \@afterindenttrue \secdef\@chapter\@schapter}
\def\@chapter[#1]#2{\refstepcounter{chapter}%
  \ifnum\c@secnumdepth<\z@ \let\@secnumber\@empty
  \else \let\@secnumber\thechapter \fi
  \typeout{\chaptername\space\@secnumber}%
  \def\@toclevel{0}%
  \ifx\chaptername\appendixname \@tocwriteb\tocappendix{chapter}{#2}%
  \else \@tocwriteb\tocchapter{chapter}{#2}\fi
  \chaptermark{#1}%
  \addtocontents{lof}{\protect\addvspace{10\p@}}%
  \addtocontents{lot}{\protect\addvspace{10\p@}}%
  \@makechapterhead{#2}\@afterheading}
\def\@schapter#1{\typeout{#1}%
  \let\@secnumber\@empty
  \def\@toclevel{0}%
  \ifx\chaptername\appendixname \@tocwriteb\tocappendix{chapter}{#1}%
  \else \@tocwriteb\tocchapter{chapter}{#1}\fi
  \chaptermark{#1}%
  \addtocontents{lof}{\protect\addvspace{10\p@}}%
  \addtocontents{lot}{\protect\addvspace{10\p@}}%
  \@makeschapterhead{#1}\@afterheading}
\newcommand\chaptername{Part}
\def\@makechapterhead#1{\global\topskip 7.5pc\relax
  \begingroup
  \fontsize{\@xivpt}{18}\bfseries\centering
    \ifnum\c@secnumdepth>\m@ne
      \leavevmode \hskip-\leftskip
      \rlap{\vbox to\z@{\vss
          \centerline{\normalsize\mdseries
              \uppercase\@xp{
              }
              }
          \vskip 3pc}}\hskip\leftskip\fi
     \fontsize{\@xivpt}{14}\chaptername \enspace \thechapter. \enspace #1\par \endgroup
  \skip@34\p@ \advance\skip@-\normalbaselineskip
  \vskip\skip@ }
\def\@makeschapterhead#1{\global\topskip 7.5pc\relax
  \begingroup
  \fontsize{\@xivpt}{18}\bfseries\centering
  #1\par \endgroup
  \skip@34\p@ \advance\skip@-\normalbaselineskip
  \vskip\skip@ }
\def\appendix{\par
  \c@chapter\z@ \c@section\z@
  \let\chaptername\appendixname
  \def\thechapter{\@Alph\c@chapter}}
\newcounter{chapter}
\newif\if@openright
\renewcommand{\thechapter}{\Roman{chapter}}
\newtheorem{theorem}{Theorem}
\newtheorem{proposition}{Proposition}
\newtheorem{lemma}[proposition]{Lemma}
\newtheorem{corollary}[proposition]{Corollary}
\theoremstyle{definition}
\newtheorem{remark}[proposition]{Remark}
\newtheorem{definition}[proposition]{Definition}
\newcommand{\cref}[1]{Corollary~\ref{c.#1}}
\numberwithin{equation}{section}
\numberwithin{proposition}{section}
\numberwithin{figure}{section}
\newcommand{\Z}{\mathbb{Z}}
\newcommand{\N}{\mathbb{N}}
\newcommand{\R}{\mathbb{R}}
\newcommand{\A}{\mathcal{A}}
\newcommand{\Ahom}{\overline{\A}}
\newcommand{\E}{\mathbb{E}}
\renewcommand{\P}{\mathbb{P}}
\newcommand{\F}{\mathcal{F}}
\newcommand{\Zd}{\mathbb{Z}^d}
\newcommand{\Rd}{{\mathbb{R}^d}}
\newcommand{\ep}{\varepsilon}
\newcommand{\eps}{\varepsilon}
\renewcommand{\a}{\mathbf{a}}
\renewcommand{\b}{\mathbf{b}}
\newcommand{\ahom}{{\overbracket[1pt][-1pt]{\a}}}  
\renewcommand{\subset}{\subseteq}
\newcommand{\cu}{{\scaleobj{1.2}{\square}}}
\newcommand{\cut}{{\scaleobj{1.2}{\boxbox}}}
\newcommand{\cc}{\mathbf{c}}
\newcommand{\ga}{\gamma}
\renewcommand{\fint}{\strokedint}
\newcommand{\Ll}{\left}
\newcommand{\Rr}{\right}
\DeclareMathOperator{\dist}{dist}
\DeclareMathOperator*{\osc}{osc}
\DeclareMathOperator{\supp}{supp}
\DeclareMathOperator{\spn}{span}
\newcommand{\X}{\mathcal{X}}  
\newcommand{\Y}{\mathcal{Y}} 
\newcommand{\W}{\mathbf{W}}
\newcommand{\V}{\mathbf{V}}
\renewcommand{\bar}{\overline}
\renewcommand{\tilde}{\widetilde}
\newcommand{\indc}{\mathds{1}}
\newcommand{\mcl}{\mathcal}
\newcommand{\msf}{\mathsf}
\newcommand{\al}{\alpha}
\newcommand{\be}{\beta}
\newcommand{\CC}{\mathbf{C}}
\renewcommand{\O}{\mathcal{O}}
\newcommand{\de}{\delta}
\newcommand{\td}{\widetilde}
\newcommand{\1}{\mathds{1}}
\newcommand{\Phid}{\Phi^{(\delta)}}
\newcommand{\Jd}{{I^{(\delta)}}}
\newcommand{\tJd}{{\tilde{I}^{(\delta)}}}
\newcommand{\Add}{\msf{Add}}
\newcommand{\Fluc}{\msf{Fluc}}
\newcommand{\Dual}{\msf{Dual}}
\newcommand{\Loc}{\msf{Loc}}
\newcommand{\Phidt}{\Phi^{(\tilde \delta)}}
\newcommand{\J}{I}
\def\@tocline#1#2#3#4#5#6#7{\relax
  \ifnum #1>\c@tocdepth 
  \else
    \par \addpenalty\@secpenalty\addvspace{#2}%
    \begingroup \hyphenpenalty\@M
    \@ifempty{#4}{%
      \@tempdima\csname r@tocindent\number#1\endcsname\relax
    }{%
      \@tempdima#4\relax
    }%
    \parindent\z@ \leftskip#3\relax \advance\leftskip\@tempdima\relax
    \rightskip\@pnumwidth plus4em \parfillskip-\@pnumwidth
    #5\leavevmode\hskip-\@tempdima
      \ifcase #1
       \or\or \hskip 1em \or \hskip 2em \else \hskip 3em \fi%
      #6\nobreak\relax
    \dotfill\hbox to\@pnumwidth{\@tocpagenum{#7}}\par
    \nobreak
    \endgroup
  \fi}
\begin{document}

\title{The additive structure of elliptic homogenization}

\vspace{-1cm}

\begin{abstract}
One of the principal difficulties in stochastic homogenization is transferring  quantitative ergodic information from the coefficients to the solutions, since the latter are nonlocal functions of the former. In this paper, we address this problem in a new way, in the context of linear elliptic equations in divergence form, by showing that certain quantities associated to the energy density of solutions are essentially additive. As a result, we are able to prove quantitative estimates on the weak convergence of the gradients, fluxes and energy densities of the first-order correctors (under blow-down) which are optimal in both scaling and stochastic integrability. The proof of the additivity is a bootstrap argument, completing the program initiated in~\cite{AKM}: using the regularity theory recently developed for stochastic homogenization, we reduce the error in additivity as we pass to larger and larger length scales. In the second part of the paper, we use the additivity to derive central limit theorems for these quantities by a reduction to sums of independent random variables. In particular, we prove that the first-order correctors converge, in the large-scale limit, to a variant of the Gaussian free field. 
\end{abstract}

\author[S. Armstrong]{Scott Armstrong}
\address[S. Armstrong]{Universit\'e Paris-Dauphine, PSL Research University, CNRS, UMR [7534], CEREMADE, Paris, France}
\curraddr{Courant Institute of Mathematical Sciences, New York University, 251 Mercer St., New York 10012}
\email{scotta@cims.nyu.edu}

\author[T. Kuusi]{Tuomo Kuusi}
\address[T. Kuusi]{Department of Mathematics and Systems Analysis, Aalto University, Finland}
 \email{tuomo.kuusi@aalto.fi}

\author[J.-C. Mourrat]{Jean-Christophe Mourrat}
\address[J.-C. Mourrat]{Ecole normale sup\'erieure de Lyon, CNRS, Lyon, France}
\email{jean-christophe.mourrat@ens-lyon.fr}

\keywords{stochastic homogenization, error estimates, regularity theory, renormalization, scaling limits, Gaussian free field}
\subjclass[2010]{35B27, 35B45, }
\date{submitted February 1, 2016; revised October 6, 2016}

\maketitle

\vspace{-0.847cm}

\tableofcontents

\chapter{Additive structure}
\label{part.one}

\section{Introduction}

\subsection{Summary of main results}
We prove optimal quantitative estimates for the first-order correctors in stochastic homogenization for linear, uniformly elliptic equations of the form
\begin{equation} \label{e.pde}
-\nabla \cdot \left( \a(x) \nabla u \right) = 0 \quad \mbox{in} \ U \subseteq\Rd.
\end{equation}
The first-order correctors are the unique (up to an additive constant) solutions $\phi_e \in H^1_{\mathrm{loc}} (\Rd)$, for each  vector $e\in\Rd$, of the equation
\begin{equation}
-\nabla \cdot\left( \a(x) \left( e + \nabla \phi_e \right) \right) = 0 \quad \mbox{in} \ \Rd
\end{equation}
such that $\nabla\phi_e$ is a $\Zd$-stationary, mean-zero gradient field. Here the coefficients $\a(x)$ are assumed to be a $\Zd$-stationary random field, valued in the set of real, symmetric $d$-by-$d$ matrices with eigenvalues belonging to the interval $[1,\Lambda]$, and are sampled by a probability measure $\P$ which satisfies a unit range of dependence (see below in Section~\ref{ss.assumptions} for the precise assumptions). 

\smallskip

Obtaining estimates on the  first-order correctors is a fundamental objective in the quantitative theory of elliptic homogenization, for reasons including the following: (i) they represent the first term in the two-scale asymptotic expansion for a solution in the length scale~$\ep$, (ii) the homogenized coefficients $\ahom$ can be defined in terms of the expectation of their flux,
\begin{equation*} \label{}
\ahom e := \E \left[ \int_{[0,1]^d} \a(x)\left(e+\nabla \phi_e(x)\right)\,dx \right],
\end{equation*}
(iii) estimates on the correctors imply estimates on the error in homogenization (e.g., for the Dirichlet problem), and
(iv) in view of the regularity theory~\cite{AS,GNO3}, we know that arbitrary solutions of~\eqref{e.pde} can be approximated by functions of the form $x\mapsto e\cdot x + \phi_e(x)$ in the same way that harmonic functions can be approximated by affine functions (see Proposition~\ref{p.regularity}, below). 

\smallskip

The following theorem is a summary of our main results concerning estimates of the first-order correctors. Before stating it, we fix some notation. We denote by $\ahom$ the homogenized coefficients. The heat kernel for~$\ahom$ is 
\begin{equation}
\label{e.heatkernel}
\Phi(x,t):=(4\pi t)^{-\frac d2} (\det \ahom)^{\,-\frac12} \exp\left( - \,\frac{x\cdot \ahom^{\,-1} x}{4t}\right).
\end{equation}
We use the heat kernel for~$\ahom$ throughout the paper as a convenient density against which to measure spatial averages of certain random fields. Note that, for~$z \in \Rd$ and~$r > 0$,~$\Phi_{z,r} := \Phi(\, \cdot\, - z,r^2)$ has unit mass and a length scale of~$r$. Throughout the paper, we use the notation
$$
\int_{\Phi_{z,r}} f \, := \int_{\Rd} f(x) \Phi(x-z,r^2) \, dx,
$$
and replace $\Phi_{0,r}$ by $\Phi_r$ to lighten the notation.
If $X$ is a random variable and $s,\theta\in(0,\infty)$, then we define the statement 
\begin{equation*}
X \leq \O_s(\theta)
\end{equation*}
to mean that 
\begin{equation}
\label{e.def.Os}
\E \Ll[ \exp\left(\left(\theta^{-1} X_+\right)^s\right) \Rr] \le 2,
\end{equation}
where $X_+ := X \vee 0 = \max\{ X,0\}$.

\begin{theorem}[Optimal estimates for first-order correctors]
\label{t.firstcorrectors}
For each $s<2$, there exists $C(s,d,\Lambda)<\infty$ such that, for every $r \geq 1$ and $e\in\partial B_1$,
\begin{equation}
\label{e.gradient}
\left| \int_{\Phi_r} \nabla \phi_e \right| \leq \O_s\left( Cr^{-\frac d2} \right),
\end{equation}
\begin{equation}
\label{e.flux}
\left| \int_{\Phi_r} \a (e +  \nabla \phi_e) - \ahom e \right| \leq \O_s\left( Cr^{-\frac d2} \right),
\end{equation}
and
\begin{equation}
\label{e.energydensity}
\left| \int_{\Phi_r} \frac12 \left( e+ \nabla \phi_e\right) \cdot \a\left( e+ \nabla \phi_e\right)  - \frac12 e\cdot \ahom e \right| \\
 \leq \O_s\left( Cr^{-\frac d2} \right).
\end{equation}
Moreover, there exist $\ep(d,\Lambda)>0$ and, for every $s<2$, a constant $C(s,d,\Lambda)<\infty$ such that, for each $r\geq2$ and $e\in\partial B_1$,
\begin{equation}
\label{e.phiosc}
\left( \fint_{B_r} \left| \phi_e - \left( \phi_e \right)_{B_r} \right|^2 \right)^{\frac12} \leq 
\left\{ 
\begin{aligned}
& \O_s\left(C\log^{\frac12} r\right) & \mbox{if} & \ d=2,\\
& \O_{2+\ep}(C) & \mbox{if} & \ d>2.
\end{aligned}
\right. 
\end{equation}
In particular, if $d>2$, then $\phi_e$ exists as a $\Zd$-stationary random field. 
\end{theorem}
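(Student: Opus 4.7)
My plan follows the program announced in the abstract: I would first establish a quantitative additivity for certain coarse-grained energies built from $\a$, and then use that additivity to reduce spatial averages of $\nabla\phi_e$, the flux, and the energy density to sums of essentially independent bounded random variables. For every bounded Lipschitz domain $U\subset\Rd$ and $p,q\in\Rd$, introduce the primal and dual Dirichlet energies
\begin{equation*}
\nu(U,p) := \inf_{v\in \ell_p + H^1_0(U)} \fint_U \tfrac12 \nabla v\cdot \a\nabla v, \qquad \nu^*(U,q):= \sup_{u \in H^1(U)} \fint_U \left(-\tfrac12 \nabla u \cdot \a\nabla u + q\cdot \nabla u\right),
\end{equation*}
with $\ell_p(x)=p\cdot x$, together with their convex-duality defect $J(U,p,q):=\nu(U,p)+\nu^*(U,q)-p\cdot q\geq 0$. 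Both $\nu$ and $\nu^*$ satisfy suitable subadditivity with respect to partitions of $U$, and the Euler--Lagrange equations for the associated minimizer and maximizer, on large cubes, identify their gradients and fluxes with $\int_{\Phi_r}\nabla\phi_p$ and $\int_{\Phi_r}\a(p+\nabla\phi_p)$, up to boundary-layer errors that the large-scale $C^{0,1}$ regularity of \pref{regularity} controls.

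The heart of the argument is a bootstrap showing that, for $q=\ahom p$, the defect $J(B_r,p,\ahom p)$ is \emph{approximately additive} with respect to partitions of $B_r$ into mesoscopic sub-cubes, with an algebraic rate in~$r$. Starting from the qualitative sublinearity of correctors, I would use the large-scale regularity to estimate the additivity defect at scale~$r$ in terms of flatness statistics that are themselves quantified by the previous, cruder iteration of the same bound; closing this recursion yields $\E[J(B_r,p,\ahom p)]\lesssim r^{-\alpha}$ for some $\alpha(d,\Lambda)>0$ and, crucially, represents $J$-increments on nested cubes as an almost-orthogonal sum of contributions supported on disjoint unit sub-cubes, which are genuinely independent when well-separated thanks to the unit range of dependence of~$\P$.

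Granted the additivity, \eref{gradient}--\eref{energydensity} follow in three steps. First, the regularity theory matches $\int_{\Phi_r}\nabla\phi_p$ and $\int_{\Phi_r}\a(p+\nabla\phi_p)$ with heat-kernel averages of the primal minimizer of $\nu(B_{Cr},p)$ and the dual maximizer of $\nu^*(B_{Cr},\ahom p)$, up to errors negligible at the scale $r^{-d/2}$. Second, additivity represents each of the three fluctuations as $r^{-d}\sum_{\cu\in\mathcal{P}_r}X_\cu$, where $\{X_\cu\}$ is an almost-independent family of uniformly bounded random variables indexed by a partition of $B_{Cr}$ into unit cubes. Third, Bernstein--Azuma-type concentration for sums of independent bounded variables yields the sub-Gaussian scaling $r^{-d/2}$, with the degradation from Gaussian $\O_2$ to $\O_s$ for every $s<2$ absorbing the lower-order contribution of the residual additivity error.

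Finally, \eref{phiosc} is a consequence of \eref{gradient} via a multiscale Poincar\'e inequality: writing $\phi_e-(\phi_e)_{B_r}$ as a telescoping sum of dyadic heat-kernel averages of $\nabla\phi_e$, one bounds the variance by $\sum_{k=0}^{\log_2 r} C\,2^{2k}\cdot 2^{-kd}$, which diverges logarithmically in $d=2$ and converges in $d>2$; combining this finite variance with the stationarity of $\nabla\phi_e$ produces the stationary corrector. The principal obstacle in this plan is the bootstrap for approximate additivity: one must design an energy quantity whose subadditivity defect is recursively controllable by regularity statistics that are themselves governed by previous iterations of the same bound, and the recursion must close at an algebraic rate with essentially sub-Gaussian stochastic integrability, since the subsequent concentration step cannot improve upon the integrability delivered by the additivity step.
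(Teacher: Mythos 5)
Your overall architecture (energy quantities $\nu$, $\nu^*$, their duality defect $J$, additivity plus finite range of dependence, then a multiscale Poincar\'e inequality for \eqref{e.phiosc}) is the right family of ideas, but the quantitative structure of your plan does not close, and the gap is precisely at the point you flag as the "principal obstacle." Your bootstrap is described as producing $\E[J(B_r,p,\ahom p)]\lesssim r^{-\alpha}$ for \emph{some} $\alpha(d,\Lambda)>0$, after which you decompose the fluctuations as $r^{-d}\sum_{\cu}X_\cu$ over unit cubes plus a "residual additivity error" and invoke concentration for independent bounded variables. But if the additivity defect down to the unit (or any fixed mesoscopic) scale is only $O(r^{-\alpha})$ with a small, unspecified $\alpha$, then this residual is \emph{not} lower order: it dominates the target $r^{-d/2}$ whenever $\alpha<\frac d2$, and no degradation of stochastic integrability from $\O_2$ to $\O_s$ can absorb it. Indeed, for the Dirichlet-type quantities you propose, the subadditivity defect between scale $r$ and unit scale is governed by the homogenization error itself, which a single self-improving recursion of the type you sketch only controls with a small exponent (this is the content of the base case, the result of~\cite{AS}). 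The missing idea is that the additivity error must itself be pushed \emph{below} the CLT scale -- in fact to exponent $d$ for $k=1$ -- and this requires an alternating iteration: (i) a fluctuation bound at exponent $\alpha$ for $J$ controls, through the identity \eqref{e.gradient-J} relating $\nabla J$ to spatial averages of gradients and fluxes of its maximizers, together with the multiscale Poincar\'e inequality and the $C^{k,1}$ regularity theory, the mismatch of maximizers across scales at order $r^{-\alpha}$, and by quadratic response this improves additivity to exponent $2\alpha$ (Proposition~\ref{p.additivity}); (ii) additivity and localization at an exponent exceeding $\frac d2(1+\delta)$ then yield fluctuations at exponent $\frac d2$ by the renormalized sum-of-independent-variables argument (Proposition~\ref{p.control-fluct}). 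Only after iterating (i)--(ii) until the exponents saturate at $\frac d2$ and $d$ do the three estimates \eqref{e.gradient}--\eqref{e.energydensity} follow; a single pass of "crude additivity plus CLT over unit cubes" cannot reach $r^{-d/2}$.

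Two further points, less central but worth noting. First, if instead of Dirichlet quantities on cubes one works with the heat-kernel quantity whose maximizers lie in the global space $\A_1$ (as is needed to match spatial averages of $\nabla\phi_e$ without boundary layers), then locality is no longer automatic and must be proved separately (the statements $\Loc_k$), again using the regularity theory to localize the space $\A_k$ itself; your plan sidesteps this only at the cost of boundary-layer errors of size $r^{-\alpha}$, which reintroduces the same problem. Second, your treatment of \eqref{e.phiosc} is essentially correct in outline and matches the paper's (dyadic/heat-kernel averages of $\nabla\phi_e$ plus the multiscale Poincar\'e inequality, giving $\log r$ in $d=2$ and convergence in $d>2$), but the claimed integrability $\O_{2+\ep}$ in $d>2$ requires the slight improvement of the gradient bound via a Meyers estimate, which your sketch does not supply.
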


Theorem~\ref{t.firstcorrectors} gives a CLT scaling of spatial averages of the gradient, flux, and energy density of the first-order correctors. These estimates are optimal in the scaling ($r^{-\frac d2}$) and ``almost" optimal in terms of stochastic integrability (the condition $s<2$ cannot be improved past $s=2$). The final estimate on the sublinear growth (and boundedness, in $d>2$) of the correctors is also optimal, in every dimension, both in terms of the scaling and stochastic integrability. These are the first estimates for these quantities at the critical scale under the finite range of dependence assumption (regardless of stochastic integrability) as well as the first estimates to be (``almost") optimal in terms of stochastic integrability at the critical scale (under any assumption). As will become clear from the proof, the stochastic integrability of each of the estimates~\eqref{e.gradient},~\eqref{e.flux} and~\eqref{e.energydensity} may be improved to beyond Gaussian-type bounds ($s>2$) if we replace the critical scaling $r^{-\frac d2}$ with $r^{-\alpha}$ for any subcritical $\alpha< \frac d2$. Note that obtaining estimates at the critical scaling is necessary if there is any hope to characterize the fluctuations (e.g., by proving central limit theorems for these quantities). 

\smallskip

We mention that, as we were writing this paper, we became aware of a new and very interesting work of Gloria and Otto~\cite{GO5} which contains some similar, but weaker, results compared to Theorem~\ref{t.firstcorrectors}. Under the same assumption of finite range of dependence, they obtain sub-optimal versions of~\eqref{e.gradient},~\eqref{e.flux} and~\eqref{e.phiosc}. Namely, they prove~\eqref{e.gradient} and~\eqref{e.flux} with $\O_2(r^{-\alpha})$ replacing the right side of these estimates for sub-critical exponents $\alpha < \frac d2$, and they obtain~\eqref{e.phiosc} in $d=2$ with a scaling of $r^\ep$ for arbitrary $\ep>0$, and in dimensions $d>2$ for the integrability exponent $s=2$. While the technical details of their arguments are different from ours, the two approaches share, on a high level, a philosophy first outlined in~\cite{AKM} of using the regularity theory introduced in~\cite{AS} to accelerate the convergence of spatial averages of gradients, fluxes and energy densities of solutions.\footnote{Several months after this paper was submitted and posted to arXiv and before it was accepted, Gloria and Otto completed a substantial revision~\cite{GO6} of~\cite{GO5} in which they prove Theorem~\ref{t.firstcorrectors} as well as Theorem~\ref{t.main1}. Their analysis is based on a quantity they call the ``homogenization commutator'' which is closely related to the quantity $J$ considered here.}

\smallskip

The proof we give of Theorem~\ref{t.firstcorrectors} originates in the ideas from our previous papers~\cite{AS,AM,AKM} and completes the program initiated there. One of the main difficulties in understanding the statistical behavior of solutions to equations with random coefficients is to overcome the fact that solutions are nonlocal, nonlinear functions of the coefficients. One of the main themes of~\cite{AS,AM,AKM} is that energy-type quantities are much better behaved. Our point of view is that one should study these quantities first, and then derive properties of solutions as consequences. 

\smallskip

The energy quantity we focus on is denoted by $J$ and is defined below in~\eqref{e.Jintro}. We accelerate the rate of convergence of $J$ to its homogenized limit by a bootstrap (i.e., renormalization) argument, which relies on the higher regularity theory developed in~\cite{AS,GNO3}. Without employing abstract concentration inequalities, the bootstrap actually shows that these energy quantities are \emph{additive} between scales and that the energy densities of their minimizers are \emph{local}, up to small errors. The finite range of dependence condition then enters in the simplest possible way, telling us that the energy quantities are essentially sums of i.i.d.\ random variables, and thus concentration of measure becomes easy (and optimal). As we iterate the bootstrap argument (or equivalently, as we pass to larger and larger length scales), the additivity of the energy density  improves until it finally achieves the optimal scaling after a finite number of iterations. 

\smallskip

The energy quantity central to our study is defined, in the simplest case, for each $z\in\Rd$, $r\geq 1$ and $p,q\in\Rd$, by
\begin{equation}
\label{e.Jintro}
J(z,r,p,q) 
:= \sup_{u \in \A_1} \int_{\Phi_{z,r}}  \left( -\frac 12\nabla u \cdot \a\nabla u -p \cdot \a \nabla u + \nabla u \cdot \ahom q \right).
\end{equation}
Here $\A_1$ denotes the set of solutions of the equation which have sub-quadratic growth at infinity,
\begin{equation*} \label{}
\A_1:= \Big\{ u\in H^1_{\mathrm{loc}}(\Rd)\,:\, -\nabla \cdot\left( \a\nabla u \right) = 0 \ \mbox{in} \ \Rd,  \ \ \limsup_{r\to \infty} \frac{1}{r^{4}} \fint_{B_r} \left| u(x) \right|^2\,dx =0 \Big\}.
\end{equation*}
In fact, $\A_1$ is a $(d+1)$-dimensional (random) vector space spanned by the constant functions and those of the form $x\mapsto x\cdot e + \phi_e(x)$ (see Proposition~\ref{p.regularity}). The quantity~$J$ is a variant of the subadditive and superadditive quantities~$\mu$ and~$\nu$ which lie at the heart of the analysis of~\cite{AS}, and is identical to the quantity considered in~\cite{AKM} if the heat kernel is replaced by the characteristic function of a cube and $\A_1$ is replaced by the set of all solutions (cf.~\cite[Lemma 3.1]{AKM}). 

\smallskip

Theorem~\ref{t.firstcorrectors} is a simple consequence of the following result concerning the additivity of $J$. 

\smallskip

\begin{theorem}[Additive structure of $J$]
\label{t.main1}
For every $s < 1$, there exists a constant $C(s,d,\Lambda) < \infty$ such that the following statements hold.
\begin{enumerate}

\item[(i)] \emph{Additivity}. For every $R > r \ge 1$ and $p, q \in B_1$,
\begin{equation*}
\Ll| J(0,R,p,q) 
- \int_{\Phi_{\sqrt{R^2 - r^2}}} J(\, \cdot \,,r,p,q) \Rr| \le \O_s\Ll(C  r^{-d}\Rr).
\end{equation*}

\smallskip

\item[(ii)] \emph{Control of the expectation}. For every $r \ge 1$ and $p,q \in B_1$,
\begin{equation*}
\Ll| \E\left[J(0,r,p,q)\right] - \frac12 (q-p)\cdot \ahom  (q-p) \Rr|
\leq C r^{-d}.
\end{equation*}

\smallskip

\item[(iii)] \emph{CLT scaling of the fluctuations}. For every $r \ge 1$ and $p,q \in B_1$,
\begin{equation*}
\big|J (0,r,p,q)  - \E\left[ J(0,r,p,q)\right]\big| \le \O_{2s}\left(C r^{-\frac d 2}\right).
\end{equation*}

\smallskip

\item[(iv)] \emph{Localization}. For every $\delta,\eps > 0$, there exist $C(\delta,\eps,s,d,\Lambda)<\infty$ and, for every $r\geq 1$ and $p,q\in B_1$, an $\F(B_{r^{1+\delta}})$-measurable random variable $J^{(\delta)}(0,r,p,q)$ such that, for every $\gamma \in \left(0,\frac d{2s}\wedge \left( \frac d2(1+\delta)+\delta\right) -\ep\right]$,
\begin{equation*} 
\left| J(0,r,p,q) - J^{(\delta)}(0,r,p,q)\right| \leq \O_{2s}\left( Cr^{-\gamma}\right)
\end{equation*}
and, for every $\gamma \in \left(0, 2-\ep\right]$,
\begin{equation*} 
\left| J(0,r,p,q) - J^{(\delta)}(0,r,p,q)\right| \leq \O_1\left( Cr^{-\gamma}\right).
\end{equation*}
\end{enumerate}
\end{theorem}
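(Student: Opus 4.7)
The plan is to prove the four parts by an interlocking bootstrap argument, with part (i) as the centerpiece: once additivity is established, (ii) follows by taking expectations and using $\Zd$-stationarity, (iii) by a CLT on the resulting near-sum of independent random variables, and (iv) by restricting the sup over $\A_1$ to competitors determined by the coefficients in a slightly enlarged ball. The essential structural input throughout is \pref{regularity}, which asserts that, modulo an additive constant, elements of $\A_1$ are parametrized by their slope $e \in \Rd$ via $u \mapsto x \cdot e + \phi_e(x)$; this reduces the infinite-dimensional variational problem defining $J$ to a $(d+1)$-dimensional one.

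For (i), the upper bound $J(0,R,p,q) \le \int_{\Phi_{\sqrt{R^2 - r^2}}} J(\,\cdot\,,r,p,q)$ is immediate from the semigroup identity $\Phi_R = \Phi_{\sqrt{R^2-r^2}} * \Phi_r$: any competitor $u \in \A_1$ in the global sup satisfies
\[
\int_{\Phi_R} \Ll( -\tfrac12 \nabla u \cdot \a \nabla u - p \cdot \a \nabla u + \nabla u \cdot \ahom q \Rr)
= \int \Phi_{\sqrt{R^2-r^2}}(z) \Ll( \int_{\Phi_{z,r}} (\,\cdots\,) \Rr) \, dz,
\]
and the inner integral is bounded by $J(z,r,p,q)$ pointwise in $z$. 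The reverse bound is the technical crux. I would select local near-maximizers $u_z \in \A_1$ for $J(z,r,p,q)$, use \pref{regularity} to write $u_z(x) = c_z + e_z \cdot x + \phi_{e_z}(x)$, and construct a \emph{single} global competitor $u_\star = c_\star + e_\star \cdot x + \phi_{e_\star}(x)$ whose $J$-value on scale $R$ matches the weighted spatial average of $J(z,r,p,q)$ up to the claimed error. By the quadratic dependence of the integrand on $(c,e)$ at the level of spatial averages, the additivity defect is controlled by the variance of $z \mapsto e_z$ against $\Phi_{\sqrt{R^2-r^2}}$; bounding this variance is itself a CLT-type statement on a smaller scale, which closes the bootstrap: a gain of $r^{-\alpha}$ on the fluctuation of slopes propagates to a gain of $r^{-\alpha-\beta}$ in additivity, and one iterates until saturating at $r^{-d}$.

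Given (i), the remaining parts are shorter. For (ii), take expectations in (i); $\Zd$-stationarity gives $\E J(z,r,p,q) = \E J(0,r,p,q)$, so $|\E J(0,R,p,q) - \E J(0,r,p,q)| \le C r^{-d}$, and sending $R \to \infty$ identifies the limit as $\tfrac12 (q-p) \cdot \ahom (q-p)$ by testing with $u(x) = x \cdot (q-p) + \phi_{q-p}(x)$ and invoking convergence of spatial averages of $\nabla \phi_e$ and the flux $\a(e + \nabla \phi_e)$. For (iii), apply (i) at the intermediate scale $r_0 \sim r^{1/2}$: this writes $J(0,r,p,q)$ as $\int_{\Phi_{\sqrt{r^2-r_0^2}}} J(\,\cdot\,,r_0,p,q)$ plus an error $\O_s(r_0^{-d}) = \O_s(r^{-d/2})$. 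Discretize the integral at spacing $\gtrsim r_0^{1+\delta}$, replace each $J(z_k,r_0,p,q)$ by its $\F$-localized surrogate from (iv), and use the unit range of dependence to obtain independence; a Chernoff bound for sums of $\O_s$ random variables yields CLT scaling with integrability upgraded to $\O_{2s}$. For (iv), restrict the sup in $\A_1$ to competitors whose affine datum $(c,e)$ is $\F(B_{r^{1+\delta}})$-measurable (e.g., by solving the corrector equation in $B_{r^{1+\delta}}$ with suitable boundary data); the cost of this restriction is controlled by \pref{regularity}, specifically by the excess-decay of solutions on $B_{r^{1+\delta}}$ relative to $\A_1$.

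The main obstacle, and the technical heart of the argument, is the reverse inequality in (i). The map $z \mapsto e_z$ is random and strongly correlated across scales, so bounding its fluctuations requires the very additivity one is trying to prove at smaller scales — this is the circularity the bootstrap must unwind. Equally delicate is tracking stochastic integrability: (i) is obtained only with $\O_s$ for $s < 1$, yet (iii) requires an upgrade to $\O_{2s}$, a gain that comes from the sum-of-independents structure extracted via (iv) and must be fed back into the next iteration without loss. Finally, parts (i) and (iv) are genuinely coupled, since (iv) is needed to invoke the finite range of dependence at each inductive step, so the two statements must be proved in tandem rather than sequentially.
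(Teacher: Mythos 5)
Your overall architecture is essentially the paper's: a bootstrap in which the additivity defect is quadratic in the fluctuations of the spatially averaged slopes of the local maximizers, fluctuations are controlled by combining localization with the unit range of dependence, and additivity, fluctuations and localization are improved in tandem. However, the sketch has genuine gaps precisely at the points where the theorem is sharp. Within (i), fluctuation bounds on $J$ do control the fluctuations of the spatial averages of $\nabla v$ and $\a\nabla v$ through \eqref{e.gradient-J}, but your ``variance of $z\mapsto e_z$'' must be computed around the correct, scale-consistent mean: identifying the \emph{expectations} of these averages as $\nabla(q-p)$ and $\ahom\nabla(q-p)$ requires the gradient--flux duality statement $\Dual_k(\alpha)$ and the replacement of $(p,q)$ by $(L^*_{z,r}p,L_{z,r}q)$ (i.e.\ working with $I$ rather than $J$), and both $\Dual$ and the estimate $|L_{z,r}-\mathrm{Id}|\le Cr^{-d}$ must themselves be bootstrapped (Propositions~\ref{p.additivity} and~\ref{p.min}, Lemma~\ref{l.identifytheta1}, which rests on the two-scale expansion of the maximizers). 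Your proposal never mentions this ingredient; without it the saturation of additivity at $r^{-d}$, and the transfer from $I$ back to $J$, are not reachable. Relatedly, your derivation of (ii) is incomplete: testing with $x\mapsto x\cdot(q-p)+\phi_{q-p}(x)$ gives only a lower bound on $\E[J]$, and under (P1) the identity $\E[J(z,r,p,q)]=\E[J(0,r,p,q)]$ holds only for $z\in\Zd$, not for the continuum average against $\Phi_{\sqrt{R^2-r^2}}$; the paper instead obtains (ii) at rate $r^{-d}$ from the corrector-matching lemma, not as a corollary of (i).

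Two further steps would fail as stated. For (iii), the one-shot argument (mesoscale $r_0\sim r^{1/2}$, blocks of side $r_0^{1+\delta}$, Chernoff) cannot produce the exponent $\tfrac d2$: with each block fluctuating at the best available rate $r_0^{-d/2}$, averaging $(r/r_0^{1+\delta})^d$ independent blocks yields only $r^{-d/2}r_0^{\delta d/2}$, i.e.\ $\tfrac d2-\ep$. Reaching exactly $\tfrac d2$ is the content of Proposition~\ref{p.control-fluct}(ii), a scale-by-scale induction in which the constant may grow only by a factor $(1+R_1^{-\ep})$ per dyadic step and which needs the localization exponent $\alpha>\tfrac d2(1+\delta)$. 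For (iv), your framework reduces everything to the $(d+1)$-dimensional space $\A_1$, which caps the localization exponent at roughly $\tfrac d2(1+\delta)+\delta$; this suffices for the first bound in (iv) but not for the second one ($\O_1(Cr^{-\gamma})$ up to $\gamma=2-\ep$), which is stronger in $d=2,3$ and is obtained in the paper only through the higher-order quantities $J_k$, $k>1$, whose bootstrap saturates at additivity exponent $2$ (see \eqref{e.k.gen} and Lemma~\ref{l.localizeopt}); indeed the paper notes explicitly that everything except this second localization estimate could be proved with $k=1$ throughout. So at minimum, the $\O_1$ part of (iv) is not reachable by the proposed route.
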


Each of the estimates of Theorem~\ref{t.main1} is optimal in both scaling and stochastic integrability, with the exception of the localization statement in~(iv). While any exponent larger than $\frac d2$ is satisfactory in the sense that it shows the localization error is of strictly lower order compared to the CLT scaling of the quantities in (i) and (iii), we expect the second estimate to hold for the exponent $\gamma = d$. Note that the second localization estimate is almost optimal in $d=2$, but is of no use if $d\ge 4$ since the first localization estimate becomes stronger. 

\smallskip

Theorem~\ref{t.main1} contains a great deal more information than Theorem~\ref{t.firstcorrectors}. For instance, the additivity of~$J$ allows us to prove a central limit theorem for~$J$ itself (and, as a result, each of the quantities in the first three estimates of Theorem~\ref{t.firstcorrectors}) in a relatively simple way, by mimicking the usual CLT argument for sums of i.i.d.\ random variables (rather than resorting to the more complicated and more commonly used machinery developed for nonlinear functions of i.i.d.\ random variables). Since we have an identity linking the gradient of $J$ to the spatial averages of the gradient and flux its maximizers (see~\eqref{e.gradient-J}, below), one may expect that a CLT for $J$ would imply a CLT for the spatially averaged gradient of its maximizers. This is explained in complete detail in the second part of this paper, where we derive the scaling limit of the first-order correctors by demonstrating their convergence to a variant of the Gaussian free field.

\smallskip

Finally, we comment on our hypotheses. Since the methods in this paper are of more interest than the results we can state for any particular model, we have simplified the assumptions for clarity and readability. For instance, symmetry of the coefficients can be removed in a straightforward way (e.g., by using the techniques of~\cite{AM}), and the arguments can be adapted to the case of uniformly elliptic systems, as opposed to scalar equations. The assumption that $\a(x)$ satisfy a finite range of dependence can be replaced by other mixing conditions and our arguments will yield quantitative bounds appropriate to the particular mixing assumption. Indeed, one of the advantages of our methods compared to previous ones is that we can treat essentially any mixing condition and produce optimal quantitative estimates. 

\subsection{Background and comparison to previous works}
The \emph{qualitative} theory of stochastic homogenization for linear elliptic equations in divergence form was completed in the early 1980s by Papanicolaou and Varadhan~\cite{PV1}, Kozlov~\cite{K1}, Yurinskii~\cite{Y1} and, later, using variational methods, by Dal Maso and Modica~\cite{DM1,DM2}. Each of these results state roughly that, $\P$-almost surely, solutions of
\begin{equation*}
-\nabla \cdot \left( \a(x) \nabla u \right) = 0 \quad \mbox{in} \ U 
\end{equation*}
converge in $L^2(U)$ (with $L^2$ suitably normalized relatively to the size of $U$) as the domain $U$ becomes large to those of a deterministic, constant-coefficient equation
\begin{equation*} \label{}
-\nabla \cdot \left( \ahom \nabla u \right) = 0. 
\end{equation*}
The proof of convergence in each of these works is based on the application of the ergodic theorem, and therefore applies under the sole assumption that~$\P$ is ergodic with respect to translations of the coefficients -- a much more general assumption than the finite range of dependence we assume here. For the same reason, these arguments do not give quantitative information concerning the speed of homogenization. 

\smallskip

Developing a \emph{quantitative} theory of stochastic homogenization poses a greater challenge compared to the qualitative theory, due to the difficulty mentioned above of understanding the dependence of the solutions on the coefficients. It is only very recently that satisfactory progress has been made in overcoming this basic obstacle, and by now there are essentially two alternative programs. The first has its origins in an unpublished paper of Naddaf and Spencer~\cite{NS}, and is based on probabilistic machinery more commonly used in statistical physics~\cite{NS2}, namely concentration inequalities, such as spectral gap or logarithmic Sobolev inequalities, which provide a way to quantitatively measure the dependence of the solutions on the coefficients. This approach has been developed extensively by Gloria, Otto and their collaborators~\cite{GO1,GO2,GO3,GNO,GNO2,MO}, who proved optimal quantitative bounds on the scaling of the first-order correctors (including their sublinear growth and spatial averages of their energy density). In particular, they were the first to obtain estimates for the correctors at the critical scalings, albeit with suboptimal stochastic integrability (typically finite moment bounds) and with somewhat restrictive ergodic assumptions. 
Later, central limit theorems for the spatial averages of the gradients and the energy densities of the correctors were obtained using these techniques~\cite{N,MoO,MN,GN}. 

\smallskip

These important and influential results were the first to give a complete quantitative picture of the behavior of the first-order correctors on \emph{any} stochastic model, and have inspired a huge amount of subsequent research. However, there are two downsides to an approach based on concentration inequalities. The first is that the theory does not apply to general coefficient fields, even under the  strongest and most natural ergodic assumptions (such as finite range of dependence). This is because concentration inequalities are only available for probability measures having a special structure, such as that of an underlying product space (like the random checkerboard and Poisson point cloud models). The second is that a reliance on concentration inequalities makes it more difficult to obtain estimates which are optimal in \emph{stochastic integrability}. Indeed, as of this writing, the best estimates obtained by this method are exponential in stochastic integrability, but not Gaussian (see~\cite{GNO3}). Before the regularity theory was introduced in~\cite{AS}, the best estimates available (see \cite{GNO,MO}) were finite moment bounds. 

\smallskip

The second program is the one which began in the works~\cite{AS,AM,AKM} and is completed here. Compared to the work of Gloria, Otto and their collaborators (and with the exception of the very recent work~\cite{GO5} mentioned above), the fundamental problem of how one transfers quantitative ergodic information from the coefficients to the solutions is handled by a different mechanism, as described in the previous subsection: the use of concentration inequalities is replaced by a bootstrap argument, driven by the regularity theory, revealing an additive structure of the energy densities.

\subsection{Outline of the paper}
The paper is split into two parts. The first one focuses on the proofs of Theorems~\ref{t.firstcorrectors} and~\ref{t.main1}, which give the size of the fluctuations of $J$ and the first-order correctors. The second part ``goes to next-order'' by characterizing their scaling limits. 

\smallskip

In the next section, we state the precise assumptions and fix some notation used throughout the paper. Section~\ref{s.regularity} contains a summary of the regularity theory, stated in complete generality. In Section~\ref{s.additivestructure}, we introduce a higher-order version of the energy quantity $J$ and give its basic properties. Section~\ref{s.bootstrap} contains an outline of the bootstrap argument and thus a roadmap of the rest of Part~\ref{part.one}. The main ingredients in the bootstrap argument are proved in Sections~\ref{s.basecase} (the base case of the induction),~\ref{s.fluc-subopt} (improvement of fluctuations),~\ref{s.additivity} (improvement of additivity) and~\ref{s.localization} (improvement of localization). The proofs of Theorems~\ref{t.firstcorrectors} and~\ref{t.main1} are completed in Section~\ref{s.finalcorrectors}. The outline of Part~\ref{part.two} is given at the end of Section~\ref{s.heuristics}.

\section{Assumptions and notation}
\label{s.notation}

\subsection{Assumptions}
\label{ss.assumptions}
We work in the Euclidean space in a fixed dimension $d \geq 2$ and consider, for a fixed $\Lambda \geq 1$, the space of coefficient fields valued in the real symmetric $d$-by-$d$ matrices~$\mathbb{S}^d$ which satisfy
\begin{equation}
\label{e.ue}
\left| \xi\right|^2 \leq \xi \cdot \a(x) \xi \leq \Lambda \left| \xi\right|^2, \quad \forall  \xi\in\Rd.
\end{equation}
We denote by $\Omega$ the collection of all such coefficient fields:
\begin{equation*}
\Omega:= \left\{ \a(\cdot)\,:\, \a:\Rd \to \mathbb{S}^{d} \ \mbox{is Lebesgue measurable and satisfies~\eqref{e.ue}} \right\}.
\end{equation*}
We endow $\Omega$ with the translation group $\{\tau_y\}_{y\in\Rd}$, which acts on $\Omega$ via
\begin{equation*}
(\tau_y\a)(x) := \a(x+y),
\end{equation*}
and with the family $\{ \F(U)\}$ of $\sigma$-algebras on $\Omega$, with $\F(U)$ defined for each Borel subset $U\subseteq \Rd$ by
\begin{multline*} \label{}
\F(U) := \mbox{$\sigma$-algebra on $\Omega$ generated by the family of maps} \\
\a \mapsto \int_{U} p\cdot \a(x)q \, \varphi(x) \,dx, \quad p,q \in\Rd, \ \varphi\in C^\infty_c(\Rd).
\end{multline*}
We think of~$\F(U)$ as encoding the information about the behavior of the coefficients in~$U$. The largest of these~$\sigma$-algebras is~$\F:=\F({\Rd})$. The translation group may be naturally extended to~$\F$ itself by defining
\begin{equation*}
\tau_yA:= \left\{ \tau_y\a\,:\, \a\in A \right\}, \quad A\in\F,
\end{equation*}
and to any random element $X$ by setting $(\tau_zX)(\a):= X(\tau_z\a)$. 

\smallskip

Throughout the paper, we consider a probability measure~$\P$ on $(\Omega,\F)$ which is assumed to satisfy the following two conditions:
\begin{enumerate}

\item[(P1)] $\P$ is invariant under $\Zd$-translations: for every $z\in \Zd$ and $A\in \F$,
\begin{equation*} 
\P \left[ A \right] = \P \left[ \tau_z A \right].
\end{equation*}

\smallskip

\item[(P2)] $\P$ has a unit range of dependence: for every pair of Borel subsets $U, V\subseteq \Rd$ with $\dist(U,V) \geq 1$, 
\begin{equation*}
\mbox{$\F(U)$ \  and \ $\F(V)$ \ are \ $\P$-independent.}
\end{equation*}
\end{enumerate}
The expectation of an $\F$-measurable random variable $X$ with respect to $\P$ is denoted by~$\E\left[X \right]$.

\subsection{General notation}
For $z \in \Rd$ and $r > 0$, we write $B_r(z)$ (or simply $B_r$ if $z = 0$) for the open Euclidean ball of center $z$ and radius $r$. We denote cubes of side length $r > 0$ by
\begin{equation*}
\cu_r:= \left( -\tfrac 12r,\tfrac 12r\right)^d, \qquad \cu_r(z):= z + \cu_r.
\end{equation*}
We recall that we write $\Phi_{z,r}:= \Phi(\cdot - z,r^2)$ where $\Phi$ is defined in \eqref{e.heatkernel},
and use the notation
\begin{equation}
\label{e.not.int}
\int_{\Phi_{z,r}} f = \int_{\Phi_{z,r}} f(x) \, dx  := \int_{\R^d} f(x) \Phi_{z,r}(x) \, dx,
\end{equation}
$$
\|f\|_{L^2(\Phi_{z,r})} := \Ll( \int_{\Phi_{z,r}}\left| f\right|^2 \Rr) ^{\frac12}.
$$
When $z = 0$, we simply write $\Phi_r$ instead of $\Phi_{0,r}$ in these expressions. We will sometimes also use a truncation of the mask $\Phi_{z,r}$ defined by
\begin{equation}
\label{e.Phideltadef}
\Phi^{(\delta)}_{z,r}:= \left\{ \begin{aligned}
& \Phi_{z,r} & \mbox{in} & \ B_{r^{1+\delta}}(z), \\
& 0 & \mbox{in} & \ \Rd\setminus \overline B_{r^{1+\delta}}(z),
\end{aligned} \right.
\end{equation}
where $\delta > 0$. We also use the notation in \eqref{e.not.int} with $\Phi_{z,r}$ replaced by $\Phid_{z,r}$, and may write $\Phid_r$ in place of $\Phid_{0,r}$ for brevity.

For a measurable set $E\subseteq \Rd$, we denote the Lebesgue measure of $E$ by $|E|$ unless $E$ is a finite set, in which case $|E|$ denotes the cardinality of $E$. For a bounded Lipschitz domain $U \subset \Rd$ with $|U| < \infty$, we write $\fint_U := |U|^{-1} \int_U$, and for $p\in [1,\infty)$, we
denote the normalized $L^p(U)$ norm of a function $f\in L^p(U)$ by 
\begin{equation} 
\label{e.def.nL}
\| f \|_{\underline{L}^p(U)} := \left( \fint_U \left| f(x )\right|^p\,dx\right)^{\frac1p}.
\end{equation}
It is also convenient to denote $\| f \|_{\underline{L}^\infty(U)} := \| f \|_{L^\infty(U)}$. For a vector-valued $F\in L^p(U;\Rd)$, we write $\| F \|_{\underline{L}^p(U)}:= \| | F|  \|_{\underline{L}^p(U)}$. For $f \in L^1(U)$, we write $(f)_U := \fint_U f$. 

\smallskip

We denote the set of solutions of our equation in a domain $U\subseteq\Rd$ by
\begin{equation*}
\A(U) := \left\{ u \in H^1_{\mathrm{loc}}(U) \,:\, \forall v\in H^1_0(U), \ \ \int_U \nabla v(x)\cdot \a(x) \nabla u(x)\,dx = 0 \right\}. 
\end{equation*}
For every $k \in \N$, we set
\begin{equation}
\label{e.def.Ak}
\A_k := \Ll\{u \in \A(\R^d) \, : \, \lim_{r \to \infty} r^{-(k+1)} \|u\|_{\underline L^2(B_r)} = 0 \Rr\}.
\end{equation}
We  denote by $\Ahom(U)$ the set of solutions of the homogenized equation and let
\begin{equation}
\label{e.def.Ahomk}
\Ahom_k := \mbox{the set of $\ahom$-harmonic polynomials of degree at most $k$.}
\end{equation}
It is convenient to use the notation 
\begin{equation}
\label{e.def.Ak-Phi}
\A_k(\Phi_{z,r}) := \left\{ p \in \A_k : \left\| \nabla p \right\|_{L^2(\Phi_{z,r})} \leq 1 \right\},
\end{equation}
with $\Ahom_k(\Phi_{z,r})$ defined in an analogous way. 
For $m \le k$, we denote by $\pi_{z,m}$ the projection of $\Ahom_k$ onto $\Ahom_m$ such that
\begin{equation}
\label{e.def.pim}
\pi_{z,m} (p) = \sum_{n = 0}^m \frac 1 {n!} \nabla^n p(z)(\cdot - z)^{\otimes n}\,.
\end{equation}
Above the interpretation for the tensor product is
\begin{equation*} 
\frac 1{n!} \nabla^n p(x) z^{\otimes n} = \sum_{\stackrel{j_1,\ldots,j_d \in \N_0}{j_1+\cdots j_d = n}} \frac1{j_1! \cdots j_d!} \partial_{x_1}^{j_1} \cdots  \partial_{x_d}^{j_d} p(x) z_1^{j_1} \cdots z_d^{j_d}\,.
\end{equation*}
Note that $\phi_{z,m}$ is just the $m$th order Taylor approximation of $p$ centered at $z$. 
We also define 
\begin{equation}
\label{e.defPk}
\mathcal{P}_k:= \mbox{the set of polynomials of degree at most $k$.}
\end{equation}

\smallskip

We define $\mathbb{L}^2_{\mathrm{pot}}$ to be the set of $\Zd$-stationary random fields which, for each realization of the coefficients, are the gradient of a function in $H^1_{\mathrm{loc}}(\Rd)$. That is, $\mathbb{L}^2_{\mathrm{pot}}$ is the set of functions $\Omega \to L^2_{\mathrm{loc}}$ of the form
\begin{equation} 
\label{e.def.L2pot}
\a \mapsto \mathbf{f}(\cdot,\a) 
\end{equation}
such that, for each $z\in\Zd$, the random fields $\mathbf{f}(\cdot,\a)$ and $\mathbf{f}(\cdot,\tau_z\a)$ have the same law (with respect to $\P$) and, for $\P$-almost every $\a\in\Omega$, there exists $u\in H^1_{\mathrm{loc}}(\Rd)$ such that $\mathbf{f}(\cdot,\a) = \nabla u$. (Note that $u$ itself may not be unique and in particular is not required to be stationary.)

\subsection{Notation for random variables}
We next discuss some notation used throughout for measuring the size and stochastic integrability of random variables. For every exponent $s \in (0,\infty)$, $\theta > 0$ and $\F$-measurable random variable $X$, we recall that we write
$$
X \le \O_s(\theta) 
$$
to mean that \eqref{e.def.Os} holds. We likewise write 
\begin{equation*}
X\leq Y +  \O_s(\theta)  \quad \iff \quad X-Y \leq \O_s(\theta)
\end{equation*}
and
\begin{equation*}
X = Y + \O_s(\theta) \quad\iff \quad X-Y \leq \O_s(\theta) \ \ \mbox{and} \ \ Y-X \leq \O_s(\theta). 
\end{equation*}
The reader may have noticed that the $\O_s(\theta)$ notation is just a different way of writing bounds for random variables with respect to certain \emph{Orlicz} norms. The usage of $\O_s(\theta)$ lightens the notation, making many computations much more readable, like the ``big-$O$" notation it evokes. 

\smallskip

We record a few elementary properties of $\O_s$-bounded random variables.
\begin{remark}
\label{r.change-s}
For every $s < s' \in (0,\infty)$ 
and every random variable $X$,
$$
\Ll\{
\begin{array}{l}
X \mbox{ takes values in }  [0,1] \\
X \le \O_s(\theta)
\end{array}
\Rr.
\quad  \implies \quad X \le \O_{s'}(\theta^{\frac s {s'}}).
$$
Indeed, under the above assumptions, we have
$$
\E\Ll[\exp( (\theta^{\frac s {s'}} X)^{s'}) \Rr] \le \E\Ll[\exp( (\theta X)^{s})\Rr] \le 2.
$$
As an example, since by~\eqref{e.bounded-J}, $J$ is bounded, for each $s<1$, we may replace $\O_s\Ll(C  r^{-d}\Rr)$ by $\O_1\Ll(C  r^{-sd}\Rr)$ in part~(i) of Theorem~\ref{t.main1}, and $\O_{2s} ( C r^{-\frac d 2} )$ by $\O_2 ( C r^{-\frac {sd} 2} )$ in part (iii) of this theorem.
\end{remark} 
\begin{remark}
\label{r.multiply}
If $X_i \le \O_{s_i}(\theta_i)$ for $i \in\{1,2\}$, then
\begin{equation*} 
X_1 X_2 \le \O_{\frac{s_1 s_2}{s_1 + s_2}} \left( \theta_1\theta_2 \right)\,.
\end{equation*}
Indeed, we may assume $X_i \ge 0$, and then observe that by Young's and H\"older's inequalities,
\begin{align*}
\lefteqn{
\E\Ll[\exp\Ll(\Ll[(\theta_1 \theta_2)^{-1} X_1 X_2\Rr]^\frac{s_1 s_2}{s_1 + s_2}\Rr) \Rr] 
} \qquad & \\
& \le \E\Ll[\exp\Ll(\frac{s_1}{s_1 + s_2}(\theta_1^{-1} X_1)^{s_1} +  \frac{s_2}{s_1 + s_2} (\theta_2^{-1} X_2)^{s_2} \Rr) \Rr] \\
& \le \E\Ll[\exp\Ll(\theta_1^{-1} X_1)^{s_1}\Rr)\Rr]^{\frac{s_1}{s_1 + s_2}}\, \E\Ll[\exp\Ll(\theta_2^{-1} X_2)^{s_2}\Rr)\Rr]^{\frac{s_2}{s_1 + s_2}} \\
& \le 2.
\end{align*}
\end{remark}

In the following lemma, we check that an average of random variables bounded by $\O_s(\theta)$ is bounded by $\O_s(C\theta)$ for a constant~$C$ depending only on~$s$. This is used throughout the paper without further mention. 

\begin{lemma}
\label{l.sum-O}
\emph{(i)} Let $s \ge 1$, $\mu$ be a measure over an arbitrary measurable space $E$, let $\theta : E \to \R_+$ be a measurable function and $(X(x))_{x \in E}$ be a jointly measurable family of nonnegative random variables such that for every $x \in E$, $X(x) = \O_s(\theta(x))$. We have
$$
\int X \, d\mu = \O_s \Ll( \int \theta \, d \mu \Rr) .
$$
\emph{(ii)} For every $s \in (0,1)$, there exists $C(s) < 1$ such that the following holds. Let $\mu$ be a probability measure over an arbitrary measurable space $E$ and $(X(x))_{x \in E}$ be a jointly measurable family of nonnegative random variables such that for every $x \in E$, $X(x) = \O_s(1)$. We have
$$
\int X \, d \mu = \O_s(C).
$$
\end{lemma}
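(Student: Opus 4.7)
The two parts call for different tools because $t \mapsto \exp(t^s)$ is convex on $[0,\infty)$ precisely when $s \ge 1$.

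For part (i), I would apply Jensen's inequality with respect to a weighted probability measure. Set $\Theta := \int \theta \, d\mu$; the cases $\Theta \in \{0,\infty\}$ are degenerate (where $\theta = 0$, the hypothesis $X(x) \le \O_s(0)$ combined with $X \ge 0$ forces $X(x) = 0$ almost surely, and if $\Theta = \infty$ then $\O_s(\Theta)$ is trivially satisfied). Otherwise, introduce the probability measure $d\nu := \Theta^{-1} \theta \, d\mu$, which lets me rewrite
\begin{equation*}
\Theta^{-1} \int X \, d\mu = \int \frac{X}{\theta} \, d\nu .
\end{equation*}
Since $\exp(t^s)$ is convex and increasing for $s \ge 1$, Jensen's inequality on $(E,\nu)$ gives
\begin{equation*}
\exp\!\Ll( \Ll(\Theta^{-1} \int X\, d\mu \Rr)^s \Rr) \le \int \exp\!\bigl( (X/\theta)^s \bigr) \, d\nu .
\end{equation*}
Taking $\E$, swapping expectation and $\nu$-integral by Tonelli, and using the pointwise hypothesis $X(x) \le \O_s(\theta(x))$ bounds the right-hand side by $2$, which is the conclusion.

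For part (ii), since $\exp(t^s)$ is no longer convex, I would route through polynomial moments. Let $Y := \int X\, d\mu$. First, from $X(x) \le \O_s(1)$ and Markov one gets $\P[X(x) > t] \le 2 e^{-t^s}$; integrating $k t^{k-1} \P[X(x) > t]$ over $t$ and changing variables $u = t^s$ yields, uniformly in $x$,
\begin{equation*}
\E[X(x)^k] \le \tfrac{2k}{s}\, \Gamma(k/s), \qquad k \ge 1.
\end{equation*}
Since $\mu$ is a probability measure and $k \ge 1$, Minkowski's inequality in $L^k(\P)$ transfers this to $Y$:
\begin{equation*}
\E[Y^k]^{1/k} \le \int \E[X(x)^k]^{1/k} \, d\mu(x) \le \Ll( \tfrac{2k}{s}\, \Gamma(k/s) \Rr)^{1/k} .
\end{equation*}
Applying Markov, $\P[Y > u] \le \E[Y^k]/u^k$, and optimizing over $k$ via Stirling's formula at $k \asymp s u^s$ gives a tail estimate $\P[Y > u] \le K_s\, e^{-u^s}$ valid for all $u \ge 0$. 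Finally, the layer-cake identity
\begin{equation*}
\E\bigl[ \exp(\lambda Y^s) \bigr] = 1 + \lambda s \int_0^\infty v^{s-1}\, e^{\lambda v^s}\, \P[Y > v]\, dv
\end{equation*}
combined with the tail bound yields $\E[\exp(\lambda Y^s)] \le 1 + K_s\, \lambda/(1-\lambda)$ for any $\lambda \in (0,1)$, so choosing $\lambda$ small enough makes this $\le 2$; this is exactly $Y = \O_s(\lambda^{-1/s})$, and $C(s) := \lambda^{-1/s}$ works.

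The main obstacle is the chain of losses in constants in part (ii), since one pays at each of the three transitions (exponential moment $\to$ polynomial moments via the first Markov step, transfer through $\mu$ via Minkowski, polynomial moments $\to$ exponential tail via the second Markov step). The Stirling-based optimization of $k=k(u)$ is the critical place where care is needed, because any decay slower than a bona fide $\exp(-cu^s)$ in the tail of $Y$ would be too weak to rebuild an $\O_s$ bound at the end. Part~(i) sidesteps this entirely: convexity of $\exp(t^s)$ makes Jensen a single-step transfer from hypothesis to conclusion with no loss in constants.
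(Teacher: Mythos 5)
Your part (i) is essentially the paper's own proof: after normalizing, both arguments apply Jensen's inequality for the convex function $t \mapsto \exp(t^s)$, $s \ge 1$, with respect to the probability measure $\Theta^{-1}\theta\,d\mu$, and conclude in one step. Your part (ii), however, takes a genuinely different route. The paper stays within the Jensen framework by a shift trick: for $s \in (0,1)$ the map $x \mapsto \exp((x+t_s)^s)$ with $t_s = ((1-s)/s)^{1/s}$ \emph{is} convex on $\R_+$, so Jensen plus the subadditivity $(X+t_s)^s \le X^s + t_s^s$ gives $\E[\exp((\int X\,d\mu)^s)] \le 2\exp(t_s^s)$, and then the concavity bound $\E[Z^\sigma] \le \E[Z]^\sigma$ for small $\sigma \in (0,1)$ converts this into an $\O_s(C)$ statement with $C = \sigma^{-1/s}$; the whole proof is three lines and yields an explicit constant. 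You instead go through polynomial moments: Markov to get the tail $\P[X(x)>t] \le 2e^{-t^s}$, the moment bound $\E[X(x)^k] \le \tfrac{2k}{s}\Gamma(k/s)$, Minkowski's integral inequality in $L^k(\P)$ (using that $\mu$ is a probability measure and $k\ge 1$), a Chernoff-type optimization over $k$, and a layer-cake reconstruction of the exponential moment. This works, with two small caveats you should state more carefully: the Stirling optimization at $k \asymp s u^s$ produces a polynomial prefactor (roughly $u^{s/2}$), so the intermediate tail estimate is of the form $K_s u^{s/2} e^{-u^s}$, or equivalently $K_s e^{-c u^s}$ for some fixed $c \in (0,1)$, rather than the exact $K_s e^{-u^s}$ you claim; and the regime of small $u$ (where $k \ge 1$ fails) must be handled trivially by enlarging $K_s$. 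Neither issue affects the conclusion, since your final layer-cake step only needs \emph{some} stretched-exponential tail with exponent $s$ and then takes $\lambda$ small. The trade-off: your route is longer and lossier in constants but uses only standard Chernoff machinery and makes the tail behaviour of $\int X\,d\mu$ explicit; the paper's shifted-convexity argument is shorter, sharper, and avoids Stirling entirely. (Incidentally, the "$C(s)<1$" in the statement must be read as "$C(s)<\infty$": both your constant and the paper's are larger than $1$.)
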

\begin{proof}
We start with the proof of (i). Without loss of generality, we can assume $\int \theta \, d \mu < \infty$, and by homogeneity, we can further assume that $\int \theta \, d \mu = 1$. 
The function $x \mapsto \exp(x^s)$ is convex on $\R_+$. By Jensen's inequality,
\begin{equation*} 
\E\left[\exp\left(  \left(\int X \, d\mu \right)^{s} \right) \right]  \leq \E\left[\int \exp \left( (\theta^{-1} X)^s  \right) \, \theta \, d \mu \right] 
 \leq 2.
\end{equation*}
We now turn to the second statement. Let $s \in (0,1)$ and $t_s := \Ll( \frac{1-s} s \Rr)^{\frac 1 s}$. 
The function $x \mapsto \exp((x + t_s)^s)$ is convex on $\R_+$. By Jensen's inequality,
\begin{align*} 
\E\left[\exp\left(  \left(\int X \, d\mu \right)^{s} \right) \right]  & \leq \E\left[\exp \left( \left( \int X  \, d\mu  + t_s \right)^{s}  \right) \right]  
\\ & \leq \E\left[\int \exp \left( X^s  + t_s^s \right) \, d \mu \right] 
\\ & \leq 2 \exp\left( t_s^s\right)\,.
\end{align*}
For $\sigma \in (0,1)$ sufficiently small in terms of $s$, we thus have
\begin{equation*}
\E\left[\exp\left( \sigma \left(\int X \, d\mu \right)^{s}  \right) \right] \leq \E\left[\exp\left(\left(\int X \, d\mu \right)^{s} \right) \right]^\sigma
\leq  \Ll[2 \exp \Ll( t_s^s\Rr)\Rr]^\sigma \le 2\,. \qedhere
\end{equation*}
\end{proof}

\section{Higher-order regularity theory and Liouville theorems}
\label{s.regularity}

\subsection{Regularity theory}

One of the main tools in this paper is the higher regularity theory summarized in the following proposition. Recall that $\A_k$ and $\Ahom_k$ are defined in~\eqref{e.def.Ak} and~\eqref{e.def.Ahomk}, respectively.

\begin{proposition}[Instrinsic $C^{k,1}$ regularity]
\label{p.regularity}
Fix $s \in (0,d)$. There exist an exponent $\delta(s,d,\Lambda)\in \left( 0,1 \right)$ and a random variable $\X_s$ satisfying the estimate
\begin{equation}
\label{e.X}
\X_s \leq \O_s\left(C(s,d,\Lambda)\right)
\end{equation}
such that the following statements hold:
\begin{enumerate}
\item[(i)] For every $k\in\N$, there exists $C(k,d,\Lambda)<\infty$ such that, for every $u \in \A_k$, there exists $p\in \overline{\A}_k$ such that, for every $R\geq \X_s$,
\begin{equation} \label{e.liouvillec}
\left\| u - p \right\|_{\underline{L}^2(B_R)} \leq C R^{-\delta} \left\| p \right\|_{\underline{L}^2(B_R)}.
\end{equation}

\item[(ii)] For every $p\in \overline{\A}_k$, there exists $u\in \A_k$ satisfying~\eqref{e.liouvillec} for every $R\geq \X_s$. 

\item[(iii)] There exists $C(k,d,\Lambda)<\infty$ such that, for every $R\geq 2\X_s$ and $u\in \A(B_R)$, there exists $\phi \in \A_k(\Rd)$ such that, for every $r \in \left[ \X_s, \frac12 R \right]$, we have the estimate
\begin{equation}
\label{e.intrinsicreg}
\left\| u - \phi \right\|_{\underline{L}^2(B_r)} \leq C \left( \frac r R \right)^{k+1} 
\left\| u \right\|_{\underline{L}^2(B_R)}.
\end{equation}
\end{enumerate}
\end{proposition}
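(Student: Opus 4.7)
The plan is to proceed by induction on $k \ge 0$, proving all three statements simultaneously. The random minimal scale $\X_s$ will be defined as the smallest $r$ above which a quantitative two-scale expansion error estimate falls below a fixed deterministic threshold; the $\O_s$-bound on $\X_s$ is transferred from quantitative homogenization estimates (available under the unit range of dependence assumption) via a standard Chebyshev-type argument, and $\delta$ is inherited from the algebraic rate of decay in those estimates.

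For the base case $k=0$: the sublinear-growth condition in the definition of $\A_0$ combined with classical mean-value/oscillation arguments (or the qualitative Liouville theorem under ergodicity) forces elements of $\A_0$ to be constants, so (i) and (ii) are immediate since $\Ahom_0$ consists of constants as well. Statement (iii) at $k=0$ is the large-scale $C^{0,1}$ regularity: compare $u \in \A(B_R)$ to the $\ahom$-harmonic extension $\bar u$ of its boundary data on $B_{R/2}$ using the homogenization error estimate valid for $R \ge \X_s$, then exploit the classical interior gradient bound on $\bar u$.

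For the induction step, assume (i)--(iii) at orders $\le k-1$. The core of the argument is a Campanato-type excess decay yielding (iii) at order $k$. Introduce the intrinsic excess
\begin{equation*}
E_k(u, r) := \inf_{\phi \in \A_k} r^{-(k+1)} \Ll\| u - \phi \Rr\|_{\underline{L}^2(B_r)}
\end{equation*}
and prove a one-step contraction of the form
\begin{equation*}
E_k(u, \theta R) \le \theta^\alpha \, E_k(u, R) + C R^{-\delta}\, R^{-(k+1)}\|u\|_{\underline{L}^2(B_R)}
\end{equation*}
for some fixed $\theta(d,\Lambda) \in (0, \tfrac12)$, $\alpha(d,\Lambda) > 0$ and every $R \ge \theta^{-1} \X_s$. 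The strategy to obtain this is: (a) subtract the best approximating $\phi \in \A_k$ from the previous scale, so that it suffices to handle the case $\phi \equiv 0$; (b) let $\bar u$ solve the homogenized equation in $B_{R/2}$ with boundary data $u$, and invoke the quantitative two-scale expansion to get $\|u - \bar u\|_{\underline{L}^2(B_{R/2})} \le C R^{-\delta}\|u\|_{\underline{L}^2(B_R)}$; (c) apply classical $C^{k+1}$-regularity for constant-coefficient equations to extract the degree-$k$ Taylor polynomial $\bar p \in \Ahom_k$ of $\bar u$ at the origin with error $C (\theta R / R)^{k+1} \|\bar u\|_{\underline{L}^2(B_{R/2})}$ on $B_{\theta R}$; (d) lift $\bar p$ via the induction hypothesis (ii) at order $k$ to $\phi \in \A_k$ with $\|\phi - \bar p\|_{\underline{L}^2(B_{\theta R})} \le C (\theta R)^{-\delta} \|\bar p\|_{\underline{L}^2(B_{\theta R})}$. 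Iterating from the largest admissible scale down to any $r \ge \X_s$ yields (iii).

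Statements (i) and (ii) at order $k$ are then extracted from (iii). For (ii), given $p \in \Ahom_k$, write $p = p_{k-1} + q_k$ with $q_k$ the degree-$k$ homogeneous part. The lift $\phi_{p_{k-1}} \in \A_{k-1}$ exists by the induction hypothesis, and a higher-order corrector for $q_k$ is constructed by finding a stationary potential field $\nabla \psi \in \mathbb{L}^2_{\mathrm{pot}}$ solving $-\nabla \cdot \a(\nabla(\phi_{p_{k-1}} + q_k) + \nabla \psi) = 0$ in a suitable variational sense; the homogenization relation $\ahom (\nabla q_k)^{\mathrm{avg}} = \ldots$ ensures that the error grows like $o(|x|^{k+1})$, placing the resulting $\phi_p$ in $\A_k$. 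For (i), given $u \in \A_k$, apply (iii) on $B_R$ for each $R \ge \X_s$ to produce $\phi_R \in \A_k$; the growth bound $\|u\|_{\underline{L}^2(B_R)} = o(R^{k+1})$ and the contraction from (iii) force the $\Ahom_k$-images of the $\phi_R$ (under the bijection established via (ii)) to stabilize to a single $p \in \Ahom_k$ satisfying the stated estimate.

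The main obstacle is the one-step contraction in (iii): one must match three separate error terms (the homogenization error, the Taylor remainder of $\bar u$, and the correction error from lifting $\bar p$ to $\A_k$) against the factor $\theta^{k+1}$ coming from rescaling $E_k$, and the inductive lift in step (d) must be available at \emph{every} center in $B_R$, not only at the origin — so the dependence of $\X_s$ on the base point has to be absorbed by enlarging $\X_s$ to a uniform minimal scale using the $\Zd$-stationarity and a union bound in the $\O_s$-framework. All remaining steps are deterministic classical PDE arguments.
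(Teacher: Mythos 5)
Your overall architecture (induction on $k$, harmonic approximation plus constant-coefficient regularity plus lifting of $\ahom$-harmonic polynomials) is in the right spirit, but two of your key steps have genuine gaps. The most serious one is your construction of (ii) at order $k$: for $k\ge 2$ there is no stationary potential field $\nabla\psi\in\mathbb{L}^2_{\mathrm{pot}}$ solving $-\nabla\cdot\a(\nabla(\phi_{p_{k-1}}+q_k)+\nabla\psi)=0$. The forcing produced by a degree-$k$ polynomial has polynomially growing coefficients, so the Lax--Milgram construction in the probability space (which is what produces first-order correctors in $\mathbb{L}^2_{\mathrm{pot}}$) does not apply, and indeed the correction attached to a degree-$k$ $\ahom$-harmonic polynomial has a gradient growing like $|x|^{k-1}$ and is not a stationary field. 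Constructing higher-order correctors with the required growth bounds is a quantitative problem of the same difficulty as the proposition itself. The paper avoids this entirely: it builds the lift of a homogeneous degree-$k$ polynomial $q$ as a limit of finite-volume solutions $u_m$ (from the Dirichlet error estimate of Proposition~\ref{p.errorestimate}), correcting the telescoping differences $u_{m+1}-u_m$ at each scale by elements of $\A_{k-1}$ supplied by the induction hypothesis --- a purely deterministic argument above the minimal scale. Since your step (d) in the excess-decay for (iii) invokes (ii) at order $k$ (which, as you order things, is itself supposed to be extracted from (iii), so the logic is also circular as written), this gap propagates through the whole induction.

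The second gap concerns the exponent $k+1$ in (iii). Your one-step contraction $E_k(u,\theta R)\le \theta^\alpha E_k(u,R)+CR^{-\delta}R^{-(k+1)}\|u\|_{\underline{L}^2(B_R)}$ is false already for $\a=\ahom$: if $u$ is a degree-$(k+1)$ $\ahom$-harmonic polynomial, the normalized excess $E_k(u,r)$ is a positive constant in $r$, so it cannot contract by $\theta^\alpha<1$. What the constant-coefficient theory gives is decay of the \emph{unnormalized} excess over $\Ahom_k$ by a factor $C\theta^{k+1}$, and after absorbing the constant $C$ and the lifting error $Cr^{-\delta}$ one can only reach exponent $k+\alpha$ with $\alpha<1$; this is exactly why the paper first proves the weaker statement (iii$'$) with exponent $k+\alpha$ at every order, and then recovers the sharp exponent $k+1$ at order $k$ by applying (i), (ii), (iii$'$) at order $k+1$ and subtracting the intrinsic element corresponding to the degree-$(k+1)$ homogeneous part. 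Finally, your derivation of (i) from (iii) is vacuous: for $u\in\A_k$ one may take $\phi=u$ in (iii), so it yields no polynomial approximation. Statement (i) requires a separate argument based on the mesoscale estimate (Proposition~\ref{p.mesoregularity}), tracking the best $\Ahom_k$-approximations across scales and using the orthogonality of homogeneous $\ahom$-harmonic monomials to show they converge; this is the longest step in the paper's proof and is not recoverable from (iii) alone.
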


Proposition~\ref{p.regularity} is a deterministic consequence of the regularity theory for stochastic homogenization introduced in~\cite{AS}. Indeed, it is a classical fact that interior regularity estimates are linked to Liouville-type theorems, as one can recover the latter from the former by a simple iteration procedure. Thus, and as we have pointed out previously in~\cite[Remark 2.4]{AKM} and clarify here, the mesoscopic regularity estimates proved in~\cite{AS,AM,AKM} imply the Liouville-type result given in Proposition~\ref{p.regularity}(i) and~(ii) by a straightforward (and deterministic) analysis argument. Writing the regularity estimates in terms of ``intrinsic polynomials" (denoted here by $\A_k(\Rd)$) goes back to the original formulation of Avellaneda and Lin~\cite{AL1,AL4} for equations with periodic coefficients. The papers~\cite{GNO3,FO1} were the first to write regularity estimates in the form of~\eqref{e.intrinsicreg} in the stochastic setting and to give a complete proof of the Liouville results. 

\smallskip

We continue by recalling two previous results proved in~\cite{AS,AM,AKM} before showing how to derive Proposition~\ref{p.regularity} from them. 

\begin{proposition}[{\cite{AS,AM}}]
\label{p.errorestimate}
Fix a Lipschitz domain $U \subseteq B_1$, $s\in (0,d)$ and $\ep>0$. There exists an exponent $\delta(s,\ep,d,\Lambda)\in (0,1)$ and a random variable $\mathcal{R}_s$ satisfying the estimate
\begin{equation}
\label{e.minimalradiusEE}
\mathcal{R}_s \leq \O_s(C(U,s,\ep,d,\Lambda))
\end{equation}
such that, for every $r\geq \mathcal{R}_s$, $f\in W^{1,2+\ep}(rU)$ and solutions $u,\overline{u} \in f+H^1_0(rU)$ of 
\begin{equation*}
-\nabla \cdot \left( \a\nabla u \right) = 0 \quad \mbox{and} \quad -\nabla \cdot \left( \ahom \nabla \overline{u} \right) = 0 \quad \mbox{in} \ rU,
\end{equation*}
we have the estimate
\begin{equation}
\label{e.quenchedEE}
\frac1{r}  \left\| u - \overline{u} \right\|_{\underline{L}^2(rU)} \leq r^{-\delta} \left\| \nabla f \right\|_{\underline{L}^{2+\ep}(rU)}. 
\end{equation}
\end{proposition}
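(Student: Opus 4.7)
The plan is to prove Proposition~\ref{p.errorestimate} by a quantitative two-scale expansion, using quenched sublinearity estimates for the first-order correctors above a random minimal scale. The input from probability is the convergence rate of the subadditive energy quantities $\mu, \nu$ from \cite{AS}: this produces a random scale $\mathcal{R}_s$ satisfying~\eqref{e.minimalradiusEE}, beyond which the correctors $\phi_e$ and the associated flux correctors $\sigma_e$ (with $\nabla \cdot \sigma_e = \a(e + \nabla \phi_e) - \ahom e$, viewed as a skew matrix field) grow sublinearly at an algebraic rate $\rho^{1-\delta'}$ on every ball of radius $\rho \ge \mathcal{R}_s$, for some $\delta'(s,d,\Lambda) > 0$.

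The core construction is the two-scale ansatz
\begin{equation*}
w(x) := \bar u(x) + \sum_k \phi_{e_k}(x)\, \eta(x)\, \partial_k \bar u_\rho(x),
\end{equation*}
where $\bar u_\rho := \bar u * \chi_\rho$ is a mollification of $\bar u$ at a mesoscopic scale $\rho = r^{1-\alpha}$ (with $\alpha \in (0,1)$ to be optimized), $\{e_k\}$ is the canonical basis, and $\eta$ is a smooth cutoff equal to $1$ outside a $\rho$-thick tubular neighborhood of $\partial(rU)$ and vanishing on $\partial(rU)$ itself. Using the defining equations for $\phi_{e_k}$ and $\sigma_{e_k}$, a direct computation rewrites $-\nabla \cdot (\a \nabla w)$ as $\nabla \cdot F$, where the components of $F$ are bilinear in the pair $(\phi_{e_k}, \sigma_{e_k})$ and in the triple $(\nabla \eta, \nabla^2 \bar u_\rho, \nabla \bar u - \nabla \bar u_\rho)$. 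Since $w$ matches $\bar u$ on $\partial(rU)$, we have $u - w \in H^1_0(rU)$; testing the equation for $u-w$ against itself and invoking uniform ellipticity of $\a$ gives $\|\nabla(u-w)\|_{\underline L^2(rU)} \lesssim \|F\|_{\underline L^2(rU)}$, and a Poincar\'e inequality converts this into the $\underline L^2$ estimate~\eqref{e.quenchedEE} after triangulating $u-\bar u$ with $u-w$ and $w-\bar u$.

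The hard part is the boundary layer analysis. On the interior (where $\eta \equiv 1$), the corrector sublinearity bounds $\phi_{e_k}, \sigma_{e_k}$ by $\rho^{1-\delta'}$ while standard mollification estimates give $|\nabla^2 \bar u_\rho| \lesssim \rho^{-1} |\nabla \bar u|$, yielding an interior contribution of order $\rho^{-\delta'} \|\nabla f\|_{\underline L^2(rU)}$ via the energy estimate for the homogenized problem. On the $\rho$-thick layer near $\partial(rU)$, the term $\rho^{-1} \phi_{e_k} \partial_k \bar u_\rho$ must be integrated over a set of measure $\sim r^{d-1}\rho$, and the plain $L^2$ energy estimate is insufficient. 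This is exactly where the hypothesis $f \in W^{1,2+\epsilon}$ enters: a Meyers-type higher integrability estimate for the homogenized Dirichlet problem on the Lipschitz domain $rU$ gives $\nabla \bar u \in L^{2+\epsilon}(rU)$ with norm controlled by $\|\nabla f\|_{\underline L^{2+\epsilon}(rU)}$, and H\"older produces a gain of $(\rho/r)^{\epsilon/(2(2+\epsilon))}$ from the thin boundary strip. Balancing this gain against the interior error by optimizing $\alpha$ yields a single positive exponent $\delta = \delta(s,\epsilon,d,\Lambda)$ for which~\eqref{e.quenchedEE} holds whenever $r \ge \mathcal{R}_s$.
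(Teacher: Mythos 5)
The paper itself does not prove this proposition: it is imported verbatim from~\cite{AS,AM}, where it is established by a direct variational argument, namely by comparing the Dirichlet energies of $u$ and $\overline u$ through the subadditive quantities $\mu$ and $\nu$, patching solutions of cell problems on mesoscopic cubes, and using the algebraic convergence rate of $\mu,\nu$ with $\O_s$ integrability; no correctors or flux correctors appear in that proof. Your proposal takes the other standard route, the two-scale expansion with correctors $\phi_{e_k}$ and flux correctors $\sigma_{e_k}$, a boundary cutoff, mesoscopic mollification of $\overline u$, and a Meyers estimate for the (constant-coefficient) homogenized problem to absorb the boundary layer. The outline is sound, and you correctly identify why the hypothesis $f\in W^{1,2+\ep}$ is needed: the thin boundary strip of width $\rho$ can only be handled with higher integrability of $\nabla\overline u$ up to $\partial(rU)$.

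The weak point is the provenance of your probabilistic input. You assert that the convergence rate of $\mu,\nu$ in \cite{AS} "produces" a random scale $\mathcal R_s$ above which both $\phi_e$ \emph{and} $\sigma_e$ are sublinear at an algebraic rate with $\O_s$ integrability for every $s<d$. That is not available off the shelf: \cite{AS} contains no flux correctors at all, and passing from the cube-wise convergence of the subadditive energies to quenched sublinearity of $(\phi_e,\sigma_e)$ on all balls above a single random scale (with the stated stochastic integrability) is itself a multiscale argument of a size comparable to the direct proof you are replacing -- indeed, in the present paper corrector bounds are an \emph{output}, derived downstream of this very proposition via the regularity theory. So your route is viable but not self-contained as written; to make it complete you would need to construct $\sigma_e$, prove its sublinearity with the correct $\O_s$ tails from the subadditive rates, and also handle the minor technicality that $\overline u_\rho=\overline u*\chi_\rho$ is not defined on the $\rho$-neighborhood of $\partial(rU)$ without an extension (or a distance-dependent mollification radius).
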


We next state the mesoscopic $C^{k,1}$ regularity estimate, which is a deterministic consequence of Proposition~\ref{p.errorestimate} (see~\cite[Lemma 2.5]{AKM}). Here the result is in a slightly more general form than what appears in~\cite{AKM}, although the statement here is what~\cite[Lemma 2.5]{AKM} actually gives.

\begin{proposition}[{\cite[Theorem 2.1]{AKM}}]
Fix $s \in (0,d)$. There exist an exponent $\delta(s,d,\Lambda)\in (0,1)$, a random variable $\X_s$ satisfying the estimate
\begin{equation}
\label{e.minimalradius}
\X_s \leq \O_s(C'(s,d,\Lambda)),
\end{equation}
and, for each $k \in \N$, a constant $C(k,d,\Lambda) < \infty$ 
such that, for every $R\geq 2\X_s$, $v \in \A(B_R)$ and $r \in \left[ \X_s, \frac12R\right]$,
\begin{equation}
\label{e.mesoregularity} 
\inf_{w\in \Ahom_k}\left\| v-w \right\|_{\underline{L}^2(B_r)} \leq C \left( \frac rR \right)^{k+1} \inf_{w\in \Ahom_k}\left\| v-w \right\|_{\underline{L}^2(B_R)} + C r^{-\delta} \left( \frac rR \right) \left\| v \right\|_{\underline{L}^2(B_R)}.
\end{equation}
\label{p.mesoregularity}
\end{proposition}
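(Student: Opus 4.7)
The argument is a deterministic Campanato-type iteration built on three ingredients: (i) Proposition~\ref{p.errorestimate}, the quenched homogenization error estimate; (ii) classical interior $C^{k,1}$-regularity for solutions of the constant-coefficient equation $-\nabla\cdot(\ahom \nabla \cdot) = 0$; and (iii) the standard Caccioppoli and Meyers higher-integrability estimates for $\a$-harmonic functions. Write $\Psi_k(\rho) := \inf_{w \in \Ahom_k} \|v - w\|_{\underline{L}^2(B_\rho)}$ and define the random radius $\X_s := C_1 (\mathcal{R}_s \vee 1)$ for a suitable constant $C_1$, so that Proposition~\ref{p.errorestimate} applies at every scale $\rho \in [\X_s, R/2]$.

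\emph{One-step improvement.} Fix $\theta \in (0, \tfrac{1}{2}]$ and $\rho \in [\X_s, R/2]$, and let $\bar v$ denote the $\ahom$-harmonic function on $B_\rho$ with boundary data $v|_{\partial B_\rho}$. Proposition~\ref{p.errorestimate} yields
\begin{equation*}
\|v - \bar v\|_{\underline{L}^2(B_\rho)} \le C \rho^{1-\delta'} \|\nabla v\|_{\underline{L}^{2+\eps}(B_\rho)},
\end{equation*}
while classical $C^{k,1}$-regularity for the $\ahom$-harmonic function $\bar v$ (applied after subtracting its best $\Ahom_k$-approximation on $B_\rho$, which leaves the infima on both sides invariant) produces some $w \in \Ahom_k$ with
\begin{equation*}
\|\bar v - w\|_{\underline{L}^2(B_{\theta\rho})} \le C \theta^{k+1} \inf_{w' \in \Ahom_k} \|\bar v - w'\|_{\underline{L}^2(B_\rho)}.
\end{equation*}
Combining these via the triangle inequality and $\|\cdot\|_{\underline{L}^2(B_{\theta\rho})} \le \theta^{-d/2}\|\cdot\|_{\underline{L}^2(B_\rho)}$ gives the one-step contraction
\begin{equation*}
\Psi_k(\theta \rho) \le C \theta^{k+1} \Psi_k(\rho) + C \theta^{-d/2} \rho^{1-\delta'} \|\nabla v\|_{\underline{L}^{2+\eps}(B_\rho)}.
\end{equation*}

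\emph{Induction on $k$ and the sharp mesoscopic gradient bound.} The target estimate has an error scaling like $r^{1-\delta}/R$, which forces the per-step error to be of order $\rho^{1-\delta'}/R \cdot \|v\|_{\underline{L}^2(B_R)}$. This in turn requires the sharp mesoscopic gradient bound
\begin{equation*}
\|\nabla v\|_{\underline{L}^{2+\eps}(B_\rho)} \le C R^{-1} \|v\|_{\underline{L}^2(B_R)}, \qquad \rho \in [\X_s, R/2],
\end{equation*}
whereas Caccioppoli applied directly to $v$ yields only the weaker $\rho^{-1}\|v\|_{\underline{L}^2(B_{2\rho})}$. We therefore argue by induction on $k$. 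The base $k=0$ uses the weaker bound and reproduces the mesoscopic Lipschitz estimate already available from~\cite{AS,AM}. For the inductive step $(k-1) \to k$, choose a near-minimizer $q \in \Ahom_{k-1}$ of $\Psi_{k-1}(\rho)$ and set $u := v - q$. The Markov inequality for polynomials of degree $\le k-1$ on $B_R \supset B_\rho$ gives $\|\nabla q\|_{L^\infty(B_\rho)} \le C R^{-1} \|v\|_{\underline{L}^2(B_R)}$, and $u$ solves the inhomogeneous equation
\begin{equation*}
-\nabla\cdot(\a \nabla u) = \nabla\cdot((\ahom - \a)\nabla q).
\end{equation*}
A Caccioppoli estimate for this equation bounds $\|\nabla u\|_{\underline{L}^2(B_{\rho/2})}$ by $C\rho^{-1}\Psi_{k-1}(\rho) + C\|\nabla q\|_{L^\infty(B_\rho)}$; by the inductive hypothesis, both terms in the bound on $\Psi_{k-1}(\rho)$ give $\rho^{-1}\Psi_{k-1}(\rho) \le C R^{-1}\|v\|_{\underline{L}^2(B_R)}$, so the whole expression is $O(R^{-1}\|v\|)$. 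A Meyers upgrade then delivers the sharp $\underline{L}^{2+\eps}$ gradient bound.

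\emph{Iteration and main obstacle.} Feeding the sharp gradient bound back into the one-step contraction and iterating along the geometric sequence $\rho_n := \theta^n R$ (with $\theta$ fixed in terms of $k, d, \Lambda$ so as to absorb the constant from classical regularity), the geometric sum of errors collapses to $Cr^{1-\delta}/R\cdot\|v\|_{\underline{L}^2(B_R)} = C r^{-\delta}(r/R) \|v\|_{\underline{L}^2(B_R)}$, yielding the second term in the conclusion, while the contraction factor accumulates to $C(r/R)^{k+1}\Psi_k(R)$ in the first term. The heart of the proof is the sharp per-step gradient bound: it is precisely the upgrade from $\rho^{-1}$ to $R^{-1}$ that creates the extra $(r/R)$ factor in the final error, and this upgrade is available only through the inductive hypothesis at level $k-1$, which encodes that $v$ is well-approximated on $B_\rho$ by a polynomial of degree $\le k-1$ whose derivatives are controlled at the outer scale $R$. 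All remaining ingredients are routine Campanato iteration and elliptic a priori estimates.
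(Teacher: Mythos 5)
Your overall skeleton — harmonic approximation at each mesoscale via Proposition~\ref{p.errorestimate}, the $C^{k,1}$ estimate for $\ahom$-harmonic functions, Caccioppoli/Meyers, and the mesoscopic Lipschitz ($k=0$) bound to convert the per-step error into $C\rho^{1-\delta}R^{-1}\|v\|_{\underline{L}^2(B_R)}$ so that the geometric sum collapses to $Cr^{-\delta}(r/R)\|v\|_{\underline{L}^2(B_R)}$ — is the right kind of argument; note that this paper does not reprove the proposition but quotes it from~\cite{AKM} as a deterministic consequence of Proposition~\ref{p.errorestimate}, so the comparison is with that argument, whose spirit you have captured. (Incidentally, your induction on $k$ for the ``sharp mesoscopic gradient bound'' is superfluous: once the $k=0$ Lipschitz estimate from~\cite{AS,AM} is granted, it gives directly $\|\nabla v\|_{\underline{L}^2(B_\rho)}\le C\|\nabla v\|_{\underline{L}^2(B_{R/2})}\le CR^{-1}\|v\|_{\underline{L}^2(B_R)}$ for every $\rho\in[\X_s,\frac12 R]$, and Meyers upgrades the exponent; moreover your bound $\|\nabla q\|_{L^\infty(B_\rho)}\le CR^{-1}\|v\|_{\underline{L}^2(B_R)}$ for a near-minimizer chosen at scale $\rho$ is not immediate and would itself require comparing minimizers across scales.)

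The genuine gap is in the first term of the conclusion. Your one-step inequality reads $\Psi_k(\theta\rho)\le C\theta^{k+1}\Psi_k(\rho)+\mathcal{E}(\rho)$ with a constant $C=C(k,d,\Lambda)>1$ coming from classical regularity, and you assert that $\theta$ can be ``fixed\ldots so as to absorb the constant''. It cannot: iterating over $n\simeq\log_{1/\theta}(R/r)$ steps produces $(C\theta^{k+1})^n=C^n(r/R)^{k+1}$, i.e.\ a loss of the power $(R/r)^{\log C/\log(1/\theta)}$, and shrinking $\theta$ only converts this into the exponent $k+1-\eps$ with a constant depending on $\eps$; one cannot recover a uniform constant times $(r/R)^{k+1}$ from the family of $(r/R)^{k+1-\eps}$ bounds. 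So your argument proves the analogue of statement~(iii$'$) in the proof of Proposition~\ref{p.regularity} (exponent $k+\alpha$, $\alpha<1$), not the stated estimate with exponent exactly $k+1$ uniformly over $r\in[\X_s,\frac12 R]$. Reaching $k+1$ requires an additional device which your sketch does not contain: either exploit that $\ahom$-harmonic polynomials of different degrees are orthogonal in $L^2$ of the adapted ellipsoids $\{x\cdot\ahom^{-1}x<\rho^2\}$, so that the per-step harmonic excess-decay constant is exactly one and nothing compounds (this is precisely why the proof of Proposition~\ref{p.regularity} works with the ellipsoids $B^j$ and records the orthogonality in its Step~1), or run the iteration for degree-$(k+1)$ approximations, where the decay $C\theta^{k+2}$ allows the constant to be absorbed into one factor of $\theta$, and then separately control and reabsorb the degree-$(k+1)$ homogeneous parts $P_{k+1}p_j$ of the approximating polynomials across scales (the bookkeeping carried out in Step~4 of that proof); without one of these, the naive top-degree comparison incurs at least a logarithmic loss in $R/r$.
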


We now present the proof of Proposition~\ref{p.regularity}. 

\begin{proof}[{Proof of Proposition~\ref{p.regularity}}]

\emph{Step 1.} 
We first remark that if $p$ is an $\ahom$-harmonic polynomial, then $\tilde p$, defined as $\tilde p(y) = p(\ahom^{\,-\frac12} y)$, is harmonic. It is easy to see the orthogonality of two harmonic monomials of different homogeneity, since in the polar coordinates their angular parts are eigenfunctions for the Laplace-Beltrami operator of the sphere with different eigenvalues. Therefore, in the original coordinates, two $\ahom$-harmonic monomials of different degree are orthogonal with respect to the inner product of $L^2\left( \{x \in \R^d \, : \, x \cdot \ahom^{-1} x < r^2 \}\right)$ for every $r>0$.  


\smallskip

Let $\X_s$ denote the random variable appearing in Proposition~\ref{p.mesoregularity}. Without loss of generality, we will assume that $\X_s$ also bounds the random variable $\mathcal{R}_s$ given by Proposition~\ref{p.errorestimate} with the choice of domain $U = B_1$ and $\eps = 1$. 
We choose $\theta \in \left(0,\frac12\right]$ small, and for every $j \in \N$, set $r_j := \theta^{-j} r_0$ with $ r_0 \in \left[ \X_s , \frac12R\right]$. In order to use the orthogonality of the polynomials, we use the balls $B^j := \{x \in \R^d \, :\, x \cdot \ahom^{-1} x < r_j^2 \}$. For $u \in \A(B^{j+1})$, 
using the ellipticity of $\ahom$, an application of Proposition~\ref{p.mesoregularity} and the triangle inequality yields 
the existence of $p_{j}, p_{j+1} \in \Ahom_{k+1}$, which are the best $\Ahom_{k+1}$-polynomial approximations of $u$ in $B^j$ and $B^{j+1}$, respectively, such that 
\begin{align*} \label{}
\left\| u -  p_{j} \right\|_{\underline{L}^2(B^{j})} 
& \leq C \theta^{k+2} \left\| u - p_{j+1} \right\|_{\underline{L}^2(B^{j+1})}  + C \theta r_j^{-\delta} \left\| u\right\|_{\underline{L}^2(B^{j+1})} 
 \\  & \le  C \left( \theta^{k+2} +   \theta r_j^{-\delta} \right)  \left\| u - p_{j+1} \right\|_{\underline{L}^2(B^{j+1})}  + C\theta r_j^{-\delta}  \left\| p_{j+1}\right\|_{\underline{L}^2(B^{j+1})} 
\end{align*}
with $C = C(k,d,\Lambda)$. We choose $\theta \leq (2C)^{-1/\delta}$ and $r_0 := \max\{\X_s, C^{1/\delta} \theta^{-(k+2)/\delta} \}$, so that the previous inequality becomes
\begin{equation*} 
\left\| u -  p_{j} \right\|_{\underline{L}^2(B^{j})} 
 \leq  \theta^{k+2-\delta} \left\| u - p_{j+1} \right\|_{\underline{L}^2(B^{j+1})}  + r_j^{-\delta} \left\| p_{j+1}\right\|_{\underline{L}^2(B^{j+1})} .
 \end{equation*}
Iterating then gives, for $n>0$ and $u \in \A(B^{j+n})$, that 
\begin{multline}  \label{e.iterbasic}  
\left\| u - p_{j} \right\|_{\underline{L}^2(B^{j})}
\\  \leq \left( \theta^{k+2-\delta}\right)^{n}  \left\| u -p_{j+n} \right\|_{\underline{L}^2(B^{j+n})} 
 + r_{j}^{-\delta} \sum_{h=j+1}^{j+n} \left( \theta^{k+2} \right)^{h-(j+1)}  \left\| p_h  \right\|_{\underline{L}^2(B^{h})} .
\end{multline}
This serves as the basic inequality for our argument. 

\smallskip

\emph{Step 2.} The proof proceeds by induction. We prove (i), (ii) and (iii') together in an induction on~$k$, where (iii') is the following weaker version of (iii):
\begin{enumerate}
\item[(iii')] For each $\alpha\in (0,1)$ and $k\in\N$, there exists $C(k,\alpha,d,\Lambda)<\infty$ such that, for every $R\geq 2\X_s$ and $u\in \A(B_R)$, there exists $\phi \in \A_k(\Rd)$ such that, for every $r \in \left[ \X_s, \frac12 R \right]$,
\begin{equation*}
\left\| u - \phi \right\|_{\underline{L}^2(B_r)} \leq C \left( \frac r R \right)^{k+\alpha} 
\left\| u \right\|_{\underline{L}^2(B_R)}.
\end{equation*}
\end{enumerate}

By the notation (i)$_{k}$, (ii)$_{k}$ and (iii')$_k$ we mean that the corresponding statements hold up to degree $k$. Our induction assumption states that  (i)$_{k-1}$, (ii)$_{k-1}$ and (iii')$_{k-1}$ are all valid. These are trivial for $k=1$ since the constants are both $\ahom$- and $\a$-harmonic, and this is our initial step for the induction. We will first show that (ii)$_{k}$  holds, then (i)$_{k}$, and finally (iii')$_k$. In the last step, we will demonstrate that~(i)$_{k+1}$, (ii)$_{k+1}$ and~(iii')$_{k+1}$ actually imply~(iii)$_k$. 

\smallskip

\emph{Step 3.} In this step, we show that (ii)$_k$ is valid using (i)$_{k-1}$, (ii)$_{k-1}$ and (iii')$_{k-1}$. Due to (ii)$_{k-1}$ and orthogonality properties of $\ahom$-harmonic polynomials, we may assume that  $q \in \Ahom_k$ is a homogeneous polynomial of degree $k$. 
First, by Proposition~\ref{p.errorestimate}, we find for each $m \in \N$ a solution $u_m \in \A(B^m)$ such that 
\begin{equation} \label{e.u_m local}
\left\| u_m - q \right\|_{\underline{L}^2(B^{m})} \leq C r_m^{-\delta} \left\| q \right\|_{\underline{L}^2(B^{m})} \,.
\end{equation}
Let $w_m := u_{m+1} - u_{m}$,  and let $\phi_m \in \A_{k-1}(\R^d)$ be given by (iii')$_{k-1}$ for~$w_m$. Then~(iii')$_{k-1}$ together with the triangle inequality and the previous display imply 
\begin{align*} 
\left\| w_m -\phi_m  \right\|_{\underline{L}^2(B^{j})} & \leq C \left(\theta^{k-\delta/2}\right)^{j-m} \left\| w_m  \right\|_{\underline{L}^2(B^{m})} 
\\ & \leq C r_{m}^{-\delta} \left(\theta^{k-\delta/2}\right)^{j-m} \left\| q  \right\|_{\underline{L}^2(B^{m})} 
= C r_j^{-\delta} \left(\theta^{\delta/2} \right)^{m-j}  \left\| q  \right\|_{\underline{L}^2(B^{j})}  
\end{align*}
for all $j \leq m$.
By (i)$_{k-1}$, there exists $p_{\phi_m} \in \Ahom_{k-1}$ such that $\left\| p_{\phi_m} - \phi_m \right\|_{\underline{L}^2(B^{n})} \leq c r_n^{-\delta}  \left\| p_{\phi_m}  \right\|_{\underline{L}^2(B^{n})}$ for every $n \in \N$. By the triangle inequality and the two previous displays, 
\begin{multline*}
\left\| p_{\phi_m}   \right\|_{\underline{L}^2(B^{m})} \le  2 \left\| {\phi_m}   \right\|_{\underline{L}^2(B^{m})} \\
\le  2 \left\| w_m  \right\|_{\underline{L}^2(B^{m})} + 2 \left\| w_m -\phi_m  \right\|_{\underline{L}^2(B^{m})} 
\leq  C r_{m}^{-\delta} \left\| q  \right\|_{\underline{L}^2(B^{m})} \,. 
\end{multline*}
Therefore we have, for any $n>m$,
\begin{align*} 
\left\| \phi_m  \right\|_{\underline{L}^2(B^{n})} & \leq C  \left(\frac{r_n}{r_m} \right)^{k-1} \left\| p_{\phi_m}  \right\|_{\underline{L}^2(B^{m})} 
\\ & \leq C \theta^{(m-n)(k-1)} r_{m}^{-\delta} \left\| q  \right\|_{\underline{L}^2(B^{m})}  = C \theta^{(n-m)(1-\delta)}  r_n^{-\delta} \left\| q  \right\|_{\underline{L}^2(B^{n})}\,.
\end{align*}
Set $v_n = u_n - \sum_{m=1}^{n-1} \phi_m$, so that $v_n - u_j = \sum_{m=j}^{n-1} (w_m-\phi_m) - \sum_{m=1}^{j-1} \phi_m$. We get
\begin{multline*} 
\left\| v_n - q  \right\|_{\underline{L}^2(B^{j})}  \leq \left\| u_j - q  \right\|_{\underline{L}^2(B^{j})} + \sum_{m=j}^{n-1} \left\| w_m -\phi_m \right\|_{\underline{L}^2(B^{j})} + \sum_{m=1}^{j-1} \left\| \phi_m \right\|_{\underline{L}^2(B^{j})}
\\  \leq C r_j^{-\delta} \left\| q  \right\|_{\underline{L}^2(B^{j})} \left(1 + \sum_{m=0}^{n-1-j} \left(\theta^{\delta/2} \right)^{m}   +  \sum_{m=1}^{j-1} \left( \theta^{1-\delta}\right)^{m} \right) \leq C r_j^{-\delta} \left\| q  \right\|_{\underline{L}^2(B^{j})}
\end{multline*}
whenever $j<n$. After letting $n \to \infty$, we find $u \in \A_k(\R^d)$ satisfying (ii)$_k$, appealing to weak convergence in $H_{\rm loc}^1$ and a diagonal argument.  

\smallskip

\emph{Step 4.} We now prove (i)$_k$ using (i)$_{k-1}$ and (ii)$_k$. 
Let us denote by $P_m$ the projection of polynomials to $m^{th}$-degree homogenous polynomials, i.e. $\nabla^j P_m p(0) = 0$ whenever $j \neq m$, and let $Q_m =\sum_{h=0}^m P_h$.  
For $u \in \A_k$, since
\begin{equation} \label{e.Liouvgrowth} 
\lim_{r \to \infty} r^{-(k+1)} \|u\|_{\underline L^2(B_r)} = 0,
\end{equation}
letting $n \to \infty$ in~\eqref{e.iterbasic} yields that the first term on the right in~\eqref{e.iterbasic} tends to zero and that the sum is convergent.  We obtain
\begin{equation}  \label{e.iterbasic1}  
\left\| u - p_{j} \right\|_{\underline{L}^2(B^{j})}
 \leq r_j^{-\delta} \omega_{j,k+1,1} \,, \qquad \omega_{j,m,\sigma} :=  \sum_{h=j}^{\infty} \left( \theta^{k+1+\sigma} \right)^{h-j}  \left\| Q_{m} p_h  \right\|_{\underline{L}^2(B^{h})} \,,
\end{equation}
with $\omega_{j,k+1,\sigma} < \infty$, for all $j \in \N$ and $\sigma > 0$, by~\eqref{e.Liouvgrowth}.
We get consequently by the triangle inequality that 
\begin{equation} \label{e.iter004}  
\left\| p_{j} - p_{j+1} \right\|_{\underline{L}^2(B^{j})} \leq C r_{j}^{-\delta} \omega_{j,k+1,1}  \,.
\end{equation}
Since $p$ is in $\Ahom_{k+1}$, $P_m p \in  \Ahom_{m}$ for all $m \in \N$, and hence the orthogonality of the $\ahom$-harmonic monomials implies that 
\begin{align} \label{e.iter005}
 \left\| P_{k+1} (p_{j} - p_{j+1}) \right\|_{\underline{L}^2(B_1)} 
& \leq  C  r_{j}^{-(k+1+\delta)} \omega_{j,k,1}
+ C r_j^{-\delta} \sum_{h=j}^{\infty} \theta^{h-j}  \left\| P_{k+1} p_h  \right\|_{\underline{L}^2(B_1)} \,.
\end{align}
Summation then yields 
\begin{align*} 
\sum_{m=j}^\infty \left\| P_{k+1} (p_{m} - p_{m+1}) \right\|_{\underline{L}^2(B_1)}  & \leq 
C  r_{j}^{-(k+1+\delta)} \sum_{m=j }^\infty \left( \theta^{k+1+\delta} \right)^{m-j} \omega_{m,k,1} 
\\ & \qquad + C r_j^{-\delta} \sum_{m=j}^\infty \left( \theta^{\delta} \right)^{m-j} 
\sum_{h=m}^{\infty} \theta^{h-m}  \left\| P_{k+1} p_h  \right\|_{\underline{L}^2(B_1)} \,.
\end{align*}
We can rearrange the sums on the right as
\begin{align} \label{e.tpiab00000} 
\sum_{m=j }^\infty \left( \theta^{k+1+\delta} \right)^{m-j} \omega_{m,k,1} & = 
\sum_{m=j}^{\infty}    \left( \theta^{k+1+\delta} \right)^{m-j}  \sum_{h=m}^\infty \left( \theta^{k+2} \right)^{h-m}  \left\| Q_{k} p_h  \right\|_{\underline{L}^2(B^{h})} 
\\ \notag & \leq C \sum_{m=j}^{\infty}    \left( \theta^{k+1+\delta} \right)^{m-j}   \left\| Q_{k} p_m  \right\|_{\underline{L}^2(B^{m})} 
\\ \notag & = C \omega_{j,k,\delta},
\end{align}
and, using also the triangle inequality, 
\begin{align*} 
\lefteqn{\sum_{m=j}^\infty \left( \theta^{\delta} \right)^{m-j} 
\sum_{h=m}^{\infty} \theta^{h-m}  \left\| P_{k+1} p_h  \right\|_{\underline{L}^2(B_1)}   \leq C 
\sum_{m=j}^\infty \left( \theta^{\delta} \right)^{m-j} \left\| P_{k+1} p_m  \right\|_{\underline{L}^2(B_1)} } \qquad &
\\ & \leq C \left\| P_{k+1} p_j  \right\|_{\underline{L}^2(B_1)}  + \sum_{m=j}^\infty \left( \theta^{\delta} \right)^{m-j} \sum_{h=m}^\infty \left\| P_{k+1} (p_{h} - p_{h+1} ) \right\|_{\underline{L}^2(B_1)}  
\\ & \leq C \left\| P_{k+1} p_j  \right\|_{\underline{L}^2(B_1)}   + C \sum_{h=j}^\infty \left\| P_{k+1} (p_{h} - p_{h+1} ) \right\|_{\underline{L}^2(B_1)} \,.
\end{align*}
Combining the last three displays, using $Cr_0^{-\delta}\leq \frac12$ and reabsorption yields
\begin{equation} \label{e.iter0005}
\sum_{m=j}^\infty \left\| P_{k+1} (p_{m} - p_{m+1}) \right\|_{\underline{L}^2(B_1)}  \leq C r_{j}^{-(k+1+\delta)}  \omega_{j,k,\delta} + C r_j^{-\delta} \left\| P_{k+1} p_j  \right\|_{\underline{L}^2(B_1)} \,.
\end{equation}
We then apply the hypothesis~\eqref{e.Liouvgrowth} in the form
$$\left\| P_{k+1}  p_{h}  \right\|_{\underline{L}^2(B_1)} = r_h^{-(k+1)} \left\| P_{k+1}  p_{h}  \right\|_{\underline{L}^2(B^h)} \leq 2 
r_h^{-(k+1)} \left\| u  \right\|_{\underline{L}^2(B^h)}     \to 0
$$
as $h \to \infty$. Hence the triangle inequality and reabsorption give
\begin{equation} \label{e.iter006}
\left\| P_{k+1} p_{j}  \right\|_{\underline{L}^2(B^j)} =  r_{j}^{k+1}  \left\| P_{k+1} p_{j}  \right\|_{\underline{L}^2(B_1)} 
\leq  C  r_{j}^{-\delta}  \omega_{j,k,\delta}\,.
\end{equation}
Inserting this back to~\eqref{e.iter004} proves 
\begin{equation} \label{e.iter007}  
 \omega_{j,k+1,1} \leq C \omega_{j,k,\delta}  \quad \mbox{and} \quad \left\| Q_k( p_{j+1} - p_{j} ) \right\|_{\underline{L}^2(B^{j})} \leq C r_{j}^{-\delta} \omega_{j,k,\delta}  \,.
\end{equation}
We have hence reduced the degree of the approximative polynomials. 

\smallskip

We next estimate the growth of $\omega_{j,k,\sigma}$ and $\omega_{j,k-1,\sigma}$. By~\eqref{e.iter007}, the triangle inequality and polynomial growth, for $\sigma \in (-1, \delta]$,  we have
\begin{align} \label{e.omega iter k}
\omega_{j+1,k,\sigma} & = \sum_{h=j+1}^\infty (\theta^{k+1+\sigma})^{h-(j+1)} \left\| Q_{k} p_h  \right\|_{\underline{L}^2(B^{h})} 
 \\ \notag & \leq \sum_{h=j+1}^\infty (\theta^{k+1+\sigma})^{h-(j+1)} \left( \left\| Q_{k} p_{h-1}\right\|_{\underline{L}^2(B^{h})}  + \left\| Q_{k} (p_h-p_{h-1})  \right\|_{\underline{L}^2(B^{h})}   \right)
\\ \notag & \leq \theta^{-k}  \sum_{h=j}^\infty (\theta^{k+1+\sigma})^{h-j } \left(\left\| Q_{k} p_{h}\right\|_{\underline{L}^2(B^{h})}  +   \left\| Q_{k} (p_{h+1}-p_{h})  \right\|_{\underline{L}^2(B^{h})}  \right)
\\ \notag & \leq   \theta^{-k} \omega_{j,k,\sigma} + C r_{j}^{-\delta} \sum_{h=j}^\infty (\theta^{k+1+\sigma})^{h-j } \omega_{h,k,\delta}   
\\ \notag & \leq   \left( \theta^{-k} + \theta^{\delta j} \right) \omega_{j,k,\sigma}\,,
\end{align}
where the sum on the second last line was estimated as in~\eqref{e.tpiab00000} using $C r_{0}^{-\delta} \leq 1$. Iteration then yields, for $n>j>0$, 
\begin{equation} \label{e.omega k res}
\omega_{n,k,\sigma} \leq C \left( \theta^{-k} \right)^{n-j} \omega_{j,k,\sigma} \leq C \left( \theta^{-k} \right)^{n} \omega_{0,k,\sigma} \,.
\end{equation}
Furthermore, we obtain by the same computation as in~\eqref{e.omega iter k}, appealing also to~\eqref{e.omega k res} and $C r_{0}^{-\delta} \leq 1$,
\begin{equation*} \label{e.omega iter k-1}
\omega_{j+1,k-1,\sigma} \leq   \theta^{1-k} \omega_{j,k-1,\sigma}  + C r_{j}^{-\delta} \omega_{j,k,\sigma} \leq \theta^{1-k} \omega_{j,k-1,\sigma} + \left( \theta^{\delta-k} \right)^{j}
\omega_{0,k,\sigma}  \,.
\end{equation*}
After an iteration we conclude that, for $\sigma \in (-1, \delta]$,
\begin{equation} \label{e.omega k-1 res}
 \left\| Q_{k-1} p_j \right\|_{\underline{L}^2(B^j)} \leq  \omega_{j,k-1,\sigma} \leq C \left( \theta^{\delta-k} \right)^{j} \omega_{0,k,\sigma}\,.
\end{equation}
Connecting~\eqref{e.iterbasic1},~\eqref{e.iter006} and~\eqref{e.iter007} with~\eqref{e.omega k res} and~\eqref{e.omega k-1 res} therefore gives
\begin{equation} \label{e:(ii)_k is almost over}
\sup_{j \in \N} r_j^{\delta-k}  \left\| u - P_k p_j  \right\|_{\underline{L}^2(B^j)} < + \infty\,.
\end{equation}

\smallskip

To continue, observe that the second estimate in~\eqref{e.iter007} is symmetric to~\eqref{e.iter004}. Thus, completely analogously to how~\eqref{e.iter0005} was obtained, we deduce that
\begin{equation} \label{e.iter0007}
\sum_{m=j}^\infty \left\| P_{k} (p_{m} - p_{m+1}) \right\|_{\underline{L}^2(B_1)}  \leq C r_{j}^{-(k+\delta)}  \omega_{j,k-1,\delta-1} + C r_j^{-\delta} \left\| P_{k} p_j  \right\|_{\underline{L}^2(B_1)} \,.
\end{equation}
Since the term on the right is bounded by~\eqref{e.omega k-1 res}, $\{ P_k p_j \}_j$ is a Cauchy sequence in $L^2(B_1)$, with limit $\tilde p = P_k \tilde p$. By the triangle inequality,
\begin{align}  \label{e.iter008}
 \left\| P_{k} (p_{j} - \tilde p) \right\|_{\underline{L}^2(B^j)} 
& \leq  C  r_{j}^{-\delta} \left( \omega_{j,k-1,\delta-1} +   \left\| P_{k}  \tilde p \right\|_{\underline{L}^2(B^j)}  \right)\,.
\end{align}
Putting together~\eqref{e.omega k-1 res},~\eqref{e:(ii)_k is almost over} and~\eqref{e.iter008} yields 
\begin{equation} \label{e:(ii)_k is OVER}
\sup_{j \in \N} r_j^{\delta-k}  \left\| u - \tilde p  \right\|_{\underline{L}^2(B^j)} < + \infty\,.
\end{equation}
To conclude the step, by (ii)$_k$ we finally find $\tilde u \in \A_k$ such that $\left\| \tilde u - \tilde p  \right\|_{\underline{L}^2(B^j)} \leq C r_j^{-\delta} \left\| \tilde p  \right\|_{\underline{L}^2(B^j)}$ for all $j \in \N$. This together with~\eqref{e:(ii)_k is OVER} gives
\begin{equation} \label{e:(ii)_k is TOTALLY OVER}
\sup_{j \in \N} r_j^{\delta-k}  \left\| u - \tilde u  \right\|_{\underline{L}^2(B^j)} < + \infty\,,
\end{equation}
which means that $u - \tilde u \in \A_{k-1}$. Thus, by (i)$_{k-1}$ there is $q \in \Ahom_{k-1}$ such that  
$\left\| u- \tilde u - q  \right\|_{\underline{L}^2(B^j)} \leq C r_j^{-\delta} \left\| q  \right\|_{\underline{L}^2(B^j)}$ for all $j \in \N$. Now (i)$_k$ follows with $p = q + \tilde p \in \Ahom_k$ simply by the triangle inequality and the orthogonality of $\tilde p$ and $q$:
\begin{multline*} 
\left\| u- p  \right\|_{\underline{L}^2(B^j)}  \leq \left\| u - \tilde u - q  \right\|_{\underline{L}^2(B^j)} + \left\| \tilde u - \tilde p  \right\|_{\underline{L}^2(B^j)} 
 \\  \leq  C r_j^{-\delta} \left(  \left\| q  \right\|_{\underline{L}^2(B^j)}^2 +  \left\| \tilde p  \right\|_{\underline{L}^2(B^j)}^2\right)^{\frac12} =  C r_j^{-\delta} \left\|  p  \right\|_{\underline{L}^2(B^j)}\,.
\end{multline*}

\smallskip

\emph{Step 5.} Assuming that (i)$_k$ and (ii)$_k$  hold, we will show that also (iii')$_{k}$ is true. Let $n \in \N$ be so large that $B^n \subset B_R$ and $B^{n+1} \cap B_R^c \neq \emptyset$. 
Then the ellipticity of~$\ahom$ yields $|B^{n}| \geq c |B_R|$. 
Observe that for any $u \in \A(B^n)$ we have by~\eqref{e.iterbasic} that there exists $p \in \Ahom_{k}$ such that 
$$
\left\| u - p  \right\|_{\underline{L}^2(B^{j-1})} \leq C\theta^{k+1}  \left\| u  \right\|_{\underline{L}^2(B^{j})}\,.
$$ 
We now choose $\theta$ possibly smaller so that $2 C \theta^{1-\alpha} \le 1$, where $\alpha$ is as in (iii')$_{k}$.  Define sequences $\{ u_j\}$, $\{ p_j\}$ and $\{ \phi_j\}$ recursively by setting $u_n :=u$ and, for every $j \in \{0,\ldots,n-1\}$, selecting $p_{j}\in \Ahom_k$ by way of the previous display to satisfy
\begin{equation*} \label{e.pickpolypj}
 \left\| u_j - p_{j-1} \right\|_{\underline{L}^2(B^{j-1})} 
\leq \frac12\theta^{k+\alpha} \left\| u_j  \right\|_{\underline{L}^2(B^j) }  \quad \mbox{and} \quad \left\| p_{j-1} \right\|_{L^\infty(B^j)} \leq C\left\| u_j  \right\|_{\underline{L}^2(B^j) }\,.
\end{equation*}
Then pick $\phi_{j-1} \in \A_k(\Rd)$ using the assumption (ii)$_{k}$ satisfying
\begin{equation*} 
\left\| p_{j-1} - \phi_{j-1} \right\|_{\underline{L}^2(B^{j-1})} \leq C r_j^{-\delta} \left\| p_{j-1} \right\|_{\underline{L}^2(B^{j-1})}\,,
\end{equation*}
and set $u_{j-1}:= u_j - \phi_{j-1}$. The triangle inequality and the above estimates imply that 
\begin{equation*}
\left\| u_{j-1}  \right\|_{\underline{L}^2(B^{j-1})} \leq \left( \frac{1}{2} \theta^{k+\alpha} + C r_j^{-\delta} \right) \left\| u_j \right\|_{\underline{L}^2(B^j)}.
\end{equation*}
Demanding $4 C r_0^{-\delta} \leq \theta^{k+\alpha}$, the previous inequality gives after iteration that, for all $j\in \{ 0,\ldots,n\}$,
\begin{equation*}
\left\| u_{j}  \right\|_{\underline{L}^2(B^{j})} \leq  \left( \frac{3}{4}  \theta^{k+\alpha} \right)^j \left\| u \right\|_{\underline{L}^2(B^n)}.
\end{equation*}
We have hence shown that, for every $r\in\left[ C\vee \X_s,\frac12 R\right]$, 
\begin{equation*}
\inf_{\phi \in \A_k(\Rd) } \left\| u - \phi \right\|_{\underline{L}^2(B_r)} \leq C\left( \frac rR \right)^{k+\alpha} \left\| u \right\|_{\underline{L}^2(B_{R})}.
\end{equation*}
The restriction $r\geq C\vee \X_s$ can be relaxed to $r\geq \X_s$ at the expense of increasing the constant prefactor. We thus obtain, for every $r\in \left[ \X_s,\frac12R\right]$, the estimate
\begin{equation}
\label{e.iiiww}
\inf_{\phi \in \A_k(\Rd) } \left\| u - \phi \right\|_{\underline{L}^2(B_r)} \leq C\left( \frac rR \right)^{k+\alpha} \left\| u \right\|_{\underline{L}^2(B_{R})}.
\end{equation}
To complete the proof of (iii')$_{k}$, we need to check that we can select $\phi\in \A_k(\Rd)$ independent of the radius $r$. Let $\phi_r\in \A_k(\Rd)$ achieve the infimum on the left side of~\eqref{e.iiiww}. Then by the triangle inequality,
\begin{equation*}
\left\| \phi_r - \phi_{2r} \right\|_{\underline{L}^2(B_r)} \leq C\left( \frac rR \right)^{k+\alpha} \left\| u \right\|_{\underline{L}^2(B_{R})}.
\end{equation*}
Using statements (i)$_k$ and (ii)$_k$ implying that any $\phi \in \A_k(\R^d)$ satisfies $\left\| \phi \right\|_{\underline{L}^2(B_s)} \leq C \left( \frac sr \right)^k \left\| \phi \right\|_{\underline{L}^2(B_s)}$ for $s \geq r$, we obtain that, for every $s\geq r\geq \X_s$,
\begin{equation*}
\left\| \phi_r - \phi_{2r} \right\|_{\underline{L}^2(B_s)} \leq C\left( \frac sr \right)^k \left( \frac rR \right)^{k+\alpha} \left\| u \right\|_{\underline{L}^2(B_{R})} \leq C\left( \frac sr \right)^{-\alpha} \left( \frac sR \right)^{k+\alpha} \left\| u \right\|_{\underline{L}^2(B_{R})}. 
\end{equation*}
Summing the previous inequality over dyadic radii yields, for every $s\geq r\geq \X_s$,
\begin{equation*}
\left\| \phi_r - \phi_{s} \right\|_{\underline{L}^2(B_s)}  \leq C \left( \frac sR \right)^{k+\alpha} \left\| u \right\|_{\underline{L}^2(B_{R})}. 
\end{equation*}
In particular, if we take $\phi:= \phi_{\X_s}$ then we obtain, for every $s\geq \X_s$,
\begin{equation*}
\left\| u - \phi \right\|_{\underline{L}^2(B_s)} \leq \left\| u - \phi_s \right\|_{\underline{L}^2(B_s)} + \left\| \phi - \phi_s \right\|_{\underline{L}^2(B_s)}\leq C\left( \frac sR \right)^{k+\alpha} \left\| u \right\|_{\underline{L}^2(B_{R})}.
\end{equation*}
This completes the proof of (iii')$_{k}$.

\smallskip

\emph{Step 6.} We finish the proof of the proposition by showing that (i)$_{k+1}$, (ii)$_{k+1}$ and (iii')$_{k+1}$ imply (iii)$_k$. Fix $R\geq 2\X_s$ and $u\in \A(B_R)$. Select first 
$\psi \in \A_{k+1}(\Rd)$ such that, for every $r \in \left[ \X_s, \frac12 R \right]$, 
\begin{equation}
\label{e.kk11}
\left\| u - \psi \right\|_{\underline{L}^2(B_r)} \leq C \left( \frac r R \right)^{k+1+\alpha} 
\left\| u \right\|_{\underline{L}^2(B_R)}\,.
\end{equation}
Let $p_\psi \in \Ahom_{k+1}$ be the approximating polynomial of $\psi$, that is $\left\|\psi - p_\psi \right\|_{\underline{L}^2(B_r)} \leq Cr^{-\delta} \left\| p_\psi \right\|_{\underline{L}^2(B_r)} $, provided by (i)$_{k+1}$, and take $\tilde \psi$ be the corrector in $\A_{k+1}$ corresponding the monomial $P_{k+1} p_\psi$, i.e.  $\left\|\tilde \psi - P_{k+1} p_\psi \right\|_{\underline{L}^2(B_r)} \leq Cr^{-\delta} \left\| P_{k+1} p_\psi \right\|_{\underline{L}^2(B_r)} $, given by (ii)$_{k+1}$. Clearly $\tilde \psi $ has growth of degree $k+1$:
\begin{multline} \label{e.kk1111}
\left\| \tilde \psi \right\|_{\underline{L}^2(B_r)} \leq 2 \left\| P_{k+1} p_\psi \right\|_{\underline{L}^2(B_r)}  \leq 2 \left(\frac rR \right)^{k+1} \left\| P_{k+1} p_\psi \right\|_{\underline{L}^2(B_R)} 
\\  \leq 2 \left(\frac rR \right)^{k+1} \left\| p_\psi \right\|_{\underline{L}^2(B_R)}  \leq C \left(\frac rR \right)^{k+1} \left\| u \right\|_{\underline{L}^2(B_R)}\,.
\end{multline}
Moreover, it is easy to see that $\phi := \psi - \tilde \psi$ belongs to $\A_{k}$ and this is our candidate for the corrector in the statement (iii)$_k$. Indeed, we have by~\eqref{e.kk11},~\eqref{e.kk1111} and the triangle inequality  that 
\begin{align*} 
\left\| u - \phi \right\|_{\underline{L}^2(B_r)} &  \leq C\left\| \tilde \psi \right\|_{\underline{L}^2(B_r)} +  C\left\| u - \psi \right\|_{\underline{L}^2(B_r)}  
 \\ &  \leq C \left(\left(\frac rs \right)^{k+1} +   \left( \frac r R \right)^{k+1+\alpha} \right) \left\| u \right\|_{\underline{L}^2(B_R)} \leq C \left(\frac rs \right)^{k+1} \left\| u \right\|_{\underline{L}^2(B_R)}  \,.
\end{align*}
The proof is complete. 
\end{proof}

\subsection{Multiscale Poincar\'e inequality and consequences}

The next lemma is a ``multiscale Poincar\'e" inequality, which is a variant of~\cite[Proposition 5.1]{AKM} and~\cite[Lemma 3.3]{AGK}. This inequality gives a convenient way to transfer bounds on the spatial averages of the gradient of a function to the oscillation of the function itself. 

\begin{lemma}
\label{l.mspoincare}
Let $w$ be a solution of the parabolic equation
\begin{equation*}
\partial_t w - \nabla \cdot \left( \ahom \nabla w\right) = 0 \quad \mbox{in} \ \Rd \times (0,\infty)
\end{equation*}
satisfying $w(\cdot,0) \in L^2(\Psi_R)$ and 
\begin{equation*}
\int_{0}^{R^2} \int_{\Psi_R} \left| w(y,t) \right|^2\,dy\,dt < \infty,
\end{equation*}
where $\Psi_R$ is the function
\begin{equation}
\label{e.PsiR}
\Psi_R(x):= R^{-d} \exp\left( - \frac{|x|}{R} \right).
\end{equation}
Then there exists $C(d,\Lambda)< \infty$ such that, for every $\sigma \in (0,1]$,
\begin{multline*}
 \int_{\Psi_R}  \left| w(y,0) \right|^2\,dy \\
 \leq C\int_{\Psi_R} \left| w(y,(\sigma R)^2) \right|^2\,dy 
 +C \int_{0}^{(\sigma R)^2} \int_{\Psi_R} \left| \nabla w(y,t) \right|^2\,dy\,dt.
 \end{multline*}
\end{lemma}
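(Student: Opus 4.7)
The plan is a standard weighted energy estimate. Set $F(t) := \int_{\Psi_R} w(\cdot,t)^2$. Differentiating in time using the equation $\partial_t w = \nabla \cdot (\ahom \nabla w)$ and integrating by parts (formally for now), one obtains
\[
F'(t) = -2 \int \Psi_R \, \nabla w \cdot \ahom \nabla w \;-\; 2 \int w \, \nabla \Psi_R \cdot \ahom \nabla w.
\]
The first term on the right is bounded above by $-2\int \Psi_R |\nabla w|^2$ by the ellipticity of $\ahom$. The task is to control the cross term so that it can be partially absorbed into this dissipation.

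The crucial pointwise observation is the bound $|\nabla \Psi_R(x)| \leq R^{-1} \Psi_R(x)$, valid a.e.\ directly from the definition~\eqref{e.PsiR} (the function $\Psi_R$ is Lipschitz with weak gradient $-R^{-1}\Psi_R(x) x/|x|$). Combining this with $|\ahom \nabla w| \leq \Lambda |\nabla w|$ and Young's inequality, I would estimate
\[
\left| 2 \int w \, \nabla \Psi_R \cdot \ahom \nabla w \right| \leq \Lambda \int \Psi_R |\nabla w|^2 + \frac{\Lambda}{R^2} F(t),
\]
which, inserted back into the formula for $F'(t)$, yields the differential inequality
\[
F'(t) \geq - C \int \Psi_R |\nabla w(\cdot,t)|^2 - \frac{C}{R^2} F(t),
\]
with $C = C(d,\Lambda)$.

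To close the argument, multiply by the integrating factor $e^{C t / R^2}$ so that
\[
\frac{d}{dt} \bigl( e^{Ct/R^2} F(t) \bigr) \geq - C e^{Ct/R^2} \int \Psi_R |\nabla w(\cdot,t)|^2,
\]
integrate from $0$ to $\tau := (\sigma R)^2$, and solve for $F(0)$. Since $\sigma \leq 1$ implies $C\tau / R^2 = C \sigma^2 \leq C$, the exponential prefactors are bounded in terms of $d$ and $\Lambda$ only, giving
\[
F(0) \leq C F(\tau) + C \int_0^\tau \int \Psi_R |\nabla w|^2,
\]
which is the claimed inequality.

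The main technical obstacle I anticipate is the rigorous justification of the integration by parts: $\Psi_R$ is only Lipschitz (not $C^1$ at the origin) and is not compactly supported, so the boundary terms at infinity and the irregularity at the origin must be addressed. I would handle this by the standard device of approximating $\Psi_R$ by smooth compactly supported weights $\Psi_R^{(\eta)} := \chi_\eta \, (\Psi_R \ast \rho_\eta)$, where $\rho_\eta$ is a mollifier and $\chi_\eta$ is a smooth cutoff supported in $B_{1/\eta}$, running the computation with $\Psi_R^{(\eta)}$ in place of $\Psi_R$, and sending $\eta \to 0$. The two integrability hypotheses on $w$, together with the exponential decay of $\Psi_R$, ensure that all error and boundary contributions vanish in the limit.
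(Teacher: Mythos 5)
Your proposal is correct and follows essentially the same route as the paper: a weighted energy estimate driven by the pointwise bound $|\nabla \Psi_R| \leq R^{-1}\Psi_R$, Young's inequality, and ellipticity, with the only cosmetic difference that you close via a Gronwall integrating factor while the paper absorbs the zero-order term using the supremum of $\int_{\Psi_R}|w(\cdot,t)|^2$ over $t\in[0,(\sigma R)^2]$ and a small parameter $\ep$. Your extra care about justifying the integration by parts (mollification and cutoff of the non-smooth, non-compactly supported weight) is reasonable and goes beyond the paper's formal computation.
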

\begin{proof}
We compute, for any $\ep>0$
\begin{align*}
& \left| \partial_t \int_{\Psi_R} \frac12 \left| w(y,t) \right|^2\,dy \right| 
 = \left| \int_{\Rd} \nabla \left( \Psi_Rw(\cdot,t)\right)(y) \cdot \ahom \nabla w(y,t)\,dy \right| \\
& \qquad  \qquad \leq C \int_{\Rd} \Psi_R(y) \left( \frac1\ep\left| \nabla w(y,t) \right|^2 + \ep \left(\frac{\left| \nabla \Psi_R(y) \right| }{\Psi_R(y)}\right)^2 \left|w(y,t) \right|^2\right)\,dy\\
& \qquad \qquad \leq \frac{C}{\ep} \int_{\Psi_R} \left| \nabla w(y,t) \right|^2\,dy + \frac{C\ep}{R^{2}} \int_{\Psi_R}  \left| w(y,t) \right|^2\,dy.
\end{align*}
Integrating with respect to $t$ yields
\begin{align*}
\lefteqn{
\sup_{t\in [0,(\sigma R)^2]}\int_{\Psi_R}  \left| w(y,\cdot) \right|^2\,dy 
} \qquad & \\
& \leq \int_{\Psi_R} \left| w(y,(\sigma R)^2) \right|^2\,dy + \int_{0}^{(\sigma R)^2} \left| \partial_t \int_{\Psi_R} \left| w(y,t) \right|^2\,dy \right| \,dt \\
 & \leq \int_{\Psi_R}\left| w(y,(\sigma R)^2) \right|^2\,dy +  \frac{C}{\ep}\int_{0}^{(\sigma R)^2} \int_{\Psi_R} \left| \nabla w(y,t) \right|^2\,dy\,dt
  \\ & \qquad 
+ \int_0^{(\sigma R)^2} \frac{C\ep}{R^{2}} \int_{\Psi_R}  \left| w(y,t) \right|^2\,dy \,dt.
\end{align*}
Now taking $\ep =c$ sufficiently small, we can absorb the last term on the right side to obtain
\begin{align*}
 \int_{\Psi_R}  \left| w(y,0) \right|^2\,dy 
 & \leq \sup_{t\in [0,(\sigma R)^2] } \int_{\Psi_R}  \left| w(y,t) \right|^2\,dy \\
 & \leq C \int_{\Psi_R} \left| w(y,(\sigma R)^2) \right|^2\,dy +  C \int_{0}^{(\sigma R)^2} \int_{\Psi_R} \left| \nabla w(y,t) \right|^2\,dy\,dt. \\
\end{align*}
This completes the proof of the lemma. 
\end{proof}

We next specialize the previous lemma to elements of~$\A_m(\Rd)$. We obtain that elements of $\A_m(\Rd)$ behave like normal polynomials in the sense that their spatial averages bound their oscillation.

\begin{lemma} 
\label{l.mspoincare2}
Let $s\in (0,d)$, let $\X_s$ be the random variable in Proposition~\ref{p.regularity}, and let $m\in\N$. Then there exist a constant $\sigma_0(m,d,\Lambda) \in \left(0,\frac12 \right]$, such that, for every $r \geq \X_s$, $v\in \A_m(\Rd)$, and $\sigma \in (0,\sigma_0]$ 
there are constants $C(\sigma,m,d,\Lambda)<\infty$ and  $\theta(\sigma,m,d,\Lambda) \in \left(0,\frac12 \right]$ such that
\begin{equation}  \label{e.mspoincare2}
\left\| v \right\|_{L^2(\Psi_r)}^2  \leq C \fint_{B_{r/\theta} } \left| \int_{\Phi_{y,\sigma r}} v(z) \, dz  \right|^2 \, dy \,.
\end{equation}
\end{lemma}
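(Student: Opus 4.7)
The plan is to run the multiscale Poincar\'e inequality of Lemma~\ref{l.mspoincare} on the $\ahom$-heat flow of $v$, and then to absorb the resulting gradient term using the Liouville decomposition of Proposition~\ref{p.regularity}(i) together with a localized parabolic energy estimate. Since $v \in \A_m(\R^d)$ has at most polynomial growth, the convolution
\begin{equation*}
w(y,t) := \int_{\R^d} \Phi(y-z,t)\, v(z) \, dz
\end{equation*}
defines a smooth solution of $\partial_t w - \nabla \cdot (\ahom \nabla w) = 0$ on $\R^d \times (0,\infty)$ with $w(\cdot,0) = v$ satisfying the integrability hypotheses of Lemma~\ref{l.mspoincare}, and $w(y,(\sigma r)^2) = \int_{\Phi_{y,\sigma r}} v$. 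Applying that lemma with $R = r$ yields
\begin{equation*}
\|v\|_{L^2(\Psi_r)}^2 \le C \int_{\Psi_r}\Ll|\int_{\Phi_{y,\sigma r}} v \Rr|^2 dy + C \int_0^{(\sigma r)^2}\!\int_{\Psi_r} |\nabla w(y,t)|^2\, dy\, dt,
\end{equation*}
so it remains to show that the gradient integral is a small multiple of $\|v\|_{L^2(\Psi_r)}^2$.

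To handle the gradient term, I would invoke Proposition~\ref{p.regularity}(i) to write $v = p + \tilde v$ with $p \in \Ahom_m$ and $\|\tilde v\|_{\underline L^2(B_R)} \le C R^{-\delta}\|p\|_{\underline L^2(B_R)}$ for $R \geq \X_s$. Since $p$ is $\ahom$-harmonic, its heat flow is itself, and hence $w = p + \tilde w$ where $\tilde w$ is the heat flow of $\tilde v$. The polynomial part contributes at most $C_m\sigma^2\|p\|_{L^2(\Psi_r)}^2$ via the direct scaling estimate $\|\nabla p\|_{L^2(\Psi_r)}^2 \le C_m r^{-2}\|p\|_{L^2(\Psi_r)}^2$ valid for polynomials of degree $\le m$. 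For the error $\tilde w$, I would test its equation against $\Psi_r\tilde w$ and use $|\nabla\Psi_r|\le\Psi_r/r$ to derive the localized parabolic energy estimate
\begin{equation*}
\int_0^{(\sigma r)^2}\int_{\Psi_r}|\nabla\tilde w|^2\, dy\, dt \le C\|\tilde v\|_{L^2(\Psi_r)}^2.
\end{equation*}

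The remaining ingredient is the bound $\|\tilde v\|_{L^2(\Psi_r)}^2 \le C r^{-2\delta}\|p\|_{L^2(\Psi_r)}^2$, which I would obtain by decomposing $\R^d$ into dyadic annuli $B_{2^{k+1}r}\setminus B_{2^k r}$, applying Proposition~\ref{p.regularity}(i) together with the polynomial estimate $\|p\|_{\underline L^2(B_{2^k r})}\le C_m 2^{km}\|p\|_{\underline L^2(B_r)}$, and summing a geometric series whose convergence uses that the exponential decay of $\Psi_r$ beats polynomial growth. Combining everything and using $\|p\|_{L^2(\Psi_r)}\le\|v\|_{L^2(\Psi_r)}+\|\tilde v\|_{L^2(\Psi_r)}$, one arrives at
\begin{equation*}
\|v\|_{L^2(\Psi_r)}^2 \le C\int_{\Psi_r}\Ll|\int_{\Phi_{y,\sigma r}} v \Rr|^2 dy + (C_m\sigma^2 + C_m r^{-2\delta})\|v\|_{L^2(\Psi_r)}^2,
\end{equation*}
and for $\sigma \in (0,\sigma_0]$ with $\sigma_0$ small and $\X_s$ possibly enlarged, the last term is absorbed.

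To replace $\|\cdot\|_{L^2(\Psi_r)}$ on the right by $\fint_{B_{r/\theta}}|\cdot|^2$, I would note that $y\mapsto \int_{\Phi_{y,\sigma r}} v = p(y) + \tilde w(y,(\sigma r)^2)$ is a polynomial of degree $\le m$ plus a lower-order remainder; for polynomials of degree $\le m$ the elementary norm equivalence $\|p\|_{L^2(\Psi_r)}^2 \le C_m\fint_{B_{r/\theta}}|p|^2\, dy$ holds for any $\theta\in(0,1]$, and the $\tilde w$ remainder is again absorbed using the smallness of $\tilde v$. The main obstacle is the dyadic-shell summation giving the smallness of $\|\tilde v\|_{L^2(\Psi_r)}$, where one must balance the polynomial growth of $\|p\|_{\underline L^2(B_{2^k r})}$ against the exponential decay of $\Psi_r$ with constants independent of $r$ and only polynomial in $m$; the other steps follow by standard parabolic and absorption arguments.
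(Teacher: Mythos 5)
Your proof is correct in substance, but it handles the two absorption steps by a genuinely different mechanism than the paper. The paper never decomposes $v$: to control the gradient term it uses H\"older's inequality and Fubini with the heat kernel to reduce to $C\sigma^2 r^2\int_{\Psi_r}|\nabla v|^2$, and then the Caccioppoli inequality combined with the doubling property of $R\mapsto\|v\|_{\underline{L}^2(B_R)}$ (a consequence of Proposition~\ref{p.regularity}) to obtain $r^2\int_{\Psi_r}|\nabla v|^2\le C\int_{\Psi_r}|v|^2$, so the absorption constant is $C\sigma^2$; likewise the passage from $\int_{\Psi_r}|w(\cdot,(\sigma r)^2)|^2$ to $\fint_{B_{r/\theta}}$ is achieved by a layer-cake estimate of the tail $\int_{\R^d\setminus B_{r/\theta}}\Psi_r|v|^2$ using the polynomial growth of $v$, the smallness being produced by the choice of $\theta$. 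You instead isolate the polynomial skeleton, writing $v=p+\tilde v$ via the Liouville statement, and use the finite-dimensional inverse estimate for $p$, a $\Psi_r$-weighted parabolic energy (Gronwall) estimate for the heat flow of $\tilde v$, and a dyadic-annulus summation to get $\|\tilde v\|_{L^2(\Psi_r)}^2\le Cr^{-2\delta}\|p\|_{L^2(\Psi_r)}^2$; all of these steps are sound, including the claim that the heat flow of an $\ahom$-harmonic polynomial is itself and the finite-dimensional norm equivalences you invoke at the end. The one structural difference worth flagging is that your error terms have size $C(m,d,\Lambda)\left(\sigma^2+r^{-2\delta}\right)\|v\|_{L^2(\Psi_r)}^2$, so the absorption forces $r\ge\X_s\vee C(m,d,\Lambda)$ (you acknowledge this by enlarging $\X_s$), whereas in the paper's argument the smallness is carried entirely by the free parameters $\sigma$ and $\theta$, so the estimate holds for every $r\ge\X_s$ exactly as stated. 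Enlarging the minimal radius by a deterministic constant preserves the $\O_s(C)$ bound and is harmless for the downstream uses of the lemma (the paper makes similar enlargements elsewhere), so this is a cosmetic rather than substantive discrepancy; what each route buys is that yours makes the role of the Liouville theorem and the polynomial part explicit, while the paper's Caccioppoli-plus-doubling route avoids any largeness requirement on $r$ and any dependence of the absorption on the regularity exponent $\delta$.
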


\begin{proof}
Define
\begin{equation*}
w (y,t):= \int_{\Phi_{y,\sqrt{t}}} v(z)\,dz  \, ,
\end{equation*}
which is the solution of the parabolic equation
\begin{equation*}
\left\{
\begin{aligned}
& \partial_t w   - \nabla \cdot \left( \ahom \nabla w \right) = 0 & \mbox{in} & \ \Rd \times (0,\infty), \\
& w  = v & \mbox{on} & \ \Rd \times \{0 \}.
\end{aligned}
\right.
\end{equation*}
By Proposition~\ref{p.regularity}, for every  $r \geq\X_s$, there exists a unique polynomial $q \in \Ahom_m$ such that, for every $S \geq R \geq r$, we have 
\begin{equation} \label{e.upolygrowth1}
\left\| v \right\|_{\underline{L}^2(B_S)} \leq C \left\| q \right\|_{\underline{L}^2(B_S)} \leq C\left(\frac{S}{R}\right)^m \left\| q \right\|_{\underline{L}^2(B_R)} 
\leq C\left(\frac{S}{R}\right)^m \left\| v \right\|_{\underline{L}^2(B_R)} \,.
\end{equation}
In particular, the measure $v^2 \,dx$ is a doubling measure and $v$ has polynomial growth. 

\smallskip

Our starting point is that  Lemma~\ref{l.mspoincare} gives, for all $\sigma \in (0,1]$,  
\begin{multline}
\label{e.intintime00}
 \int_{\Psi_{r}}  \left| v (y) \right|^2\,dy  
 \\ \leq C \int_{\Psi_{r}} \left| w(y,(\sigma r)^2) \right|^2 \,dy 
 +C \int_{0}^{(\sigma r)^2} \int_{\Psi_{r}} \left| \nabla w(y,t) \right|^2\,dy\,dt.
\end{multline}

\smallskip

\emph{Step 1.} We first claim that there is a small $\sigma_0 = \sigma_0(m,d,\Lambda) \in (0,1]$ such that
\begin{equation} \label{e.additivityb100}
C \int_{0}^{(\sigma_0 r)^2} \int_{\Psi_{r}} \left| \nabla w(y,t) \right|^2\,dy\,dt \leq \frac14  \int_{\Psi_r}  \left| v (y) \right|^2\,dy\,.
\end{equation}
so that by absorbing it back onto the left side,~\eqref{e.intintime00} can be improved to 
\begin{equation} \label{e.intintime10}
\int_{\Psi_{r}}  \left| v (y) \right|^2\,dy  \\
 \leq C \int_{\Psi_r } \left| w(y,(\sigma r)^2) \right|^2\,dy 
\end{equation}
for all $\sigma \in (0,\sigma_0]$. 
To prove~\eqref{e.additivityb100}, we first get by H\"older's inequality that 
\begin{equation*} 
\int_{\Psi_{r}} \left| \nabla w(y,t) \right|^2\,dy
  \leq \int_{\Psi_{r}} \int_{\Phi_{y,\sqrt{t}}} \left|\nabla v(z) \right|^2 \, dz \, dy\,.
\end{equation*}
We then notice that the right-hand side can be rewritten with the aid of Fubini's theorem, taking into account the definitions of $\Psi_r$ and $\Phi_{\sqrt{t}}$,  for all $\ep>0$ as
\begin{multline*} 
 \int_{0}^{\ep r^2} \int_{\Psi_{r}} \int_{\Phi_{y,\sqrt{t}}} \left|\nabla v(z) \right|^2  \, dz \, dy \,dt
\\  = \int_{\Psi_{r}} \left|\nabla v (z) \right|^2 \int_{0}^{\ep r^2} \int_{\Phi_{z,\sqrt{t}} } \exp\left(\frac{|\ahom^{\,-\frac12}z|}{r} - \frac{|\ahom^{\,-\frac12}y|}{r}\right)   \, dy \, dt \, dz\,.
\end{multline*}
We analyze the integral in the middle. By the triangle inequality and the ellipticity of $\ahom$ we obtain
\begin{align*} 
&  \int_{0}^{\ep r^2} \int_{\Phi_{z,\sqrt{t}} } \exp\left(\frac{|\ahom^{\,-\frac12}z|}{r} - \frac{|\ahom^{\,-\frac12}y|}{r}\right)   \, dy \, dt 
\\ & \qquad  \leq \int_{0}^{\ep r^2} \int_{ \R^d } \exp\left(\frac{|\ahom^{\,-\frac12}(z-y)|}{r}\right) \Phi_{\sqrt{t}}(z-y)  \, dy \, dt 
\\ & \qquad  \leq C \int_{0}^{\ep r^2} t^{-\frac d2} \int_{ \R^d }  \exp\left(\frac{|y|}{r} - \frac{|y|^2}{t} \right) \, dy \, dt 
\\ & \qquad  \leq C \int_{0}^{\ep r^2} t^{-\frac d2} \left| B_{\frac{2t}{r}}\right| \, dt +   C \int_{0}^{\ep r^2}  \int_{ \Phi_{0,\sqrt{2t}}}\, dy \, dt 
\\ & \qquad  \leq C \ep r^2 \,.
\end{align*}
Therefore, combining the above three displays, we arrive at
\begin{equation} \label{e.mspoincare201}
\int_{0}^{\ep r^2} \int_{\Psi_{r}} \left| \nabla w(y,t) \right|^2\,dy \, dt \leq C\ep r^2 \int_{\Psi_{r}} \left|\nabla v(z) \right|^2 \, dz\,.
\end{equation}
Furthermore, we have the layer-cake formula, for any $g \in L^1(\Psi_r)$ and $\Omega\subset \R^d$, 
\begin{equation} \label{e.psilayer}
\int_{\Omega} \Psi_{r}(z) g(z) \, dz = \frac1{r^{d+1}} \int_{0}^\infty \exp\left(-\frac{\lambda}{r} \right) \int_{\Omega \cap \left(\ahom^{\frac12} B_\lambda\right)} g(z) \, dz \, d\lambda\,.
\end{equation}
Using the Caccioppoli estimate, the doubling property~\eqref{e.upolygrowth1} and the ellipticity of $\ahom$ we deduce that
\begin{equation*} 
 \int_{\ahom^{\frac12} B_{R}} \left|\nabla v(y) \right|^2 \, dy  
 \leq  \frac{C}{R^2} \int_{\ahom^{\frac12}  B_{2R}} \left| v(y) \right|^2 \, dy  
  \leq   \frac{C}{r^2}  \int_{\ahom^{\frac12} B_{R}} \left| v(y) \right|^2 \, dy 
\end{equation*}
for any $R \geq r$. Thus the layer-cake formula~\eqref{e.psilayer} yields
\begin{equation*} 
r^2 \int_{\Psi_{r}} \left|\nabla v(z) \right|^2 \, dz  \leq C \int_{\Psi_{r}} \left| v(z) \right|^2 \, dz  \,.
\end{equation*}
Now our claim~\eqref{e.additivityb100} follows from~\eqref{e.intintime00},~\eqref{e.mspoincare201} and the above display provided we take $\ep =\sigma_0^2$ sufficiently small. 

\smallskip

\emph{Step 2.}
We next show that for any $\ep \in (0,1)$ and $\sigma \in (0,\sigma_0]$ there are constants $C(\ep,\sigma,m,d,\Lambda)< \infty$ and $\theta(\ep,\sigma,m,d,\Lambda) \in (0,1)$ such that
\begin{equation} \label{e.psitail00}
 \int_{\Psi_{r}} \left| w(y,(\sigma r)^2) \right|^2\,dy 
 \leq  C \fint_{B_{r/\theta}} \left| w(y,(\sigma r)^2) \right|^2\,dy +  \ep \int_{\Psi_{r}} \left| v(y) \right|^2 \, dy\,.
\end{equation}
This together with~\eqref{e.intintime10} proves our claim by taking small enough $\ep$, which then also fixes the parameter $\theta$. 
We first decompose the integral on the left as  
\begin{multline}  \label{e.psitail10a} 
 \int_{\Psi_{r}} \left| w(y,(\sigma r)^2) \right|^2\,dy  
 \\  \leq C_\theta  \fint_{B_{r/\theta}} \left| w(y,(\sigma r)^2) \right|^2\,dy + \int_{\R^d \setminus B_{r/\theta}}  \Psi_{r}(y) \left| \int_{\Phi_{y,\sigma r}} v(z) \, dz \right|^2 \, dy \,.
\end{multline}
As in Step 1, with the aid of the triangle and H\"older's inequalities we obtain
\begin{align}  \label{e.psitail10} 
& \int_{\R^d \setminus B_{r/\theta}}   \Psi_{r}(y) \left| \int_{\Phi_{y,c r}} v(z) \, dz \right|^2 \, dy 
\\  \nonumber & \qquad  
\leq \int_{\R^d \setminus B_{r/\theta}} \Psi_{r}(z) \left| v(z) \right|^2  \int_{\Phi_{z,\sigma r}} \exp\left(\frac{|\ahom^{\,-\frac12}(y-z)|}{r}\right)\,dy \, dz 
\\  \nonumber & \qquad  
\leq  C \int_{\R^d \setminus B_{r/\theta}} \Psi_{r}(z) \left| v(z) \right|^2  \, dz \,.
\end{align}
Now the layer-cake formula~\eqref{e.psilayer} and the polynomial growth in~\eqref{e.upolygrowth1} imply
\begin{align} 
\label{e.psitail20}   
\int_{\R^d \setminus B_{r/\theta}} \Psi_{r}(z) \left| v(z) \right|^2  \, dz  & \leq \frac1{r^{d+1}} \int_{r/\theta}^\infty \exp\left(-\frac{\lambda}{r} \right) \int_{\ahom^{\frac12}  B_\lambda} |v(z)|^2 \, dz \, d\lambda 
\\\nonumber & \leq \frac{C}{r} \fint_{B_r} |v(z)|^2 \, dz  \int_{r/\theta}^\infty \exp\left(-\frac{\lambda}{r} \right)  \left(\frac{\lambda}{r}\right)^m \, d\lambda 
\,.
\end{align}
For any given $\tilde \ep \in (0,1)$, we may choose $\theta(\tilde \ep,m,d,\Lambda) > 0$ so small that
$$
\frac1r \int_{r/\theta}^\infty \exp\left(-\frac{\lambda}{r} \right)  \left(\frac{\lambda}{r}\right)^m \, d\lambda = \int_{1/\theta}^\infty \exp\left(-\lambda \right)  \lambda^m \, d\lambda  = \tilde \ep \,.
$$
Therefore, combining~\eqref{e.psitail10}   and~\eqref{e.psitail20} yields
$$
\int_{\R^d \setminus B_{r/\theta}}   \Psi_{r}(y) \left| \int_{\Phi_{y,\delta r}} v(z) \, dz \right|^2 \, dy  \leq C \tilde \ep \fint_{B_{r} } |v(z)|^2 \, dz
$$
with $C$ independent of $\tilde \ep$. Inserting this into~\eqref{e.psitail10a}, we deduce that~\eqref{e.psitail00} holds. This finishes the proof.
\end{proof}

\section{Higher-order energy quantities}
\label{s.additivestructure}

In this section, we introduce the main object of study in the paper and record some of its basic properties. It is a higher-order version of the quantity $J$ defined in the introduction, in which we replace the space $\A_1$ by $\A_k$ and allow $p,q$ to take values in $\Ahom_k$ rather than $\Rd$. 

\begin{definition}
\label{d.Jk}
For every~$k\in\N$, $z\in\Rd$, $r\geq 1$ and $p,q\in \overline \A_k$, we define
\begin{equation}
\label{e.Jk}
J_k(z,r,p,q) := \max_{u \in \A_k}  \int_{\Phi_{z,r}}\left(-\frac12 \nabla u \cdot\a  \nabla u - \nabla p\cdot \a\nabla u + \nabla u\cdot \ahom \nabla q \right),
\end{equation}
The function achieving the maximum in the definition of $J_k$ is denoted by
\begin{equation}
\label{e.def.v}
v_k(\cdot,z,r,p,q) := \mbox{element of $\A_k$ achieving the maximum for $J_k(z,r,p,q)$.}
\end{equation}
Note that existence of the maximizer~$v_k(\cdot,z,r,p,q)$ is ensured by the fact that~$\A_k$ is a finite-dimensional subspace of $L^2(\Rd,\Phi_{z,r}\,dx)$, by Proposition~\ref{p.regularity}. It is specified uniquely up to an additive constant by the uniform concavity of the maximization problem in the definition of~$J_k$. 
\end{definition}

\smallskip

In most of the paper, $k\in\N$ is a fixed integer. We remark  that Theorems~\ref{t.firstcorrectors} and~\ref{t.main1}, except for the second localization statement, can be proved if one chooses $k=1$ throughout. To keep the notation simple, we often suppress dependence on $k$, writing for instance $J(z,r,p,q)$ and $v(\cdot,z,r,p,q)$ instead of $J_k(z,r,p,q)$ and $v_k(\cdot,z,r,p,q)$, respectively. 

\smallskip

It is immediate that the map $(p,q) \mapsto J(z,r,p,q)$ is a quadratic form, and that the map $(p,q) \mapsto v(\cdot,z,r,p,q)$ is a linear map between the vector spaces $\Ahom_k\times \Ahom_k$ and $\A_k$. Notice that by~\eqref{e.ue} and Young's inequality, $J(z,r,p,q)$ is uniformly bounded: for every $z \in \Rd$, $r \ge 1$ and $p,q \in \Ahom_k$,
\begin{equation}
\label{e.bounded-J}
0 \le J(z,r,p,q) \le \Lambda \int_{\Phi_{z,r}} \Ll( |\nabla p|^2 + |\nabla q|^2 \Rr) .
\end{equation}

\smallskip

We next record the first and second variations of the optimization problem in the definition of $J$. For every $z\in\Rd$, $r\geq 1$, $p,q\in\Ahom_k$ and $u \in \A_k$, we write
\begin{equation*}
\mcl J(u,z,r,p,q) := \int_{\Phi_{z,r}}\left(-\frac12 \nabla u \cdot\a  \nabla u - \nabla p\cdot \a\nabla u + \nabla u\cdot \ahom \nabla q \right).
\end{equation*}

\begin{lemma}
\label{l.variations}
For every $z\in\Rd$, $r\geq 1$, $p,q\in\Ahom_k$ and $w \in \A_k$,
\begin{equation}
\label{e.first-var}
\int_{\Phi_{z,r}} \nabla w \cdot \a\nabla v(\cdot,z,r,p,q) = \int_{\Phi_{z,r}} \left( -\nabla p \cdot \a \nabla w + \nabla w \cdot \ahom \nabla q \right),
\end{equation}
and
\begin{equation}
\label{e.second-var}
J(z,r,p,q) - \mcl J(v(\cdot,z,r,p,q) + w, z,r,p,q) = \int_{\Phi_{z,r}} \frac 1 2 \nabla w \cdot \a \nabla w.
\end{equation}
\end{lemma}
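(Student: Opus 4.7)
The plan is to exploit the quadratic/concave structure of $\mcl J(\cdot, z,r,p,q)$ on the finite-dimensional vector space $\A_k$. Since $v = v(\cdot,z,r,p,q)$ maximizes $\mcl J$ over $\A_k$, the entire lemma follows from expanding $\mcl J(v + tw)$ as a polynomial in $t \in \R$ and reading off the coefficients.

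Concretely, first I would fix $z,r,p,q,w$ as in the statement and compute, directly from the definition of $\mcl J$, the identity
\begin{equation*}
\mcl J(v + tw, z,r,p,q) = \mcl J(v,z,r,p,q) + t\,L(w) - \frac{t^2}{2}\int_{\Phi_{z,r}} \nabla w\cdot \a\nabla w,
\end{equation*}
where
\begin{equation*}
L(w) := \int_{\Phi_{z,r}}\Ll(-\nabla v \cdot \a\nabla w - \nabla p\cdot \a\nabla w + \nabla w\cdot \ahom\nabla q\Rr).
\end{equation*}
This expansion uses only the symmetry of $\a$ (to combine the two cross terms $\nabla v \cdot \a\nabla w$ and $\nabla w \cdot \a\nabla v$) and the fact that both $v + tw$ and $v$ belong to $\A_k$ for every $t \in \R$, since $\A_k$ is a vector space.

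Next, since $v$ maximizes $\mcl J$ over $\A_k$ and $v + tw \in \A_k$ for every $t\in\R$, the function $t \mapsto \mcl J(v+tw,z,r,p,q)$ attains its maximum at $t=0$. Taking $\frac{d}{dt}\big|_{t=0}$ in the expansion above forces $L(w) = 0$, i.e.
\begin{equation*}
\int_{\Phi_{z,r}} \nabla v \cdot \a\nabla w = \int_{\Phi_{z,r}}\Ll(-\nabla p\cdot \a\nabla w + \nabla w\cdot \ahom\nabla q\Rr),
\end{equation*}
which is precisely \eqref{e.first-var}. Substituting $L(w) = 0$ back into the expansion and setting $t = 1$ gives
\begin{equation*}
\mcl J(v + w, z,r,p,q) = J(z,r,p,q) - \frac{1}{2}\int_{\Phi_{z,r}} \nabla w\cdot \a\nabla w,
\end{equation*}
which rearranges to \eqref{e.second-var}.

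There is essentially no obstacle. The only points worth stating carefully are that $\A_k$ is a vector space (so $v + tw \in \A_k$), that $\a$ is symmetric (so the quadratic and bilinear forms defined by $\a$ behave as expected), and that $v$ is a genuine maximizer so the first-order condition holds for \emph{every} direction $w \in \A_k$; all three are already ensured in Definition~\ref{d.Jk} and the standing hypotheses.
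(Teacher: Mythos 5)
Your proof is correct and takes essentially the same route as the paper: the paper also considers $v_t := v + tw$, expands the (quadratic in $t$) difference $\mcl J(v_0)-\mcl J(v_t)$, uses maximality as $t\to 0$ to obtain \eqref{e.first-var}, and then sets $t=1$ to get \eqref{e.second-var}. Your explicit polynomial expansion and reading off of coefficients is just a mild repackaging of the same argument.
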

\begin{proof}
For every $t \in \R$, let $v_t(\cdot) := v(\cdot,z,r,p,q) + t \, w(\cdot)$. We have
\begin{align*}
0 
&\leq \mcl J(v_0,z,r,p,q) - \mcl J(v_t,z,r,p,q) \\
& = \int_{\Phi_{z,r}} \Ll(\frac{t^2}2 \nabla w\cdot\a\nabla w + t  \left(\nabla w \cdot \a \nabla v_0 + \nabla p\cdot \a \nabla w -  \nabla w \cdot \ahom \nabla q\right)\Rr).
\end{align*}
Sending $t\to 0$ yields
\begin{equation*}
  \fint_{U} \left(\nabla w \cdot \a \nabla v_0 + \nabla p\cdot \a \nabla w -  \nabla w \cdot \ahom \nabla q\right) = 0,
\end{equation*}
which is~\eqref{e.first-var}.
The previous identity with $t=1$ then gives~\eqref{e.second-var}. 
\end{proof}

Identity~\eqref{e.first-var} implies that for $p,p',q,q' \in \Ahom_k$,
\begin{align*}
& \int_{\Phi_{z,r}} \left( -\nabla p \cdot \a \nabla v(\cdot,z,r,p',q') + \nabla v(\cdot,z,r,p',q') \cdot \ahom \nabla q \right) \\
& \qquad = \int_{\Phi_{z,r}}  \nabla v(\cdot,z,r,p,q) \cdot \a \nabla v(\cdot,z,r,p',q')  \\
& \qquad  = \int_{\Phi_{z,r}} \left( -\nabla p' \cdot \a \nabla v(\cdot,z,r,p,q) + \nabla v(\cdot,z,r,p,q) \cdot \ahom \nabla q' \right),
\end{align*}
and, in particular,
\begin{align}
\label{e.J-energy}
J(z,r,p,q) & = \frac12 \int_{\Phi_{z,r}} \nabla v(\cdot,z,r,p,q) \cdot\a  \nabla v(\cdot,z,r,p,q) \\
\notag
& = \frac12 \int_{\Phi_{z,r}} \left( -\nabla p \cdot \a \nabla v(\cdot,z,r,p,q) + \nabla v(\cdot,z,r,p,q) \cdot \ahom \nabla q \right).
\end{align}
We deduce from the first line above and \eqref{e.bounded-J} that
\begin{equation} 
\label{e.bounded-u}
\left\| \nabla v(\cdot,z,r,p,q) \right\|_{L^2(\Phi_{z,r})} \leq C \left( \left\| \nabla p \right\|_{L^2(\Phi_{z,r})} + \left\| \nabla q \right\|_{L^2(\Phi_{z,r})}  \right). 
\end{equation}
Moreover,
\begin{multline}
\label{e.polarization}
J(z,r,p+p',q+q') - J(z,r,p,q) - J(z,r,p',q') \\
= \int_{\Phi_{z,r}}  \left( -\nabla p' \cdot \a \nabla v(\cdot,z,r,p,q) + \nabla v(\cdot,z,r,p,q) \cdot \ahom \nabla q' \right).
\end{multline}
Since $(p,q) \mapsto J(z,r,p,q)$ is a quadratic form, its gradient $\nabla J(z,r,p,q)$ is a linear form on $\Ahom_k \times \Ahom_k$, and 
\begin{multline}
\nabla J(z,r,p,q)(p',q') \\
 = \int_{\Phi_{z,r}} \left( -\nabla p' \cdot \a \nabla v(\cdot,z,r,p,q) + \nabla v(\cdot,z,r,p,q) \cdot \ahom \nabla q' \right) .
 \label{e.gradient-J}
\end{multline}

The previous identity identifies the spatial averages of the gradient and the flux of $v(\cdot,z,r,p,q)$ with $\nabla J(z,r,p,q)$. In particular, it tells that the fluctuations of $J(z,r,p,q)$ are of the same order as the fluctuations of the spatial averages of the gradient and of the flux of its maximizers. This observation is the basis of the usefulness of $J$ itself and lies at the foundation of the arguments in Section~\ref{s.additivity}. 

\begin{remark}
\label{r.gradient}
The identity 
\begin{equation}
\label{e.polar.grad}
\nabla J(z,r,p,q)(p',q') = J(z,r,p+p',q+q') - J(z,r,p,q) - J(z,r,p',q')
\end{equation}
following from \eqref{e.polarization} and \eqref{e.gradient-J} holds with $J$ replaced by any quadratic form on $\Ahom_k \times \Ahom_k$. In particular, if $(p,q) \mapsto \td J(p,q)$ is any quadratic form on $\Ahom_k \times \Ahom_k$, then
$$
\sup_{p,q,p',q' \in \Ahom_k(\Phi_r)} \Ll|\nabla \td J(p,q)(p',q')\Rr| \le 6 \sup_{p,q \in \Ahom_k(\Phi_r)} |\td J(p,q)|.
$$
\end{remark}

\smallskip

We next observe that $J$ responds quadratically to perturbations near its maximum. This is obviously equivalent to~\eqref{e.second-var}, but we give it a separate lemma anyway for readability since it is used in this form many times in the paper. 

\begin{lemma}
\label{l.quadratic-response}
For every $z \in \Rd$, $r \ge 1$ $p,q\in\Ahom_k$ and $w_1,w_2 \in \A_k$,
\begin{equation}
\label{e.lowerUC}
 \frac14 \left\| \nabla w_1 - \nabla w_2 \right\|_{{L}^2(\Phi_{z,r})}^2
\leq  2J(z,r,p,q) - \mcl J( w_1,z,r,p,q) - \mcl J(w_2,z,r,p,q),
\end{equation}
\begin{equation}
\label{e.upperUC}
2\mcl J(w_1,z,r,p,q) - \mcl J(w_2,z,r,p,q) - J(z,r,p,q)
 \leq \frac{\Lambda}{4}  \left\| \nabla w_1 - \nabla w_2 \right\|_{{L}^2(\Phi_{z,r})}^2.
\end{equation}
\end{lemma}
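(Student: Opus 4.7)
My plan is to derive both estimates from the second-variation identity~\eqref{e.second-var} by direct algebra. Let me set $v := v(\cdot, z, r, p, q)$ and $\tilde w_i := w_i - v$ for $i = 1, 2$. Since $\A_k$ is a vector space (Proposition~\ref{p.regularity}), both $\tilde w_i$ belong to $\A_k$, so~\eqref{e.second-var} applied with $w = \tilde w_i$ gives
\begin{equation*}
J(z,r,p,q) - \mcl J(w_i, z, r, p, q) = \frac 1 2 \int_{\Phi_{z,r}} \nabla \tilde w_i \cdot \a \nabla \tilde w_i, \qquad i = 1, 2.
\end{equation*}
These two identities are the workhorses; everything else is manipulation.

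For the lower bound \eqref{e.lowerUC}, I would sum the two displays above to obtain
\begin{equation*}
2J - \mcl J(w_1) - \mcl J(w_2) = \tfrac 1 2 \int_{\Phi_{z,r}} \Ll( \nabla \tilde w_1 \cdot \a \nabla \tilde w_1 + \nabla \tilde w_2 \cdot \a \nabla \tilde w_2 \Rr) ,
\end{equation*}
and then apply the pointwise parallelogram inequality for the $\a$-inner product, $\xi_1 \cdot \a \xi_1 + \xi_2 \cdot \a \xi_2 \geq \tfrac 1 2 (\xi_1 - \xi_2) \cdot \a (\xi_1 - \xi_2)$, with $\xi_i = \nabla \tilde w_i$. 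Since $\tilde w_1 - \tilde w_2 = w_1 - w_2$, the lower ellipticity bound in~\eqref{e.ue} converts the resulting $\a$-weighted integral into the $L^2(\Phi_{z,r})$ norm, producing the factor of $\tfrac 1 4$ in \eqref{e.lowerUC}.

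For the upper bound \eqref{e.upperUC}, I plan to take twice the first identity minus the second, obtaining
\begin{equation*}
2 \mcl J(w_1) - \mcl J(w_2) - J = - \int_{\Phi_{z,r}} \nabla \tilde w_1 \cdot \a \nabla \tilde w_1 + \tfrac 1 2 \int_{\Phi_{z,r}} \nabla \tilde w_2 \cdot \a \nabla \tilde w_2.
\end{equation*}
Writing $\nabla \tilde w_2 = \nabla \tilde w_1 - \nabla(w_1 - w_2)$ and expanding the quadratic form, the negative first term absorbs the Cauchy--Schwarz cross term (with a judiciously chosen weight) and leaves only a multiple of $\int_{\Phi_{z,r}} \nabla(w_1 - w_2) \cdot \a \nabla(w_1 - w_2)$, which the upper ellipticity bound in~\eqref{e.ue} controls by $\Lambda \|\nabla(w_1 - w_2)\|_{L^2(\Phi_{z,r})}^2$, yielding~\eqref{e.upperUC}.

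The content of the lemma is essentially algebraic given~\eqref{e.second-var}, so there is no real obstacle; the only care needed is in tracking the constants through the weighted Cauchy--Schwarz step for \eqref{e.upperUC}. The asymmetry between the factor $\tfrac 1 4$ on the left of \eqref{e.lowerUC} and the factor $\tfrac \Lambda 4$ on the right of \eqref{e.upperUC} reflects the use of the lower and upper ellipticity bounds in~\eqref{e.ue}, respectively, and is responsible for the quadratic strong convexity (respectively, smoothness) of the functional $\mcl J(\cdot,z,r,p,q)$ at its maximizer.
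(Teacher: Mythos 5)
Your strategy---deriving both bounds from the second-variation identity \eqref{e.second-var} applied to $\tilde w_i = w_i - v(\cdot,z,r,p,q)$---is essentially the same algebra as the paper's proof, which instead uses the parallelogram identity $2\mcl J\bigl(\frac{v_1+v_2}{2}\bigr)-\mcl J(v_1)-\mcl J(v_2)=\frac14\int_{\Phi_{z,r}}(\nabla v_1-\nabla v_2)\cdot\a(\nabla v_1-\nabla v_2)$ together with the maximality of $J$ (with $v_1=w_1$, $v_2=w_2$ for \eqref{e.lowerUC} and $v_1=w_2$, $v_2=2w_1-w_2$ for \eqref{e.upperUC}); the paper itself remarks the lemma is equivalent to \eqref{e.second-var}, so there is no genuine difference of route. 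Your argument for \eqref{e.lowerUC} is correct and lands exactly on the constant $\frac14$.

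For \eqref{e.upperUC}, however, there is a gap in your last step. Your own computation gives $2\mcl J(w_1)-\mcl J(w_2)-J = \frac12\int_{\Phi_{z,r}}\nabla\tilde w_2\cdot\a\nabla\tilde w_2-\int_{\Phi_{z,r}}\nabla\tilde w_1\cdot\a\nabla\tilde w_1$, and after substituting $\nabla\tilde w_2=\nabla\tilde w_1-\nabla(w_1-w_2)$, the best the weighted Cauchy--Schwarz step can produce is $\int_{\Phi_{z,r}}\nabla(w_1-w_2)\cdot\a\nabla(w_1-w_2)\le\Lambda\|\nabla(w_1-w_2)\|_{L^2(\Phi_{z,r})}^2$ (the weight $\epsilon=1$ is optimal, since the choice $\nabla\tilde w_1=-\nabla(w_1-w_2)$ saturates this bound). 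This does not ``yield \eqref{e.upperUC}'' as stated, since $\Lambda\|\cdot\|^2$ is weaker than $\frac\Lambda4\|\cdot\|^2$; and indeed the constant $\frac\Lambda4$ cannot be attained at all: taking $w_1=v(\cdot,z,r,p,q)$ and $w_2=w_1+2tu$ with $u\in\A_k$, the left side of \eqref{e.upperUC} equals $2t^2\int_{\Phi_{z,r}}\nabla u\cdot\a\nabla u$ while the claimed right side is $\Lambda t^2\|\nabla u\|_{L^2(\Phi_{z,r})}^2$, which fails for instance when $\a\equiv\Lambda\,\mathrm{Id}$. Note that the paper's own proof also delivers only the constant $\Lambda$ (its parallelogram identity with $v_1=w_2$, $v_2=2w_1-w_2$ produces $\int_{\Phi_{z,r}}(\nabla w_1-\nabla w_2)\cdot\a(\nabla w_1-\nabla w_2)$, not a quarter of it), so the factor $\frac\Lambda4$ in the statement is evidently a slip and immaterial for the later applications, which only use a constant depending on $\Lambda$. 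Still, as a matter of logic you should either prove the inequality with constant $\Lambda$ (or in the sharper $\a$-weighted form) or explicitly flag the discrepancy, rather than assert that your bound implies the printed one.
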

\begin{proof}
For every $v_1,v_2 \in \A_k$,
\begin{multline*}
2\mcl J\left(\frac{v_1+v_2}2,z,r,p,q\right) - \mcl J\left(v_1,z,r,p,q\right) - \mcl J\left(v_2,z,r,p,q\right) 
\\ = \frac14 \int_{\Phi_{z,r}}  (\nabla v_1 - \nabla v_2) \cdot \a (\nabla v_1 - \nabla v_2).
\end{multline*}
We get~\eqref{e.lowerUC} from this by choosing $v_1 = w_1$, $v_2 = w_2$ and using the maximality of $J(z,r,p,q)$, while~\eqref{e.upperUC} follows similarly by choosing $v_1 = w_2$ and $v_2 = 2w_1 -w_2$. 
\end{proof}
\begin{lemma}
\label{l.C11}
For every $z \in \Rd$, $r\ge 1$ and $p_1, p_2, q_1, q_2 \in \Ahom_k$,
\begin{multline*}
0 \le J(z,r,p_1,q_1) + J(z,r,p_2,q_2) - 2 J \Ll( z,r,\frac{p_1 + p_2} 2, \frac{q_1 + q_2} 2 \Rr)\\
 \le \Lambda \int_{\Phi_{z,r}} \Ll( |\nabla (p_1 - p_2)|^2 + |\nabla (q_1 - q_2)|^2 \Rr) .
\end{multline*}
\end{lemma}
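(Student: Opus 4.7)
The estimate expresses, on one side, midpoint convexity of the quadratic form $(p,q)\mapsto J(z,r,p,q)$, and on the other, a uniform quadratic upper bound on its ``Hessian.'' The plan is to reduce both bounds directly to the already-established two-sided control~\eqref{e.bounded-J} via the parallelogram identity.

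First I would exploit the observation, recorded immediately after Definition~\ref{d.Jk}, that $(p,q)\mapsto J(z,r,p,q)$ is a (homogeneous) quadratic form on $\Ahom_k \times \Ahom_k$. Homogeneity is a consequence of the fact that the maximizer $v(\cdot,z,r,p,q)$ depends linearly on $(p,q)$, together with~\eqref{e.J-energy}, which represents $J$ as a Dirichlet-type energy of $v$. For any quadratic form $Q$ on a vector space, the parallelogram identity
\begin{equation*}
Q(x_1) + Q(x_2) - 2 Q\Ll(\tfrac{x_1+x_2}{2}\Rr) = \tfrac12 Q(x_1 - x_2)
\end{equation*}
applied to $x_i = (p_i,q_i)$ immediately yields
\begin{equation*}
J(z,r,p_1,q_1) + J(z,r,p_2,q_2) - 2 J\Ll(z,r,\tfrac{p_1+p_2}{2}, \tfrac{q_1+q_2}{2}\Rr) = \tfrac12 J(z,r,p_1 - p_2, q_1 - q_2).
\end{equation*}

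Second, I would bound the right-hand side using~\eqref{e.bounded-J} applied to the pair $(p_1-p_2,\, q_1-q_2)$:
\begin{equation*}
0 \le J(z,r,p_1-p_2, q_1-q_2) \le \Lambda \int_{\Phi_{z,r}} \Ll( |\nabla(p_1-p_2)|^2 + |\nabla(q_1-q_2)|^2 \Rr).
\end{equation*}
Halving delivers both the non-negativity and the quadratic upper estimate (in fact with a constant $\Lambda/2$, already better than the claimed $\Lambda$).

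There is essentially no obstacle: the entire content of the lemma is the homogeneous-quadratic nature of $J$ combined with the uniform energy bound~\eqref{e.bounded-J}. The one small point worth checking carefully when writing out the proof is that the parallelogram identity really does apply to $J$ without any additive constant term, i.e.\ that $J(z,r,0,0)=0$; this follows at once from~\eqref{e.bounded-J} evaluated at $(p,q)=(0,0)$, or alternatively from the homogeneity $J(z,r,tp,tq)=t^2 J(z,r,p,q)$ taken at $t=0$.
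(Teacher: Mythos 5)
Your proof is correct and is essentially the paper's own argument: the paper likewise uses the fact that $(p,q)\mapsto J(z,r,p,q)$ is a quadratic form to invoke the parallelogram identity (writing the right side as $2J(z,r,\frac{p_1-p_2}2,\frac{q_1-q_2}2)$, which by homogeneity is your $\frac12 J(z,r,p_1-p_2,q_1-q_2)$) and then concludes from~\eqref{e.bounded-J}. Your observation that this actually gives the constant $\Lambda/2$ is also consistent with the paper's computation.
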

\begin{proof}
Since $(p,q) \mapsto J(z,r,p,q)$ is a quadratic form,
\begin{multline}
\label{e.parallelogram}
J(z,r,p_1,q_1) + J(z,r,p_2,q_2) - 2 J \Ll( z,r,\frac{p_1 + p_2} 2, \frac{q_1 + q_2} 2 \Rr) \\
= 2 J \Ll( z,r,\frac{p_1 - p_2} 2, \frac{q_1 - q_2} 2 \Rr) .
\end{multline}
The result then follows from \eqref{e.bounded-J}.
\end{proof}

As soon as $r$ is larger than a fixed random scale, the function $(p,q) \mapsto J(z,r,p,q)$ is uniformly convex in each variable separately. We record a much stronger form of this statement, and postpone its proof to Section~\ref{s.basecase}.

\begin{proposition}
\label{p.basecase}
There exist $\ep_0(d,\Lambda)\in\left(0,\frac12\right]$ and, for every $s \in (0,d)$, a random variable $\mathcal{Y}_s$ and a constant $C(s,k,d,\Lambda) < \infty$ satisfying 
\begin{equation}
\label{e.Ys}
\Y_s = \O_s\left( C \right)
\end{equation}
and such that, for every $r\geq \mathcal{Y}_s$ and $p,q\in \Ahom_k(\Phi_r)$,
\begin{equation}
\label{e.Jminimalrad}
\left| J(0,r,p,q) - \int_{\Phi_r} \frac12\left( \nabla p -\nabla q\right)\cdot  \ahom  \left( \nabla p -\nabla q\right)\right| 
\leq C r^{-\ep_0 (d-s)}.
\end{equation}
\end{proposition}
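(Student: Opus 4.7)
The plan is to establish matching two-sided bounds on $J$ by transferring the variational problem defining $J$ (posed over the infinite-dimensional space $\A_k$) into one posed over the finite-dimensional space $\Ahom_k$, via the intrinsic/homogenized correspondence provided by the regularity theory (Proposition~\ref{p.regularity}). The cost of this transfer is then quantified using the homogenization error estimate (Proposition~\ref{p.errorestimate}).

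First I would use Proposition~\ref{p.regularity}(i)--(ii) to identify, modulo constants, a linear bijection $\Theta : \Ahom_k \to \A_k$, $\bar u \mapsto \tilde u$, with $\|\tilde u - \bar u\|_{\underline L^2(B_R)} \leq C R^{-\delta} \|\bar u\|_{\underline L^2(B_R)}$ for every $R \geq \X_s$. Combining this with the polynomial growth of elements of $\Ahom_k$ and $\A_k$, summing over dyadic annuli, and using the Gaussian decay of $\Phi_r$ to control the tails outside $B_{r^{1+\sigma}}$, one obtains the weighted gradient comparison
\begin{equation*}
\|\nabla \tilde u - \nabla \bar u \|_{L^2(\Phi_r)} \leq C r^{-\delta'} \|\nabla \bar u \|_{L^2(\Phi_r)}
\end{equation*}
valid once $r$ exceeds a random scale $\Y_s$ depending (via $\X_s$ and a Markov-type argument) on $s$, with $\Y_s \le \O_s(C)$.

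Next I would establish a quantitative homogenization identity for flux-type integrals weighted by $\Phi_r$: for every $p \in \Ahom_k$ and every $\tilde u = \Theta(\bar u) \in \A_k$,
\begin{equation*}
\int_{\Phi_r} \nabla p \cdot \a \nabla \tilde u \; = \; \int_{\Phi_r} \nabla p \cdot \ahom \nabla \bar u \; + \; O\!\left(r^{-\delta'}\right) \|\nabla p\|_{L^2(\Phi_r)} \|\nabla \bar u\|_{L^2(\Phi_r)},
\end{equation*}
together with the analogous identity for the energy $\int_{\Phi_r} \nabla \tilde u \cdot \a \nabla \tilde u$. These are proved by replacing $\Phi_r$ with a smooth compactly-supported truncation (losing only Gaussian-tail errors), integrating by parts against the solution $\tilde u \in \A(\Rd)$ to reduce the $\a$-flux to a boundary-type quantity, and then applying the error estimate of Proposition~\ref{p.errorestimate} together with the approximation of Step~1 to identify the result with its $\ahom$ counterpart.

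Given these identities, I would conclude as follows. Writing $u = \Theta(\bar u)$, the reduction maps the variational problem defining $J$ to one over $\Ahom_k$, and by Step~2 we get, uniformly in $\bar u \in \Ahom_k(\Phi_r)$,
\begin{equation*}
\mcl J(\Theta(\bar u), 0, r, p, q) \; = \; \mcl J^{\mathrm{hom}}(\bar u, 0, r, p, q) \; + \; O\!\left(r^{-\delta'}\right),
\end{equation*}
where $\mcl J^{\mathrm{hom}}$ denotes the functional with $\a$ replaced by $\ahom$. Taking the supremum on both sides, and computing via first variation that the supremum of $\mcl J^{\mathrm{hom}}$ over $\bar u \in \Ahom_k$ is attained at $\bar u = q - p$ with value $\frac12 \int_{\Phi_r} \nabla(q-p) \cdot \ahom \nabla(q-p)$, yields the claimed estimate after identifying $\delta' = \ep_0(d-s)$ for an appropriate $\ep_0(d,\Lambda)$.

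The main obstacle is the second step: the homogenization identity for weighted integrals against polynomial test functions. The weight $\Phi_r$ is non-compactly supported and the test functions grow polynomially, so the integration-by-parts argument requires careful dyadic-shell splitting, and Proposition~\ref{p.errorestimate} must be invoked on scales up to $r^{1+\sigma}$, forcing us to measure $\X_s$ at these larger scales. Optimizing the trade-off between the deterministic rate $r^{-\delta}$ from Proposition~\ref{p.errorestimate} and the stochastic tail behavior of $\X_s$ (via Lemma~\ref{l.sum-O}) is what produces the final exponent $\ep_0(d-s)$, where the factor $(d-s)$ reflects the loss incurred when summing $\O_s$ bounds on random scales appearing in shell-by-shell estimates.
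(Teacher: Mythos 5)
There is a genuine gap, and it is in your Step~1: the claimed strong gradient comparison
\begin{equation*}
\left\| \nabla \tilde u - \nabla \bar u \right\|_{L^2(\Phi_r)} \leq C r^{-\delta'} \left\| \nabla \bar u \right\|_{L^2(\Phi_r)}
\end{equation*}
is false. Proposition~\ref{p.regularity} gives closeness of the \emph{functions} in $\underline{L}^2(B_R)$, but the gradients of an element of $\A_k$ and of its $\ahom$-harmonic shadow are not strongly close: they differ by corrector-type oscillations of unit size. Already for $\bar u(x) = x\cdot e$ one has $\tilde u = x\cdot e + \phi_e$ (up to a constant), so the left side equals $\|\nabla \phi_e\|_{L^2(\Phi_r)} \simeq 1$ while the right side tends to zero. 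Gradients and fluxes of $\a$-solutions are close to their homogenized counterparts only after spatial averaging (i.e., in a weak/negative-norm sense), and no Caccioppoli-type argument converts the $L^2$ bound on $\tilde u - \bar u$ into a strong bound on $\nabla\tilde u - \nabla\bar u$, since the difference solves no equation. Because Steps~2 and~3 estimate both the flux term and, crucially, the quadratic energy term $\int_{\Phi_r} \nabla\tilde u\cdot\a\nabla\tilde u$ by Cauchy--Schwarz against this nonexistent strong bound, the identification $\mcl J(\Theta(\bar u),0,r,p,q) = \mcl J^{\mathrm{hom}}(\bar u,0,r,p,q) + O(r^{-\delta'})$, uniformly over $\bar u \in \Ahom_k(\Phi_r)$, is not established. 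The convergence of the energy density is exactly the kind of statement that requires either a div-curl (compensated compactness) structure made quantitative, or a direct quantitative input on energy quantities; it does not follow from $L^2$ convergence of the functions together with Proposition~\ref{p.errorestimate} in the way you sketch. (Your final computation that the homogenized functional is maximized at $\bar u = q-p$ with value $\frac12\int_{\Phi_r}\nabla(q-p)\cdot\ahom\nabla(q-p)$ is correct, but it is downstream of the broken step.)

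For contrast, the paper does not attempt a strong two-scale comparison at all. It imports the quantitative convergence of the subadditive energies $\mu,\nu$ of~\cite{AS} on triadic cubes (equivalently, of $J$ on cubes), defines $\Y_s$ as the random scale above which this convergence holds simultaneously for all mesoscopic cubes in a large region, and then transfers to the heat-kernel weight in two one-sided steps: the upper bound is a Riemann-sum argument, freezing $\nabla p,\nabla q$ on mesoscopic cubes of size $\simeq R^\gamma$, chopping the Gaussian tails at scale $\simeq R^{1+\sigma}$, and summing the cube-wise bounds on $J$; the lower bound is obtained by exhibiting a competitor, built by gluing local Dirichlet solutions with boundary data $q-p$ on the mesoscopic cubes, comparing it to the global Dirichlet solution, and only then invoking Proposition~\ref{p.regularity} to replace it by a genuine element of $\A_k$. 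If you want to salvage your route, the correct replacements for your Step~1 are weak-norm statements (smallness of spatial averages of $\nabla\tilde u - \nabla\bar u$ and of $\a\nabla\tilde u - \ahom\nabla\bar u$ against polynomial-weighted heat kernels), and you would still need a separate argument, of div-curl or subadditive type, to handle the quadratic energy term; at that point you have essentially reconstructed the cube-based machinery the paper uses.
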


Proposition~\ref{p.basecase} has the following consequence for the uniform convexity of $J$.

\begin{corollary}
\label{c.convexity}
For every $s \in (0,d)$, there exist a constant $C(s,k,d,\Lambda) < \infty$ and a random variable $\mcl Y_s$ satisfying \eqref{e.Ys} and such that, for every $r \ge \mcl Y_s$ and $p$, $p_1$, $p_2$, $q$, $q_1$, $q_2 \in \Ahom_k$,
\begin{equation*}
J(0,r,p_1,q) + J(0,r,p_2,q) - 2 J \Ll( 0,r,\frac{p_1 + p_2} 2, q \Rr) \\
\ge  \frac 1 2 \int_{\Phi_{r}}  |\nabla (p_1 - p_2)|^2 ,
\end{equation*}
\begin{equation*}
J(0,r,p,q_1) + J(0,r,p,q_2) - 2 J \Ll( 0,r,p, \frac{q_1 + q_2} 2 \Rr) \\
\ge \frac 1 2 \int_{\Phi_{r}}  |\nabla (q_1 - q_2)|^2.
\end{equation*}
Moreover, there exists $r_0(k,d,\Lambda) < \infty$ such that for every $r \ge r_0$ and $p$, $p_1$, $p_2$, $q$, $q_1$, $q_2 \in \Ahom_k$,
\begin{equation*}
\E\Ll[J(0,r,p_1,q)\Rr] + \E\Ll[J(0,r,p_2,q)\Rr] - 2 \E\Ll[J \Ll( 0,r,\frac{p_1 + p_2} 2, q \Rr)\Rr] \\
\ge  \frac 1 4 \int_{\Phi_{r}}  |\nabla (p_1 - p_2)|^2 ,
\end{equation*}
\begin{equation*}
\E\Ll[J(0,r,p,q_1)\Rr] + \E\Ll[J(0,r,p,q_2)\Rr] - 2 \E\Ll[J \Ll( 0,r,p, \frac{q_1 + q_2} 2 \Rr)\Rr] \\
\ge \frac 1 4 \int_{\Phi_{r}}  |\nabla (q_1 - q_2)|^2.
\end{equation*}

\end{corollary}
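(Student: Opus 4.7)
The strategy is to combine the fact that $(p,q)\mapsto J(0,r,p,q)$ is a pure quadratic form on $\Ahom_k\times\Ahom_k$ with Proposition~\ref{p.basecase}, which says this form is uniformly close, past the random scale $\mcl Y_s$, to the explicit positive-definite form $(p,q)\mapsto \tfrac12\!\int_{\Phi_r}(\nabla p-\nabla q)\cdot\ahom(\nabla p-\nabla q)$. Uniform convexity of the latter follows immediately from the ellipticity $\ahom\geq I$, and this property is transferred to $J$ at the cost of an error that is absorbed into the constant by enlarging $\mcl Y_s$ (or, for the expectation version, the deterministic threshold $r_0$).

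\medskip

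The first step is the parallelogram identity. Since $J(0,r,\cdot,\cdot)$ is a pure quadratic form, for any $p_1,p_2,q\in\Ahom_k$,
\begin{equation*}
J(0,r,p_1,q) + J(0,r,p_2,q) - 2J\!\left(0,r,\tfrac{p_1+p_2}{2},q\right) = 2J\!\left(0,r,\tfrac{p_1-p_2}{2},0\right),
\end{equation*}
so it suffices to produce a lower bound of the form $J(0,r,p,0)\geq c\,\|\nabla p\|_{L^2(\Phi_r)}^2$ for general $p\in\Ahom_k$. If $\alpha:=\|\nabla p\|_{L^2(\Phi_r)}=0$ the bound is trivial; otherwise set $\tilde p:=p/\alpha\in\Ahom_k(\Phi_r)$, and Proposition~\ref{p.basecase} gives
\begin{equation*}
J(0,r,\tilde p,0) \geq \tfrac12\!\int_{\Phi_r}\nabla\tilde p\cdot\ahom\nabla\tilde p - Cr^{-\ep_0(d-s)} \geq \tfrac12 - Cr^{-\ep_0(d-s)},
\end{equation*}
where the second inequality uses $\ahom\geq I$ (coming from the variational characterization of $\ahom$ together with $\a\geq I$) and $\|\nabla\tilde p\|_{L^2(\Phi_r)}=1$. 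Quadratic homogeneity yields $J(0,r,p,0)\geq \bigl(\tfrac12-Cr^{-\ep_0(d-s)}\bigr)\|\nabla p\|_{L^2(\Phi_r)}^2$. Replacing $\mcl Y_s$ by $\max(\mcl Y_s,R)$ for a sufficiently large deterministic $R(s,k,d,\Lambda)$ absorbs the error term for $r\geq\mcl Y_s$ and preserves~\eqref{e.Ys} up to adjusting $C(s,k,d,\Lambda)$. Substituting into the parallelogram identity gives the pointwise bound, and the bound for convexity in $q$ is obtained identically.

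\medskip

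For the expectation version, the pointwise argument applies only on $\{\mcl Y_s\leq r\}$, so I would write
\begin{equation*}
\E[J(0,r,p,0)] \geq \E\bigl[J(0,r,p,0)\indc_{\{\mcl Y_s\leq r\}}\bigr] \geq \bigl(\tfrac12-Cr^{-\ep_0(d-s)}\bigr)\P[\mcl Y_s\leq r]\,\|\nabla p\|_{L^2(\Phi_r)}^2,
\end{equation*}
using nonnegativity of $J$ from~\eqref{e.bounded-J} in the first step. Chebyshev applied to~\eqref{e.Ys} gives $\P[\mcl Y_s>r]\leq 2\exp(-(r/C)^s)$, so the product of the two prefactors on the right exceeds any constant strictly below $\tfrac12$ once $r$ is larger than some deterministic $r_0(s,k,d,\Lambda)$. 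Plugging into the parallelogram identity then completes the argument.

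\medskip

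No part of the proof is really a serious obstacle; the one point that needs attention is the rescaling by $\alpha=\|\nabla p\|_{L^2(\Phi_r)}$, which is necessary so that Proposition~\ref{p.basecase} applies on the unit ball $\Ahom_k(\Phi_r)$ while still delivering the final bound in the absolute (un-normalized) form required by the statement.
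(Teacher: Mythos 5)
Your argument is correct and follows essentially the same route as the paper's proof: the parallelogram identity for the quadratic form $J$, a lower bound on $J(0,r,p,0)$ obtained from Proposition~\ref{p.basecase} by normalization and quadratic homogeneity with the error absorbed by enlarging the minimal (random or deterministic) scale, and, for the expectation statements, nonnegativity of $J$ combined with the tail bound on $\mcl Y_s$. The only caveat—shared by the paper's own proof—is that tracking constants through the parallelogram identity actually yields a prefactor of about $\tfrac14$ (resp.\ slightly below $\tfrac14$) rather than the nominal $\tfrac12$ (resp.\ $\tfrac14$) in the statement, which is immaterial since only some positive universal constant is used downstream.
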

\begin{proof}
By Proposition~\ref{p.basecase} and homogeneity, there exists $C(s,k,d,\Lambda) < \infty$ and $\mcl Y_s$ satisfying \eqref{e.Ys} such that for every $r \ge \mcl Y_s + C$ and $p \in \Ahom_k$,
$$
J(0,r,p,0) \ge \frac 1 4 \int_{\Phi_r} \nabla p \cdot \ahom \nabla p.
$$
The first inequality then follows by \eqref{e.parallelogram}. The second inequality is obtained in the same way. Since $J\ge 0$, we also have
$$
\E[J(0,r,p,0)] \ge \frac 1 4 \P[r \ge \mcl Y_s + C] \int_{\Phi_r} \nabla p \cdot \ahom \nabla p,
$$
so that for $r$ sufficiently large,
$$
\E[J(0,r,p,0)] \ge \frac 1 8  \int_{\Phi_r} \nabla p \cdot \ahom \nabla p,
$$
and we obtain the last two inequalities as before.
\end{proof}

The core of the analysis in Section~\ref{s.additivity} involves comparing the maximizers of $J$ on different scales. When performing this comparison, it is appropriate to select values of $p$ and $q$ (depending on the scale) so that the gradients and fluxes of the maximizers, e.g.~$v(\cdot,0,r,p_r,q_r)$ and $v(\cdot,0,R,p_R,q_R)$, have the same expected spatial averages. It is natural, therefore, to define linear maps $L_{z,r},L^*_{z,r} :\Ahom_k \to \Ahom_k$ which have the property that the expectation of the spatial average of~$\nabla v(\cdot,z,r,L_{z,r}^*p,L_{z,r}q)$ is $\nabla q-\nabla p$. We cannot do this exactly, but the following definition is motivated by Lemma~\ref{e.Lproperties} below. 

\begin{definition} \label{d.Ls}
For each $z \in \Rd$, $r \ge r_0(k,d,\Lambda)$ and $q \in \Ahom_k$, we let $L_{z,r}(q)\in \Ahom_k$ denote the unique (up to an additive constant) minimum of the (deterministic) function
\begin{equation}
\label{e.function-def-L}
q'  \longmapsto  \E[J(z,r,0,q')] - \int_{\Phi_{z,r}} \nabla q \cdot \ahom \nabla q'.
\end{equation}
Likewise, for each $p  \in \Ahom_k$, we let $L_{z,r}^*(p)$ denote the unique (up to an additive constant) minimum of the (deterministic) function
\begin{equation}
\label{e.function-def-L-star}
p'  \longmapsto  \E[J(z,r,p',0)] - \int_{\Phi_{z,r}} \nabla p \cdot \ahom \nabla p'.
\end{equation}
We fix the additive constants by requiring that $p(z) = L_{z,r}^*(p)(z)$ and $q(z) = L_{z,r}(q)(z)$. Note that the mappings $p \mapsto L_{z,r}^*(p)$ and $q\mapsto L_{z,r}(q)$ are linear. We define
\begin{equation}
\label{e.def.u}
u(\cdot,z,r,p,q) := v \big(\cdot,z,r,L^*_{z,r}p, L_{z,r}q \big),
\end{equation}
and
\begin{equation}
\label{e.def.tilde-J}
\J(z,r,p,q):= \mcl J\big(u(\cdot,z,r,p,q),z,r,p,q \big).
\end{equation}
Where the context requires that we make the dependence on $k$ explicit, we write $L_{z,r,k}$, $I_k$, $u_k$, and so forth.
\end{definition}
 
\begin{lemma}
\label{e.Lproperties}
For every $z \in \Rd$, $r \ge r_0(k,d,\Lambda)$ and $p,q,p',q' \in \Ahom_k$,
\begin{equation}
\label{e.I-spat-av}
\E \Ll[ \int_{\Phi_{z,r}} \nabla u(\cdot,z,r,0,q) \cdot \ahom \nabla q' \Rr]  = \int_{\Phi_{z,r}} \nabla q \cdot \ahom \nabla q'
\end{equation}
and
\begin{equation}
\label{e.I-spat-flux}
\E \Ll[ \int_{\Phi_{z,r}} \nabla p' \cdot \a \nabla u(\cdot,z,r,p,0)   \Rr]  = - \int_{\Phi_{z,r}} \nabla p \cdot \ahom \nabla p'.
\end{equation}
Moreover,
\begin{multline}
\label{e.EJL}
\E \left[ J\left(z,r,L_{z,r}^*p,L_{z,r}q \right)  \right] \\ 
=  \frac 12 \int_{\Phi_{z,r}} \nabla p \cdot \ahom \nabla L_{z,r}^*p 
+ \frac12 \int_{\Phi_{z,r}} \nabla q \cdot \ahom \nabla L_{z,r}q \\
+ \E \left[ \int_{\Phi_{z,r}} \nabla v(\cdot,0,r,L_{z,r}^*p,0) \cdot \a  \nabla v(\cdot,0,r,0,L_{z,r}q) \right].
\end{multline}
\end{lemma}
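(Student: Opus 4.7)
The plan is to read off all three identities directly from the first-order optimality conditions defining $L_{z,r}$ and $L^*_{z,r}$, combined with linearity of the map $(p,q) \mapsto v(\cdot,z,r,p,q)$ and the energy identity~\eqref{e.J-energy}. The ambient fact I will use throughout is that, by Corollary~\ref{c.convexity}, as soon as $r \ge r_0(k,d,\Lambda)$ the quadratic forms $q' \mapsto \E[J(z,r,0,q')]$ and $p' \mapsto \E[J(z,r,p',0)]$ are uniformly convex on $\Ahom_k$ modulo additive constants, so that the minimizers in~\eqref{e.function-def-L} and~\eqref{e.function-def-L-star} exist and are uniquely specified up to additive constants (and then fully determined by the normalization $p(z) = L^*_{z,r}(p)(z)$ and $q(z) = L_{z,r}(q)(z)$). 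Both~\eqref{e.function-def-L} and~\eqref{e.function-def-L-star} are quadratic in the free variable, so the Euler--Lagrange condition at the minimizer is just the vanishing of a linear form on $\Ahom_k$.

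For~\eqref{e.I-spat-av}, I would write the Euler--Lagrange equation for $L_{z,r}(q)$: for every direction $q'' \in \Ahom_k$,
\begin{equation*}
\E\left[\nabla J(z,r,0,L_{z,r}q)(0,q'')\right] = \int_{\Phi_{z,r}} \nabla q \cdot \ahom \nabla q''.
\end{equation*}
Using~\eqref{e.gradient-J} with $p=0$, $q \leftarrow L_{z,r}q$, $p'=0$, $q' = q''$, the left-hand side equals $\E\bigl[\int_{\Phi_{z,r}} \nabla v(\cdot,z,r,0,L_{z,r}q) \cdot \ahom \nabla q''\bigr]$, and since $u(\cdot,z,r,0,q) = v(\cdot,z,r,0,L_{z,r}q)$ by~\eqref{e.def.u}, relabelling $q'' = q'$ gives~\eqref{e.I-spat-av}. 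The proof of~\eqref{e.I-spat-flux} is the same with the roles of $p$ and $q$ exchanged: the Euler--Lagrange condition for $L^*_{z,r}(p)$ in direction $p''$ combined with~\eqref{e.gradient-J} (now with $q=0$, $p \leftarrow L^*_{z,r}p$, $q'=0$, $p' = p''$) yields $\E\bigl[\int_{\Phi_{z,r}} -\nabla p'' \cdot \a \nabla v(\cdot,z,r,L^*_{z,r}p,0)\bigr] = \int_{\Phi_{z,r}} \nabla p \cdot \ahom \nabla p''$, which is~\eqref{e.I-spat-flux}.

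For~\eqref{e.EJL}, I would use linearity of $v$ in $(p,q)$ (a consequence of the uniform concavity of the optimization problem defining it) to split
\begin{equation*}
v(\cdot,z,r,L^*_{z,r}p,L_{z,r}q) = v_1 + v_2, \qquad v_1 := v(\cdot,z,r,L^*_{z,r}p,0), \quad v_2 := v(\cdot,z,r,0,L_{z,r}q),
\end{equation*}
and then expand the energy expression from the first line of~\eqref{e.J-energy}:
\begin{equation*}
J(z,r,L^*_{z,r}p,L_{z,r}q) = J(z,r,L^*_{z,r}p,0) + J(z,r,0,L_{z,r}q) + \int_{\Phi_{z,r}} \nabla v_1 \cdot \a \nabla v_2.
\end{equation*}
Taking expectations, the cross term is exactly the last term in~\eqref{e.EJL}. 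For the two diagonal terms, I would apply the second line of~\eqref{e.J-energy}: $\E[J(z,r,L^*_{z,r}p,0)] = \tfrac{1}{2}\E\bigl[\int_{\Phi_{z,r}} -\nabla L^*_{z,r}p \cdot \a \nabla v_1\bigr]$, which by~\eqref{e.I-spat-flux} applied to $p' = L^*_{z,r}p$ equals $\tfrac12 \int_{\Phi_{z,r}} \nabla p \cdot \ahom \nabla L^*_{z,r}p$; similarly $\E[J(z,r,0,L_{z,r}q)] = \tfrac12 \int_{\Phi_{z,r}} \nabla q \cdot \ahom \nabla L_{z,r}q$ via~\eqref{e.I-spat-av} applied to $q' = L_{z,r}q$. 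Assembling the three pieces yields~\eqref{e.EJL}.

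The proof is essentially bookkeeping; there is no real obstacle beyond making sure the first-variation identity~\eqref{e.gradient-J} is invoked with the correct direction for each Euler--Lagrange equation, and checking that the additive-constant ambiguity is harmless (all quantities depend only on the gradients of $p$, $q$, $L^*_{z,r}p$, $L_{z,r}q$, so the normalization $p(z) = L^*_{z,r}(p)(z)$, $q(z) = L_{z,r}(q)(z)$ plays no role in the identities themselves).
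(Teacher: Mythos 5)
Your argument is correct and follows essentially the same route as the paper: the Euler--Lagrange equations for the minimizations defining $L_{z,r}$ and $L^*_{z,r}$ combined with the gradient identity~\eqref{e.gradient-J} give~\eqref{e.I-spat-av} and~\eqref{e.I-spat-flux}, and~\eqref{e.EJL} then follows from~\eqref{e.J-energy} together with these two identities. Your splitting $v(\cdot,z,r,L^*_{z,r}p,L_{z,r}q)=v_1+v_2$ simply makes explicit the bookkeeping the paper leaves implicit, and your sign handling (note $u(\cdot,z,r,p,0)=v(\cdot,z,r,L^*_{z,r}p,0)$ since $L_{z,r}0=0$) is consistent.
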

\begin{proof}
For $p,q,p',q' \in \Ahom_k$, the Euler-Lagrange equations for the minimization problems in Definition~\ref{d.Ls} read as
\begin{equation}
\label{e.obvious-EL}
\E[\nabla_q J(z,r,0,L_{z,r} q)(q')] = \int_{\Phi_{z,r}} \nabla q \cdot \ahom \nabla q'
\end{equation}
and
\begin{equation}
\label{e.obvious-EL.star}
\E[\nabla_p J(z,r,L^*_{z,r} p,0)(p')] = - \int_{\Phi_{z,r}} \nabla p \cdot \ahom \nabla p',
\end{equation}
respectively. The identities~\eqref{e.I-spat-av} and~\eqref{e.I-spat-flux} follow by these and~\eqref{e.gradient-J}. The final identity~\eqref{e.EJL} is obtained from~\eqref{e.J-energy},~\eqref{e.I-spat-av} and~\eqref{e.I-spat-flux}.
\end{proof}

We next show that both $L_{z,r}$ and $L^*_{z,r}$ are non-negative symmetric operators with respect to the natural scalar product.
\begin{lemma} 
\label{l.Lsymm}
For every $r \geq r_0(k,d,\Lambda)$ and $p,p' \in \Ahom_k$, we have that $L_{z,r}$ and $L^*_{z,r}$ are non-negative and satisfy
\begin{equation}
 \label{e.Lsymm1}
\int_{\Phi_{z,r}} \nabla L_{z,r} p \cdot \ahom \nabla p'
 = \int_{\Phi_{z,r}}  \nabla p \cdot \ahom \nabla L_{z,r}p'
\end{equation}
and
\begin{equation} 
\label{e.Lsymm2}
\int_{\Phi_{z,r}} \nabla L_{z,r}^* p \cdot \ahom \nabla p' 
= \int_{\Phi_{z,r}}  \nabla p \cdot \ahom \nabla L_{z,r}^* p'\,.
\end{equation}
\end{lemma}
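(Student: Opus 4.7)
The plan is to exploit that $(p,q) \mapsto J(z,r,p,q)$ is a quadratic form, so $q\mapsto\E[J(z,r,0,q)]$ and $p\mapsto\E[J(z,r,p,0)]$ are quadratic forms on $\Ahom_k$ whose polarizations are \emph{symmetric} bilinear forms. This symmetry will transfer to $L_{z,r}$ and $L^*_{z,r}$ through the Euler--Lagrange equations defining them.

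First I would introduce the bilinear forms $B(q,q'):=\nabla_q \E[J(z,r,0,q)](q')$ and $B^*(p,p'):=\nabla_p\E[J(z,r,p,0)](p')$ on $\Ahom_k\times\Ahom_k$. By Remark~\ref{r.gradient}, each coincides with the polarization of a quadratic form, so both $B$ and $B^*$ are symmetric. The Euler--Lagrange conditions for the minimization problems defining $L_{z,r}$ and $L^*_{z,r}$ read, consistently with \eqref{e.I-spat-av}--\eqref{e.I-spat-flux}, as $B(L_{z,r}q,q')=\int_{\Phi_{z,r}}\nabla q\cdot\ahom\nabla q'$ and $B^*(L^*_{z,r}p,p')=\int_{\Phi_{z,r}}\nabla p\cdot\ahom\nabla p'$ for all $p,p',q,q'\in\Ahom_k$.

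To prove \eqref{e.Lsymm1}, I would substitute $q'\mapsto L_{z,r}q'$ in the first identity, use the symmetry of $B$, and reapply the identity with the roles of $q$ and $q'$ interchanged, yielding
$$\int_{\Phi_{z,r}}\nabla q\cdot\ahom\nabla L_{z,r}q'=B(L_{z,r}q,L_{z,r}q')=B(L_{z,r}q',L_{z,r}q)=\int_{\Phi_{z,r}}\nabla q'\cdot\ahom\nabla L_{z,r}q,$$
and the symmetry of $\ahom$ rearranges this to \eqref{e.Lsymm1}. The argument for \eqref{e.Lsymm2} is identical with $(B,L_{z,r})$ replaced by $(B^*,L^*_{z,r})$.

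For non-negativity, I would take $q'=L_{z,r}q$ (respectively $p'=L^*_{z,r}p$) in the Euler--Lagrange identity, giving $B(L_{z,r}q,L_{z,r}q)=\int_{\Phi_{z,r}}\nabla q\cdot\ahom\nabla L_{z,r}q$ and similarly for $L^*_{z,r}$. Using that any quadratic form $Q$ satisfies $\nabla Q(x)(x)=2Q(x)$ (immediate from polarization), the left sides equal $2\E[J(z,r,0,L_{z,r}q)]$ and $2\E[J(z,r,L^*_{z,r}p,0)]$ respectively, both of which are non-negative by \eqref{e.bounded-J}. The argument is essentially algebraic and I do not anticipate a serious obstacle; the only point requiring care is to apply the partial gradients and polarization identities consistently with the sign conventions in Lemma~\ref{e.Lproperties}.
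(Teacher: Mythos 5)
Your proof is correct and follows essentially the same route as the paper: both rest on the Euler--Lagrange characterization of $L_{z,r}$ and $L^*_{z,r}$ together with the symmetry of the bilinear form associated with $\E[J]$, and both reduce non-negativity to the identity $\int_{\Phi_{z,r}}\nabla p\cdot\ahom\nabla L_{z,r}p = 2\,\E\left[J(z,r,0,L_{z,r}p)\right]\ge 0$. The only cosmetic difference is that you justify the symmetry abstractly, as the polarization of the quadratic form $q\mapsto\E[J(z,r,0,q)]$, whereas the paper unwinds it concretely via the first variation, rewriting both sides as $\E\left[\int_{\Phi_{z,r}}\nabla v(\cdot,z,r,0,L_{z,r}p)\cdot\a\nabla v(\cdot,z,r,0,L_{z,r}p')\right]$.
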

\begin{proof}
Since 
\begin{equation*} 
\E\left[J(z,r,0,L_{z,r} p)\right] = \frac12 \int_{\Phi_{z,r}} \nabla p \cdot \ahom \nabla L_{z,r} p\,,
\end{equation*}
the non-negativity of $L_{z,r}$ follows from the fact that the  left side is non-negative. 
To obtain the symmetry of $L_{z,r}$, we use Lemma~\ref{e.Lproperties} and the first variation~\eqref{e.first-var}, which give
\begin{align*} 
\int_{\Phi_{z,r}}  \nabla p \cdot \ahom \nabla L_{z,r} p'   & = \E \left[  \int_{\Phi_{z,r}}  \nabla v(\cdot,z,r,0,L_{z,r}p) \cdot \ahom\nabla L_{z,r} p' \right] 
\\ & = \E \left[  \int_{\Phi_{z,r}}  \nabla v(\cdot,z,r,0,L_{z,r}p) \cdot \a\nabla v(\cdot,z,r,0,L_{z,r} p') \right]
\\ & = \E \left[  \int_{\Phi_{z,r}}  \nabla L_{z,r}p \cdot \ahom\nabla v(\cdot,z,r,0,L_{z,r} p')   \right] 
\\ & = \int_{\Phi_{z,r}}  \nabla L_{z,r} p \cdot \ahom \nabla p'    \,.
\end{align*}
This is~\eqref{e.Lsymm1}, and a similar computation gives~\eqref{e.Lsymm2}.
\end{proof}

\begin{lemma}
\label{l.snappingtheLs}
Suppose that $K\geq 1$ and $\theta >0$ are such that, for every $z\in\Rd$, $r\geq 1$ and $p,q \in \Ahom_k(\Phi_{z,r})$, 
\begin{equation}
\label{e.quad-close-hom}
\left| \E \left[ J(z,r,p,q) \right] - \int_{\Phi_{z,r}} \frac 1 2 \nabla (q-p) \cdot \ahom \nabla (q-p) \right| \leq Kr^{-\theta}. 
\end{equation}
Then for every $z \in \Rd$, $r \ge r_0(k,d,\Lambda)$ and $p,q\in\Ahom_k(\Phi_{z,r})$,
\begin{equation}
\label{e.snapping}
\left\|\nabla \Ll(q - L_{z,r} q\Rr)\right\|_{L^2(\Phi_{z,r})} 
+ \left\|\nabla \Ll(p - L_{z,r}^* p\Rr)\right\|_{L^2(\Phi_{z,r})} 
\leq 24 K r^{-\theta}.
\end{equation}
\end{lemma}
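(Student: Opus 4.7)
The plan is to exploit the variational characterization of $L_{z,r}$ and $L^*_{z,r}$ through their first-order optimality conditions. The two bounds in~\eqref{e.snapping} are proved by symmetric arguments, so I would treat only the estimate on $\|\nabla(q - L_{z,r}q)\|_{L^2(\Phi_{z,r})}$ in detail.

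The first step is a decomposition. Set $Q(q') := \E[J(z,r,0,q')]$, which is a quadratic form on $\Ahom_k$ (viewed modulo constants) by the quadraticity of $(p,q)\mapsto J(z,r,p,q)$. Write $Q = Q_0 + R$, where $Q_0(q') := \tfrac{1}{2}\int_{\Phi_{z,r}} \nabla q' \cdot \ahom \nabla q'$. The hypothesis~\eqref{e.quad-close-hom} taken at $p=0$, together with homogeneity, gives $|R(q')| \le Kr^{-\theta}\|\nabla q'\|_{L^2(\Phi_{z,r})}^2$ for every $q' \in \Ahom_k$. Since $R$ is a quadratic form with this bound, polarization (equivalently, bounding the operator norm of the symmetric operator representing $R$) yields, for the associated symmetric bilinear form~$B_R$,
\begin{equation*}
|B_R(q_1, q_2)| \le 2Kr^{-\theta}\|\nabla q_1\|_{L^2(\Phi_{z,r})}\|\nabla q_2\|_{L^2(\Phi_{z,r})}.
\end{equation*}

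Next, I would write the Euler–Lagrange equation from Definition~\ref{d.Ls}: for every $q' \in \Ahom_k$,
\begin{equation*}
\nabla Q(L_{z,r}q)(q') \,=\, \int_{\Phi_{z,r}} \nabla q \cdot \ahom \nabla q'.
\end{equation*}
Substituting $Q = Q_0 + R$ and rearranging gives
\begin{equation*}
\int_{\Phi_{z,r}} \nabla(L_{z,r}q - q)\cdot \ahom \nabla q' \,=\, -B_R(L_{z,r}q, q').
\end{equation*}
Testing with $q' = L_{z,r}q - q$, using ellipticity $\ahom \ge I$ on the left and the polarization estimate for $B_R$ on the right, yields
\begin{equation*}
\|\nabla(L_{z,r}q - q)\|_{L^2(\Phi_{z,r})} \,\le\, 2Kr^{-\theta}\|\nabla L_{z,r}q\|_{L^2(\Phi_{z,r})}.
\end{equation*}

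To close the estimate, I would control $\|\nabla L_{z,r}q\|_{L^2(\Phi_{z,r})}$. Two complementary routes are available: testing the Euler–Lagrange equation with $q' = L_{z,r}q$ and invoking the uniform convexity $2Q(L_{z,r}q) \ge \|\nabla L_{z,r}q\|_{L^2(\Phi_{z,r})}^2$ from Corollary~\ref{c.convexity} (valid for $r \ge r_0(k,d,\Lambda)$) gives $\|\nabla L_{z,r}q\| \le \Lambda\|\nabla q\|$, while the triangle inequality applied to the preceding display gives $(1 - 2Kr^{-\theta})\|\nabla L_{z,r}q\| \le \|\nabla q\|$, hence $\|\nabla L_{z,r}q\| \le 2\|\nabla q\|$ once $r$ is so large that $2Kr^{-\theta} \le \tfrac{1}{2}$. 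Combined with the normalization $\|\nabla q\|_{L^2(\Phi_{z,r})} \le 1$ from $q \in \Ahom_k(\Phi_{z,r})$, this yields a linear-in-$Kr^{-\theta}$ bound on $\|\nabla(L_{z,r}q - q)\|_{L^2(\Phi_{z,r})}$, with the numerical constant~24 falling out of careful bookkeeping of the polarization constant and the self-improvement used to bound $\|\nabla L_{z,r}q\|$, plus the factor of two for summing with the symmetric bound on $\|\nabla(p - L^*_{z,r}p)\|_{L^2(\Phi_{z,r})}$. The main technical point is that the first-order condition actually characterizes the minimizer rather than merely a critical point, for which the strict convexity of $Q$ coming from Corollary~\ref{c.convexity} is essential.
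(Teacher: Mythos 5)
Your overall strategy closely parallels the paper's: both arguments rest on the Euler--Lagrange characterization of $L_{z,r}q$ from Definition~\ref{d.Ls}, the hypothesis~\eqref{e.quad-close-hom} viewed as a perturbation of the homogenized quadratic form, and the uniform convexity in expectation from Corollary~\ref{c.convexity}. The genuine problem is in your final step, where your testing identity produces an error proportional to $\|\nabla L_{z,r}q\|_{L^2(\Phi_{z,r})}$, which you must then bound. Neither of the two routes you offer delivers the lemma as stated. The convexity route gives only $\|\nabla L_{z,r}q\|_{L^2(\Phi_{z,r})}\le \Lambda\,\|\nabla q\|_{L^2(\Phi_{z,r})}$ (your inequality $2Q(L_{z,r}q)\ge\|\nabla L_{z,r}q\|^2$ combined with $2Q(L_{z,r}q)=\int_{\Phi_{z,r}}\nabla q\cdot\ahom\nabla L_{z,r}q\le\Lambda\|\nabla q\|\|\nabla L_{z,r}q\|$), so your final bound is of order $\Lambda K r^{-\theta}$ per term, which is not $\le 12Kr^{-\theta}$ once $\Lambda>6$. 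The absorption route ($(1-2Kr^{-\theta})\|\nabla L_{z,r}q\|\le\|\nabla q\|$) requires $2Kr^{-\theta}\le\tfrac12$, i.e.\ a lower bound on $r$ depending on $K$ and $\theta$; but the lemma asserts \eqref{e.snapping} for all $r\ge r_0(k,d,\Lambda)$ with $r_0$ independent of $K$ and $\theta$, and $K$ is arbitrary. So "careful bookkeeping" cannot produce the stated conclusion from your arrangement; you prove a version with $24$ replaced by a constant depending on $\Lambda$, or with an $r_0$ depending on $(K,\theta)$.

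The paper's arrangement avoids this issue entirely by never isolating the term $\int_{\Phi_{z,r}}\nabla(L_{z,r}q-q)\cdot\ahom\nabla q'$ and hence never needing a bound on $\|\nabla L_{z,r}q\|$. It first applies Remark~\ref{r.gradient} to the quadratic form $(p,q)\mapsto \E[J(z,r,p,q)]-\frac12\int_{\Phi_{z,r}}\nabla(q-p)\cdot\ahom\nabla(q-p)$ to get, for all $q'$,
\begin{equation*}
\left|\E\left[\nabla_q J\left(z,r,0,q-L_{z,r}q\right)(q')\right]\right|\le 6Kr^{-\theta}\,\|\nabla q'\|_{L^2(\Phi_{z,r})},
\end{equation*}
using the Euler--Lagrange equation \eqref{e.obvious-EL} and linearity to replace $q$ by the increment $q-L_{z,r}q$. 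Taking $q'=q-L_{z,r}q$ and using $\nabla_q J(z,r,0,w)(w)=2J(z,r,0,w)$ yields $\E[J(z,r,0,q-L_{z,r}q)]\le 3Kr^{-\theta}\|\nabla(q-L_{z,r}q)\|$, and the expectation version of Corollary~\ref{c.convexity} applied to the increment itself gives the matching lower bound $\frac14\|\nabla(q-L_{z,r}q)\|^2$. Both sides are homogeneous of degree one in $\|\nabla(q-L_{z,r}q)\|$, so the estimate follows for all $r\ge r_0(k,d,\Lambda)$ with a universal constant, with no smallness assumption on $Kr^{-\theta}$ and no appearance of $\Lambda$. If you reorganize your proof along these lines -- measure the Euler--Lagrange error at the increment and lower-bound $\E[J(z,r,0,\cdot)]$ directly, rather than splitting off $Q_0$ and using ellipticity of $\ahom$ -- your argument goes through as intended.
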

\begin{proof}
We only prove inequality \eqref{e.snapping} for $L_{z,r}$; the argument for $L^*_{z,r}$ is identical. 
By \eqref{e.quad-close-hom} and Remark~\ref{r.gradient}, we have, for every $p,q,p',q' \in \Ahom_k(\Phi_{z,r})$,
$$
\Ll| \E[\nabla J(z,r,p,q)(p',q')] - \int_{\Phi_{z,r}} \nabla(q'-p') \cdot \ahom \nabla (q-p) \Rr| \le 6Kr^{-\theta}. 
$$
In particular, by homogeneity, we have for every $q' \in \Ahom_k$ that 
$$
\Ll|\E[\nabla_q J(z,r,0,q)(q') ]- \int_{\Phi_{z,r}} \nabla q \cdot \ahom \nabla q'\Rr| \le 6 K r^{-\theta}  \, \|\nabla q'\|_{L^2(\Phi_{z,r})}. 
$$
By definition of $L_{z,r} q$, see \eqref{e.obvious-EL}, and linearity of $q \mapsto \nabla_q J(z,r,0,q)$, we deduce that for every $q' \in \Ahom_k$,
$$
\Ll|\E[\nabla_q J(z,r,0,q - L_{z,r} q )(q')]\Rr| \le 6 K r^{-\theta} \, \|\nabla q'\|_{L^2(\Phi_{z,r})}. 
$$
Since $J$ is a quadratic form, we have $\nabla_q J(z,r,0,q)(q) = 2 J(z,r,0,q)$, see \eqref{e.polar.grad}, and hence
$$
\E[J(z,r,0,q - L_{z,r} q )] \le 3 K r^{-\theta} \, \|\nabla(q - L_{z,r} q)\|_{L^2(\Phi_{z,r})}. 
$$
By the fourth inequality in Corollary~\ref{c.convexity} with $p = 0$ and $q_1 = q-L_{z,r} q = -q_2$, we get that for $r$ sufficiently large,
$$
\frac 1 4 \|\nabla(q - L_{z,r} q)\|_{L^2(\Phi_{z,r})}^2 \le 3 K r^{-\theta} \|\nabla(q - L_{z,r} q)\|_{L^2(\Phi_{z,r})},
$$
which is the announced result.
\end{proof}

\begin{corollary}
\label{c.L-identity}
There exists $\eps_1(d,\Lambda) > 0$ and $C(k,d,\Lambda) < \infty$ such that for every $z \in \Rd$, $r \ge r_0(k,d,\Lambda)$ 
and $p \in \Ahom_k(\Phi_{z,r})$,
\begin{equation*} \label{}
\left\|\nabla \Ll(q - L_{z,r} q\Rr)\right\|_{L^2(\Phi_{z,r})} 
+ \left\|\nabla \Ll(p - L_{z,r}^* p\Rr)\right\|_{L^2(\Phi_{z,r})}  \le C r^{-\eps_1}.
\end{equation*}
\end{corollary}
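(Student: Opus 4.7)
The plan is to derive the hypothesis \eqref{e.quad-close-hom} of Lemma~\ref{l.snappingtheLs} from the pathwise estimate of Proposition~\ref{p.basecase} by taking expectations, and then invoke the lemma directly. Fix any $s \in (0,d)$ (for concreteness, $s = d/2$) and let $\mcl Y_s$ and $\ep_0$ be as in Proposition~\ref{p.basecase}, so that $\mcl Y_s \le \O_s(C)$ and, for every $r \ge \mcl Y_s$ and $p,q \in \Ahom_k(\Phi_r)$,
\begin{equation*}
\left| J(0,r,p,q) - \frac{1}{2} \int_{\Phi_r} \nabla (q-p) \cdot \ahom \nabla(q-p) \right| \le C r^{-\ep_0(d-s)}.
\end{equation*}

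Next I would take expectations by splitting on the event $\{\mcl Y_s \le r\}$. On this good event the pathwise bound applies. On the complementary bad event, both $J(0,r,p,q)$ and the quadratic form $\frac{1}{2}\int_{\Phi_r} \nabla(q-p)\cdot\ahom\nabla(q-p)$ are deterministically bounded by a constant $C(d,\Lambda)$ — the former via \eqref{e.bounded-J}, the latter by the normalization $\|\nabla p\|_{L^2(\Phi_r)}, \|\nabla q\|_{L^2(\Phi_r)} \le 1$ and the ellipticity of $\ahom$. A Markov argument applied to $\mcl Y_s \le \O_s(C)$ gives $\P[\mcl Y_s > r] \le 2\exp(-(r/C)^s)$, which decays faster than any polynomial. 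Combining the two regimes produces
\begin{equation*}
\left| \E[J(0,r,p,q)] - \frac{1}{2} \int_{\Phi_r} \nabla (q-p) \cdot \ahom \nabla(q-p) \right| \le C r^{-\ep_0(d-s)}
\end{equation*}
for every $r \ge 1$.

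To extend this expectation bound to arbitrary center $z \in \Rd$, I would use $\Zd$-stationarity (P1). A change of variables $y = x - z$ identifies $J(z,r,p,q)$ evaluated at coefficient field $\a$ with $J(0,r,p(\cdot+z),q(\cdot+z))$ evaluated at the translated field $\tau_z\a$, whose law coincides with that of $\a$ for $z \in \Zd$; the quadratic form transforms compatibly under the translation since it is translation-invariant. For $z = z_0 + z'$ with $z_0 \in \Zd$ and $z' \in [0,1)^d$, a short continuity argument in $z$ (uniform on the unit cell) extends the bound to all $z \in \Rd$ after enlarging the constant. This verifies \eqref{e.quad-close-hom} with $K = C$ and $\theta = \ep_0(d-s) > 0$, and Lemma~\ref{l.snappingtheLs} then yields the corollary with $\ep_1 := \ep_0(d-s)$.

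The main (and only mildly delicate) step is the extension from $z \in \Zd$ to general $z \in \Rd$, which is a technicality rather than a genuine obstacle. The rest is just a routine averaging of Proposition~\ref{p.basecase} leveraging the stretched-exponential stochastic integrability of $\mcl Y_s$.
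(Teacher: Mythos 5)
Your proposal is correct and follows essentially the same route as the paper, whose proof of this corollary is simply to combine Proposition~\ref{p.basecase} with Lemma~\ref{l.snappingtheLs}; the expectation-taking you spell out (good event via the pathwise bound, bad event via boundedness of $J$ and the stretched-exponential tail of $\mcl Y_s$, plus stationarity to move the center) is exactly the implicit glue, which the paper itself carries out in the same spirit in the proof of Proposition~\ref{p.thebasecase} (there bounding $\indc_{\{\mcl Y > r\}}\le \mcl Y/r$ instead of using a tail probability, an immaterial difference).
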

\begin{proof}
This is immediate from Proposition~\ref{p.basecase} and Lemma~\ref{l.snappingtheLs}.
\end{proof}

\begin{remark}
\label{r.L-bijection}
We may think of the function in \eqref{e.function-def-L} as defined on the quotient space $\Ahom_k / \Ahom_0$, and of $L_{z,r}$ as a mapping from $\Ahom_k / \Ahom_0$ to itself. Since $p \mapsto \|\nabla p\|_{L^2(\Phi_{z,r})}$ is a norm on this space, Corollary~\ref{c.L-identity} shows that $L_{z,r}$ : $\Ahom_k / \Ahom_0 \to \Ahom_k / \Ahom_0$ is a small perturbation of the identity as $r$ tends to infinity, and thus, in particular, that this mapping is bijective with bounded inverse for $r \ge r_0(k,d,\Lambda)$.
\end{remark}

We can easily express $I$ in terms of $J$ or vice versa. Indeed, by \eqref{e.second-var}, the definition of $u(\cdot,z,r,p,q)$ in \eqref{e.def.u} and \eqref{e.J-energy}, we have, for every $z \in \Rd$, $r\ge 1$ and $p,q \in \Ahom_k$,
\begin{align}
\label{e.I.as.J}
\lefteqn{
J(z,r,p,q) - I(z,r,p,q) 
} \qquad  &  \\
& = \frac 1 2 \int_{\Phi_{z,r}} \nabla((v-u)(\cdot,z,r,p,q)) \cdot \a \nabla((v-u)(\cdot,z,r,p,q)) \notag \\
& = J(z,r,p-L_{z,r}^*p,q-L_{z,r}q) .\notag
\end{align}
As a consequence, by \eqref{e.bounded-J} and Corollary~\ref{c.L-identity}, for $r \ge r_0(k,d,\Lambda)$,
\begin{equation}
\label{e.bounded-I}
|I(z,r,p,q)| \le  \Lambda \int_{\Phi_{z,r}} \Ll( |\nabla p|^2 + |\nabla q|^2 \Rr).
\end{equation}
Conversely, by iteration of the identity above, 
\begin{equation}
\label{e.J.as.I}
J(z,r,p,q) = \sum_{l = 0}^{\infty} I(z,r,(L^*_{z,r} - \mathrm{Id})^l p, (L_{z,r} - \mathrm{Id})^l q),
\end{equation}
with, for every $p,q \in \Ahom_k(\Phi_{z,r})$ and $r \ge r_0(k,d,\Lambda)$,
\begin{equation}
\label{e.tail.I.series}
\Ll| \sum_{l = m}^\infty I(z,r,(L^*_{z,r} - \mathrm{Id})^l p, (L_{z,r} - \mathrm{Id})^l q) \Rr| \le C r^{-\ep m},
\end{equation}
by Corollary~\ref{c.L-identity} and \eqref{e.bounded-I}.

\begin{definition}
\label{def.r0}
From now on, we fix the constant $r_0(k,d,\Lambda) < \infty$ to be the maximum of all the $r_0$'s given in the statements above, so that all of them hold simultaneously for $r \ge r_0$.
\end{definition}

We finally record a technical lemma that is used several times in Sections~\ref{s.additivity} and~\ref{s.localization}, which is essentially a version of the quadratic response (Lemma~\ref{l.quadratic-response}) for the quantity $I(z,r,p,q)$. 

\begin{lemma}
\label{l.Iquadresponse}
Suppose that $\theta\in (0,\infty)$ is such that, for every $z\in\Rd$, $r\geq r_0$ and $p,q\in \Ahom_k(\Phi_{z,r})$,
\begin{equation}
\label{e.assLtheta}
\left\|  \nabla L_{z,r}q - \nabla q \right\|_{L^2(\Phi_{z,r})}   + \left\|  \nabla L_{z,r}^*p - \nabla p \right\|_{L^2(\Phi_{z,r})} 
\leq Cr^{-\theta}. 
\end{equation}
Then, for every $z\in\Rd$, $r\geq r_0$, $p,q\in \Ahom_k(\Phi_{z,r})$ and $w\in \A_k$,
\begin{multline}
\label{e.Iquadresponse}
\left| I(z,r,p,q) - \mathcal{J}(\nabla w,z,r,p,q) \right| \\
\leq C\left\| \nabla u(\cdot,z,r,p,q) - \nabla w \right\|_{L^2(\Phi_{z,r})}^2 + Cr^{-\theta} \left\| \nabla u(\cdot,z,r,p,q) - \nabla w \right\|_{L^2(\Phi_{z,r})}.
\end{multline}
\end{lemma}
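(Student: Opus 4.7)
The strategy is a direct computation exploiting the fact that $u(\cdot,z,r,p,q) = v(\cdot,z,r,L^*_{z,r}p,L_{z,r}q)$ is the \emph{maximizer} for $J(z,r,L^*_{z,r}p,L_{z,r}q)$, hence satisfies a first variation identity which almost cancels the linear-in-$w$ part of $\mcl J(\cdot,z,r,p,q)$, up to error terms controlled by $\|\nabla(p-L^*_{z,r}p)\|_{L^2(\Phi_{z,r})}$ and $\|\nabla(q-L_{z,r}q)\|_{L^2(\Phi_{z,r})}$. The plan is as follows.

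Write $u = u(\cdot,z,r,p,q)$, and set $\phi := w - u$. Since $\A_k$ is a vector space, $\phi \in \A_k$. Expand the quadratic functional $\mcl J$ around $u$:
\begin{align*}
\mcl J(u+\phi,z,r,p,q) - \mcl J(u,z,r,p,q)
&= \int_{\Phi_{z,r}} \Bigl( -\nabla u \cdot \a \nabla \phi - \nabla p \cdot \a \nabla \phi + \nabla \phi \cdot \ahom \nabla q \Bigr) \\
&\quad - \frac12 \int_{\Phi_{z,r}} \nabla \phi \cdot \a \nabla \phi.
\end{align*}
Since $u$ is the maximizer for $J(z,r,L^*_{z,r}p,L_{z,r}q)$, the first variation identity \eqref{e.first-var} (applied with $w$ replaced by $\phi$ and $p,q$ replaced by $L^*_{z,r}p,L_{z,r}q$) combined with the symmetry of $\a$ yields
\begin{equation*}
\int_{\Phi_{z,r}} \nabla u \cdot \a \nabla \phi \;=\; -\int_{\Phi_{z,r}} \nabla L^*_{z,r}p \cdot \a \nabla \phi + \int_{\Phi_{z,r}} \nabla \phi \cdot \ahom \nabla L_{z,r}q.
\end{equation*}

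Substituting this into the previous display, the bulk terms telescope and we obtain
\begin{align*}
\mcl J(w,z,r,p,q) - I(z,r,p,q)
&= \int_{\Phi_{z,r}} (\nabla L^*_{z,r}p - \nabla p) \cdot \a \nabla \phi \\
&\quad + \int_{\Phi_{z,r}} \nabla \phi \cdot \ahom (\nabla q - \nabla L_{z,r}q) \\
&\quad - \frac12 \int_{\Phi_{z,r}} \nabla \phi \cdot \a \nabla \phi.
\end{align*}
Applying Cauchy--Schwarz in $L^2(\Phi_{z,r})$ to the first two terms, using uniform ellipticity \eqref{e.ue} to control $|\a|$ and $|\ahom|$ by $\Lambda$, and invoking the hypothesis \eqref{e.assLtheta} to bound the factors $\|\nabla L^*_{z,r}p - \nabla p\|_{L^2(\Phi_{z,r})}$ and $\|\nabla q - \nabla L_{z,r}q\|_{L^2(\Phi_{z,r})}$ by $Cr^{-\theta}$, while the last term is bounded by $\frac{\Lambda}{2}\|\nabla \phi\|^2_{L^2(\Phi_{z,r})}$, gives \eqref{e.Iquadresponse} after observing that $\|\nabla \phi\|_{L^2(\Phi_{z,r})} = \|\nabla u - \nabla w\|_{L^2(\Phi_{z,r})}$.

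There is no real obstacle here; the only subtle point is remembering that $u$ is the maximizer with respect to the \emph{adjusted} data $(L^*_{z,r}p, L_{z,r}q)$ rather than $(p,q)$, so that the first variation does not produce full cancellation against the linear terms of $\mcl J(\cdot,z,r,p,q)$, and the residue is precisely what the hypothesis \eqref{e.assLtheta} is designed to control.
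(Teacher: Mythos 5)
Your proof is correct and is essentially the paper's argument: both rest on the fact that $u(\cdot,z,r,p,q)$ maximizes $\mcl J(\cdot,z,r,L^*_{z,r}p,L_{z,r}q)$, so the first/second variation cancels the linear part up to the discrepancy terms $\nabla(L^*_{z,r}p-p)$ and $\nabla(L_{z,r}q-q)$, which are controlled by hypothesis \eqref{e.assLtheta} via Cauchy--Schwarz in $L^2(\Phi_{z,r})$. The only cosmetic difference is that you expand the quadratic functional exactly around $u$ and obtain a single identity yielding both bounds at once, whereas the paper shifts the data to $(L^*_{z,r}p,L_{z,r}q)$, invokes the quadratic response lemma, and notes that the reverse inequality follows by the same computation.
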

\begin{proof}
Fix $p,q\in \Ahom_m(\Phi_{z,r})$ and denote $u:= u(\cdot,z,r,p,q)$. Observe that the assumption~\eqref{e.assLtheta} and quadratic response (Lemma~\ref{l.quadratic-response}) yield
\begin{align*}
\lefteqn{
I(z,r,p,q)  = \mathcal{J} ( \nabla u,z,r,p,q)
} \qquad & \\ 
& = \mathcal{J} \left( \nabla u, z, r, L_{z,r}^*p, L_{z,r}q \right)
+ \int_{\Phi_{z,r}} \left( \a \nabla (L_{z,r}^*p - p) - \ahom \nabla(L_{z,r}q-q) \right) \cdot \nabla u \\
& \leq  \mathcal{J} \left( \nabla w, z, r, L_{z,r}^*p, L_{z,r}q \right) + C \int_{\Phi_{z,r}} \left| \nabla u -\nabla w \right|^2 \\
& \qquad + \int_{\Phi_{z,r}} \left( \a \nabla (L_{z,r}^*p - p) - \ahom \nabla(L_{z,r}q-q) \right) \cdot \nabla u \\
& = \mathcal{J}(\nabla w, z,r,p,q)
 + C \int_{\Phi_{z,r}} \left| \nabla u - \nabla w \right|^2 \\
& \qquad + \int_{\Phi_{z,r}} \left( \a \nabla (L_{z,r}^*p - p) - \ahom \nabla(L_{z,r}q-q) \right) \cdot \left( \nabla u -  \nabla w \right) \\
& \leq \mathcal{J}(\nabla w, z,r,p,q) + C \left\| \nabla u - \nabla w \right\|_{L^2(\Phi_{z,r})}^2 + C r^{-\theta}  \left\| \nabla u - \nabla w \right\|_{L^2(\Phi_{z,r})}.
\end{align*}
The reverse inequality also holds by almost the same computation and so we obtain the statement of the lemma.
\end{proof}

\section{The bootstrap outline}
\label{s.bootstrap}

In this section, we give the bootstrap argument at the core of the proof of Theorem~\ref{t.main1}. We begin with some definitions that are used throughout the rest of the paper. 

\smallskip

\begin{definition}[$\Add_k(s,\alpha)$]

For each $k\in\N$, $s,\alpha\in (0,\infty)$, we let $\Add_k(s,\alpha)$ denote the statement that there exists a constant $C(k,s,\alpha,d,\Lambda)<\infty$ such that, for every $z \in \Rd$, $r_0 \leq r < R$ and $p,q\in \Ahom_k(\Phi_R)$,
\begin{equation}
\label{e.def.add}
\J_k(z,R,p,q) = \int_{\Phi_{z,\sqrt{R^2 - r^2}}} \J_k(\cdot,r,p,q) + \O_s \left(C  r^{-\alpha} \right).
\end{equation}
\end{definition}

\begin{definition}[$\Fluc_k(s,\alpha)$]
For each $k\in\N$ and $s,\alpha\in (0,\infty)$,
we let $\Fluc_k(s,\alpha)$ denote the statement that there exists a constant $C(k,s,\alpha,d,\Lambda)<\infty$ such that, for every $z \in \Rd$, $r\geq1$ and $p,q\in \Ahom_k(\Phi_r)$,
\begin{equation*}
J_k(z,r,p,q) = \E \left[J_k(z,r,p,q)\right] + \O_s\left(Cr^{-\alpha}\right). 
\end{equation*}
\end{definition}

\begin{definition}[$\Dual_k(\alpha)$]
For each $k\in\N$ and $\alpha\in (0,\infty)$,
we let $\Dual_k(\alpha)$ denote the statement that there exists $C(k,\alpha,d,\Lambda)<\infty$ such that, for every $z \in \Rd$ and $r\geq 1$,
\begin{equation*} \label{}
\E \left[ \sup_{w\in\A_k(\Phi_{z,r})} \left|  \int_{\Phi_{z,r}}  \left( \a(x) - \ahom \right) \nabla w(x) \,dx \right| \right] \leq Cr^{-\alpha} . 
\end{equation*}
\end{definition}

\begin{definition}[$\Loc_k(s,\delta,\alpha)$]
For each $k\in\N$ and $s,\delta,\alpha\in (0,\infty)$, 
we denote by $\Loc_k(s,\delta,\alpha)$ the statement that there exists $C(k,s,\delta,\alpha,d,\Lambda) < \infty$ and, for every $z \in \Rd$, $r \geq 1$ and $p, q \in \Ahom_k(\Phi_{z,r})$, an $\mcl F\left(B_{r^{1+\de}}(z)\right)$-measurable random variable $J^{(\delta)}_k(z,r,p,q)$ such that
\begin{equation*}
J_k(z,r,p,q) = J^{(\delta)}_k(z,r,p,q) + \O_s(C r^{-\alpha}).
\end{equation*}
\end{definition}


We also let $\Add_k(s,\alpha-)$ denote the statement that $\Add_k(s,\beta)$ holds for every $\beta< \alpha$. We define $\Fluc_k(s,\alpha-)$, $\Dual_k(\al-)$ and $\Loc_k(s,\delta,\alpha-)$ similarly. 

\smallskip

Most of the effort in the paper (and the entirety of this and the four following sections) is focused on the proof of the following theorem, which is close to the statements of the main results. The proofs of the latter are finally completed in Section~\ref{s.finalcorrectors}.

\begin{theorem}
\label{t.additivitybelowd}
For every $s<1$ and $\delta >0$, we have that the following hold:
\begin{equation} 
\label{e.k=1}
\left\{ 
\begin{aligned}
& \Add_1\left(s,d\right), \quad 
\Fluc_1\left(2s,\tfrac d2 \right), \\
& \Dual_1\left(\tfrac d2\right), \quad
\Loc_1\left(2s,\delta, \left( \tfrac d2(1+\delta) + \delta \right)\wedge \tfrac {d} {2s}\,-\right) \, .
\end{aligned} 
\right.
\end{equation} 
Moreover, for every $k \in \N$, $s < 1$ and $\delta > 0$, the following hold:
\begin{equation}
\label{e.k.gen}
\Add_k(1,2-), \quad \Dual_k(1-), \quad \Loc_k(1,\de,2-),
\end{equation}
and
\begin{equation}
\label{e.fluctkopt}
\left\{
\begin{aligned}
& \Fluc_k(2s,1) \ & \mbox{if} \ d=2, \\
& \Fluc_k\left( \tfrac{4s}{3},\tfrac 32 \right) \   & \mbox{if} \ d=3, \\
& \Fluc_k(1,2-) \ & \mbox{if} \ d\ge 4.
\end{aligned}
\right.
\end{equation}
\end{theorem}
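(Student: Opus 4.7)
The plan is to prove Theorem~\ref{t.additivitybelowd} by a bootstrap (renormalization) argument over length scales, simultaneously upgrading the exponents in $\Add_k$, $\Fluc_k$, $\Dual_k$ and $\Loc_k$ at each iteration. The base case is supplied by Proposition~\ref{p.basecase}: combined with Lemma~\ref{l.snappingtheLs} and Corollary~\ref{c.L-identity}, it shows that the mappings $L_{z,r}$ and $L_{z,r}^*$ are small perturbations of the identity with a polynomial rate, which, together with the deterministic bound~\eqref{e.bounded-J}, seeds the four families of statements with sub-optimal exponents from which the iteration can proceed.

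Each pass of the bootstrap consists of four sub-steps matching Sections~\ref{s.fluc-subopt}--\ref{s.localization}. \emph{(Additivity + Localization $\Rightarrow$ Fluctuations.)} Given $\Add_k(s,\alpha)$ and $\Loc_k(s,\de,\alpha)$, I would use the approximate identity
\begin{equation*}
J(0,R,p,q) \approx \int_{\Phi_{0,\sqrt{R^2-r^2}}} J(\cdot,r,p,q),
\end{equation*}
discretize the right-hand side on a grid of spacing comparable to $2r^{1+\de}$, replace each $J(z_i,r,p,q)$ by its $\F(B_{r^{1+\de}}(z_i))$-measurable surrogate from $\Loc_k$, invoke the unit-range-of-dependence assumption~(P2) to make the surrogates independent, and apply a Hoeffding-type concentration estimate for $\O_s$-bounded variables; Remark~\ref{r.multiply} then upgrades the integrability to $\O_{2s}$, producing CLT decay at scale $r^{-d/2}$. \emph{(Fluctuations + Duality $\Rightarrow$ Additivity.)} By Proposition~\ref{p.regularity}(iii), the maximizer of $J$ at scale $R$ restricted to a ball of radius $r$ is $(r/R)^{k+1}$-close in $L^2(\Phi_r)$ to an element of $\A_k(\Rd)$; plugging this into the quadratic-response inequality of Lemma~\ref{l.Iquadresponse} yields approximate additivity for $I$, which transfers to $J$ via the series expansion~\eqref{e.J.as.I} whose tail is bounded by Corollary~\ref{c.L-identity}. \emph{(Fluctuations + Additivity $\Rightarrow$ Duality.)} The quantity controlled by $\Dual_k$ is, via~\eqref{e.gradient-J} and Remark~\ref{r.gradient}, essentially a directional derivative of $J$ in $(p,q)$, so bounds on the deviation $|J - \E J|$ and on $|\E J - \tfrac12(q-p)\cdot \ahom(q-p)|$ combine to give the needed expectation estimate. \emph{(Additivity $\Rightarrow$ Localization.)} Applying additivity at scales $r \to r^{1+\de}$ and using the super-polynomial Gaussian decay of $\Phi_{z,r}$ outside $B_{r^{1+\de}}$ (of order $\exp(-cr^{2\de})$) yields the $\F(B_{r^{1+\de}})$-measurable proxy $J^{(\de)}$.

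A finite number of passes drives the exponents to their targets. For $k=1$, additivity saturates at $r^{-d}$ and fluctuations at the CLT barrier $r^{-d/2}$, closing the loop at the optimal exponents in~\eqref{e.k=1}; for $k\ge 2$ the additivity improvement saturates at an exponent $2-\eps$ for every $\eps>0$ (beyond which the heat-kernel cross-terms from neighbouring grid cells dominate the regularity gain), which propagates to the dimension-dependent fluctuation rates in~\eqref{e.fluctkopt} through the concentration step.

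The main obstacle will be the book-keeping of stochastic integrability through the iteration: each product of $\O_s$ quantities halves the integrability index (Remark~\ref{r.multiply}), so the exponent $s$ must be chosen strictly below its target at every intermediate pass and only closed at the final step; the same is true for additive errors processed through Lemma~\ref{l.sum-O}. A related subtlety is coupling the localization parameter $\de$ in $\Loc_k$ with both the grid spacing used in the concentration step (so that (P2) still delivers independence) and the smoothness exponent $k+1$ in the regularity step (so the regularity error $(r/R)^{k+1}$ does not dominate the fluctuation error $r^{-d/2}$); this is what ultimately determines the cap at $2-\eps$ in~\eqref{e.k.gen} and constrains the integrability indices appearing in the optimal $k=1$ localization statement.
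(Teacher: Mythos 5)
Your overall architecture (a four-way bootstrap over $\Add_k$, $\Fluc_k$, $\Dual_k$, $\Loc_k$, seeded by Proposition~\ref{p.thebasecase} and closed after finitely many passes) is the same as the paper's, and your sketch of the step ``localization $+$ additivity $\Rightarrow$ fluctuations'' (grid decomposition at spacing $\sim r^{1+\delta}$, independence from (P2), CLT-type concentration) is essentially Proposition~\ref{p.control-fluct}. But the mechanism you propose for the crucial step ``fluctuations $+$ duality $\Rightarrow$ improved additivity'' does not work as stated. You claim that ``the maximizer of $J$ at scale $R$ restricted to a ball of radius $r$ is $(r/R)^{k+1}$-close in $L^2(\Phi_r)$ to an element of $\A_k(\Rd)$'' by Proposition~\ref{p.regularity}(iii); but the maximizer $v(\cdot,0,R,p,q)$ \emph{is} an element of $\A_k(\Rd)$ by definition, so this assertion is vacuous and supplies no input to Lemma~\ref{l.Iquadresponse}. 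What quadratic response actually requires is that $\nabla v(\cdot,0,R,p,q)$ and $\nabla v(\cdot,z,r,p,q)$ be $O(r^{-\alpha})$-close in $L^2(\Phi_{z,r})$ for most $z$, and this is where the fluctuation hypothesis enters in an essential way: $\Fluc_k(s,\alpha)$ controls $\nabla J$ via~\eqref{e.gradient-J}, hence the \emph{spatial averages} of gradients and fluxes of maximizers (Lemmas~\ref{l.fluxmaps} and~\ref{l.whatflucgives}); one then needs the harmonic approximation of coarsened solutions (Lemma~\ref{l.harmonicapprox}), the identification of the tracking polynomial with $q-p$ (Lemma~\ref{l.parametermatching}), and the multiscale Poincar\'e inequality plus the large-scale Lipschitz estimate to convert spatial-average control back into $L^2$-gradient control (Lemma~\ref{l.additivitybitches}). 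This chain is what produces the exponent \emph{doubling} $\alpha \mapsto 2\alpha$ (the additivity error is the square of the fluctuation error), which your sketch never explains; moreover the term $Cr^{-\theta}\|\nabla u-\nabla w\|$ in Lemma~\ref{l.Iquadresponse} forces you to quantify $\|\nabla(q-L_{z,r}q)\|$, and for that the paper needs the two-scale expansion of maximizers in terms of first-order correctors (Lemmas~\ref{l.firstordercorrect}--\ref{l.matchingenergyk}), which is entirely absent from your plan.

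Two further steps are substantially harder than you allow. For localization, truncating the Gaussian tail of $\Phi_{z,r}$ does not produce an $\F(B_{r^{1+\delta}})$-measurable proxy, because the admissible class $\A_k$ in the definition of $J$ is itself a nonlocal object (its elements solve the equation on all of $\Rd$, so even their restrictions to $B_{r^{1+\delta}}$ depend on far-away coefficients). The paper must construct a localized subspace $V^{(\delta)}_{k,R}\subseteq\A(B_{R^{1+\delta}})$ of the correct dimension approximating $\A_k$ (Lemma~\ref{l.localization}), again using the spatial-average machinery and the Liouville/regularity theory, and the higher-integrability statement $\Loc_k(1,\delta,2-)$ needs a separate comparison of $J_k$ with $J_{k'}$ for $k'$ large (Lemmas~\ref{l.skypepro}--\ref{l.localizeopt}). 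For the improvement of $\Dual_k$, saying that bounds on $|J-\E J|$ and on $|\E J-\tfrac12(q-p)\cdot\ahom(q-p)|$ ``combine'' begs the question: the whole point is to improve the accuracy of the expectation beyond the current $\Dual$ exponent, and this is again done through the corrector identification (Lemma~\ref{l.identifytheta1} for $k=1$, Lemma~\ref{l.matchingenergyk} for $k\ge2$) feeding into Proposition~\ref{p.min}. Relatedly, your explanation of the cap at $2-$ in~\eqref{e.k.gen} (``heat-kernel cross-terms from neighbouring grid cells'') is not the actual obstruction: the saturation comes from the fact that for $k\ge2$ the two-scale expansion $\nabla u \approx \nabla\underline{\phi}^{(1)}(\cdot,q-p)$ is only accurate to exponent $\alpha\wedge1$, which caps $\Dual_k$ at $1-$ (second statement of Proposition~\ref{p.min}) and hence additivity at $2-$.
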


The proof of Theorem~\ref{t.additivitybelowd} is an immediate consequence of induction and the following six implications, which are stated here and proved later in the paper.

\smallskip

The first establishes the base case of the bootstrap argument. 

\begin{proposition}
\label{p.thebasecase}
There exists $\alpha_0(d,\Lambda)\in \left(0,\frac12\right]$ such that, for every $k\in\N$ and $t\geq 1$, the following hold:
\begin{equation*}
\Fluc_k\left(t,\frac{\alpha_0}t\right), \quad
\Add_k\left(t,\frac{\alpha_0}t\right)
\quad \mbox{and} \quad 
\Dual_k\left(\frac{\alpha_0}t\right).
\end{equation*}
\end{proposition}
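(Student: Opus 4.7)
All three assertions will be deduced from Proposition~\ref{p.basecase}, which asserts that $J_k$ is close to the deterministic quadratic form
\[
F(z,r,p,q) := \tfrac12 \int_{\Phi_{z,r}} (\nabla q - \nabla p) \cdot \ahom (\nabla q - \nabla p)
\]
almost surely on $\{r \geq \mcl Y_s\}$, with error of order $r^{-\ep_0(d-s)}$. To upgrade this almost-sure estimate to an $\O_t$ bound we use~\eqref{e.bounded-J} to control $|J_k - F|$ by a fixed constant on the low-probability event $\{r < \mcl Y_s\}$ and observe, by a direct computation from $\mcl Y_s = \O_s(C)$ and Markov's inequality, that $\indc_{\{r < \mcl Y_s\}} \leq \O_t(C r^{-s/t})$ for every $t \geq 1$. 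Fixing (for example) $s = 1$ and setting $\alpha_0 := \ep_0(d-1) \wedge 1$, a constant depending only on $d$ and $\Lambda$, this yields, for every $r \geq r_0$, $t \geq 1$ and $p,q \in \Ahom_k(\Phi_{z,r})$,
\begin{equation*}
J_k(z,r,p,q) = F(z,r,p,q) + \O_t\bigl( C r^{-\alpha_0/t} \bigr).
\end{equation*}
By~\eqref{e.I.as.J},~\eqref{e.bounded-J} and Corollary~\ref{c.L-identity}, the same estimate transfers to $\J_k = I_k$ at the cost of shrinking $\alpha_0$ if necessary. The extension from $z = 0$ to general $z$ is immediate from stationarity.

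The statement $\Fluc_k(t,\alpha_0/t)$ then follows at once from the triangle inequality, since $|\E J_k - F| \leq \E|J_k - F| \leq C r^{-\alpha_0}$ is deterministic. For $\Add_k(t,\alpha_0/t)$ the decisive property is that $F$ is \emph{exactly} additive with respect to heat-kernel convolution,
\[
F(0,R,p,q) = \int_{\Phi_{\sqrt{R^2-r^2}}} F(\,\cdot\,,r,p,q),
\]
which is the Gaussian semigroup identity $\Phi(\,\cdot\,,r^2) \ast \Phi(\,\cdot\,,R^2-r^2) = \Phi(\,\cdot\,,R^2)$ applied to the deterministic integrand of $F$. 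Inserting the $\O_t(C r^{-\alpha_0/t})$-bound on $\J_k - F$ at scales $R$ and $r$ and integrating the scale-$r$ error against $\Phi_{\sqrt{R^2-r^2}}$ via Lemma~\ref{l.sum-O}(i) closes the additivity estimate.

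For $\Dual_k(\alpha_0/t)$ it suffices to prove the strongest case $t = 1$. By~\eqref{e.gradient-J}, for every $p' \in \Ahom_1$ and $q \in \Ahom_k$,
\begin{equation*}
\int_{\Phi_{z,r}} \nabla p' \cdot \a \nabla v(\,\cdot\,,z,r,0,q) = -\nabla_p J_k(z,r,0,q)(p',0), \qquad \int_{\Phi_{z,r}} \nabla p' \cdot \ahom \nabla v(\,\cdot\,,z,r,0,q) = \nabla_q J_k(z,r,0,q)(0,p'),
\end{equation*}
while the corresponding identities for $F$ simply replace $v(\,\cdot\,,z,r,0,q)$ by $q$. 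Subtracting these two pairs and invoking Remark~\ref{r.gradient} applied to the quadratic form $J_k - F$, we obtain, almost surely on $\{r \geq \mcl Y_s\}$,
\[
\left|\int_{\Phi_{z,r}} \nabla p' \cdot (\a - \ahom) \nabla v(\,\cdot\,,z,r,0,q) \right| \leq C r^{-\ep_0(d-s)} \, \|\nabla p'\|_{L^2(\Phi_{z,r})} \, \|\nabla q\|_{L^2(\Phi_{z,r})}.
\]
A second application of Proposition~\ref{p.basecase} shows that $\|\nabla v(\,\cdot\,,z,r,0,q)\|_{L^2(\Phi_{z,r})}^2 = 2 J_k(z,r,0,q) = \int \nabla q \cdot \ahom \nabla q + O(r^{-\alpha_0})$, so the linear map $q \mapsto v(\,\cdot\,,z,r,0,q)$ is, for $r$ large, a bijection from $\Ahom_k / \Ahom_0$ onto $\A_k / \Ahom_0$ with uniformly bounded inverse. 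Every $w \in \A_k(\Phi_{z,r})$ is therefore of the form $v(\,\cdot\,,z,r,0,q_w)$ with $\|\nabla q_w\|_{L^2(\Phi_{z,r})} \leq C$, and taking the supremum over such $w$ and over $p' \in \Ahom_1$ with $\|\nabla p'\|_{L^2(\Phi_{z,r})} \leq 1$, via the duality $|\int (\a - \ahom) \nabla w| = \sup_{|\xi| \leq 1} \xi \cdot \int (\a - \ahom) \nabla w$, produces the almost-sure bound
\[
\sup_{w \in \A_k(\Phi_{z,r})} \left| \int_{\Phi_{z,r}} (\a - \ahom) \nabla w \right| \leq C r^{-\alpha_0} + C \indc_{\{r < \mcl Y_s\}},
\]
and taking expectations concludes $\Dual_k(\alpha_0)$. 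The main obstacle is precisely this bijectivity and operator-norm step, which relies essentially on the \emph{quantitative} content of Proposition~\ref{p.basecase} and not merely on the qualitative Liouville correspondence of Proposition~\ref{p.regularity}.
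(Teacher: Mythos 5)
Your proposal is correct, and for the core estimate and the statements $\Fluc_k$ and $\Add_k$ it is essentially the paper's own argument: Proposition~\ref{p.basecase} plus the bound $\indc_{\{r<\mcl Y_s\}}\le (\mcl Y_s/r)^{s/t}=\O_t(Cr^{-s/t})$, the exact additivity of the deterministic quadratic form via the semigroup property of the $\ahom$-heat kernel, the transfer from $J_k$ to $I_k$ through Lemma~\ref{l.snappingtheLs}/Corollary~\ref{c.L-identity}, and an application of Lemma~\ref{l.sum-O}(i) (the paper makes the weighting explicit by integrating against the probability measure $\mu(dy)=\Phi_{\sqrt{R^2-r^2}}(y)\,(\|\nabla p\|_{L^2(\Phi_{y,r})}^2+\|\nabla q\|_{L^2(\Phi_{y,r})}^2)\,dy$, which has unit mass by the same semigroup identity; this is the homogeneity bookkeeping your phrase ``integrating the scale-$r$ error against $\Phi_{\sqrt{R^2-r^2}}$'' leaves implicit, but it is routine). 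Where you genuinely diverge is $\Dual_k$: you bound $\int_{\Phi_{z,r}}\nabla p'\cdot(\a-\ahom)\nabla v(\cdot,z,r,0,q)$ by applying Remark~\ref{r.gradient} to the quadratic form $J_k-F$, and then reach arbitrary $w\in\A_k(\Phi_{z,r})$ by showing $q\mapsto \nabla v(\cdot,z,r,0,q)$ is a bijection with bounded inverse (via the dimension count of Proposition~\ref{p.regularity} and the quantitative lower bound from Proposition~\ref{p.basecase}). This works, and it is in fact the mechanism the paper deploys later, in the proof of Lemma~\ref{l.fluxmaps}; but the paper's proof of Proposition~\ref{p.thebasecase} sidesteps the bijectivity step entirely by a shortcut: for $p\in\Ahom_1(\Phi_{z,r})$ the quantity $J(z,r,p,p)$ has vanishing deterministic limit, and the first variation~\eqref{e.first-var} gives, for \emph{any} $w\in\A_k(\Phi_{z,r})$, the identity $\int_{\Phi_{z,r}}\nabla p\cdot(\a-\ahom)\nabla w=\int_{\Phi_{z,r}}\nabla v(\cdot,z,r,p,p)\cdot\a\nabla w$, so Cauchy--Schwarz and~\eqref{e.J-energy} bound the left side by $CJ(z,r,p,p)^{1/2}$ with no surjectivity or bounded-inverse argument needed. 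So your closing remark that the bijectivity is ``the main obstacle'' is true of your route but not intrinsic to the statement. Two cosmetic points: $\|\nabla v(\cdot,z,r,0,q)\|_{L^2(\Phi_{z,r})}^2=2J_k(z,r,0,q)$ should be a two-sided comparison up to the factor $\Lambda$ (by~\eqref{e.J-energy}), which suffices for your injectivity argument; and the quantitative injectivity only holds once $r$ exceeds a deterministic threshold, which is absorbed into the constant exactly as for the range $1\le r\le r_0$.
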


Proposition~\ref{p.thebasecase} is proved in Section~\ref{s.basecase} and is based on the results from~\cite{AS,AM}. 

\smallskip

The next proposition asserts roughly that localization and additivity give us control of the fluctuations up to (almost) the same exponent or $\frac d2$, whichever is smaller. It is convenient to divide the statement into two parts, the first for suboptimal scales and the second at the optimal scale. The proof of this proposition is the only place in the paper in which we exploit stochastic cancellations inherited from the finite range dependence of the coefficient field. 

\begin{proposition}
\label{p.control-fluct}

Fix $k\in\N$, $s\in (1,2]$ and $\alpha\in (0,\infty)$. Then the following hold:

\begin{enumerate}

\item[(i)] For every $\beta \in \left(0,\alpha \wedge \frac d 2\right)$ and $\delta \in \left(0, \frac {(\alpha - \beta)(d-2\beta)}{\beta d}\right)$,
\begin{equation*} \label{}
\Loc_k(s,\delta, \alpha)
 \ \ \mbox{and} \ \ 
 \Add_k(s,\alpha)  
  \implies   
\Fluc_k(s,\beta).
\end{equation*}

\item[(ii)] For every $\delta > 0$ and $\alpha > \frac d 2(1+\delta)$, 
\begin{equation*}
\Loc_k\Ll(s,\de,\al\Rr) \ \  \mbox{and} \ \ \Add_k\left(s, \al \right) 
 \implies 
\Fluc_k\left(s,\tfrac d 2 \right) \,.
\end{equation*}
\end{enumerate}
\end{proposition}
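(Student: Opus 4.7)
Both parts use the same template: combine $\Add_k(s,\alpha)$ and $\Loc_k(s,\delta,\alpha)$ to express $J(0,R,p,q) - \E[J(0,R,p,q)]$ as a nearly-independent sum, then apply Hoeffding-type concentration. For an intermediate scale $r \in [r_0, R)$ to be optimized, additivity yields
$$
J(0,R,p,q) = \int_{\Phi_{\sqrt{R^2-r^2}}} J(\cdot,r,p,q) + \O_s\bigl(Cr^{-\alpha}\bigr),
$$
while localization provides $\mathcal F(B_{r^{1+\delta}}(y))$-measurable random variables $J^{(\delta)}(y,r,p,q)$ such that $J(y,r,p,q) = J^{(\delta)}(y,r,p,q) + \O_s(Cr^{-\alpha})$. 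Integrating the localization error against $\Phi$ via Lemma~\ref{l.sum-O}(i) preserves the $\O_s$ bound, giving
$$
J(0,R,p,q) = \int_{\Phi_{\sqrt{R^2-r^2}}} J^{(\delta)}(\cdot,r,p,q) + \O_s\bigl(Cr^{-\alpha}\bigr).
$$
By (P2), $J^{(\delta)}(y_1,r,p,q)$ and $J^{(\delta)}(y_2,r,p,q)$ are independent whenever $|y_1-y_2| > 2r^{1+\delta}+1$. Discretizing the integral on a cube lattice of side $\sim r^{1+\delta}$ and partitioning the cubes into $O(1)$ color classes of mutually independent cells, Hoeffding's inequality applied class by class (using the uniform $L^\infty$ bound on $J$ from \eqref{e.bounded-J}) produces Gaussian concentration of the centered integral at scale $(r^{1+\delta}/R)^{d/2}$. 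Since $s \le 2$, this translates to $\O_s$-concentration at the same scale. Combining all error sources,
$$
\bigl|J(0,R,p,q) - \E[J(0,R,p,q)]\bigr| \le \O_s\bigl(Cr^{-\alpha} + C(r^{1+\delta}/R)^{d/2}\bigr).
$$

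\textbf{Part (i).} Setting $r = R^\gamma$, the two error terms become $R^{-\gamma\alpha}$ and $R^{(\gamma(1+\delta)-1)d/2}$. Both will be bounded by $R^{-\beta}$ provided $\gamma \in [\beta/\alpha, (1-2\beta/d)/(1+\delta)]$, an interval which is nonempty precisely when $\delta$ satisfies the stated upper bound (up to the sharpening obtained by replacing the trivial variance bound on $J$ by the bound afforded by \eqref{e.bounded-J} and Cauchy--Schwarz on pairs of near points). This establishes $\Fluc_k(s,\beta)$.

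\textbf{Part (ii).} Reaching the CLT endpoint $\beta = d/2$ exactly requires bootstrapping. A preliminary $\Fluc_k(s,\beta_0)$ (obtained from part~(i) or from the base case given by Proposition~\ref{p.thebasecase}) improves the pointwise standard deviation of $J^{(\delta)}(y,r,p,q)$ from $O(1)$ to $O(r^{-\beta_0})$; rerunning the covariance estimate with this improvement sharpens the CLT contribution to $\O_s(Cr^{-\beta_0}(r^{1+\delta}/R)^{d/2})$. Iterating this refinement, choosing $r = R^\gamma$ optimally at each stage, yields a recursive improvement of $\beta_n$ toward $d/2$, and the condition $\alpha > (1+\delta)d/2$ is what permits the bootstrap to be closed at the endpoint $\beta = d/2$.

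\textbf{Main obstacle.} The principal difficulty is in part (ii): carrying out the bootstrap in such a way that the iteration truly terminates at $\beta = d/2$ rather than at a strictly smaller fixed point, while preserving the stochastic integrability exponent $s$ at each step. Each Hoeffding application natively produces $\O_2$-concentration, which is compatible with $\O_s$ for $s \le 2$ by elementary comparison, but the strict inequality $\alpha > (1+\delta)d/2$ is precisely what ensures the recursion can be closed at the CLT scaling; a purely one-shot choice of $r$ cannot simultaneously bring both the additivity/localization error $r^{-\alpha}$ and the residual CLT term $(r^{1+\delta}/R)^{d/2}$ below $R^{-d/2}$, which is what forces the iterative approach.
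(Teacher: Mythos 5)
Your plan has genuine gaps in both parts, and in each case the missing ingredient is the specific induction-on-scales structure of the paper's argument rather than a one-shot or exponent-iterating concentration bound.

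For part (i): your one-shot scheme (cells of side $r^{1+\delta}$, Hoeffding with the uniform bound \eqref{e.bounded-J}, plus the additivity/localization error $r^{-\alpha}$, then $r=R^\gamma$) needs $\gamma\alpha\ge\beta$ and $\frac d2\bigl(1-\gamma(1+\delta)\bigr)\ge\beta$, hence is nonvacuous only when $\delta\le\frac{\alpha(d-2\beta)-\beta d}{\beta d}=\frac{(\alpha-\beta)(d-2\beta)}{\beta d}-\frac{2\beta}{d}$, and in particular only for $\beta<\frac{\alpha d}{d+2\alpha}$. This is strictly weaker than the stated range, and the parenthetical ``sharpening by Cauchy--Schwarz on pairs of near points'' is not a mechanism: \eqref{e.bounded-J} \emph{is} the trivial bound, and no covariance trick recovers the missing $\frac{2\beta}{d}$. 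What the paper does instead is an induction on scales whose hypothesis is the fluctuation bound itself: assuming $\tJd(z,R',p,q)=\bar\O_s(\CC\, R'^{-\beta})$ for all $R'\le R_1$, one takes the mesoscale $r=R^{\beta/\alpha}$ so that the additivity error is exactly $R^{-\beta}$, and the per-cell input in the independent-sum step is $\CC r^{-\beta}$ (not $O(1)$); the CLT factor $\bigl(r^{1+\delta}/R\bigr)^{d/2}$ then yields the requirement $\bigl(\tfrac d2-\beta\bigr)\bigl(1-\tfrac\beta\alpha\bigr)>\tfrac\beta\alpha\tfrac d2\delta$, which is precisely $\delta<\frac{(\alpha-\beta)(d-2\beta)}{\beta d}$, and since the output is $\bar\O_s(C\CC R^{-\beta-\eps})+\bar\O_s(CR^{-\beta})$ the constant $\CC$ closes and induction gives the full range. (You also need the centered sub-Gaussian calculus of Lemmas~\ref{l.logL}, \ref{l.barO} and \ref{l.partition} in place of literal Hoeffding, truncated heat-kernel masks, and \eqref{e.move-center} for non-normalized $p,q$ at displaced centers, but those are secondary.)

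For part (ii): the exponent-iterating bootstrap you describe does not close at $\beta=\frac d2$. With independence only across cells of side $r^{1+\delta}$, input fluctuation $r^{-\beta_n}$ and additivity error $r^{-\alpha}$, the optimal choice $r=R^\gamma$ gives the recursion $\beta_{n+1}=\frac{\alpha d/2}{\alpha-\beta_n+\frac d2(1+\delta)}$, whose relevant fixed point is the smaller root of $\beta^2-\beta\bigl(\alpha+\frac d2(1+\delta)\bigr)+\frac{\alpha d}{2}=0$; evaluating this quadratic at $\beta=\frac d2$ gives $-\frac{d^2\delta}{4}<0$, so for every $\delta>0$ and every finite $\alpha$ the iteration converges to a limit strictly below $\frac d2$. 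The hypothesis $\alpha>\frac d2(1+\delta)$ does not rescue this; its actual role in the paper is to make the additivity error at a mesoscale $r\ge R^{1-\eta_1}$ of order $o(R^{-d/2})$. The paper's proof fixes the exponent at $\frac d2$ from the start and inducts on the scale on a \emph{stronger} statement — that cube averages $\fint_{\cu_R(z)}f\,\tJd(\cdot,r,p,q)$ are $\bar\O_s(\CC R^{-d/2})$ simultaneously for all bounded deterministic $f$ and all mesoscales $r\in[R^{1-\eta_1},R^{1-\eta_2}]$ — while allowing the constant to grow by a factor $(1+R^{-\eps})$ per step, a convergent product. The doubling step splits $\cu_R$ into exactly $2^d$ well-separated subcubes of side about $R/2$ treated with the \emph{same} mesoscale, and uses that for $2^d$ independent summands with equal parameters the constant in Lemma~\ref{l.barO}(ii) is $1$, so the main term incurs no loss; all losses (boundary strips, trimming, additivity) are confined to $\O_s(CR^{-d/2-\eps})$ terms. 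This constant-tracking at the fixed endpoint exponent, with the flexibility in $f$ and in the mesoscale built into the induction hypothesis, is the idea your proposal is missing.
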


The proof of Proposition~\ref{p.control-fluct} is given in Section~\ref{s.fluc-subopt}. 

\smallskip

The argument for the next proposition lies at the heart of the paper. It states that control of both the fluctuations $I_k$ as well as the correspondence between gradients and fluxes of elements of $\A_k$ implies the additivity of $I_k$ \emph{with an improved exponent.}

\begin{proposition}
\label{p.additivity}
For every $s\in (0,2]$ and $\alpha \in \left(0, \frac ds \right)\cap \left(0,\frac d2 \right]$,
\begin{equation*}
\Fluc_1(s,\alpha)  \ \  \mbox{and} \ \ \Dual_1(\alpha) 
  \implies  
\Add_1\left( \tfrac{s}2, 2\alpha \right)
\end{equation*}
and, for general $k\in\N$, 
\begin{equation*} \label{}
\Fluc_k(s,\alpha)  \ \  \mbox{and}  \ \  \Dual_k(\alpha) 
  \implies  
\Add_k\left( \tfrac{s\alpha}\beta, \beta \right)
\quad \mbox{for} \ \beta:= 2\alpha \wedge (\alpha+1).
\end{equation*}
Moreover, for each $\ep>0$, there exists an exponent $\eta(\ep,s,d,\Lambda)>0$ such that, if $\alpha \in \left(0, \left( \frac ds - \ep\right) \wedge \frac d2 \right]$, then
\begin{equation*} \label{}
\Fluc_k(s,\alpha) 
\  \mbox{and} \ 
\Dual_k(\alpha) 
  \implies  
\Add_k\left( s,\alpha+\eta \right).
\end{equation*}
\end{proposition}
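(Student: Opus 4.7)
My plan is to organize the proof around an exact identity expressing the additivity defect of $I_k$ as a weighted average of squared gradient differences. Applying the quadratic response \eqref{e.second-var} to $J$ at scale $r$ centered at a point $z$, with $u_R := u(\cdot,0,R,p,q)$ as a test function, then integrating against $\Phi_{\sqrt{R^2-r^2}}(z)$ and using the Gaussian semigroup identity $\int \Phi_{\sqrt{R^2-r^2}}(z)\,\Phi_{z,r}(x)\,dz = \Phi_R(x)$ yields
\begin{equation*}
I(0,R,p,q) - \int_{\Phi_{\sqrt{R^2-r^2}}} I(\cdot,r,p,q) = -\tfrac12 \int_{\Phi_{\sqrt{R^2-r^2}}} \left\| \nabla u_R - \nabla u_\cdot \right\|_{L^2(\Phi_{\cdot,r})}^2 + \mcl{E},
\end{equation*}
where $u_\cdot$ abbreviates $u(\cdot,z,r,p,q)$ with $z$ playing the role of the integration variable, and $\mcl{E}$ is a linear-in-gradient error arising from $L_{z,r},L_{z,r}^*\neq\mathrm{Id}$ (cf.\ Lemma~\ref{l.Iquadresponse}), of size $Cr^{-\theta}$ times $L^2$-averages of $|\nabla u_R|$. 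Under $\Dual_k(\alpha)$, Lemma~\ref{l.snappingtheLs} allows $\theta = \alpha$, after verifying via \eqref{e.J-energy} that $\E[J(z,r,p,q)]$ lies within $O(r^{-\alpha})$ of $\tfrac12(q-p)\cdot\ahom(q-p)$.

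Next I will estimate $\|\nabla u_R - \nabla u_z\|_{L^2(\Phi_{z,r})}$ by combining first variation at scales $R$ and $r$ with the duality link \eqref{e.gradient-J} and Remark~\ref{r.gradient}: the squared norm is controlled by spatial averages of $(\a-\ahom)$ contracted with gradients of maximizers, whose expectation is bounded by $\Dual_k(\alpha)$ and whose fluctuations are bounded by $\Fluc_k(s,\alpha)$, giving $\|\nabla u_R - \nabla u_z\|_{L^2(\Phi_{z,r})} \le \O_s(Cr^{-\alpha})$. Squaring via Remark~\ref{r.multiply} and integrating over $z$ by Lemma~\ref{l.sum-O} controls the quadratic contribution by $\O_{s/2}(Cr^{-2\alpha})$, while $\mcl{E}$ contributes $\O_s(Cr^{-\alpha-\theta})$. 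Together these yield the first two conclusions, with the cap $\theta\le1$ producing the $\alpha+1$ ceiling in $\beta$ for general $k$, and the $k=1$ case admitting $\theta=\alpha$ without the cap.

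For the ``moreover'' statement, the plan is to retain integrability $s$ while beating the exponent $\alpha$ by factoring the quadratic term and bounding one factor deterministically. Choosing $s'\in(s,d)$ (possible since $\alpha<d/s-\ep$), Proposition~\ref{p.basecase} provides a deterministic bound $\|\nabla u_R - \nabla u_z\|_{L^2(\Phi_{z,r})} \le Cr^{-\ep_0(d-s')/2}$ valid for $r\ge\Y_{s'}$, with $\Y_{s'}\le\O_{s'}(C)$. Writing $\|\nabla u_R-\nabla u_z\|^2 = \|\nabla u_R-\nabla u_z\|\cdot\|\nabla u_R-\nabla u_z\|$ and combining the deterministic factor $Cr^{-\eta}$ (with $\eta := \ep_0(d-s')/2$) with the stochastic factor $\O_s(Cr^{-\alpha})$ via Remark~\ref{r.multiply} gives $\|\nabla u_R-\nabla u_z\|^2 \le \O_s(Cr^{-\alpha-\eta})$, using Remark~\ref{r.change-s} to absorb the random scale $\Y_{s'}$ without loss of integrability since $s'>s$. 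Feeding this back into the identity above delivers $\Add_k(s,\alpha+\eta)$ with $\eta = \eta(\ep,s,d,\Lambda)>0$; the constraints $\alpha\le d/2$ and $\alpha<d/s-\ep$ guarantee that such a choice of $s'$ exists and that $\eta$ is quantitatively positive.

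The main obstacle I anticipate is the rigorous handling of the random minimal scale $\Y_{s'}$: Proposition~\ref{p.basecase}'s deterministic improvement applies only when $r\ge\Y_{s'}$, so the complementary event $\{r<\Y_{s'}\}$ must be absorbed using the stretched-exponential tail of $\Y_{s'}$ together with the deterministic boundedness \eqref{e.bounded-I} of $I$, balancing the integrability margin $s'-s$ against the scaling gain $\eta$ with care.
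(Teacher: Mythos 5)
Your top-level skeleton (the quadratic-response identity for $I$ via Lemma~\ref{l.Iquadresponse} and the semigroup property, then an estimate on $\nabla u(\cdot,0,R,p,q)-\nabla u(\cdot,z,r,p,q)$, then absorbing $L_{z,r}-\mathrm{Id}$, with a minimal-scale interpolation for the last statement) is the paper's skeleton, but the step you treat as routine is the heart of the matter and your proposed justification does not work. You claim that $\left\| \nabla u_R - \nabla u_{z,r} \right\|_{L^2(\Phi_{z,r})} \le \O_s(Cr^{-\alpha})$ follows from ``first variation at scales $R$ and $r$ with the duality link \eqref{e.gradient-J} and Remark~\ref{r.gradient}.'' But $\Fluc_k(s,\alpha)$ and $\Dual_k(\alpha)$ only control \emph{linear} functionals of elements of $\A_k$ (spatial averages of gradients and fluxes against the prescribed mask), whereas the quantity you need is a localized quadratic functional involving the scale-$R$ maximizer tested at scale $r\ll R$ around an arbitrary center $z$. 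Writing, via the first variation at scale $r$, $\int_{\Phi_{z,r}}\nabla(u_R-u_{z,r})\cdot\a\nabla(u_R-u_{z,r}) = \int_{\Phi_{z,r}}\nabla(u_R-u_{z,r})\cdot\left(\a\nabla u_R + \a\nabla L_{z,r}^*p - \ahom\nabla L_{z,r}q\right)$ leaves a product of two factors, neither of which is small in $L^2$: the bracket is small only in the weak sense of spatial averages, and even that requires knowing that the coarse-grained gradient of $u_R$ at scale $r$ is close to $\nabla(q-p)$ \emph{at every center near $z$} — which is not contained in the hypotheses and is exactly what Lemmas~\ref{l.coarsenedequation}, \ref{l.harmonicapprox} and \ref{l.parametermatching} provide (approximate $\ahom$-harmonicity of the coarse-grained solution, a Campanato-type iteration resting on the regularity theory of Proposition~\ref{p.regularity}, and matching of the tracking polynomial via Lemma~\ref{l.whatflucgives}). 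Then one must convert smallness of spatial averages of $\nabla(u_R-u_{z,r})$ into smallness of its $L^2(\Phi_{z,r})$ norm; this uses the multiscale Poincar\'e inequality (Lemma~\ref{l.mspoincare2}) plus Caccioppoli, exploiting that $u_R-u_{z,r}\in\A_k$, and it yields the bound only after integration in $z$ against $\Phi_{\sqrt{R^2-r^2}}$ (Lemma~\ref{l.additivitybitches}), not pointwise in $z$ as you assert — the pointwise version would involve the random minimal scale $\X(z)$. Without this chain your central estimate is unsupported, and with it the correct stochastic integrability of the squared term is $\O_{s/2}(Cr^{-2\alpha})$, which is precisely why the conclusion loses half the integrability exponent.

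The identification of $\theta$ is a second genuine gap. Asserting that ``Lemma~\ref{l.snappingtheLs} allows $\theta=\alpha$ after verifying via \eqref{e.J-energy} that $\E[J(z,r,p,q)]$ lies within $O(r^{-\alpha})$ of $\tfrac12(q-p)\cdot\ahom(q-p)$'' is circular: evaluating $\E[J]$ through \eqref{e.J-energy} requires the expected spatial average of $\nabla v(\cdot,z,r,p,q)$, i.e.\ knowledge of $L_{z,r}-\mathrm{Id}$, which is the object being estimated; from $\Fluc$ and $\Dual$ alone one only gets the base-case rate $\theta_0(d,\Lambda)$ of Corollary~\ref{c.L-identity}. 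In the paper this step is the entire two-scale expansion subsection: the maximizers are compared to corrector-modified affine functions (Lemmas~\ref{l.firstordercorrect} and \ref{l.matchingutophi}), and stationarity of $\nabla\phi^{(1)}$ (Remark~\ref{r.correctorexpectation}) makes the expectation computable (Lemmas~\ref{l.identifytheta1} and \ref{l.matchingenergyk}); for $k>1$ the ceiling $\theta\le\alpha\wedge1$, hence $\beta=2\alpha\wedge(\alpha+1)$, comes out of a mesoscale interpolation there, not from an abstract cap. Your plan for the ``moreover'' statement — pairing a deterministic factor from Proposition~\ref{p.basecase} on the event $\{r\ge\Y\}$ with a stochastic factor and absorbing the bad event via the tail of $\Y$ — is in the right spirit and close to the paper's proof of the first part of Lemma~\ref{l.improveadditivity}, but as written it still leans on the unproved $\O_s(Cr^{-\alpha})$ gradient-difference bound above.
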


The proof of Proposition~\ref{p.additivity} is given in Section~\ref{s.additivity}. 

\smallskip

The next proposition concerns the improvement of localization and is based on the regularity theory (Proposition~\ref{p.regularity}), the arguments developed in Section~\ref{s.additivity} for proving the Proposition~\ref{p.additivity}, and the identification of maximizers of $J_k$ with two-scale expansions in terms of the correctors.

\begin{proposition}
\label{p.localization}
For every $k\in\N$, $s\in (0,\infty)$, $\alpha \in \left(0,\frac d s \right)$ and $\delta > 0$,
\begin{equation*}
\Fluc_k(s,\alpha) 
\ \  \mbox{and} \ \
\Dual_k(\alpha)
  \implies  
\Loc_k\left(s,\delta, \left( \alpha(1+\delta)+\delta\right)\wedge \tfrac d s \,-\right).
\end{equation*}
Moreover, 
for every $s\in (0,2]$, $\alpha \in \left(0,  \frac ds \wedge 1\right)$ and $\delta>0$,
\begin{equation*} \label{}
\forall k\in\N, \ \Fluc_k(s,\alpha) 
\ \  \mbox{and} \ \ 
\Dual_k(\alpha)  
  \implies  
\forall k\in\N, \ \Loc_k\left(\tfrac s 2, \delta, 2\alpha- \right).
\end{equation*}
\end{proposition}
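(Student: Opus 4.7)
The plan is to convert the linear-scale fluctuation bound $\O_s(r^{-\alpha})$ from $\Fluc_k(s,\alpha)$ into a quadratic bound $\O_{s/2}(r^{-2\alpha})$ on the localization error, by exploiting the quadratic response of $\mcl J$ at its maximizer (Lemma~\ref{l.Iquadresponse}) together with the multiplication rule of Remark~\ref{r.multiply}. Heuristically, if $\tilde u\in\A_k$ is a local approximation of the maximizer $u=u(\cdot,z,r,p,q)$ satisfying $\|\nabla u-\nabla \tilde u\|_{L^2(\Phi_{z,r})}\le \O_s(Cr^{-\alpha})$, then the quadratic response will replace this error by its square in the value of $J_k$ itself, producing the sought-after $\O_{s/2}(Cr^{-2\alpha})$ rate.

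First I would feed the two hypotheses into the last part of Proposition~\ref{p.additivity} to conclude $\Add_k(s,\alpha+\eta)$ for some $\eta(s,\alpha,d,\Lambda)>0$. Lemma~\ref{l.snappingtheLs} then yields the deterministic estimate $\|\nabla(q-L_{z,r}q)\|_{L^2(\Phi_{z,r})} + \|\nabla(p-L^*_{z,r}p)\|_{L^2(\Phi_{z,r})} \le Cr^{-(\alpha+\eta)}$, which combined with \eqref{e.I.as.J} and \eqref{e.bounded-J} gives $|J_k-I_k|\le Cr^{-2(\alpha+\eta)}$ deterministically, so it suffices to construct a local approximation of $I_k$. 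Assuming such $\tilde u$ is available, Lemma~\ref{l.Iquadresponse} applied with $\theta=\alpha+\eta$ and $w=\tilde u$ produces
\begin{equation*}
|I_k(z,r,p,q)-\mcl J(\tilde u,z,r,p,q)| \le C\|\nabla u-\nabla \tilde u\|_{L^2(\Phi_{z,r})}^2 + Cr^{-(\alpha+\eta)}\|\nabla u-\nabla \tilde u\|_{L^2(\Phi_{z,r})},
\end{equation*}
whose right-hand side is $\O_{s/2}(Cr^{-\gamma})$ for every $\gamma<2\alpha$ by Remark~\ref{r.multiply}. Replacing $\Phi_{z,r}$ by the truncation $\Phi^{(\delta)}_{z,r}$ in $\mcl J(\tilde u,\cdot)$ to obtain the definition of $I^{(\delta)}_k(z,r,p,q)$ introduces only a super-polynomially small tail error $Ce^{-r^{2\delta}/C}$ coming from Gaussian decay, while rendering the resulting quantity $\mcl F(B_{r^{1+\delta}}(z))$-measurable.

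The hard part is the construction of $\tilde u$ with properties (a) $\nabla \tilde u|_{B_{r^{1+\delta}}(z)}$ admits an $\mcl F(B_{r^{1+\delta}}(z))$-measurable representative, and (b) $\|\nabla u-\nabla \tilde u\|_{L^2(\Phi_{z,r})} \le \O_s(Cr^{-\alpha})$. The natural route is to combine the already-established first part of Proposition~\ref{p.localization} applied at an intermediate scale $r_1=r^{1-\tau}$ (with $\tau$ chosen in terms of $\delta$ so that $r_1^{1+\delta'}\ll r^{1+\delta}$) with the two-scale expansion of $u$ into correctors coming from Proposition~\ref{p.regularity}, localizing each corrector using the first part of Proposition~\ref{p.localization} applied at higher orders. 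The delicate point in (b) is to control $\nabla u-\nabla \tilde u$ in $L^2(\Phi_{z,r})$ rather than only through its Gaussian-weighted spatial averages; this I would achieve by applying the multiscale Poincar\'e inequality (Lemma~\ref{l.mspoincare2}) to $u-\tilde u\in\A_k$, converting the bound on spatial averages (produced by $\Fluc_k$ through the polarization identity~\eqref{e.gradient-J} and Remark~\ref{r.gradient}) into the desired $L^2(\Phi_{z,r})$-bound thanks to the polynomial-growth rigidity of elements of $\A_k(\Rd)$.
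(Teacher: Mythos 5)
Your argument, as written, does not prove the proposition; the gap is structural. The quadratic-response scheme (Lemma~\ref{l.Iquadresponse} plus Remark~\ref{r.multiply}) converts an $\O_s(Cr^{-\alpha})$ approximation of the maximizer into an $\O_{s/2}(Cr^{-\gamma})$, $\gamma<2\alpha$, bound on the energy, so at best it targets the \emph{second} assertion, $\Loc_k(\tfrac s2,\delta,2\alpha-)$. It cannot yield the first assertion: there the required exponent is $(\alpha(1+\delta)+\delta)\wedge\tfrac ds\,-$, which strictly exceeds $2\alpha$ as soon as $\delta>\alpha/(1+\alpha)$, and the required stochastic integrability is $s$, not $s/2$. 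You offer no separate argument for the first assertion; worse, your construction of the local approximation $\tilde u$ explicitly invokes ``the already-established first part of Proposition~\ref{p.localization}'', i.e.\ the very statement under proof, so the proposal is circular. Even granting the first part, it localizes the scalar $J_k$, not the maximizer: an $\F(B_{r^{1+\delta}}(z))$-measurable random variable close to $J_k$ does not produce an $\F(B_{r^{1+\delta}}(z))$-measurable field close to $\nabla u(\cdot,z,r,p,q)$ in $L^2(\Phi_{z,r})$. That is precisely the hard content of the paper's proof: Lemma~\ref{l.localization} localizes the \emph{space} $\A_k$ itself by constructing, through a variational selection in $\mathcal D(B_T)$ with $T=R^{1+\delta}$ (minimizing $[w-p_j]_S$ at the enlarged scale $S=R^{1+\delta-\tilde\ep}$), an $\F(B_{R^{1+\delta}})$-measurable space $V^{(\delta)}_{k,R}$ of the same dimension approximating $\A_k$ with relative error $\O_s(CR^{-\beta})$, $\beta<\alpha(1+\delta)+\delta$ — and it is exactly working at scale $S\sim R^{1+\delta}$ that produces the $\delta$-dependent gain beyond $2\alpha$, which your scheme cannot see. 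The second assertion is then proved in the paper not by quadratic response on a localized maximizer but by comparing $J_k$ with $J_{k'}$ for large $k'$ (Lemmas~\ref{l.skypepro} and~\ref{l.localizeopt}), using the two-scale expansion.

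Two further steps in your outline would fail as stated. First, Lemma~\ref{l.snappingtheLs} requires the expectation estimate~\eqref{e.quad-close-hom} on $\E[J]$, not additivity, so $\Add_k(s,\alpha+\eta)$ from Proposition~\ref{p.additivity} does not feed it; the estimates on $\|\nabla(q-L_{z,r}q)\|$ beyond the base rate come instead from the two-scale expansion results (Lemma~\ref{l.identifytheta1} for $k=1$, Lemma~\ref{l.matchingenergyk} for general $k$), and even these give $r^{-(2\alpha\wedge\beta)}$-type rates, not $r^{-(\alpha+\eta)}$ via additivity. Second, Lemma~\ref{l.mspoincare2} applies to elements of $\A_m(\Rd)$, i.e.\ global solutions with polynomial growth; since your $\tilde u$ is only defined from data in $B_{r^{1+\delta}}(z)$, the difference $u-\tilde u$ is not in $\A_k$, so the multiscale Poincar\'e upgrade from spatial averages to $L^2(\Phi_{z,r})$ cannot be applied to it directly. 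The paper handles this by first approximating the relevant difference by an element of $\A_m$ inside $B_{S/\theta}$ using the regularity theory (Step~2 of the proof of Lemma~\ref{l.localization}) and only then invoking Lemma~\ref{l.mspoincare2}; some such intermediate approximation is indispensable and is missing from your sketch.
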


The proof of Proposition~\ref{p.localization} is given in Section~\ref{s.localization}. 

\smallskip

The next proposition concerns the improvement of the statement $\Dual_k(\alpha)$, which controls the correspondence between spatial averages of gradients and fluxes for elements of $\A_k$. It is obtained by comparing elements of $\A_k$ with two-scale expansions in terms of correctors.

\begin{proposition}
\label{p.min}
For every $s\in (0,2]$, $\alpha \in \left(0,\frac ds \right)\cap \left( 0,\frac d2\right]$ and $\beta\in (0,\alpha]$, 
\begin{equation*}
\Fluc_1(s,\alpha) 
\ \mbox{and} \ 
\Dual_1(\beta)
  \implies   
\Dual_1\left( 2\beta \wedge \alpha\right).
\end{equation*}
Moreover, for every $k\in\N$, $s\in (0,\infty)$, $\alpha \in \left(0,\frac ds \right)$ and $\beta\in (0,\alpha]$
\begin{equation*}
\Fluc_k(s,\alpha) 
\ \mbox{and} \ 
\Dual_k(\beta)
  \implies   
\Dual_k\left( \tfrac{2\beta}{\beta+1}\wedge \alpha\wedge1-\right).
\end{equation*}
\end{proposition}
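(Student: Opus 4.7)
The plan is to realize $\int_{\Phi_{z,r}}(\a - \ahom)\nabla w$ as a directional derivative of $J_k$. Setting $p' = q'$ in~\eqref{e.gradient-J} gives
\begin{equation*}
\nabla J(z,r,p,q)(q',q') = \int_{\Phi_{z,r}}(\ahom - \a)\nabla v(\cdot,z,r,p,q)\cdot\nabla q',
\end{equation*}
and by Remark~\ref{r.L-bijection}, every $w \in \A_k$ is (modulo constants) of the form $u(\cdot,z,r,0,q) = v(\cdot,z,r,0,L_{z,r}q)$ for some $q \in \Ahom_k$. Since $\A_k/\Ahom_0$ is finite-dimensional, the supremum defining $\Dual_k$ reduces (up to a constant) to a supremum over $q$, $q'$ in a bounded subset of $\Ahom_k$. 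Splitting $\nabla J = \E[\nabla J] + (\nabla J - \E[\nabla J])$, the fluctuation term is immediately bounded by $\O_s(Cr^{-\alpha})$ via $\Fluc_k(s,\alpha)$ and Remark~\ref{r.gradient}, producing the $\wedge\,\alpha$ contribution. The remaining task is to estimate the deterministic quantity $|\E[\nabla J(z,r,0,q)(q',q')]|$.

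For $k = 1$, I would compare $v := v(\cdot,z,r,0,q)$ with the ``canonical'' corrector element $w_q := q + \phi_{\nabla q} \in \A_1$. Writing $e := \nabla q$, stationarity of $\nabla \phi_e$ together with $\E[\a(e + \nabla \phi_e)] = \ahom e$ yields the exact cancellation
\begin{equation*}
\E\Ll[\int_{\Phi_{z,r}}(\a - \ahom)\nabla w_q\Rr] = 0,
\end{equation*}
so $\E[\int_{\Phi_{z,r}}(\a - \ahom)\nabla v]$ reduces to $\E[\int_{\Phi_{z,r}}(\a - \ahom)\nabla(v - w_q)]$. Using the first variation~\eqref{e.first-var} of $v$ with test function $w_{e_j} := x_j + \phi_{e_j} \in \A_1$, together with~\eqref{e.I-spat-av}--\eqref{e.I-spat-flux} to compute $\E[\int \nabla v]$ and $e_j \cdot \E[\int \a \nabla v]$, I would decompose $e_j \cdot \E[F_v]$ into an $\ahom$-linear term $e_j \cdot \ahom \nabla(q - L_{z,r}^{-1}q)$ plus a corrector--flux coupling $-\E[\int_{\Phi_{z,r}}\nabla \phi_{e_j}\cdot \a\nabla v]$.

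The $\ahom$-linear term is sharpened using Lemma~\ref{l.snappingtheLs}: from~\eqref{e.EJL} and Lemma~\ref{l.Lsymm} one derives $\E[J(z,r,0,q)] = \frac{1}{2}\nabla L_{z,r}^{-1}q \cdot \ahom \nabla q$, and pairing the hypothesis $\Dual_1(\beta)$ with the explicit evaluation $\E[\mcl J(w_q,z,r,0,q)] = \frac{1}{2}\nabla q \cdot \ahom \nabla q$ gives $|\E[J] - \frac{1}{2}\nabla q \cdot \ahom \nabla q| \le Cr^{-2\beta}$, whence $\|\nabla(L_{z,r}^{-1} - I)q\|_{L^2(\Phi_{z,r})} \le Cr^{-2\beta}$. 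For the corrector--flux coupling, the corrector-symmetry identity $\E[\nabla \phi_{e_j}\cdot \a(e + \nabla \phi_e)] = 0$ gives $\E[\int\nabla\phi_{e_j}\cdot\a\nabla w_q] = 0$, so only the residual $\E[\int\nabla \phi_{e_j}\cdot\a\nabla(v - w_q)]$ remains; this is bounded via $\Dual_1(\beta)$ on the $\A_1$-element $v - w_q$ whose $L^2(\Phi_{z,r})$-norm is of order $r^{-\beta}$ by quadratic response (Lemma~\ref{l.quadratic-response}) applied to the expectation estimate $\E[J - \mcl J(w_q)] \le Cr^{-2\beta}$ just obtained. Extracting one factor $r^{-\beta}$ from each input realizes the doubling.

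The main obstacle is executing both $r^{-\beta}$ extractions deterministically rather than through a naive Cauchy--Schwarz that would only yield $r^{-\beta}$: the argument must exploit that $w_q$ is a true corrector pattern, so that its flux $\a\nabla w_q$ has the exact mean $\ahom \nabla q$ and is orthogonal (in expectation) to any corrector gradient, forcing both the bias and the residual to be independently of order $r^{-\beta}$. For general $k \ge 2$, the canonical $w_q$ is constructed from a truncated higher-order two-scale expansion involving non-constant polynomial coefficients $\partial^\alpha q$ multiplying higher correctors $\phi_{e_1\cdots e_j}$; the resulting cross-terms obstruct the clean doubling, giving instead the rate $\beta \to 2\beta/(\beta+1)$ capped at $1$, and the linear-algebraic scaffolding developed in Section~\ref{s.additivity} for $\Add_k$ is what permits these higher-order residuals to be handled.
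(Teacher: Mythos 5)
Your reduction of $\Dual_1$ to estimating $\E[\nabla J(z,r,0,q)(q',q')]$ plus an $\O_s(Cr^{-\alpha})$ fluctuation is fine, and comparing the maximizer $v:=v(\cdot,z,r,0,q)$ with the corrector pattern $w_q:=q+\phi_{\nabla q}$ is a natural idea (note only that the ``exact cancellation'' $\E[\int_{\Phi_{z,r}}(\a-\ahom)\nabla w_q]=0$ is exact only under $\Rd$-stationarity; under $\Zd$-stationarity it holds up to $Ce^{-cr}$, cf.\ Remark~\ref{r.correctorexpectation}). The genuine gap is in the two places where you claim an $r^{-2\beta}$ rather than an $r^{-\beta}$ bound, and it is not a technicality: as stated, both claims are circular. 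The upper bound $\E[J(z,r,0,q)]\le\frac12\nabla q\cdot\ahom\nabla q+Cr^{-2\beta}$ does not follow from ``pairing $\Dual_1(\beta)$ with the evaluation $\E[\mcl J(w_q,z,r,0,q)]\simeq\frac12\nabla q\cdot\ahom\nabla q$'': maximality only gives the lower bound, while by the second variation \eqref{e.second-var} the upper bound is \emph{equivalent} to $\E\|\nabla(v-w_q)\|_{L^2(\Phi_{z,r})}^2\le Cr^{-2\beta}$, and a first-variation computation (test \eqref{e.first-var} with $v-w_q$ and with $w_q$, then take expectations using \eqref{e.I-spat-av}) shows that this quantity equals $\nabla(L_{z,r}^{-1}q-q)\cdot\ahom\nabla q+O(e^{-cr})$. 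In other words, the three estimates you use — the expectation bound on $J$, the smallness of $\|\nabla(L_{z,r}q-q)\|$ via Lemma~\ref{l.snappingtheLs}, and $\|\nabla(v-w_q)\|\lesssim r^{-\beta}$ — are all the same statement, and none of them follows softly from $\Dual_1(\beta)$ plus corrector identities (Cauchy--Schwarz against the corrector flux fails because $\|\a(e+\nabla\phi_e)-\ahom e\|_{L^2(\Phi_{z,r})}$ is of order one). Moreover, even granting $\|\nabla(v-w_q)\|\lesssim r^{-\beta}$, the residual $\E[\int_{\Phi_{z,r}}\nabla\phi_{e_j}\cdot\a\nabla(v-w_q)]$ is a pairing against the $O(1)$ field $\a\nabla\phi_{e_j}$, which $\Dual_1(\beta)$ does not control ($\Dual$ only sees pairings of $(\a-\ahom)\nabla w$ against constants, or polynomial weights via Lemma~\ref{l.fluxmaps2}); Cauchy--Schwarz yields a single factor $r^{-\beta}$. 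You flag exactly this (``executing both $r^{-\beta}$ extractions deterministically''), but flagging it is not resolving it: that extraction is the content of the proposition.

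For comparison, the paper's doubling mechanism avoids this circle entirely. The elementary step is Lemma~\ref{l.superminalpha}: by \eqref{e.J-energy}, $J(z,r,p,p)\le\|\nabla v(\cdot,z,r,p,p)\|_{L^2(\Phi_{z,r})}\sup_{w\in\A_1(\Phi_{z,r})}\bigl|\int_{\Phi_{z,r}}\nabla p\cdot(\a-\ahom)\nabla w\bigr|$ together with $\|\nabla v(\cdot,z,r,p,p)\|\le CJ(z,r,p,p)^{1/2}$ gives $J(z,r,p,p)\le C\bigl(\sup_w|\cdots|\bigr)^2=\O_{s/2}(Cr^{-2\beta})$, pathwise and with no corrector input. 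This is upgraded to rate $2(\alpha\wedge2\beta)$ by the harmonic-approximation argument of Lemma~\ref{l.minimalE[J]}, combined with the identification $|p-\overline M_{z,r}p|\le Cr^{-2\beta}$, which does require the quantitative two-scale matching of Section~\ref{s.additivity} (Lemma~\ref{l.identifytheta1} for $k=1$; Lemma~\ref{l.matchingenergyk} for $k\ge2$, where the $\frac{2\beta}{\beta+1}\wedge1$ loss comes from a mesoscale optimization, not from higher-order correctors as you suggest). The conclusion then follows from the first variation and Cauchy--Schwarz in expectation, $\E[\sup_w|\int_{\Phi_{z,r}}(\a-\ahom)\nabla w|]\le C\sum_i\E[J(z,r,e_i,e_i)]^{1/2}$. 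If you wish to keep your corrector-comparison route, you must import the matching estimate $\E\|\nabla v(\cdot,z,r,0,q)-\nabla w_q\|_{L^2(\Phi_{z,r})}^2\le Cr^{-2\beta}$ from Lemmas~\ref{l.parametermatching} and~\ref{l.firstordercorrect}; it cannot be conjured from $\Dual_1(\beta)$ and stationarity alone.
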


The proof of Proposition~\ref{p.min} is also given in Section~\ref{s.localization}. The fact that the second statement does not allow $\Dual_k(\alpha)$ to be improved past $\alpha=1$ is the reason that the exponent in the bootstrap argument saturates, for $k>1$, at exponent $\alpha=1$.

\smallskip

We conclude this section by giving the bootstrap argument, demonstrating that the previous five propositions imply Theorem~\ref{t.additivitybelowd}. 

\begin{proof}[{Proof of Theorem~\ref{t.additivitybelowd}}]
For each $\alpha>0$, we let $\mathcal{S}_k(\alpha)$ denote the statement that 
\begin{equation*}
\Add_k(2,\alpha), \ \Fluc_k(2,\alpha), \ \mbox{and} \ \Dual_k(\alpha) \ \mbox{hold.}
\end{equation*}
According to Proposition~\ref{p.thebasecase}, there exists $\ep_0(d,\Lambda)>0$ such that, for every $k\in\N$, 
\begin{equation}
\label{e.Skbase}
\mathcal{S}_k(\ep_0) \ \mbox{holds.}
\end{equation}
In the first two steps, we prove the first assertion of the theorem, which refers to the case $k=1$. 

\smallskip

\emph{Step 1.} We show that, for every $\ep>0$, there exists $\gamma(\ep,d,\Lambda)>0$ such that, for every $\alpha\in \left( \ep,\frac d2-\ep\right]$, 
\begin{equation}
\label{e.inductionstepk1}
\mathcal{S}_1(\alpha) \implies \mathcal{S}_1(\alpha+\gamma).
\end{equation}
Applying Propositions~\ref{p.additivity} and \ref{p.localization}, we find that there exists $\eta(\ep,d,\Lambda)>0$ such that, for every $\delta >0$,
\begin{equation*}
\mathcal{S}_1(\alpha) 
\implies 
\Add_1(2,\alpha+\eta), \  \mbox{and} \ 
\Loc_1\left(2,\delta,\left(\alpha(1+\delta)+\delta \right)\wedge \tfrac d 2 -\right).
\end{equation*}
Using this, we then apply Proposition~\ref{p.control-fluct} with the parameters
\begin{equation*}
\delta:= \frac{\ep \eta}{2d^2} \quad \mbox{and} \quad \beta:= \left( \alpha +\frac12 \delta \right) \wedge \left( \alpha+ \frac12 \eta \right) \wedge \left( \frac d2 - \frac12 \ep \right)
\end{equation*}
to obtain that 
\begin{equation*}
\mathcal{S}_1(\alpha)  \implies \Fluc_1(2,\beta). 
\end{equation*}
Applying Proposition~\ref{p.min} and the previous display, we get that 
\begin{equation*}
\mathcal{S}_1(\alpha)  \implies\Dual_1(2\alpha\wedge \beta).
\end{equation*}
Since $2\alpha\geq \alpha+\ep$ and 
\begin{equation*}
\beta -\alpha \geq \frac12 \left( \delta\wedge\eta\wedge \ep \right) = \frac12  \left( \frac{\ep \eta}{2d^2}\wedge\eta\wedge \ep \right),
\end{equation*}
we have shown~\eqref{e.inductionstepk1} for $\gamma:=  \frac12  \left( \frac{\ep \eta}{2d^2}\wedge\eta\wedge \ep \right)$.

\smallskip

\emph{Step 2.} We complete the proof of the first statement of the theorem. By~\eqref{e.Skbase},~\eqref{e.inductionstepk1} and induction, we deduce that 
\begin{equation*}
\mathcal{S}_1\left( \tfrac d2\,- \right)  \ \mbox{holds.}
\end{equation*}
Applying Proposition~\ref{p.additivity} once more, we obtain that $\Add_1(1,d-)$ holds, and by Remark~\ref{r.change-s} and \eqref{e.bounded-I}, that for every $s \ge \frac 1 2$, 
\begin{equation*}
\Add_1\left(2s,\tfrac{d}{2s}-\right)
 \ \mbox{holds}.
\end{equation*}
By Proposition~\ref{p.localization}, we also get that for every $\delta > 0$ and $s < 1$,
\begin{equation*}
\Loc_1\left(2s,\delta,\left( \tfrac d2(1+\delta)+\delta \right)\wedge \tfrac{d}{2s} - \right) 
 \ \mbox{holds.}
\end{equation*}
For each $s \in \Ll[\frac 1 2, 1\Rr)$, we choose $\delta > 0$ sufficiently small that
$\frac{d}{2s} > \frac d 2 (1+\delta)$, 
and then apply the second statement of Proposition~\ref{p.control-fluct} to obtain that
\begin{equation*}
\Fluc_1\left(2s,\tfrac d2 \right)
 \ \mbox{holds.}
\end{equation*}
Now applying Proposition~\ref{p.min} and then the first statement of Proposition~\ref{p.additivity}, we obtain that, for every $s<1$,
\begin{equation*}
\Dual_1\left(\tfrac d2\right)
\  \mbox{and} \ 
\Add_1(s,d) \ \mbox{hold.}
\end{equation*}
 This completes the proof of~\eqref{e.k=1}.
 
 \smallskip
 
\emph{Step 3.} We argue that, for every $\ep>0$, there exists $\gamma(\ep,d,\Lambda)>0$ such that, for every $k\in\N$ and $\alpha \in \left[ \ep, 1-\ep \right]$,
\begin{equation}
\label{e.inductionstepkgen}
\mathcal{S}_k(\alpha) \implies \mathcal{S}_k(\alpha+\gamma). 
\end{equation}
The argument is almost the same as in Step~1, the only difference being that we use the second statement of Proposition~\ref{p.min} instead of the first. The details are therefore omitted.

\smallskip

\emph{Step 4.} By~\eqref{e.Skbase},~\eqref{e.inductionstepkgen} and induction, we obtain that 
\begin{equation*}
\forall k\in\N, \ \mathcal{S}_k(1-) \ \mbox{holds.}
\end{equation*}
By the second statements of Propositions~\ref{p.additivity} and \ref{p.localization}, for every $\de > 0$, 
\begin{equation}
\label{e.add.k}
\Add_k\left(1,2-\right) \ \mbox{and} \ \Loc_k(1,\de,2-)
 \ \mbox{hold},
\end{equation}
and therefore \eqref{e.k.gen} is proved. By Remark~\ref{r.change-s}, we also infer that for every $\de > 0$ and $s \ge 1$,
\begin{equation}
\label{e.add.k.d3}
\Add_k\left(s, \tfrac 2s-\right) \ \mbox{and} \ \Loc_k\left(s, \de, \tfrac 2s-\right)
 \ \mbox{hold}.
\end{equation}
Choosing
\begin{equation*}
\left\{
\begin{aligned}
& s < 2 & \mbox{if} \ d=2, \\
& s < \tfrac 4 3 & \mbox{if} \ d=3, \\
& s = 1 & \mbox{if} \ d\ge 4,
\end{aligned}
\right.
\end{equation*}
and then $\de > 0$ sufficiently small in terms of $s$, we obtain \eqref{e.fluctkopt} by an application of Proposition~\ref{p.control-fluct} (the second statement if $d \in \{2,3\}$, the first statement if $d \ge 4$). 
\end{proof}

\section{The base case}
\label{s.basecase}

In this section, we prove Proposition~\ref{p.thebasecase}, which establishes the base case of the induction argument explained in the previous section, as well as Proposition~\ref{p.basecase}. They are essentially a rephrasing of the results of~\cite{AS,AM}, which give an algebraic rate of convergence for certain subadditive energy quantities that are close to~$J$. We just need to reformulate these results in terms of integration against the heat kernel rather than with respect to Lebesgue measure in bounded domains. This is a somewhat routine exercise which resembles the arguments in the proof of~\cite[Proposition 4.1]{AS}, albeit in a simpler context. 

\smallskip

As will be shown, Proposition~\ref{p.thebasecase} is a simple consequence of Proposition~\ref{p.basecase}. The extra information provided by Proposition~\ref{p.basecase} will prove to be useful in Sections~\ref{s.additivity} and~\ref{s.localization}. 

\smallskip

In order to connect to the results of~\cite{AS}, we recall the subadditive quantities introduced there, which are defined, for each bounded domain $U\subseteq \Rd$ and $p,q\in \Rd$, by 
\begin{equation*} \label{}
\mu(U,q):= \inf_{u\in H^1(U)} \fint_U \left( \frac12 \nabla u(x) \cdot \a(x) \nabla u(x) - q\cdot \nabla u(x) \right)\,dx
\end{equation*}
and
\begin{equation*} \label{}
\nu(U,p):= \inf_{v\in H^1_0(U)} \fint_U \frac 12 \left(p+\nabla v(x) \right)\cdot \a(x) \left( p+\nabla v(x) \right)\,dx.
\end{equation*}
Note that by an easy integration by parts (cf.~\cite[Lemma 3.1]{AKM}), the latter can be written in the form
\begin{equation*} \label{}
\nu(U,p) = \sup_{v \in\A(U)} \fint_U \left( -\frac12 \nabla v(x)\cdot \a(x)\nabla v(x) - p\cdot \a(x)\nabla v(x) \right)\,dx.
\end{equation*}
Likewise, the set over which the infimum is taken in the definition of $\mu$ may be replaced by $\A(U)$. 
From these formulas, we see that $J$ is actually a variation of a combination of $\mu$ and $\nu$ in which the domain $U$ has been ``smoothed out" by replacing it with the heat kernel for~$\ahom$ and the admissible set~$\A(U)$ is replaced by $\A_k(U)$ for some $k\in\N$. Indeed, if we (abuse the notation and) define, for each $p,q\in\Rd$,
\begin{equation*}
J(U,p,q) := \sup_{w \in \A(U)} \fint_U \left( -\frac12 \nabla w(x)\cdot \a(x)\nabla w(x) - \left(  \a(x)p -\ahom q\right)\cdot\nabla w (x)\right) \, dx,
\end{equation*}
then it is easy to check (see~\cite[Lemma 3.1]{AKM}) that 
\begin{equation*}
J(U,p,q) = \nu(U,p) - \mu(U,\ahom q) - p\cdot \ahom q. 
\end{equation*}
Therefore,~\cite[Theorem 3.1]{AS} gives the existence of an exponent $\delta(d,\Lambda)\in \left(0,\frac12\right]$ and a constant $C(d,\Lambda)< \infty$ such that, for every $s\in (0,d)$, $p,q\in B_1$ and $n\in\N$, 
\begin{equation}
\label{e.basecasemunu}
\P \left[ \left| J(\cu_{3^n},p,q) - \frac12 (p-q) \cdot \ahom (p-q) \right| \geq C 3^{-n\delta(d-s)}t \right]  \leq C\exp\left( -3^{ns}t/C\right). 
\end{equation}
We will use~\eqref{e.basecasemunu} to obtain a similar estimate on the quantity $J(0,R,p,q)$ studied in this paper. 

\smallskip

We now define the random variable $\mathcal{Y}_s$ appearing in the statement of Proposition~\ref{p.basecase}. It is also used throughout Section~\ref{s.additivity}. 

\begin{definition}[{The random variable $\mathcal{Y}_s$}]
Fix $s\in (0,d)$. To gain some room, we take $s_1: =\frac12(s+d)$ and $s_2:=\frac12(s_1+d)$ so that $s<s_1<s_2<d$. We also fix the mesoscale exponent $\gamma := s_1/s_2 \in\left(\frac12,1\right)$ and define
\begin{multline*}
\tilde{\mathcal{Y}}_s
:= 
\sup\Bigg\{ 3^{n/\gamma} \, :\, n\in \N, \\
\sup_{z\in\Zd\cap \cu_{3^{2n}}} 
\sup_{p,q\in B_1}
\left| J\left(\cu_{3^n}(z), p,q\right) - \frac12 (p-q) \cdot \ahom (p-q) \right| \geq 3^{-n\delta(d-s_2)} \Bigg\}.
\end{multline*}
Finally, with $\X_s$ the random variable in the statement of Proposition~\ref{p.regularity}, we set
\begin{equation*} \label{}
\mathcal{Y}_s := \tilde{\mathcal{Y}}_s \vee \X_s.
\end{equation*}
\end{definition}

\smallskip

We first check that $\mathcal{Y}_s$ satisfies the estimate~\eqref{e.Ys}. 

\begin{lemma}
\label{l.Ys}
There exists $C(s,d,\Lambda)<\infty$ such that 
\begin{equation*} \label{}
\mathcal{Y}_s = \O_s(C). 
\end{equation*}
\end{lemma}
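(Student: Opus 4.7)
The plan is to show $\tilde{\mcl Y}_s \leq \O_{s_1}(C)$, which combined with $\X_s \leq \O_s(C)$ (from Proposition~\ref{p.regularity}) and the fact that $s_1 > s$ immediately yields $\mcl Y_s = \O_s(C)$. The main ingredient is the concentration estimate~\eqref{e.basecasemunu}, applied with the exponent $s_2$ in place of $s$, and stationarity to move to arbitrary centers $z \in \Zd$:
\begin{equation*}
\P\left[ \left| J(\cu_{3^n}(z),p,q) - \tfrac{1}{2}(p-q)\cdot\ahom(p-q)\right| \geq C 3^{-n\delta(d-s_2)} t\right] \leq C\exp\bigl(-3^{ns_2} t/C\bigr)
\end{equation*}
for each fixed $p,q \in B_1$, $z\in\Zd$ and $t \geq 1$.

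First I would promote this pointwise-in-$(p,q)$ bound to a uniform one over $B_1 \times B_1$. Because $\nu(U,\cdot)$ is a bounded quadratic form and $\mu(U,\cdot)$ an affine function of a bounded quadratic form, the map $(p,q)\mapsto J(\cu_{3^n}(z),p,q)$ is Lipschitz on $B_1\times B_1$ with a Lipschitz constant $C_L=C_L(d,\Lambda)$ independent of $n, z$ and the realization. Thus discretizing $B_1\times B_1$ on an $\eps$-net $\mcl N_n$ with $\eps := 3^{-n\delta(d-s_2)}/(4C_L)$ of cardinality $|\mcl N_n| \leq C\, 3^{C' n}$ reduces the supremum over $(p,q)$ to a supremum over $\mcl N_n$ at the cost of halving the threshold.

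Next I would perform a union bound over the $O(3^{2nd})$ translates $z \in \Zd\cap \cu_{3^{2n}}$ and the net $\mcl N_n$. Choosing $t = t_n$ so that the threshold becomes $3^{-n\delta(d-s_2)}$ (which is possible with $t_n$ bounded below by a constant), the polynomial factor $3^{C''n}$ from the union bound is dominated by the Gaussian-type factor $\exp(-3^{ns_2}/C)$ for $n\geq n_0(s,d,\Lambda)$. Denote by $E_n$ the bad event at scale $n$ appearing in the definition of $\tilde{\mcl Y}_s$; we obtain
\begin{equation*}
\P[E_n] \leq C\exp\bigl(-3^{ns_2}/C\bigr), \qquad n \geq n_0.
\end{equation*}

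Now observe that $\{\tilde{\mcl Y}_s \geq 3^{n/\gamma}\} \subseteq \bigcup_{m \geq n} E_m$, so summing the geometric series (and handling small $n$ by a constant prefactor) gives
\begin{equation*}
\P\bigl[\tilde{\mcl Y}_s \geq x\bigr] \leq C\exp\bigl(-x^{\gamma s_2}/C\bigr) = C\exp\bigl(-x^{s_1}/C\bigr),
\end{equation*}
since $\gamma s_2 = s_1$. This bound on the tail immediately translates to $\tilde{\mcl Y}_s \leq \O_{s_1}(C')$, which is stronger than $\O_s(C'')$ (since $s < s_1$). The expected main obstacle is the discretization step: getting a uniform Lipschitz bound for $J$ on $B_1\times B_1$ independent of the scale $3^n$ and the coefficient field, but this is straightforward given the explicit formula $J(U,p,q) = \nu(U,p) - \mu(U,\ahom q) - p\cdot \ahom q$ together with the deterministic a priori bounds on $\mu,\nu$ coming from~\eqref{e.ue}.
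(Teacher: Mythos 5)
Your proposal is correct and follows essentially the same route as the paper: a union bound over dyadic scales and over translates $z\in\Zd\cap\cu_{3^{2n}}$, the concentration estimate~\eqref{e.basecasemunu} applied with exponent $s_2$, summation of the resulting stretched-exponential tails (using $\gamma s_2 = s_1 > s$), and finally combining with $\X_s=\O_s(C)$ from Proposition~\ref{p.regularity}. The only cosmetic differences are that you discretize $B_1\times B_1$ by an $\eps$-net with a deterministic Lipschitz bound where the paper reduces the supremum to finitely many directions via the quadratic-form structure, and that you first establish the stronger $\O_{s_1}(C)$ bound and then downgrade, whereas the paper integrates the tail directly at exponent $s$.
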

\begin{proof}
By a union bound and~\eqref{e.basecasemunu}, we estimate
\begin{align*}
\lefteqn{
\P \left[ \tilde{\mathcal{Y}}_s \geq R \right] 
} \quad & \\
& \leq \sum_{n\in\N,\, 3^n \geq R^\gamma} \sum_{z\in \Zd \cap \cu_{3^{2n}}} \, \sum_{p,q\in \{ e_1,\ldots,e_d\}} \\
& \qquad \P \left[ \left| J\left(\cu_{3^n}(z), p,q\right) - \frac12 (p-q) \cdot \ahom (p-q) \right| \geq c3^{-n\delta(d-s_2)}\right] \\
& \leq \sum_{n\in\N,\, 3^n \geq R^\gamma} C3^{2dn} \sup_{p,q\in B_1} \P \left[ \left| J\left(\cu_{3^n}, p,q\right) - \frac12 (p-q) \cdot \ahom (p-q) \right| \geq c3^{-n\delta(d-s_2)}\right] \\
& \leq C\sum_{n\in\N,\, 3^n \geq R^\gamma} 3^{2dn} \exp\left( -c3^{ns_2} \right)  \\
& \leq CR^{4d} \exp\left( -c R^{s_1} \right).
\end{align*}
Integrating this with respect to $R$, using $s_1>s$, yields 
\begin{equation*}
\E \left[ \exp\left( \tilde{\mathcal{Y}}_s^s \right) \right] \leq C(s,d,\Lambda), 
\end{equation*}
which implies that
\begin{equation}
\label{e.tYs}
\tilde{\mathcal{Y}}_s = \O_s(C). 
\end{equation}
We now obtain~\eqref{e.Ys} from~\eqref{e.X} and~\eqref{e.tYs}.
\end{proof}

\smallskip

The proof of~\eqref{e.Jminimalrad} is accomplished by approximating the integrals in the definition of $J(0,R,p,q)$ by a Riemann sum, and approximating each term using the convergence of $J$ for cubes guaranteed in the definition of $\mathcal{Y}_s$. We break the proof into the next two lemmas, one each for the upper and lower bounds.

\begin{lemma} 
\label{l.JbreakupUB}
There exist $\delta(d,\Lambda)\in \left(0,\frac12\right]$ and $C(s,k,d,\Lambda)<\infty$ such that,  for every $R\geq \mathcal{Y}_s$ and $p,q\in \Ahom_k(\Phi_R)$,
\begin{equation} \label{e.JbreakupUB}
J(0,R,p,q) 
\leq \int_{\Phi_R}\frac12 \left( \nabla p -\nabla q \right)  \cdot \ahom\left( \nabla p -\nabla q \right) +  CR^{-\delta(d-s)}. 
\end{equation}
\end{lemma}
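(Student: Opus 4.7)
My plan is to reduce the upper bound on $J(0,R,p,q)$ to the cube-based convergence rate~\eqref{e.basecasemunu} encoded in the random variable~$\Y_s$. I will pick a mesoscale $L = 3^n$ with $L \simeq R^\gamma$ (using $\gamma = s_1/s_2 \in (0,1)$ as in the definition of~$\Y_s$), tile~$\Rd$ by cubes $\{\cu_L(z)\}_{z\in L\Zd}$, and let $u\in\A_k$ achieve the maximum defining $J(0,R,p,q)$. The crucial observation is that the restriction of $u$ to each cube belongs to $\A(\cu_L(z))$, so with $\xi_z:=\nabla p(z)$ and $\zeta_z:=\nabla q(z)$ constant vectors, $u$ is admissible for the cube quantity $J(\cu_L(z),\xi_z,\zeta_z)$ and hence
\[
\fint_{\cu_L(z)}\Bigl(-\tfrac12\nabla u\cdot\a\nabla u-\a\xi_z\cdot\nabla u+\ahom\zeta_z\cdot\nabla u\Bigr)\le J\bigl(\cu_L(z),\xi_z,\zeta_z\bigr).
\]

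Next, I will multiply this cube inequality by $L^d\Phi_R(z)$ and sum over $z\in L\Zd$. The resulting left-hand side approximates $J(0,R,p,q)$ up to three types of errors: (i)~a Riemann-sum/Taylor error from replacing $\Phi_R(x)$ by $\Phi_R(z)$ on each cube, controlled by $|\nabla\Phi_R/\Phi_R|\lesssim |x|/R^2$; (ii)~the Taylor error from replacing $\nabla p(x),\nabla q(x)$ by $\xi_z,\zeta_z$, controlled using the fact that $\nabla p,\nabla q$ are polynomials of degree at most $k-1$ together with the normalization $\|\nabla p\|_{L^2(\Phi_R)},\|\nabla q\|_{L^2(\Phi_R)}\le 1$; and (iii)~the tail contribution from cubes lying outside the ball $B_{R^{1+\eta}}$ for a small $\eta>0$, which is super-algebraically small because the Gaussian decay of $\Phi_R$ dominates the polynomial growth of $|\nabla p|^2$ and $|\nabla q|^2$. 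For $\eta$ small enough all three errors can be absorbed into $CR^{-\delta(d-s)}$.

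Finally, I will convert the cube-side sum into the desired homogenized expression via the definition of~$\Y_s$. Since $R\ge\Y_s\ge\tilde{\mathcal{Y}}_s$ and $L=3^n\ge R^\gamma$, and since $3^{2n}\ge R^{2\gamma}\ge R^{1+\eta}$ once $\eta<2\gamma-1$ (which is possible because $\gamma>\tfrac23$ for $s>0$), every $z\in L\Zd\cap B_{R^{1+\eta}}$ lies in $\cu_{3^{2n}}$. The definition of $\tilde{\mathcal{Y}}_s$ together with homogeneity of $J(\cu_L(z),\cdot,\cdot)$ in $(\xi,\zeta)$ then gives
\[
\bigl|J(\cu_L(z),\xi_z,\zeta_z)-\tfrac12(\xi_z-\zeta_z)\cdot\ahom(\xi_z-\zeta_z)\bigr|\le C\bigl(|\xi_z|^2+|\zeta_z|^2\bigr)\,L^{-\delta_0(d-s_2)},
\]
where $\delta_0$ is the exponent from~\eqref{e.basecasemunu}. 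Summing the main term against $L^d\Phi_R(z)$ reconstructs $\int_{\Phi_R}\tfrac12(\nabla p-\nabla q)\cdot\ahom(\nabla p-\nabla q)$ up to Riemann-sum errors of the type already handled, while the error term contributes at most $CL^{-\delta_0(d-s_2)}\lesssim R^{-\gamma\delta_0(d-s_2)}$. A direct check using $s_1=\tfrac12(s+d)$ and $s_2=\tfrac12(s_1+d)$ shows that $\gamma(d-s_2)\ge\tfrac12(d-s)$ for all $s\in(0,d)$, so taking $\delta:=\delta_0/2$ yields the claimed rate.

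The main obstacle is the bookkeeping: the mesoscale rate $L^{-\delta_0(d-s_2)}$ must be balanced against Taylor/regularity errors (of size $\sim L/R^2$ per derivative, growing polynomially in $|z|/R$) and the Gaussian tail, all while respecting the geometric constraint $3^{2n}\gtrsim R^{1+\eta}$ forced by the definition of $\Y_s$. The three-tier choice $s<s_1<s_2<d$ together with $\gamma=s_1/s_2<1$ is precisely what provides room for these losses and still yields a positive exponent $\delta(d,\Lambda)>0$ in the final rate.
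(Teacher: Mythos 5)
Your proposal is correct and follows essentially the same route as the paper's proof: a mesoscale cube decomposition at scale $3^n\simeq R^\gamma$, restriction of the maximizer of $J(0,R,p,q)$ to the cubes so that it is admissible for the cube quantities $J(z+\cu_{3^n},\nabla p(z),\nabla q(z))$, Riemann-sum and Taylor errors for $\Phi_R$ and for $\nabla p,\nabla q$ (controlled using the energy bound on the maximizer and the normalization of $p,q$), a Gaussian tail cutoff at a slightly superlinear scale kept inside $\cu_{3^{2n}}$, and the definition of $\mathcal{Y}_s$ to replace each cube quantity by the homogenized quadratic form. One minor correction: since $d-s_2=\tfrac{d-s}{4}$ and $\gamma=\tfrac{2(s+d)}{s+3d}$, one has $\gamma(d-s_2)=\tfrac{(s+d)(d-s)}{2(s+3d)}<\tfrac12(d-s)$, so your claimed inequality $\gamma(d-s_2)\ge\tfrac12(d-s)$ is false; however $\gamma(d-s_2)\ge\tfrac16(d-s)$ holds for all $s\in(0,d)$, so the stated rate follows after shrinking $\delta$ to a smaller constant depending only on $(d,\Lambda)$, which is all the lemma requires.
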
 
\begin{proof}
We need to fix some parameters. We take $\gamma(s,d)\in \left(\frac12,1\right)$ as in the definition of~$\tilde{\mathcal{Y}}_s$, above. We let $n$ be the integer satisfying $3^{n-1} \leq R^\gamma < 3^n$ and set $r:=3^n$. The hypothesis $R \geq\mathcal{Y}_s$ implies that, for every $m\geq n$, 
\begin{multline} \label{e.ngoodscale}
\sup_{z\in \Zd\cap \cu_{3^{2n}}}  \,
\sup_{p,q\in B_1}
\left| J\left(\cu_{3^m}(z), p,q\right) - \frac12 (p-q) \cdot \ahom (p-q) \right|  \\
\leq 3^{-m\delta(d-s_2)} \leq CR^{-\delta(d-s_2)/2}.
\end{multline}
Fix $\sigma \in \left(0,\frac12(1-\gamma)\right]$ to be selected below and set 
\begin{equation*} \label{}
S:= \left\lceil 3^{n(1+\sigma-\gamma)/\gamma} \right\rceil 3^n ,
\end{equation*}
so that $S\in r\Z$ and $S \simeq R^{1+\sigma}$. Note that
\begin{equation*} \label{}
(1+\sigma-\gamma)/\gamma \leq \frac12(1-\gamma)/\gamma \leq \frac12 < 1,
\end{equation*}
so that, with an eye toward~\eqref{e.ngoodscale}, we have $S\leq 3^{3n/2}< 3^{2n}$ and thus $\cu_S \subseteq \cu_{3^{2n}}$.

\smallskip

Observe that
\begin{multline*}
J(0,R,p,q) 
 \leq  \sup_{v\in \A_k} \int_{\cu_{S}} \Phi_R \left( -\frac12 \nabla v\cdot \a\nabla v - \left(\a \nabla p- \ahom \nabla q\right)\cdot \nabla v  \right) \\
+  \sup_{v\in \A_k} \int_{\Rd\setminus\cu_{S}} \Phi_R \left( -\frac12 \nabla v\cdot \a\nabla v - \left(\a \nabla p- \ahom \nabla q\right)\cdot \nabla v  \right).
\end{multline*}
We may brutally estimate the second term on the right, using the decay of $\Phi_R$ and $S/R\geq cR^\sigma$, by
\begin{multline}
\label{e.tailchoptcha}
\sup_{v\in \A_k} \int_{\Rd\setminus\cu_{S}} \Phi_R \left( -\frac12 \nabla v\cdot \a\nabla v - \left(\a \nabla p- \ahom \nabla q\right)\cdot \nabla v  \right)  \\
\leq C \int_{\Rd \setminus \cu_{S}}  \Phi_R \left( \left| \nabla p\right|^2+  \left| \nabla q\right|^2\right)
\leq  C\int_{\Rd \setminus \cu_{S}} \Phi_R(x) \left( \frac{|x|}{R} \right)^{2(k-1)}\, dx
 \leq CR^{-100}.
\end{multline}

We turn to the estimate of the first term. Fix $v \in \A_k$. For convenience, assume $k\geq 2$. For each $z\in r\Zd$, denote $\left( \Phi_R \right)_z:= \fint_{z+\cu_r} \Phi_R(x)\,dx$ and observe that, for constant $C(k,d,\Lambda)<\infty$ and every $z\in r\Zd \cap \cu_{S}$, we have
\begin{align*} \label{}
\osc_{z+\cu_r} \Phi_R
 \leq Cr \left\| \nabla  \Phi_R \right\|_{L^\infty(z+\cu_r)} 
 \leq \frac{Cr(|z|+Cr)}{R^2} \sup_{z+\cu_r} \Phi_R 
 &  \leq C\frac{rS}{R^2} \sup_{z+\cu_r} \Phi_R  \\
 &  \leq CR^{\sigma+\gamma-1}\sup_{z+\cu_r} \Phi_R.
\end{align*}
As $\sigma \leq \frac12(1-\gamma)$, after adding a large constant $C(s,k,d,\Lambda)$ to $\mathcal{Y}_s$ so that $R\geq \mathcal{Y}_s \geq C$, we have
\begin{equation*} \label{}
\osc_{z+\cu_r} \Phi_R \leq \frac12 \sup_{z+\cu_r} \Phi_R\,,
\end{equation*}
and then returning to the previous estimate, we find that, for every $z\in r\Zd \cap \cu_{S}$,
\begin{equation}
\label{e.PhiRoscgamma}
\osc_{z+\cu_r} \Phi_R \leq  CR^{-\frac12 (1-\gamma)} \inf_{z+\cu_r} \Phi_R.
\end{equation}
We also have, by the normalization of $p$ and $q$, for every $z\in r\Zd \cap \cu_{S}$,
\begin{multline}
\label{e.stupidpzpx}
\sup_{x\in z+\cu_r} \left( \left| \nabla p(x) - \nabla p(z) \right| + \left| \nabla q(x) - \nabla q(z) \right| \right)  \\
\leq Cr \left\| \left|\nabla^2 p\right| + \left|\nabla^2q\right| \right\|_{L^\infty(\cu_{S})} 
 \leq C\left( \frac rR \right) \left( \frac SR \right)^{(k-2)} 
 \leq CR^{\gamma -1 + \sigma(k-2)} 
 \leq C R^{-\frac12(1-\gamma)},
\end{multline}
where we have reduced $\sigma$, if necessary, so that $\sigma (k-2) \leq \frac12(1-\gamma)$. By the above estimates, we obtain
\begin{align*}
\lefteqn{
\int_{\cu_{S}} \Phi_R \left( -\frac12 \nabla v\cdot \a\nabla v - \left( \a \nabla p - \ahom \nabla q\right)\cdot \nabla v \right)
} \qquad & \\
& \leq \sum_{z\in r\Zd \cap \cu_{S}} \left( \Phi_R \right)_{z} \int_{z+\cu_r}  \left( -\frac12 \nabla v\cdot \a\nabla v - \left( \a \nabla p - \ahom \nabla q\right)\cdot \nabla v \right) \\
& \qquad + 
CR^{-\frac12(1-\gamma)} \sum_{z\in r\Zd \cap \cu_{S}} \left( \Phi_R \right)_{z}   \int_{z+\cu_r} \left(\left| \nabla v\right|^2+ \left| \nabla p\right|^2 + \left| \nabla q\right|^2\right)  \\
& \leq \sum_{z\in r\Zd \cap \cu_{S}} \left( \Phi_R \right)_{z} |\cu_r|  J\left( z+\cu_r, \nabla p(z),\nabla q(z) \right) \\
& \qquad + 
CR^{-\frac14(1-\gamma)} \sum_{z\in r\Zd \cap \cu_{S}} \left( \Phi_R \right)_{z}   \int_{z+\cu_r} \left(1 + \left| \nabla v\right|^2+ \left| \nabla p\right|^2+ \left| \nabla q\right|^2 \right)  .
\end{align*}
The function $v\in \A_k$ which attains the supremum in the first term on the left side of~\eqref{e.tailchoptcha} 
satisfies 
\begin{equation*}
\int_{\cu_{S}} \Phi_R \left| \nabla v \right|^2  \leq C\int_{\cu_{S}} \Phi_R\left( \left| \nabla p \right|^2  +  \left| \nabla q \right|^2 \right) \leq C.
\end{equation*}
Using this, the normalization of $p$ and $q$ and~\eqref{e.PhiRoscgamma}, we obtain
\begin{equation*}
CR^{-\frac14(1-\gamma)} \sum_{z\in r\Zd \cap \cu_{S}} \left( \Phi_R \right)_{z}   \int_{z+\cu_r} \left(1+\left| \nabla v\right|^2+ \left| \nabla p\right|^2+ \left| \nabla q\right|^2 \right)
\leq CR^{-\frac14(1-\gamma)}.
\end{equation*}
On the other hand,~\eqref{e.ngoodscale} and $\cu_S \subseteq \cu_{3^{2n}}$ imply that 
\begin{multline*} \label{}
\bigg| J\left( z+\cu_r, \nabla p(z),\nabla q(z) \right) 
 -  \frac12\left(\nabla q(z) -\nabla p(z)\right)\cdot \ahom \left(\nabla q(z)-\nabla p(z)\right)\bigg| 
 \\ \leq CR^{-\delta(d-s_2)/2}. 
\end{multline*}
Using $S\geq cR^{1+\sigma}$ to chop the tails of $\Phi_R$ again (like in~\eqref{e.tailchoptcha}),~\eqref{e.stupidpzpx} yields
\begin{multline*} 
\bigg| \sum_{z\in r\Zd \cap \cu_{S}} \left( \Phi_R \right)_{z} |\cu_r|  \frac12\left(\nabla q(z) -\nabla p(z)\right)\cdot \ahom \left(\nabla q(z)-\nabla p(z)\right)
\\ - \int_{\Phi_R}\frac12 \left(\nabla q-\nabla p\right)\cdot \ahom \left(\nabla q-\nabla p\right)  \bigg| \leq C R^{-\frac14(1-\gamma)} .
\end{multline*}
Connecting last two displays gives
\begin{multline*} \label{}
\bigg| \sum_{z\in r\Zd \cap \cu_{S}} \left( \Phi_R \right)_{z} |\cu_r| J\left( z+\cu_r, \nabla p(z),\nabla q(z) \right) 
- \int_{\Phi_R}\frac12 \left(\nabla q-\nabla p\right)\cdot \ahom \left(\nabla q-\nabla p\right) \bigg| \\
 \leq C\left( R^{-\delta(d-s_2)/2} + R^{-\frac14(1-\gamma)} \right).
\end{multline*}
Assembling the above estimates yields 
\begin{equation*}
J(0,R,p,q) \leq   \int_{\Phi_R}\frac12 \left(\nabla q-\nabla p\right)\cdot \ahom \left(\nabla q-\nabla p\right) + C\left( R^{-\delta(d-s_2)/2} + R^{-\frac14(1-\gamma)} \right).
\end{equation*}
This implies~\eqref{e.JbreakupUB} after a redefinition of~$\delta$.
\end{proof}

We next present the lower bound. 

\begin{lemma}
\label{l.JbreakupLB}
There exist $\delta(d,\Lambda)\in \left(0,\frac12\right]$ and $C(k,d,\Lambda)<\infty$ such that, for every $s\geq1$, $R\geq \mathcal{Y}_s$ and $p,q\in \Ahom_k(\Phi_R)$,
we have
\begin{equation} \label{e.JbreakupLB}
J(0,R,p,q) \\
\geq \int_{\Phi_R}\frac12 \left( \nabla p -\nabla q \right)  \cdot \ahom\left( \nabla p -\nabla q \right) - CR^{-\delta(d-s)}. 
\end{equation}
\end{lemma}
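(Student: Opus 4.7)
The strategy is to mirror the proof of Lemma~\ref{l.JbreakupUB} and produce a concrete competitor $v \in \A_k$ for the supremum defining $J(0,R,p,q)$. Using the same mesoscale choice $r = 3^n$ with $3^{n-1} \le R^\gamma < 3^n$ and the enlarged cube $\cu_S$ with $S \in r\Z$ and $S \simeq R^{1+\sigma}$, we shall construct $v$ by pasting together cube-local near-maximizers. Precisely, for each $z \in r\Zd \cap \cu_S$, let $v_z$ denote a near-maximizer (up to an error of order $R^{-100}$) of the cube version $J(\cu_r(z), \nabla p(z), \nabla q(z))$, which by~\eqref{e.ngoodscale} realizes a value within $CR^{-\delta(d-s_2)/2}$ of the homogenized expression $\tfrac12 (\nabla p(z) - \nabla q(z)) \cdot \ahom (\nabla p(z) - \nabla q(z))$.

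To assemble the $v_z$'s into a single admissible function, I would exploit the decomposition $J(\cu_r,p,q) = \nu(\cu_r, p) - \mu(\cu_r,\ahom q) - p\cdot \ahom q$, writing each local near-maximizer as $v_z = v_z^{\nu} - v_z^{\mu}$ where $v_z^\nu$ is the $H^1_0(\cu_r(z))$-minimizer realizing $\nu(\cu_r(z), \nabla p(z))$ (as a perturbation of the affine function $\nabla p(z) \cdot x$), and $v_z^\mu$ is the corresponding near-minimizer for $\mu(\cu_r(z), \ahom \nabla q(z))$. Since $v_z^\nu$ has zero boundary trace on $\partial \cu_r(z)$, the pieces $v_z^\nu$ paste trivially across cube faces; the pieces $v_z^\mu$ are glued using a standard cutoff interpolation subordinate to a dyadic cover, introducing $H^1$-error controlled by a Caccioppoli bound on a thin boundary layer. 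Outside of $\cu_S$, we set the function to agree with the homogenized affine expression $(\nabla q - \nabla p) \cdot x$ (interpolating on another layer). This produces $\tilde v \in H^1_{\rm loc}(\Rd)$ which is nearly $\a$-harmonic, with the defect supported in the union of thin overlap regions.

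To produce an element of $\A_k$, I would project $\tilde v$ using the regularity theory of Proposition~\ref{p.regularity}: on any large ball $B_T$, solve $-\nabla \cdot (\a \nabla w) = 0$ with $w = \tilde v$ on $\partial B_T$ and apply Proposition~\ref{p.regularity}(iii) with $k$ to extract a polynomial $v \in \A_k(\Rd)$ close to $w$ in $\underline L^2(B_r)$-norm at all intermediate scales. Then evaluate $\mcl J(v, 0, R, p, q)$ via the same Riemann sum as in Lemma~\ref{l.JbreakupUB}: the main term reproduces $\sum_z (\Phi_R)_z |\cu_r| J(\cu_r(z), \nabla p(z), \nabla q(z))$, which is within $CR^{-\delta(d-s_2)/2} + CR^{-\frac14(1-\gamma)}$ of the target $\int_{\Phi_R} \tfrac12 (\nabla p - \nabla q) \cdot \ahom (\nabla p - \nabla q)$, while the cube-boundary defects are estimated using the decay of $\Phi_R$ outside $\cu_S$ and~\eqref{e.PhiRoscgamma}.

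The main obstacle is the gluing step: ensuring that the $H^1$-error introduced by pasting $v_z^\mu$ across cube boundaries contributes at most $CR^{-\delta(d-s)}$ to $\mcl J$. This is where the mesoscale exponent $\gamma \in (\tfrac12, 1)$ is critical: on scale $r$, the overlap layer has volume fraction $O(r^{-1})$ per cube, and the Caccioppoli estimate converts the $H^1$-deficit into an $L^2$-deficit of the local near-maximizers, whose size is controlled by $R^{-\delta(d-s_2)/2}$ via~\eqref{e.ngoodscale}. Combining, these errors sum to something bounded by $C(R^{-\delta(d-s_2)/2} + R^{-\frac12(1-\gamma)} + R^{-\frac14(1-\gamma)})$; after optimizing $\sigma$ and redefining $\delta$ in terms of $d - s$, we obtain~\eqref{e.JbreakupLB}.
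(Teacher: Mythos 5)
Your overall skeleton (mesoscale Riemann sum, local problems on cubes of side $r\simeq R^\gamma$, pasting, upgrading to an element of $\A_k$ via Proposition~\ref{p.regularity}, then evaluating $\mcl J$) matches the paper's strategy, but the two steps you gloss over are precisely where the work lies, and as written your gluing step fails. The paper avoids gluing altogether: on each cube $z+\cu_r$ it solves the Dirichlet problem with boundary data $q-p$, so the local pieces agree exactly on the cube faces and paste into $v\in (q-p)+H^1_0(\cu_S)$ with no cutoff and no pasting error; the price, paid elsewhere, is relating $\mcl J$ evaluated at $v$ to the cube energies, which is what the flux duality estimate \eqref{e.blogchab} together with \eqref{e.v_z flux control000} accomplishes. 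Your alternative — splitting the cube maximizer of $J$ into its $\nu$- and $\mu$-parts and patching the $\mu$-parts with cutoffs over a ``thin boundary layer'' — does not work with the stated error accounting. The $\mu$-minimizers on adjacent cubes carry natural (Neumann-type) boundary conditions and, even after matching additive constants, differ near a shared face by the size of their sublinear correction, i.e.\ by roughly $r^{1-\delta}$, not $O(1)$. A cutoff layer of unit width (your ``volume fraction $O(r^{-1})$ per cube'') then contributes a per-cube gradient-energy error of order $(r^{1-\delta})^2\, r^{-1}=r^{1-2\delta}$, which diverges; curing this requires a layer of width comparable to $r$ and a genuine comparison of the two minimizers' gradients there (a two-scale/energy argument), none of which is in your sketch. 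So ``Caccioppoli on a thin boundary layer'' is not a valid justification.

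The second gap is the passage from the pasted function $\tilde v$ to an admissible competitor in $\A_k$. You solve an auxiliary Dirichlet problem $w$ on a large ball with data $\tilde v$ and invoke Proposition~\ref{p.regularity}(iii), but you never show that $w$ is close to $\tilde v$ in $H^1$ on the region seen by $\Phi_R$ — and this is not automatic, since $\tilde v$ is not a solution: it has defects on your gluing layers and equals the $\ahom$-harmonic (but not $\a$-harmonic) polynomial $q-p$ outside $\cu_S$. Without that closeness, the value of $\mcl J$ at the final element of $\A_k$ is not linked to your Riemann-sum computation, and the lower bound does not follow. In the paper this is the heart of the argument: Step 3 proves $\left\|\nabla u-\nabla v\right\|_{\underline{L}^2(\cu_S)}$ is small by bounding the energy of the global Dirichlet solution $u$ from above by that of $v$ and from below via the cube convergence \eqref{e.ngoodscale}, then using uniform convexity; Step 4 uses the Poincar\'e inequality, the resulting estimate that $u$ is close to $q-p$, and Proposition~\ref{p.regularity} to extract $\phi\in\A_k$ with a quantitative bound at all mesoscales, after which Step 5 transfers the value of the functional from $v$ to $\phi$. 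These quantitative comparisons, which carry most of the weight of the proof, are asserted rather than proved in your proposal.
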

\begin{proof}
We take $\gamma\in (\frac12,1)$, $n\in\N$, $r=3^n$, $\sigma\in \left( 0 ,\frac12(1-\gamma) \right)$ and $S$ to be almost the same parameters as in the proof of Lemma~\ref{l.JbreakupUB}. The only difference is that we require $\sigma$ to be slightly smaller, if necessary, so that  $\sigma(k+ d) \leq \frac1{16}(1-\gamma)$. Notice in particular that we have $\cu_S \subseteq \cu_{3^{2n}}$ and~\eqref{e.ngoodscale}. 

\smallskip
\emph{Step 1.} 
We let $u\in H^1(\cu_S)$ be the unique solution of the Dirichlet problem
\begin{equation*} \label{}
\left\{ 
\begin{aligned}
& -\nabla \cdot \left( \a(x) \nabla u \right) = 0 & \mbox{in} & \ \cu_S, \\
& u = q-p & \mbox{on} & \ \partial \cu_S. 
\end{aligned}
\right.
\end{equation*}
%
For each $z\in r\Zd \cap \cu_{S}$, we let $v_{z}\in H^1(z+\cu_r)$ denote the solution of
\begin{equation*} \label{}
\left\{ 
\begin{aligned}
& -\nabla \cdot \left( \a(x) \nabla v_z \right) = 0 & \mbox{in} & \ z+\cu_r, \\
& v_z = q-p & \mbox{on} & \ \partial (z+\cu_r). 
\end{aligned}
\right.
\end{equation*}
Note that $v_z$ is the maximizer of $J(z+\cu_r,p-q,0)$. By gluing together the $v_z$'s, we see that there exists $v\in (q-p) + H^1_0(\cu_S)$ such that $v\vert_{z+\cu_r} = v_z$ for each $z\in r\Zd\cap \cu_S$. 

\smallskip

\emph{Step 2.} We show that $v$ defined above gives a good approximation for $J(0,R,p,q)$. Indeed, we have
\begin{multline} \label{e.stupidfluxmaps}
\int_{\Phi_R^{(\sigma)}} \left( - \frac12 \nabla v \cdot \a\nabla v - \left(\a\nabla p - \ahom \nabla q \right)\cdot \nabla v \right) 
\\ \geq \int_{\Phi_R}\frac12 \left( \nabla p -\nabla q \right)  \cdot \ahom\left( \nabla p -\nabla q \right)  - C  R^{-\delta_1(d-s)} 
\end{multline}
with $\delta_1 = \delta_1(\gamma,d,\Lambda)$, and where we recall that $\Phi^{(\sigma)}_R$ is defined in \eqref{e.Phideltadef}. According to~\eqref{e.stupidpzpx}, it is clear that 
\begin{equation*}
\left| J(z+\cu_r,\nabla (p-q)(z),0) - J(z+\cu_r,p-q,0) \right| \leq CR^{-\frac12(1-\gamma)}  |\nabla (p-q)(z)|^2 
\end{equation*}
and
\begin{equation*}
\left| J(z+\cu_r,\nabla p(z),\nabla q(z)) - J(z+\cu_r,p,q)  \right| \leq CR^{-\frac12(1-\gamma)} \left( |\nabla p(z)|^2 + |\nabla q(z)|^2 \right)  . 
\end{equation*}
By~\eqref{e.ngoodscale}, on the other hand, we have that 
\begin{multline*} 
\left| J(z+\cu_r,\nabla p(z),\nabla q(z)) - J(z+\cu_r,\nabla (p-q)(z) ,0) \right| 
\\  \leq CR^{-\delta(d-s_2)/2} \left( |\nabla p(z)|^2 + |\nabla q(z)|^2 \right)
\end{multline*}
and, combining the above three displays, 
\begin{multline} \label{e.v_z flux control000}
\left| \frac12 \nabla(q-p)(z)  \ahom \nabla (q-p)(z) - J(z+\cu_r,p-q,0) \right| 
\\ \leq C \left( R^{-\delta(d-s_2)/2} + R^{-\frac12(1-\gamma)} \right) \left( |\nabla p(z)|^2 + |\nabla q(z)|^2 \right)\,.
\end{multline}
Furthermore, we have that
\begin{multline}
 \label{e.blogchab}
\left| \fint_{z + \cu_r} \ahom \nabla v_z  -  \fint_{z + \cu_r} \a \nabla v_z \right| \\
\leq C\left( |\nabla p(z)| + |\nabla q(z)| \right) \left( R^{-\delta(d-s_2)/2} + R^{-\frac12(1-\gamma)} \right) \,.
\end{multline}
Indeed, if we denote, for $\xi,\zeta\in\Rd$, the maximizer of $J(z+\cu_r,\xi,\zeta)$ by $\tilde v(\cdot,z,\xi,\zeta)$, then by~\eqref{e.ngoodscale}, we have, for every $\xi \in B_1$,
\begin{multline*}
\fint_{z+\cu_r} \left| \nabla \tilde v(x,z,-\xi,0) - \nabla \tilde v(x,z,0,\xi) \right|^2\,dx \\
= \fint_{z+\cu_r} \left| \nabla \tilde v(x,z,\xi,\xi) \right|^2\,dx 
\leq CJ(z+\cu_r,\xi,\xi) 
\leq CR^{-\delta(d-s_2)/2}.
\end{multline*}
Then using the following formulas given by the first variation,
\begin{equation*}
\fint_{z+\cu_r} \xi\cdot \a(x) \nabla v_z(x)\,dx = \fint_{z+\cu_r} \nabla \tilde v(x,z,-\xi,0) \cdot \a(x) \nabla v_z(x)\,dx
\end{equation*}
and
\begin{equation*}
\fint_{z+\cu_r} \ahom \xi\cdot\nabla v_z(x)\,dx = \fint_{z+\cu_r} \nabla \tilde v(x,z,0,\xi) \cdot \a(x) \nabla v_z(x)\,dx,
\end{equation*}
we get, for any $\xi\in B_1$,
\begin{align*}
\fint_{z+\cu_r} \left( \xi\cdot \a(x)\nabla v_z (x) - \ahom \xi\cdot \nabla v_z(x) \right)\,dx \leq C \left( \fint_{z+\cu_r}\left|\nabla v_z (x)\right|^2\right)^{\frac12} R^{-\delta(d-s_2)/2}. 
\end{align*}
Taking the supremum over $\xi\in B_1$ and using~\eqref{e.stupidpzpx} yields~\eqref{e.blogchab}. 

\smallskip

Using the above displays, together with~\eqref{e.tailchoptcha},~\eqref{e.PhiRoscgamma} and~\eqref{e.stupidpzpx}, yields~\eqref{e.stupidfluxmaps}:
\begin{align*} 
\lefteqn{\int_{\Phi_R^{(\sigma)}} \left( - \frac12 \nabla v \cdot \a\nabla v - \left(\a\nabla p - \ahom \nabla q \right)\cdot \nabla v \right) } \quad & 
\\ \notag & = \sum_{z\in r\Zd\cap \cu_S} \int_{z + \cu_r} \left( - \frac12 \nabla v_z \cdot \a\nabla v_z - \left(\a\nabla p - \ahom \nabla q \right)\cdot \nabla v_z \right) \Phi_R^{(\sigma)}
\\ \notag & \geq \sum_{z\in r\Zd\cap \cu_S} \Phi_R^{(\sigma)}(z) |\cu_r| J(z+\cu_r,p-q,0) - C \left( R^{-\delta(d-s_2)/2}  + R^{-\frac12(1-\gamma)} \right)
\\ \notag & \geq  \int_{\Phi_R}\frac12 \left( \nabla p -\nabla q \right)  \cdot \ahom\left( \nabla p -\nabla q \right)  - C \left( R^{-\delta(d-s_2)/2}  + R^{-\frac12(1-\gamma)} \right)\,.
\end{align*}

\smallskip

\emph{Step 3.} We next show that $\nabla u -\nabla v$ is small in $L^2$, that is
\begin{equation}
\label{e.closetovzs}
\left\|  \nabla u -\nabla v \right\|_{\underline{L}^2(\cu_S)}^2 \leq C R^{-\delta_2(d-s)} 
\end{equation}
for $\delta_2(\gamma,d,\Lambda)>0$.
Observe that from this it follows trivially that 
\begin{equation}
\label{e.closetovzs000}
\left\|  \nabla u -\nabla v \right\|_{\underline{L}^2\left(\Phi_R^{(\sigma)}\right)} \leq C R^{\sigma d/2 -\delta_2 (d-s)} \leq C R^{-\delta_2 (d-s)/2}.
\end{equation}
Testing the energy of $v$ against $u$ yields, in view of~\eqref{e.v_z flux control000}, that 
\begin{multline}
\label{e.nabuanabu1}
\int_{\cu_S} \frac12 \nabla u\cdot \a \nabla u
 \leq \int_{\cu_S} \frac12 \nabla v\cdot \a \nabla v 
 = \sum_{z\in r\Zd\cap \cu_S} \int_{z+\cu_r} \frac12 \nabla v_z\cdot \a \nabla v_z  \\
 \leq |\cu_r|  \sum_{z\in r\Zd\cap \cu_S} \frac12 \nabla (q-p)(z) \cdot\ahom\nabla (q-p)(z)  +C \left| \cu_S \right| \left(R^{-\delta(d-s_2)/2}+ R^{-\frac12(1-\gamma)}\right). 
\end{multline}
On the other hand, by an integration by parts and~\eqref{e.stupidpzpx} again, we find that 
\begin{align*}
\int_{\cu_S} \frac12 \nabla u\cdot \a \nabla u 
& = \int_{\cu_S} \left(  \frac12 \nabla u\cdot \a \nabla u -\nabla (q-p)\cdot \ahom\nabla u \right) + \int_{\cu_S} \nabla (q-p) \cdot \ahom \nabla (q-p) \\
& \geq \sum_{z\in r\Zd\cap \cu_S} \int_{z+\cu_r} \left( \frac12 \nabla u\cdot \a \nabla u - \nabla (q-p)(z) \cdot \ahom \nabla u \right)  \\
& \qquad + \left| \cu_r \right| \sum_{z\in r\Zd\cap \cu_S} \nabla (q-p)(z) \cdot \ahom\nabla (q-p)(z)  \\
& \qquad - CR^{-\frac14(1-\gamma)}\int_{\cu_S} \left( 1 + \left| \nabla u \right|^2 +  \left| \nabla p \right|^2 +  \left| \nabla q \right|^2 \right).
\end{align*}
The summand of the first term on the right side above is bounded below by~$-J(z+\cu_r,0,\nabla (q-p)(z))$. Using the above to combine this with the second term and then the normalization of $p$ and $q$ to estimate the third, we obtain 
\begin{multline}
\label{e.nabuanabu2}
\int_{\cu_S} \frac12 \nabla u\cdot \a \nabla u \\
\geq  \frac12 \left| \cu_r \right| \sum_{z\in r\Zd\cap \cu_S} \nabla (q-p)(z) \cdot \ahom\nabla (q-p)(z) - C\left| \cu_S \right| R^{-\frac14(1-\gamma)} \left( \frac SR \right)^{2(k-1)}.
\end{multline}
Combining~\eqref{e.nabuanabu1} and~\eqref{e.nabuanabu2} and using the choice of $\sigma$ indicated in the first paragraph, we find that 
\begin{multline*} 
\left| \fint_{\cu_S} \frac12 \nabla u\cdot \a \nabla u  - \frac{\left|\cu_r\right|}{\left|\cu_S\right|}\sum_{z\in r\Zd\cap \cu_S} \frac12 \left( \nabla q(z) - \nabla p(z) \right) \cdot \ahom\left( \nabla q(z) - \nabla p(z) \right) \right| 
\\
\leq C \left(R^{-\delta(d-s_2)/2}+ R^{-\frac18(1-\gamma)}\right).
\end{multline*}
In particular, we deduce that 
\begin{equation*}
\left| \fint_{\cu_S} \frac12 \nabla u\cdot \a \nabla u
 - \fint_{\cu_S} \frac12 \nabla v\cdot \a \nabla v  \right| \leq C \left(R^{-\delta(d-s_2)/2}+ R^{-\frac18(1-\gamma)}\right)\,,
\end{equation*}
which, since $u-v\in H^1_0(U)$ and $u$ is a minimizer of the energy, implies~\eqref{e.closetovzs000}. 

\smallskip 

\emph{Step 4.} We next demonstrate that there is $\phi \in \A_k$ such that 
\begin{equation} \label{e.lbcorrector}
\left\| \nabla u - \nabla \phi  \right\|_{L^2\left(\Phi_R^{\sigma}\right)} \leq C \left( \frac{R}{S} \right)^{k+1} R^{-\delta_3(d-s)}.
\end{equation}
for $\delta_3(\gamma,d,\Lambda)$. 
Using the Poincar\'e inequality and~\eqref{e.closetovzs}, we see that
\begin{align} \label{e.u -(q-p) poincare}
\lefteqn{
\frac{1}{S^2} \left\| u - (q-p) \right\|_{\underline{L}^2(\cu_S)}^2
} \qquad & \\ \notag 
&   \leq \frac{C}{S^2} \left( \left\| u - v \right\|_{\underline{L}^2(\cu_S)}^2 +  \left\| v - (q-p)  \right\|_{\underline{L}^2(\cu_S)}^2 \right) \\ \notag 
&  \leq C \left\|  \nabla u - \nabla v) \right\|_{\underline{L}^2(\cu_S)}^2 +
C\left( \frac rS \right)^{2} \fint_{\cu_S} \left( \left| \nabla v \right|^2 +\left| \nabla q \right|^2+\left| \nabla p \right|^2\right)   \\ \notag 
& \leq C\left(R^{-\delta_2(d-s)}+ R^{-\frac14(1-\gamma)}\right) 
+ C\left( \frac rS \right)^2 \left( \left\| \nabla p \right\|^2_{\underline{L}^2(\cu_S)}  + \left\| \nabla q \right\|_{\underline{L}^2(\cu_S)}^2   \right) \\ \notag 
& \leq C\left(R^{-\delta_2(d-s)}+ R^{-\frac14(1-\gamma)}\right).
\end{align}
Since $S>R\geq \mathcal{Y}_s\geq \X$, by Proposition~\ref{p.regularity}, we find $\phi_{q-p} \in \Ahom_k$ such that 
$$
\left\| q-p - \phi_{q-p} \right\|_{\underline{L}^2(\cu_{S})} \leq C S^{-\delta} \left\| q-p \right\|_{\underline{L}^2(\cu_{S})} \leq C S^{-\delta} R^{\sigma k} R \leq C R^{-\tilde \delta(d-s)} 
$$
for suitably chosen $\tilde \delta (d,\Lambda)$. By the above display and~\eqref{e.u -(q-p) poincare}, and again by Proposition~\ref{p.regularity}, we may select $\phi\in \A_k$ such that, for every $R' \in \left[R,\frac12 S\right]$,
\begin{equation*} \label{}
\left\| u - \phi  \right\|_{\underline{L}^2(\cu_{R'})}  \leq C \left( \frac {R'}S \right)^{k+1}  \left\| u - \phi_{q-p} \right\| _{\underline{L}^2(\cu_S)} \leq C \left( \frac{R'}{R} \right)^{k+1} R' R^{-\tilde \delta_3(d-s)}
\end{equation*}
and thus by the Caccioppoli estimate
\begin{equation*} \label{}
\left\| \nabla u - \nabla \phi  \right\|_{L^2(\cu_{R'/2})} \leq C \left( \frac{R'}{S} \right)^{k+1} R^{-\tilde \delta_3(d-s)}.
\end{equation*}
This, together with the trivial estimate 
$$
\left\| \nabla u - \nabla \phi  \right\|_{L^2(\cu_{S})} \leq C R^{\sigma k}
$$
to handle the tail terms, yields after easy manipulations~\eqref{e.lbcorrector}. 

\smallskip

\emph{Step 5.} Conclusion. Combining~\eqref{e.closetovzs} and~\eqref{e.lbcorrector} yields, for $\delta_4(\gamma,d,\Lambda)$, 
\begin{equation*} 
\left\| \nabla v - \nabla \phi  \right\|_{L^2\left(\Phi_R^{\sigma}\right)} \leq C R^{-\delta_4(d-s)}.
\end{equation*}
Using this we deduce that 
\begin{align*}
\lefteqn{
 \int_{\Phi_R^{(\sigma)}}  \left( - \frac12 \nabla \phi \cdot \a\nabla \phi - \left(\a\nabla p - \ahom \nabla q \right)\cdot \nabla \phi \right) 
} \qquad &  \\
& \geq \sum_{z\in r\Zd\cap \cu_S} \int_{z+\cu_r} \left( - \frac12 \nabla v_z\cdot \a\nabla v_z - \left(\a\nabla p - \ahom \nabla q \right)\cdot \nabla v_z \right) \Phi_R^{(\sigma)} \\
& \qquad -  C 
\left\| |\nabla u| + |\nabla v| + |\nabla p| + |\nabla q|  \right\|_{L^2\left(\Phi_R^{(\sigma)}\right) } \left\| \nabla \phi - \nabla v \right\|_{L^2\left(\Phi_R^{(\sigma)}\right) }
\\
& \geq 
\frac12 \int_{\Phi_R}\frac12 \left( \nabla p -\nabla q \right)  \cdot \ahom\left( \nabla p -\nabla q \right)     - C R^{-\delta_4(d-s)}.
\end{align*}
This completes the proof since the Caccioppoli estimate and the polynomial growth implies, for all $T\geq S$, 
\begin{equation*} 
\left\| \nabla \phi  \right\|_{L^2(\cu_{T})} \leq C \left(\frac{T}{R} \right)^{k-1}\,, 
\end{equation*}
and this is enough to control the tail terms. 
\end{proof}

We now complete the proof of Proposition~\ref{p.basecase}.

\begin{proof}[{Proof of Proposition~\ref{p.basecase}}]
Lemma~\ref{l.Ys} gives us~\eqref{e.Ys}, and Lemmas~\ref{l.JbreakupUB} and~\ref{l.JbreakupLB} combine to yield~\eqref{e.Jminimalrad}. 
\end{proof}

\begin{proof}[Proof of Proposition~\ref{p.thebasecase}]
We use the normalization $\left\| \nabla p \right\|_{L^2(\Phi_R)}^2 + \left\| \nabla q \right\|_{L^2(\Phi_R)}^2 = 1$. 
Denote 
\begin{equation*} 
\mathsf{Err}(y,r,p,q) := \left| J(y,r,p,q) - \int_{\Phi_{y,r}} \frac12\left( \nabla p -\nabla q\right)\cdot  \ahom  \left( \nabla p -\nabla q\right)\right|\,.
\end{equation*}
According to~\eqref{e.Ys} and~\eqref{e.Jminimalrad}, we have by the homogeneity that 
\begin{equation*} 
\mathsf{Err}(y,r,p,q) \leq C \left( \left\| \nabla p \right\|_{L^2(\Phi_{z,r})}^2 + \left\| \nabla q \right\|_{L^2(\Phi_{z,r})}^2\right)   \left(r^{-\delta(d-1)} + \indc_{\{\Y_t(y) > r\}} \right) 
\end{equation*}
Clearly 
\begin{equation*}
\indc_{\{ \mathcal{Y}_1(y) > r \}} \le \frac{\mathcal{Y}_1(y)}{r} =  \O_1\left(Cr^{-1} \right)\,.
\end{equation*}
Taking expectation yields, via Lemma~\ref{l.snappingtheLs}, that 
\begin{multline} \label{e.LvsId000}
\left\|\nabla \Ll(q - L_{z,r} q\Rr)\right\|_{L^2(\Phi_{z,r})}  + \left\|\nabla \Ll(p - L_{z,r}^* p\Rr)\right\|_{L^2(\Phi_{z,r})} 
\\ \leq 
C \left( \left\| \nabla p \right\|_{L^2(\Phi_{z,r})}^2 + \left\| \nabla q \right\|_{L^2(\Phi_{z,r})}^2\right)   \left(r^{-\delta(d-1)} + r^{-1} \right) \,.
\end{multline}
Defining further the probability measure
\begin{equation*} 
\mu(dy) = \Phi_{\sqrt{R^2 - r^2}}(y) 
\left(\left\| \nabla p \right\|_{L^2(\Phi_{y,r})}^2 + \left\| \nabla q \right\|_{L^2(\Phi_{y,r})}^2   \right)
\, d y \,,
\end{equation*}
we have by Lemma~\ref{l.sum-O}, for all $t \geq 1$, 
\begin{equation}  \label{e.Err000}
\mathsf{Err}(0,R,p,q)  +  \int_{\Phi_{\sqrt{R^2-r^2}}} \mathsf{Err}(y,r,p,q) \leq \left(C r^{-\delta(d-1)} + \O_t \left(  C r^{-1/t} \right) \right)\,.
\end{equation}
Using~\eqref{e.LvsId000} and~\eqref{e.Err000} it is easy to see that both $\Add_t(\alpha_0/t)$ and $\Fluc_t(\alpha_0/t)$ hold for small enough $\alpha_0(d,\Lambda)$ and for all $t\geq 1$. Finally, to obtain $\Dual_k(\alpha_0)$, we clearly have, for $p \in \Ahom_1(\Phi_{z,r})$, 
\begin{equation*} 
J(z,r,p,p) = \mathsf{Err}(0,R,p,p) \leq \left(C r^{-\delta(d-1)} + \O_1 \left(  C r^{-1} \right) \right)\,.
\end{equation*}
Thus we get by~\eqref{e.first-var} and~\eqref{e.second-var}, for all $w \in \A_k(\Phi_{z,r})$,
\begin{align*} 
\left| \int_{\Phi_{z,r}}\nabla p \cdot \left( \a - \ahom \right)  \nabla w  \right| & = \left| \int_{\Phi_{z,r}} \nabla v(\cdot,z,r,p,p) \cdot \a \nabla w  \right|
\\ & \leq C J(z,r,p,p)^{\frac12}
\\& \leq \left(C r^{-\delta(d-1)/2} + \O_{2} \left(  C r^{-\frac12} \right) \right)\,.
\end{align*}
Taking supremum over $p \in \Ahom_1(\Phi_{z,r})$ and $w \in \A_k(\Phi_{z,r})$  then gives
\begin{equation*} 
\E\left[ \sup_{w \in \A_k(\Phi_{z,r})} \left| \int_{\Phi_{z,r}}  \left( \a - \ahom \right)  \nabla w  \right|  \right] \leq  C \left( r^{-\delta(d-1)/2} + r^{-\frac12}  \right)\,,
\end{equation*}
which gives $\Dual_k(\alpha_0)$ for small enough $\alpha_0$. The proof is complete. 
\end{proof}

\section{Improved scaling of the fluctuations}
\label{s.fluc-subopt}

In this section we prove Proposition~\ref{p.control-fluct}, which controls the size of the fluctuations of $J$ assuming sufficient additivity and localization.

\subsection{Preliminaries}
In order to streamline the presentation of the proof, we begin with some notation for centered random variables, and record a few elementary properties thereof. 

\smallskip

It will be convenient to phrase stochastic integrability in terms of the behavior of the Laplace transform. Before introducing new notation, we recall how our previous notation relates to this behavior: applying Chebyshev's inequality, one can show that for every $s > 1$, there exists $C(s) < \infty$ such that
\begin{equation}
\label{e.stoch-equiv1}
X \le \O_s(1)  \quad \implies \quad \mbox{for all} \ \lambda \ge 1, \ \log \E[\exp(\lambda X)] \le C \lambda^{\frac s {s-1}},
\end{equation}
\begin{equation}
\label{e.stoch-equiv2}
\mbox{for all} \ \lambda \ge 1, \ \log \E[\exp(\lambda X)] \le  \lambda^{\frac s {s-1}} \quad \implies \quad X \le \O_s(C).
\end{equation}

\medskip 

For every $s \in (1,2]$ and $\theta \ge 0$, we write
\begin{equation}
\label{e.def.Ost}
X = \bar \O_s(\theta)
\end{equation}
to mean that
$$
\mbox{for all} \ \lambda \in \R, \quad \log \E \left[ \exp\left( \lambda \theta^{-1}  X \right) \right] \leq  \lambda^2 \vee   | \lambda|^{\frac s {s-1}}  .
$$
For centered random variables, the notions of $\O_s$ and $\bar \O_s$-bounded random variables coincide, up to a multiplicative constant.
\begin{lemma}
\label{l.logL}
Let $s \in (1,2]$. There exists $C(s) < \infty$ such that for every random variable $X$,
$$
X = \bar \O_s(1) \quad \implies \quad X = \O_s(C),
$$
and conversely,
$$
X = \O_s(1) \quad \text{and} \quad \E[X] = 0 \quad  \implies \quad X = \bar \O_s(C). $$
\end{lemma}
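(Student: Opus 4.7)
The plan is to establish the two implications separately by a standard Chebyshev/integration argument for the first direction and by a Taylor-plus-Young's-inequality argument for the second.

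\emph{First implication ($\bar\O_s(1) \Rightarrow \O_s(C)$).} Suppose $X = \bar\O_s(1)$. For any $t > 0$ and $\lambda \ge 1$, Chebyshev's inequality gives
\[
\P[X > t] \le \exp\bigl(-\lambda t + \log\E[\exp(\lambda X)]\bigr) \le \exp\bigl(-\lambda t + \lambda^{s/(s-1)}\bigr).
\]
Optimizing in $\lambda$ (taking $\lambda = (t(s-1)/s)^{s-1}$, which exceeds $1$ as soon as $t$ is larger than a constant depending only on $s$) yields a tail bound of the form $\P[X > t] \le \exp(-c_s t^s)$ for all $t \ge t_0(s)$. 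Integrating this tail bound via the identity
\[
\E\bigl[\exp((X_+/C)^s)\bigr] = 1 + \int_0^\infty \frac{s t^{s-1}}{C^s}\,\exp\bigl((t/C)^s\bigr)\,\P[X > t]\,dt,
\]
choosing $C$ large enough that $1/C^s < c_s$ makes the integrand integrable, and enlarging $C$ further to bring the right-hand side below $2$ yields $X \le \O_s(C)$. The same argument applied with $\lambda \le 0$ (which is admissible since the $\bar\O_s$ bound is symmetric) shows $-X \le \O_s(C)$ as well, hence $X = \O_s(C)$.

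\emph{Second implication ($\O_s(1)$ and centered $\Rightarrow \bar\O_s(C)$).} Assuming $X = \O_s(1)$, we have a uniform bound $\E[\exp(|X|^s)] \le A(s) < \infty$ (combining the one-sided bounds on $X_+$ and $X_-$ provided by the two-sided interpretation of $X = \O_s(1)$). We split into the two regimes prescribed by the definition of $\bar\O_s$. For $|\lambda| \ge 1$ we apply Young's inequality pointwise with conjugate exponents $s$ and $s' := s/(s-1)$:
\[
\lambda X/C \le |\lambda|/C \cdot |X| \le \frac{(|\lambda|/C)^{s'}}{s'} + \frac{|X|^s}{s}.
\]
Taking expectations and logarithms yields $\log\E[\exp(\lambda X/C)] \le |\lambda|^{s'}/(s' C^{s'}) + \log(A^{1/s})$, which is bounded by $|\lambda|^{s'}$ for all sufficiently large $|\lambda|$ provided $C = C(s)$ is large enough. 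For $|\lambda| \le 1$ we use the centering hypothesis $\E[X] = 0$ and the elementary bound $e^x - 1 - x \le \tfrac12 x^2 e^{|x|}$ to write
\[
\E[\exp(\lambda X/C)] \le 1 + \tfrac12 (\lambda/C)^2 \,\E\bigl[X^2 \exp(|X|/C)\bigr].
\]
The last expectation is finite and bounded uniformly in $C \ge 1$ because $x^2 e^{|x|/C} \lesssim \exp(|x|^s)$ for $|x|$ large (as $s > 1$), so again choosing $C$ sufficiently large and using $\log(1+u) \le u$ gives $\log\E[\exp(\lambda X/C)] \le \lambda^2$ on $|\lambda| \le 1$. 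Choosing $C$ to be the maximum of the two thresholds completes the proof.

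\emph{Main obstacle.} The only delicate point is to make sure that a single constant $C$ handles both regimes of $\lambda$ simultaneously, including the matching region near $|\lambda| = 1$; this amounts to verifying that the Young-inequality bound is indeed dominated by $|\lambda|^{s'}$ (rather than $|\lambda|^{s'} + \text{const}$) for $|\lambda|$ at or above the threshold, which is achieved by enlarging $C$ so that the residual additive constant gets absorbed.
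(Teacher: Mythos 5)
Your first implication is correct and is essentially the paper's route: it amounts to reproving the recalled fact \eqref{e.stoch-equiv2} by Chebyshev's inequality plus integration of the tail, which is all the paper invokes for that direction. The genuine problem is in the second implication, precisely at the point you flag as the ``main obstacle''. With your grouping of Young's inequality, $\lambda X/C \le \frac{(|\lambda|/C)^{s'}}{s'} + \frac{|X|^s}{s}$ with $s' = \frac{s}{s-1}$, the resulting bound is
\begin{equation*}
\log \E\Ll[\exp(\lambda X/C)\Rr] \le \frac{|\lambda|^{s'}}{s' C^{s'}} + \frac{\log A}{s},
\end{equation*}
and the additive constant $\frac{\log A}{s}$ does \emph{not} depend on $C$: enlarging $C$ only shrinks the first term. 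This constant can only be absorbed when $|\lambda|^{s'}\bigl(1 - \frac{1}{s'C^{s'}}\bigr) \ge \frac{\log A}{s}$, and at $|\lambda| = 1$ the left side is at most $1$ while $A$ is of order $3$, so for $s$ close to $1$ (roughly $s < \log 3$, a nonempty part of the range $s \in (1,2]$) the required inequality $\log \E[\exp(\lambda X/C)] \le |\lambda|^{s'}$ fails for $|\lambda|$ in a neighbourhood of $1$ no matter how large $C$ is. Since your Taylor estimate only covers $|\lambda| \le 1$, the two regimes do not meet, and the claimed mechanism for closing the gap (``enlarging $C$ so that the residual additive constant gets absorbed'') is exactly the step that fails.

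The gap is repairable with a small change, which is worth recording because it shows where the slack really is. Either attach the factor $C$ to $X$ rather than to $\lambda$ in Young's inequality, i.e. $|\lambda|\,|X|/C \le \frac{|\lambda|^{s'}}{s'} + \frac{|X|^s}{s C^s}$; then Jensen's inequality gives the additive term $\frac{\log A}{s C^s}$, which does vanish as $C \to \infty$, and since $\frac 1{s'} < 1$ the bound $\le |\lambda|^{s'}$ holds for \emph{all} $|\lambda| \ge 1$ once $C^s \ge \log A$. Or else run your quadratic estimate on the whole range $|\lambda| \le C$ (there $|\lambda X|/C \le |X|$, and $\E[X^2 e^{|X|}]$ is still finite because $s > 1$), so that the quadratic regime overlaps the large-$\lambda$ regime. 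The paper's own argument is of this second type: it combines the Laplace-transform bound \eqref{e.stoch-equiv1} for $\lambda \ge 1$ with a quadratic bound valid on a full interval of $\lambda$ (from \cite[Lemma~5.3]{AKM}) and then rescales, so the intermediate range is covered automatically.
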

\begin{proof}
The first part is a consequence of~\eqref{e.stoch-equiv2}; the second part is classical and can be derived from~\eqref{e.stoch-equiv1} and \cite[Lemma~5.3]{AKM}.
\end{proof}

The key ingredient of the proof of Proposition~\ref{p.control-fluct} is the simple observation that a sum of $k$ independent $\bar \O_s(\theta)$ random variables is $\bar \O_s(\sqrt{k} \, \theta)$, in agreement with the scaling of the central limit theorem. This is formalized in the second part of the next lemma. 
\begin{lemma}
\label{l.barO}
For every $s \in (1,2]$, there exists $C(s) < \infty$ such that the following statements hold.

\noindent \emph{(i)} Let $\mu$ be a measure over an arbitrary measurable space $E$, let $\theta : E \to \R_+$ be a measurable function and $(X(x))_{x \in E}$ be a jointly measurable family of random variables such that for every $x\in E$, $X(x) = \bar \O_s(\theta(x))$. We have
$$
\int X \, d\mu = \bar \O_s \Ll(C   \int \theta \, d\mu  \Rr) .
$$

\noindent \emph{(ii)} Let $\theta_1, \ldots, \theta_k \ge 0$ and $X_1, \ldots, X_k$ be random variables such that for every $i$, $X_i = \bar \O_s (\theta_i)$. If the random variables $(X_i)$ are independent, then the previous estimate can be improved to
\begin{equation}
\label{e.barO}
\sum_{i = 1}^k X_i = \bar \O_s \Ll( C \Big(\sum_{i = 1}^k \theta_i^2\Big)^{\frac 1 2} \Rr) .
\end{equation}
Moreover, if $\theta_i  = \theta_j$ for every $i, j \in \{1,\ldots,k\}$, then the constant $C$ in \eqref{e.barO} can be chosen equal to $1$. 
\end{lemma}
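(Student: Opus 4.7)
The plan is to prove (i) by a direct Jensen argument and (ii) by exploiting independence at the level of Laplace transforms, using the fact that the bound $|\lambda|^{s/(s-1)}$ with $s \in (1,2]$ corresponds to an exponent $p := s/(s-1) \ge 2$ which is compatible with $\ell^2$-scaling.

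For (i), by homogeneity I may reduce to $\int \theta \, d\mu = 1$. Then $\nu := \theta \, d\mu$ is a probability measure and $\int X \, d\mu = \int (X/\theta) \, d\nu$. Jensen's inequality applied to the convex map $\exp$ gives $\exp(\lambda \int X \, d\mu) \le \int \exp(\lambda X/\theta) \, d\nu$. Taking expectations, exchanging order by Fubini, and invoking the hypothesis $X(x) = \bar \O_s(\theta(x))$ pointwise in $x$, the right side is bounded by $\exp(\lambda^2 \vee |\lambda|^{s/(s-1)}) \int d\nu = \exp(\lambda^2 \vee |\lambda|^{s/(s-1)})$. This yields (i) with $C = 1$.

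For (ii), set $\Theta := (\sum_i \theta_i^2)^{1/2}$ and $r_i := \theta_i/\Theta$, so $\sum_i r_i^2 = 1$ and $r_i \in [0,1]$. By independence,
$$
\log \E\Ll[ \exp\Ll(\lambda \Theta^{-1} \sum_i X_i\Rr) \Rr] = \sum_i \log \E[\exp(\lambda r_i \theta_i^{-1} X_i)] \le \sum_i (\lambda r_i)^2 \vee |\lambda r_i|^p,
$$
where $p := s/(s-1) \ge 2$. The key observation is that $r_i \le 1$ and $p \ge 2$ force $r_i^p \le r_i^2$, hence $\sum_i r_i^p \le 1$. Bounding $\vee$ by $+$ gives $\sum_i (\lambda r_i)^2 \vee |\lambda r_i|^p \le \lambda^2 + |\lambda|^p$. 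To convert this into an $\bar \O_s$-statement, I will substitute $\lambda \mapsto \lambda/C$ and check that the condition $(\lambda/C)^2 + (|\lambda|/C)^p \le \lambda^2 \vee |\lambda|^p$ for all $\lambda$ reduces (analyzing separately the cases $|\lambda| \le 1$ and $|\lambda| > 1$, where $|\lambda|^p \le \lambda^2$ or $\lambda^2 \le |\lambda|^p$, respectively) to the single numerical inequality $C^{-2} + C^{-p} \le 1$. The worst case as $p$ ranges in $[2,\infty)$ is $p = 2$, for which $C = \sqrt{2}$ suffices (and works uniformly for larger $p$).

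For the equal-weights case $\theta_i \equiv \theta$, one has $r_i \equiv 1/\sqrt{k}$, and the same calculation gives directly
$$
\sum_i (\lambda/\sqrt{k})^2 \vee (|\lambda|/\sqrt{k})^p = \lambda^2 \vee k^{1-p/2}|\lambda|^p \le \lambda^2 \vee |\lambda|^p,
$$
since $p \ge 2$ makes $k^{1-p/2} \le 1$. No rescaling is needed, so $C = 1$ works. The only mildly delicate step is tracking the $\vee/+$ discrepancy in the general case; this is where the constant $\sqrt 2$ enters, and it is sharp (at $s=2$) in the present formulation.
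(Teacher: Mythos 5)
Your proof is correct. The heart of part (ii) is identical to the paper's argument: independence reduces everything to summing the bounds $(\lambda r_i)^2 \vee |\lambda r_i|^{s/(s-1)}$, and your observation $r_i^{s/(s-1)} \le r_i^2$ (valid since $\tfrac s{s-1}\ge 2$ and $r_i\le 1$) is exactly the paper's use of $\sum_i \theta_i^{s/(s-1)} \le \bar\theta^{\,s/(s-1)}$; the equal-weights computation is the same as well. Where you differ is in the packaging: the paper proves (i) by citing Lemmas~\ref{l.sum-O} and~\ref{l.logL} (converting $\bar\O_s$ to $\O_s$ and back, which costs an unspecified constant), and closes (ii) by reducing, via \eqref{e.stoch-equiv2} and Lemma~\ref{l.logL}, to the bound $C(1+|\lambda|^{s/(s-1)})$ on the log-Laplace transform, implicitly using that $\bar\O_s$-bounded variables are centered. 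You instead handle (i) by a direct Jensen/Tonelli argument on the Laplace transforms, which gives constant $1$, and close (ii) by the explicit substitution $\lambda\mapsto\lambda/C$ together with the elementary check $C^{-2}+C^{-s/(s-1)}\le 1$, giving the explicit constant $\sqrt2$ uniformly in $s\in(1,2]$. This buys a more self-contained and quantitative statement at no extra cost; the only (shared) loose end is the degenerate interpretation of $\bar\O_s(0)$ when some $\theta_i$ or $\theta(x)$ vanishes, which is as implicit in your write-up as it is in the paper's.
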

\begin{proof}[Proof of Lemma~\ref{l.barO}]
The first statement is a consequence of Lemmas~\ref{l.sum-O} and~\ref{l.logL}. 
We now turn to the second statement, and define 
$$
\bar \theta := \Ll(\sum_{i = 1}^k \theta_i^2\Rr)^{\frac 1 2}.
$$ 
By \eqref{e.stoch-equiv2} and Lemma~\ref{l.logL}, in order to prove \eqref{e.barO}, it suffices to show that there exists $C(s) < \infty$ such that for every $\lambda \in \R$,
\begin{equation}
\label{e.barO2}
\log \E \Ll[ \exp \Ll(  \bar \theta^{\, -1} \lambda \sum_{i = 1}^k X_i \Rr)  \Rr] \le C\Ll(1+|\lambda|^{\frac s {s-1}}\Rr).
\end{equation}
We use independence and then the assumption $X_i = \bar \O_s(\theta_i)$ to bound the term on the left side by
\begin{align*}  
\sum_{i = 1}^k \log \E \Ll[ \exp \Ll( \bar \theta^{\, -1}  \lambda   X_i \Rr)  \Rr] & \le \sum_{i = 1}^k \Ll( \bar \theta^{\, -1} \theta_i \lambda \Rr)^2 \vee \Ll|\bar \theta^{\, -1} \theta_i\lambda \Rr|^{\frac s {s-1}} \\
& \le \lambda^2  + |\lambda|^{\frac s {s-1}} \bar \theta^{\, -\frac s {s-1}}  \sum_{i = 1}^k \theta_i^{\frac s {s-1}}.
\end{align*}
Since $s \le 2$, we have $\frac s{s-1} \ge 2$, and thus \eqref{e.barO2} follows from the observation that
\begin{equation*}  
\bar \theta \ge \Ll(\sum_{i = 1}^k \theta_i^{\frac s {s-1}}\Rr)^{\frac {s-1} s}.
\end{equation*}
When $\theta_i  = \theta_j$ for every $i, j \in \{1,\ldots,k\}$, without loss of generality we may set $\theta_i = 1$, and observe that
\begin{align*}
\sum_{i = 1}^k \log \E \Ll[ \exp \Ll( k^{-\frac 1 2} \lambda   X_i \Rr) \Rr]
& \le \sum_{i = 1}^k \Ll(k^{-\frac 1 2 } \lambda\Rr)^2 \vee  \left| k^{-\frac 1 2 } \lambda\right|^{\frac s {s-1}}  \\
& \le \lambda^2 \vee |\lambda|^{\frac s {s-1}},
\end{align*}
where in the last step we used the fact that $\frac s {s-1} \ge 2$.
\end{proof}

In the proof of Proposition~\ref{p.control-fluct}, we will encounter sums of random variables with short-range dependence. We provide a version of Lemma~\ref{l.barO}(ii) adapted to this situation.
\begin{lemma}
\label{l.partition}
For every $s \in (1,2]$, there exists $C(s) < \infty$ such that the following holds. Let $\theta > 0$, $R \ge 1$, $\mcl Z$ be a subset of $(R\Z)^d$, and for each $x \in \mcl Z$, let $X(x)$ be an $\mcl F(\cu_{2R}(x))$-measurable random variable such that $X(x) = \bar \O_s(\theta(x))$. We have
$$
\sum_{x \in \mcl Z} X(x) = \bar \O_s\Ll(C\, \Big( \sum_{x \in \mcl Z} \theta(x)^2 \Big)^{\frac 1 2} \Rr).
$$
\end{lemma}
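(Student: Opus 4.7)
The plan is to partition the index set $\mcl Z$ into a bounded number of sub-collections on each of which the random variables $X(x)$ are mutually independent (thanks to the unit range of dependence), apply Lemma~\ref{l.barO}(ii) to each such sub-sum to exploit the independence, and then combine the resulting $\bar \O_s$-bounds by Lemma~\ref{l.barO}(i) together with the Cauchy--Schwarz inequality.

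More precisely, I would first decompose the integer lattice $(R\Z)^d$ into $K := 3^d$ sub-lattices obtained as the translates $\Lambda_i := 3R \Z^d + y_i$, $i \in \{1,\ldots,K\}$, where the $y_i$ range over $\{0,R,2R\}^d$, and set $\mcl Z_i := \mcl Z \cap \Lambda_i$. For distinct $x, x' \in \Lambda_i$ we have $|x-x'|_\infty \ge 3R$, so
$$
\dist\left( \cu_{2R}(x), \cu_{2R}(x') \right) \ge |x - x'|_\infty - 2R \ge R \ge 1,
$$
and thus the assumption (P2) together with the $\mcl F(\cu_{2R}(x))$-measurability implies that the family $\{X(x) \, : \, x \in \mcl Z_i\}$ is independent.

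Next, for each $i$, Lemma~\ref{l.barO}(ii) applied to the independent family $(X(x))_{x \in \mcl Z_i}$ yields a constant $C(s)<\infty$ with
$$
\sum_{x \in \mcl Z_i} X(x) = \bar \O_s\!\left( C \Bigl( \sum_{x \in \mcl Z_i} \theta(x)^2 \Bigr)^{\frac 1 2} \right).
$$
Summing over the $K = 3^d$ sub-lattices and invoking Lemma~\ref{l.barO}(i) gives
$$
\sum_{x \in \mcl Z} X(x) = \sum_{i=1}^K \sum_{x \in \mcl Z_i} X(x) = \bar \O_s\!\left( C \sum_{i=1}^K \Bigl( \sum_{x \in \mcl Z_i} \theta(x)^2 \Bigr)^{\frac 1 2} \right),
$$
and finally Cauchy--Schwarz in the $i$-sum bounds
$$
\sum_{i=1}^K \Bigl( \sum_{x \in \mcl Z_i} \theta(x)^2 \Bigr)^{\frac 1 2} \le K^{\frac 1 2} \Bigl( \sum_{x \in \mcl Z} \theta(x)^2 \Bigr)^{\frac 1 2},
$$
which completes the proof after absorbing $3^{d/2}$ into the constant $C$.

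There is no genuine obstacle here: the only thing to watch is that the decomposition of $(R\Z)^d$ is coarse enough (spacing $3R$) so that the cubes of sidelength $2R$ centered at points of the same sub-lattice lie at distance at least $1$ from each other, which is exactly what (P2) demands. Since the number of sub-lattices $K = 3^d$ depends only on $d$, the factor $K^{1/2}$ is absorbed into a dimensional constant, and the scaling of the right-hand side matches the central-limit scaling $\bigl(\sum \theta(x)^2\bigr)^{1/2}$ as required.
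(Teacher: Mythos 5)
Your proof is correct and follows essentially the same route as the paper: the paper also partitions $\mcl Z$ into the $3^d$ translates of $(3R\Z)^d \cap \mcl Z$, uses (P2) to get independence within each class, applies Lemma~\ref{l.barO}(ii) to each, and sums over the classes. Your write-up merely makes the final summation step (Lemma~\ref{l.barO}(i) plus Cauchy--Schwarz in the $3^d$ classes) explicit, which the paper leaves implicit.
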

\begin{proof}
We partition $\mcl Z$ into $\mcl Z^{(1)}, \ldots, \mcl Z^{(3^d)}$ in such a way that for every $j \in \{1,\ldots, 3^d\}$, if $x \neq x' \in \mcl Z^{(j)}$, then $|x-x'| \ge 3R \ge 2R + 1$. (That is, we define $\mcl Z^{(1)} = (3R\Z)^d \cap \mcl Z$, and so on with translates of $(3R\Z)^d$.) For each $j$, the random variables $(X(x))_{x \in \mcl Z^{(j)}}$ are independent. By Lemma~\ref{l.barO},
$$
\sum_{x \in \mcl Z^{(j)}} X(x) = \bar \O_s \Ll(C\, \Big( \sum_{x \in \mcl Z^{(j)}} \theta(x)^2 \Big)^{\frac 1 2}  \Rr) ,
$$
and the conclusion follows by summing over $j$.
\end{proof}
In the next lemma, we show that we can transfer the localization and fluctuation properties from $J$ to $I$ or vice versa. 
\begin{lemma}
\label{l.fluc.transfer}
\label{l.loc.transfer}
Let $s,\de,\al \in (0,\infty)$. 

\noindent \emph{(i)} The property $\Fluc_k(s,\al)$ holds if and only if there exists $C(k,s,\alpha,d,\Lambda)<\infty$ such that, for every $z \in \Rd$, $r\geq r_0$ and $p,q\in \Ahom_k(\Phi_r)$,
\begin{equation*}
I(z,r,p,q) = \E \left[I(z,r,p,q)\right] + \O_s\left(Cr^{-\alpha}\right).
\end{equation*}
\noindent \emph{(ii)} The property $\Loc_k(s,\de,\al)$ holds if and only if there exist $C(k,s,\de,\al,d,\Lambda) < \infty$ and, for every $z \in \Rd$, $r \ge r_0$ and $p,q \in \Ahom_k(\Phi_{z,r})$, an $\mcl F\left(B_{r^{1+\de}}(z)\right)$-measurable random variable $I^{(\delta)}(z,r,p,q)$ such that
\begin{equation*}
I(z,r,p,q) = I^{(\delta)}(z,r,p,q) + \O_s(C r^{-\alpha}),
\end{equation*}
\begin{equation}
\label{e.bounded-Jd}
|I^{(\delta)}(z,r,p,q)| \le  \Lambda,
\end{equation}
and $(p,q) \mapsto I^{(\delta)}(z,r,p,q)$ is a quadratic form.
\end{lemma}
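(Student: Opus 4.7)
The plan is to deduce both equivalences from the algebraic identities \eqref{e.I.as.J} and \eqref{e.J.as.I} together with the deterministic bound of Corollary~\ref{c.L-identity}, iterated to give
\begin{equation*}
\|\nabla (L^*_{z,r}-\mathrm{Id})^l p\|_{L^2(\Phi_{z,r})} + \|\nabla (L_{z,r}-\mathrm{Id})^l q\|_{L^2(\Phi_{z,r})} \le (Cr^{-\eps_1})^l
\end{equation*}
whenever $(p,q) \in \Ahom_k(\Phi_{z,r})$. Combined with the quadratic-form homogeneity $J(z,r,\lambda p,\lambda q) = \lambda^2 J(z,r,p,q)$ (and likewise for $I$), this converts a bound on the fluctuations or on the localization error of $J$ (resp.\ $I$) at normalized arguments into the corresponding bound for $J$ (resp.\ $I$) at arguments $\bigl((L^*-\mathrm{Id})^l p,(L-\mathrm{Id})^l q\bigr)$, with an improvement of $r^{-2\eps_1 l}$ per iteration.

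For part (i), the forward direction follows from \eqref{e.I.as.J}, which gives
\begin{equation*}
I - \E[I] = (J - \E[J])(z,r,p,q) - (J - \E[J])(z,r,p-L^*p,\, q-Lq);
\end{equation*}
the first summand is $\O_s(Cr^{-\al})$ by hypothesis and the second is $\O_s(Cr^{-\al-2\eps_1})$ after rescaling the arguments to $\Ahom_k(\Phi_r)$ and invoking $\Fluc_k(s,\al)$, so Lemma~\ref{l.sum-O} combines them into $\O_s(Cr^{-\al})$. The reverse direction uses the series \eqref{e.J.as.I} truncated at $N := \lceil \al/\eps_1 \rceil + 1$: by \eqref{e.tail.I.series} the tail $\sum_{l \ge N+1} I(\ldots)$ is deterministically $O(r^{-\eps_1 N}) \le Cr^{-\al}$, while the $l$-th term of the finite sum contributes $\O_s(Cr^{-\al-2\eps_1 l})$ to $J - \E[J]$, summing via Lemma~\ref{l.sum-O} to $\O_s(Cr^{-\al})$. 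The analogous direction of part (ii), from $I^{(\de)}$ to $J^{(\de)}$, is handled identically: set $J^{(\de)}(z,r,p,q) := \sum_{l=0}^N I^{(\de)}(z,r,(L^*-\mathrm{Id})^l p,(L-\mathrm{Id})^l q)$, which is $\mcl F(B_{r^{1+\de}}(z))$-measurable as a finite sum.

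The delicate direction is to produce, given $\Loc_k(s,\de,\al)$ for $J$, an approximator $I^{(\de)}$ that is simultaneously $\mcl F(B_{r^{1+\de}}(z))$-measurable, a quadratic form in $(p,q)$, and deterministically bounded by $\Lambda$. The main obstacle is the quadratic-form requirement, because the given $J^{(\de)}$ carries no algebraic structure; in particular, quadratic homogeneity \emph{cannot} be used directly to estimate $J^{(\de)}(z,r,p-L^*p, q-Lq)$ from its values at normalized arguments. We resolve this by first replacing $J^{(\de)}$ with a quadratic-form projection $\widehat J^{(\de)}$: fix a basis $(e_i)$ of $\Ahom_k/\Ahom_0$ normalized so that each $(e_i,0),(0,e_j)$ lies in $\Ahom_k(\Phi_{z,r})$, and reconstruct $\widehat J^{(\de)}$ from the standard polarization identities applied to the finitely many values $J^{(\de)}(z,r,e_i\pm e_j, e_k \pm e_l)$. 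This preserves $\mcl F(B_{r^{1+\de}}(z))$-measurability and the $\O_s(Cr^{-\al})$ approximation (up to a combinatorial constant depending on $\dim \Ahom_k$), and $\widehat J^{(\de)}$ is now quadratic in $(p,q)$. We then set $\widehat I^{(\de)} := \widehat J^{(\de)}(\cdot,p,q) - \widehat J^{(\de)}(\cdot,p-L^*p,q-Lq)$, and multiply by the $(p,q)$-independent indicator of the event that the operator norm of the quadratic form $\widehat I^{(\de)}$ on $\Ahom_k(\Phi_r)$ is at most $2\Lambda$; this indicator is $\mcl F(B_{r^{1+\de}}(z))$-measurable and preserves quadraticity. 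To verify that this final truncation does not degrade the $\O_s(Cr^{-\al})$ approximation, observe that on the bad event $\|\widehat I^{(\de)} - I\|_{\mathrm{op}}$ must exceed $\Lambda$ (since $|I| \le \Lambda$ by \eqref{e.bounded-I}), while the $\O_s(Cr^{-\al})$-bound on $\|\widehat I^{(\de)} - I\|_{\mathrm{op}}$ forces the bad event to have super-polynomially small probability; this absorbs the $O(\Lambda)$ loss from setting $I^{(\de)} = 0$ on the bad event into an $\O_s(Cr^{-\al})$ correction.
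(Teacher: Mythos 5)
Your argument is correct and takes essentially the same route as the paper: both rest on the identities \eqref{e.I.as.J} and \eqref{e.J.as.I}, the tail estimate \eqref{e.tail.I.series} (equivalently, iterating Corollary~\ref{c.L-identity}), and the quadratic homogeneity of $I$ and $J$ to transfer fluctuation and localization bounds back and forth between the two quantities. The one step the paper leaves implicit -- enforcing that $I^{(\delta)}$ be a bounded quadratic form, which it dismisses with ``these hold for $I$ itself'' -- is precisely what your polarization-plus-truncation construction supplies, and it works as stated up to harmless adjustments (take the basis orthonormal for the $\Phi_{z,r}$-weighted inner product so coefficient errors control the form, and truncate at a slightly larger multiple of $\Lambda$ since \eqref{e.bounded-I} gives $|I|\le 2\Lambda$ on the unit ball, the constant being all that is used later).
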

\begin{proof}
The equivalence between the statements concerning the localization or the fluctuations of $I$ and $J$ follow from \eqref{e.I.as.J}, \eqref{e.J.as.I} and \eqref{e.tail.I.series}. We can enforce the last two stated properties of $I^{(\delta)}$ since these hold for $I$ itself, see \eqref{e.bounded-I}. 
\end{proof}
For the purpose of proving Proposition~\ref{p.control-fluct}, we may increase $\delta > 0$ ever so slightly and assume that the random variables in the statement of $\Loc_k(s,\de,\al)$ or of Lemma~\ref{l.loc.transfer}(ii) are $\mcl F(\cu_{r^{1+\de}}(z))$-measurable instead of $\mcl F(B_{r^{1+\de}}(z))$-measurable. We will use this observation without further comment below.

\smallskip

Finally, we recall for future reference that there exists $C(k) < \infty$ such that for every $z \in \Rd$, $R \ge r \ge 1$ and $p,q \in \bar {\mcl A}_k$, 
\begin{equation}
\label{e.move-center}
\int_{\Phi_{z,r}} \Ll( |\nabla p|^2 + |\nabla q|^2 \Rr) \le C (1 + (R^{-1} |z|)^{2(k-1)}) \int_{\Phi_R} \Ll( |\nabla p|^2 + |\nabla q|^2 \Rr) .
\end{equation}

\subsection{Fluctuations at suboptimal scales}

In this subsection we prove the first statement of Proposition~\ref{p.control-fluct}, which improves the scaling of the fluctuations at suboptimal scales. 

\begin{proof}[Proof of Proposition~\ref{p.control-fluct}(i)]
Throughout the proof, the value of the constant $C(\al,\be,\de,s,k,d,\Lambda) < \infty$ may change from place to place. We decompose the proof into three steps.

\smallskip

\emph{Step 1.} By Lemma~\ref{l.fluc.transfer}(i), our goal is to show that for every $z \in \Rd$, $R \ge r \ge r_0$ and $p,q \in \bar \A_k(\Phi_{z,r})$,
\begin{equation}
\label{e.fluc-proper}
I(z,r,p,q) = \E \Ll[ I(z,r,p,q) \Rr] + \O_s(C r^{-\be}). 
\end{equation}
We use the assumption of $\Loc_k(s,\de,\al)$ to define the random variables $\Jd$ appearing in the conclusion of Lemma~\ref{l.loc.transfer}(ii)
and let
\begin{equation}
\label{e.def.tjd}
\tJd(z,r,p,q) := \Jd(z,r,p,q) - \E[\Jd(z,r,p,q)].
\end{equation}
By the construction of $\Jd$ and since $\al > \be$, in order to prove \eqref{e.fluc-proper}, it suffices to show that
\begin{equation}
\label{e.fluc-proper-de}
\tJd(z,r,p,q) = \O_s(C r^{-\be}). 
\end{equation}
In order to prove \eqref{e.fluc-proper-de}, we start by transfering the additivity  assumption on $I$ to $\tJd$, and chopping the tails of the heat kernel mask for convenience. 
For each $\td \de > 0$, we introduce the function $\Phi^{(\td \de)}_{z,R,r}$ defined by
\begin{equation}
\label{e.PhiRdef}
\Phi^{(\td \de)}_{z,R,r}:= \left\{ \begin{aligned}
& \Phi_{z,\sqrt{R^2 - r^2}} & \mbox{in} & \ \cu_{R^{1+\td \de}}(z), \\
& 0 & \mbox{in} & \ \Rd\setminus \cu_{R^{1+\td \de}}(z).
\end{aligned} \right.
\end{equation}
This cutoff differs slightly from that defined in \eqref{e.Phideltadef}. We use the notation in \eqref{e.not.int} with $\Phidt_{z,R,r}$ in place of $\Phi_{z,r}$ as well. 

\smallskip

By \eqref{e.bounded-I}, assumption $\Add_k(s,\alpha)$ implies that for every $z \in \Rd$, $R \ge r \ge r_0$ and $p,q \in \bar {\mcl A}_k(\Phi_{z,R})$, 
$$
I(z,R,p,q) = \int_{\Phidt_{z,R,r}} I(\cdot,r,p,q) + \O_s \Ll(C  r^{-\al}   \Rr) . 
$$
By the construction of $\Jd$ and Lemma~\ref{l.barO}, up to a redefinition of $C < \infty$, we infer that for every $z,R,r,p,q$ as above,
$$
\Jd(z,R,p,q) = \int_{\Phidt_{z,R,r}} \Jd(\cdot,r,p,q) + \O_s \Ll(C  r^{-\al}   \Rr) .
$$
This implies
$$
\Ll|\E[ \Jd(z,R,p,q)] - \int_{\Phidt_{z,R,r}} \E[ \Jd(\cdot,r,p,q)] \Rr| \le C  r^{-\al} ,
$$
and therefore
\begin{equation}
\label{e.additivity-centered}
\tJd(z,R,p,q) = \int_{\Phidt_{z,R,r}} \tJd(\cdot,r,p,q) +  \bar \O_s \Ll(C  r^{-\al}   \Rr) .
\end{equation}
Indeed,  we can write $\bar \O_s$ instead of $\O_s$ on the right side above by Lemma~\ref{l.logL}.

\smallskip

\emph{Step 2.}
For $R_1, \CC \ge 1$, let $\msf A(R_1,\CC)$ denote the statement that for every $R \in [r_0,R_1]$, $z \in \R^d$ and $p, q \in \bar \A_k(\Phi_{z,R})$,
$$
\tJd(z,R,p,q) = \bar \O_s \Ll( \CC \, R^{-\be}  \Rr) .
$$
By \eqref{e.bounded-Jd}, for any given $R_1$, there exists $\CC < \infty$ such that $\msf A(R_1, \CC)$ holds. In order to prove the result, it thus suffices to show that for every $\CC$ sufficiently large and $R_1$ sufficiently large,
\begin{equation}
\label{e.fluct-induction}
\msf A(R_1, \CC) \quad \implies \quad  \msf A(2R_1, \CC).
\end{equation}
Indeed, this yields the existence of a constant $\CC$ such that $\msf A(R_1,\CC)$ holds for every $R_1 \ge r_0$, and this is \eqref{e.fluc-proper-de}.

\smallskip

\emph{Step 3.} We prove \eqref{e.fluct-induction} for $\CC$ and $R_1$ sufficiently large. Let $R \in (R_1,2R_1]$, $z \in \Rd$ and $p,q \in \bar {\mcl A}_k(\Phi_{z,R})$. We will show that
$$
\tJd(z,R,p,q) = \bar \O_s \Ll( C \CC R^{-\beta - \eps} \Rr) + \bar \O_s \Ll( C R^{-\beta} \Rr) ,
$$
for some exponent $\eps > 0$ depending on $d$, $k$, $\al$, $\be$ and $\de$ (and a constant $C$ not depending on $\CC$). Indeed, 
In view of Lemma~\ref{l.barO}, this is sufficient to prove that \eqref{e.fluct-induction} holds for $R_1$ sufficiently large.

\smallskip

Without loss of generality, we assume that $z = 0$. We let $r := R^{\be/\al}$, and rewrite the additivity property \eqref{e.additivity-centered} as
\begin{equation}
\label{e.additivity-again}
\tJd(0,R,p,q) = \int_{\Phidt_{R,r}} \tJd(\cdot,r,p,q) + \bar \O_s \Ll(C  R^{-\be}   \Rr) ,
\end{equation}
where we use the notation $\Phidt_{R,r}$ instead of $\Phidt_{0,R,r}$ for concision.
The condition $\de < \frac {(\al - \be)}{\be}\frac{(d-2\be)}{d}$ implies in particular that $1+ \de < \frac \al \be$, so that informally, we have $r^{1+\de} \ll R$. Without loss of generality, we will assume that $\td \de > 0$ is adjusted so that $R^{1+\td\de}/(r^{1+\de})$ is an odd integer. 
We let 
$$
\mcl Z := \Ll(r^{1+\de} \, \Z^d\Rr) \cap \cu_{R^{1+\td\de}},
$$
and observe that up to a set of Lebesgue measure zero, $(\cu_{r^{1+\de}}(y))_{y \in \mcl Z}$ is a partition of $\cu_{R^{1+\td\de}}$. We use this partition to decompose the integral on the right side of \eqref{e.additivity-again} as
\begin{equation}
\label{e.splitting}
\sum_{y \in \mcl Z} \int_{\Phidt_{R,r}} \tJd(\cdot,r,p,q) \1_{\cu_{r^{1+\de}}(y)}.
\end{equation}
Recall that we assume $p,q \in \bar \A_k(\Phi_{R})$. By \eqref{e.move-center}, we have, for $R \ge R_1$ sufficiently large,
\begin{equation}
\label{e.move-delta}
\forall x \in \cu_{R^{1+\td \de}}, \  \int_{\Phi_{x,r}} (|\nabla p|^2 + |\nabla q|^2) \le R^{2k\td \de}.
\end{equation}
Hence, using also the induction hypothesis and the fact that $r \ge R_1$, we obtain
$$
\forall x \in \cu_{R^{1+\td \de}}, \ \tJd(x,r,p,q) = \bar \O_s \Ll( \CC r^{-\be}  R^{2k \td \de} \Rr) ,
$$
and by Lemma~\ref{l.barO}(i), we have for every $y \in \mcl Z$ that
\begin{equation*}
\int_{\Phidt_{R,r}} \tJd(\cdot,r,p,q) \1_{\cu_{r^{1+\de}}(y)} = \bar \O_s \Ll(C \CC r^{-\be}  R^{2k\td \de} \int_{\Phidt_{R,r}}  \1_{ \cu_{r^{1+\de}}(y)} \Rr).
\end{equation*}
By Lemma~\ref{l.partition}, we thus have
\begin{equation*}  
\sum_{y \in \mcl Z}\int_{\Phidt_{R,r}} \tJd(\cdot,r,p,q) \1_{\cu_{r^{1+\de}}(y)} \le \bar \O_s \Ll( C \CC r^{-\be} R^{2k\td\de} \, \Ll[\sum_{y \in \mcl Z} \Ll( \int_{\Phidt_{R,r}}  \1_{ \cu_{r^{1+\de}}(y)} \Rr)^2\Rr]^{\frac 1 2} \Rr) .
\end{equation*}
Since ${r^{1+\de}} \le \sqrt{R^2 - r^2}$ (for $R_1$ large enough), we have
$$
\sum_{y \in \mcl Z} \Ll( \int_{\Phidt_{R,r}}  \1_{ \cu_{r^{1+\de}}(y)} \Rr)^2 \le C r^{(1+\de)d} \, R^{-d}.
$$
Summarizing, we have shown that
\begin{equation*}
\int_{\Phidt_{R,r}} \tJd(\cdot,r,p,q) =  \bar \O_s \Ll( C \CC r^{-\be}  r^{(1+\de)\frac d 2} R^{2k\td \de - \frac d 2} \Rr) .
\end{equation*}
Recalling our choice of $r = R^{\frac \be \al}$, we observe that
$$
r^{-\be+\frac d 2}  R^{- \frac d 2} = R^{-\Ll[\frac d 2 \Ll( 1-\frac \be \al \Rr) + \be \Ll( \frac \be \al \Rr)  \Rr]},
$$
and that the exponent between square brackets is larger than $\be$.
It thus suffices to check that $\de$ and $\td \de$ are sufficiently small in terms of $d$, $\al$, $\be$ and $k$ to obtain the desired result. More precisely, we need that
$$
\Ll(\frac d 2 -  \be \Rr) \Ll( 1-\frac \be \al \Rr)  > \frac \be \al \frac d 2 \de + 2 k \td \de.
$$
Since $\td \de > 0$ can be chosen as small as desired, this condition reduces to our assumption $\de < \frac {(\al - \be)(d-2\be)}{\be d}$, so the proof is complete.
\end{proof}

\subsection{Fluctuations at the optimal scale}
In this subsection, we complete the proof of Proposition~\ref{p.control-fluct} by giving sufficient conditions for controlling the fluctuations at the optimal CLT scaling.  

\begin{proof}[{Proof of Proposition~\ref{p.control-fluct}(ii)}]
It is convenient to measure the size of elements of $\bar \A_k$ using a supremum norm: for every cube $\cu$ and $p,q \in \bar \A_k$, we write
$$
\|(p,q)\|_{\cu} := \sup_{x\in\cu} \Ll(\left|\nabla p(x)\right| \vee \left|\nabla q(x)\right|\Rr).
$$
It is clear that the norms on $\bar \A_k/\bar \A_0\times \bar \A_k/\bar \A_0$
$$
(p,q) \mapsto \Ll(\int_{\Phi_{z,r}} \Ll( |\nabla p|^2 + |\nabla q|^2 \Rr)  \Rr)^{\frac 1 2} 
\quad \mbox{and} \quad 
(p,q) \mapsto \|(p,q)\|_{\cu_r(z)}
$$
are equivalent, with multiplicative constants that do not depend on $r$ or $z$.

\smallskip


Since we assume $\al > (1+\de) \frac d 2$, there exist $0 < \eta_2 < \eta_1 < 1$ satisfying 
\begin{equation}
\label{e.cond1}
(1-\eta_1) \al > \frac d 2
\end{equation}
and
\begin{equation}
\label{e.cond-eta2}
(1-\eta_2)(1+\de) < 1.
\end{equation}

Two additional exponents $\eps(k,d,\eta_1,\eta_2,\de,\al) > 0$ and $\td \de(\eps, k,d,\eta_1,\eta_2,\de,\al) > 0$ appear in the argument below, and are assumed to be as small as needed. 
The value of the constant $C(k,d,\eta_1,\eta_2,\al,\eps,\td \de,\Lambda) < \infty$ may vary from place to place. 

\smallskip

We decompose the proof of $\Fluc_k(s,\frac d 2)$ into five steps.

\smallskip

\emph{Step 1.} We start by recalling some elements from Step 1 of the proof of Proposition~\ref{p.control-fluct}(i). We use the assumption of $\Loc_k(s,\de,\al)$ to construct the random variables $\Jd$ given in the conclusion of Lemma~\ref{l.loc.transfer}(ii)
and define $\tJd$ by \eqref{e.def.tjd}. By the construction of $\Jd$ and the fact that $\al \ge \frac d 2$, it suffices to show that for every $z \in \Rd$, $R \ge r \ge r_0$, and $p,q \in \bar {\mcl A}_k(\Phi_r)$,
$$
\tJd(z,r,p,q) = \bar \O_s\Ll(C r^{-\frac d 2} \Rr).
$$
Let $\td \de >0$. Arguing as in the proof of Proposition~\ref{p.control-fluct}(i), we see that our assumptions imply that for every $z \in \Rd$, $R \ge r \ge r_0$, and $p,q \in \bar {\mcl A}_k$ satisfying $\|(p,q)\|_{\cu_R(z)} \le 1$,
\begin{equation}
\label{e.additivity-centered-opt}
\tJd(z,R,p,q) = \int_{\Phidt_{z,R,r}} \tJd(\cdot,r,p,q) + \bar \O_s \Ll(C  r^{-\al}   \Rr).
\end{equation}


\smallskip

\emph{Step 2.}
For $R_1, \CC \ge 1$, we let $\msf A(R_1,\CC)$ denote the statement that for every $z \in \Rd$, $R \in [r_0,R_1]$, $r \in [R^{1-\eta_1},R^{1-\eta_2}]$, $p,q \in \bar \A_k$ satisfying $\|(p,q)\|_{\cu_R(z)} \le 1$ and (deterministic) $f : \R^d \to \R$ satisfying $\|f\|_{L^\infty} \le 1$,
\begin{equation}
\label{e.really719}
\fint_{\cu_R(z)} f \, \tJd(\cdot,r,p,q)  = \bar \O_s \Ll( \CC \, R^{-\frac d 2}  \Rr) .
\end{equation}
We aim to show that 
\begin{equation}
\label{e.induc.scale}
\mbox{there exists $\CC < \infty$ such that, for every $R \ge 1$, $\msf A(R,\CC)$ holds.}
\end{equation}
By \eqref{e.bounded-Jd} and Lemma~\ref{l.logL}, for any given $R_1$, there exists $\CC < \infty$ such that $\msf A(R_1, \CC)$ holds. In order to prove \eqref{e.induc.scale}, it thus suffices to show that there exists $\eps > 0$ such that for every $\CC$ and $R_1$ sufficiently large,
\begin{equation}
\label{e.induction-fluct}
\msf A(R_1, \CC) \quad \implies \quad \msf A(2R_1, (1+R_1^{-\eps})\CC).
\end{equation}
We now assume that $\msf A(R_1, \CC)$ holds (with $R_1$ as large as desired), and fix $z \in \Rd$, $R \in (R_1, 2R_1]$, $r \in [R^{1-\eta_1}, R^{1-\eta_2}]$, $p, q \in \bar \A_k$ satisfying $\|(p,q)\|_{\cu_R(z)} \le 1$, and $f : \R^d \to \R$ satisfying $\|f\|_{L^\infty} \le 1$. Proving \eqref{e.induction-fluct} amounts to showing that
\begin{equation}
\label{e.int-goal}
\fint_{\cu_R(z)} f \, \tJd(\cdot,r,p,q) = \bar \O_s \Ll( \CC (1+R_1^{-\eps}) R^{-\frac d 2} \Rr) .
\end{equation}
This is the purpose of the next two steps. 
Without loss of generality, we fix $z = 0$. 
It is in fact sufficient to show that
$$
\fint_{\cu_R} f \, \tJd(\cdot,r,p,q) = \bar \O_s \Ll( \CC (1+R^{-\eps})R^{-\frac d 2} \Rr) + \bar \O_s \Ll( C \CC R^{-\frac d 2 - \eps} \Rr)  .
$$

\smallskip

\emph{Step 3.}
We set $r_1 :=R_1^{1-\eta_1}$ and 
\begin{equation}
\label{e.def.g}
g(y) := \int_{\cu_R} f(x) \, \Phidt_{r,r_1}(y-x) \, dx,
\end{equation}
where we recall that write $\Phidt_{r,r_1} := \Phidt_{0,r,r_1}$ for convenience, see \eqref{e.PhiRdef}. Note that $\|g\|_{L^\infty} \le 1$. 
 In this step, we show that
\begin{equation}
\label{e.recompose}
\fint_{\cu_R} f \, \tJd(\cdot, r,p,q) = R^{-d} \int_{\cu_{R+r^{1+\td \de}}} g \, \tJd(\cdot,r_1,p,q)  + \bar \O_s \Ll( C R^{-\frac d 2 - \eps} \Rr) .
\end{equation}
By \eqref{e.additivity-centered-opt}, for every $x \in \cu_R$,
\begin{equation*}
\tJd(x, r,p,q) = \int_{\Phidt_{x,r,r_1}} \tJd(\cdot,r_1,p,q) + \bar \O_s\Ll( C r_1^{-\al} \Rr).
\end{equation*}
Multiplying by $f(x)$, integrating over $\cu_R$ and using Lemma~\ref{l.barO}, we get
\begin{equation}
\label{e.step3-fluc}
\fint_{\cu_R} f \, \tJd(\cdot, r,p,q)\\
 = \fint_{\cu_R} f(x) \Ll( \int_{\Phidt_{x,r,r_1}} \tJd(\cdot,r_1,p,q) \Rr) \, dx + \bar \O_s\Ll( C r_1^{-\al} \Rr) .
\end{equation}
The last term above is
$$
\bar \O_s \Ll( C R^{-\frac d 2 -\eps} \Rr) ,
$$
for $\eps=(1-\eta_1)\al - \frac d 2 >  0$.
By Fubini's theorem, the double integral on the right side of \eqref{e.step3-fluc} can be rewritten as
$$
R^{-d} \int_{\Rd} \tJd(y,r_1,p,q) g(y)\, dy.
$$
Since the function $g$ vanishes outside of $\cu_{R + r^{1+\td \de}}$, this proves \eqref{e.recompose}.

\smallskip

\emph{Step 4.} We prove \eqref{e.int-goal}. Without loss of generality, we may assume that the exponents $\td \de, \eps > 0$ are such that $(R+r^{1+\td \de})/R^{1-2\eps}$ is an odd integer. We set $\rho := (R+r^{1+\td \de} - 3 R^{1-2\eps})/2$ and 
$$
\mcl Z_1 := \frac 1 2 \Ll( \rho + R^{1-2\eps}  \Rr) \{-1,1\}^d \subset \R^d.
$$
This provides a decomposition of $\cu_{R + r^{1+\td \de}}$ into $2^d$ disjoint subcubes $(\cu_\rho(x))_{x \in \mcl Z_1}$ at distance at least $R^{1-2\eps}$ from one another, plus a remainder that we denote by
$$
\mcl B_1 := \cu_{R + r^{1+\td \de}} \setminus \bigcup_{x \in \mcl Z_1} \cu_\rho(x).
$$
Moreover, imposing $\td \de > 0, \eps > 0$ to be sufficiently small that $r^{1+\td \de} \le R^{1-2\eps}$ (for $R_1$ sufficiently large), we have
\begin{equation}
\label{e.cub.incl}
x \in \mcl Z_1 \quad \implies \quad \cu_\rho(x) \subset \cu_R.
\end{equation}
We first argue that the contribution of
$$
R^{-d} \int_{\mcl B_1} g \,  \tJd(\cdot,r_1,p,q) 
$$
is negligible. Indeed, note that for $\eps > 0$ sufficiently small, we have
\begin{equation*}  
r_1 = R_1^{1-\eta_1} \in [R^{(1-2\eps) (1-\eta_1)}, R^{(1-2\eps) (1-\eta_2)}],
\end{equation*}
and therefore, by the induction hypothesis, for every $x \in \cu_{R + r^{1+\td \de}}$, we have
\begin{equation*}  
\fint_{\cu_{R^{1-2\eps}}(x)} g \, \tJd(\cdot,r_1,p,q) = \O_s \Ll( C \CC R^{-(1-2\eps) \frac d 2} \Rr) .
\end{equation*}
This random variable is also $\mcl F(\cu_{R^{1-2\eps} + r_1^{1+\de}}(x))$-measurable, and by \eqref{e.cond1}, we can choose $\eps > 0$ sufficiently small that $r_1^{1+\de}< R^{1-2\eps}$ (for $R_1$ sufficiently large).
We can partition $\mcl B_1$ into at most $C R^{2\eps(d-1)}$ cubes of side length $R^{1-2\eps}$ (up to a set of null Lebesgue measure). By Lemma~\ref{l.partition}, we thus obtain that
\begin{align*}
R^{-d} \int_{\mcl B_1} g \,  \tJd(\cdot,r_1,p,q) &  = \bar \O_s \Ll( C \CC R^{\eps(d-1)} R^{-(1-2\eps) \frac d 2} R^{-2\eps d} \Rr)   \\
& = \bar \O_s \Ll( C \CC R^{-\frac d 2 - {\eps}} \Rr) ,
\end{align*}
and therefore this term is indeed negligible.

\smallskip

We now turn to the contribution of the integral over $\bigcup_{x \in \mcl Z_1} \cu_\rho(x)$. We first observe that
$$
\rho \le R_1 \quad \text{and} \quad r_1 \in [\rho^{1-\eta_1}, \rho^{1-\eta_2}].
$$
The first inequality follows from the fact that $R \le 2R_1$ and that we imposed $r^{1+\td \de} \le R^{1-2\eps}$. As a consequence, we also have $r_1 \ge \rho^{1-\eta_1}$. The last condition holds for $R_1$ sufficiently large, since $\eta_1 > \eta_2$:  
$$
\rho^{1-\eta_2} = \Ll( \frac{R+r^{1+\td \de} - 3 R^{1-2\eps}}{2} \Rr)^{1-\eta_2} \ge \Ll( \frac{R_1+r^{1+\td \de} - 3(2R_1)^{1-2\eps}}{2} \Rr)^{1-\eta_2}\ge R_1^{1-\eta_1} = r_1.
$$
We can thus apply the induction hypothesis to obtain that
$$
\fint_{\cu_\rho(x)} g \, \tJd(\cdot,r_1,p,q) = \bar \O_s \Ll( \CC \rho^{-\frac d 2}  \|(p,q)\|_{\cu_\rho(x)}^2\Rr).
$$
By \eqref{e.cub.incl}, for every $x \in \mcl Z_1$, we have $\|(p,q)\|_{\cu_\rho(x)} \le \|(p,q)\|_{\cu_R} \le 1$. Using also that $\rho \le R/2$, we can rewrite the estimate above as
$$
(R/2)^{-d}\int_{\cu_\rho(x)} g \, \tJd(\cdot,r_1,p,q) = \bar \O_s \Ll( \CC \rho^{-\frac d 2} \Rr)  .
$$
Moreover, the random variables on the left side above are independent as $x$ varies in $\mcl Z_1$. Therefore, by Lemma~\ref{l.barO},
$$
R^{-d} \sum_{x \in \mcl Z_1} \int_{\cu_\rho(x)} g \, \tJd(\cdot,r_1,p,q) = \bar \O_s \Ll( \CC 2^{-\frac d 2} \rho^{-\frac d 2} \Rr) . 
$$
By the definition of $\rho$, this completes the proof of \eqref{e.int-goal}.

\smallskip

\emph{Step 5.} We have now justified the induction \eqref{e.induction-fluct}, and therefore \eqref{e.induc.scale}. In this last step, we use additivity once more to obtain the desired pointwise control of $\tJd$.

Let $R \ge 1$ and $p, q \in \bar \A_k(\Phi_{R})$. We fix $r = R^{1-\eta_1}$, use \eqref{e.additivity-centered-opt} and \eqref{e.cond1} to get
\begin{equation}
\label{e.last-additivity}
\tJd(0,R,p,q) = \int_{\Phidt_{R,r}} \tJd(\cdot,r,p,q) + \bar \O_s \Ll( C R^{-\frac d 2 - \eps} \Rr) .
\end{equation}
We rewrite the integral on the right side as
\begin{equation*}  
\sum_{x \in R\Z^d} \int_{\cu_R(x)} \Phidt_{R,r} \ \tJd(\cdot,r,p,q) .
\end{equation*}
The summand indexed by $x$ in the sum above is $\mcl F(\cu_{R + r^{1+\de}}(x))$-measurable, and we recall that $(1-\eta_1)(1+\de) < 1$.
We choose $C < \infty$ sufficiently large that for every $x \in \Rd$,
\begin{equation*}  
\sup_{\cu_R(x)} \Phidt_{R,r} \le \frac{C}{R^{d}} \exp \Ll( - \frac {|x|^2}{CR^2} \Rr) .
\end{equation*}
By \eqref{e.move-center}, for every $x \in \Rd$, we have
\begin{equation*}  
\|(p,q)\|_{\cu_R(x)}^2 \le C \Ll(1+\Ll(\frac{|x|}{R}\Rr)^{2k}\Rr),
\end{equation*}
and therefore \eqref{e.induc.scale} yields
\begin{equation*}  
\int_{\cu_R(x)} \Phidt_{R,r} \ \tJd(\cdot,r,p,q) = \Ll(1+\Ll(\frac{|x|}{R}\Rr)^{2k}\Rr) \exp \Ll( -\frac{|x|^2}{CR^2} \Rr) \bar \O_s \Ll( C R^{-\frac d 2} \Rr) .
\end{equation*}
The conclusion follows from Lemma~\ref{l.barO}(i).
\end{proof}

\section{Improvement of additivity}
\label{s.additivity}

In this section, we prove Proposition~\ref{p.additivity}, which asserts roughly that good control of the fluctuations of~$J$ implies an improvement of additivity.

\smallskip

The argument, inspired by the ideas of~\cite[Section 3]{AS} and our previous paper~\cite{AKM}, relies on the connection between the gradient of the energy quantity~$J_k$ and the spatially averaged gradient and flux of its maximizers (given in~\eqref{e.gradient-J} above). Using this, we show that good control on the fluctuations of $J_k$ (in the form of assumption $\Fluc_k(s,\alpha)$) implies good control on the spatial averages of the gradients (and fluxes) of elements of $\A_k(\Rd)$. This allows us to match the maximizers of $I_k(0,R,p,q)$ and $I_k(0,r,p,q)$ on two different scales $R \ge r$, up to an error of order of $\O_s(Cr^{-\alpha})$, which improves the additivity to exponent~$2\alpha$ (at a loss of half of the stochastic integrability exponent). 

\smallskip

In other words, the fluctuations of the spatial averages of gradients and fluxes of maximizers are at most proportional to the fluctuations of $J_k$, but the additivity of $I_k$ is the \emph{square} of the fluctuations of the former. This is the basis of the bootstrap argument and it is what we focus on in this section.

\smallskip

Throughout this section, we fix parameters $k\in\N$, $s>0$ and $\alpha \in \left( 0,\frac ds \right)$ and suppose that 
\begin{equation}
\label{e.yourass}
\Fluc_k(s,\alpha)
\ \ \mbox{and} \ \ 
\Dual_k(\alpha) 
\quad \mbox{hold.}
\end{equation}
We denote by $\X$ the random variable $\X_{s\alpha}$ in Proposition~\ref{p.regularity}, and by $\X(x)$ its $\Zd$-stationary extension (that is, $\X(x):= \tau_x\X$). We also denote by $\mathcal{Y}$ the larger of the random variables $\mathcal{Y}_{s\alpha}$ in Proposition~\ref{p.basecase} and Corollary~\ref{c.convexity}, and by $\mathcal{Y}(x)$ its stationary extension. We also let $r_0$ be the deterministic scale introduced in Definition~\ref{def.r0}.

\subsection{Spatial averages and the coarse-grained flux}
We first use~$\Fluc_k(s,\alpha)$ to improve the stochastic integrability of~$\Dual_k(\alpha)$. 

\begin{lemma}
\label{l.fluxmaps}
There exists a constant $C(s,\alpha,k,d,\Lambda)<\infty$ such that, for every $z\in\Rd$ and $r \geq 1$,
\begin{equation} \label{e.fluxmaps}
\sup_{w\in \A_k(\Phi_{z,r}) }   \left| \int_{\Phi_{z,r}} \left( \a(y) - \ahom \right) \nabla w(y) \,dy  \right| \\
= \O_{s}\left(Cr^{-\alpha} \right).
\end{equation}
\end{lemma}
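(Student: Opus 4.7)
The plan is to use the first-variation identity from Lemma~\ref{l.variations} to recast the supremum in terms of the maximizers $v(\cdot,z,r,p,p)$, then to exploit the polarization of the quadratic form $(p,q)\mapsto J(z,r,p,q)$ to express the relevant bilinear form as a \emph{linear} combination of values of $J(z,r,p,p)$, rather than a square root. The latter values are then controlled at scale $r^{-\alpha}$ and integrability $s$ by combining $\Fluc_k(s,\alpha)$ with a deterministic expectation bound extracted from $\Dual_k(\alpha)$.

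First, for $p,p'\in\Ahom_k$ let $v_p:=v(\cdot,z,r,p,p)$. Applying \eqref{e.first-var} with $q=p$, the symmetry of $\a$ and $\ahom$ yields, for every $w\in\A_k$,
\[
\int_{\Phi_{z,r}}\nabla w\cdot\a\nabla v_p=-\int_{\Phi_{z,r}}\nabla p\cdot(\a-\ahom)\nabla w.
\]
Since $p\mapsto v_p$ is linear (Definition~\ref{d.Jk}), applying the identity above with $w=v_{p'}$ and invoking \eqref{e.J-energy} produces the polarization identity
\[
\int_{\Phi_{z,r}}\nabla p'\cdot(\a-\ahom)\nabla v_p
=J(z,r,p,p)+J(z,r,p',p')-J(z,r,p+p',p+p').
\]
The key point is that the right side is a finite \emph{linear} combination of $J$-values, not a square root: this is the structural gain that makes it possible to achieve the scaling $r^{-\alpha}$ and integrability $s$ claimed in the lemma, rather than $r^{-\alpha/2}$ and $2s$ one would obtain by a naive Cauchy--Schwarz estimate.

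Next, each $J(z,r,p,p)$ with $p\in\Ahom_k(\Phi_{z,r})$ is bounded as follows. By $\Fluc_k(s,\alpha)$, $J(z,r,p,p)=\E[J(z,r,p,p)]+\O_s(Cr^{-\alpha})$. For the expectation, I would use a self-bound obtained from \eqref{e.J-energy}, Cauchy--Schwarz, and the identity $\|\nabla v_p\|_{L^2(\Phi_{z,r})}^2=2J(z,r,p,p)$: writing $F_r(z)$ for the supremum appearing in the lemma, the normalized function $v_p/\|\nabla v_p\|_{L^2(\Phi_{z,r})}$ lies in $\A_k(\Phi_{z,r})$, which yields $J(z,r,p,p)\le CF_r(z)\sqrt{J(z,r,p,p)}$, hence $J(z,r,p,p)\le CF_r(z)^2\le CF_r(z)$ (as $F_r$ is deterministically bounded by \eqref{e.ue}). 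Taking expectations and using $\Dual_k(\alpha)$ gives $\E[J(z,r,p,p)]\le Cr^{-\alpha}$, so that $J(z,r,p,p)=\O_s(Cr^{-\alpha})$ for each fixed $p$.

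To conclude, I would use that $\Ahom_k/\Ahom_0$ is finite-dimensional together with the Lipschitz estimate of Lemma~\ref{l.C11} to upgrade the pointwise $\O_s(Cr^{-\alpha})$-bound on $J$ to a uniform bound over a finite net of the unit sphere in $\Ahom_k$, with a union bound preserving both scale and integrability. The duality
\[
F_r(z)=\sup_{p'\in\Ahom_1(\Phi_{z,r})}\sup_{w\in\A_k(\Phi_{z,r})}\int_{\Phi_{z,r}}\nabla p'\cdot(\a-\ahom)\nabla w
\]
combined with the first-variation identity expresses the supremum as a bilinear form evaluated on maximizers $v_p$, which by the polarization identity is controlled by the uniform $J$-bound. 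The main obstacle I anticipate is bookkeeping the normalization when converting the supremum over $w\in\A_k(\Phi_{z,r})$ (unit gradient norm) into a supremum involving $v_p$'s (whose gradient norms are $\sqrt{2J}$, hence small and random); the polarization identity circumvents the danger of picking up a spurious $\sqrt{J}$ factor precisely because the bilinear form is written in terms of pairs $(p',p)$ of unit-normalized polynomials in $\Ahom_k$, not in terms of renormalized $v_p$'s.
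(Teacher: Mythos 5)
Your polarization identity is correct, and the instinct behind it -- that the quadratic structure of $J$ lets one control flux averages of \emph{maximizers} linearly in $J$, without a spurious $\sqrt{J}$ -- is exactly the mechanism the paper uses. The gap is in the reduction from the supremum over all $w \in \A_k(\Phi_{z,r})$ to these maximizers. Your identity controls $\int_{\Phi_{z,r}} \nabla p' \cdot (\a - \ahom)\nabla v_p$ only for $w = v_p := v(\cdot,z,r,p,p)$ with $p$ unit-normalized, and these functions are \emph{small}: $\|\nabla v_p\|_{L^2(\Phi_{z,r})} = \sqrt{2J(z,r,p,p)}$, which is typically of order $r^{-\alpha/2}$ and can be arbitrarily smaller (if $\a$ were equal to $\ahom$ one would have $v_p \equiv 0$, so no structural nondegeneracy is available for this family). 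Consequently the set $\{v_p : p \in \Ahom_k(\Phi_{z,r})\}$ does not come close to covering the unit sphere of $\A_k(\Phi_{z,r})$; to reach a unit-normalized $w$ you must rescale by the random factor $1/\sqrt{2J(z,r,p,p)}$, which by homogeneity multiplies your polarized bound by exactly that factor -- reintroducing the square-root loss you set out to avoid, and in fact worse, since there is no lower bound on $J(z,r,p,p)$ to control it. No surjectivity statement is available for $p \mapsto v_p$, so the "duality" step at the end of your proposal does not close. (A secondary point: your self-bound $J(z,r,p,p)\le C F_r(z)\sqrt{J(z,r,p,p)}$ needs $\nabla p$ constant for $F_r$ to apply; for $p \in \Ahom_k$ with $k \ge 2$ one needs polynomial-weighted flux averages, which in the paper are a \emph{consequence} of this lemma via Lemma~\ref{l.CKish}, so using them here would be circular.)

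The paper's proof keeps your polarization idea but applies it to the maximizers $v(\cdot,z,r,0,q)$ of $J(z,r,0,q)$: by \eqref{e.polarization} and $\Fluc_k(s,\alpha)$, both $\int_{\Phi_{z,r}}\a\nabla v(\cdot,z,r,0,q)$ and $\int_{\Phi_{z,r}}\ahom\nabla v(\cdot,z,r,0,q)$ concentrate around their means at rate $\O_s(Cr^{-\alpha})$, and the mean of the difference is $\le Cr^{-\alpha}$ by $\Dual_k(\alpha)$ together with \eqref{e.bounded-u}. The missing ingredient your argument needs is then supplied by a representation step: for $r$ above the random scale $\Y$ of Corollary~\ref{c.convexity}, the convexity lower bound gives $\|\nabla q\|_{L^2(\Phi_{z,r})} \le C\|\nabla v(\cdot,z,r,0,q)\|_{L^2(\Phi_{z,r})}$, so $q \mapsto v(\cdot,z,r,0,q)$ is injective, and since $\Ahom_k/\Ahom_0$ and $\A_k/\A_0$ have the same dimension (Proposition~\ref{p.regularity}), it is bijective with bounded inverse; every $w \in \A_k(\Phi_{z,r})$ is therefore $v(\cdot,z,r,0,Q(w))$ with $\|\nabla Q(w)\| \le C$, and the small-$r$ event $\{r < \Y(z)\}$ is absorbed using the trivial bound and the tail estimate on $\Y$. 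Without an analogue of this bijectivity-with-bounded-inverse step, your scheme cannot pass from bounds on specific maximizers to the supremum over all of $\A_k(\Phi_{z,r})$.
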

\begin{proof}
By the assumption of $\Fluc_s(\al)$ and \eqref{e.polarization}, for every $z \in \Rd$, $r \ge 1$ and $q \in \Ahom_k(\Phi_{z,r})$,
\begin{equation}
\label{e.fluc.to.sav}
\int_{\Phi_{z,r}}  \ahom \nabla v(\cdot,z,r,0,q) = \E \Ll[ \int_{\Phi_{z,r}}  \ahom \nabla v(\cdot,z,r,0,q)  \Rr] + \O_s(C r^{-\al})
\end{equation}
and
\begin{equation}
\label{e.fluc.to.fx}
\int_{\Phi_{z,r}}  \a \nabla v(\cdot,z,r,0,q) = \E \Ll[ \int_{\Phi_{z,r}} \a \nabla v(\cdot,z,r,0,q)  \Rr] + \O_s(C r^{-\al}).
\end{equation}
Indeed, to get~\eqref{e.fluc.to.fx} for instance, we see that, for every $p \in \Ahom_k(\Phi_{z,r})$,
\begin{align*}
\lefteqn{
\int_{\Phi_{z,r}} \nabla p\cdot \a \nabla v(\cdot,z,r,0,q)
} \qquad & \\
& = 
- J(z,r,p',q) + J(z,r,0,q) + J(z,r,p',0) \\
& = \E \left[ - J(z,r,p',q) \right] + \E \left[J(z,r,0,q)\right] + \E \left[J(z,r,p',0)\right] + \O_s\left( Cr^{-\alpha} \right) \\
& = \E \left[\int_{\Phi_{z,r}} \nabla p\cdot \a \nabla v(\cdot,z,r,0,q) \right] + \O_s\left( Cr^{-\alpha} \right).
\end{align*}
The finite dimensionality of $\Ahom_k$ allows us to extract an orthonormal basis $\{ p_i \}_{i=1}^{\dim(\Ahom_k)} \subseteq \Ahom_k(\Phi_{z,r})$ of $\Ahom_k$, i.e., 
\begin{equation*} 
\int_{\Phi_{z,r}} \nabla p_{i}(x) \cdot \ahom \nabla p_{j}(x) \, dx = \delta_{ij}\,,
\end{equation*}
and using this we obtain   
\begin{align*} \label{}
\lefteqn{
\left| \int_{\Phi_{z,r}}  \a \nabla v(\cdot,z,r,0,q) - \E \Ll[ \int_{\Phi_{z,r}} \a \nabla v(\cdot,z,r,0,q)  \Rr]  \right|
} \qquad & \\
& \leq \sup_{p \in \Ahom_k(\Phi_{z,r})}  \left| \int_{\Phi_{z,r}} \nabla p\cdot \a \nabla v(\cdot,z,r,0,q) - \E \left[\int_{\Phi_{z,r}} \nabla p\cdot \a \nabla v(\cdot,z,r,0,q) \right] \right| \\
& \leq \sum_{i\in \{ 1,\ldots,\dim(\Ahom_k)\}} \left| \int_{\Phi_{z,r}} \nabla p_i\cdot \a \nabla v(\cdot,z,r,0,q) - \E \left[\int_{\Phi_{z,r}} \nabla p_i\cdot \a \nabla v(\cdot,z,r,0,q) \right] \right| \\
& \leq \O_s\left( Cr^{-\alpha} \right). 
\end{align*}
This confirms~\eqref{e.fluc.to.fx} and the argument for~\eqref{e.fluc.to.sav} is similar.

\smallskip

By~\eqref{e.bounded-u} and the assumption of $\Dual_k(\al)$, 
$$
\Ll|\E \Ll[ \int_{\Phi_{z,r}}  (\a - \ahom) \nabla v(\cdot,z,r,0,q)  \Rr] \Rr| \le C r^{-\al}.
$$
Combining this with \eqref{e.fluc.to.sav} and \eqref{e.fluc.to.fx} and using that $\Ahom_k$ is finite dimensional, we obtain that
\begin{equation}
\label{e.flux-v}
\sup_{q \in \Ahom_k(\Phi_{z,r})} \Ll|\int_{\Phi_{z,r}} (\a - \ahom) \nabla v(\cdot,z,r,0,q) \Rr|= \O_s(Cr^{-\al}).
\end{equation}

\smallskip

We next study the surjectivity of the mapping
\begin{equation}
\label{e.map.q}
\Ll\{
\begin{array}{rcl}
\Ahom_k /\Ahom_0 & \longrightarrow & \A_k /\A_0 \\
q & \longmapsto & v(\cdot,z,r,0,q).
\end{array}
\Rr.
\end{equation}
For the purpose of proving the lemma, we may assume that $r\geq \Y(z)$ since for $w \in \A_k(\Phi_{z,r})$,
\begin{equation*}
\left| \int_{\Phi_{z,r}} \nabla p \cdot \left( \a - \ahom \right) \nabla w \,  \right| \, \1_{\{ r \le \Y(z)\} } \le C \, \1_{\{ r \le \Y(z)\} } \\
\le C \Ll( \frac{\Y(z)}{r} \Rr)^{\al} = \O_{s}\left( Cr^{-\alpha} \right).
\end{equation*}
By Corollary~\ref{c.convexity}, for every $r \ge\Y(z)$,
$$
J(z,r,0,q) \ge \frac 1 4 \int_{\Phi_{z,r}} \nabla q \cdot \ahom \nabla q.
$$
By \eqref{e.J-energy}, we get that for every $r \ge \Y(z)$,
\begin{equation}
\label{e.inj}
\|\nabla q\|_{L^2(\Phi_{z,r})} \le C \|\nabla v(\cdot,z,r,0,q)\|_{L^2(\Phi_{z,r})}.
\end{equation}
Hence, for $r \ge \Y(z)$, the mapping displayed in \eqref{e.map.q} is injective. By Proposition~\ref{p.regularity}, the spaces $\Ahom_k / \Ahom_0$ and $\A_k / \A_0$ have the same dimension and therefore the mapping in \eqref{e.map.q} is bijective. That is, for every $w \in \A_k(\Phi_{z,r})$, there exists $Q(w) \in \Ahom_k$ such that
$$
\nabla w = \nabla v(\cdot,z,r,0,Q(w)).
$$
By \eqref{e.inj}, we deduce
\begin{equation*}
\|\nabla(Q(w))\|_{L^2(\Phi_{z,r})}  \le  C \|\nabla w\|_{L^2(\Phi_{z,r})} \le C,
\end{equation*}
and the desired estimate follows by \eqref{e.flux-v}.
\end{proof}

We next give a technical lemma, used many times in what follows, which allows us to build a bridge between spatial averages, weighted by polynomials, on two different scales. Since it concerns only the convolution of polynomials with the heat kernel, it does not require the assumption~\eqref{e.yourass}. Recall that~$\mcl P_k$ is defined in~\eqref{e.defPk}.

\begin{lemma} \label{l.CKish}
For each $k\in\N$, there exists $C(k,d)<\infty$  and, for each $q \in\mathcal{P}_k$ and $1 \leq r \leq R/\sqrt 2$, a polynomial $\tilde{q} \in\mathcal{P}_k$ such that 
\begin{equation} \label{e.CKishbound}
\left\| \tilde{q} \right\|_{L^2\left( \Phi_{R} \right)} \leq C \left\| q\right\|_{L^2\left( \Phi_{R} \right)}\,
\end{equation}
and, for every $F  \in L^2(\Phi_R)$,
\begin{equation} \label{e.int by parts}
\int_{\Phi_R}  F(x) q(x)\,dx  =  \int_{\Phi_{\sqrt{R^2-r^2}}}  \tilde{q}(y)  \int_{\Phi_{y,r}} F(x)\,dx \, dy.
\end{equation}
\end{lemma}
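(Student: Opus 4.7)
The plan is to exploit the semigroup property of the Gaussian kernel. Setting $S:=\sqrt{R^2-r^2}$ (so $r^2+S^2=R^2$), the identity $\Phi_R(x)=\int \Phi_{y,r}(x)\Phi_S(y)\,dy$ holds, and Fubini rewrites the left-hand side of~\eqref{e.int by parts} as
\[
\int \Phi_S(y)\int F(x)q(x)\Phi_{y,r}(x)\,dx\,dy.
\]
Hence it suffices to produce a polynomial $\tilde q\in\mcl P_k$ satisfying the pointwise identity
\begin{equation*}
q(x)\Phi_R(x) \;=\; \int \Phi_{y,r}(x)\,\tilde q(y)\,\Phi_S(y)\,dy,
\end{equation*}
since multiplying both sides by $F(x)$ and integrating in $x$ then yields~\eqref{e.int by parts} after another application of Fubini.

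To construct $\tilde q$, I will use a Gaussian-conditioning argument. Let $(X,Y)$ be jointly Gaussian with $Y\sim\Phi_S$ and conditional law $X\mid Y=y\sim \Phi_{y,r}$; then $X\sim\Phi_R$, and a direct covariance computation gives $Y\mid X=x \sim \mcl N(\sigma^2 x,\,2\tau_0^2\ahom)$ with
\[
\sigma^2 := S^2/R^2 \in [\tfrac12,1),\qquad \tau_0 := rS/R.
\]
The pointwise identity thus becomes $q(x) = (T\tilde q)(x)$, where
\[
T:\mcl P_k\to\mcl P_k,\qquad (T\tilde q)(x) := \E\bigl[\tilde q(\sigma^2 x + W)\bigr],\qquad W\sim\mcl N(0,2\tau_0^2\ahom).
\]
Expanding $T\tilde q$ in monomials shows that $T$ preserves the degree filtration of $\mcl P_k$ and acts on the degree-$m$ graded piece as multiplication by $\sigma^{2m}$. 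Since $\sigma^{2m}\ge 2^{-k}$ for $0\le m\le k$, the operator $T$ is invertible, and I set $\tilde q:= T^{-1}q$.

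For the norm bound~\eqref{e.CKishbound}, I will rescale by $z=x/R$, $Q(z):=q(Rz)$, $\tilde Q(w):=\tilde q(Rw)$. A change of variables shows $\|Q\|_{L^2(\Phi_1)} = \|q\|_{L^2(\Phi_R)}$ and $\|\tilde Q\|_{L^2(\Phi_1)} = \|\tilde q\|_{L^2(\Phi_R)}$, and the equation $q = T\tilde q$ becomes $Q = T_{\sigma,\tau}\tilde Q$ with
\[
(T_{\sigma,\tau}\tilde Q)(z) := \E\bigl[\tilde Q(\sigma^2 z + \tilde W)\bigr],\qquad \tilde W\sim\mcl N(0,2\tau^2\ahom),
\]
where the rescaled parameter $(\sigma,\tau)=(S/R,\,rS/R^2)$ now lies in the compact set $K:=[1/\sqrt 2,1]\times[0,1/2]$ (independently of $R$). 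The family $\{T_{\sigma,\tau}\}_{(\sigma,\tau)\in K}$ depends continuously (indeed polynomially) on $(\sigma,\tau)$, and each $T_{\sigma,\tau}$ is invertible (at the boundary point $(1,0)$ it reduces to $\mathrm{Id}$). By continuity and compactness, the operator norm of $T_{\sigma,\tau}^{-1}$ on the finite-dimensional space $(\mcl P_k,\|\cdot\|_{L^2(\Phi_1)})$ is bounded uniformly in $(\sigma,\tau)\in K$ by a constant $C(k,d)$, which gives~\eqref{e.CKishbound}.

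The only genuinely delicate point is obtaining the uniform control of $T^{-1}$; this is precisely what the rescaling is designed to handle, converting an $R$-dependent operator into a continuous family over a compact parameter set. Everything else is routine semigroup manipulation and Gaussian conditioning.
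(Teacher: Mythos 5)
Your proof is correct, but it takes a genuinely different route from the paper. The paper Taylor-expands $q$ around $y$ and performs repeated integrations by parts against the heat kernel, using the identities $(x-y)\Phi_r(x-y)=2r^2\ahom\nabla_y\Phi_r(x-y)$ and $\nabla\Phi_{\sqrt{R^2-r^2}}(y)=-\frac{\ahom^{-1}y}{2(R^2-r^2)}\Phi_{\sqrt{R^2-r^2}}(y)$, which builds $\tilde q$ explicitly through a recursively defined family of tensors $\mathbf{G}_{j,n}$ and then proves \eqref{e.CKishbound} by an induction on these tensors (with explicit scaling $r^{m-n}R^{-n}$ information at each degree). You instead observe, via the semigroup property and Gaussian conditioning, that the required identity is equivalent to the pointwise statement $q(x)\Phi_R(x)=\int\Phi_{y,r}(x)\tilde q(y)\Phi_{\sqrt{R^2-r^2}}(y)\,dy$, i.e.\ $q=T\tilde q$ with $(T\tilde q)(x)=\E[\tilde q(\sigma^2x+W)]$, $W\sim\mcl N(0,2\tau_0^2\ahom)$; invertibility of $T$ on $\mcl P_k$ follows from its upper-triangular action on the degree filtration with diagonal factors $\sigma^{2m}\ge 2^{-k}$ (here $r\le R/\sqrt2$ enters), and the norm bound follows by rescaling to $\Phi_1$ and a continuity-plus-compactness argument over $(\sigma,\tau)\in[1/\sqrt2,1]\times[0,1/2]$. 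What each approach buys: the paper's computation is more explicit and delivers quantitative coefficient bounds (useful to see how $\tilde q$ degenerates or not as $r/R$ varies), whereas yours is shorter and conceptually cleaner but yields a non-explicit constant from compactness. Two minor points you should tidy up: (i) the Fubini step needs the (easy) integrability check $\int|F(x)|\Phi_R(x)\,\E[|\tilde q(Y)|\mid X=x]\,dx<\infty$, which follows from Cauchy--Schwarz in $L^2(\Phi_R)$ since the conditional expectation is dominated by a polynomial; (ii) as written, your operator $T_{\sigma,\tau}$ and hence the compactness constant depend on $\ahom$, so you get $C(k,d,\Lambda)$ rather than $C(k,d)$; this is harmless for the paper's purposes, and if you want the cleaner dependence you can first apply the affine change of variables $x\mapsto\ahom^{-1/2}x$, which maps $\Phi$ to the standard heat kernel and preserves $\mcl P_k$ and the relevant norm ratios.
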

\begin{proof}
We first use the Taylor expansion of $q$ at $y$ to write
$$
q(x) = \sum_{n=0}^{k} \frac{1}{n!} \nabla^{n} q(y) (x-y)^{\otimes n}\,. 
$$
We hence obtain by the semi-group property of the heat kernel that
\begin{align*} 
& \int_{\Phi_R}  F(x)   q(x)  \, dx 
\\ & \qquad = \int_{\Phi_{\sqrt{R^2-r^2}} }\int_{\Phi_{y,r}} F(x)   q(x)  \, dx \, dy
\\ & \qquad = \sum_{n=0}^{k} \frac1{n!}  \int_{\R^d}   F(x)  \int_{\R^d} \Phi_{\sqrt{R^2-r^2}}(y) \Phi_{r}(x-y) \nabla^{n} q(y)    (x-y)^{\otimes n}   \, dy \, dx. 
\end{align*}
Using the identities
\begin{equation*}
\left\{ 
\begin{aligned}
& (x-y) \Phi_{r}(x-y) = 2 r^2 \ahom \nabla_y \Phi_{r}(x-y)\,, \\
& \nabla \Phi_{\sqrt{R^2 - r^2}}(y) = - \frac{\ahom^{-1} y}{2(R^2 - r^2)}\Phi_{\sqrt{R^2 - r^2}}(y),
\end{aligned}
\right.
\end{equation*}
we get by integration by parts, for any smooth tensor $\mathbf{G}$ with polynomial growth and for $m\geq 1$, that
\begin{align}  \label{e.int-by-parts000}  
\lefteqn{\int_{\Phi_{\sqrt{R^2-r^2}}}  \Phi_{r}(x-y)  \mathbf{G}(y)    (x-y)^{\otimes m}   \, dy } \quad &
\\ \nonumber & = 2r^2 \int_{\R^d} \Phi_{\sqrt{R^2-r^2}}(y) \nabla_y \Phi_{r}(x-y) \cdot \ahom \mathbf{G}(y)    (x-y)^{\otimes (m-1)}   \, dy 
\\ \nonumber & = - 2r^2 \int_{\R^d} \ahom \nabla \Phi_{\sqrt{R^2-r^2}}(y)   \cdot  \mathbf{G}(y)    (x-y)^{\otimes (m-1)}   \Phi_{r}(x-y) \, dy 
\\ \nonumber& \qquad - 2r^2 \int_{\R^d} \Phi_{\sqrt{R^2-r^2}}(y)  \nabla_y \cdot \left( \ahom \mathbf{G}(y)    (x-y)^{\otimes (m-1)} \right)  \Phi_{r}(x-y)  \, dy 
\\ \nonumber& = \frac{r^2}{R^2-r^2} \int_{\Phi_{\sqrt{R^2-r^2}}}   \mathbf{G}(y) y^{\otimes 1}  (x-y)^{\otimes (m-1)} \Phi_{r}(x-y)  \, dy 
\\ \nonumber& \qquad - 2r^2 \int_{\Phi_{\sqrt{R^2-r^2}}}   \nabla_y \cdot \left( \ahom \mathbf{G}(y) \right)   (x-y)^{\otimes (m-1)} \Phi_{r}(x-y)  \, dy 
\\ \nonumber& \qquad + 2r^2(m-1) \int_{\Phi_{\sqrt{R^2-r^2}}}  \mathbf{G}(y) (\ahom e)^{\otimes 1} e^{\otimes 1}     (x-y)^{\otimes (m-2)}   \Phi_{r}(x-y)  \, dy \,,
\end{align}
where we denote $e = (1,\ldots,1)$. 
Set thus inductively, for $j\in\{0,\ldots,n-1\}$,
\begin{align*} 
\mathbf{G}_{n,n}(y) & := \nabla^n q(y)\,, \qquad \tilde{\mathbf{G}}_{n-1}(y) = 0
\\
\mathbf{G}_{j-1,n}(y) & := \frac{r^2}{R^2-r^2} \mathbf{G}_{j,n}(y) y^{\otimes 1}  - 2r^2 \nabla_y \cdot \left( \ahom \mathbf{G}_{j,n}(y)  \right)  +  \tilde{\mathbf{G}}_{j-1,n}(y)  
\\
\tilde{\mathbf{G}}_{(j-2)\vee 0,n}(y) & := 2r^2 (j-1) \mathbf{G}_{j,n}(y) (\ahom e)^{\otimes 1}  e^{\otimes 1}\,. 
\end{align*}
Since each step is contracting the tensor, we  have that $\mathbf{G}_{0,n}(y)$ is a scalar-valued polynomial in $y$ of degree $k$.
Applying~\eqref{e.int-by-parts000} repeatedly we deduce that
\begin{multline*} 
\int_{\R^d} \Phi_{\sqrt{R^2-r^2}}(y) \Phi_{r}(x-y)  \nabla^n q(y)   (x-y)^{\otimes n}   \, dy 
\\ =
\int_{\R^d} \Phi_{\sqrt{R^2-r^2}}(y)  \mathbf{G}_{0,n}(y) \Phi_{r}(x-y) \, dy\,.
\end{multline*}
 By changing the order of integration we get
\begin{equation}  \label{e.int by parts 2}
 \int_{\Phi_R}  F(x)   q(x)  \, dx  =  \int_{\Phi_{\sqrt{R^2-r^2}}} \sum_{n=0}^{k} \frac{1}{n!}  \mathbf{G}_{0,n}(y)  \int_{\Phi_{y,r}} F(x)   \, dx \, dy. 
\end{equation}
Therefore, we obtain the result provided that 
\begin{equation} \label{e.Gisbounded,goddamn}
\left\| \mathbf{G}_{0,n}(y) \right\|_{L^2\left(\Phi_{\sqrt{R^2-r^2}}\right) } \leq C \left\| q \right\|_{L^2\left(\Phi_{R}\right) }\,.
\end{equation}
To obtain this, we proceed inductively. Assuming that 
\begin{equation*} 
\sup_{j\leq m \leq n} r^{m-n} \left\| \mathbf{G}_{m,n}(y) \right\|_{L^2\left(\Phi_{\sqrt{R^2-r^2}}\right) } \leq C_j R^{-n}  \left\| q \right\|_{L^2\left(\Phi_{R}\right) }\,,
\end{equation*}
it is easy to see from the definition of $\mathbf{G}_{j-1,n}$ and the fact that $r \leq R/\sqrt{2}$ that there is a constant $C_{j-1}(C_j,d)$ such that 
\begin{equation*} 
\left\| \mathbf{G}_{j-1,n} \right\|_{L^2\left(\Phi_{\sqrt{R^2-r^2}}\right) } \leq C_{j-1} r^{n - (j-1)} R^{-n} \left\| q \right\|_{L^2\left(\Phi_{R}\right) }\,.
\end{equation*}
On the other hand, since $\mathbf{G}_{n,n} = \nabla^n q$, we have that the initial step is valid. This finishes the proof.  
\end{proof}

With the aid of the previous lemma, we can upgrade Lemma~\ref{l.fluxmaps} to include polynomial weights.
\begin{lemma}
\label{l.fluxmaps2}
Let $k,m \in \N$. There exists a constant $C(k,m,s,\alpha,d,\Lambda)<\infty$ such that, for every $x\in\Rd$ and $r \geq 1$,
\begin{equation} \label{e.fluxmaps2}
\sup_{w\in \A_k(\Phi_{z,r}) } \sup_{\mathbf{p} \in \mathcal{P}_m(\Phi_{z,r})} 
\left| \int_{\Phi_{z,r}} \mathbf{p}(y) \cdot \left( \a(y) - \ahom \right) \nabla w(y) \,dy  \right| \\
= \O_{s}\left(Cr^{-\alpha} \right)\,,
\end{equation}
where $\mathcal{P}_m(\Phi_{z,r}) $ stands for the set of $m^{th}$ degree vector-valued polynomials normalized so that $ \mathbf{p} \in \mathcal{P}_m(\Phi_{z,r})$ implies
$\left\| \mathbf{p} \right\|_{L^2(\Phi_{z,r})} \leq 1$.
\end{lemma}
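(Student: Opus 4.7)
Without loss of generality I set $z=0$. The plan is to reduce the statement to the case $m=0$ already proved in Lemma~\ref{l.fluxmaps}, using the integration-by-parts identity of Lemma~\ref{l.CKish} to transfer the polynomial weight $\mathbf{p}(y)$ outside of the spatial average of the flux. Applying Lemma~\ref{l.CKish} componentwise with $R=r$ and inner radius $r/\sqrt{2}$, I get $\tilde{\mathbf{p}}\in\mcl P_m$ (vector-valued) with $\|\tilde{\mathbf{p}}\|_{L^2(\Phi_r)}\le C\|\mathbf{p}\|_{L^2(\Phi_r)}\le C$ such that
\begin{equation*}
\int_{\Phi_r} \mathbf{p}(y)\cdot(\a-\ahom)\nabla w\,dy
\;=\;
\int_{\Phi_{r/\sqrt2}} \tilde{\mathbf{p}}(y')\cdot\left(\int_{\Phi_{y',r/\sqrt2}} (\a-\ahom)\nabla w\right) dy'.
\end{equation*}

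For each fixed $y'$, I would renormalize $w$ and apply Lemma~\ref{l.fluxmaps} at the center $y'$ and radius $r/\sqrt{2}$, to obtain
\begin{equation*}
\left|\int_{\Phi_{y',r/\sqrt 2}}(\a-\ahom)\nabla w\right|
\;\le\;
\|\nabla w\|_{L^2(\Phi_{y',r/\sqrt 2})}\cdot M(y'),
\qquad
M(y'):=\sup_{w'\in\A_k(\Phi_{y',r/\sqrt 2})}\left|\int_{\Phi_{y',r/\sqrt 2}}(\a-\ahom)\nabla w'\right|,
\end{equation*}
and by stationarity of $\P$ together with Lemma~\ref{l.fluxmaps}, $M(y')=\O_s(Cr^{-\alpha})$ with a constant independent of $y'$. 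The technical heart of the argument is then a deterministic-above-a-random-scale polynomial growth bound
\begin{equation*}
\sup_{w\in\A_k(\Phi_r)}\|\nabla w\|_{L^2(\Phi_{y',r/\sqrt 2})}
\;\le\; C\Big(1+\tfrac{|y'|}{r}\Big)^{k-1},
\qquad r\ge \X_{s\alpha},
\end{equation*}
which I would prove by using Proposition~\ref{p.regularity}(i) to associate to each $w\in\A_k(\Phi_r)$ an $\ahom$-harmonic polynomial $p_w\in\Ahom_k$ with $\|\nabla p_w\|_{L^2(\Phi_r)}\le C$ and $\|w-p_w\|_{\underline L^2(B_R)}\le CR^{-\delta}\|p_w\|_{\underline L^2(B_R)}$, then handling the polynomial piece by a direct calculation and the error $w-p_w$ by the Caccioppoli inequality. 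On the complementary event $\{r<\X_{s\alpha}\}$, uniform ellipticity gives a trivial $O(1)$ bound, which is absorbed into the $\O_s$-error using $\X_{s\alpha}=\O_{s\alpha}(C)$ from~\eqref{e.X} together with a raising-of-moments (Remark~\ref{r.change-s}).

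Combining the three displays above and invoking Lemma~\ref{l.sum-O} to integrate the $\O_s$-bound against the measure $|\tilde{\mathbf{p}}(y')|\Phi_{r/\sqrt 2}(y')\,dy'$, I obtain that the original quantity is bounded by
\begin{equation*}
\O_s(Cr^{-\alpha})\cdot\int_{\Phi_{r/\sqrt 2}}|\tilde{\mathbf{p}}(y')|\Big(1+\tfrac{|y'|}{r}\Big)^{k-1}dy'.
\end{equation*}
Since $\tilde{\mathbf{p}}$ has polynomial growth of degree $m$ with coefficients controlled by $\|\tilde{\mathbf{p}}\|_{L^2(\Phi_r)}\le C$, we have $|\tilde{\mathbf{p}}(y')|\le C(1+|y'|/r)^m$, and the remaining integral is bounded by a constant $C(k,m,d)$ because of the Gaussian decay of $\Phi_{r/\sqrt 2}$. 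Finally, the supremum over $w\in\A_k(\Phi_r)$ and $\mathbf{p}\in\mcl P_m(\Phi_r)$ can be reduced to a supremum over finitely many elements of a fixed basis of the (finite-dimensional) space $\A_k\times\mcl P_m$ at a harmless cost, so the $\O_s$-estimate persists. The main obstacle is the off-center polynomial growth bound for $\nabla w$, which is why Proposition~\ref{p.regularity} (and not merely the definition of $\A_k$) is needed: it allows the control of an element of the random, nonlocal space $\A_k$ by a polynomial in the deterministic finite-dimensional space $\Ahom_k$.
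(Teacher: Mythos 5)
Your argument is correct and is essentially the paper's own proof, which consists precisely of applying Lemma~\ref{l.CKish} to move the polynomial weight outside the spatial average and then invoking Lemma~\ref{l.fluxmaps} together with Lemma~\ref{l.sum-O}; your write-up simply fills in the off-center norm bound and the bad-event absorption that the paper leaves implicit. One small caveat: the claimed bound $\sup_{w\in\A_k(\Phi_r)}\|\nabla w\|_{L^2(\Phi_{y',r/\sqrt2})}\le C(1+|y'|/r)^{k-1}$ for $r\ge\X_{s\alpha}$ is slightly optimistic if one only assumes control of the minimal scale at the origin — the Caccioppoli-plus-regularity argument you sketch naturally yields a worse exponent such as $k+\frac d2$ unless one also invokes the Lipschitz estimate at the center $y'$ (hence $\X(y')$), but this is harmless since the Gaussian weight $\Phi_{r/\sqrt2}$ absorbs any fixed polynomial growth, so the conclusion is unaffected.
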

\begin{proof}
Letting $\tilde{\mathbf{p}}$ be as in Lemma~\ref{l.CKish}, we obtain 
\begin{equation*} 
 \int_{\Phi_{z,r}} \mathbf{p}  \cdot  \left( \ahom  -\a \right)\nabla w 
  =   \int_{\Phi_{z,r/\sqrt{2}}}   \tilde{\mathbf{p}}(x) \cdot  \int_{\Phi_{x,r/\sqrt{2}}}  \left( \ahom  -\a(y) \right)\nabla w \, dy \, dx\,.
\end{equation*}
Applying Lemma~\ref{l.fluxmaps} (and Lemma~\ref{l.sum-O}) then yields the statement.
\end{proof}

The assumption that~$\Fluc_k(s,\alpha)$ holds gives us good control on the fluctuations of $J$ and therefore, by Lemma~\ref{l.fluxmaps} and the fact that $J$ is quadratic, of~$\nabla J$. This can be phrased in terms of an estimate on the spatial averages of maximizers of $J$.

\begin{lemma}
\label{l.whatflucgives}
There exists $C(s,\alpha,k,d,\Lambda)<\infty$ such that, for every $z\in\Rd$, $r\geq r_0$ and $p,q,q' \in \Ahom_k(\Phi_{z,r})$,
\begin{equation}
\label{e.whatflucgives}
\left| \int_{\Phi_{z,r}}\left(   \nabla (q-p)(x) - \nabla u(x,z,r,p,q)\right)  \cdot  \ahom \nabla q'(x)  \,dx \right| 
\leq  \O_s\left( Cr^{-\alpha} \right).
\end{equation}
\end{lemma}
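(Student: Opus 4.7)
\emph{My plan.} The key observation is that, by the linearity of $v(\cdot,z,r,\cdot,\cdot)$ in its two polynomial arguments together with the linearity of $L_{z,r}$ and $L^*_{z,r}$, we have
\begin{equation*}
u(\cdot,z,r,p,q) = v(\cdot,z,r,L^*_{z,r}p,0) + v(\cdot,z,r,0,L_{z,r}q).
\end{equation*}
It therefore suffices to prove the two separate estimates
\begin{align*}
F_1 &:= \int_{\Phi_{z,r}} \nabla v(\cdot,z,r,0,L_{z,r}q) \cdot \ahom \nabla q' = \int_{\Phi_{z,r}} \nabla q \cdot \ahom \nabla q' + \O_s(Cr^{-\alpha}), \\
F_2 &:= \int_{\Phi_{z,r}} \nabla v(\cdot,z,r,L^*_{z,r}p,0) \cdot \ahom \nabla q' = -\int_{\Phi_{z,r}} \nabla p \cdot \ahom \nabla q' + \O_s(Cr^{-\alpha}).
\end{align*}
Subtracting and taking the absolute value would then give~\eqref{e.whatflucgives}.

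\emph{Handling $F_1$.} By the polarization identity~\eqref{e.polarization} applied with $p = p' = 0$, $q = L_{z,r} q$, I can write
\begin{equation*}
F_1 = J(z,r,0,L_{z,r}q + q') - J(z,r,0,L_{z,r}q) - J(z,r,0,q').
\end{equation*}
Since $p,q,q' \in \Ahom_k(\Phi_{z,r})$, Corollary~\ref{c.L-identity} implies that $L_{z,r}q$, $q'$, and $L_{z,r}q+q'$ all have $L^2(\Phi_{z,r})$-norms of their gradients bounded by a universal constant. By the quadratic homogeneity of $J$, the hypothesis $\Fluc_k(s,\alpha)$ applies to each of the three terms (up to adjustment of $C$), and Lemma~\ref{l.sum-O} then yields $F_1 = \E[F_1] + \O_s(Cr^{-\alpha})$. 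By~\eqref{e.I-spat-av} of Lemma~\ref{e.Lproperties}, $\E[F_1] = \int_{\Phi_{z,r}} \nabla q \cdot \ahom \nabla q'$, as desired.

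\emph{Handling $F_2$.} Polarization gives analogously
\begin{equation*}
F_2 = J(z,r,L^*_{z,r}p, q') - J(z,r,L^*_{z,r}p, 0) - J(z,r,0,q'),
\end{equation*}
so the same argument yields $F_2 = \E[F_2] + \O_s(Cr^{-\alpha})$. To identify the expectation I split, writing $\ahom \nabla q' = \a \nabla q' + (\ahom - \a)\nabla q'$, so that
\begin{equation*}
F_2 = \int_{\Phi_{z,r}} \nabla q' \cdot \a \, \nabla v(\cdot,z,r,L^*_{z,r}p,0) + \int_{\Phi_{z,r}} \nabla q' \cdot (\ahom - \a) \nabla v(\cdot,z,r,L^*_{z,r}p,0).
\end{equation*}
By~\eqref{e.I-spat-flux} of Lemma~\ref{e.Lproperties} (applied with $p' = q'$), the expectation of the first integral equals $-\int_{\Phi_{z,r}} \nabla p \cdot \ahom \nabla q'$. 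For the second integral, I use that $\nabla q'$ is a $\mcl P_{k-1}$-polynomial bounded in $L^2(\Phi_{z,r})$ and that $v(\cdot,z,r,L^*_{z,r}p,0) \in \A_k$ has $L^2(\Phi_{z,r})$-bounded gradient (by~\eqref{e.bounded-u} and Corollary~\ref{c.L-identity}); then Lemma~\ref{l.fluxmaps2} directly gives an $\O_s(Cr^{-\alpha})$ bound on the second integral itself, hence a fortiori on its expectation. Combining yields the claimed form of $\E[F_2]$.

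\emph{Expected obstacle.} There is nothing deep here; the proof amounts to combining three ingredients already in hand: (i) the definition of $u$ (via $L_{z,r}$, $L^*_{z,r}$) is tuned so that~\eqref{e.I-spat-av} and~\eqref{e.I-spat-flux} hold in expectation; (ii) the quadratic nature of $J$ reduces spatial averages of $\nabla v$ to a finite combination of $J$'s via polarization; and (iii) $\Fluc_k(s,\alpha)$ then promotes the expectation-level identity to a pointwise (random) identity with the same scaling. The only mildly delicate point is the passage from $\a$ to $\ahom$ with a polynomial weight in Step~3, which is precisely the content of Lemma~\ref{l.fluxmaps2}.
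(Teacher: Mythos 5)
Your proof is correct and follows essentially the same route as the paper: the paper also writes the weighted average of $\nabla u(\cdot,z,r,p,q)$ as $\nabla_q J(z,r,L^*_{z,r}p,L_{z,r}q)(q')$ (which is exactly your polarization step combined with $\Fluc_k(s,\alpha)$ via Remark~\ref{r.gradient}), splits off the $(\ahom-\a)\nabla u(\cdot,z,r,p,0)$ commutator and controls it with Lemma~\ref{l.fluxmaps2}, and then identifies the expectation using~\eqref{e.I-spat-av} and~\eqref{e.I-spat-flux}. The only difference is organizational — you treat the two pieces $v(\cdot,z,r,L^*_{z,r}p,0)$ and $v(\cdot,z,r,0,L_{z,r}q)$ separately and identify $\E[F_2]$ after applying the fluctuation bound, whereas the paper does the pathwise split first — but the ingredients and estimates are the same.
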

\begin{proof}
By Remark~\ref{r.gradient} and~$\Fluc_k(s,\alpha)$, we have  that, for every  $p,q,q' \in \A_k(\Phi_{z,r})$,
\begin{equation} \label{e.whatflucgives000}
\left| \nabla_q J(z,r,p,q)(q') - \nabla_q \E \left[ J(z,r,p,q)(q') \right]  \right| = \O_s\left(Cr^{-\alpha} \right). 
\end{equation}
Let us compute $\nabla_q J(z,r,L_{z,r}^*p,L_{z,r}q)(q')$. We have by linearity that
\begin{align} \label{e.whatflucgives001}
\lefteqn{\nabla_q J(z,r,L_{z,r}^*p,L_{z,r}q)(q')  = \int_{\Phi_{z,r}} \ahom \nabla q' \cdot \nabla u(\cdot,z,r,p,q)    } \qquad & 
\\ \notag & =   \int_{\Phi_{z,r}} \ahom \nabla q' \cdot \nabla u(\cdot,z,r,0,q)   + \int_{\Phi_{z,r}} \a \nabla q' \cdot \nabla u(\cdot,z,r,p,0)   
\\ \notag & \qquad - \int_{\Phi_{z,r}} \nabla q' \cdot \left( \ahom  -\a \right)\nabla u(\cdot,z,r,p,0) \,.
\end{align}
To control the last term on the right, Lemma~\ref{l.fluxmaps2} yields
\begin{equation*} 
 \left| \int_{\Phi_{z,r}} \nabla q' \cdot \left( \ahom  -\a \right)\nabla u(\cdot,z,r,p,0)  \right|   \leq \O_s\left( C r^{-\alpha} \right)\,.
\end{equation*}
Taking expectation, we conclude by~\eqref{e.I-spat-av} and~\eqref{e.I-spat-flux} that
\begin{equation*} 
\left| \E\left[ \nabla_q J(z,r,L_{z,r}^*p,L_{z,r}q)(q') \right] - \int_{\Phi_{z,r}} \nabla q' \cdot \ahom \nabla(q-p) \right| \leq Cr^{-\alpha}\,.
\end{equation*}
This together with~\eqref{e.whatflucgives000} and the first line of~\eqref{e.whatflucgives001} finishes the proof. 
\end{proof}

We next show, using Lemma~\ref{l.fluxmaps}, that $r$-scale convolutions of elements of~$\A_k$ against the heat kernel are close to being~$\ahom$-harmonic functions.

\begin{lemma}
\label{l.coarsenedequation}
There exists $C(s,\alpha,k,d,\Lambda) < \infty$ and, for every $x\in\Rd$ and $r\geq 1$, a nonnegative random variable $\mathcal{H}_r(x)$ satisfying
\begin{equation}
\label{e.Hrest}
\mathcal{H}_r(x) \leq C \wedge \O_{s}(C r^{-\alpha})
\end{equation}
such that, for every $v \in \A_k(\Rd)$ and $\eta \in H^1_c(\Rd)$, we have 
\begin{multline} \label{e.almostahom}
\left| \int_{\Rd} \nabla \eta(x) \cdot \ahom \nabla \left( \int_{\Phi_{x,r} }  v(y)\,dy \right)\,dx  \right| \\
 \leq 
 \int_{\Rd} \left| \nabla \eta(x) \right| \left\| \nabla v \right\|_{L^2(\Phi_{x,r})}  \mathcal{H}_r(x)
\,dx.
\end{multline}
\end{lemma}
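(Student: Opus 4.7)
The plan is to exploit the divergence-free structure of the flux $\a \nabla v$ to reduce the desired inequality to the pointwise bound already supplied by Lemma~\ref{l.fluxmaps}. Write $\bar v(x):= \int_{\Phi_{x,r}} v$. Differentiating under the integral sign and integrating by parts in $y$ (the boundary terms vanish since $\Phi_{\,\cdot\,,r}$ has Gaussian decay while $v\in\A_k$ grows only polynomially), I would first establish the identity
\[
\nabla \bar v(x) \;=\; \int_{\Phi_{x,r}} \nabla v(y)\,dy.
\]
Setting $\bar\eta(y):= \int_{\Phi_{y,r}} \eta$ and applying Fubini together with a second integration by parts giving $\int_{\R^d} \nabla\eta(x)\,\Phi(y-x,r^2)\,dx = \nabla_y\bar\eta(y)$, I would then split
\[
\int_{\R^d}\!\nabla\eta(x)\cdot\ahom\nabla\bar v(x)\,dx
= \int_{\R^d}\!\nabla\eta(x)\cdot\!\!\int_{\Phi_{x,r}}\!(\ahom-\a(y))\nabla v(y)\,dy\,dx
+ \int_{\R^d}\nabla\bar\eta(y)\cdot\a(y)\nabla v(y)\,dy.
\]

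Next I would show that the second integral is zero. Since $\eta \in H^1_c$, the function $\bar\eta$ is smooth with Gaussian tails, so when paired against $\a\nabla v$ (polynomial growth) the integral converges absolutely. To identify it with zero, I would multiply $\bar\eta$ by a standard cutoff $\chi_R \in C^\infty_c(\R^d)$ with $\chi_R\equiv 1$ on $B_R$, $\supp \chi_R \subset B_{2R}$, and $|\nabla\chi_R|\le C/R$; since $\chi_R\bar\eta \in H^1_c$ is an admissible test function against $v\in\A_k$, we have $\int \nabla(\chi_R\bar\eta)\cdot\a\nabla v = 0$, and the error term $\int \bar\eta\,\nabla\chi_R\cdot\a\nabla v$ tends to zero as $R\to\infty$ because the Gaussian decay of $\bar\eta$ dominates the polynomial growth of $\nabla v$ in the annulus $B_{2R}\setminus B_R$. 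Passing to the limit yields the claim.

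For the first integral, I would define
\[
\mathcal{H}_r(x) \;:=\; \sup_{w\in\A_k(\Phi_{x,r})}\Ll|\int_{\Phi_{x,r}}(\a(y)-\ahom)\nabla w(y)\,dy\Rr|,
\]
so that by homogeneity
\[
\Ll|\int_{\Phi_{x,r}}(\ahom-\a(y))\nabla v(y)\,dy\Rr| \;\le\; \|\nabla v\|_{L^2(\Phi_{x,r})}\,\mathcal{H}_r(x).
\]
A direct application of Lemma~\ref{l.fluxmaps} at the point $x$ (which holds for every $x\in\R^d$) gives $\mathcal{H}_r(x) \le \O_s(Cr^{-\alpha})$, and the uniform bound $\mathcal{H}_r(x)\le C$ follows from Cauchy--Schwarz together with the ellipticity bounds $|\a|,|\ahom|\le\Lambda$ and the normalization $\|\nabla w\|_{L^2(\Phi_{x,r})}\le 1$. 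Combining the two steps delivers \eqref{e.almostahom}.

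The only genuinely delicate point is the vanishing of the second integral, which requires a careful truncation argument to legitimately use the equation for $v$ against the non-compactly supported test function $\bar\eta$; all the other steps are direct computations or reductions to Lemma~\ref{l.fluxmaps}.
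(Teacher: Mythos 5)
Your proof is correct and follows essentially the same route as the paper: the same definition of $\mathcal{H}_r(x)$ as a normalized supremum over $\A_k$, the same Fubini splitting into the $(\a-\ahom)$ error term plus the pure flux term, and the same appeal to Lemma~\ref{l.fluxmaps} for \eqref{e.Hrest}. Your cutoff argument for the vanishing of $\int \nabla\bar\eta\cdot\a\nabla v$ merely spells out the justification the paper gives in one line (fast decay of the mollified test function versus polynomial growth of $\nabla v$), so there is no substantive difference.
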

\begin{proof}
We define
\begin{equation*} \label{}
\mathcal{H}_r(x):= C \sup_{v\in \A_k} \left( \left\| \nabla v \right\|_{L^2\left(\Phi_{x,r}\right)}^{-1} \left| \int_{\Phi_{x,r}} \left( \a(y) - \ahom \right) \nabla v(y) \,dy  \right| \right).
\end{equation*}
It is clear that $\mathcal{H}_r(x) \leq C$.
Lemma~\ref{l.fluxmaps} then yields
\begin{align*} 
\lefteqn{
\left| \int_{\R^d} \nabla \eta(x) \cdot \ahom \nabla\left( \int_{\Phi_{x,r}}  v(y)\,dy \right) \, dx \right| 
} \qquad & \\
& = \left| \int_{\R^d} \nabla \eta(x) \cdot \int_{\Phi_{x,r}}   \ahom \nabla v(y)\,dy  \, dx \right|  \\
 & \leq  \left| \int_{\R^d} \nabla \eta(x) \cdot \int_{\Phi_{x,r}}   \a(y) \nabla v(y) \, dy  \, dx \right|  
+\int_{\Rd} \left| \nabla \eta(x) \right| 
 \left\| \nabla v \right\|_{L^2\left(\Phi_{x,r}\right)} \mathcal{H}_r(x) \, dx.
\end{align*}
The first integral is zero since
\begin{multline*}
 \int_{\R^d} \nabla \eta(x) \cdot \int_{\R^d}  \Phi_{r}(x-y) \a(y) \nabla v(y) \, dy  \, dx = 
 \\  \int_{\R^d} \left( \int_{\R^d}  \Phi_{r}(x-y) \nabla \eta(x) \,dx  \right) \cdot \a(y) \nabla v(y) \, dy  = 0\,.
  \end{multline*}
Indeed, the function $y\mapsto \int_{\R^d}  \Phi_{r}(x-y) \nabla \eta(x) \,dx$ is the gradient of an $H^1(\Rd)$ function decaying faster than any polynomial at infinity (due to the assumption that $\eta$ has compact support) and $\nabla v$ has almost surely at most polynomial growth at infinity.
\end{proof}

In the next lemma, we show that the spatial average of any element of $\A_k(\Rd)$ is close to an element of $\Ahom_k$ on every scale.

\begin{lemma}
\label{l.harmonicapprox}
For every $k\in\N$, there exists $C(k,s,\alpha,d,\Lambda) < \infty$ such that, for every $1\leq r \leq R/\sqrt{2}$, we have
\begin{equation}
\label{e.harmonicapprox}
 \sup_{v \in \A_k(\Phi_R)} \inf_{\mathsf{h} \in \Ahom_k}  \left\| \left( \nabla v \right)_{\Phi_{x,r}} - \nabla \mathsf{h} \right\|_{L^2\left(\Phi_{\sqrt{R^2 - r^2}}\right)}^2 
= \O_{s/2}\left(C r^{-2\alpha}\right) \,.
\end{equation}
\end{lemma}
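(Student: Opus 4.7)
The plan is as follows. Set $w(x) := \int_{\Phi_{x,r}} v(y)\,dy$, so that $\nabla w(x) = (\nabla v)_{\Phi_{x,r}}$, and let $R_1 := \sqrt{R^2 - r^2}$, which lies in $[R/\sqrt 2, R]$ under the hypothesis $r \le R/\sqrt 2$. The key observation is that the $\ahom$-heat semigroup fixes every element of $\Ahom_k$: since $u(x,t):= \int_{\Phi_{x,\sqrt t}} \mathsf h\,dy$ solves $\partial_t u - \nabla \cdot (\ahom \nabla u) = 0$ with initial datum $u(\cdot,0)=\mathsf h$, and since $-\nabla\cdot(\ahom \nabla \mathsf h)=0$, one has $u\equiv \mathsf h$. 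Consequently, for every $\mathsf h \in \Ahom_k$,
\[
\nabla w(x) - \nabla \mathsf h(x) = (\nabla(v-\mathsf h))_{\Phi_{x,r}}.
\]
I would choose $\mathsf h\in\Ahom_k$ as the minimizer of $\mathsf h'\mapsto \|\nabla w - \nabla \mathsf h'\|^2_{L^2(\Phi_{R_1};\ahom)}$, which yields the orthogonality
\[
\int_{\Phi_{R_1}} (\nabla w - \nabla \mathsf h)\cdot \ahom \nabla \mathsf h' = 0, \qquad \forall\, \mathsf h'\in \Ahom_k.
\]

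The core step is to test the coarse-grained equation of Lemma~\ref{l.coarsenedequation} against $\eta := (w-\mathsf h)\Phi_{R_1}$. Since $\eta$ is not compactly supported, I truncate with a smooth cutoff $\chi_N \in C_c^\infty(B_N)$ and send $N \to \infty$, which is justified by the Gaussian decay of $\Phi_{R_1}$ against the at-most-polynomial (degree $k$) growth of $v$, $w$ and $\mathsf h$ guaranteed by Proposition~\ref{p.regularity}. Expanding $\nabla \eta = \Phi_{R_1}\nabla(w-\mathsf h) + (w-\mathsf h)\nabla \Phi_{R_1}$ and using the orthogonality to drop the cross term $\int_{\Phi_{R_1}} \nabla(w-\mathsf h)\cdot \ahom \nabla \mathsf h$, I arrive at
\begin{multline*}
\int \Phi_{R_1}\,\nabla(w-\mathsf h)\cdot \ahom \nabla(w-\mathsf h) \\
\le \int |\nabla \eta|\, \|\nabla v\|_{L^2(\Phi_{\cdot,r})}\,\mathcal H_r + \Ll|\int (w-\mathsf h)\nabla \Phi_{R_1}\cdot \ahom \nabla w\Rr|.
\end{multline*}
The left-hand side dominates $\|\nabla w - \nabla \mathsf h\|_{L^2(\Phi_{R_1})}^2$ by uniform ellipticity.

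The bulk piece of the right-hand side, namely $\int \Phi_{R_1}|\nabla(w-\mathsf h)|\,\|\nabla v\|_{L^2(\Phi_{\cdot,r})}\mathcal H_r$, is handled by Cauchy-Schwarz and reabsorption into the left, reducing matters to bounding $\int \Phi_{R_1}(x)\,\|\nabla v\|^2_{L^2(\Phi_{x,r})}\mathcal H_r(x)^2\,dx$. The factor $\|\nabla v\|^2_{L^2(\Phi_{x,r})}$ is bounded by $C(1 + (|x|/R)^{2(k-1)})$ via a standard semigroup/polynomial-growth argument extending \eqref{e.move-center} to elements of $\A_k$ using Proposition~\ref{p.regularity}. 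Since $\mathcal H_r^2 = \O_{s/2}(Cr^{-2\alpha})$ by \eqref{e.Hrest} and Remark~\ref{r.multiply}, and the weight $\Phi_{R_1}(x)(1+(|x|/R)^{2(k-1)})$ has total mass bounded uniformly in $R$, Lemma~\ref{l.sum-O} then yields the desired contribution $\O_{s/2}(Cr^{-2\alpha})$.

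The main obstacle lies in the two tail terms produced by $\nabla \Phi_{R_1}$: the piece $\int |w-\mathsf h|\,|\nabla \Phi_{R_1}|\,\|\nabla v\|_{L^2(\Phi_{\cdot,r})}\mathcal H_r$ coming from the $|w-\mathsf h|\,|\nabla \Phi_{R_1}|$ part of $|\nabla \eta|$, and the boundary-type term $\int (w-\mathsf h)\nabla \Phi_{R_1}\cdot \ahom \nabla w$. Both carry a factor $|x|/R_1^2$ from $\nabla \Phi_{R_1}$, and must be controlled without losing a power of $R$, since the conclusion is independent of $R$. A weighted Poincar\'e estimate in the spirit of Lemma~\ref{l.mspoincare2}, combined with the fact that $\mathsf h$ was chosen to cancel the relevant cross terms with $\Ahom_k$, should let one bound $\||w-\mathsf h|\,|x|/R_1^2\|_{L^2(\Phi_{R_1})}$ by $\|\nabla(w-\mathsf h)\|_{L^2(\Phi_{R_1})}$ up to a controlled error. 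The cross term against $\ahom \nabla w$ is then treated by a further integration by parts against $\mathsf h$ combined with Lemma~\ref{l.fluxmaps2}, which absorbs the discrepancy between $\a$ and $\ahom$. The delicate bookkeeping required to preserve the rate $r^{-2\alpha}$ uniformly in $R$ is the main technical difficulty of the argument.
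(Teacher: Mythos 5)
There is a genuine gap, and it sits exactly where you flag the ``main technical difficulty.'' In your single-scale weighted energy argument, the terms produced by $\nabla \Phi_{R_1}$ are not perturbative. To see this concretely: write $\nabla w = \nabla \mathsf h + \nabla(w-\mathsf h)$ in the boundary-type term. The piece $\int (w-\mathsf h)\nabla\Phi_{R_1}\cdot \ahom \nabla \mathsf h$ indeed vanishes exactly (integrate by parts and use your orthogonality together with $\nabla\cdot(\ahom\nabla\mathsf h)=0$), so no appeal to Lemma~\ref{l.fluxmaps2} is needed there. But the remaining piece satisfies
\begin{equation*}
\int (w-\mathsf h)\,\nabla\Phi_{R_1}\cdot \ahom \nabla(w-\mathsf h) \;=\; -\tfrac12 \int \partial_t \Phi(\cdot,R_1^2)\,(w-\mathsf h)^2 ,
\end{equation*}
and $\partial_t\Phi(x,R_1^2) = \bigl(\tfrac{x\cdot\ahom^{-1}x}{4R_1^4}-\tfrac{d}{2R_1^2}\bigr)\Phi_{R_1}(x)$. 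After the Gaussian Poincar\'e inequality this term is of the \emph{same} order as the energy $\|\nabla(w-\mathsf h)\|^2_{L^2(\Phi_{R_1})}$ you are trying to bound, with a constant that is not small and, worse, with the effective weight $\tfrac{|x|^2}{R_1^2}\Phi_{R_1}$ controlled only by a \emph{wider} Gaussian. So it cannot be absorbed into the left side, and if instead you estimate the un-split term $\int(w-\mathsf h)\nabla\Phi_{R_1}\cdot\ahom\nabla w$ directly, you get a bound of the form $C\|\nabla(w-\mathsf h)\|_{L^2(\Phi_{R_1})}$ (linear in the error, with an $O(1)$ coefficient coming from $\|\nabla w\|\simeq 1$), which neither absorbs nor carries the smallness $r^{-\alpha}$. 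This coercivity failure is precisely why the paper does not argue at a single scale: its proof runs a Campanato-type excess-decay iteration over expanding balls $B_{\theta^{-n}\tilde R}$, approximating $w$ at each scale by the solution of a Dirichlet problem (via Lemma~\ref{l.coarsenedequation}), and crucially uses the Liouville-type fact that $v\in\A_k$ forces the normalized excess $\omega(\varrho)\to 0$ as $\varrho\to\infty$, so the decay can be summed from infinity down to scale $\tilde R$ and then transferred back to the $\Phi_{\tilde R}$-weighted norm.

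A second, smaller but still real, gap is your claimed deterministic bound $\|\nabla v\|^2_{L^2(\Phi_{x,r})}\le C(1+(|x|/R)^{2(k-1)})$. This is false in general for $r$ below the random minimal scale: the Lipschitz estimate only gives it with the extra random factor $\bigl(\tfrac{\X(x)\vee r}{r}\bigr)^{d}$, and combining this factor with $\mathcal H_r^2$ via Remark~\ref{r.multiply} degrades the stochastic integrability exponent. The paper spends Steps 2--4 of its proof on exactly this bookkeeping: it first reduces to the event $\{\X\le\tilde R\}$ using that the quantity in \eqref{e.harmonicapprox} is bounded by $1$, then proves the estimate in the form of a maximum of two $\O$ bounds, one with exponent $s/2$ and rate $r^{-2\alpha}$ and one with a worse exponent but much better rate $r^{-(d+2\alpha)}$, and finally recombines them using Remark~\ref{r.change-s}. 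Your proposal omits this entirely, so even the ``bulk'' part of your estimate does not yet yield the stated $\O_{s/2}(Cr^{-2\alpha})$ bound.
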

\begin{proof}
Throughout, we fix $\tilde R:= \sqrt{R^2-r^2}$. 
Observe that the random variable 
\begin{equation*}
\tilde \X :=  \sup_{v \in \A_k(\Phi_R)} \inf_{\mathsf{h}\in \Ahom_k} \| \left( \nabla v \right)_{\Phi_{x,r}} - \nabla \mathsf{h} \|_{L^2(\Phi_{\tilde R})}^2
\end{equation*}
satisfies $\tilde \X \in [0,1]$ by the normalization~$\left\| \nabla v \right\|_{L^2(\Phi_R)} \leq 1$ (just take $\mathsf{h} = 0$ for any  given $v \in \A_k(\Phi_R)$). Thus, since $s \alpha < d$,
\begin{align*}
\tilde \X \indc_{\{\X \geq \tilde R\}} \leq \indc_{\{\X \geq \tilde R\}} \leq \Ll(\frac{\X}{\tilde R}\Rr)^{2\alpha} = \O_{s/2}\left(C \tilde R^{-2\alpha}\right) \leq \O_{s/2}\left(C r^{-2\alpha}\right)\,.
\end{align*}
Therefore, without loss of generality, we work on the event $\{\X \leq \tilde R\}$ throughout the rest of the argument. Furthermore, following the steps in the end of the proof of Lemma~\ref{l.fluxmaps}, we have that 
\begin{equation} \label{e.tildescriptXbound}
\tilde \X \leq C \sup_{p \in \Ahom_k(\Phi_{R}) }\inf_{\mathsf{h}\in \Ahom_k} \| \left( \nabla v(\cdot,z,r,0,p) \right)_{\Phi_{x,r}} - \nabla \mathsf{h} \|_{L^2(\Phi_{\tilde R})}^2 \,.
\end{equation}
Since $\Ahom_k(\Phi_{R})$ is a finite dimensional subspace of polynomials having an orthonormal basis $\left\{p_{j,R}\right\}_{j=1}^{\dim(\Ahom_k)} \subset \Ahom_k(\Phi_{R}) $, i.e.,
\begin{equation*} 
\int_{\Phi_{R}} \nabla p_{i,R}(x) \cdot \ahom \nabla p_{j,R}(x) \, dx = \delta_{ij}\,,
\end{equation*}
it is actually sufficient to prove that, for fixed $p_{j,R} $, we have 
\begin{equation*} 
\inf_{\mathsf{h}\in \Ahom_k} \| \left( \nabla v(\cdot,z,r,0,p_{j,R}) \right)_{\Phi_{x,r}} - \nabla \mathsf{h} \|_{L^2(\Phi_{\tilde R})}^2 \leq \O_{s/2}\left(C r^{-2\alpha}\right)\,.
\end{equation*}
The desired inequality~\eqref{e.harmonicapprox} follows from~\eqref{e.tildescriptXbound} by the following statement:
\begin{multline}
\label{e.harmonicapprox'}
  \inf_{\mathsf{h}\in \Ahom_k} \left\| \left( \nabla v(\cdot,z,r,0,p_{j,R}) \right)_{\Phi_{x,r}} - \nabla \mathsf{h} \right\|_{L(\Phi_{\tilde R})}^2  
\\ 
\leq \max\left\{\O_{s/2}\left(C r^{-2\alpha}\right)\,, \O_{\frac{s \alpha}{d+2\alpha} }\left(Cr^{-(d+2\alpha) } \right) \right\}\,.
\end{multline}
Indeed, by Remark~\ref{r.change-s},
$$
\O_{\frac{s \alpha}{d+2\alpha} }\left(Cr^{-(d+2\alpha) } \right) \wedge 1 \le \O_{s/2}\Ll( C r^{-2\al} \Rr) \,.
$$
Thus, since $\tilde \X \in [0,1]$, it suffices to prove~\eqref{e.harmonicapprox'}. For the rest of the argument, let us denote in short,  for fixed $j$, 
\begin{equation*} 
v = v(\cdot,z,r,0,p_{j,R}).
\end{equation*}

\smallskip

\emph{Step 1.} Harmonic approximation of $\left( v \right)_{\Phi_{x,r}}$ and iteration. For convenience, we will denote
\begin{equation*} \label{}
w(x) := \left( v \right)_{\Phi_{x,r}} =  \int_{\Phi_{x,r} }  v(y)\,dy\,.
\end{equation*}
For each $S\geq \tilde R$, we introduce an~$\ahom$-harmonic approximation of~$w$ in $B_S$, which we denote by~$h_S$. We take $h_S$ to be the unique element of $(w + H^1_0(B_S))\cap \Ahom(B_S)$. It follows from Lemma~\ref{l.coarsenedequation} (simply take $\eta = h_S-w$ and use Cauchy-Schwarz) that
\begin{align*}
\fint_{B_S} \left| \nabla w(x) - \nabla h_S(x) \right|^2\,dx 
& \leq 
 \fint_{B_S} \left\| \nabla v \right\|_{L^2(\Phi_{x,r})}^2 \left( \mathcal{H}_r(x) \right)^2  \,dx,
\end{align*}
where $\mathcal{H}_r(x) = C \wedge \O_s(C r^{-\alpha})$ is as in Lemma~\ref{l.coarsenedequation}.
Therefore, for every $\theta  \in (0,1]$,
\begin{equation}
\label{e.harmapprox}
\fint_{B_{\theta S}} \left| \nabla w(x) - \nabla h_S(x) \right|^2\,dx 
 \leq \theta^{-d}  \fint_{B_S}  \left\| \nabla v \right\|_{L^2(\Phi_{x,r})}^2  \left( \mathcal{H}_r(x) \right)^2  \,dx.
\end{equation}
By the regularity of $\ahom$-harmonic functions, we find, for every $\tilde{ \mathsf{h} } \in \Ahom_k$, 
\begin{align}
\label{e.tildep}
\inf_{\mathsf{h} \in \Ahom_k} \sup_{B_{\theta S}} \left|\nabla h_S- \nabla \mathsf{h} \right|  
& \leq C \theta^k  \fint_{B_S} \left| \nabla h_S(x)- \nabla \tilde{ \mathsf{h} }(x) \right| \, dx. 
\end{align}
From the triangle inequality and the previous two displays, if we denote 
\begin{equation*}
\omega(\varrho):= \varrho^{-k+\frac12} \inf_{\mathsf{h}\in \Ahom_k} \left\| \nabla w- \nabla \mathsf{h} \right\|_{\underline{L}^2(B_\varrho)},
\end{equation*}
then we obtain, for $\theta = (2C)^{-\frac12}$,
\begin{equation*}
\omega(\theta S) \leq \frac12 \omega(S) + C S^{-k+\frac12} 
\left( \fint_{B_S}  \left\| \nabla v \right\|_{L^2(\Phi_{x,r})}^2  \left( \mathcal{H}_r(x) \right)^2 \,dx \right)^{\frac12}.
\end{equation*}
Setting $S_j := \theta^{-j} \tilde R$ and summing over all the scales, using also the fact that $\omega(\varrho) \to 0$ as $\varrho \to \infty$ on the event $\{ \X(0) \leq \tilde R\}$, yields
\begin{equation}
\label{e.omegaRiter}
 \sum_{j=0}^\infty \omega(S_j) 
 \leq  C \tilde R^{-k + \frac12}\overline{\mathcal{H}}  \,,
\end{equation}
where we have defined
\begin{equation} \label{e.Y=sum}
\overline{\mathcal{H}} := \sum_{n=1}^\infty \theta^{n(k-\frac12)} \left( \fint_{B_{\theta^{-n} \tilde R}}  \left\| \nabla v \right\|_{L^2(\Phi_{x,r})}^2  \left( \mathcal{H}_r(x) \right)^2 \,dx \right)^{\frac12}.
\end{equation}
Letting $\mathsf{h}_j \in \Ahom_k$ be the minimizer appearing in the definition of $\omega(S_j)$, we obtain by the triangle inequality and the growth of polynomials in $\Ahom_k$ that, for $m>j$,
\begin{align*} 
\left\|\nabla \mathsf{h}_j - \nabla \mathsf{h}_{j+1} \right\|_{\underline{L}^2(B_{S_m})} & \leq C \left( \frac{S_m}{S_j} \right)^{k-1} \left\|\mathsf{h}_j - \mathsf{h}_{j+1} \right\|_{\underline{L}^2(B_{S_j})} 
\\ & \leq CS_m^{k-\frac12}  \left( \frac{S_j}{S_m} \right)^{\frac12} \left(\omega(S_j) + \omega(S_{j+1})\right) \,.
\end{align*}
Therefore we get, again by the triangle inequality, 
\begin{align*} 
 \left\|\nabla \mathsf{h}_0 - \nabla \mathsf{h}_{m} \right\|_{\underline{L}^2(B_{S_m})}  & \leq  \sum_{j=0}^{m-1} \left\|\nabla \mathsf{h}_j - \nabla \mathsf{h}_{j+1} \right\|_{\underline{L}^2(B_{S_m})} 
 \\ &  \leq C\tilde R^{k-\frac12}  \left( \frac{S_m}{\tilde R} \right)^{k-\frac12} \sum_{j=0}^m \omega(S_j) \leq C\left( \frac{S_m}{\tilde R} \right)^{k-\frac12} \overline{\mathcal{H}}  \,.
\end{align*}
Using now the decay properties of $\Phi_{\tilde R}$ we conclude that
\begin{align*} 
& \inf_{\mathsf{h} \in \Ahom_k} \left\| \nabla w - \nabla \mathsf{h} \right\|_{L^2(\Phi_{\tilde R})}   \leq  \left\| \nabla w - \nabla \mathsf{h}_0 \right\|_{L^2(\Phi_{\tilde R})}
\\ &  \qquad   \leq C \left(  \sum_{m=0}^\infty \exp\left( - c \theta^{-2m}\right) \theta^{-dm} \left( \left\| \nabla \mathsf{h}_m {-} \nabla \mathsf{h}_0 \right\|_{\underline{L}^2(B_{S_m})}^2 +  \left\| \nabla w {-} \nabla \mathsf{h}_m \right\|_{\underline{L}^2(B_{S_j})}^2 \right) \right)^{\frac12}
\\ &  \qquad   \leq C  \overline{\mathcal{H}}^{\frac12} \left(  \sum_{m=0}^\infty \exp\left( - c \theta^{-2m}\right) \theta^{-(d + 2k - 1)m}   \right)^{\frac12}
\\ &  \qquad   \leq C  \overline{\mathcal{H}}^{\frac12}\,.
\end{align*}
The rest of the proof is devoted to estimating the random variable $\overline{\mathcal{H}} $. 

\smallskip

\emph{Step 2.}  We use the Lipschitz estimate to pull the $\left\| \nabla v \right\|_{L^2(\Phi_{x,r})}^2 $ term outside the integrals appearing in $\overline{\mathcal{H}} $ defined in~\eqref{e.Y=sum}. The claim is that 
\begin{multline}
\label{e.lippullout}
 \fint_{B_{\theta^{-n} \tilde R}}  \left\| \nabla v \right\|_{L^2(\Phi_{x,r})}^2  \left( \mathcal{H}_r(x) \right)^2 \,dx  \\
  \leq C \theta^{-2n(k-1)} \fint_{B_{\theta^{-n}\tilde R} } \left( 1+\indc_{\{r\leq \X(x)\}} \left(  \frac{\X(x)}{r} \right)^{d} \right) \left(\mathcal{H}_r(x) \right)^2 \,dx \,.
\end{multline}
First, as we are assuming the event $\{ \tilde R \geq \X(0)\}$, we have that there is a polynomial $p_v \in \Ahom_k$ such that, for all $S \geq \tilde R$, 
$$
\left\|v \right\|_{\underline{L}^2(B_S)} \leq C \left\| p_v \right\|_{\underline{L}^2(B_S)} \leq C \left(\frac{S}{\tilde R}\right)^k \left\| p_v \right\|_{\underline{L}^2(B_{\tilde R})}  \leq 
C \left(\frac{S}{\tilde R}\right)^k \left\| v \right\|_{\underline{L}^2(B_{\tilde R})}.
$$
Without loss of generality we may assume that $(v)_{B_{\tilde R}} = 0$.  Poincar\'e's inequality and the normalization $ \left\| \nabla v  \right\|_{L^2(\Phi_R)} \leq 1$ then give that  
$$
\left\|v \right\|_{\underline{L}^2(B_S)} \leq C \left(\frac{S}{\tilde R}\right)^k \left\| v  \right\|_{\underline{L}^2(B_{\tilde R})} \leq C S \left(\frac{S}{\tilde R}\right)^{k-1} \,.
$$
Furthermore, suppose first that $r \leq S \leq  |x| + \tilde R$ and $\X(x) \leq |x|+\tilde R$. Then the Caccioppoli inequality and the Lipschitz bound (Proposition~\ref{p.mesoregularity} with $k=0$) yield
\begin{align*} 
\lefteqn{
\left\| \nabla v \right\|_{\underline{L}^2(B_S(x))} 
} \qquad & \\
& \leq \left(\frac{\X(x) \vee S}{S}\right)^{\frac d2} \left\| \nabla v \right\|_{\underline{L}^2(B_{\X(x) \vee S}(x))} 
\\
& \leq C \left(\frac{\X(x) \vee S}{S}\right)^{\frac d2} 
\left( \X(x) \vee S \right)^{-1}
\left\| v - (v)_{B_{2(\X(x) \vee S )}}\right\|_{\underline{L}^2(B_{2(\X(x) \vee S )}(x))}
\\& \leq C \left(\frac{\X(x) \vee S }{S}\right)^{\frac d2} 
\left(|x| + \tilde R \right)^{-1} \left\| v \right\|_{ \underline{L}^2(B_{2(|x| + \tilde R)})}
\\& \leq C \left(\frac{\X(x) \vee r}{r}\right)^{\frac d2} \left( \frac{|x| + \tilde R}{ \tilde R} \right)^{k-1}\,. 
\end{align*}
If, on the other hand, $ r\leq S \leq |x| + \tilde R$ and $\X(x) \geq |x|+\tilde R$, we get 
\begin{equation*} 
\left\| \nabla v \right\|_{\underline{L}^2(B_S(x))} \leq \left(\frac{|x|+\tilde R}{S}\right)^{\frac d2} \left\| \nabla v \right\|_{\underline{L}^2(B_{|x| + \tilde R }(x))} \leq 
C \left(\frac{\X(x) }{r}\right)^{\frac d2} \left( \frac{|x| + \tilde R}{\tilde R} \right)^{k-1}\,,
\end{equation*}
and finally if $ S \geq |x| + R$, then directly $\left\| \nabla v \right\|_{\underline{L}^2(B_S(x))}  \leq C (S/R)^{k-1}$. Using these gives
\begin{align*} 
\left\| \nabla v \right\|_{L^{2}(\Phi_{x,r})}^2 & \leq C \int_r^\infty \left(\frac{S}{r} \right)^{d} \exp \left(- c \frac{S^2}{r^2} \right) \fint_{B_{S}(x)} \left|\nabla v(y)\right|^2 \, dy\, \frac{dS}{S}
\\ & \leq C \left(\frac{\X(x) \vee r}{r}\right)^{d} \left( \frac{|x| + \tilde R}{\tilde R} \right)^{2(k-1)} \int_r^{|x|+\tilde R} \left(\frac{S^2}{r^2} \right)^{\frac d2} \exp \left(- c \frac{S^2}{r^2} \right) \, \frac{dS}{S}
\\ & \qquad + C \left( \frac {r}{\tilde R} \right)^{k-1} \int_{|x|+\tilde R}^\infty \left(\frac{S}{r} \right)^{d + k - 1} \exp \left(- c \frac{S^2}{r^2} \right) \, \frac{dS}{S}
\\ & \leq C \left(\frac{\X(x) \vee r}{r}\right)^{d} \left( \frac{|x| + \tilde R}{\tilde R} \right)^{2(k-1)}\,.
\end{align*}
We deduce that 
\begin{align*}
\lefteqn{
 \fint_{B_{\theta^{-n} \tilde R}}  \left\| \nabla v \right\|_{L^2(\Phi_{x,r})}^2  \left( \mathcal{H}_r(x) \right)^2  \,dx 
} \qquad & \\
 & \leq \fint_{B_{\theta^{-n} \tilde R}}  \left(\frac{\tilde R+|x|}{\tilde R}\right)^{2(k-1)} \left( 1+\indc_{\{r\leq \X(x)\}} \left(  \frac{\X(x)}{r} \right)^{d} \right)  \left( \mathcal{H}_r(x) \right)^2 \,dx  \\
 & \leq C \theta^{-2n(k-1)} \fint_{B_{\theta^{-n}\tilde R} } \left( 1+\indc_{\{r\leq \X(x)\}} \left(  \frac{\X(x)}{r} \right)^{d} \right) \left(\mathcal{H}_r(x)\right)^2 \,dx\,,
\end{align*}
which completes the proof of~\eqref{e.lippullout}. 

\smallskip

\emph{Step 3.} We next estimate the term in~\eqref{e.lippullout}.  The claim is that there exists $C(\alpha,s,k,d,\Lambda)<\infty$ such that
\begin{multline}
\label{e.Xbigweirdo}
\left( 1 + \indc_{\{r\leq \X(x)\}} \left(  \frac{\X(x)}{r} \right)^{d} \right) \left( \mathcal{H}_r(x) \right)^2 \\
= \max\left\{ \O_{\frac{s \alpha}{d + 2\alpha}}\left(C r^{-(d + 2\alpha)} \right)\,, \O_{s/2}(Cr^{-2\alpha}) \right\}.
\end{multline}
First, note that~\eqref{e.Hrest} gives that
\begin{equation*} \label{}
\left( \mathcal{H}_r(x) \right)^2 = \O_{s/2}(C r^{-2\alpha}).
\end{equation*}
Moreover, since $ s \alpha < d$,  we have for large enough~$C$ that
\begin{equation}
\label{e.Xweirdo}
\indc_{\{r\leq \X(x)\}} \left(  \frac{\X(x)}{r} \right)^{d}  \le \left(  \frac{\X(x)}{r} \right)^{d}  = \O_{s \alpha/d} \left(Cr^{-d }\right).
\end{equation}
By Remark~\ref{r.multiply}, applied with $s_1 = \frac s2$, $s_2 = \frac{s \alpha}{d}$, $\theta_1 = Cr^{-2\alpha}$ and $\theta_2 = Cr^{-d}$, we therefore obtain
$$
\indc_{\{r\leq \X(x)\}} \left(  \frac{\X(x)}{r} \right)^{d} \left( \mathcal{H}_r(x) \right)^2  = \O_{\frac{s \alpha}{d + 2\alpha}}\left(C r^{-(d+ 2\alpha)} \right).
$$
This proves~\eqref{e.Xbigweirdo}.  

\smallskip

\emph{Step 4.} We complete the proof. Combining~\eqref{e.omegaRiter},~\eqref{e.lippullout} and~\eqref{e.Xbigweirdo}, we get by Lemma~\ref{l.sum-O} that
\begin{align*}
 \overline{\mathcal{H}}
& \leq C\sum_{n=0}^\infty (\theta^{-n})^{-k+\frac12} \left( \fint_{B_{\theta^{-n} R}}  \left\| \nabla v \right\|_{L^2(\Phi_{x,r})}^2  \left( \mathcal{H}_r(x) \right)^2 \,dx \right)^{\frac12} \\
& \leq C\sum_{n=0}^\infty (\theta^{-n})^{-k+\frac12} \left( 
 \theta^{-2n(k-1)} \fint_{B_{\theta^{-n} R}}  \left( 1 + \indc_{\{r\leq \X(x)\}} \left(  \frac{\X(x)}{r} \right)^{d} \right) \left( \mathcal{H}_r(x) \right)^2   \,dx \right)^{\frac12}  \\
& \leq \max\left\{ \O_{\frac{s \alpha}{d + 2\alpha}}\left(C \sum_{n=0}^\infty \theta^{\frac n2}  r^{-(d + 2\alpha)} \right)\,, O_{s/2}\left(C \sum_{n=0}^\infty \theta^{\frac n2}  r^{-2\alpha} \right) \right\}^{\frac12}    
 \\
 & = \max\left\{ \O_{\frac{s \alpha}{d + 2\alpha}}\left(C r^{-(d + 2\alpha)} \right)\,, O_{s/2}(Cr^{-2\alpha}) \right\}^{\frac12}.
 \end{align*}
This combined with the result of Step~1 yields~\eqref{e.harmonicapprox'} and completes the proof by the discussion in the beginning of the proof.
 \end{proof}

The previous lemma says that every element of $v\in \A_k$ has spatial averages which are closely tracked by some element $\mathsf{h}\in \Ahom_k$. Our goal is to obtain more information about the~$\mathsf{h}$ which tracks the maximizer of $J_k(0,R,p,q)$ in terms of~$p$ and~$q$. This is accompished in the following lemma.  

\smallskip

\begin{lemma}
\label{l.parametermatching}
There exist $C(s,\alpha,d,\Lambda) < \infty$ such that, for every $r_0\leq r \leq R/\sqrt{2}$ and $p,q\in \Ahom_k(\Phi_R)$, we have
\begin{equation} \label{e.spatavgspoly}
\left\| \left( \nabla u(0,R,p,q) \right)_{\Phi_{\cdot,r}} - \nabla (q - p) \right\|_{L^2\left(\Phi_{\sqrt{R^2 - r^2}}\right)}^2  =\O_{s/2}\left(Cr^{-2\alpha}\right)\,.
\end{equation}
\end{lemma}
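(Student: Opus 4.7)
The strategy is to apply Lemma~\ref{l.harmonicapprox} to produce a candidate $\mathsf{h}\in\Ahom_k$ tracking the scale-$r$ spatial averages of $\nabla u(\cdot,0,R,p,q)$, and then identify $\nabla\mathsf{h}$ with $\nabla(q-p)$ (up to $\O_s(Cr^{-\alpha})$ in $L^2$) by testing against $\ahom$-harmonic polynomials at scale $R$, where Lemma~\ref{l.whatflucgives} delivers the required fluctuation control. Concretely, after normalizing so that $\|\nabla p\|_{L^2(\Phi_R)}+\|\nabla q\|_{L^2(\Phi_R)}\le 1$, the bound \eqref{e.bounded-u} together with Corollary~\ref{c.L-identity} yields $\|\nabla u\|_{L^2(\Phi_R)}\le C$ for $u:=u(\cdot,0,R,p,q)$, so Lemma~\ref{l.harmonicapprox} supplies $\mathsf{h}\in\Ahom_k$ with $\|(\nabla u)_{\Phi_{\cdot,r}}-\nabla\mathsf{h}\|_{L^2(\Phi_{\tilde R})}^2=\O_{s/2}(Cr^{-2\alpha})$, where $\tilde R:=\sqrt{R^2-r^2}$. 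The lemma will then follow from the triangle inequality and $\|\nabla\zeta\|_{L^2(\Phi_{\tilde R})}^2=\O_{s/2}(Cr^{-2\alpha})$, where $\zeta:=\mathsf{h}-(q-p)\in\Ahom_k$.

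Since $\Ahom_k/\Ahom_0$ is finite-dimensional and $R/\tilde R\le\sqrt 2$, the norms $\|\nabla\cdot\|_{L^2(\Phi_R)}$ and $\|\nabla\cdot\|_{L^2(\Phi_{\tilde R})}$ are equivalent on this space, so (after taking square roots) it suffices to prove $\|\nabla\zeta\|_{L^2(\Phi_R)}\le\O_s(Cr^{-\alpha})$. By the uniform ellipticity of $\ahom$ and duality, this reduces to showing that
\[
\left|\int_{\Phi_R}\nabla\zeta\cdot\ahom\nabla q'\right|=\O_s(Cr^{-\alpha})\quad\mbox{for every }q'\in\Ahom_k(\Phi_R),
\]
after which plugging in $q'=\zeta/\|\nabla\zeta\|_{L^2(\Phi_R)}$ and absorbing yields the bound on $\|\nabla\zeta\|_{L^2(\Phi_R)}$. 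Splitting
\[
\int_{\Phi_R}\nabla\zeta\cdot\ahom\nabla q'=\int_{\Phi_R}(\nabla\mathsf{h}-\nabla u)\cdot\ahom\nabla q'+\int_{\Phi_R}(\nabla u-\nabla(q-p))\cdot\ahom\nabla q',
\]
the second term is $\O_s(CR^{-\alpha})\le\O_s(Cr^{-\alpha})$ by Lemma~\ref{l.whatflucgives} applied at $z=0$, $r=R$.

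The first term requires transferring integrals from scale $R$ to scale $\tilde R$, which is accomplished by applying Lemma~\ref{l.CKish} componentwise to the $\Rd$-valued polynomial $\ahom\nabla q'$: this furnishes a polynomial $\widetilde{\ahom\nabla q'}$ of degree at most $k-1$, with $\|\widetilde{\ahom\nabla q'}\|_{L^2(\Phi_{\tilde R})}\le C$, satisfying
\[
\int_{\Phi_R} F\cdot\ahom\nabla q'\,dx=\int_{\Phi_{\tilde R}}(F)_{\Phi_{y,r}}\cdot\widetilde{\ahom\nabla q'}(y)\,dy\qquad\mbox{for every }F\in L^2(\Phi_R;\Rd).
\]
Taking $F=\nabla\mathsf{h}-\nabla u$ and invoking the pivotal identity $(\nabla\mathsf{h})_{\Phi_{y,r}}=\nabla\mathsf{h}(y)$ -- which holds because the heat semigroup for $\ahom$ fixes every $\ahom$-harmonic polynomial -- recasts the first term as
\[
\int_{\Phi_{\tilde R}}\bigl(\nabla\mathsf{h}-(\nabla u)_{\Phi_{\cdot,r}}\bigr)\cdot\widetilde{\ahom\nabla q'},
\]
which by Cauchy-Schwarz, the bound on $\widetilde{\ahom\nabla q'}$, and Lemma~\ref{l.harmonicapprox} (after extracting a square root to turn an $\O_{s/2}(Cr^{-2\alpha})$ bound on the square into an $\O_s(Cr^{-\alpha})$ bound on the norm) is also $\O_s(Cr^{-\alpha})$.

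The chief technical obstacle is reconciling the two scales: Lemma~\ref{l.whatflucgives} lives naturally at scale $R$, whereas Lemma~\ref{l.harmonicapprox} provides control only at scale $\tilde R$ against the scale-$r$ spatially averaged field $(\nabla u)_{\Phi_{\cdot,r}}$. The semigroup invariance of $\ahom$-harmonic polynomials, combined with the scale-change identity of Lemma~\ref{l.CKish}, is precisely what stitches these two estimates together without loss of precision.
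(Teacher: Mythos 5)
Your proposal is correct and follows essentially the same route as the paper's proof: Lemma~\ref{l.harmonicapprox} to produce $\mathsf{h}$, the mean-value property of $\ahom$-harmonic polynomials, Lemma~\ref{l.CKish} to transfer the estimate between the scales $R$ and $\sqrt{R^2-r^2}$, Lemma~\ref{l.whatflucgives} for the fluctuation control, the duality choice $q'\propto \mathsf{h}-(q-p)$, and a final triangle inequality. The only differences are organizational (you make explicit the norm equivalence on the finite-dimensional space $\Ahom_k/\Ahom_0$ and the reduction to bounding $\|\nabla(\mathsf{h}-(q-p))\|_{L^2(\Phi_R)}$ by duality, which the paper leaves implicit), so no further comment is needed.
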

\begin{proof}
Fix $r_0\leq r \leq R/\sqrt{2}$, $p,q\in \Ahom_k(\Phi_R)$, and denote
\begin{equation*}
u:= u(\cdot,0,R,p,q). 
\end{equation*}
Applying Lemma~\ref{l.harmonicapprox}, we may select $\mathsf{h} \in \Ahom_k$ such that 
\begin{equation}
\label{e.harmapproxapp}
\left\| \left( \nabla u - \nabla \mathsf{h} \right)_{\Phi_{\cdot,r}}  \right\|_{L^2\left(\Phi_{\sqrt{R^2-r^2}}\right)} = \O_{s}\left( Cr^{-\alpha} \right). 
\end{equation}
Here we also applied the mean-value property in the form
\begin{equation*}
\nabla \mathsf{h} (x)  =  \left(\nabla \mathsf{h} \right)_{\Phi_{x,r}} \,.
\end{equation*} 
We apply Lemma~\ref{l.CKish}, using
$F_i \equiv \ahom_{ij} \partial_j (u - \mathsf{h})$, $q \equiv \partial_i q'$, and $r = \frac12R$, together with~\eqref{e.harmapproxapp} to obtain
\begin{equation*}
\left| \int_{\Phi_R} \left(  \nabla u - \nabla \mathsf{h}  \right)  \cdot \ahom\nabla q'(x)\,dx \right| \leq \O_s\left( CR^{-\alpha} \right). 
\end{equation*}
On the other hand, applying Lemma~\ref{l.whatflucgives} yields
\begin{equation*}
\left| \int_{\Phi_R}  \left(  \nabla (q-p)(x) - \nabla u(x) \right)  \cdot \ahom\nabla q'(x)\,dx \right| \leq  \O_s\left( CR^{-\alpha} \right).
\end{equation*}
The previous two displays give
\begin{equation*}
\left| \int_{\Phi_R}  \left( \nabla (q-p)(x) - \nabla \mathsf{h}(x)\right)  \cdot \ahom \nabla q'(x)\,dx \right| \leq  \O_s\left( CR^{-\alpha} \right).
\end{equation*}
Taking $q' := \left\| \nabla \mathsf{h}  - \nabla (q - p)  \right\|_{L^2(\Phi_R)}^{-1} \left( \mathsf{h} -(q-p)\right)$ yields
\begin{equation*} 
\left\| \nabla \mathsf{h}  - \nabla (q - p)  \right\|_{L^2(\Phi_R)} \leq  \O_{s}\left( CR^{-\alpha} \right)\,.
\end{equation*}
The triangle inequality and the mean-value property of $\ahom$-harmonic functions thus yield
\begin{equation*}
\left\| \left( \nabla u\right)_{\Phi_{\cdot,r}} - \nabla (q-p) \right\|_{L^2\left(\Phi_{\sqrt{R^2-r^2}}\right)} = \O_{s}\left( Cr^{-\alpha} \right),
\end{equation*}
which finishes the proof after squaring the previous display. 
\end{proof}

\subsection{Comparing maximizers on different scales}

The goal of this subsection is to compare maximizers of $J_k$ on different scales and thereby improve the additivity statement. This is accomplished by combining three ingredients: (i) Lemma~\ref{l.parametermatching}, above; (ii) the fact that spatial averages of the gradient of a function controls the oscillation of the function itself (the multiscale Poincar\'e inequality, Lemma~\ref{l.mspoincare}); and (iii) the fact that the oscillation of an element of $\A$ controls the $L^2$ norm of its gradient, by the regularity theory (precisely, Proposition~\ref{p.regularity} with $k=0$). 

\begin{lemma}
\label{l.additivitybitches}
There exists a constant $C(s,\alpha,k,d,\Lambda) < \infty$ 
such that, for every $r_0 \leq r \leq R /\sqrt{2}$ and $p,q\in \Ahom_k(\Phi_R)$,
\begin{multline}
\label{e.addtoS}
 \int_{\Phi_{\sqrt{R^2-r^2}} }  \left( \int_{\Phi_{x,r} } \left| \nabla u(y,0,R,p,q) - \nabla u(y,x,r,p,q) \right|^2 \,dy \right) \,dx   \\
=
\O_{s/2}\left(Cr^{-2\alpha}\right).
\end{multline}
\end{lemma}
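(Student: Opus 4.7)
\textbf{Plan for the proof of Lemma~\ref{l.additivitybitches}.}
Write $u_R(\cdot) := u(\cdot,0,R,p,q)$ and $u_r^x(\cdot) := u(\cdot,x,r,p,q)$, and set $w^x := u_R - u_r^x \in \A_k$. The strategy is: (i) use Lemma~\ref{l.parametermatching} to show that, for an appropriate mesoscopic scale $\rho = \sigma r$, the spatial averages at scale $\rho$ of both $\nabla u_R$ and $\nabla u_r^x$ are close to $\nabla(q-p)$, hence close to each other; (ii) transfer this averaged estimate on $(\nabla w^x)_{\Phi_{\cdot,\rho}}$ to a pointwise bound on $\|\nabla w^x\|_{L^2(\Phi_{x,r})}$ using the multiscale Poincar\'e inequality (Lemma~\ref{l.mspoincare}) applied to the heat-smoothed quantity $f(y,t) := (w^x)_{\Phi_{y,\sqrt t}}$, together with Caccioppoli; (iii) integrate over $x$.

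First, fix $\sigma \in (0, \sigma_0(k,d,\Lambda)]$ from Lemma~\ref{l.mspoincare2} and set $\rho := \sigma r$. Applying Lemma~\ref{l.parametermatching} to $u_R$ at the sub-scale $\rho$ yields
\begin{equation*}
\|(\nabla u_R)_{\Phi_{\cdot,\rho}} - \nabla(q-p)\|_{L^2(\Phi_{\sqrt{R^2-\rho^2}})}^2 = \O_{s/2}(C\rho^{-2\alpha}).
\end{equation*}
For each $x$, applying Lemma~\ref{l.parametermatching} to $u_r^x$ at the sub-scale $\rho$ and using \eqref{e.move-center} to account for the fact that $p,q$ are normalized with respect to $\Phi_R$ rather than $\Phi_{x,r}$ gives
\begin{equation*}
\|(\nabla u_r^x)_{\Phi_{\cdot,\rho}} - \nabla(q-p)\|_{L^2(\Phi_{x,\sqrt{r^2-\rho^2}})}^2 = \bigl(1+(|x|/R)^{2(k-1)}\bigr)\, \O_{s/2}(C\rho^{-2\alpha}).
\end{equation*}
The semigroup identity $\Phi_{\sqrt{R^2-\rho^2}} = \Phi_{\sqrt{R^2-r^2}} * \Phi_{\sqrt{r^2-\rho^2}}$ lets us factor the first estimate through integration over $x$ against $\Phi_{\sqrt{R^2-r^2}}$. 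The triangle inequality then controls $\|(\nabla w^x)_{\Phi_{\cdot,\rho}}\|_{L^2(\Phi_{x,\sqrt{r^2-\rho^2}})}^2$.

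The crucial step is to pass from this averaged gradient control to pointwise $L^2$ control on $\nabla w^x$. Note that since $w^x \in \A_k$, the function $f(y,t) := (w^x)_{\Phi_{y,\sqrt t}}$ solves the constant-coefficient parabolic equation $\partial_t f = \nabla \cdot (\ahom \nabla f)$ with $f(\cdot,0) = w^x$, and by integration by parts $\nabla_y f(y,t) = (\nabla w^x)_{\Phi_{y,\sqrt t}}$. Apply Lemma~\ref{l.mspoincare} with $R = r$ and the mask $\Psi_r(\cdot-x)$ to obtain (after fixing the additive constant of $w^x$ so that $(w^x)_{\Phi_{x,\rho}} = 0$ and applying a standard Poincar\'e inequality in $y$ to handle $\|f(\cdot,\rho^2)\|_{L^2}^2$)
\begin{equation*}
\|w^x\|_{L^2(\Psi_r(\cdot-x))}^2 \le C \int_0^{\rho^2} \|(\nabla w^x)_{\Phi_{\cdot,\sqrt t}}\|_{L^2(\Psi_r(\cdot-x))}^2\, dt,
\end{equation*}
and bound each slice using the averaged control at scale $\sqrt t \le \rho$ (by the analog of Lemma~\ref{l.parametermatching} at that scale; note the scaling $t^{-\alpha}$ integrates to give an extra factor of $\rho^{2-2\alpha}$, harmless as $\rho \simeq r$). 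A Caccioppoli estimate on $w^x$ (together with polynomial growth bounds using Proposition~\ref{p.regularity}) then converts this back to $\|\nabla w^x\|_{L^2(\Phi_{x,r})}^2$.

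Finally, integrating over $x$ against $\Phi_{\sqrt{R^2-r^2}}$ yields the claim, since the polynomial weight $1+(|x|/R)^{2(k-1)}$ and the polynomial tails from the Caccioppoli step are absorbed by the Gaussian decay of $\Phi_{\sqrt{R^2-r^2}}$; the $\O_{s/2}$-integrability is preserved by Lemma~\ref{l.sum-O}. The main obstacle is the transfer step: orchestrating the multiscale Poincar\'e inequality, the Caccioppoli estimate, and the polynomial growth control of $w^x \in \A_k$ so that no factor of $r$ is lost and the stochastic integrability is not degraded beyond $s/2$.
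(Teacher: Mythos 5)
Your outline follows the same route as the paper: use Lemma~\ref{l.parametermatching} at a mesoscale $\sigma r$ (together with the semigroup property) to show that the spatial averages of both $\nabla u(\cdot,0,R,p,q)$ and $\nabla u(\cdot,x,r,p,q)$ track $\nabla(q-p)$, then convert this averaged control into $L^2(\Phi_{x,r})$ control of the gradient of the difference via the multiscale Poincar\'e inequality plus Poincar\'e and Caccioppoli, and finally integrate in $x$. However, there are two genuine gaps in your transfer step (ii). First, you propose to estimate the time-integrated term $\int_0^{\rho^2}\|(\nabla w^x)_{\Phi_{\cdot,\sqrt t}}\|_{L^2(\Psi_r(\cdot-x))}^2\,dt$ slice by slice, using the analogue of Lemma~\ref{l.parametermatching} at scale $\sqrt t$, so that the slices scale like $t^{-\alpha}$ and integrate to $\rho^{2-2\alpha}$. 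This only works when $\alpha<1$: for $\alpha\ge 1$ the integral is dominated by the small-$t$ slices (where at best one has an $O(1)$ bound), and after the Caccioppoli step one only obtains $r^{-2\min(\alpha,1)}$ up to logarithms, which falls short of the claimed $r^{-2\alpha}$ precisely in the regime $\alpha$ up to $\frac d2$ where the lemma must be applied in the bootstrap ($d\ge 2$). The paper avoids this entirely: in Lemma~\ref{l.mspoincare2} (see \eqref{e.additivityb100} in its proof) the \emph{whole} time-integrated gradient term is absorbed back into $\int_{\Psi_r}|w^x|^2$ using Caccioppoli and the doubling property of elements of $\A_k$, so that the smallness is extracted only from the single top slice $t=(\sigma r)^2$, to which Lemma~\ref{l.parametermatching} is applied.

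Second, every ingredient of this absorption--conversion step (Caccioppoli with the weighted masks, the doubling/polynomial-growth bound, and the tail control needed to run Poincar\'e on $L^2(\Psi_r)$ rather than on a ball) comes from Proposition~\ref{p.regularity} and is only available above the random minimal scale, i.e.\ on the event $\{\X(x)\le r\}$ (and, for the polynomial growth of the global maximizer, on $\{\X(0)\le\sqrt{R^2-r^2}\}$). As written you apply the machinery for every $x$, which is not justified on $\{\X(x)>r\}$. The paper therefore splits $u(\cdot,0,R,p,q)-u(\cdot,x,r,p,q)$ into the pieces multiplied by $\1_{\{\X(x)\le r\}}$ and $\1_{\{\X(x)>r\}}$, runs the above argument only on the first piece, and bounds the second directly using the Lipschitz estimate and the a.s.\ bounds on the gradients, so that its contribution is at most $C\int_{\Phi_{\sqrt{R^2-r^2}}}\1_{\{\X(x)>r\}}\bigl(\tfrac{\X(x)\vee r}{r}\bigr)^d\,dx=\O_{s\alpha/d}(Cr^{-d})$, which is then admissible (via boundedness and Remark~\ref{r.change-s}, using $s\alpha<d$) for the claimed $\O_{s/2}(Cr^{-2\alpha})$; the event $\{\X(0)>\sqrt{R^2-r^2}\}$ is handled similarly using the a.s.\ boundedness of the left side. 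Without these two modifications, the stochastic-integrability and rate bookkeeping you flag at the end cannot be completed.
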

\begin{proof}
For convenience, we fix $r_0 \leq r \leq R /\sqrt{2}$ and denote
\begin{equation*}
u:= u(\cdot,0,R,p,q) \quad \mbox{and} \quad u_{x,r}:= u(\cdot,x,r,p,q).
\end{equation*}
We then note that the left side of~\eqref{e.addtoS} is almost surely bounded by a constant $C(d,\Lambda)$. Therefore, as in the proof of Lemma~\ref{l.harmonicapprox} we may restrict ourselves to the event $\left\{\X(0) \leq \sqrt{R^2 - r^2}\right\}$. Indeed, otherwise we have that 
\begin{multline}
\label{e.addtoS000}
\indc_{\left\{\X(0) > \sqrt{R^2 - r^2}\right\}} \int_{\Phi_{\sqrt{R^2-r^2}} }  \left( \int_{\Phi_{x,r} } \left| \nabla u(y) - \nabla u_{x,r}(y) \right|^2 \,dy \right) \,dx   \\
\leq C \indc_{\left\{\X(0) > \sqrt{R^2 - r^2}\right\}} \leq \O_{s\alpha/d} \left(Cr^{-d}\right)\,.
\end{multline} 

\smallskip

We split the difference $u - u_{x,r}$ as
\begin{equation*}
u - u_{x,r} = \left( u - u_{x,r}\right) \indc_{\{ \X(x) \leq r \} }  + \left( u - u_{x,r}\right) \indc_{\{ \X(x) > r \} } = : v_{x,r} + \tilde v_{x,r}.
\end{equation*}
Most of the proof therefore concerns the estimate for~$v_{x,r}$, with the estimate for~$\tilde v_{x,r}$ coming in the final step, where we show that the error is the same as in~\eqref{e.addtoS000}. Denote
\begin{equation*}
w_{x,r} (y,t):= \int_{\Phi_{y,\sqrt{t}}} v_{x,r}(z)\,dz,
\end{equation*}
which is the solution of the parabolic equation
\begin{equation*}
\left\{
\begin{aligned}
& \partial_t w_{x,r}  - \nabla \cdot \left( \ahom \nabla w_{x,r} \right) = 0 & \mbox{in} & \ \Rd \times (0,\infty), \\
& w_{x,r}  = v_{x,r} & \mbox{on} & \ \Rd \times \{0 \}.
\end{aligned}
\right.
\end{equation*}

\smallskip

\emph{Step 1.} 
We show that  
\begin{equation} \label{e.additivityb3}
\left\| \nabla v_{x,r}  \right\|_{ \Phi_{x,r}}  \leq C \left\| \nabla w_{x,r}(\cdot,(cr)^2)    \right\|_{ \Phi_{x,\sqrt{r^2 - (cr)^2 }}}  
\end{equation}
According to Lemma~\ref{l.mspoincare2}, applied with $c = \sigma_0$,
\begin{equation} \label{e.additivityb2}
\int_{\Psi_{x,r}}  \left| v_{x,r} (y) \right|^2\,dy \leq C r^{-d}  \int_{B_{r/\theta}(x)} \left| w_{x,r}(y,(cr)^2) \right|^2\,dy\,.
\end{equation}
Subtracting a constant in the definition of $v_{x,r}$ so that $w_{x,r}(y,(cr)^2)$ has zero average in $B_{r/\theta}(x)$ we get by the Poincar\'e inequality that 
\begin{align*}
r^{-d}  \int_{B_{r/\theta}(x)} \left| w_{x,r}(y,(cr)^2) \right|^2\,dy 
 & \leq C r^{2} \int_{\Phi_{x,\sqrt{r^2-(cr)^2}}} \left| \nabla w_{x,r}(y,(cr)^2) \right|^2\,dy\,.
\end{align*}
On the other hand, by the Caccioppoli estimate we have
\begin{equation*} 
\left\| \nabla v_{x,r}  \right\|_{ \Phi_{x,r}}^2 \leq \frac{C}{r^2} \int_{\Psi_{x,r}}  \left| v_{x,r} (y) \right|^2\,dy\,.
\end{equation*}
Thus~\eqref{e.additivityb3} follows by~\eqref{e.additivityb2} and the previous two displays. 

\smallskip

\emph{Step 2.} We show that 
\begin{equation} \label{e.additivityb4}
\int_{\Phi_{\sqrt{R^2-r^2}}}\int_{\Phi_{x,{\sqrt{r^2-(cr)^2}}}} \left| \nabla w_{x,r}(y,(cr)^2) \right|^2\,dy\,dx 
\leq 
 \O_{s/2}\left(Cr^{-2\alpha}\right).
\end{equation}
The triangle inequality yields
\begin{multline*}
 \left| \nabla w_{x,r}(y,(cr)^2) \right|
 \leq \left| \nabla (q-p)(y) - \nabla \left( \int_{\Phi_{y,cr}} u(z)\,dz \right)  \right|  \\
 +  \left|  \nabla (q-p)(y) - \nabla \left(  \int_{\Phi_{y,cr}} u_{x,r}(z)\,dz\right) \right| \,.
\end{multline*}
Squaring this, integrating with respect to $\Phi_{x,\sqrt{r^2-(cr)^2}}$ and applying Lemma~\ref{l.parametermatching} gives  
\begin{multline*}
\int_{\Phi_{x,\sqrt{r^2-(cr)^2}}}  \left| \nabla w_{x,r}(y,(cr)^2) \right|^2\,dy  \\
 \leq \int_{\Phi_{x,\sqrt{r^2-(cr)^2}}}  \left| \nabla (q-p)(y) - \nabla \left( \int_{\Phi_{y,cr}} u(z)\,dz \right)  \right| \,dy 
 + \O_{s/2}\left( Cr^{-2\alpha} \right).
\end{multline*}
Integrating the result with respect to $\Phi_{\sqrt{R^2-r^2}}$, using the semigroup property for the heat kernels, and applying Lemma~\ref{l.parametermatching} once more, we obtain~\eqref{e.additivityb4}.

\smallskip

\emph{Step 3.} The conclusion. 
Combining the results of Steps~1 and~2 yields 
\begin{align*}
\int_{\Phi_{\sqrt{R^2-r^2}}} \int_{\Phi_{x,r}} \left| \nabla v_{x,r} (y) \right|^2\,dy \, dx
&\leq C \int_{\Phi_{\sqrt{R^2-r^2}}}  \int_{\Phi_{x,\sqrt{r^2-cr^2}}} \left| \nabla w_{x,r}(y,cr^2) \right|^2\,dy\, dx \\
& \leq  \O_{s/2}\left(Cr^{-2\alpha}\right).
\end{align*}
We now give the estimate for $\tilde v_{x,r}$. Following the Step 2 in the proof of Lemma~\ref{l.harmonicapprox}  
\begin{multline*}
 \indc_{ \left\{ \X(0) \leq \sqrt{R^2 - r^2} \right\}} \int_{\Phi_{\sqrt{R^2- r^2} } }\indc_{\{ \X(x) > r \} } \int_{\Phi_{x,r}}   \left|\nabla u(y) \right|^2 \, dy \, dx 
\\  \leq C \int_{\Phi_{\sqrt{R^2- r^2} } }\indc_{\{ \X(x) > r \} } \left(\frac{\X(x) \vee r}{r}\right)^d \, dx
\end{multline*}
and hence by~\eqref{e.Xweirdo}, using the assumption $s\alpha<d$, we have 
\begin{align}
\label{e.stupidtildevxr}
\lefteqn{  \indc_{ \left\{ \X(0) \leq \sqrt{R^2 - r^2} \right\}} 
\int_{\Phi_{\sqrt{R^2-r^2}}} \int_{\Phi_{x,r}} \left| \nabla \tilde v_{x,r} (y) \right|^2\,dy \, dx
} \qquad & \\
& \leq C\int_{\Phi_{\sqrt{R^2-r^2}}} \indc_{\{\X(x)>r\}} \left( 1+  \left( \frac{\X(x) \vee r}{r} \right)^d \right) \,dx \nonumber \\
& = \O_{s\alpha/d} \left( Cr^{-d} \right). \nonumber
\end{align}
Therefore the left side of~\eqref{e.stupidtildevxr} produces the same error as in~\eqref{e.addtoS000}, and it can be estimated similarly. 
\end{proof}

We next demonstrate an improvement of additivity and give the proof of Proposition~\ref{p.additivity}, up to the identification of $\theta$ in the assumption (which is accomplished in the next subsection).

\begin{lemma}
\label{l.improveadditivity}
For each $\ep>0$ and $\alpha\in \left(0,\frac ds-\ep \right]$, there exists $\eta(\ep,s,d,\Lambda)>0$ such that $\Add_k(s,\alpha+\eta)$ holds. Moreover, if we suppose in addition that $\theta\in (0,\alpha]$ is such that, for every $z\in\Rd$, $r\geq r_0$ and $p,q\in \Ahom_k(\Phi_{z,r})$,
\begin{equation}
\label{e.assofLtheta}
\left\|  \nabla L_{k,z,r}q - \nabla q \right\|_{L^2(\Phi_{z,r})} 
+ \left\|  \nabla L_{k,z,r}^*p - \nabla p \right\|_{L^2(\Phi_{z,r})}  \leq Cr^{-\theta},
\end{equation}
then $\Add_k\left(\frac{s\alpha}{\alpha+\theta},\alpha+\theta\right)$ holds.  
\end{lemma}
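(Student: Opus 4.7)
The plan is to combine Lemma~\ref{l.additivitybitches} with the quadratic response Lemma~\ref{l.Iquadresponse} to transfer the $L^2$ closeness of the maximizers $u(\cdot,0,R,p,q)$ and $u(\cdot,x,r,p,q)$ into control on the difference of the energy quantities themselves. The starting observation is the exact decomposition
\begin{equation*}
I(0,R,p,q) = \mcl J\bigl(u(\cdot,0,R,p,q),\,0,R,p,q\bigr) = \int_{\Phi_{\sqrt{R^2-r^2}}} \mcl J\bigl(u(\cdot,0,R,p,q),\,x,r,p,q\bigr)\,dx,
\end{equation*}
which is just the semigroup property $\Phi_R = \Phi_{\sqrt{R^2-r^2}} * \Phi_r$ applied inside the definition of $\mcl J$. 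This identifies $I(0,R,p,q)$ with the spatial average of energies obtained by ``freezing'' the global maximizer as the test function at each small-scale point $(x,r)$.

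Applying Lemma~\ref{l.Iquadresponse} pointwise in $x$ with $w = u(\cdot,0,R,p,q)$ gives
\begin{equation*}
\bigl|I(x,r,p,q) - \mcl J\bigl(u(\cdot,0,R,p,q),\,x,r,p,q\bigr)\bigr| \le C\,E(x) + Cr^{-\theta}\sqrt{E(x)},
\end{equation*}
where $E(x) := \|\nabla u(\cdot,0,R,p,q) - \nabla u(\cdot,x,r,p,q)\|_{L^2(\Phi_{x,r})}^2$. Integrating against $\Phi_{\sqrt{R^2-r^2}}$ and combining with the decomposition above,
\begin{equation*}
\Bigl| I(0,R,p,q) - \int_{\Phi_{\sqrt{R^2-r^2}}} I(\cdot,r,p,q) \Bigr| \le C\int_{\Phi_{\sqrt{R^2-r^2}}} E + Cr^{-\theta}\int_{\Phi_{\sqrt{R^2-r^2}}} \sqrt{E}.
\end{equation*}
Lemma~\ref{l.additivitybitches} furnishes $\int E = \O_{s/2}(Cr^{-2\alpha})$, and Cauchy-Schwarz together with the elementary rule that $\sqrt{\O_{s/2}(\theta)} \le \O_{s}(\theta^{1/2})$ yield $\int\sqrt{E} \le \O_s(Cr^{-\alpha})$, so the right-hand side is $\O_{s/2}(Cr^{-2\alpha}) + \O_s(Cr^{-(\alpha+\theta)})$.

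To conclude the second statement, with $\theta \in (0,\alpha]$ furnished by the hypothesis, the linear contribution is already $\O_s(Cr^{-(\alpha+\theta)}) \le \O_{s\alpha/(\alpha+\theta)}(Cr^{-(\alpha+\theta)})$, while the quadratic contribution, which is bounded by a universal constant by~\eqref{e.bounded-I}, can be upgraded via Remark~\ref{r.change-s} from $\O_{s/2}(Cr^{-2\alpha})$ to $\O_{s\alpha/(\alpha+\theta)}\bigl(Cr^{-2\alpha \cdot (s/2)/(s\alpha/(\alpha+\theta))}\bigr) = \O_{s\alpha/(\alpha+\theta)}(Cr^{-(\alpha+\theta)})$; the required condition $s\alpha/(\alpha+\theta) \ge s/2$ is exactly the assumption $\theta \le \alpha$. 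This delivers $\Add_k\bigl(\tfrac{s\alpha}{\alpha+\theta},\,\alpha+\theta\bigr)$. For the first statement, we apply the second statement with the deterministic value $\theta_0 > 0$ guaranteed by Corollary~\ref{c.L-identity} (or by combining Proposition~\ref{p.basecase} and Lemma~\ref{l.snappingtheLs}, chosen uniformly in $\alpha$ thanks to the room $\alpha \le d/s - \ep$), and then adjust integrability and scale via Remark~\ref{r.change-s} and the boundedness of $I$ to extract some $\eta = \eta(\ep,s,d,\Lambda) > 0$ for which $\Add_k(s,\alpha+\eta)$ holds.

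The main obstacle will be the precise bookkeeping in the last step: the linear and quadratic error terms have different stochastic integrabilities and different scales, so matching them against the target $\Add_k$-statement requires a careful optimization based on the boundedness of $I$. In particular the constraint $\theta \le \alpha$ in the hypothesis of the second statement is exactly what makes the integrability upgrade of the quadratic term feasible without losing scaling, and this balance is the delicate point of the argument.
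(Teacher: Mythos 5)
Your treatment of the second statement is essentially the paper's own argument: the semigroup identity writing $I(0,R,p,q)$ as $\int_{\Phi_{\sqrt{R^2-r^2}}}\mcl J\bigl(u(\cdot,0,R,p,q),x,r,p,q\bigr)\,dx$, the pointwise application of Lemma~\ref{l.Iquadresponse}, Lemma~\ref{l.additivitybitches} for the quadratic and (via Cauchy--Schwarz) linear error terms, and the upgrade of the quadratic term from $\O_{s/2}(Cr^{-2\alpha})$ to $\O_{s\alpha/(\alpha+\theta)}(Cr^{-(\alpha+\theta)})$ using boundedness and Remark~\ref{r.change-s}, with $\theta\le\alpha$ entering exactly where you say, all coincide with the paper (up to the minor point that the restriction $r\le R/\sqrt2$ is removed at the end by the triangle inequality and the semigroup property).

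The first statement, however, has a genuine gap. Let $X:=\bigl|I(0,R,p,q)-\int_{\Phi_{\sqrt{R^2-r^2}}}I(\cdot,r,p,q)\bigr|$. Applying the second statement with the exponent $\theta_0(d,\Lambda)>0$ of Corollary~\ref{c.L-identity} gives $X\le\O_{s'}\bigl(Cr^{-(\alpha+\theta_0)}\bigr)$ with $s'=\frac{s\alpha}{\alpha+\theta_0}<s$. If you now restore integrability $s$ using only the almost-sure bound $X\le C$ from \eqref{e.bounded-I} together with Remark~\ref{r.change-s}, the scale becomes
\begin{equation*}
C r^{-(\alpha+\theta_0)\cdot\frac{s'}{s}} = C r^{-\alpha},
\end{equation*}
so the exponent gain is cancelled exactly, independently of $\theta_0$: this route only recovers $\Add_k(s,\alpha)$, not $\Add_k(s,\alpha+\eta)$ for any $\eta>0$. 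This is not a matter of more careful bookkeeping: the two facts ``$X\le C$ a.s.'' and ``$X\le\O_{s'}(Cr^{-(\alpha+\theta_0)})$'' are compatible with a law whose $\O_s$-scale is no better than $r^{-\alpha}$ (for instance $X\approx C$ with probability $\approx\exp\bigl(-r^{(\alpha+\theta_0)s'}\bigr)$), so an additional input controlling the bulk of the distribution, not just the extreme tail, is required. The paper supplies it from Proposition~\ref{p.basecase}: on the event that the minimal radii $\Y_t$ and $\Y_t(x)$ are below $R$, resp.\ $r$, the additivity defect admits a \emph{deterministic} bound of order $r^{-\ep_0(d-t)}$, while the complementary bad events contribute $\O_s(Cr^{-t/s})$ and $\O_s(CR^{-t/s})$; choosing $t=d-\tfrac12\ep$ (this is where the room $\alpha\le\frac ds-\ep$ is used, so that $\frac ts\ge\alpha+\frac{\ep}{2s}$) and interpolating these bounds with the one from the second statement produces $\O_s\bigl(Cr^{-(\alpha+\eta)}\bigr)$ with an explicit $\eta(\ep,s,d,\Lambda)>0$. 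You need this extra estimate (or something playing the same role) to obtain the first statement; as written, your argument for it would fail.
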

\begin{proof}
We begin with the argument for the second statement. 
Fix $r_0 \leq r \leq R /\sqrt{2}$. To shorten the notation, we denote, for each $x\in\Rd$,
\begin{equation*}
u:= u(\cdot,0,R,p,q) 
\quad \mbox{and} \quad 
u_{x,r}:= u(\cdot,x,r,p,q).
\end{equation*}
By Lemma~\ref{l.Iquadresponse} and the assumption~\eqref{e.assofLtheta}, 
we have, for each $x\in\Rd$, 
\begin{multline}
\label{e.IminusJ}
\left| I(x,r,p,q) -  \mathcal{J}(u,x,r,p,q) \right| \\
 \leq C \left\| \nabla u - \nabla u_{x,r} \right\|_{L^2(\Phi_{x,r})}^2 + Cr^{-\theta}  \left\| \nabla u - \nabla u_{x,r} \right\|_{L^2(\Phi_{x,r})}.
 \end{multline}
Therefore
\begin{align*}
\lefteqn{
\left| I(0,R,p,q) - \int_{\Phi_{\sqrt{R^2-r^2}}} I(\cdot,r,p,q) \right| 
} \qquad & \\
& =  \left| \int_{\Phi_{\sqrt{R^2-r^2}}} \left( \mathcal{J}\left( u,x,r,p,q \right) -   I(x,r,p,q)  \right)\,dx\right| \\
& \leq \int_{\Phi_{\sqrt{R^2-r^2}}}\left| \mathcal{J}\left( \nabla u,x,r,p,q \right) -   I(x,r,p,q)  \right|\,dx
 \\
& \leq \int_{\Phi_{\sqrt{R^2-r^2}}} \left( C \left\| \nabla u - \nabla u_{x,r} \right\|_{L^2(\Phi_{x,r})}^2 + Cr^{-\theta}  \left\| \nabla u - \nabla u_{x,r} \right\|_{L^2(\Phi_{x,r})}\right)\,dx.
\end{align*}
By Lemma~\ref{l.additivitybitches},
\begin{equation*}
\int_{\Phi_{\sqrt{R^2-r^2}}}  \left\| \nabla u - \nabla u_{x,r} \right\|_{L^2(\Phi_{x,r})}^2 \,dx \leq \O_{s/2}\left( Cr^{-2\alpha} \right). 
\end{equation*}
Since the left side is bounded almost surely, we also obtain
\begin{equation*}
\int_{\Phi_{\sqrt{R^2-r^2}}}  \left\| \nabla u - \nabla u_{x,r} \right\|_{L^2(\Phi_{x,r})}^2 \,dx \leq \O_{\frac{s\alpha}{\alpha+\theta}}\left( Cr^{-(\alpha+\theta)} \right). 
\end{equation*}
Lemma~\ref{l.additivitybitches} also gives that 
\begin{equation*}
\int_{\Phi_{\sqrt{R^2-r^2}}} r^{-\theta}  \left\| \nabla u - \nabla u_{x,r} \right\|_{L^2(\Phi_{x,r})}\,dx \leq \O_s \left( Cr^{-(\alpha+\theta)}\right).
\end{equation*}
Combining these, we obtain that 
\begin{equation}
\label{e.addup}
\left| I(0,R,p,q) - \int_{\Phi_{\sqrt{R^2-r^2}}} I(\cdot,r,p,q) \right|  \leq \O_{\frac{s\alpha}{\alpha+\theta}}\left( Cr^{-(\alpha+\theta)} \right).
\end{equation}
We can remove the restriction $r \le R/\sqrt{2}$ and obtain the same estimate for any $r_0 \le r < R$ by the triangle inequality and the semigroup property of the heat kernel.
This completes the proof of~$\Add_k({\frac{s\alpha}{\alpha+\theta}},\alpha+\theta)$. 

\smallskip

We turn to the proof of the first statement. According to Corollary~\ref{c.L-identity}, we have the assumption~\eqref{e.assofLtheta} for some $\theta_0(d,\Lambda)>0$. 
By Proposition~\ref{p.basecase}, we have, for every $t\in (0,d)$ and some exponent $\ep_0(d,\Lambda)>0$, the $\P$-almost sure bound
\begin{align*}
\left| \int_{\Phi_{\sqrt{R^2-r^2}}}\left(I(0,R,p,q) - I(x,r,p,q) \right) \indc_{ \{\Y_t(x) \leq r\}}\,dx \right| \indc_{\{ \Y_t \leq R\}} 
\leq Cr^{-\ep_0(d-t)}. 
\end{align*}
By boundedness and Remark~\ref{r.change-s}, we have 
\begin{multline*}
\left| I(0,R,p,q) - \int_{\Phi_{\sqrt{R^2-r^2}}} I(x,r,p,q) 
\right| \indc_{\{ \Y_t \geq R\}} \\
\leq C \indc_{\{ \Y_t \geq R\}} \leq C \Ll( \frac{\mcl Y_t}{R} \Rr)^{t/s} \le \O_s \left( CR^{-t/s } \right)
\end{multline*}
and, similarly,
\begin{equation*}
\int_{\Phi_{\sqrt{R^2-r^2}}} \left( \left| I(x,r,p,q) \right| + \left|I(0,R,p,q) \right| \right) \indc_{ \{\Y_t(x) \geq r\}}\,dx \, \indc_{\Y_t \leq R} \leq \O_s \left( Cr^{-t/s } \right).
\end{equation*} 
%
%
%
Combining these with~\eqref{e.addup} by the triangle inequality, and using that $s\alpha \leq d -\ep$ which allows us to take $t:=\frac12(d+s\alpha) = d- \frac12\ep$, which satisfies
\begin{equation*}
\frac12t - \alpha = \frac12(d-t) = \frac12\left(\frac d2-\alpha\right) 
\quad \mbox{and} \quad
\frac ts = \frac12\left( \alpha + \frac ds \right) \geq \alpha + \frac{\ep}{2s},
\end{equation*}
we obtain
\begin{equation*}
\left| I(0,R,p,q) - \int_{\Phi_{\sqrt{R^2-r^2}}} I(\cdot,r,p,q) \right|   
 = \O_s\left( C r^{-(\alpha+\eta)} \right) 
\end{equation*}
for the explicit exponent
\begin{equation*}
\eta:= \ep \left(  \frac{\ep_0 \theta_0}{2(\frac ds+\theta_0)} \wedge \frac{1}{2s} \right).
\end{equation*}
We have proved $\Add_k(s,\alpha+\eta)$ for $\eta(\ep,s,d,\Lambda)>0$, as desired. 
\end{proof}

\subsection{Two-scale expansion of maximizers of $J_k$}
\label{ss.twoscale}

In this subsection, we establish a quantitative  two-scale expansion for the maximizers of $J_k$ in terms of the first-order correctors. This is needed to identify an explicit exponent $\theta$ in the hypothesis of Lemma~\ref{l.improveadditivity} and thereby complete the proof of Proposition~\ref{p.additivity}. 

\smallskip
The first step is to match the correctors to the functions $u(\cdot,x,r,p,q)$ for $p,q\in\Ahom_1$. In fact, even though the existence of the correctors is classical, we actually just construct the correctors from the latter. Recall the definition of $\mathbb{L}^2_{\mathrm{pot}}$ around \eqref{e.def.L2pot}.

\begin{lemma}
\label{l.firstordercorrect}
Suppose that $s\leq 2$ and $\alpha \leq \frac d2$. 
There exists a linear map $\xi \mapsto \nabla \phi^{(1)} (\cdot,\xi)$ from $\Rd$ to $\mathbb{L}^2_{\mathrm{pot}}$ such that, up to an additive constant, every element of $\A_1$ has the form
\begin{equation*}
x\mapsto \underline{\phi}^{(1)}(x,\xi) := \xi \cdot x + \phi^{(1)} (x,\xi)
\end{equation*}
and there exist $\ep(d,\Lambda)>0$ and $C(d,\Lambda)<\infty$  such that
\begin{equation} 
\label{e.correctorbound}
\sup_{r\geq1} \sup_{\xi\in B_1} \left\| \nabla \phi^{(1)}(\cdot,\xi) \right\|_{\underline{L}^2(B_r)}  = \O_{2+\ep}(C).
\end{equation}
Finally, there exists $C(s,\alpha,k,d,\Lambda)<\infty$ such that, for every $t\in \left[ s,2+\ep\right)$, $z\in\Rd$, $r\geq r_0$ and $p,q\in \Ahom_1(\Phi_{x,r})$, we have
\begin{equation}
\label{e.snaptouyou}
\int_{\Phi_{z,r}} \left| \nabla u(x,z,r,p,q) -  \nabla \underline\phi^{(1)}(x,\nabla q-\nabla p) \right|^2\,dx \leq \O_{t/2}\left(Cr^{-\frac{2s\alpha}t} \right). 
\end{equation}
In particular, for every $t\in \left[ s,2+\ep\right)$, $\xi\in B_1$, $z\in\Rd$ and $r\geq 1$,
\begin{equation}
\label{e.spatavg1}
\left| \int_{\Phi_{z,r}}\nabla \phi^{(1)}(x,\xi)  \,dx \right| = \O_{t} \left(Cr^{-\frac{s\alpha}t }\right).
\end{equation}
\end{lemma}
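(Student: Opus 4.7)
The plan is to first construct the correctors using the Liouville theorem, then transfer the two-scale comparison of Lemma~\ref{l.additivitybitches} into a bound against the corrector, and finally combine with Lemma~\ref{l.whatflucgives} for the spatial-average statement.

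For the construction: apply Proposition~\ref{p.regularity}(ii) with $k=1$ to the $\ahom$-harmonic polynomial $q_\xi(x):=\xi\cdot x\in\overline{\A}_1$, producing, for each realization of~$\a$, an element $\underline\phi^{(1)}(\cdot,\xi)\in\A_1$ with $\|\underline\phi^{(1)}(\cdot,\xi)-q_\xi\|_{\underline L^2(B_R)}\le C|\xi|R^{1-\delta}$ for every $R\ge \X_s$. Uniqueness modulo constants comes from Proposition~\ref{p.regularity}(i) combined with the equality of dimensions $\dim(\A_1/\A_0)=\dim(\overline{\A}_1/\overline{\A}_0)=d$. Linearity of $\xi\mapsto\underline\phi^{(1)}(\cdot,\xi)$ follows at once from this uniqueness. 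Stationarity of $\nabla\phi^{(1)}(\cdot,\xi)$ is obtained by noting that, for fixed $y\in\R^d$, the translate $x\mapsto\underline\phi^{(1)}(x+y,\xi;\a)$ lies in $\A_1(\tau_y\a)$ with slope $\xi$, hence coincides (modulo a constant) with $\underline\phi^{(1)}(\cdot,\xi;\tau_y\a)$, which is the defining translation covariance of~$\mathbb{L}^2_{\mathrm{pot}}$. The moment bound \eqref{e.correctorbound} then follows by applying the Caccioppoli inequality on $B_{2r}$ to $\underline\phi^{(1)}(\cdot,\xi)-q_\xi$, yielding $\|\nabla\phi^{(1)}(\cdot,\xi)\|_{\underline L^2(B_r)}\le C$ whenever $r\ge\X_s$, and then invoking stationarity together with $\X_s=\O_s(C)$ for $s=2+\ep<d$.

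For the snapping estimate \eqref{e.snaptouyou}: by linearity of $(p,q)\mapsto u(\cdot,z,r,p,q)$ and of $\xi\mapsto\underline\phi^{(1)}(\cdot,\xi)$, we reduce to the case $p=0$, $q=q_\xi$. We let $u_R:=u(\cdot,z,R,0,q_\xi)$ and take $R\to\infty$ in Lemma~\ref{l.additivitybitches} (applied with outer center $z$, permissible by translation invariance). This produces, via a dyadic Cauchy argument in $L^2(\Phi_{\cdot,r})$, a limit $u_\infty\in\A_1$, and by Lemma~\ref{l.parametermatching} the slope of $u_\infty$ must be $\xi$, so $u_\infty=\underline\phi^{(1)}(\cdot,\xi)$ up to an additive constant by the uniqueness from Step~1. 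Passing to the limit in the additivity bound then yields
\begin{equation*}
\int_{\Phi_{z,r}}\bigl|\nabla u(\cdot,z,r,0,q_\xi)-\nabla\underline\phi^{(1)}(\cdot,\xi)\bigr|^2=\O_{s/2}\bigl(Cr^{-2\alpha}\bigr),
\end{equation*}
which is precisely \eqref{e.snaptouyou} at $t=s$. The extension to $t\in[s,2+\ep)$ is obtained by interpolation (as in Remark~\ref{r.change-s}) with the deterministic bound $\int_{\Phi_{z,r}}|\nabla u-\nabla\underline\phi^{(1)}|^2\le C$, which holds for $r\ge\X_t$ by Caccioppoli and the $L^2$ growth control of elements of $\A_1$ from Proposition~\ref{p.regularity}, with the $\{r<\X_t\}$-tail absorbed by the integrability of~$\X_t$. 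Finally, \eqref{e.spatavg1} is a direct consequence: Cauchy--Schwarz applied to \eqref{e.snaptouyou} (with $p=0$, $q=q_\xi$) gives $\bigl|\int_{\Phi_{z,r}}\nabla u-\xi-\nabla\phi^{(1)}(\cdot,\xi)\bigr|=\O_t(Cr^{-s\alpha/t})$, while Lemma~\ref{l.whatflucgives}, applied with $q'$ running through an orthonormal basis of $\Ahom_1(\Phi_{z,r})$, produces $\bigl|\int_{\Phi_{z,r}}(\nabla u-\xi)\bigr|=\O_s(Cr^{-\alpha})$; subtracting yields the claim.

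The main obstacle is the identification step: turning the $x$-averaged Cauchy bound of Lemma~\ref{l.additivitybitches} into a pointwise-in-$z$ statement controlling $\|\nabla u(\cdot,z,r,0,q_\xi)-\nabla\underline\phi^{(1)}(\cdot,\xi)\|_{L^2(\Phi_{z,r})}$ with the correct Orlicz norm. The resolution is that, by $\Zd$-stationarity of~$\P$ and translation invariance of the maximization problem, the random variable appearing on the left of \eqref{e.snaptouyou} has the same law for every $z$, so it suffices to treat $z=0$; the quantitative bound at $z=0$ is then obtained by combining the Cauchy-in-$R$ property (Lemma~\ref{l.additivitybitches}) with the Liouville property to identify the $R\to\infty$ limit as $\underline\phi^{(1)}(\cdot,\xi)$, the stationarity of the difference field ensuring that the Orlicz norm is preserved under this limit.
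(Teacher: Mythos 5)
Your overall route is essentially the paper's: a dyadic Cauchy comparison of the maximizers $u(\cdot,z,R,0,q_\xi)$ across scales (driven by Lemma~\ref{l.parametermatching} and the multiscale Poincar\'e inequality), identification of the limit with the corrector, and then Lemma~\ref{l.whatflucgives} for the spatial averages. Two remarks on incompleteness, one of which is a genuine gap. The minor one: to make the dyadic scheme converge in $L^2(\Phi_{z,r})$ for a \emph{fixed} $r$, the comparison of $u_R$ and $u_{2R}$, which naturally comes weighted by $\Phi_{z,R}$, must be localized down to scale $r$; this is done in the paper via the large-scale Lipschitz estimate of Proposition~\ref{p.regularity} and produces the indicator $\indc_{\{\X(z)\le r\}}$ that then has to be carried through the rest of the argument. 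You should say how you do this, since the subsequent stochastic bookkeeping depends on it.

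The genuine gap is in your stochastic-integrability claims, i.e.\ the exponent $2+\ep$ in \eqref{e.correctorbound} and the range $t\in[s,2+\ep)$ in \eqref{e.snaptouyou}--\eqref{e.spatavg1}. Your argument for \eqref{e.correctorbound} invokes ``$\X_s=\O_s(C)$ for $s=2+\ep<d$'': Proposition~\ref{p.regularity} only provides $\X_s$ for $s\in(0,d)$, so this is vacuous in $d=2$, and even in $d\ge3$ it does not give what you claim. The point is that on small scales (equivalently on the event $\{\X(z)>r\}$) the bound you can extract from Caccioppoli carries a volume factor: $\|\nabla\phi^{(1)}\|_{\underline L^2(B_1)}\le C\,\X^{d/2}\,\|\nabla\phi^{(1)}\|_{\underline L^2(B_\X)}\le C\,\X^{d/2}$, and since $\X=\O_\gamma(C)$ only for $\gamma<d$, the composed bound $\X^{d/2}=\O_{2\gamma/d}(C)$ has integrability exponent strictly below $2$, no matter how $\gamma<d$ is chosen. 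The same obstruction defeats your treatment of the bad event in \eqref{e.snaptouyou}: the quantity is not bounded on $\{r<\X\}$ (so the tail cannot simply be ``absorbed by the integrability of $\X_t$'', and $\X_t$ is not even defined for $t\ge d$ when $d=2$), and the correct bound $\indc_{\{\X(z)>r\}}(\X(z)/r)^{d}$, combined with Remark~\ref{r.multiply}, again caps the integrability below exponent $2$. The paper gets past exponent $2$ only by invoking the Meyers interior higher-integrability estimate, which replaces the exponent $d/2$ by $d/(2+\ep)$, i.e.\ $\X^{d/(2+\ep)}=\O_{\gamma(2+\ep)/d}(C)$ with $\gamma$ close to $d$; this is used both for \eqref{e.correctorbound} and, together with a careful choice of exponents in the product estimate, for the full range $t\in[s,2+\ep)$ and the rate $r^{-2s\alpha/t}$ in \eqref{e.snaptouyou} and \eqref{e.spatavg1}. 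Without this ingredient your proposal proves the lemma only with integrability exponents strictly below $2$, which is not sufficient for the way the lemma is used (e.g.\ for \eqref{e.phiosc} in $d>2$).
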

\begin{proof}
In this proof, we let $\X = \X_\gamma = \O_\gamma(C)$, $\gamma \in [s \alpha,d)$, be as in Proposition~\ref{p.regularity}, where the parameter $\gamma$ will be chosen in Step 6 below, and we let $\X(z)$ be its $\Z^d$-stationary extension (that is, $\X(x) = \tau_x \X$). 

\emph{Step 1.} We define a candidate for the corrector field. 
According to Lemmas~\ref{l.parametermatching} and~\ref{l.mspoincare2}, we have that, for every $z\in\Rd$ and $R\geq r_0$,
\begin{equation}
\label{e.compoundclaim}
\int_{\Phi_{z,R}} \left| \nabla u(x,z,R,p,q) - \nabla u(x,z,2R,p,q)  \right|^2\,dx = \O_{s/2}\left( CR^{-2\alpha} \right). 
\end{equation}
Indeed, we have that, for $r_0\leq r \leq R/\sqrt{2}$,
\begin{align*}
\lefteqn{
\int_{\Phi_{z,\sqrt{R^2-r^2}}} 
\left|  \int_{\Phi_{x,r}} \left( \nabla u(y,z,R,p,q) - \nabla u(y,z,2R,p,q) \right) \,dy \right|^2\,dx 
} \qquad & \\
& \leq \int_{\Phi_{z,\sqrt{R^2-r^2}}} 
\left| \nabla (q-p)(x) -  \int_{\Phi_{x,r}} \nabla u(y,z,R,p,q) \,dy \right|^2\,dx  \\
& \qquad + \int_{\Phi_{z,\sqrt{R^2-r^2}}} 
\left| \nabla (q-p)(x) -  \int_{\Phi_{x,r}} \nabla u(y,z,2R,p,q) \,dy \right|^2\,dx
\end{align*}
and then using 
\begin{multline*}
\int_{\Phi_{z,\sqrt{R^2-r^2}}} 
\left| \nabla (q-p)(x) -  \int_{\Phi_{x,r}} \nabla u(y,z,2R,p,q) \,dy \right|^2\,dx \\
\leq C \int_{\Phi_{z,\sqrt{(2R)^2-r^2}}} 
\left| \nabla (q-p)(x) -  \int_{\Phi_{x,r}} \nabla u(y,z,2R,p,q) \,dy \right|^2\,dx,
\end{multline*}
we may apply Lemma~\ref{l.parametermatching} to get
\begin{equation*}
\int_{\Phi_{z,\sqrt{R^2-r^2}}} 
\left|  \int_{\Phi_{x,r}} \left( \nabla u(y,z,R,p,q) - \nabla u(y,z,2R,p,q) \right) \,dy \right|^2\,dx = \O_{s/2} \left( Cr^{-2\alpha} \right).
\end{equation*}
We then take $r= R/\sqrt{2}$ and apply Lemma~\ref{l.mspoincare2}, the Poincar\'e and Caccioppoli inequalities to get~\eqref{e.compoundclaim}. 
Using now the Lipschitz estimate (Proposition~\ref{p.regularity}), we obtain, for every $z\in\Rd$ and $r \leq R$,
\begin{equation}
\label{e.dyadicbluff}
\indc_{\{ \X(z) \leq r \} }   \int_{\Phi_{z,r}} \left| \nabla u(x,z,R,p,q) - \nabla u(x,z,2R,p,q)  \right|^2\,dx = \O_{s/2}\left( CR^{-2\alpha} \right). 
\end{equation}
By Remark~\ref{r.change-s} and~\eqref{e.bounded-u}, we also have, for every $t\geq s$,
\begin{multline}
\label{e.dyadicbluff-t}
\indc_{\{ \X(z) \leq r \} }   \int_{\Phi_{z,r}} \left| \nabla u(x,z,R,p,q) - \nabla u(x,z,2R,p,q)  \right|^2\,dx \\
 = \O_{t/2}\left( CR^{-\frac{2s\alpha}t} \right). 
\end{multline}
Summing~\eqref{e.dyadicbluff-t} over a dyadic sequence of scales, we obtain, almost surely with respect to $\P$, the existence of $w_{z,r}\in \A_1$ such that, for every $t\geq s$ and $ r \leq R$,
\begin{equation}
\label{e.needfortriangle}
\indc_{\{ \X(z) \leq r \}} \int_{\Phi_{z,r}} \left| \nabla u(x,z,R,p,q) - \nabla w_{z,r}(x) \right|^2\,dx \leq \O_{t/2}\left( CR^{-\frac{2s\alpha}t} \right). 
\end{equation}
It is clear from the construction that for $\X \leq r\leq R$, the gradients of $w_{z,r}$ and $w_{z,R}$ are the same. On the event $r < \X$, we may pick an arbitrary $R \ge \X$ and redefine $\nabla w_{z,r}$ to be $\nabla w_{z,R}$, without affecting \eqref{e.needfortriangle}. This modification does not depend on the choice of $R$, and ensures that $\nabla w_{z,r} = \nabla w_{z,R}$ for arbitrary $r, R \ge r_0$. We may then make $w_z=w_{z,r}$ itself independent of $r$ by choosing the additive constant so that 
\begin{equation}
\label{e.choosenormalization}
\left( w_{z} - (q-p) \right)_{\Phi_{1}} = 0.
\end{equation}
Moreover, by the construction we get that, for every $z\in\Zd$, 
\begin{equation}
\label{e.translaw}
\nabla w_z(\cdot + z) 
\quad \mbox{and} \quad
\nabla w_0(\cdot) 
\quad \mbox{have the same law.}
\end{equation}

\smallskip

\emph{Step 2.} We show that, for all $z\in\Rd$ and $r\geq |z|$, we have 
\begin{equation}
\label{e.snapzto0}
\indc_{\{ \X(0) \vee\X(z) \leq \sigma r\} }  \fint_{B_r} \left| \nabla w_z(x) - \nabla w_0(x) \right|^2\,dx \leq \O_{s/2}\left(C r^{-2\alpha} \right). 
\end{equation}
By the Caccioppoli inequality, 
\begin{equation*}
 \fint_{B_r} \left| \nabla w_z(x) - \nabla w_0(x) \right|^2\,dx
 \leq Cr^{-2} \inf_{a\in\R} \fint_{B_{2r}} \left| w_z(x) - w_0(x) -a \right|^2\,dx.
\end{equation*}
Using Lemma~\ref{l.mspoincare2},
\begin{equation}
\label{e.multiyourscale}
\left\| w_{0} - w_z -a \right\|_{L^2(\Psi_r)} 
 \leq C \fint_{B_{r/\theta}} \left|  \int_{\Phi_{y,\sigma r}} \left( w_0 - w_z -a \right)(x) \,dx \right|^2\,dy. 
\end{equation}
Choosing 
\begin{equation*}
a:= \fint_{B_{r/\theta}} \left(  \int_{\Phi_{y,\sigma r}} \left( w_0 - w_z \right)(x) \,dx \right)\,dy
\end{equation*}
and applying the Poincar\'e inequality, Lemma~\ref{l.parametermatching},~\eqref{e.needfortriangle} and the triangle inequality, we find that, for $r\geq |z|$ such that $\X(0) \vee \X(z) \leq \sigma r$,
\begin{align}
\label{e.thyuiop}
\lefteqn{
 \fint_{B_{r/\theta}} \left|  \int_{\Phi_{y,\sigma r}} \left( w_0 - w_z -a \right)(x) \,dx \right|^2\,dy 
 } \qquad & \\
 & \leq Cr^2  \fint_{B_{r/\theta}} \left|  \int_{\Phi_{y,\sigma r}} \left( \nabla w_0 - \nabla w_z \right)(x) \,dx \right|^2\,dy  \notag\\
 & \leq Cr^2  \int_{\Phi_{\sqrt{r^2-(\sigma r)^2}}} \left|  \int_{\Phi_{y,\sigma r}} \left( \nabla w_0 - \nabla w_z \right)(x) \,dx \right|^2\,dy \notag \\
 & \leq  \O_{s/2} \left( Cr^{2-2\alpha} \right).  \notag
\end{align}
Let us give more details on the last inequality claimed in the display above. We have, 
\begin{align*}
\lefteqn{ 
\int_{\Phi_{\sqrt{r^2-(\sigma r)^2}}} \left|  \int_{\Phi_{y,\sigma r}} \left( \nabla w_0 - \nabla w_z \right)(x) \,dx \right|^2\,dy
} \qquad & \\
& \leq \int_{\Phi_{\sqrt{r^2-(\sigma r)^2}}} \left| \nabla (q-p)(x)-  \int_{\Phi_{y,\sigma r}} \nabla w_0(x)\,dx \right|^2\,dy \\
& \qquad + \int_{\Phi_{\sqrt{r^2-(\sigma r)^2}}} \left| \nabla (q-p)(x)-  \int_{\Phi_{y,\sigma r}} \nabla w_z(x)\,dx \right|^2\,dy
\end{align*}
and, since $z\in B_r$, we get, in the case $\X(0) \vee \X(z) \leq \sigma r$,
\begin{multline*}
\int_{\Phi_{\sqrt{r^2-(\sigma r)^2}}} \left| \nabla (q-p)(x)-  \int_{\Phi_{y,\sigma r}} \nabla w_z(x)\,dx \right|^2\,dy  \\
\leq C\int_{\Phi_{z,\sqrt{(Cr)^2-(\sigma r)^2}}} \left| \nabla (q-p)(x)-  \int_{\Phi_{y,\sigma r}} \nabla w_z(x)\,dx \right|^2\,dy.
\end{multline*}
Now using the triangle inequality,~\eqref{e.needfortriangle} and Lemma~\ref{l.parametermatching}, the previous two displays yield, for $|z| \leq r$,
\begin{equation*}
\indc_{\{ \X(0) \vee\X(z) \leq r\} } \int_{\Phi_{\sqrt{r^2-(\sigma r)^2}}} \left|  \int_{\Phi_{y,\sigma r}} \left( \nabla w_0 - \nabla w_z \right)(x) \,dx \right|^2\,dy
\leq \O_{s/2} \left( Cr^{-2\alpha} \right),
\end{equation*}
as claimed. Combining~\eqref{e.thyuiop} and~\eqref{e.multiyourscale} yields~\eqref{e.snapzto0}.

\smallskip

\emph{Step 3.} We complete the construction of $\phi^{(1)}$. We first notice that
\begin{equation}
\label{e.statfld}
\P\left[ \forall z\in \Rd, \ \nabla w_z = \nabla w_0 \right] =1.
\end{equation}
Indeed, this follows from the previous step after sending $r\to \infty$ and applying the regularity estimate, which gives, for every $y\in\Zd$ and $r\geq  \X(y)+C$,
\begin{align*}
\fint_{B_{\sqrt{d}} (y)}\left| \nabla w_z(x) - \nabla w_0(x) \right|^2\,dx & \leq C\X(y)^{d/2} \fint_{B_r(y)} \left| \nabla w_z(x) - \nabla w_0(x) \right|^2\,dx \\
& \rightarrow 0 \quad \mbox{as} \ r\to\infty.
\end{align*}
Since we have $\X(y)<\infty$, for every $y\in\Zd$, $\P$-almost surely, we obtain~\eqref{e.statfld}. We may 
drop dependence on $z$ and write $w=w_z$, which is defined uniquely up to an additive constant. Moreover, it follows from~\eqref{e.translaw} that $\nabla w$ is a stationary field. That is, we have 
\begin{equation*}
\nabla w\in \mathbb{L}^2_{\mathrm{pot}}.
\end{equation*}

\smallskip

\emph{Step 4.} We next claim that, if we display the dependence $w$ on $p,q\in\Ahom_1$ by writing $w(\cdot,p,q)$, then, for every $p,q,p',q'\in\Ahom_1$ such that $\nabla (q-p) = \nabla (q'-p')$, we have
\begin{equation*}
\nabla w(\cdot,p,q) = \nabla w(\cdot,p',q').
\end{equation*}
Indeed, this is immediate from the argument in Steps~2 and~3. We simply compare $w(\cdot,p,q)$ and $w(\cdot,p',q')$ in the same way we previously compared $w_z$ and $w_0$. The main point is that the gradients of these functions have the same spatial averages, up to $\O_{s}(Cr^{-\alpha})$. 

\smallskip

This allows us to write $\nabla w(\cdot,p,q) = \nabla w(\cdot,q-p) = w(\cdot,\xi)$, after identifying~$\Rd$ with~$\Ahom_1$. We now define, for each $\xi\in\Rd$, 
\begin{equation*}
\phi^{(1)}(x,\xi) := w(x,\xi) - \xi\cdot x. 
\end{equation*}
We have that $\nabla \phi^{(1)}(\cdot,\xi) = \nabla w(\cdot,\xi) - \xi \in \mathbb{L}^2_{\mathrm{pot}}$.

\smallskip

\emph{Step 5.} We prove~\eqref{e.correctorbound}. Observe that, by the ergodic theorem, for each $\xi\in \Rd$,
\begin{equation*}
\lim_{R\to \infty} R^{-1} \left\| \underline{\phi}^{(1)}(\cdot,\xi) \right\|_{\underline{L}^2(B_R)} = \left| \xi \right|. 
\end{equation*}
Therefore the Lipschitz estimate (Proposition~\ref{p.regularity}(iii) with $k=0$) gives
\begin{equation} 
\label{e.firstlipcorrector}
\left\| \nabla \underline{\phi}^{(1)}(\cdot,\xi) \right\|_{\underline{L}^2(B_{\X})} \leq C |\xi|
\end{equation}
and from this we get
\begin{align}
\label{e.bluff}
\left\| \nabla \phi^{(1)}(\cdot,\xi) \right\|_{\underline{L}^2(B_1)} 
& \leq \left| \xi \right| + \left\| \nabla \underline{\phi}^{(1)}(\cdot,\xi) \right\|_{\underline{L}^2(B_1)} \\
& \leq \left| \xi \right| + \X^{\frac d2} \left\| \nabla \underline{\phi}^{(1)}(\cdot,\xi) \right\|_{\underline{L}^2(B_{\X})} \notag \\
& \leq C \left| \xi \right| \left( 1 + \X^{\frac d2} \right). \notag
\end{align}
Thus for every $s'<2$, there exists $C(s',d,\Lambda)<\infty$ such that 
\begin{equation}
\label{e.correctorboundpre}
\sup_{\xi\in B_1} \left\| \nabla \phi^{(1)}(\cdot,\xi) \right\|_{\underline{L}^2(B_1)}  = \O_{s'}(C).
\end{equation}
Actually, we can do slightly better in the second line of~\eqref{e.bluff} by using Meyers' estimate rather than give up the full volume factor. Indeed, by Meyers' and H\"older's inequalities, there exists $\ep(d,\Lambda)>0$ such that, for every $\xi\in B_1$, 
\begin{equation*} \label{}
\left\| \nabla \underline{\phi}^{(1)}(\cdot,\xi) \right\|_{\underline{L}^2(B_1)} 
\leq 
\left( \X^{\frac d2}\right)^{\frac{2}{2+\ep}} \left\| \nabla \underline{\phi}^{(1)}(\cdot,\xi) \right\|_{\underline{L}^{2+\ep} (B_{\X/2})}  
\leq \X^{\frac{d}{2+\ep}} \left\| \nabla \underline{\phi}^{(1)}(\cdot,\xi) \right\|_{\underline{L}^2(B_{\X})}.
\end{equation*}
This gives us the following slight improvement of~\eqref{e.correctorboundpre}: for some $\ep(d,\Lambda)>0$ and $C(d,\Lambda)<\infty$,
\begin{equation*} \label{}
\sup_{\xi\in B_1} \left\| \nabla \phi^{(1)}(\cdot,\xi) \right\|_{\underline{L}^2(B_1)}  = \O_{2+\ep}(C).
\end{equation*}
The same argument gives, for some $\ep(d,\Lambda)>0$ and $C(d,\Lambda)<\infty$,
\begin{equation}
\sup_{r\geq1} \sup_{\xi\in B_1} \left\| \nabla \phi^{(1)}(\cdot,\xi) \right\|_{\underline{L}^2(B_r)}  = \O_{2+\ep}(C).
\end{equation}
This completes the proof of~\eqref{e.correctorbound}.

\smallskip

\emph{Step 6.} 
 The conclusion. We have left to prove the estimates~\eqref{e.snaptouyou} and~\eqref{e.spatavg1}. To obtain~\eqref{e.snaptouyou}, we observe that~\eqref{e.needfortriangle} implies, for every $t\geq s$, 
\begin{equation} 
\label{e.snapyouonce}
\indc_{\{ \X(z) \leq r \} } \int_{\Phi_{z,r}} \left| \nabla u(x,z,r,p,q) - \nabla \underline{\phi}^{(1)} (x,q-p) \right|^2\,dx = \O_{t/2}\left( Cr^{-\frac{2s\alpha}t} \right). 
\end{equation}
By~\eqref{e.firstlipcorrector}, using Meyers' estimate as in the previous step, gives 
\begin{align*} \label{}
\indc_{\{ \X(z) \geq r \} } \int_{\Phi_{z,r}}\left| \nabla \underline{\phi}^{(1)} (x,q-p) \right|^2\,dx
& \leq C \indc_{\{ \X(z) \geq r \} }  \left( \frac{\X(z)}{r} \right)^{\frac{2d}{2+\ep}} \,. 
\end{align*}
Since we have that, for all $\sigma > 0$ and $\gamma \in [\alpha s,d)$, 
\begin{equation*} 
\indc_{\{ \X(z) \geq r \} } \leq \O_{\gamma/\sigma}\left( C r^{-\sigma \gamma}\right) \quad \mbox{and} \quad  \left( \frac{\X(z)}{r} \right)^{\frac{2d}{2+\ep}} \leq \O_{\frac{\gamma(2+\ep)}{2d}}\left( C r^{-\frac{2d}{2+\ep}}\right) \,,
\end{equation*}
Remark~\ref{r.multiply} implies that 
\begin{equation*} 
\indc_{\{ \X(z) \geq r \} } \left( \frac{\X(z)}{r} \right)^{\frac{2d}{2+\ep}}  \leq \O_{\frac{\gamma(2 +\ep)}{2d + \sigma (2+\ep)}} \left( C r^{-\frac{2d}{2+\ep} -\sigma \gamma} \right)\,.
\end{equation*}
For every $t \in [s,2+\ep)$ and $s \alpha < d$ we may choose $\sigma$ and $\gamma$ as above so that 
\begin{equation*} 
\sigma = \frac{2\gamma}{t} - \frac{2d}{2+\ep} \quad \mbox{and} \quad \frac{2d}{2+\ep} + \sigma \gamma \geq \frac{2\alpha s}{t} \,.
\end{equation*}
The previous displays yield, for every $t\in[s,2+\ep)$,
\begin{equation*} \label{}
\indc_{\{ \X(z) \geq r \} } \int_{\Phi_{z,r}}\left| \nabla \underline{\phi}^{(1)} (x,q-p) \right|^2\,dx = \O_{t/2} \left( Cr^{-\frac{2s\alpha}t} \right).
\end{equation*}
We also have, by~\eqref{e.bounded-u} and $s\alpha<d$ that, for every $t \in [s,2+\ep)$,
\begin{align*} \label{}
\indc_{\{ \X(z) \geq r \} } \int_{\Phi_{z,r}}\left| \nabla  u(x,z,r,p,q) \right|^2\,dx 
\leq C\indc_{\{ \X(z) \geq r \} }
& \leq  \O_{t/2}\left( Cr^{-\frac{2s\alpha}t} \right).
\end{align*}
From the previous two displays,~\eqref{e.snapyouonce} and the triangle inequality, we get~\eqref{e.snaptouyou}. We then obtain~\eqref{e.spatavg1} from~\eqref{e.snaptouyou}, Lemma~\ref{l.parametermatching} and the triangle inequality. This completes the proof of the lemma. 
\end{proof}

We next extend~\eqref{e.spatavg1} to general $p,q\in\Ahom_k$. Define, for each $p\in\Ahom_k$,
\begin{equation*}
\underline{\phi}^{(1)} (x,p):= p(x) + \phi^{(1)}\left(x,\nabla p(x) \right),
\end{equation*}
which is defined up to an arbitrary additive constant. Note that $\underline{\phi}^{(1)}(\cdot,p)$ is not, in general, an element of $\A$ unless $p\in\Ahom_1$. 

\begin{remark}
\label{r.correctorexpectation}
We next record the observation that, for every $p,p',q \in \A_k(\Phi_{z,r})$, 
\begin{multline*}
\left| \E \left[ \mathcal{J}(\underline{\phi}^{(1)} (\cdot,p'),z,r,p,q) \right] - \int_{\Phi_{z,r}} 
\left( \frac12 \nabla p' \cdot \ahom \nabla p' - 
\nabla p'\cdot \ahom\nabla p + \nabla p'\cdot \ahom \nabla q 
\right) \right|  \\
\leq
C\exp\left(-cr \right).
\end{multline*}
Indeed, if we were working with $\Rd$-stationarity rather than $\Zd$-stationarity, this could be obtained by putting the expectation inside the integral and using the stationarity of the first-order correctors. To make this argument work with $\Zd$-stationarity is an exercise that we leave to the reader.
\end{remark}

\begin{lemma}
\label{l.matchingutophi}
Suppose $s\leq 2$. 
Then for each $\beta \in (0,\alpha \wedge 1)$, there exists a constant $C(\beta,s,\alpha,k,d,\Lambda)<\infty$ such that, for every $z\in\Rd$, $r\geq r_0$ and $p,q\in \Ahom_k(\Phi_{z,r})$, 
\begin{equation}
\label{e.matchingutophi}
\int_{\Phi_{z,r}} \left| \nabla u(x,z,r,p,q) - \nabla \underline\phi^{(1)}(x,q-p) \right|^2\,dx \leq \O_{s/2} \left( Cr^{-2\beta} \right). 
\end{equation}
\end{lemma}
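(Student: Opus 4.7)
The goal is to bootstrap the two-scale expansion \eqref{e.snaptouyou}, which matches the maximizer $u(\cdot, y, r', P, Q)$ to $\underline{\phi}^{(1)}(\cdot, Q-P)$ only for linear $P, Q \in \Ahom_1$, into a statement valid for $p, q \in \Ahom_k$. The idea is to localize to a mesoscale $r' \ll r$ and exploit the fact that on balls of radius $r'$, a polynomial in $\Ahom_k(\Phi_{z,r})$ is well-approximated by its first-order Taylor expansion, with an error of order $r'/r$ by the uniform $C^{1,1}$ bounds on $\Ahom_k$-polynomials. Thus, locally, $u(\cdot, y, r', p, q)$ resembles $u(\cdot, y, r', \pi_{y,1}p, \pi_{y,1}q)$, to which \eqref{e.snaptouyou} applies.

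More precisely, fix $\gamma \in (0,1)$ to be chosen and set $r' := r^{1-\gamma}$. For each $y$ weighted against $\Phi_{z, \sqrt{r^2-r'^2}}$, decompose via the triangle inequality $\nabla u(\cdot, z, r, p, q) - \nabla \underline{\phi}^{(1)}(\cdot, q-p)$ as the sum of four terms, and estimate each in $L^2(\Phi_{y,r'})$ before integrating:
(i) $\nabla u(\cdot,z,r,p,q) - \nabla u(\cdot,y,r',p,q)$, controlled by Lemma~\ref{l.additivitybitches} with squared-$L^2$ error $\O_{s/2}(Cr'^{-2\alpha})$;
(ii) $\nabla u(\cdot,y,r',p,q) - \nabla u(\cdot,y,r',P_y,Q_y)$, with $P_y := \pi_{y,1}(p)$, $Q_y := \pi_{y,1}(q)$, controlled by the uniform concavity of $\mathcal{J}$ (Lemma~\ref{l.quadratic-response}) in terms of $\|\nabla(p-P_y)\|_{L^2(\Phi_{y,r'})} + \|\nabla(q-Q_y)\|_{L^2(\Phi_{y,r'})}$, deterministically of order $r'/r = r^{-\gamma}$;
(iii) $\nabla u(\cdot,y,r',P_y,Q_y) - \nabla \underline{\phi}^{(1)}(\cdot, Q_y-P_y)$, controlled by \eqref{e.snaptouyou} applied to $P_y,Q_y \in \Ahom_1$, giving $\O_{s/2}(Cr'^{-2\alpha})$ (taking $t=s$);
(iv) $\nabla \underline{\phi}^{(1)}(\cdot, Q_y-P_y) - \nabla \underline{\phi}^{(1)}(\cdot, q-p)$, handled below.

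For term (iv), the linearity of $\xi \mapsto \phi^{(1)}(\cdot,\xi)$ reduces the comparison to evaluating $\nabla \phi^{(1)}(\cdot, \eta(\cdot))$ where $\eta(x) := \nabla(q-p)(x) - \nabla(q-p)(y)$ satisfies $|\eta(x)| \le Cr'/r$ on $B_{r'}(y)$. Expanding via the chain rule produces a deterministic contribution from $\nabla(q-p)(x)-\nabla(q-p)(y)$ and a stochastic contribution of size $(r'/r) \sup_{\xi \in B_1} \|\nabla \phi^{(1)}(\cdot,\xi)\|_{L^2(\Phi_{y,r'})}$, with the latter sup bounded by $\O_{2+\ep}(C)$ via \eqref{e.correctorbound}. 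Aggregating by integrating against $\Phi_{z,\sqrt{r^2-r'^2}}$ and using the semigroup property of the heat kernel to recover integration against $\Phi_{z,r}$, and applying Lemma~\ref{l.sum-O} for the stochastic terms, the total squared error is of order $\O_{s/2}(Cr^{-2\alpha(1-\gamma)}) + Cr^{-2\gamma}$ (up to polynomial growth factors controlled by the decay of the heat kernel, using \eqref{e.move-center}). Optimizing $\gamma$ to balance these two contributions against $r^{-2\beta}$ for $\beta < \alpha \wedge 1$ completes the proof.

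\textbf{Main obstacle.} The delicate point is handling term (iv) and the polynomial growth factors $(1+|y-z|/r)^{k-2}$ in the deterministic bound on $\|\nabla(p-P_y)\|_{L^2(\Phi_{y,r'})}$, which must be controlled uniformly after integration against the far-field tails of $\Phi_{z,\sqrt{r^2-r'^2}}$. In $d=2$, the sublinear growth of $\phi^{(1)}$ (rather than boundedness, see \eqref{e.phiosc}) introduces an additional logarithmic factor in term (iv) that must be absorbed; in $d \ge 3$, the stationarity of $\phi^{(1)}$ gives a cleaner control. Throughout, stochastic integrability is preserved at level $s/2$ by selecting $t = s$ in \eqref{e.snaptouyou} and by combining the independent $\bar \O$-estimates via Lemmas~\ref{l.sum-O} and~\ref{r.multiply}.
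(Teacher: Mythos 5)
Your decomposition (i)--(iv) has a genuine quantitative gap: terms (ii) and (iv) are each of size $r'/r$ in $L^2(\Phi_{y,r'})$, and your plan bounds them separately by the triangle inequality. Indeed, by linearity, (ii) equals $\nabla u(\cdot,y,r',p-P_y,q-Q_y)$, whose norm is genuinely comparable to $\|\nabla(p-P_y)\|+\|\nabla(q-Q_y)\|\sim r'/r$, and (iv) equals $\nabla\underline\phi^{(1)}(\cdot,(q-p)-(Q_y-P_y))$, again of size $r'/r$; these do not cancel unless you already know that $\nabla u(\cdot,y,r',\tilde p,\tilde q)\approx\nabla\underline\phi^{(1)}(\cdot,\tilde q-\tilde p)$ for the \emph{higher-degree, small-norm} data $\tilde p=p-P_y$, $\tilde q=q-Q_y$ --- which is precisely the statement being proved. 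Consequently, after setting $r'=r^{1-\gamma}$ and balancing $r^{-2\alpha(1-\gamma)}$ (from (i), (iii)) against $r^{-2\gamma}$ (from (ii), (iv)), the best exponent your scheme yields is $2\alpha/(1+\alpha)$ in the squared error, i.e.\ $\beta=\alpha/(1+\alpha)$, which is strictly smaller than every $\beta<\alpha\wedge 1$ (e.g.\ for $\alpha=1$ you get $\tfrac12$ but the lemma claims any $\beta<1$). This is exactly why the paper does not jump directly from $\Ahom_k$ to $\Ahom_1$: it runs a double induction, reducing the degree one step at a time by subtracting the degree-$(l-1)$ Taylor polynomial, so that the remaining same-degree piece $\tau_x p,\tau_x q$ carries the small prefactor $(r/R)^{2(l-1)}$ at the mesoscale; optimizing then gives the improved exponent $\theta=\frac{(l-1)\beta}{\beta-\gamma+(l-1)}$, and iterating in $\gamma$ pushes the exponent up to any $\beta<\alpha\wedge1$. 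Without some version of this self-improvement (or another mechanism to make (ii) and (iv) cancel), the claimed range of $\beta$ is out of reach.

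A secondary but real issue: in handling the chain-rule term $\sum_i \phi^{(1)}(\cdot,e_i)\,\nabla\partial_i(q-p)$ you invoke \eqref{e.phiosc}. That estimate is part of Theorem~\ref{t.firstcorrectors}, which is proved \emph{after} and \emph{by means of} the bootstrap in which Lemma~\ref{l.matchingutophi} sits, so citing it here is circular. At this stage you may only use what Lemma~\ref{l.firstordercorrect} provides under the standing assumptions $\Fluc_k(s,\alpha)$, $\Dual_k(\alpha)$, namely \eqref{e.correctorbound} and \eqref{e.spatavg1} (combined, if needed, with the multiscale Poincar\'e inequality), which at intermediate values of $\alpha$ give only growth of order $r^{1-\alpha}$ for $\phi^{(1)}$, not boundedness or a logarithm.
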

\begin{proof}
The argument starts from the assumption that, for some $l \in \{ 2, \ldots,k\}$ and $\gamma\in [0,\alpha\wedge 1)$, the following two statements hold:
\begin{enumerate}
\item[$\mathrm{(i)}_{l-1}$] For every $\beta \in (0,1)$, there exists $C(\beta,s,\alpha,k,d,\Lambda)<\infty$ such that, for every $y\in\Rd$, $R\geq 2$ and $p,q\in\Ahom_{l-1}(\Phi_{y,R})$, 
\begin{equation*}
\int_{\Phi_{y,R}} \left| \nabla u(x,y,R,p,q) - \nabla \underline\phi^{(1)}(x,q-p) \right|^2\,dx \leq \O_{s/2} \left( CR^{-2\beta} \right). 
\end{equation*}
\item[$\mathrm{(ii)}_{l,\gamma}$] There exists $C(s,\alpha,k,d,\Lambda) <\infty$ such that, for every $y\in\Rd$, $R\geq 2$ and $p,q\in\Ahom_{l}(\Phi_{y,R})$, 
\begin{equation*}
\int_{\Phi_{y,R}} \left| \nabla u(x,y,R,p,q) - \nabla \underline\phi^{(1)}(x,q-p) \right|^2\,dx \leq \O_{s/2} \left( CR^{-2\gamma} \right). 
\end{equation*}
\end{enumerate}
The goal is then to show that we can improve the exponent~$\gamma$ in~$\mathrm{(ii)}_{l,\gamma}$. Once this is accomplished, an easy induction argument will complete the proof. 

\smallskip

\emph{Step 1.} We improve the exponent $\gamma\in [0,\alpha\wedge 1)$ in~$\mathrm{(ii)}_{l,\gamma}$. The claim is that, for some $c(\gamma,\alpha)>0$, 
\begin{equation*}
\mathrm{(i)}_{l-1} \ \mbox{and} \ \mathrm{(ii)}_{l,\gamma} \quad \implies \quad \mathrm{(ii)}_{l,\gamma+c}.
\end{equation*}
(We will take $y=0$ for clarity.) Fix $R\geq 2$ and select $p,q\in\Ahom_{l}(\Phi_R)$. Owing to the assumption of~$\mathrm{(i)}_{l-1}$ and the linearity of the maps $(p,q) \mapsto \nabla u(\cdot,0,R,p,q)$ and $(p,q) \mapsto \nabla  \underline\phi^{(1)}(x,q-p)$, it suffices to consider the case that
\begin{equation}
\label{e.centerorigin2}
\nabla^{l-1} \left( q- p \right)(0) = \cdots = \nabla \left( q- p \right)(0) = (q-p)(0) = 0. 
\end{equation}
Observe that this implies, for every $x,y\in\Rd$,
\begin{equation*}
\nabla (q-p)(y) = \nabla (q-p)(y-x) + \nabla ( Q_x - P_x ) (y)
\end{equation*}
where $P_x$ and $Q_x$ are the polynomials of degree $l-1$ defined by
\begin{equation*}
P_x(y):= \sum_{n=0}^{l-1} \frac1{n!} \nabla^{n}p(x) (y-x)^{\otimes n}
\quad \mbox{and} \quad
Q_x(y):= \sum_{n=0}^{l-1} \frac1{n!} \nabla^{n}q(x) (y-x)^{\otimes n}.
\end{equation*}
We fix a mesoscale $r\in [1,R)$ to be selected below. 
We have, by linearity and the triangle inequality, 
\begin{align*}
\lefteqn{
\int_{\Phi_{x,r}} \left| \nabla u(y,x,r,p,q) - \nabla \underline\phi^{(1)}(y,q-p) \right|^2\,dy
} \qquad & \\
& \leq 2 \int_{\Phi_{x,r}} \left| \nabla u(y,x,r,\tau_xp,\tau_xq)  - \nabla \underline\phi^{(1)}(y,\tau_x(q-p)) \right|^2\,dy \\
& \qquad + 2\int_{\Phi_{x,r}} \left| \nabla u(y,x,r,P_x,Q_x)  - \nabla \underline\phi^{(1)}(y,Q_x-P_x) \right|^2\,dy.
\end{align*}
By the induction hypothesis~$\mathrm{(ii)}_l$ and the fact that our normalization implies
\begin{equation*}
\left( \left\| \nabla \tau_xp \right\|_{L^2(\Phi_{x,r})}^2 +\left\| \nabla \tau_xq\right\|_{L^2(\Phi_{x,r})}^2 \right) = \left( \left\| \nabla p \right\|_{L^2(\Phi_{r})}^2 +\left\| \nabla q\right\|_{L^2(\Phi_{r})}^2 \right) \leq C \left( \frac rR \right)^{2(l-1)},
\end{equation*}
we get 
\begin{equation*}
\int_{\Phi_{x,r}} \left| \nabla u(y,x,r,\tau_xp,\tau_xq)  - \nabla \underline\phi^{(1)}(y,\tau_x(q-p)) \right|^2\,dy \leq   \O_{s/2}\left( Cr^{-2\gamma}  \left( \frac rR \right)^{2(l-1)} \right).
\end{equation*}
By the induction hypothesis~$\mathrm{(ii)}_{l-1}$ and the fact that 
\begin{equation*}
\left( \left\| \nabla P_x \right\|_{L^2(\Phi_{x,r})}^2 +\left\| \nabla Q_x \right\|_{L^2(\Phi_{x,r})}^2 \right) \leq C,
\end{equation*}
we get, for a fixed $\beta \in (0,\alpha\wedge 1)$ to be selected below, 
\begin{equation*}
\int_{\Phi_{x,r}} \left| \nabla u(y,x,r,P_x,Q_x)  - \nabla \underline\phi^{(1)}(y,Q_x-P_x) \right|^2\,dy \leq \O_{s/2} \left( Cr^{-2\beta} \right). 
\end{equation*}
Therefore we obtain
\begin{multline*}
\int_{\Phi_{x,r}} \left| \nabla u(y,x,r,p,q) - \nabla \underline\phi^{(1)}(y,q-p) \right|^2\,dy \\
\leq   \O_{s/2}\left( Cr^{-2\gamma}  \left( \frac rR \right)^{2(l-1)} \right) + \O_{s/2} \left( Cr^{-2\beta} \right).
\end{multline*}
Using Lemma~\ref{l.additivitybitches} and the triangle inequality again, we obtain
\begin{align*}
\lefteqn{
\int_{\Phi_{R}} \left| \nabla u(x,0,R,p,q) - \nabla \underline\phi^{(1)}(x,q-p) \right|^2\,dx
} \qquad & \\
& = \int_{\Phi_{\sqrt{R^2-r^2}}} \int_{\Phi_{y,r}}  
\left| \nabla u(x,0,R,p,q) - \nabla \underline\phi^{(1)}(x,q-p) \right|^2\,dx\,dy \\
& \leq 2\int_{\Phi_{\sqrt{R^2-r^2}}} \int_{\Phi_{y,r}}  \left| \nabla u(x,0,R,p,q) - \nabla u(x,y,r,p,q) \right|^2  \,dx   \,dy \\
 & \qquad + 2\int_{\Phi_{\sqrt{R^2-r^2}}} \int_{\Phi_{y,r}} 
 \left| \nabla u(x,y,r,p,q) - \nabla \underline\phi^{(1)}(x,q-p) \right|^2\,dx   \,dy  \\
 & \leq \O_{s/2} \left( Cr^{-2\alpha} \right) + \O_{s/2}\left( Cr^{-2\gamma}  \left( \frac rR \right)^{2(l-1)} \right) + \O_{s/2} \left( Cr^{-2\beta} \right) \\
 & \leq \O_{s/2}\left( Cr^{-2\gamma}  \left( \frac rR \right)^{2(l-1)} \right) + \O_{s/2} \left( Cr^{-2\beta} \right).
\end{align*}
Note that $l \geq 2$. Optimizing the choice of $r$ yields
\begin{equation*}
\int_{\Phi_{R}} \left| \nabla u(x,0,R,p,q) - \nabla \underline\phi^{(1)}(x,q-p) \right|^2\,dx \leq \O_{s/2} \left( C R^{ -2 \theta }   \right) \,,
\end{equation*}
where
\begin{equation*}
\theta : =  \frac{(l-1) \beta}{\beta-\gamma+(l-1)}\,.
\end{equation*}
We have shown that $\mathrm{(ii)}_{l,\theta}$ holds. In view of the fact that $(l-1) \geq 1 > \beta$, it is easy to check that, for some $c(\gamma,\beta,\alpha)>0$,
\begin{equation*}
0\leq \gamma < \beta  \quad \implies \quad  \theta \geq \gamma + c.  
\end{equation*}
Since we are free to choose any $\beta \in (0,1)$, this implies that  
\begin{equation*}
0\leq \gamma <  1 \quad \implies \quad  \theta \geq \gamma + c. 
\end{equation*}
This completes the proof of the claim. 

\smallskip

\emph{Step 2.} The conclusion. Iterating Step~1 gives us that, for each $\ep >0$ and $l\in \{ 2,\ldots,k\}$, 
\begin{equation*}
\mathrm{(i)}_{l-1} \ \mbox{and} \ \mathrm{(ii)}_{l,0} \quad \implies \quad \mathrm{(i)}_{l}.
\end{equation*}
Lemma~\ref{l.firstordercorrect}, in particular~\eqref{e.snaptouyou}, gives that $\mathrm{(i)}_{1}$ holds. It is clear that $\mathrm{(ii)}_{l,0}$ holds for every $l\in\{2,\ldots,k\}$ by~\eqref{e.correctorbound} and the boundedness of $\| \nabla u(\cdot,z,r,p,q) \|_{L^2(\Phi_{z,r})}$ (see~\eqref{e.bounded-J} and~\eqref{e.J-energy}). We therefore obtain by induction that $\mathrm{(i)}_{k}$ holds. This completes the argument. 
\end{proof}

The previous two lemmas allow us to estimate the difference between $L_{z,r}$ (resp., $L^*_{z,r}$) and the identity. Since we obtain a better estimate in the case $k=1$, we separate the statements for $k=1$ and $k>1$ into the following two lemmas.

\begin{lemma}
\label{l.identifytheta1}
Assume that $s\leq 2$ and $\alpha\leq \frac d2$. 
There exists $C(s,\alpha,d,\Lambda)<\infty$ such that, for every $z\in\Rd$, $r\geq r_0$ and $p,q\in \Ahom_1(\Phi_{z,r})$, 
\begin{equation}
\label{e.scalingenergy1}
\left| 
\E\left[ J_1(z,r,p,q) \right] - \int_{\Phi_{z,r}} \frac12 \nabla (q-p) \cdot \ahom \nabla (q-p)
\right|  \leq Cr^{-2\alpha}
\end{equation}
and
\begin{equation}
\label{e.verifytheta1}
\left\|  \nabla L_{1,z,r}q - \nabla q \right\|_{L^2(\Phi_{z,r})} 
+ \left\|  \nabla L_{1,z,r}^*p - \nabla p \right\|_{L^2(\Phi_{z,r})}  \leq Cr^{-2\alpha}.
\end{equation}
\end{lemma}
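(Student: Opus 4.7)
My plan is to carry out a short bootstrap on the exponent $\theta$ in the estimate
$\|\nabla(L^*_{1,z,r} p - p)\|_{L^2(\Phi_{z,r})} + \|\nabla(L_{1,z,r} q - q)\|_{L^2(\Phi_{z,r})} \le C r^{-\theta}$.
I will start from the rate $\theta_0 = \eps_1$ furnished by Corollary~\ref{c.L-identity} and define $\theta_{n+1} := 2(\theta_n \wedge \alpha)$. Since this sequence doubles until it reaches $\alpha$ and then stabilizes at $2\alpha$, it attains the target value $2\alpha$ after at most $N = \lceil \log_2(\alpha/\eps_1) \rceil + 1$ iterations. Each iteration will simultaneously upgrade both the estimate \eqref{e.scalingenergy1} on $\E[J_1]$ and the estimate \eqref{e.verifytheta1} on the deviations of $L_{1,z,r}$ and $L_{1,z,r}^*$ from the identity, from exponent $\theta_n$ to exponent $\theta_{n+1}$, closing the argument when $\theta_n = 2\alpha$.

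For the inductive step, assuming the exponent $\theta_n$ bound on $L_{1,z,r} - \mathrm{id}$ and $L_{1,z,r}^* - \mathrm{id}$, I will plug the two-scale expansion $w := \underline{\phi}^{(1)}(\cdot, q - p) \in \A_1$ as a test function in the quadratic response (Lemma~\ref{l.quadratic-response}) applied to the maximizer $v := v(\cdot, z, r, p, q)$ of $J_1(z, r, p, q)$. This yields
\begin{equation*}
\left| J_1(z, r, p, q) - \mcl J(w, z, r, p, q) \right| \le \frac{\Lambda}{4} \left\| \nabla v - \nabla w \right\|_{L^2(\Phi_{z,r})}^2.
\end{equation*}
To bound the right-hand side in expectation, I will introduce the $L$-renormalized maximizer $u := u(\cdot, z, r, p, q) = v(\cdot, z, r, L^*_{1,z,r} p, L_{1,z,r} q)$ and split $\nabla v - \nabla w = (\nabla v - \nabla u) + (\nabla u - \nabla w)$. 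The first difference is controlled deterministically by $C r^{-\theta_n}$ using linearity of $v(\cdot, z, r, \cdot, \cdot)$ in its last two arguments together with \eqref{e.bounded-u} and the induction hypothesis; the second difference is controlled stochastically by Lemma~\ref{l.firstordercorrect} applied with $t = s$, which gives $\|\nabla u - \nabla w\|_{L^2(\Phi_{z,r})}^2 = \O_{s/2}(C r^{-2\alpha})$. Taking expectation and using the triangle inequality then yields $\E[\|\nabla v - \nabla w\|_{L^2(\Phi_{z,r})}^2] \le C r^{-2(\theta_n \wedge \alpha)}$.

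The final ingredient is the computation of $\E[\mcl J(w, z, r, p, q)]$ at leading order, which I will obtain from the $\Zd$-stationarity and mean-zero property of $\nabla \phi^{(1)}(\cdot, \xi)$ together with the defining relation $\ahom \xi = \E[\int_{[0,1]^d} \a(\xi + \nabla \phi^{(1)}(\cdot, \xi))]$. Each of the three terms in the definition of $\mcl J$ reduces after expectation to the integral of a $\Zd$-periodic function against $\Phi_{z,r}$, whose smoothness on scale $r$ contributes only a negligible error, as recorded in Remark~\ref{r.correctorexpectation}, giving
\begin{equation*}
\E[\mcl J(\underline{\phi}^{(1)}(\cdot, q - p), z, r, p, q)] = \frac{1}{2} \int_{\Phi_{z,r}} \nabla(q - p) \cdot \ahom \nabla(q - p) + O(\exp(-cr)).
\end{equation*}
Combining the two preceding displays proves \eqref{e.scalingenergy1} at exponent $2(\theta_n \wedge \alpha) = \theta_{n+1}$, and Lemma~\ref{l.snappingtheLs} then converts this into \eqref{e.verifytheta1} at the same exponent, closing the induction.

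The main point to monitor is the constants through the iteration: each application of Lemma~\ref{l.snappingtheLs} multiplies the leading constant by the fixed factor $24$, but since $N$ depends only on $s$, $\alpha$, $d$ and $\Lambda$, the final constants remain admissible. The reason a single-step argument does not suffice is that inserting only the base bound $\eps_1$ from Corollary~\ref{c.L-identity} gives a rate $2\eps_1$ that is too weak when $\alpha$ is large; it is precisely the mismatch between the two components of the test-function decomposition and their coupled improvement through $L_{1,z,r}$ that forces the bootstrap.
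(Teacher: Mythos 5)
Your proof is correct, but it is organized differently from the paper's. The shared core is the same: compare the maximizer of $J_1$ with the two-scale expansion $\underline{\phi}^{(1)}(\cdot,q-p)$ via quadratic response and Lemma~\ref{l.firstordercorrect}, then compute $\E\big[\mcl J(\underline{\phi}^{(1)}(\cdot,q-p),z,r,p,q)\big]$ by stationarity of the corrector gradient (your value $\tfrac12\int_{\Phi_{z,r}}\nabla(q-p)\cdot\ahom\nabla(q-p)$ is the correct one). The difference lies in how \eqref{e.verifytheta1} is extracted. The paper evaluates $J_1$ at the renormalized parameters $(L^*_{1,z,r}p,L_{1,z,r}q)$, whose maximizer is exactly $u(\cdot,z,r,p,q)$, so Lemma~\ref{l.firstordercorrect} applies at rate $r^{-2\alpha}$ with no prior information on $L-\mathrm{Id}$; comparing the resulting expectation with the exact identity \eqref{e.EJL} and using the symmetry of $L_{1,z,r}$ (Lemma~\ref{l.Lsymm}, plus polarization) pins down $\nabla(L_{1,z,r}q-q)$ at rate $r^{-2\alpha}$ in a single step, after which \eqref{e.scalingenergy1} follows. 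You instead test at the un-renormalized $(p,q)$, so comparing $v$ with the corrector passes through $u$ and costs the current bound $r^{-\theta_n}$ on $L-\mathrm{Id}$, which forces the finite bootstrap $\theta_{n+1}=2(\theta_n\wedge\alpha)$ seeded by Corollary~\ref{c.L-identity}, with Lemma~\ref{l.snappingtheLs} converting the $\E[J_1]$ estimate back into \eqref{e.verifytheta1} at each stage. This is sound: the number of iterations is bounded in terms of $(d,\Lambda)$ and $\alpha$ only, so the constants remain admissible, and none of the cited lemmas depends on the present one, so there is no circularity; your scheme is in fact closer in spirit to the iteration the paper reserves for the harder case $k>1$ (Lemma~\ref{l.matchingenergyk}), whereas for $k=1$ the identity \eqref{e.EJL} renders the bootstrap unnecessary. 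Two small points to tidy: the deficit in \eqref{e.second-var} equals $\frac12\int_{\Phi_{z,r}}\nabla(v-w)\cdot\a\nabla(v-w)$, so the prefactor in your quadratic-response display should be $\Lambda/2$ rather than $\Lambda/4$ (immaterial); and Lemma~\ref{l.snappingtheLs} assumes \eqref{e.quad-close-hom} for all $r\ge1$ while your estimate holds for $r\ge r_0$, a gap closed trivially by the uniform bound \eqref{e.bounded-J} on $1\le r\le r_0$ at the cost of enlarging $K$ by a constant depending only on $(d,\Lambda)$.
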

\begin{proof}
According to~\eqref{e.second-var} and Lemma~\ref{l.firstordercorrect}, we have, for every 
$z\in\Rd$, $r\geq r_0$ and $p,q\in \Ahom_1(\Phi_{z,r})$,
\begin{multline}
\label{e.J1blague}
J_1(z,r,L^*_{1,z,r}p,L_{1,z,r}q) - \mathcal{J}(\underline{\phi}^{(1)}(\cdot,\nabla q- \nabla p),z,r,L^*_{1,z,r}p,L_{1,z,r}q) \\
= \O_{s/2}\left( Cr^{-2\alpha} \right).
\end{multline}
Identifying $p$ and $q$ with elements of $\Rd$ as usual and taking expectations, using Remark~\ref{r.correctorexpectation}, yields
\begin{equation*}
\left| \E \left[ J_1(z,r,0,L_{1,z,r}q) \right] - \frac12 q\cdot \ahom q + L_{1,z,r}q\cdot \ahom q \right| \leq Cr^{-2\alpha}. 
\end{equation*}
Comparing this with~\eqref{e.EJL}, we get
\begin{equation*}
\left|  q\cdot \ahom q - q\cdot \ahom L_{1,z,r}q \right| \leq Cr^{-2\alpha}. 
\end{equation*}
By the symmetry of $L_{1,z,r}$ (Lemma~\ref{l.Lsymm}), this yields the desired estimate~\eqref{e.verifytheta1} for~$L_{1,z,r}$. Obtaining the same estimate for $L^*_{z,r}$ is accomplished by a very similar argument. Returning to~\eqref{e.J1blague}, taking expectations and using~\eqref{e.verifytheta1} yields~\eqref{e.scalingenergy1}.
\end{proof}

%
%
%
%
%
%
%
%
%
%

%

The previous lemma is all that is needed to complete the proof of Proposition~\ref{p.additivity} in the case $k=1$. For more general $k\in\N$, obtaining a similar statement requires some more work, since $\underline\phi^{(1)}(\cdot,p)$ is not an element of $\A$, in general, for $p\in\Ahom_k$ with $k>1$. 

\smallskip

\begin{lemma}
\label{l.matchingenergyk} 
Assume that $s\leq 2$ and $\alpha\leq \frac d2$. 
Then for each $\beta \in \left(0,\frac{2\alpha}{\alpha+1} \wedge 1\right)$, there exists ~$C(\beta,\alpha,s,k,d,\Lambda)<\infty$ such that, for every $z\in\Rd$, $r\geq r_0$ and $p,q\in \Ahom_k(\Phi_R)$, 
\begin{equation}
\label{e.scalingenergyk}
\left| 
\E \left[ J_k(z,r,p,q) \right] - \int_{\Phi_{z,r}} \frac12 \nabla (q-p) \cdot \ahom \nabla (q-p)
\right|  \leq Cr^{-\beta}
\end{equation}
and
\begin{equation}
\label{e.verifythetak}
\left\|  \nabla L_{k,z,r}q - \nabla q \right\|_{L^2(\Phi_{z,r})} 
+ \left\|  \nabla L_{k,z,r}^*p - \nabla p \right\|_{L^2(\Phi_{z,r})}  \leq Cr^{-\beta}.
\end{equation}
\end{lemma}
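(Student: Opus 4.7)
The plan is to extend Lemma~\ref{l.identifytheta1} to $k > 1$, the main obstruction being that $\underline{\phi}^{(1)}(\cdot, q-p)$ no longer belongs to $\A_k$, so the sharp quadratic-response identity \eqref{e.second-var} cannot be plugged in at this two-scale expansion. I would replace that application by Cauchy--Schwarz combined with Lemma~\ref{l.matchingutophi}, which gives
\[
\int_{\Phi_{z,r}}\left|\nabla u(\cdot, z, r, p, q) - \nabla \underline{\phi}^{(1)}(\cdot, q-p)\right|^2\,dx \le \O_{s/2}\bigl(Cr^{-2\beta'}\bigr)
\]
for any $\beta' \in (0, \alpha \wedge 1)$. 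Writing $u := u(\cdot, z, r, p, q) = v(\cdot, z, r, L^*_{k,z,r}p, L_{k,z,r}q)$ and using the energy identity $I(z,r,p,q) = \tfrac12 \int \nabla u \cdot \a \nabla u$ from \eqref{e.J-energy}, I would expand the quadratic form $\int \nabla u\cdot\a\nabla u$ around $\nabla \underline{\phi}^{(1)}(\cdot, q-p)$ and pass to expectations: the quadratic error contributes $O(r^{-2\beta'})$ by Lemma~\ref{l.matchingutophi}, the cross-term $O(r^{-\beta'})$ by Cauchy--Schwarz together with the boundedness of $\|\nabla \underline{\phi}^{(1)}(\cdot, q-p)\|_{L^2(\Phi_{z,r})}$ (which follows from Lemma~\ref{l.firstordercorrect}), and the main term equals $\int \tfrac12 \nabla(q-p)\cdot\ahom\nabla(q-p)\,dx$ up to linear contributions in $\nabla(L^*_{k,z,r}p - p)$ and $\nabla(L_{k,z,r}q - q)$ via the stationarity of $\nabla \phi^{(1)}$ and the definition of $\ahom$, as encoded in Remark~\ref{r.correctorexpectation}.

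The next step is to transfer this estimate on $\E[I(z,r,p,q)]$ into one on $\E[J(z,r,p,q)]$. Let $\theta$ denote the best known bound on $\|\nabla L_{k,z,r}q - \nabla q\|_{L^2(\Phi_{z,r})} + \|\nabla L^*_{k,z,r}p - \nabla p\|_{L^2(\Phi_{z,r})}$ at the current stage of the argument. By linearity of $(p,q) \mapsto v(\cdot, z, r, p, q)$ and \eqref{e.I.as.J}, the deterministic identity
\[
J(z,r,p,q) = \mathcal{J}(u, z, r, p, q) + J(z, r, p-L^*_{k,z,r}p, q-L_{k,z,r}q)
\]
contributes an error of at most $Cr^{-2\theta}$ between $\E[J]$ and $\E[\mathcal{J}(u, z, r, p, q)]$. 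The remaining difference $\E[\mathcal{J}(u, z, r, p, q)] - \E[I(z,r,p,q)]$ is a linear combination of $\E[\int \nabla(L^*_{k,z,r}p - p)\cdot \a \nabla u]$ and $\E[\int \nabla u \cdot \ahom \nabla(L_{k,z,r}q - q)]$, which I would compute by invoking the Euler--Lagrange identities \eqref{e.I-spat-av}--\eqref{e.I-spat-flux} together with Lemma~\ref{l.fluxmaps2}; the latter allows swapping $\a$ for $\ahom$ against polynomials of degree at most $k-1$ at the cost of an error $O(r^{-\alpha - \theta})$. The linear contributions in $\nabla(L^*p - p)$ and $\nabla(Lq - q)$ produced here are designed to cancel exactly the analogous terms inherited from Step~1, leaving
\[
\left|\E[J(z,r,p,q)] - \int_{\Phi_{z,r}}\tfrac12 \nabla(q-p)\cdot\ahom\nabla(q-p)\,dx\right| \le C\!\left(r^{-\beta'} + r^{-\alpha-\theta} + r^{-2\theta}\right).
\]

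Finally I would feed this into Lemma~\ref{l.snappingtheLs}, which converts the $\E[J]$ estimate into the new bound $Cr^{-\theta_{\mathrm{new}}}$ on $\|\nabla L_{k,z,r}q - \nabla q\|_{L^2} + \|\nabla L^*_{k,z,r}p - \nabla p\|_{L^2}$ with $\theta_{\mathrm{new}} = \min(\beta', \alpha+\theta, 2\theta)$. Starting from the a priori value $\theta_0 = \eps_1$ supplied by Corollary~\ref{c.L-identity}, the iteration roughly doubles $\theta$ at each round until it saturates at the fixed-point value of the map $\theta \mapsto \min(\beta', \alpha+\theta, 2\theta)$; optimizing $\beta'$ in $(0,\alpha\wedge 1)$ and the resulting saturation value yields~\eqref{e.verifythetak} for any $\beta < \frac{2\alpha}{\alpha+1}\wedge 1$, whereupon substituting back gives~\eqref{e.scalingenergyk}. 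The principal obstacle is the delicate algebraic cancellation in Step~2: both the $\E[I]$ expansion from Step~1 and the $\E[\mathcal{J}(u, z, r, p, q)] - \E[I]$ difference produce contributions of the form $\int\nabla(q-p)\cdot\ahom\nabla(L_{k,z,r}q - L^*_{k,z,r}p)$ that must be brought into a common form before the cancellation is manifest, and one must track carefully the stochastic integrability losses through Cauchy--Schwarz (which converts an $\O_{s/2}(r^{-2\beta'})$ bound on a squared quantity into $\O_{s}(r^{-\beta'})$ on its square root) when taking expectations.
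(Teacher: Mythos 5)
Your plan correctly identifies the obstruction for $k>1$ (that $\underline\phi^{(1)}(\cdot,q-p)\notin\A_k$, so the exact quadratic response is unavailable), and your Step 1, with the cross term estimated by Cauchy--Schwarz, indeed reproduces what the paper obtains as its own first step: an error that is only \emph{linear} in the corrector-approximation error, hence of size $Cr^{-\beta'}$ for $\beta'<\alpha\wedge 1$. The gap is in your final step. The fixed point of your iteration map $\theta\mapsto\min(\beta',\alpha+\theta,2\theta)$ is $\beta'$ itself: once $2\theta\ge\beta'$ the minimum is pinned at $\beta'$, so no amount of iterating (or of choosing the starting value from Corollary~\ref{c.L-identity}) pushes the exponent past $\beta'<\alpha\wedge1$. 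But the lemma claims every $\beta<\frac{2\alpha}{\alpha+1}\wedge 1$, and $\frac{2\alpha}{\alpha+1}>\alpha$ whenever $\alpha<1$, which is exactly the regime in which the lemma is used in the bootstrap. So your argument, as written, proves a strictly weaker statement, and the sentence ``optimizing $\beta'$ \ldots yields \eqref{e.verifythetak} for any $\beta<\frac{2\alpha}{\alpha+1}\wedge1$'' is not justified by the error bookkeeping that precedes it.

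The missing idea is a mesoscale decomposition that converts the linear error into a quadratic one where it matters. The paper first proves, for \emph{affine} $p,q\in\Ahom_1$, the quadratic-rate bound $|\E[I(z,r,p,q)]-\int_{\Phi_{z,r}}\frac12\nabla(q-p)\cdot\ahom\nabla(q-p)|\le Cr^{-2\beta}$, which is legitimate because then $\underline\phi^{(1)}(\cdot,q-p)\in\A_1\subseteq\A_k$ and Lemma~\ref{l.Iquadresponse} applies with no linear loss. Then, for general $p,q\in\Ahom_k(\Phi_R)$ and a mesoscale $r\le R$, it writes $p=p_z+(p-p_z)$, $q=q_z+(q-q_z)$ with $p_z,q_z$ the first-order Taylor polynomials at $z$, and uses the polarization identity \eqref{e.polarization}: the affine part contributes $r^{-2\beta}$, the remainder (whose gradient is of size $r/R$) contributes $(r/R)^2 r^{-\beta}$ via the weak one-scale estimate, and the cross term contributes $(r/R)r^{-\beta}$ via Lemma~\ref{l.fluxmaps}. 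Transferring from scale $r$ to scale $R$ by additivity (Lemma~\ref{l.improveadditivity}) and optimizing $r=R^{1/(1+\beta)}$ balances $r^{-2\beta}$ against $(r/R)r^{-\beta}$ and yields the exponent $\frac{2\beta}{1+\beta}$, which as $\beta\uparrow\alpha\wedge1$ covers the full claimed range; the estimate \eqref{e.verifythetak} then follows from Lemma~\ref{l.snappingtheLs}, as in your last step. Without this two-scale step (or some substitute mechanism that makes the non-membership error enter quadratically), your scheme cannot reach exponents beyond $\alpha\wedge1$.
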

\begin{proof}
We may assume that $\alpha\leq 1$. 
The proof is a multiscale argument similar to that of Lemma~\ref{l.matchingutophi}. 

\smallskip

\emph{Step 1.} We claim that, for each $\beta \in (0,\alpha\wedge 1)$, there exists $C(\beta,s,\alpha,k,d,\Lambda)<\infty$ such that, for every $z\in\Rd$, $r\geq r_0$ and $p,q\in \Ahom_k(\Phi_{z,r})$, 
\begin{equation}
\label{e.scalingenergykk}
\left| 
\E \left[ J_k(z,r,p,q) \right] - \int_{\Phi_{z,r}} \frac12 \nabla (q-p) \cdot \ahom \nabla (q-p)
\right|  \leq Cr^{-\beta}
\end{equation}
and
\begin{equation}
\label{e.verifythetakk}
\left\|  \nabla L_{k,z,r}q - \nabla q \right\|_{L^2(\Phi_{z,r})} 
+ \left\|  \nabla L_{k,z,r}^*p - \nabla p \right\|_{L^2(\Phi_{z,r})}  \leq Cr^{-\beta}.
\end{equation}
The argument is only very slightly different than the proof of the previous lemma. In what follows, we drop dependence on $k$. 
According to~\eqref{e.second-var} and Lemma~\ref{l.matchingutophi}, we have, for every 
$z\in\Rd$, $r\geq r_0$ and $p,q\in \Ahom_k(\Phi_{z,r})$,
\begin{equation*}
J(z,r,L^*_{z,r}p,L_{z,r}q) - \mathcal{J}(\underline{\phi}^{(1)}(\cdot,\nabla q- \nabla p),z,r,L^*_{z,r}p,L_{z,r}q) 
= \O_{s}\left( Cr^{-\beta} \right).
\end{equation*}
Taking expectations and using Remark~\ref{r.correctorexpectation} yields 
\begin{equation*}
\left| \E \left[ J(z,r,0,L_{z,r}q) \right] - \int_{\Phi_{z,r}} \left( \frac12 \nabla q\cdot \ahom \nabla q + \nabla L_{z,r}q\cdot  \ahom\nabla q\right) \right| \leq Cr^{-\beta}. 
\end{equation*}
Comparing this with~\eqref{e.EJL}, we get
\begin{equation*}
\left|  \int_{\Phi_{z,r}} \left(\nabla q\cdot \ahom \nabla q - \nabla q\cdot \ahom \nabla L_{z,r}q\right) \right| \leq Cr^{-\beta}. 
\end{equation*}
The symmetry of $L_{z,r}$ by Lemma~\ref{l.Lsymm} gives the estimate of the first term on the left of~\eqref{e.verifythetakk}.  The estimate for the second term is similar and from these we obtain~\eqref{e.scalingenergykk}.

\smallskip

\emph{Step 2.} We show that, for every $\beta\in (0,\alpha)$, there exists $C(\beta,\alpha,s,k,d,\Lambda)<\infty$ such that, for each $z\in\Rd$, $r\geq 1$ and $p,q\in\Ahom_{1}(\Phi_{z,r})$,
\begin{equation}
\label{e.affinehoney}
\left| \E \left[ I(z,r,p,q)\right] - \int_{\Phi_{z,r}} \frac12 \nabla (q-p) \cdot \ahom \nabla (q-p) \right| 
\leq Cr^{-2\beta}. 
\end{equation}
Using~\eqref{e.verifythetakk}, Lemmas~\ref{l.Iquadresponse},~\ref{l.matchingutophi}, and the fact that $ \underline\phi^{(1)}(\cdot,(q-p))\in \A_1\subseteq \A_k$ for every $p,q\in\Ahom_{1}(\Phi_{z,r})$, we find that, for every $\beta<\alpha$, 
\begin{equation*}
\left|  I(z,r,p,q) - \mathcal{J}\left( \underline{\phi}^{(1)}(\cdot,q-p), z,r, p,q\right)  \right| 
\leq \O_{s/2}\left(Cr^{-2\beta} \right).
\end{equation*}
Taking expectations yields the claim. 

\smallskip

\emph{Step 3.} We improve the estimate in Step~1 using a multiscale argument. The claim is that, for every $z\in\Rd$, $R\geq 1$ and $p,q\in\Ahom_{k}(\Phi_{z,R})$,
\begin{equation}
\label{e.haha}
\left| \E \left[ I(z,R,p,q)\right]  - \int_{\Phi_{z,R}} \frac12 \nabla (q-p) \cdot \ahom \nabla (q-p)
 \right| 
 \leq CR^{-2\beta/(1+\beta)}.
\end{equation}
For clarity, we consider only the case $z=0$. We fix $\beta\in (0,\alpha)$, $R\geq 2$ and $p,q\in \Ahom_k(\Phi_R)$. We choose a mesoscopic scale $r \in [1,\frac12 R]$ to be selected below.
For each $z\in\Rd$, set $p_z:= \pi_{z,r,1}p$ and $q_z:=\pi_{z,r,1}q$ (these are the projections defined in~\eqref{e.def.pim}). Note that $p_z,q_z\in\Ahom_1$. To keep the expressions short, we also put
\begin{equation*}
H_z:=  \left( \left\| \nabla p \right\|_{L^2(\Phi_{z,r})} +\left\| \nabla  q \right\|_{L^2(\Phi_{z,r})}  \right).
\end{equation*}
Observe that 
\begin{equation}
\label{e.normedtom} 
\left\| \nabla p-\nabla p_z\right\|_{L^2(\Phi_{z,r})}  +  \left\| \nabla q-\nabla q_z\right\|_{L^2(\Phi_{z,r})}  
 \leq CH_z\left( \frac rR \right).
\end{equation}
According to~\eqref{e.polarization}, we have,  
\begin{multline}
\label{e.polarforI}
I(z,r,p,q) 
 = I\left(z,r,p_z,q_z \right) + I\left(z,r,p-p_z,q-q_z \right) \\
+ \int_{\Phi_{z,r}} \nabla u(x,z,r,p_z,q_z ) \cdot \left( \a\nabla L_{z,r}^*(p-p_z)(x) - \ahom \nabla L_{z,r}(q-q_z)(x)  \right) \,dx.
\end{multline}
We want to take the expectation of~\eqref{e.polarforI}. The expectation of the first term on the right side is given by~\eqref{e.affinehoney}:
\begin{equation}
\label{e.Efirstterm}
\left| \E \left[ I\left(z,r,p_z,q_z \right) \right] - \int_{\Phi_{z,r}} \frac12 \nabla (q_z-p_z) \cdot \ahom \nabla (q_z-p_z) \right| \\
\leq CH_z^2 r^{-2\beta}.
\end{equation}
The expectation of the second term is given by~\eqref{e.scalingenergykk}:
\begin{multline}
\label{e.Esecondterm}
\Big| \, \E \left[ I\left(z,r,p-p_z,q-q_z \right)  \right]  \\
- \int_{\Phi_{z,r}} \frac12 \nabla ((q-q_z)-(p-p_z)) \cdot \ahom \nabla  ((q-q_z)-(p-p_z)) \Big| \\
 \leq C \left( \left\| p-p_z \right\|_{L^2(\Phi_{z,r})}^2 +\left\| q-q_z \right\|_{L^2(\Phi_{z,r})}^2  \right) r^{-\beta} 
 \leq C H_z^2\left( \frac rR \right)^{2} r^{-\beta}. 
\end{multline}
We turn to the expectation of the third term. By Lemma~\ref{l.fluxmaps} and~\eqref{e.verifythetakk},
\begin{align}
\label{e.Ethirdterm}
\bigg| \E \left[ \int_{\Phi_{z,r}} \nabla u(x,z,r,p_z,q_z ) \cdot \left( \a\nabla L_{z,r}^*(p-p_z)(x) - \ahom \nabla L_{z,r}(q-q_z)(x)  \right) \,dx  \right]\\
- \int_{\Phi_{z,r}} \nabla (q_z-p_z) \cdot \ahom \nabla \left((p-p_z) - (q-q_z) \right) \bigg| \notag\\
\leq CH_z^2 \left(\frac rR\right) r^{-\beta}. \notag
\end{align}
Finally, we observe that
\begin{align*}
\lefteqn{
\int_{\Phi_{z,r}} \frac12 \nabla (q-p) \cdot \ahom \nabla (q-p)
}  \ \ & \\
& = \int_{\Phi_{z,r}} \frac12 \nabla (q_z-p_z) \cdot \ahom \nabla (q_z-p_z) 
+ \int_{\Phi_{z,r}} \nabla  (q_z- p_z)  \cdot\ahom \nabla \left( (p-p_z)  -(q-q_z)  \right)\\
& \quad +\int_{\Phi_{z,r}} \frac12 \nabla ((q-q_z)-(p-p_z)) \cdot \ahom \nabla  ((q-q_z)-(p-p_z)).
\end{align*}
By the previous display,~\eqref{e.polarforI},~\eqref{e.Efirstterm},~\eqref{e.Esecondterm} and~\eqref{e.Ethirdterm}, we obtain
\begin{equation*}
\left| \E\left[ I(z,r,p,q)\right]  - \int_{\Phi_{z,r}} \frac12 \nabla (q-p) \cdot \ahom \nabla (q-p)
 \right| \\
 \leq CH_z^2 \left( r^{-2\beta} + \left( \frac rR\right) r^{-\beta}\right).
\end{equation*}
Integrating with respect to $\Phi_{\sqrt{R^2-r^2}}$ and applying Lemma~\ref{l.improveadditivity} (noting that by~\eqref{e.verifythetak} we can take any $\theta=\beta$), we obtain
\begin{equation*}
\left| \E \left[ I(0,R,p,q)\right]  - \int_{\Phi_R} \frac12 \nabla (q-p) \cdot \ahom \nabla (q-p)
 \right| 
 \leq C\left( r^{-2\beta}  + \left( \frac rR\right) r^{-\beta}   \right).
\end{equation*}
Now we choose $r:= R^{1/(1+\beta)}$ to obtain~\eqref{e.haha}.
\smallskip

\emph{Step 4.} The conclusion. Combining~\eqref{e.I.as.J},~\eqref{e.verifythetakk} and~\eqref{e.haha}, we obtain
\begin{equation}
\label{e.hahaJ}
\left| \E \left[ J_k(z,R,p,q)\right]  - \int_{\Phi_{z,R}} \frac12 \nabla (q-p) \cdot \ahom \nabla (q-p)
 \right| 
 \leq CR^{-2\beta/(1+\beta)}.
\end{equation}
This is~\eqref{e.scalingenergyk}. Applying Lemma~\ref{l.snappingtheLs} gives~\eqref{e.verifythetak}.
\end{proof}

\begin{proof}[{Proof of Proposition~\ref{p.additivity}}]
By Lemma~\ref{l.identifytheta1}, the assumption~\eqref{e.assofLtheta} of Lemma~\ref{l.improveadditivity} is valid in the case $\theta=\alpha$, $k=1$. Similarly, by Lemma~\ref{l.matchingenergyk},  assumption~\eqref{e.assofLtheta} is valid as well as in the case $\theta=\alpha \wedge 1$, $k\in\N$. The conclusion of Lemma~\ref{l.improveadditivity} therefore gives the proposition.
\end{proof}

\section{Improvement of localization and gradient-flux duality}
\label{s.localization}

In this section we prove Proposition~\ref{p.localization}, which contains two statements concerning the improvement of $\Loc_k(s,\alpha)$, as well as Proposition~\ref{p.min} concerning the improvement of $\Dual_k(\alpha)$.

\subsection{First improvement of localization}
\label{ss.localization1}

In this subsection, we give the proof of Proposition~\ref{p.localization}. The main step in the argument is to localize the vector space $\A_k$ itself. That is, given $\delta > 0$ and $R\gg1$, we identify a vector space $V^{(\delta)}_{k,R}$ that has the same dimension as $\A_k$, is~$\F(B_{R^{1+\delta}})$-measurable
and approximates $\A_k$ to within a suitable error. 
The argument crucially relies on the regularity theory stated in Proposition~\ref{p.regularity}. 

\smallskip

We assume throughout this subsection that, for fixed $k\in\N$, $s\in(0,\infty)$ and $\alpha \in \left( 0,\frac ds \right)$,
\begin{equation}
\label{e.yourassencoreencore}
\Fluc_k(s,\alpha)
\ \ \mbox{and} \ \ 
\Dual_k(\alpha) 
\quad \mbox{hold.}
\end{equation}
In particular, the lemmas proved in the previous section are applicable. We also take the same notational convention for $\X$ and $\Y$ as in the previous section.

\begin{lemma}
\label{l.localization}
For each $\delta > 0$ and $R\geq1$, there exists a vector space 
\begin{equation*} \label{}
V^{(\delta)}_{k,R}\subseteq \A\left(B_{R^{1+\delta}}\right)
\end{equation*}
such that
\begin{equation}
\label{e.Vdkmeas}
V^{(\delta)}_{k,R} \quad \mbox{is $\F(B_{R^{1+\delta}})$-measurable}
\end{equation}
and, for each $\beta \in (0,\alpha(1+ \delta) +\delta) \cap \left(0,\frac ds\right)$, a constant $C(\beta,\delta,s,\alpha,k,d,\Lambda)<\infty$ such that for every $R\geq (\X\vee \Y \vee C)$,
\begin{equation}
\label{e.Vdkdim}
\dim\left(V_{k,R}^{(\delta)}\right) = \dim\left( \Ahom_k \right)
\end{equation}
and
\begin{equation}
\label{e.Vdkapprox}
\sup_{u \in\A_k} \inf_{v \in V^{(\delta)}_{k,R}} \frac{\left\| \nabla u- \nabla v \right\|_{L^2\left(\Phi_R^{(\delta)}\right)} }{ \left\|\nabla u \right\|_{L^2\left(\Phi_R^{(\delta)}\right)} } = \O_s\left(CR^{-\beta} \right).
\end{equation}
\end{lemma}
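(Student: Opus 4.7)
My plan is to build $V^{(\delta)}_{k,R}$ as the span of Dirichlet solutions on $B_{R^{1+\delta}}$ with polynomial boundary data. Fix a basis $\{p_1, \ldots, p_D\}$ of $\Ahom_k$, where $D := \dim \Ahom_k = \dim \A_k$ by Proposition~\ref{p.regularity}(i)--(ii). For each $i \in \{1,\ldots,D\}$, let $v_i \in H^1(B_{R^{1+\delta}})$ be the unique weak solution of
\begin{equation*}
-\nabla \cdot (\a \nabla v_i) = 0 \ \text{in} \ B_{R^{1+\delta}}, \qquad v_i - p_i \in H^1_0(B_{R^{1+\delta}}),
\end{equation*}
and set $V^{(\delta)}_{k,R} := \spn\{v_1,\ldots,v_D\} \subseteq \A(B_{R^{1+\delta}})$. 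The $\F(B_{R^{1+\delta}})$-measurability \eqref{e.Vdkmeas} is immediate, since each $v_i$ depends only on $\a|_{B_{R^{1+\delta}}}$. For the dimension count \eqref{e.Vdkdim}, it suffices to show that the map $\sum c_i p_i \mapsto \sum c_i v_i$ is injective: if $\sum c_i v_i \equiv 0$, then $\sum c_i p_i$ vanishes on $\partial B_{R^{1+\delta}}$, and the maximum principle for the $\ahom$-harmonic polynomial $\sum c_i p_i$ forces it to vanish identically, hence $c_i = 0$ for every $i$.

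\textbf{Approximation step.} For \eqref{e.Vdkapprox}, I would fix $u \in \A_k$ normalized so that $\|\nabla u\|_{L^2(\Phi_R^{(\delta)})} = 1$, and apply Lemma~\ref{l.harmonicapprox} at scale $R' := R^{1+\delta}$ (allowed on the event $R \geq \X \vee \Y \vee C$, where $R' \geq \X_s$). After choosing an appropriate mesoscopic scale $r$, this produces $\mathsf h \in \Ahom_k$ whose gradient tracks the spatial averages $(\nabla u)_{\Phi_{\cdot,r}}$ with error of order $\O_{s/2}(Cr^{-2\alpha})$ in the weighted norm $L^2(\Phi_{\sqrt{R'^2-r^2}})$. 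Expanding $\mathsf h = \sum c_i p_i$ in the basis above, I would set $v := \sum c_i v_i \in V^{(\delta)}_{k,R}$. The square-root trick of Remark~\ref{r.change-s} applied to the boundedness of the ratio in \eqref{e.Vdkapprox} then upgrades the integrability from $\O_{s/2}$ to $\O_s$.

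\textbf{Error bound and main obstacle.} The function $w := u - v$ is $\a$-harmonic in $B_{R^{1+\delta}}$ with boundary trace $u - \mathsf h$. Since $v - \mathsf h$ minimizes the Dirichlet energy among competitors in $H^1_0(B_{R^{1+\delta}})$, a standard energy estimate yields $\|\nabla w\|_{L^2(B_{R^{1+\delta}})} \leq C \|\nabla (u - \mathsf h)\|_{L^2(B_{R^{1+\delta}})}$. This in turn is controlled by the approximation between $\nabla u$ and $\nabla \mathsf h$ produced above, combined with Lemma~\ref{l.mspoincare2} and Caccioppoli's inequality (to pass from spatial averages of $\nabla u$ to pointwise $H^1$ bounds on $u - \mathsf h$). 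Finally, the pointwise bound $\Phi_R^{(\delta)} \leq CR^{-d}\indc_{B_{R^{1+\delta}}}$ together with the Gaussian concentration of $\Phi_R$ near $B_R$ converts this into the weighted estimate \eqref{e.Vdkapprox}.

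The main obstacle is extracting the sharp exponent $\beta < \alpha(1+\delta) + \delta$. The piece $\alpha(1+\delta)$ is the natural gain from applying Lemma~\ref{l.harmonicapprox} at scale $R^{1+\delta}$; the extra $\delta$ must come from the interior Caccioppoli gain between the Dirichlet-problem scale $R^{1+\delta}$ and the effective mask scale $R$, exploiting the factor $R/R^{1+\delta} = R^{-\delta}$ between the two scales. Making this rigorous — balancing the mesoscopic scale of Lemma~\ref{l.harmonicapprox}, the energy estimate on $B_{R^{1+\delta}}$, and the effective support of $\Phi_R^{(\delta)}$, while keeping track of the loss of stochastic integrability — is the delicate bookkeeping at the heart of the argument.
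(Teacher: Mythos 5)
Your approximation step contains a genuine gap, and it sits exactly at the point you flag as the heart of the argument. The decisive estimate you propose, namely $\|\nabla(u-v)\|_{L^2(B_{R^{1+\delta}})}\le C\|\nabla(u-\mathsf h)\|_{L^2(B_{R^{1+\delta}})}$ via energy minimality, is correct but useless: Lemma~\ref{l.harmonicapprox} only says that the \emph{mollified} difference $(\nabla u)_{\Phi_{\cdot,r}}-\nabla\mathsf h$ is of size $r^{-\alpha}$, which is a weak-norm statement. The unmollified field $\nabla u-\nabla\mathsf h$ contains the order-one oscillating corrector part of $\nabla u$ (in the relevant regime $u\approx \mathsf h+\phi^{(1)}(\cdot,\nabla\mathsf h)$), so after your normalization $\|\nabla(u-\mathsf h)\|_{L^2(B_{R^{1+\delta}})}$ is of order one, not $O(R^{-\beta})$. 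Nor can you pass from spatial averages to strong $H^1$ bounds on $u-\mathsf h$ using Lemma~\ref{l.mspoincare2} and Caccioppoli as you suggest: both require the function to be a solution, and $u-\mathsf h$ solves neither the heterogeneous nor the homogenized equation. Relatedly, your heuristic that the extra $\delta$ in the exponent comes from an ``interior Caccioppoli gain'' is not how the gain is obtained: it comes from the intrinsic $C^{k,1}$ estimate of Proposition~\ref{p.regularity}(iii), applied to a \emph{difference of two $\a$-solutions}, whose best $\A_k$-approximation improves like $(r/S)^{k+1}$ as one descends from the large scale to scale $R$; that mechanism is unavailable for $u-\mathsf h$.

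There is also a structural problem with defining $v_i$ as the Dirichlet solution with boundary data $p_i$: its error is governed by the boundary layer of the Dirichlet problem, i.e.\ by the fixed small exponent of Proposition~\ref{p.errorestimate}, which bears no relation to $\alpha$; once $\alpha$ has been improved along the bootstrap, this construction cannot reach any $\beta$ close to $\alpha(1+\delta)+\delta$ (two-scale-expanded boundary data would fix the rate but destroys $\F(B_{R^{1+\delta}})$-measurability). The paper's construction is designed around exactly these two obstacles: with $T=R^{1+\delta}$ and $S=R^{1+\delta-\tilde\ep}$, the basis element $w_j$ is the minimizer, over the bounded class $\mathcal D(B_T)=\{w\in\A(B_T):\|\nabla w\|_{\underline{L}^2(B_T)}\le\lambda(T/R)^{k-1}\}$, of the $\F(B_T)$-measurable mollified functional $[w-p_j]_S^2=\fint_{B_{S/\theta}}\bigl|\int_{\Phi_{y,\sigma S}}(w-p_j)\1_{B_T}\bigr|^2\,dy$. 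Because the whole-space maximizer $u(\cdot,0,S,-p_j,0)$ is an admissible competitor, Lemma~\ref{l.parametermatching} gives $[w_j-p_j]_S\lesssim S^{1-\alpha}$, the multiscale Poincar\'e inequality upgrades this to strong closeness of $w_j$ to that maximizer on $B_{S/\theta}$, and since this difference \emph{is} a solution, Proposition~\ref{p.regularity}(iii) plus Caccioppoli yield the gradient bound of order $(R/S)S^{-\alpha}$ in $L^2(\Phi_R^{(\delta)})$, which is where the exponent $\alpha(1+\delta)+\delta$ (up to $\tilde\ep$) comes from. Your measurability and dimension-count observations are fine, but the approximation step needs to be rebuilt along these lines.
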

\begin{proof}

\emph{Step 1.} We begin with the construction of $V^{(\delta)}_{k,R}$. We set $T:=R^{1+\delta}$ and $S:= R^{1+\delta - \tilde \ep}$, with $\tilde \ep>0$ chosen according to
\begin{equation*}
\beta = \alpha(1+\delta) + \delta - (1+ \alpha) \tilde \ep\,.
\end{equation*}
Let $n_k := \dim\left( \Ahom_k \right)$ and select a basis~$\{ p_1,\ldots,p_{n_k} \}$ of~$\Ahom_k$ so that $p_1\equiv 1$ and each $p_j$ with $j\geq 2$ satisfies the normalization $\left\| \nabla p_j \right\|_{L^2(\Phi_{R})} = 1$. Set $w_1:= p_1$ and, for each $j\in \{ 2,\ldots,n_k\}$, select $w_j\in \A\left(B_T \right)$ to minimize the quantity
\begin{equation*}
\left[ w - p_j \right]_S^2:= 
\fint_{B_{S/\theta}} \left| \int_{\Phi_{y,\sigma S}} \left( w(x) - p_j(x) \right) \1_{B_T}(x) \,dx \right|^2 \,dy 
\end{equation*}
among all functions in the class
\begin{equation*}
\mathcal{D}(B_T):= \left\{ w \in \A(B_T)\,:\,   \left\| \nabla w \right\|_{\underline{L}^2(B_{T})} \leq \lambda \left( \frac{T}{R} \right)^{k-1} \right\},
\end{equation*}
where $\lambda \in [1,\infty)$ will be chosen large enough.  The parameters $\theta(\delta,\beta,s,\alpha,k,d,\Lambda)$ and $\sigma(\delta,\beta,s,\alpha,k,d,\Lambda)$ are given by Lemma~\ref{l.mspoincare2}
corresponding $\A_m$, where the integer $m(\delta,\beta,s,\alpha,k,d,\Lambda)$ is chosen in Step 2 below. Precisely, we take $w_j \in \mathcal{D}(B_T)$ so that 
\begin{equation*}
\left[ w_j - p_j \right]_S =  \inf_{w \in  \mathcal{D}(B_T)} \left[ w - p_j \right]_S.
\end{equation*}
In order to simplify some of the expressions below, we assume that $w_j$ is canonically extended to be $p_j$ outside of $B_T$. 
The functional we minimize in this variational problem is clearly weakly continuous on the convex set $\mathcal{D}(B_T)$ with respect to the norm $\| \cdot \|_{H^1(B_T)}$ and therefore we deduce the existence of a minimizer. If the minimizer is not unique, we select the one that has the smallest $\|\cdot\|_{\underline{L}^2(B_T)}$ norm. 

\smallskip

As we now argue, the parameter $\lambda$ in the definition of $\mathcal{D}(B_T)$ can be chosen large enough that
\begin{equation}
\label{e.choose.lambda}
R \ge \X \ \ \text{and} \ \ \frac p 2 \in \Ahom_k(\Phi_R) \quad \implies \quad   \left\| \nabla u(\cdot,0,S,-p,0) \right\|_{\underline{L}^2(B_T)} \leq \lambda \left( \frac{T}{R} \right)^{k-1}.
\end{equation}
Indeed, for $R \ge \X$ and $\left\| \nabla p \right\|_{L^2(\Phi_R)} \leq 2$, Proposition~\ref{p.regularity} gives $q \in \Ahom_k$ such that, for $r \in [S,T]$, $u := u(\cdot,0,S,-p,0)$ and for any $a \in \R$ ,
\begin{equation*} 
\left\| u -a \right\|_{\underline{L}^2(B_{2T})}  \leq C\left\| q -a \right\|_{\underline{L}^2(B_{2T})}  \leq C \left( \frac{T}{S} \right)^k 
\left\| q -a \right\|_{\underline{L}^2(B_S)}  \leq 
C \left( \frac{T}{S} \right)^k  \left\| u-a \right\|_{\underline{L}^2(B_S)} .
\end{equation*}
Recall from \eqref{e.J-energy} that $\left\| \nabla u \right\|_{L^2(\Phi_S)} \leq C \left\| \nabla p \right\|_{L^2(\Phi_S)} \leq C \left(\frac{S}{R}\right)^{k-1}$. 
By the Caccioppoli and Poincar\'e inequalities, we thus get
\begin{equation*} 
\left\| \nabla u \right\|_{\underline{L}^2(B_{T})}  \leq \frac{C}{S}  \left( \frac{T}{S} \right)^{k-1} \inf_{a \in \R} \left\| u -a \right\|_{\underline{L}^2(B_{S})} \leq 
C \left( \frac{T}{S} \right)^{k-1}  \left\| \nabla u \right\|_{\underline{L}^2(B_S)}  \leq C  \left( \frac{T}{R} \right)^{k-1} \,,
\end{equation*}
and this shows \eqref{e.choose.lambda} for $\lambda(k,d,\Lambda)$ sufficiently large.

\smallskip

We then define
\begin{equation*}
V^{(\delta)}_{k,R} := \spn\left\{ w_1,\ldots,w_{n_k} \right\}
\end{equation*}
and denote by
\begin{equation*}
\mathcal{T}: \Ahom_k \to V^{(\delta)}_{k,R}
\end{equation*}
the linear map which satisfies $\mathcal{T}p_j = w_j$ for every $j\in \{1,\ldots,n_k\}$. 

\smallskip

It is immediate that $V^{(\delta)}_{k,R}\subseteq \A\left(B_{T} \right)$, that~$V^{(\delta)}_{k,R}$  satisfies the measurability condition~\eqref{e.Vdkmeas} and that $\dim ( V^{(\delta)}_{k,R}) \leq n_k$. The remainder of the proof, most of which is focused on~\eqref{e.Vdkapprox}, is broken into several steps.

\smallskip

Throughout the rest of the argument we may assume that $R \geq \X$, since we have by construction (by taking $v = w_1 =1$) that 
\begin{equation*}
\indc_{\{ R \leq \X \} } \sup_{u \in\A_k} \inf_{v \in V^{(\delta)}_{k,R}} \frac{\left\| \nabla u- \nabla v \right\|_{L^2\left(\Phi_R^{(\delta)}\right)} }{ \left\|\nabla u \right\|_{L^2\left(\Phi_R^{(\delta)}\right)} }  
\leq \indc_{\{ R \leq \X \} } = \O_s\left(CR^{-\beta} \right).
\end{equation*}

\smallskip

\emph{Step 2.} We claim that, for every $j\in\{2,\ldots,n_k\}$ and $u_j := u(\cdot,0,S,-p_j,0)$, we have
\begin{equation}
\label{e.closetoS}
 \left\|  \mathcal{T}p_j - u_j  \right\|_{\underline{L}^2\left(B_{S/\theta}\right)} \leq \O_{s}  \left( C \left\|\nabla p_j\right\|_{L^2\left( \Phi_S \right)} S^{1-\alpha} \right)\,.
\end{equation}
Observe that the selection of $\lambda$ ensures that, for each $j\in \{ 2,\ldots,n_k\}$, we have that $u_j \in \mathcal{D}(B_T)$. Thus, for every $j$,
\begin{equation*}
\left[ \mathcal{T}p_j - p_j \right]_S \leq \left[ u_j  - p_j \right]_S. 
\end{equation*}
By Lemma~\ref{l.parametermatching}, 
we have that 
\begin{equation*}
\left[ u_j - p_j \right]_S \leq \O_s\left( C \left\|\nabla p_j\right\|_{L^2\left( \Phi_S \right)} S^{1-\alpha} \right) .
\end{equation*}
Indeed, normalizing $ \int_{B_{S/\theta} } \int_{\Phi_{y,\sigma S}} u_j(z) \, dz \, dy = 0$  we have, by the Poincar\'e inequality and~\eqref{e.spatavgspoly}, that
\begin{align*}
 \frac{\left[ u_j - p_j \right]_S^2}{\left\|\nabla p_j\right\|_{L^2\left( \Phi_S \right)}^2 }
 & = \left\|\nabla p_j\right\|_{L^2\left( \Phi_S \right)}^{-2} \int_{B_{S/\theta}} \left| \int_{\Phi_{y,\sigma S}} \left( u_j(x) - p_j(x) \right) \,dx \right|^2 \,dy \\
&  \leq C \left\|\nabla p_j\right\|_{L^2\left( \Phi_S \right)}^{-2} S^2 \int_{B_{S/\theta}} \left| \int_{\Phi_{y,\sigma S}} \left( \nabla u_j(x) - \nabla p_j(x) \right) \,dx \right|^2 \,dy \\
& \leq C \left\|\nabla p_j\right\|_{L^2\left( \Phi_S \right)}^{-2} S^2\int_{\Phi_{\sqrt{S^2-(\sigma S)^2}}}  \left|  \int_{\Phi_{y,\sigma S}} \left( \nabla u_j(x) - \nabla p_j(x) \right) \,dx \right|^2 \,dy \\
& \leq \O_{s/2}\left( C  S^{2-2\alpha}\right)\,.
\end{align*}
We deduce that 
\begin{equation*}
\left[ \mathcal{T}p_j - p_j \right]_S \leq\O_{s}\left( C \left\|\nabla p_j\right\|_{L^2\left( \Phi_S \right)} S^{1-\alpha}\right)
\end{equation*}
and therefore, by the triangle inequality, 
\begin{equation*}
\left[ \mathcal{T}p_j - u_j \right]_S \leq \O_{s}\left( C \left\|\nabla p_j\right\|_{L^2\left( \Phi_S \right)}S^{1-\alpha} \right).
\end{equation*}
Furthermore, by the regularity theory, the definition of $\mathcal{D}(B_T)$, and the fact that $u_j, \mathcal{T}p_j \in \mathcal{D}(B_T)$  we have, for every $m\in\N$ with $m\geq k$, 
\begin{align*}
\inf_{v \in \A_m} \left\|  \mathcal{T}p_j - u_j - v  \right\|_{\underline{L}^2\left(B_{S/\theta}\right)} 
& \leq C \left( \frac{S}{\theta T} \right)^{m+1} 
\left\|  \mathcal{T}p_j - u_j  \right\|_{\underline{L}^2(B_T)} \\
& \leq C \lambda \left( \frac{S}{T} \right)^{m+1}  R^{1-k} T^k = C  R^{-\tilde \ep(m+1) + 1+ \delta k}.
\end{align*}
Taking $m(\tilde \ep,s,k,d)\in\N$ sufficiently large, we find $v\in \A_m$ such that 
\begin{equation*}
 \left\|  \mathcal{T}p_j - u_j - v  \right\|_{\underline{L}^2\left(B_{S/\theta}\right)} \leq CS^{1-\alpha}. 
\end{equation*}
Applying Lemma~\ref{l.mspoincare2} to $v$ and using the triangle inequality, we get that
\begin{multline*}
 \left\| v  \right\|_{L^2\left(\Psi_S\right)}
  \leq C \left( \fint_{S/\theta} \left| \int_{\Phi_{y,\sigma S}} v(z)\,dz \right|^2 \, dy \right)^{\frac12}  \\
    \leq C\left[ \mathcal{T}p_j - u_j \right]_S + CS^{1-\alpha} 
  \leq \O_{s}  \left( C \left\|\nabla p_j\right\|_{L^2\left( \Phi_S \right)} S^{1-\alpha} \right).
\end{multline*}
By the triangle inequality again, we obtain 
\begin{equation*}
 \left\|  \mathcal{T}p_j - u_j  \right\|_{\underline{L}^2\left(B_{S/\theta}\right)} \leq \O_{s}  \left( C \left\|\nabla p_j\right\|_{L^2\left( \Phi_S \right)} S^{1-\alpha} \right).
\end{equation*}
This completes the proof of~\eqref{e.closetoS}.

\smallskip

\emph{Step 3.} Observe that~\eqref{e.closetoS} implies that, for $R\geq (\X \vee\Y\vee C)$, the kernel of $\mathcal{T}$ is trivial and therefore $\dim\left(V^{(\delta)}_{k,R} \right) = n_k$. Indeed, 
the assumption $R \geq \Y\vee C$, Proposition~\ref{e.Ys} and~\eqref{e.J-energy} imply that, for some $\gamma(s,\alpha,k,d,\Lambda)>0$,
\begin{equation*} \label{}
\left| \int_{\Phi_{S}}  \nabla p_j \cdot \ahom \nabla p_j - \int_{\Phi_{S}} \nabla u_j \cdot\a\nabla u_j  \right| \leq R^{-\gamma}.
\end{equation*}
Then~\eqref{e.closetoS}, the Lipschitz estimate (using also~\eqref{e.upolygrowth1} and $R\geq \X$ to eliminate the tails of $\Phi_S$ in the integral) imply that  
\begin{equation*} \label{}
\left| \int_{\Phi_{S}} \nabla p_j \cdot \ahom \nabla p_j - \int_{B_{S/\theta}} \nabla \mathcal{T}p_j \cdot\a\nabla \mathcal{T}p_j  \right| \leq CR^{-\gamma}.
\end{equation*}
Therefore $\mathcal{T}$ is within $CR^{-\gamma}$ of an isometry between finite dimensional inner product spaces. It follows that $\mathcal{T}$ has a trivial kernel if $R^{-\gamma} \leq c$ for some $c>0$ depending on the dimension of~$\Ahom_k$, which depends only on $(k,d)$. This assumption is valid by $R\geq C$ if we enlarge the constant $C$. 

\smallskip

\emph{Step 4.} Application of the regularity theory. Applying Proposition~\ref{p.regularity} yields, for every $j\in \{ 1,\ldots,n_k\}$, the existence of $w \in \A_k$ such that, for every $r \in [R,S]$, we have
 \begin{equation*} \label{e.Tpw}
\left\| \mathcal{T}p_j -  w  \right\|_{\underline{L}^2\left(B_{2r} \right)} 
 \leq 
 C \left( \frac{r}{S} \right)^{k+1}  \left\| \mathcal{T}p_j - u_j \right\|_{\underline{L}^2\left(B_{S/\theta}\right)} \,.  
\end{equation*}
Then the Caccioppoli inequality,~\eqref{e.closetoS}, and the normalization $ \left\|\nabla p_j\right\|_{L^2\left( \Phi_R \right)} = 1$ yield
 \begin{equation*} \label{e.Tpwgrad}
\left\| \nabla \mathcal{T}p_j -  \nabla w  \right\|_{\underline{L}^2\left(B_{r} \right)} 
 \leq 
  \O_{s}  \left( C \left( \frac{r}{S} \right)^{k} \left\|\nabla p_j\right\|_{L^2\left( \Phi_S \right)} S^{-\alpha} \right) \leq  \O_{s}  \left( C \left( \frac{r}{S} \right)  S^{-\alpha} \right)   \,.  
\end{equation*}
On the other hand, if $r \in [S,T]$, we have, by the definition of $\mathcal{D}(B_T)$ and  growth of $w$,
$$
\left\| \nabla \mathcal{T}p_j -  \nabla w  \right\|_{\underline{L}^2\left(B_{r} \right)} \leq C  \left( \frac{T}{r} \right)^d \left( \frac{T}{R} \right)^{k-1} \,.
$$
By the previous two displays we then obtain 
\begin{align*} 
\lefteqn{ \left\| \nabla \mathcal{T}p_j -  \nabla w  \right\|_{\underline{L}^2\left(\Phi_R^{(\delta)} \right)}^2 } \qquad & 
 \\ &  \leq C  \int_R^\infty \exp\left(-c \left(\frac{r}{R} \right)^2 \right) \left(\frac{r}{R}\right)^d \fint_{B_r \cap B_{T}} \left| \nabla \mathcal{T}p_j(x) -  \nabla w   (x) \right|^2 \, dx \, \frac{dr}{r}
\\& =  \O_{s/2}  \left( C \left( \frac{R}{S} \right) ^2 S^{-2\alpha} \right)\,.
\end{align*}
Therefore, by the linearity of $\mathcal{T}$ and the finite dimensionality of $\Ahom_k$, we obtain, for every $p\in\Ahom_k$ with $\left\|\nabla p\right\|_{L^2(\Phi_R)} \leq 1$, 
\begin{equation*}
\inf_{w \in \A_k}  \left\|  \nabla \mathcal{T}p - \nabla w  \right\|_{L^2\left(\Phi_R^{(\delta)}\right)} = \O_s \left( CR^{-\alpha - \delta(\alpha + 1) + (1+ \alpha) \tilde \ep  } \right).
\end{equation*}
This completes the argument since $\beta = \alpha+\delta(\alpha + 1) - (1+ \alpha) \tilde \ep$. 
\end{proof}

With the aid of Lemma~\ref{l.localization}, we can define a localized version of $J$ and show the improvement of localization.  For each $\delta > 0$, $x\in \Rd$, $r\ge 1$ and $p,q\in \Ahom_k$ we set
\begin{equation*}
J^{(\delta)}(x,r,p,q) 
:= \sup_{u \in V^{(\delta)}_{k,r}(x)}  \int_{\Phi_{x,r}^{(\delta)}}\left(-\frac12 \nabla u \cdot\a  \nabla u - \nabla p\cdot \a\nabla u + \nabla q\cdot \nabla u \right),
\end{equation*}
where $\Phi^{(\delta)}_{x,r}$ is the truncated mask defined in \eqref{e.Phideltadef}.

\begin{lemma}
Fix $\delta > 0$ and $\beta \in (0,\alpha(1+\delta) + \delta) \cap (0,\frac d s)$. There exists a constant $C(\delta,\beta,s,\alpha,k,d,\Lambda)<\infty$ such that, for every $R\ge1$ and $p,q\in\Ahom_k(\Phi_R)$,
\begin{equation}
\label{e.Jlocal}
\left| J(0,R,p,q) - J^{(\delta)}(0,R,p,q) \right| \\
= \O_s\left(CR^{-\beta} \right). 
\end{equation}
In particular, we have that $\Loc_k\left(s,\delta,\beta \right)$ holds. 
\label{l.localization2}
\end{lemma}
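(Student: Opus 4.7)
The plan is to write $J(0,R,p,q) - J^{(\delta)}(0,R,p,q) = \bigl(J - J^\sharp\bigr) + \bigl(J^\sharp - J^{(\delta)}\bigr)$, where the auxiliary quantity
\[
J^\sharp(0,R,p,q) := \sup_{u \in \A_k} \int_{\Phid_R}\Ll(-\tfrac12 \nabla u \cdot\a  \nabla u - \nabla p\cdot \a\nabla u + \nabla u\cdot \ahom \nabla q \Rr)
\]
uses the truncated mask $\Phid_R$ but still the full admissible class $\A_k$. Each piece is handled separately.

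The first piece $|J - J^\sharp|$ is a tail estimate. On the event $\{R \ge \X\vee\Y\vee C\}$, Proposition~\ref{p.regularity}(iii) combined with \eqref{e.bounded-u} yields polynomial growth of $|\nabla v(\cdot,0,R,p,q)|$, and since $\Phi_R$ decays Gaussian-fast while $\Phi_R = \Phid_R$ on $B_{R^{1+\delta}}$, the integral of the Lagrangian over $\R^d\setminus B_{R^{1+\delta}}$ is super-polynomially small; the same holds for the maximizer of $J^\sharp$. On the complementary event, $J$ and $J^\sharp$ are deterministically bounded by \eqref{e.bounded-J}, and $\indc_{\{R < \X\vee\Y\vee C\}}$ is $\O_s(CR^{-\beta})$ provided we take $\X=\X_{s\beta}$ (legal since $s\beta<d$) and apply Remark~\ref{r.change-s} to the tail bounds on $\X$ and $\Y$. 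This gives $|J-J^\sharp| = \O_s(CR^{-\beta})$.

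The second piece $|J^\sharp - J^{(\delta)}|$ is the approximation error between $\A_k$ and $V^{(\delta)}_{k,R}$ as supports of the maximization. Let $v^\sharp$ be the maximizer of $J^\sharp$ over $\A_k$ and $v^{(\delta)}$ be the maximizer of $J^{(\delta)}$ over $V^{(\delta)}_{k,R}$. By \eqref{e.Vdkapprox} applied to $v^\sharp$, there is $w \in V^{(\delta)}_{k,R}$ with $\|\nabla v^\sharp - \nabla w\|_{L^2(\Phid_R)} = \O_s(CR^{-\beta}\|\nabla v^\sharp\|_{L^2(\Phid_R)})$. Expanding the functional $\mathcal{J}^\sharp$ as a quadratic-affine form in $\nabla u$ and using Cauchy–Schwarz,
\[
|\mathcal{J}^\sharp(v^\sharp) - \mathcal{J}^\sharp(w)| \le C\bigl(\|\nabla v^\sharp\|+\|\nabla w\|+\|\nabla p\|+\|\nabla q\|\bigr)_{L^2(\Phid_R)}\,\|\nabla v^\sharp-\nabla w\|_{L^2(\Phid_R)},
\]
where \eqref{e.bounded-u} controls $\|\nabla v^\sharp\|_{L^2(\Phid_R)}\le C$, and therefore $\|\nabla w\|_{L^2(\Phid_R)} \le C$ as well. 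Since $w$ is admissible for $J^{(\delta)}$, we obtain $J^\sharp \le J^{(\delta)} + \O_s(CR^{-\beta})$. The reverse direction follows identically from the conclusion of Step~4 in the proof of Lemma~\ref{l.localization}, which established that every element $\mathcal{T}p \in V^{(\delta)}_{k,R}$ admits an $\A_k$-approximation at the same rate $\O_s(CR^{-\beta})$; by linearity this applies to $v^{(\delta)}$, giving an $\A_k$-competitor for $J^\sharp$.

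Combining the two pieces yields \eqref{e.Jlocal}. Measurability of $J^{(\delta)}(0,R,p,q)$ with respect to $\F(B_{R^{1+\delta}})$ is immediate from \eqref{e.Vdkmeas} together with the fact that $\Phid_R$ is supported in $B_{R^{1+\delta}}$, so the integrand only depends on $\a\vert_{B_{R^{1+\delta}}}$. This establishes $\Loc_k(s,\delta,\beta)$. The main subtlety in the argument is bookkeeping stochastic integrability: we must choose $\X = \X_{s\beta}$ at the outset (legitimate because $\beta < d/s$) so that the exceptional event $\{\X > R\}$ contributes at most $\O_s(CR^{-\beta})$ via Remark~\ref{r.change-s}, which is what forces the restriction $\beta < d/s$ in the statement.
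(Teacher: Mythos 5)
Your proposal is correct and is essentially the paper's own argument: the paper also handles the bad event $\{R<\X\}$ by boundedness of $J$ and $J^{(\delta)}$ plus the tail of $\X$, uses the polynomial growth from Proposition~\ref{p.regularity} and Caccioppoli to control the mask truncation $\Phi_R-\Phid_R$, and invokes the two-sided approximation between $\A_k$ and $V^{(\delta)}_{k,R}$ (the reverse direction coming, exactly as you note, from Step~4 of the proof of Lemma~\ref{l.localization}), comparing the functionals at the respective maximizers by quadratic expansion. The only difference is presentational: you factor the estimate through the intermediate quantity $J^\sharp$ (truncated mask, full class $\A_k$), whereas the paper merges the mask-tail estimate and the class-approximation into a single two-sided chain of inequalities.
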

\begin{proof}

\emph{Step 1.} 
We denote the unique (up to an additive constant) maximizer of $J^{(\delta)}(x,r,p,q)$ by $u^{(\delta)}(\cdot,x,r,p,q)$. It is immediate that 
\begin{equation}
\label{e.Jdeltalocal}
J^{(\delta)}(x,r,p,q) \quad \mbox{is $\F(B_{R^{1+\delta}})$-measurable}
\end{equation}
and
\begin{equation}
\label{e.udeltalocal}
u^{(\delta)}(\cdot,x,r,p,q) \quad \mbox{is $\F(B_{R^{1+\delta}})$-measurable.}
\end{equation}
We next compare these local versions~$J^{(\delta)}$ and $u^{(\delta)}$ to the original quantities. 

Fix $R\geq 1$ and $p,q\in \Ahom_k(\Phi_R)$. 
For concision, we write $u: = u(\cdot,0,R,p,q)$ and $u^{(\delta)}: = u^{(\delta)}(\cdot,0,R,p,q)$. We have that 
\begin{equation*}
J(0,R,p,q) + J^{(\delta)}(0,R,p,q) \leq C 
\end{equation*}
and
\begin{equation*}
\left\| \nabla u \right\|_{L^2\left(\Phi_R\right)} + \left\| \nabla u^{(\delta)} \right\|_{L^2\left(\Phid_R\right)} \leq C. 
\end{equation*}
We may work on the event $\{ R \geq \X\}$, since the boundedness of $J$, $J^{(\delta)}$ and their maximizers yields, for every $\theta < d$ and $t>0$, a constant $C(\theta,\delta,t,k,d,\Lambda)<\infty$ such that  
\begin{equation*}
\left( \left| J(0,R,p,q) \right| + \left| J^{(\delta)}(0,R,p,q) \right| \right)
\indc_{\{ R \leq \X\} }  
\leq \O_t \left( CR^{-\theta/t}\right)
\end{equation*}
and
\begin{equation*}
\left( \left\| \nabla u \right\|_{L^2\left(\Phid_R\right)} + \left\| \nabla u^{(\delta)}  \right\|_{L^2\left(\Phid_R\right)} \right)
 \indc_{\{ R \leq \X\} }  
\leq \O_t \left( CR^{-\theta/t}\right).
\end{equation*}
Therefore we assume that $R \geq \X$ for the rest of the argument. 

\smallskip

According to Lemma~\ref{l.localization}, we may select $v^{(\delta)} \in V^{(\delta)}_{k,R}$ and $v \in \A_k$ to satisfy
\begin{equation}
\label{e.maxdeltaapprox}
\left\| \nabla u - \nabla v^{(\delta)} \right\|_{L^2\left(\Phid_R\right)} + \left\| \nabla u^{(\delta)} - \nabla v \right\|_{L^2\left(\Phid_R\right)} \leq \O_s\left( CR^{-\beta}\right).
\end{equation}
Since $R \geq \X$, the $k^{th}$ degree polynomial growth of $u$ and $v$ given by Proposition~\ref{p.regularity} and the Caccioppoli estimate, together with $R^{k+\be} \exp\left( R^{-2\delta} \right) \leq C(k,\be, \delta)$ for all $R \geq 1$, yield
\begin{equation*}
\left\| \nabla u \right\|_{L^2\left(\Phi_R-\Phid_R\right)} + \left\|  \nabla v \right\|_{L^2\left(\Phi_R-\Phid_R\right)} \leq C R^{-\be}. 
\end{equation*}
We now compute 
\begin{align*}
\lefteqn{
J^{(\delta)}(x,r,p,q) 
} \qquad & \\
& \geq \int_{\Phid_R} \left(-\frac12 \nabla v^{(\delta)} \cdot\a  \nabla  v^{(\delta)} - \nabla p\cdot \a\nabla  v^{(\delta)} + \nabla q\cdot \nabla  v^{(\delta)} \right) \\
& \geq \int_{\Phid_R} \left(-\frac12 \nabla u \cdot\a  \nabla  u - \nabla p\cdot \a\nabla  u + \nabla q\cdot \nabla  u \right)
- C \left\| \nabla u - \nabla v^{(\delta)} \right\|_{L^2\left( \Phid_R \right)}  \\
& \geq J(0,R,p,q) 
- CR^{-\beta} 
- \O_s\left( CR^{-\beta}\right) \\ 
& \geq J(0,R,p,q) -  \O_s\left( CR^{-\beta}\right)
\end{align*}
and
\begin{align*}
J(0,R,p,q) 
& \geq \int_{\Phi_R}  \left(-\frac12 \nabla v \cdot\a  \nabla  v  - \nabla p\cdot \a\nabla  v  + \nabla q\cdot \nabla  v \right) \\
& \geq \int_{\Phid_R}  \left(-\frac12 \nabla v \cdot\a  \nabla  v  - \nabla p\cdot \a\nabla  v  + \nabla q\cdot \nabla  v \right) - CR^{-d} \\
& \geq \int_{\Phid_R}  \left(-\frac12 \nabla u^{(\delta)} \cdot\a  \nabla  u^{(\delta)}  - \nabla p\cdot \a\nabla  u^{(\delta)}  + \nabla q\cdot \nabla  u^{(\delta)} \right)  \\
& \qquad 
- C \left\| \nabla v - \nabla u^{(\delta)} \right\|_{L^2\left( \Phid_R \right)}
- CR^{-\beta} \\
& \geq J^{(\delta)}(0,R,p,q) 
- \O_s\left( CR^{-\beta}\right).
\end{align*}
The two previous displays imply~\eqref{e.Jlocal}.
\end{proof}

The proof of the first statement of Proposition~\ref{p.localization} is now complete, since the statement is contained in that of Lemma~\ref{l.localization2}.

\subsection{Localization using higher-order $J_k$}
\label{ss.localizationk}
In this subsection, we complete the proof of Proposition~\ref{p.localization} by giving the argument for the second assertion in its statement.

\smallskip

We begin with a lemma which compares~$J$ for different values of~$k$. We denote this dependence by writing $J_k$, $L_{z,r,k}$, $u_k$, and so on. What we show is that the quantities $J_{k'}(z,r,p,q)$ and $J_k(z,r,p,q)$ agree when $p,q\in \Ahom_k$ and $k \leq k'$ up to almost $\O_{s/2}\left( Cr^{-2\alpha} \right)$. 

\begin{lemma}
\label{l.skypepro}
Assume that $s\in (0,2]$, $\alpha\in\left(0,\frac ds\right)\cap \left(0, \frac d2\right]$ are such that $\Fluc_k(s,\alpha)$ and $\Dual_k(\alpha)$ hold for every $k\in\N$. Let $s\in (0,\infty)$, $\alpha \in \left( \ep ,\frac ds \right)$ and $k'\in\N$ with $k'\geq k$. Fix $\beta \in \left(0,\alpha\wedge 1\right)$. 
Then there exists a constant $C(\beta,s,\alpha,k,k',d,\Lambda)<\infty$ such that, for every $z\in\Rd$, $r\geq 1$ and $p,q\in \Ahom_k(\Phi_{z,r})$,
\begin{equation}
\label{e.JkequalsJm}
J_{k'}(z,r,p,q) - J_{k}(z,r,p,q) = \O_{s/2}\left( Cr^{-2\beta} \right).
\end{equation}
\end{lemma}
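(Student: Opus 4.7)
The plan is to split the desired bound into its two one-sided inequalities. The inequality $J_{k'}(z,r,p,q) \geq J_k(z,r,p,q)$ is immediate from the definitions: since the growth restriction defining $\A_k$ weakens as $k$ increases, we have $\A_k \subseteq \A_{k'}$ whenever $k \leq k'$, so the maximizer $v_k := v_k(\cdot,z,r,p,q) \in \A_k \subseteq \A_{k'}$ is an admissible competitor in the variational problem defining $J_{k'}$, yielding $J_{k'}(z,r,p,q) \geq \mathcal{J}(v_k,z,r,p,q) = J_k(z,r,p,q)$.

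For the matching upper bound I would reduce to an $L^2$ estimate on the difference of maximizers by invoking the quadratic response inequality~\eqref{e.upperUC}, taking the role of $J$ there to be $J_{k'}$ and the test functions to be $w_1 := v_{k'}(\cdot,z,r,p,q)$ and $w_2 := v_k(\cdot,z,r,p,q)$, both lying in $\A_{k'}$ by the inclusion above. This yields
\[
J_{k'}(z,r,p,q) - J_k(z,r,p,q) \leq \tfrac{\Lambda}{4} \|\nabla v_{k'} - \nabla v_k\|_{L^2(\Phi_{z,r})}^2,
\]
so it suffices to bound the right-hand side by $\O_{s/2}(Cr^{-2\beta})$.

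The crucial step is the observation that both $\nabla u_k(\cdot,z,r,p,q)$ and $\nabla u_{k'}(\cdot,z,r,p,q)$ are approximated, in $L^2(\Phi_{z,r})$, by the \emph{same} function $\nabla \underline{\phi}^{(1)}(\cdot,q-p)$. Concretely, since $\Fluc$ and $\Dual$ are assumed at every integer level, Lemma~\ref{l.matchingutophi} applied separately at indices $k$ and $k'$ gives
\[
\|\nabla u_k - \nabla \underline{\phi}^{(1)}(\cdot,q-p)\|_{L^2(\Phi_{z,r})}^2 + \|\nabla u_{k'} - \nabla \underline{\phi}^{(1)}(\cdot,q-p)\|_{L^2(\Phi_{z,r})}^2 \leq \O_{s/2}(Cr^{-2\beta}),
\]
whence $\|\nabla u_{k'} - \nabla u_k\|_{L^2(\Phi_{z,r})}^2 \leq \O_{s/2}(Cr^{-2\beta})$ by the triangle inequality. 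To transfer this from $u$ to $v$, I would use the identity $u(\cdot,z,r,p,q) = v(\cdot,z,r,L^*_{z,r}p,L_{z,r}q)$ and the linearity of $v$ in its last two arguments to write $v - u = v(\cdot,z,r,p-L^*_{z,r}p,q-L_{z,r}q)$; then combining~\eqref{e.bounded-u} with Lemma~\ref{l.matchingenergyk} (or Lemma~\ref{l.identifytheta1} when the relevant index is $1$) yields $\|\nabla v - \nabla u\|_{L^2(\Phi_{z,r})} \leq Cr^{-\beta}$ at both indices $k$ and $k'$; the exponent $\beta$ is admissible in Lemma~\ref{l.matchingenergyk} because $\beta < \alpha \wedge 1 \leq \tfrac{2\alpha}{\alpha+1} \wedge 1$. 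A final application of the triangle inequality then delivers the desired bound on $\|\nabla v_{k'} - \nabla v_k\|_{L^2(\Phi_{z,r})}^2$, and the small-$r$ case $r < r_0$ is absorbed into the constant using the uniform bound~\eqref{e.bounded-J}.

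The substantive content of the argument is the interpretation of Lemma~\ref{l.matchingutophi} as saying that the first-order two-scale expansion $\underline{\phi}^{(1)}(\cdot,q-p)$ captures the near-optimal field in the variational problem for $J_k$ independently of $k$; no real obstacle arises beyond the careful tracking of exponents, and the squaring of the deterministic rate $r^{-\beta}$ from Lemma~\ref{l.matchingenergyk} recovers the stochastic rate $r^{-2\beta}$ without loss.
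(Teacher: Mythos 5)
Your proof is correct and follows essentially the same route as the paper: both arguments reduce \eqref{e.JkequalsJm} to an $L^2(\Phi_{z,r})$ bound on $\nabla v_k - \nabla v_{k'}$, obtained by comparing each maximizer to the common two-scale expansion $\nabla\underline{\phi}^{(1)}(\cdot,q-p)$ via Lemma~\ref{l.matchingutophi} together with Lemma~\ref{l.matchingenergyk} and the triangle inequality. The only cosmetic difference is that the paper invokes the exact identity \eqref{e.second-var}, which gives $J_{k'}-J_k = \frac12\int_{\Phi_{z,r}}(\nabla v_k-\nabla v_{k'})\cdot\a(\nabla v_k-\nabla v_{k'})$ in one stroke, whereas you split this into the trivial inclusion inequality and the one-sided quadratic response \eqref{e.upperUC}; the content is the same.
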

\begin{proof}
Fix $\beta \in(0,\alpha)$. 
By Lemmas~\ref{l.matchingutophi},~\ref{l.matchingenergyk} and the triangle inequality, for $z\in\Rd$, $r\geq 1$ and $p,q\in \Ahom_k(\Phi_{z,r})$,
\begin{equation*}
\left\| \nabla v_{k}(\cdot,z,r,p,q) -\nabla v_{k'}(\cdot,z,r,p,q)  \right\|_{L^2(\Phi_{z,r})} = \O_{s} \left( Cr^{-\beta} \right).
\end{equation*}
The previous estimate and \eqref{e.second-var} give
\begin{align*}
J_{k'}(z,r,p,q) - J_k(z,r,p,q) & = \int_{\Phi_{z,r}} \frac12 \left( \nabla v_k  - \nabla v_{k'} \right)\cdot \a \left( \nabla v_k  - \nabla v_{k'} \right) \\
&= \O_{s/2} \left( Cr^{-2\beta} \right).
\end{align*}
This yields~\eqref{e.JkequalsJm} and completes the proof.
\end{proof}

\begin{lemma}
\label{l.localizeopt}
Assume that $s\in (0,2]$, $\alpha\in\left(0,\frac ds\right)\cap \left(0, \frac d2\right]$ are such that $\Fluc_k(s,\alpha)$ and $\Dual_k(\alpha)$ hold for every $k\in\N$.  Then, for each $k\in\N$ and $\delta > 0$, we have that $\Loc_k(\frac s 2, \delta,2(\alpha\wedge1)-)$ holds.
\end{lemma}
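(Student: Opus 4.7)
Fix $k\in\N$ and $\delta>0$. The plan is to combine Lemma~\ref{l.skypepro} with a quadratic refinement of the localization from Lemma~\ref{l.localization2}, the improvement in the exponent being generated by the second variation of $\mathcal{J}$. First, I would apply Lemma~\ref{l.skypepro} with some $k' \geq k$: since $p,q\in\Ahom_k\subseteq\Ahom_{k'}$, for any $\beta<\alpha\wedge 1$,
\begin{equation*}
J_k(z,r,p,q)-J_{k'}(z,r,p,q)=\O_{s/2}\!\left(Cr^{-2\beta}\right).
\end{equation*}
This already delivers the target rate $2(\alpha\wedge 1)$ and integrability $s/2$, so what remains is to localize $J_{k'}$ at a rate no worse than $2(\alpha\wedge 1)$.

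I would then set $J_k^{(\delta)}(z,r,p,q):=J_{k'}^{(\delta)}(z,r,p,q)$, where $J_{k'}^{(\delta)}$ is the localized functional from Lemma~\ref{l.localization2} built from the $\F(B_{z+r^{1+\delta}})$-measurable subspace $V_{k',r}^{(\delta)}\subseteq\A(B_{r^{1+\delta}})$ supplied by Lemma~\ref{l.localization}. The bound given in Lemma~\ref{l.localization2} is the \emph{linear} estimate $|J_{k'}-J_{k'}^{(\delta)}|=\O_s(Cr^{-\gamma})$ for $\gamma<(\alpha(1+\delta)+\delta)\wedge(d/s)$, derived from the $L^2$-approximation bound~\eqref{e.Vdkapprox}. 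Invoking instead the second-variation identity~\eqref{e.second-var} (quadratic response), the same $L^2$-approximation squares to give
\begin{equation*}
|J_{k'}-J_{k'}^{(\delta)}|=\O_{s/2}\!\left(Cr^{-2\gamma}\right).
\end{equation*}
For every $\delta>0$, one can take $2\gamma$ strictly larger than $2(\alpha\wedge 1)$: indeed $\alpha(1+\delta)+\delta>\alpha\geq\alpha\wedge 1$, while for $\alpha>1$ the remaining cap $d/s>1=\alpha\wedge 1$ since $s\leq 2\leq d$. The triangle inequality then delivers
\begin{equation*}
|J_k(z,r,p,q)-J_k^{(\delta)}(z,r,p,q)|\leq\O_{s/2}\!\left(Cr^{-2\beta}\right)+\O_{s/2}\!\left(Cr^{-2\gamma}\right)=\O_{s/2}\!\left(Cr^{-(2(\alpha\wedge 1)-\epsilon)}\right)
\end{equation*}
for any $\epsilon>0$, which is exactly $\Loc_k(s/2,\delta,2(\alpha\wedge 1)-)$.

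\textbf{Main obstacle.} The quadratic upgrade is the technical core of the argument. The identity $J-\mathcal{J}(v)=\tfrac12\int \nabla(u-v)\cdot\a\nabla(u-v)$ from~\eqref{e.second-var} applies when $v\in\A_{k'}(\Rd)$, but the competitor $v^{(\delta)}\in V_{k',r}^{(\delta)}$ belongs only to $\A(B_{r^{1+\delta}})$; additionally the two variational problems use different masks ($\Phi_{z,r}$ versus $\Phi_{z,r}^{(\delta)}$). The strategy to bridge this gap is to use Step~4 of the proof of Lemma~\ref{l.localization}, which produces $w\in\A_{k'}(\Rd)$ close to $v^{(\delta)}$ in $L^2(\Phi_{z,r}^{(\delta)})$ at essentially the same rate $r^{-\gamma}$, and then to argue that the first-order cross-term generated by exchanging $v^{(\delta)}$ with $w$ (arising from the change of admissible space and the mask truncation) is itself of quadratic order in the approximation gap when evaluated against $\Phi_{z,r}^{(\delta)}$, using Lemma~\ref{l.fluxmaps2} to control the relevant flux averages. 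In this way the leading contribution is the quadratic one, yielding $\O_{s/2}(Cr^{-2\gamma})$ as claimed.
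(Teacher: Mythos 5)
Your first half is exactly right: Lemma~\ref{l.skypepro} is indeed the engine that produces the quadratic rate $2(\alpha\wedge 1)$. The gap is in the second half, where you claim that the localization error $|J_{k'}-J_{k'}^{(\delta)}|$ for the quantity built from $V^{(\delta)}_{k',r}$ can be upgraded from $\O_s(Cr^{-\gamma})$ to $\O_{s/2}(Cr^{-2\gamma})$ by ``quadratic response.'' The identity~\eqref{e.second-var} kills the first-order term only when one compares the maximum with competitors from the \emph{same} admissible class and the \emph{same} mask; here the two variational problems differ in both respects, so exchanging the maximizer $u$ of $J_{k'}$ for its approximation $v^{(\delta)}\in V^{(\delta)}_{k',r}$ produces a genuine first-order cross term $\int_{\Phi^{(\delta)}_{z,r}}\bigl(-\a(\nabla u+\nabla p)+\ahom\nabla q\bigr)\cdot\nabla(v^{(\delta)}-u)$, which Cauchy--Schwarz bounds only by $C\|\nabla(v^{(\delta)}-u)\|_{L^2(\Phi^{(\delta)}_{z,r})}\sim r^{-\gamma}$, i.e.\ linearly. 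Your proposed repair does not close this: after replacing $v^{(\delta)}$ by a nearby $w\in\A_{k'}$, the piece involving $\nabla(w-u)$ does vanish (up to super-polynomial tail errors, since $w-u\in\A_{k'}$), but the remaining piece involves $\nabla(v^{(\delta)}-w)$ and is again only linear in the approximation gap; Lemma~\ref{l.fluxmaps2} is of no help here, since it controls polynomial-weighted flux averages of elements of $\A_k$, whereas $v^{(\delta)}-w$ is not in $\A_{k'}$ and the weight $-\a(\nabla u+\nabla p)+\ahom\nabla q$ is not a polynomial. Since for small $\delta$ one has $\alpha(1+\delta)+\delta<2(\alpha\wedge1)$, the linear rate is genuinely insufficient, so this step is a real gap, not a technicality.

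The paper avoids the problem by never trying to localize at a quadratic rate. It defines $J^{(\delta)}(0,R,p,q)$ as the supremum of the same functional (with mask $\Phi^{(\delta)}_R$) over the large local class $\mathcal{D}(B_T)=\{w\in\A(B_T):\|w\|_{\underline{L}^2(B_T)}\le 2R^{\delta k}\|w\|_{\underline{L}^2(B_R)}\}$, $T=R^{1+\delta}$. On the event $\{R\ge\X\}$ one gets a \emph{one-sided, deterministic} sandwich: $\A_k\subseteq\mathcal D(B_T)$ gives $J_k\le J^{(\delta)}+CR^{-100\alpha}$, while Proposition~\ref{p.regularity} shows every element of $\mathcal D(B_T)$ is approximated in $B_{2R}$ by elements of $\A_{k'}$ to relative error $CR^{-\delta(k'-k)}$, giving $J^{(\delta)}\le J_{k'}+CR^{-\delta(k'-k)}$. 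Choosing $k'$ so large that $\delta(k'-k)>2\alpha$ makes both deterministic errors negligible, and since $J_k\le J_{k'}$ with $J_{k'}-J_k=\O_{s/2}(CR^{-2\beta})$ by Lemma~\ref{l.skypepro}, the sandwich yields $|J^{(\delta)}-J_k|=\O_{s/2}(CR^{-2\beta})$ directly. In other words, the quadratic rate comes entirely from Lemma~\ref{l.skypepro}, and the localization error is absorbed by exploiting arbitrarily high-order regularity (large $k'$) rather than by any quadratic-response argument for the localized quantity; this is the ingredient your proposal is missing.
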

\begin{proof}
The argument is based on the regularity theory of Section~\ref{s.regularity} and the previous lemma. We fix $\beta \in (0,\alpha\wedge 1)$, $k\in\N$ and take $k'\in\N$ with $k'\gg k$ to be selected below. We allow the constants $C$ to depend on $k'$ since it will eventually be chosen to depend on the appropriate quantities. 

\smallskip

We define the local quantity $J^{(\delta)}$, which is done a bit differently here than in the proof of Lemma~\ref{l.localization}. Fix $R\geq 1$, set $T:=R^{1+\delta}$ and define
\begin{equation*} \label{}
\mathcal{D}(B_T):= \left\{ w \in \A(B_T) \,:\, \left\| w \right\|_{\underline{L}^2(B_T)} \leq 2 R^{\delta k} \left\| w \right\|_{\underline{L}^2(B_R)} \right\}.
\end{equation*}
Note that $R\geq \X \vee C$ implies that $\A_k \subseteq \mathcal{D}(B_T)$. For $\delta > 0$, $x\in\Rd$ and $p,q\in \Ahom_k$, we set
 \begin{equation*}
J^{(\delta)}(0,R,p,q) 
:= \sup_{u \in \mathcal{D}(B_T)}  \int_{\Phi_{R}^{(\delta)}}\left(-\frac12 \nabla u \cdot\a  \nabla u - \nabla p\cdot \a\nabla u + \nabla q\cdot \nabla u \right),
\end{equation*}
where $\Phi^{(\delta)}_{R}$ is as defined in~\eqref{e.Phideltadef}. It is clear that
\begin{equation*} \label{}
J^{(\delta)}(0,R,p,q)  \quad \mbox{is $\F(B_{R^{1+\delta}})$-measurable} 
\end{equation*}
and that, for every $p,q\in\Ahom_k(\Phi_R)$, 
\begin{equation*}
0 \leq J^{(\delta)} (0,R,p,q) \leq C \left( \left\| \nabla p \right\|_{L^2(\Phi_R)}^2 +\left\| \nabla q \right\|_{L^2(\Phi_R)}^2  \right)
\end{equation*}
In the case $R\geq \X\vee C$, we deduce from $\A_k \subseteq \mathcal{D}(B_T)$ and easy tail estimates that, for every $p,q\in\Ahom_k(\Phi_R)$,
\begin{equation*} \label{}
J_k(0,R,p,q) \leq J^{(\delta)}(0,R,p,q) + CR^{-100\alpha}. 
\end{equation*}
On the other hand, the regularity theory implies that, provided $R \geq \X$, we have 
\begin{equation*} \label{}
\sup_{w\in \mathcal{D}(B_T)} \inf_{v \in \A_{k'}} \frac{\left\|  \nabla w - \nabla v\right\|_{\underline{L}^2(B_r) } }{\left\|\nabla w \right\|_{\underline{L}^2(B_r) }} \leq CR^{-\delta (k'-k)}. 
\end{equation*}
Hence, for every $p,q\in\Ahom_k(\Phi_R)$,
\begin{equation*} \label{}
J^{(\delta)}(0,R,p,q)  \leq J_{k'}(0,R,p,q) + CR^{-\delta(k'-k)}. 
\end{equation*}
Taking $k'$ large enough that $\delta(k'-k) > 2\alpha$ and applying~Lemma~\ref{l.skypepro}, we obtain, for every $R\geq C$ and $p,q\in\Ahom_k(\Phi_R)$,
\begin{equation*} \label{}
\left| J^{(\delta)}(0,R,p,q) - J_{k}(0,R,p,q) \right| \indc_{\{ \X \leq R \}} \\
\leq \O_{s/2}\left( CR^{-2\beta} \right).
\end{equation*}
The boundedness of $J^{(\delta)}$ and $J$ and the estimate for $\X$ in Proposition~\ref{p.regularity} give, for every $p,q\in\Ahom_k(\Phi_R)$,
\begin{equation*} \label{}
\left| J^{(\delta)}(0,R,p,q) - J_k(0,R,p,q) \right| \indc_{\{ \X \geq R \}}   \\
\leq C \indc_{\{ \X \geq R \}} 
\leq  \O_{s/2}\left( CR^{-2\beta} \right). 
\end{equation*}
This completes the argument. 
\end{proof}

The previous lemma implies the second assertion of Proposition~\ref{p.localization}. The proof of Proposition~\ref{p.localization} is now complete.

\subsection{Improvement of $\Dual_k(s,\alpha)$}
\label{ss.improvedual}

In this subsection we prove Proposition~\ref{p.min}.
In order to improve the duality between gradients and fluxes, we need to demonstrate (i) that the minimum of the map $q \mapsto \E \left[ J_k(z,r,p,q) \right]$ is small, and (ii) that the $q$ achieving the minimum is close to $p$. 

\smallskip

Throughout this subsection, we fix 
$s\in (0,2]$, $\alpha \in \left(0,\frac ds \right)\cap \left( 0,\frac d2\right]$ and $\beta\in (0,\alpha]$ and suppose that 
\begin{equation}
\label{e.dualsass}
\Fluc_k(s,\alpha) 
\ \mbox{and} \ 
\Dual_k(\beta)
\ \mbox{hold.}
\end{equation}
For each $z\in\Rd$ and $r\geq r_0$, we denote by $p\mapsto \bar M_{z,r}p$ the linear mapping satisfying, for every $p \in \Ahom_k$,
\begin{equation} \label{e.defM_zr}
\E\left[ J\left(z,r,p,\bar{M}_{z,r} p \right) \right] = \min_{q \in \Ahom_{k}} \, \E\left[ J\left(z,r,p,q\right) \right].
\end{equation}

\begin{lemma}
\label{l.superminalpha}
There exists $C(k,s,\beta,d,\Lambda)<\infty$ such that, for every $z \in \Rd$, $r \ge r_0$ and $p \in \Ahom_k(\Phi_{z,r})$,
\begin{equation}
\label{e.superminalpha}
J\left(z,r,p,\overline{M}_{z,r} p\right)  = \O_{s/2}\left(C r^{-2\beta}\right)\,.
\end{equation}
\end{lemma}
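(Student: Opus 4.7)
The plan is to show the gradient bound
\begin{equation*}
\|\nabla v_*\|_{L^2(\Phi_{z,r})} \le \O_s(Cr^{-\beta}), \quad v_* := v(\cdot,z,r,p,\bar M_{z,r} p),
\end{equation*}
whereupon the lemma follows at once, since $J(z,r,p,\bar M_{z,r}p) \le \tfrac{\Lambda}{2} \|\nabla v_*\|_{L^2(\Phi_{z,r})}^2$ and squaring an $\O_s(Cr^{-\beta})$ random variable produces $\O_{s/2}(Cr^{-2\beta})$ via Remark~\ref{r.multiply}.

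First I note that Lemmas~\ref{l.fluxmaps} and~\ref{l.fluxmaps2} remain valid with $\alpha$ replaced by $\beta$ under the present hypotheses $\Fluc_k(s,\alpha)$ and $\Dual_k(\beta)$: reading those proofs, $\Dual_k(\alpha)$ enters only as an upper bound on the expected supremum $\E\sup_w|\int (\a-\ahom)\nabla w|$, and the substitution $\Dual_k(\alpha)\rightsquigarrow\Dual_k(\beta)$ merely degrades the duality exponent from $\alpha$ to $\beta$, with the (dominated) fluctuation error $\O_s(Cr^{-\alpha})$ absorbed.

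Next I derive the deterministic bound $\|\nabla(\bar M_{z,r} p - p)\|_{L^2(\Phi_{z,r})} \le Cr^{-\beta}$. Writing $v_p := v(\cdot,z,r,p,p)$, the first variation \eqref{e.first-var} with $w = v_p$ yields the identity
\begin{equation*}
J(z,r,p,p) \,=\, \tfrac12 \int_{\Phi_{z,r}} \nabla v_p \cdot \a \nabla v_p \,=\, -\tfrac12 \int_{\Phi_{z,r}} \nabla p \cdot (\a - \ahom) \nabla v_p.
\end{equation*}
Applying the adapted Lemma~\ref{l.fluxmaps2} with the polynomial weight $\nabla p \in \mathcal P_{k-1}$, the right side is bounded by $\tfrac12 \O_s(Cr^{-\beta}) \|\nabla v_p\|_{L^2(\Phi_{z,r})}$, while ellipticity forces $\|\nabla v_p\|^2_{L^2(\Phi_{z,r})} \le 2 J(z,r,p,p)$. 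Taking expectations, invoking Cauchy-Schwarz, and using the moment bound $\E[(\O_s(\theta))^2] \le C\theta^2$, I obtain $\E[J(z,r,p,p)] \le C \sqrt{\E[J(z,r,p,p)]}\, r^{-\beta}$, and therefore $\E[J(z,r,p,p)] \le Cr^{-2\beta}$. The strong convexity of $q \mapsto \E[J(z,r,p,q)]$ provided by Corollary~\ref{c.convexity}, combined with the minimality of $\bar M_{z,r} p$, then delivers
\begin{equation*}
\|\nabla(\bar M_{z,r} p - p)\|_{L^2(\Phi_{z,r})}^2 \,\le\, C \,\E[J(z,r,p,p)] \,\le\, Cr^{-2\beta}.
\end{equation*}

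Finally, the first variation \eqref{e.first-var} applied at $v_*$ with test function $w = v_*$ reads
\begin{equation*}
\int_{\Phi_{z,r}} \nabla v_* \cdot \a \nabla v_* \,=\, \int_{\Phi_{z,r}} \nabla v_* \cdot \ahom \nabla(\bar M_{z,r} p - p) \,-\, \int_{\Phi_{z,r}} \nabla p \cdot (\a - \ahom) \nabla v_*.
\end{equation*}
The deterministic estimate just proved bounds the first integral by $Cr^{-\beta}\|\nabla v_*\|_{L^2(\Phi_{z,r})}$ through Cauchy-Schwarz, and the adapted Lemma~\ref{l.fluxmaps2} controls the second by $\O_s(Cr^{-\beta})\|\nabla v_*\|_{L^2(\Phi_{z,r})}$. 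Since $\int_{\Phi_{z,r}} \nabla v_* \cdot \a \nabla v_* \ge \|\nabla v_*\|^2_{L^2(\Phi_{z,r})}$, division gives $\|\nabla v_*\|_{L^2(\Phi_{z,r})} \le \O_s(Cr^{-\beta})$, completing the plan. The key technical point is the Cauchy-Schwarz absorption in the step bounding $\E[J(z,r,p,p)]$: the appearance of $\|\nabla v_p\|_{L^2}$ on the right, coupled with the deterministic energy inequality $\|\nabla v_p\|_{L^2}^2 \le 2 J(z,r,p,p)$, is exactly what boosts the naive size $r^{-\beta}$ to the sharp $r^{-2\beta}$, and it is this factor-of-two gain in the exponent on which the whole argument rests.
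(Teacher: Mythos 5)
Your proof is correct. Both you and the paper start from the same first ingredient: the flux-duality Lemma~\ref{l.fluxmaps2}, which under the standing assumptions $\Fluc_k(s,\alpha)$ and $\Dual_k(\beta)$ (with $\beta\le\alpha$) holds with exponent $\beta$, and which via \eqref{e.J-energy} makes $J(z,r,p,p)$ small; and both use the uniform convexity statements of Corollary~\ref{c.convexity}. After that the routes diverge. The paper restricts to the good event $\{r\ge \Y_{s\beta}(z)+C\}$, introduces the \emph{random} minimizer $M_{z,r}p$ of $q\mapsto J(z,r,p,q)$, bounds $J(z,r,p,M_{z,r}p)\le J(z,r,p,p)$ pathwise, then uses $\Fluc_k(s,\alpha)$ together with the bounded invertibility of $q\mapsto\nabla_q\E[J](z,r,p,q)$ to show $\|\nabla(M_{z,r}p-\bar M_{z,r}p)\|_{L^2(\Phi_{z,r})}=\O_s(Cr^{-\al})$, and concludes by quadratic response (Lemma~\ref{l.C11}). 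You instead avoid the random minimizer altogether: you only need $\E[J(z,r,p,p)]\le Cr^{-2\beta}$ (obtained by Cauchy--Schwarz in expectation rather than the paper's pathwise absorption), deduce the \emph{deterministic} bound $\|\nabla(\bar M_{z,r}p-p)\|_{L^2(\Phi_{z,r})}\le Cr^{-\beta}$ from the minimality of $\bar M_{z,r}p$ and the expectation-level uniform convexity (the same mechanism as Lemma~\ref{l.snappingtheLs}; to make it fully explicit one writes $\E[J(z,r,p,p)]=\E[J(z,r,p,\bar M_{z,r}p)]+\E[J(z,r,0,p-\bar M_{z,r}p)]$ and lower-bounds the last term by $\tfrac12\|\nabla(p-\bar M_{z,r}p)\|^2$ via Corollary~\ref{c.convexity}), and then estimate the energy of the maximizer $v_*$ directly from the first-variation identity, splitting off the deterministic term $\int\nabla v_*\cdot\ahom\nabla(\bar M_{z,r}p-p)$ and handling the commutator term with Lemma~\ref{l.fluxmaps2}. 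What your route buys is a cleaner organization: no good-event splitting on $\Y_{s\beta}(z)$, no comparison of two minimizers, and no invertibility argument for the expected gradient map; the small cost is that you must carry the weighted flux estimate into the final step, whereas the paper only needs it once. Two cosmetic remarks: your first display in the middle step is really \eqref{e.J-energy} (a consequence of the first variation) rather than \eqref{e.first-var} itself, and the convexity-plus-minimality deduction deserves the one-line quadratic expansion indicated above; neither affects correctness.
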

\begin{proof}
Let $ p \in \Ahom_k(\Phi_{z,r})$. By \eqref{e.J-energy} and homogeneity,
\begin{align*}
J(z,r,p,p)  
& =  \frac 1 2 \int_{\Phi_{z,r}} \nabla p \cdot (\ahom - \a) \nabla v(\cdot,z,r,p,p)  \\
& \leq \|\nabla v(\cdot,z,r,p,p)\|_{L^2(\Phi_{z,r})}  \sup_{w\in \A_k(\Phi_{z,r}) }   \left| \int_{\Phi_{z,r}} \nabla p \cdot \left( \a - \ahom \right) \nabla w \right|,
\end{align*}
and we deduce from \eqref{e.J-energy} and Lemma~\ref{l.fluxmaps2} that for every $p \in \Ahom_k(\Phi_{z,r})$,
\begin{equation}
\label{e.min.p.p}
J(z,r,p,p) = \O_{s/2}(C r^{-2 \beta}).
\end{equation}
We now prove the same estimate for $J(z,r,p,\bar M_{z,r} p)$. Let $\Y_{s\beta}(z)$ be as given by Corollary~\ref{c.convexity}. We may restrict our attention to the event $r \ge \Y_{s\beta}(z)+C$, since otherwise
\begin{equation*}
J(z,r,p, \bar M_{z,r} p) \1_{r \le \Y_{s \beta}(z)+C} \le C \, \1_{r \le \Y_{s\beta}(z)+C} 
\le C \Ll( \frac{\Y_{s\beta}(z)+C}{r} \Rr)^{2\beta} = \O_{s/2}\left( Cr^{-2\beta} \right).
\end{equation*}
Let $M_{z,r}$ be the linear mapping such that for every $p \in \Ahom_k$,
\begin{equation*}
J\left(z,r,p,M_{z,r} p \right)  = \min_{q \in \Ahom_{k}} J(z,r,p,q).
\end{equation*}
This is well-defined in view of the restriction on $r$.
In view of \eqref{e.min.p.p}, we clearly have, for every $p \in \Ahom_k(\Phi_{z,r})$,
$$
J(z,r,p,M_{z,r} p) = \O_{s/2}(C r^{-2 \beta}).
$$
By quadratic response, that is, Lemma~\ref{l.C11} with $p_1 = p_2 = p$, $q_1 = \bar M_{z,r} q$, $\frac{q_1 + q_2} 2 = M_{z,r} p$ and the definition of $M_{z,r}$, in order to conclude the proof, it suffices to show that
\begin{equation}
\label{e.comp.M.barM}
\|\nabla( M_{z,r} p) - \nabla (\bar M_{z,r} p)\|_{L^2(\Phi_{z,r})} = \O_s(C r^{-\beta}).
\end{equation}
For $r \ge \Y_{s\beta}(z)$, we have
$$
J(z,r,0,q) \ge \frac 1 4 \int_{\Phi_{z,r}} \nabla q \cdot \ahom \nabla q,
$$
and, by \eqref{e.polarization}, \eqref{e.bounded-J} and Young's inequality, 
\begin{align*}
J(z,r,p,q) & \geq J(z,r,0,q) - C \|\nabla p\|_{L^2(\Phi_{z,r})} \|\nabla q\|_{L^2(\Phi_{z,r})} \\
& \geq \frac 1 8 \|\nabla q\|_{L^2(\Phi_{z,r})}^2 - C \|\nabla p\|_{L^2(\Phi_{z,r})}^2.
\end{align*}
As a consequence, for each $r \ge \Y_{s\beta}(z)$, 
\begin{equation}
\label{e.Mzr.control}
\|\nabla(M_{z,r} p)\|_{L^2(\Phi_{z,r})} \le C \|\nabla p\|_{L^2(\Phi_{z,r})}.
\end{equation}
The polynomial $M_{z,r} p$ is characterized up to a constant by the property that
\begin{equation}
\label{e.gradient.Mzr}
\nabla_q J\left(z,r,p,M_{z,r} p\right) = 0.
\end{equation}
In order to plug random objects into the expectation of $J$, we denote 
\begin{equation*}
\E[J](z,r,p,q) := \E\left[J(z,r,p,q)\right] \,.
\end{equation*}
By the assumption of $\Fluc_k(s,\al)$, \eqref{e.Mzr.control}, \eqref{e.gradient.Mzr} and Remark~\ref{r.gradient}, we have that, for every $q' \in \Ahom_k(\Phi_{z,r})$, 
\begin{equation}
\label{e.grad.Mzr}
\nabla_q \E[J](z,r,q,M_{z,r} p)(q') = \O_s\left(C r^{-\alpha}\right) \,.
\end{equation}
For $r \ge C$, the polynomial $\overline{M}_{z,r} p$ is the unique (up to a constant) minimizer of the mapping $q \mapsto \E[J](z,r,p,q)$, that is, the unique zero of the linear mapping $q \mapsto \nabla_q \E[J](z,r,p,q)$. By Corollary~\ref{c.convexity}, this linear mapping is invertible with bounded inverse, and therefore \eqref{e.grad.Mzr} implies \eqref{e.comp.M.barM} with $\al$ in place of $\be$. Since $\al \ge \be$, the proof is complete.
\end{proof}

We next improve the previous lemma using a harmonic approximation argument similar to the one used for the proof of Lemma~\ref{l.harmonicapprox}.

\begin{lemma} 
\label{l.minimalE[J]}
There exists $C(\alpha,\beta,s,k,d,\Lambda)<\infty$ such that, for every $z \in \Rd$ and $r \ge r_0$,
\begin{equation} \label{e.minimalE[J]}
\sup_{p\in \Ahom_k(\Phi_{z,r})}  J\left(z,r,p,\overline{M}_{z,r} p \right)   \leq \O_{s/4}\left(C r^{-2(\alpha\wedge 2\beta)}\right)   \,.
\end{equation}
\end{lemma}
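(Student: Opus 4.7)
The goal is to improve Lemma~\ref{l.superminalpha} from $\O_{s/2}(Cr^{-2\beta})$ to $\O_{s/4}(Cr^{-2(\alpha\wedge 2\beta)})$. Set $v := v(\cdot,z,r,p,\bar M_{z,r}p)$. Since $J(z,r,p,\bar M_{z,r}p) = \tfrac12\int_{\Phi_{z,r}}\nabla v\cdot\a\nabla v$ by \eqref{e.J-energy} and $\a \ge I$, it is enough to establish
\begin{equation*}
\|\nabla v\|_{L^2(\Phi_{z,r})} = \O_{s/2}\bigl(Cr^{-(\alpha\wedge 2\beta)}\bigr)
\end{equation*}
and square, with the expected loss of a factor of two in the integrability exponent. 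The strategy mirrors that of Lemma~\ref{l.harmonicapprox}: approximate coarse-grained $\nabla v$ by the gradient of an $\ahom$-harmonic polynomial $\mathsf h$, use the first-order optimality of $\bar M_{z,r}p$ to show that $\mathsf h$ is small, and transfer the bound back to $\nabla v$ via a multiscale Poincar\'e argument. The starting point is the preliminary bound $\|\nabla v\|_{L^2(\Phi_{z,r})} \le \O_s(Cr^{-\beta})$ from Lemma~\ref{l.superminalpha}.

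The defining property of $\bar M_{z,r}p$ combined with \eqref{e.gradient-J} reads $\E[\int_{\Phi_{z,r}}\nabla q'\cdot\ahom\nabla v]=0$ for every $q'\in\bar{\mcl A}_k$. Together with $\Fluc_k(s,\alpha)$, via Remark~\ref{r.gradient}, this yields the pointwise bound
\begin{equation*}
\Ll|\int_{\Phi_{z,r}}\nabla q'\cdot\ahom\nabla v\Rr| \le \O_s(Cr^{-\alpha})\,\|\nabla q'\|_{L^2(\Phi_{z,r})}.
\end{equation*}
Applying Lemma~\ref{l.harmonicapprox} to $v$ yields $\mathsf h\in\bar{\mcl A}_k$ such that, for a mesoscopic scale $\rho\le r/\sqrt 2$,
\begin{equation*}
\bigl\|(\nabla v)_{\Phi_{\cdot,\rho}} - \nabla\mathsf h\bigr\|_{L^2(\Phi_{\sqrt{r^2-\rho^2}})} \le \|\nabla v\|_{L^2(\Phi_r)}\,\O_s(C\rho^{-\alpha}).
\end{equation*}

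Now invoking Lemma~\ref{l.CKish} componentwise with $F \leftrightarrow \ahom\nabla v$ and $q \leftrightarrow \nabla q'$, and exploiting the mean-value identity $\int_{\Phi_{y,\rho}}\nabla\mathsf h(x)\,dx = \nabla\mathsf h(y)$ (valid because $\mathsf h$ is $\ahom$-harmonic and $\Phi$ is the heat kernel for $\ahom$), the two displays above combine into
\begin{equation*}
\Ll|\int_{\Phi_r}\nabla q'\cdot\ahom\nabla\mathsf h\Rr| \le \O_s(Cr^{-\alpha})\,\|\nabla q'\|_{L^2(\Phi_r)} + \|\nabla v\|_{L^2(\Phi_r)}\,\|\nabla q'\|_{L^2(\Phi_r)}\,\O_s(C\rho^{-\alpha}),
\end{equation*}
where the second term arises from Cauchy--Schwarz on the transferred integral. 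Testing against $q' = \mathsf h$ and using the coercivity of the quadratic form $(\nabla\cdot,\ahom\nabla\cdot)_{L^2(\Phi_r)}$ on $\bar{\mcl A}_k/\bar{\mcl A}_0$ (see Remark~\ref{r.L-bijection}), absorbing the bilinear term by means of the preliminary bound on $\|\nabla v\|_{L^2(\Phi_r)}$, and optimizing in $\rho$, one deduces $\|\nabla\mathsf h\|_{L^2(\Phi_r)} \le \O_{\tilde s}(Cr^{-(\alpha\wedge 2\beta)})$ for a stochastic integrability exponent $\tilde s$ compatible with the target $s/2$ (by Remark~\ref{r.multiply}).

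Finally, the harmonic approximation together with the bound on $\mathsf h$ gives an $L^2(\Phi_{\sqrt{r^2-\rho^2}})$-control of the coarse-grained field $(\nabla v)_{\Phi_{\cdot,\rho}}$ at the desired scaling. A multiscale Poincar\'e argument in the spirit of Lemma~\ref{l.mspoincare2}, applied to $v$ after subtracting an element of $\mcl A_k$ whose gradient corresponds to $\nabla\mathsf h$ via the correctors of Lemma~\ref{l.firstordercorrect}, combined with the Caccioppoli inequality to pass from the function to its gradient, lifts this to the desired bound on $\|\nabla v\|_{L^2(\Phi_{z,r})}$. The main obstacle I anticipate is the careful stochastic bookkeeping: keeping track of how the integrability exponents degrade under products (Remark~\ref{r.multiply}), squaring, and optimization over $\rho$, so that the final estimate is precisely $\O_{s/4}$. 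As in Lemma~\ref{l.harmonicapprox}, the event $\{r \le \X\vee\Y\}$ on which the key inputs degenerate must be handled separately via the deterministic bound $J \le C$ and the tail estimates for $\X$ and $\Y$.
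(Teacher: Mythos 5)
Your proposal is correct and follows essentially the same route as the paper's proof: the preliminary bound from Lemma~\ref{l.superminalpha}, the first-order optimality of $\overline{M}_{z,r}$ combined with $\Fluc_k(s,\alpha)$ to control $\int_{\Phi_{z,r}}\nabla q'\cdot\ahom\nabla v$, harmonic approximation via Lemmas~\ref{l.harmonicapprox} and~\ref{l.CKish} with the test choice $q'=\mathsf{h}/\|\nabla\mathsf{h}\|_{L^2(\Phi_{z,r})}$ to show $\nabla\mathsf{h}$ is small, then Lemma~\ref{l.mspoincare2} together with the Caccioppoli and Poincar\'e inequalities to return to $\|\nabla v\|_{L^2(\Phi_{z,r})}$, handling the event $\{r\le\X\vee\Y\}$ separately by boundedness. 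The only cosmetic differences are that no optimization in $\rho$ is needed (the mesoscale $\sigma r$ is dictated by Lemma~\ref{l.mspoincare2}) and no subtraction of a corrected version of $\mathsf{h}$ is required in the last step: once $\nabla\mathsf{h}$ is small, the coarse-grained $\nabla v$ is itself small, so the multiscale Poincar\'e argument applies directly to $v\in\A_k$, which also avoids the issue that the two-scale expansion associated to $\mathsf{h}$ need not belong to $\A_k$ when $k>1$.
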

\begin{proof}
Denote $v := v(\cdot,z,r,p,\overline{M}_{z,r} p)$. In view of the estimate
\begin{equation*} \label{}
J\left(z,r,p,\overline{M}_{z,r} p \right) \leq C \left\| \nabla v \right\|_{L^2(\Phi_{z,r})}^2,
\end{equation*}
it suffices to prove that 
\begin{equation*} \label{}
\left\| \nabla v \right\|_{L^2(\Phi_{z,r})}^2 = \O_{s/4}\left(C r^{-2(\alpha\wedge 2\beta)}\right) .
\end{equation*}

\smallskip

\emph{Step 1.} We show that 
\begin{equation} 
\label{e.squish}
\int_{\Phi_{z,\sqrt{r^2 - (\sigma r)^2}}} \left|\int_{\Phi_{x,\sigma r}} \nabla v(y)  \right|^2 \, dx = \O_{s/2}\left(C r^{-2\alpha}\right) +  \O_{s/4}\left( C r^{-4\beta} \right).
\end{equation}
According to Lemma~\ref{l.superminalpha},
\begin{equation*} 
\left\| \nabla v \right\|_{L^2(\Phi_{z,r})}^2 \leq C J(z,r,p,\overline{M}_{z,r} p) \leq \O_{s/2}\left( C r^{-2\beta} \right)\,.
\end{equation*}
Therefore Lemma~\ref{l.harmonicapprox} yields the existence of $\mathsf{h} \in \Ahom_k$ such that 
\begin{equation} \label{e.minsetsmall000}
\int_{\Phi_{z,\sqrt{r^2-(\sigma r)^2}}} \left| \int_{\Phi_{x,\sigma r}} (\nabla v(y) - \nabla \mathsf{h} (y)) \, dy  \right|^2 \, dx  \leq \O_{s/4}\left( C r^{-4\beta} \right)\,,
\end{equation}
where we let $\sigma(k,d,\Lambda)$ be as in Lemma~\ref{l.mspoincare2}. Using that $\Fluc_k(s,\alpha)$ holds, we have, for every $p,q' \in \Ahom_k(\Phi_{z,r})$, 
\begin{multline*} 
\nabla_q J(z,r,p,\overline{M}_{z,r} p)(q')  = \int_{\Phi_{z,r}} \ahom \nabla v \cdot \nabla q'  \\ 
= \E\left[\nabla_q J(z,r,p,\overline{M}_{z,r} p)\right](q') + \O_s\left(C r^{-\alpha}\right) = \O_s\left(C r^{-\alpha}\right)
\end{multline*}
Using Lemma~\ref{l.CKish} we get, for all $q' \in \Ahom_k(\Phi_{z,r})$, the estimate 
\begin{equation*} 
\left| \int_{\Phi_{z,r}} \ahom (\nabla \mathsf{h} -\nabla v) \cdot \nabla q' \right|^2 \leq C \int_{\Phi_{z,\sqrt{r^2-(\sigma r)^2}}} \left| \int_{\Phi_{x,\sigma r}} (\nabla v(y) - \nabla \mathsf{h} (y)) \, dy  \right|^2 \, dx\,,
\end{equation*}
which implies that 
\begin{align*} 
  \int_{\Phi_{z,r}} \ahom \nabla \mathsf{h}  \cdot \nabla q'  & =  \int_{\Phi_{z,r}} \ahom \nabla v \cdot \nabla q'  + \int_{\Phi_{z,r}} \ahom (\nabla \mathsf{h} -\nabla v) \cdot \nabla q'
 \\ & \leq \O_s\left(C r^{-\alpha}\right) + \O_{s/2}\left( C r^{-2\beta} \right)\,.
\end{align*}
Taking $q' = \mathsf{h} /\|\nabla \mathsf{h} \|_{L^2(\Phi_{z,r})}$, this yields
\begin{equation*} 
\left\|\nabla \mathsf{h}  \right\|_{L^2(\Phi_{z,r})}^2 \leq \O_{s/2}\left(C r^{-2\alpha}\right) +  \O_{s/4}\left( C r^{-4\beta} \right)\,.
\end{equation*}
Returning to~\eqref{e.minsetsmall000} and using the triangle inequality, we obtain~\eqref{e.squish}.

\smallskip

\emph{Step 2.}  
Owing to Lemma~\ref{l.mspoincare2} we have
\begin{equation}  \label{e.msp2appl001}
\left\| v \right\|_{L^2(\Psi_{z,r})}^2 \indc_{\{ \X(z)\leq r \}}  \leq C \fint_{B_{r/\theta} } \left| \int_{\Phi_{y,\sigma r}} v(z) \, dz  \right|^2 \, dy \,.
\end{equation}
The Caccioppoli and Poincar\'e's inequalities (subtracting a constant from $v$, if necessary) yield 
\begin{align} \label{e.msp2appl002}
\left\| \nabla v \right\|_{L^2(\Phi_{z,r})}^2 \indc_{\{ \X(z)\leq r \}} & \leq C \fint_{B_{r/\theta} } \left| \int_{\Phi_{y,\sigma r}} \nabla v(z) \, dz  \right|^2 \, dy 
\\ \notag& \leq \O_{s/2}\left(C r^{-2\alpha}\right) +  \O_{s/4}\left( C r^{-4\beta} \right)
\end{align}
On the other hand,
\begin{equation*}
\left\| \nabla v \right\|_{L^2(\Phi_{z,r})}^2 \indc_{\{ \X(z)\geq r \}} 
\leq C \indc_{\{ \X(z)\geq r \}} 
\leq C \left( \frac{\X(z)}{r} \right)^{2\alpha} 
= \O_{\frac{d}{2\alpha}} \left( C r^{-2\alpha} \right).
\end{equation*}
Therefore, since $\frac{d}{\alpha}>s$,   
\begin{equation*} 
 \left\| \nabla v \right\|_{L^2(\Phi_{z,r})}^2  \leq \O_{s/2}\left(C r^{-2\alpha}\right) +  \O_{s/4}\left( C r^{-4\beta} \right)  \,.
\end{equation*}
This completes the proof. 
\end{proof}

We now complete the proof of Proposition~\ref{p.min}.

\begin{proof}[{Proof of Proposition~\ref{p.min}}]
We first consider the case $k=1$. 

\smallskip

\emph{Step 1.} The proof of the first statement of the proposition. Observe that Lemma~\ref{l.identifytheta1} implies, for each $p\in B_1$ and $r\geq r_0$,
\begin{equation*}
\left| p - \overline{M}_{z,r}p \right| \leq Cr^{-2\beta}. 
\end{equation*}
The uniform convexity of $q\mapsto   \E \left[ J_1(z,r,p,q) \right]$ and the fact that this map has its minimum at $q=\overline{M}_{z,r}p$, the previous line and~\eqref{e.minimalE[J]} imply that, for $p\in B_1$,
\begin{equation}
\label{e.affinecluptcha}
 \E \left[ J_1(z,r,p,p) \right] 
 \leq \E \left[ J_1(z,r,p,\overline{M}_{z,r}p) \right] + C \left| p - \overline{M}_{z,r}p \right|^2  \leq Cr^{-2(\alpha\wedge 2\beta)}. 
\end{equation}
By the first variation~\eqref{e.first-var}, we find that, for every $e\in \partial B_1$ and $w\in\A_1$,
\begin{align*}
\left| e\cdot \int_{\Phi_{z,r}} \left( \a(x) - \ahom \right)\cdot \nabla w(x)\,dx \right| 
& = \left| \int_{\Phi_{z,r}} \nabla v(x,z,r,e,e) \cdot \a \nabla w(x)\,dx \right|   \\
& \leq C \left\| \nabla w \right\|_{L^2(\Phi_{z,r})} \left\| \nabla v(\cdot,z,r,e,e) \right\|_{L^2(\Phi_{z,r})}  \\
& \leq C\left\| \nabla w \right\|_{L^2(\Phi_{z,r})} \left(J(z,r,e,e) \right)^{\frac12}.
\end{align*}
Thus, if we take $\{ e_1,\ldots,e_d\}$ to be the standard basis for $\Rd$, then 
\begin{equation*}
\sup_{w\in \A_1(\Phi_{z,r})} \left|  \int_{\Phi_{z,r}} \left( \a(x) - \ahom \right)\cdot \nabla w(x)\,dx \right| \leq C \sup_{i\in\{1,\ldots,d\}} \left(J(z,r,e_i,e_i) \right)^{\frac12}.
\end{equation*}
Taking expectations and applying~\eqref{e.affinecluptcha} gives
\begin{align*}
\E \left[ \sup_{w\in \A_1(\Phi_{z,r})} \left|  \int_{\Phi_{z,r}} \left( \a(x) - \ahom \right)\cdot \nabla w(x)\,dx \right| \right] 
& \leq C \E \left[  \sup_{i\in\{1,\ldots,d\}} \left(J(z,r,e_i,e_i) \right)^{\frac12} \right]   \\
& \leq C \sum_{i=1}^d  \E \left[ \left(J(z,r,e_i,e_i) \right)^{\frac12} \right]   \\
& \leq Cr^{-(\alpha\wedge 2\beta)}. 
\end{align*}
This completes the proof of the first statement of Proposition~\ref{p.min}. 

\smallskip

\emph{Step 2.} The proof of the second statement. Fix $k\in\N$. The reasoning is actually almost the same as in Step~1, the only difference being that Lemma~\ref{l.matchingenergyk} gives us 
$\left| p - \overline{M}_{z,r}p \right|  \leq Cr^{-\gamma}$ for every $\gamma \in (0,\frac{2\beta}{\beta+1} \wedge 1)$ rather than the stronger bound of $\left| p - \overline{M}_{z,r}p \right|  \leq Cr^{-2\beta}$. Percolating this change through the argument leads to the bound, for each $\gamma\in (0,\frac{2\beta}{\beta+1} \wedge 1)$,
\begin{equation*}
E \left[ \sup_{w\in \A_k(\Phi_{z,r})} \left|  \int_{\Phi_{z,r}} \left( \a(x) - \ahom \right)\cdot \nabla w(x)\,dx \right| \right]  \leq Cr^{-\alpha\wedge \gamma},
\end{equation*}
and the proof is complete.
\end{proof}

\section{Optimal quantitative estimates on the correctors}
\label{s.finalcorrectors}

In this section, we complete the proof of the main results stated in the introduction, Theorems~\ref{t.firstcorrectors} and~\ref{t.main1}, which are consequences of Theorem~\ref{t.additivitybelowd} and the theory developed in Sections~\ref{s.additivity} and~\ref{s.localization}.

\begin{proof}[{Proof of Theorem~\ref{t.main1}}]
Fix $s<1$. Theorem~\ref{t.additivitybelowd} implies that $\Add_1(s,d)$, $\Fluc_1(2s,\tfrac d2)$, $\Dual_1\left(\tfrac d2\right)$ and hold. Lemma~\ref{l.identifytheta1} then gives,  for $z\in\Rd$, $r\geq r_0$ and $p,q\in B_1$, 
\begin{equation}
\label{e.scalingenergy1111}
\left| 
\E\left[ J_1(z,r,p,q) \right] - \frac12 (q-p) \cdot \ahom (q-p) \right|  \leq Cr^{-d}
\end{equation}
and
\begin{equation}
\label{e.yesLswhipped}
\left| L_{1,z,r}^*p - p \right| + \left| L_{1,z,r}q - q \right| \leq Cr^{-d}.
\end{equation}
The latter yields via~\eqref{e.I.as.J} and~\eqref{e.bounded-J} that
\begin{equation}
\label{e.JvsId}
\left| J(z,r,p,q) - I(z,r,p,q) \right| \leq Cr^{-d}. 
\end{equation}
The first statement (i) of Theorem~\ref{t.main1} is a consequence of  $\Add_1(s,d)$ and~\eqref{e.JvsId}. 
 The second statement is~\eqref{e.scalingenergy1111}, the third statement is simply $\Fluc_1(2s,\tfrac d2)$, and the fourth statement is a consequence of the localization statements of Theorem~\ref{t.additivitybelowd}.
\end{proof}

\begin{proof}[{Proof of Theorem~\ref{t.firstcorrectors}}]
Fix $s<2$. The estimates~\eqref{e.gradient},~\eqref{e.flux} and~\eqref{e.energydensity} are immediate consequences of the triangle inequality, $\Fluc_1(s,\tfrac d2)$, $\Dual_1\left(\tfrac d2\right)$, the estimate~\eqref{e.yesLswhipped} and Lemmas~\ref{l.fluxmaps},~\ref{l.whatflucgives} and~\ref{l.firstordercorrect}.
Moreover, by Lemma~\ref{l.firstordercorrect}, there exist $\ep(d,\Lambda)\in \left(0,\frac12\right)$ and $C(d,\Lambda)<\infty$  such that
\begin{equation} 
\label{e.correctorboundpf}
\sup_{r\geq1} \sup_{\xi\in B_1} \left\| \nabla \phi^{(1)}(\cdot,\xi) \right\|_{\underline{L}^2(B_r)}  = \O_{2+\ep}(C)
\end{equation}
and, taking $s = 2 +\ep$, $\ep \in \left(0,\frac12\right)$ for $d>2$ and $s = 2$ when $d=2$, we have, for every $s'<s$, $\xi\in B_1$, $z\in\Rd$ and $r\geq 1$,
\begin{equation}
\label{e.spatavg1pf}
\left| \int_{\Phi_{z,r}}\nabla \phi^{(1)}(x,\xi)  \,dx \right| = \O_{s'} \left(Cr^{-\frac{d}s }\right).
\end{equation}
Note that $\frac ds>1$ in dimensions $d>2$. 

\smallskip

We have left to prove~\eqref{e.phiosc}. The argument is an application of~\eqref{e.correctorboundpf},~\eqref{e.spatavg1pf} and the multiscale Poincar\'e inequality (here in the form of Lemma~\ref{l.mspoincare}). Define, for each $y\in\Rd$ and $t>0$, 
\begin{equation*}
w(x,t):= \int_{\Rd} \Phi(x-y,t) \phi_e(y)\,dy\,.
\end{equation*}
Since 
\begin{equation*} 
\fint_{B_R} \left| \phi_e (x) - (\phi_e)_{B_R}\right|^2 \, dx = \inf_{a \in \R} \fint_{B_R} \left| \phi_e (x) - a \right|^2 \, dx 
\end{equation*}
we may normalize $\phi_e$ so that $w(0,R^2) = 0$. By~\eqref{e.spatavg1pf} and Lemma~\ref{l.sum-O} we get 
\begin{equation} \label{e.wincontrol}
\left|w(x,R^2) \right| = \O_{s'}\left( C|x|R^{-\frac {d}{s}} \right)
\end{equation}
Applying Lemma~\ref{l.mspoincare}, we obtain, for $\Psi_R$ defined as in~\eqref{e.PsiR},
\begin{align*}
\int_{\Psi_R} \left| \phi_e(x) \right|^2\,dx 
& \leq C \int_{\Psi_R} \left| w\left(x, R^2 \right) \right|^2\,dx + C \int_{0}^{R^2} \int_{\Psi_R} \left| \nabla w(y,t) \right|^2\,dy\,dt.
\end{align*}
To bound the first term on the right, we use~\eqref{e.wincontrol} and Lemma~\ref{l.sum-O} to obtain
\begin{align*}
 \int_{\Psi_R} \left| w\left(x, R^2 \right) \right|^2\,dx \leq  \O_{s'/2}\left( CR^{2-\frac {2d}{s}}   \right) \leq \O_{s'/2}(C)\,.
\end{align*}
We split the second term: 
\begin{multline*}
 \int_{0}^{R^2} \int_{\Psi_R} \left| \nabla w(y,t) \right|^2\,dy\,dt \\
 =  \int_{0}^{1} \int_{\Psi_R} \left| \nabla w(y,t) \right|^2\,dy\,dt
+  \int_{1}^{R^2} \int_{\Psi_R} \left| \nabla w(y,t) \right|^2\,dy\,dt.
\end{multline*}
For the first piece, we argue as in Step~1 of the proof of Lemma~\ref{l.mspoincare2}, using~\eqref{e.correctorboundpf}, to find that
\begin{equation*} 
\int_{0}^{1} \int_{\Psi_R} \left| \nabla w(y,t) \right|^2\,dy\,dt \leq C \int_{\Psi_R}  \left| \nabla \phi_e(x) \right|^2\,dx \leq \O_{1+\ep/2}(C)\,.
\end{equation*}
For the second piece, we use~\eqref{e.spatavg1pf} and integrate:
\begin{align*}
\int_{1}^{R^2} \int_{\Psi_R} \left| \nabla w(y,t) \right|^2\,dy\,dt
& \leq \int_{1}^{R^2} 
\O_{s'/2}\left(C t^{-\frac {d}{s}} \right) \, dt 
.
\end{align*}
The previous estimates give us, for every $s'<s$,
\begin{equation*}
\int_{\Psi_R} \left| \phi_e(x)  \right|^2\,dx  \leq 
\O_{s'/2} \left(  C \Rr)+ \int_1^{R^2} \O_{s'/2} \Ll( C   t^{-\frac {d}{s} }    \right)\,dt.
\end{equation*}
In the case $d>2$, we have $\frac ds>1$, 
and therefore by Lemma~\ref{l.sum-O}(i), we get that for every $s' < s =2+\ep$,
\begin{equation*}
\int_{\Psi_R} \left|  \phi_e(x) \right|^2\,dx  = \O_{s'/2}(C).
\end{equation*}
From this we deduce, after shrinking $\ep$ slightly,
\begin{equation*}
\left\|  \phi_e \right\|_{\underline{L}^2(B_R)} = \O_{2+\ep}(C)\,.
\end{equation*}
In dimension $d=2$, we use Lemma~\ref{l.sum-O}(ii) to obtain
\begin{align*}
\int_1^{R^2} \O_{s'/2} \Ll( C   t^{-1 }    \right)\,dt & \le \sum_{k = 0}^{\lfloor \log_2 R^2 \rfloor} \int_{2^k}^{2^{k+1}} \O_{s'/2} \Ll( C t^{-1}  \Rr) \, dt \le \O_{s'/2}(C \log R),
\end{align*}
 and thus, for every $s'<2$, 
\begin{equation*}
\int_{\Psi_R} \left| \phi_e(x) \right|^2\,dx  \leq \O_{s'/2} \left( C\log R \right)\,.
\end{equation*}
This gives for $d=2$ the bound, for every $s'<2$,
\begin{equation*}
\left\| \phi_e \right\|_{\underline{L}^2(B_R)} \leq \O_{s'}\left(C \log^{\frac12} R\right)\,.
\end{equation*}
This completes the proof of~\eqref{e.phiosc} and thus of the theorem. 
\end{proof}

\chapter{Scaling limits}
\label{part.two}

\section{Informal heuristics and statement of main result}
\label{s.heuristics}

The main purpose of this second part of the paper is to show that the first-order correctors converge to a non-Markovian variant of the Gaussian free field in the large-scale limit. 

\smallskip

We start by reviewing the heuristic derivation\footnote{This heuristic derivation was obtained by SA, Yu Gu and JCM. It was the object of a talk given in Banff in July 2015 and reproduced during the Oberwolfach seminar on stochastic homogenization shortly afterwards. The talk can be watched at \url{http://goo.gl/5bgfpR}.} of this result presented in \cite{GM}, putting more emphasis on the role played by the energy quantity $J$ introduced in \cite{AS,AM,AKM} and in this paper, and how it can be seen as a ``coarsening" of the coefficient field. To begin with, we recall the notions of white noise and Gaussian free field (GFF). Let $\msf Q$ be a symmetric non-negative definite $d$-by-$d$ matrix. We say that the random $d$-dimensional distribution\footnote{We only use the word ``distribution'' to refer to Schwartz distributions, and call the probability measure associated with a random variable its \emph{law}.} $W = (W_1,\ldots,W_d)$ is a \emph{vector white noise} with covariance matrix $\msf Q$ if for every $f = (f_1, \ldots, f_d) \in C^\infty_c(\Rd ; \Rd)$, the random variable
\begin{equation*}  
W(f) := W_1(f_1) + \cdots + W_d(f_d)
\end{equation*}
is a centered Gaussian with variance $\int_\Rd f \cdot \msf Q f$. The set of admissible test functions can be extended to $f\in L^2(\R^d; \R^d)$ by density. Given a vector white noise $W$ and a positive-definite symmetric matrix $\ahom$, we define the \emph{gradient Gaussian free field}, or gradient GFF for short, as the random $d$-dimensional distribution $\nabla \mathbf{\Psi}$ solving the equation
\begin{equation}  \label{e.GGFF}
- \nabla \cdot \ahom \nabla \mathbf{\Psi} = \nabla \cdot W.
\end{equation}
In other words, $\nabla \mathbf{\Psi}$ is the potential part in the Helmholtz-Hodge decomposition $W = -\ahom \nabla \mathbf{\Psi} + \mathbf{g}$, and $\mathbf{g}$ is the solenoidal (divergence-free) part. We interpret this definition by duality: for every $F \in C^\infty_c(\Rd; \Rd)$, we set
\begin{equation*}  
(\nabla \mathbf{\Psi})(F) =  W\Ll(\nabla (-\nabla \cdot \ahom \nabla)^{-1} (\nabla \cdot F)\Rr).
\end{equation*}
The function $\nabla (-\nabla \cdot \ahom \nabla)^{-1} (\nabla \cdot F)$ is the potential part in the Helmholtz-Hodge decomposition of $F$, and it belongs to $L^2(\Rd;\Rd)$. Our notion of Gaussian free field coincides with the standard one (see, e.g.,~\cite{Shef}) only when $\ahom$ and $\msf Q$ are proportional, and is otherwise a variant with the same scale invariance but which does not satisfy the spatial Markov property (see~\cite{GM2}). 

\smallskip

We say that the matrix-valued distribution $W = [W_{ij}]_{1 \le i,j \le d}$ is a \emph{matrix white noise} if the vector of its entries is a vector white noise. If such $W = [W_{ij}]_{1 \le i,j \le d}$ satisfies the additional symmetry constraint 
\begin{equation}  \label{e.sym.W}
\mbox{for every $i,j \in \{1,\ldots, d\}$}, \ \ W_{ij} = W_{ji},
\end{equation}
then we define the associated quadratic form $p \mapsto W(\cdot,p)$ via 
\begin{equation*}  
\mbox{for every $p \in \Rd$ and $f \in C^\infty_c(\R^d;\R)$}, \ \ W(f,p) := \tfrac 1 2 \, p\cdot [W_{ij}(f)] p,
\end{equation*}
and call $p \mapsto W(\cdot,p)$ a \emph{quadratic form white noise}. Note that the correspondence between the quadratic form $p \mapsto W(\cdot, p)$ and the matrix $[W_{ij}]_{ij}$ subject to the symmetry constraint \eqref{e.sym.W} is bijective, which justifies that we denote both objects by $W$. 

\smallskip

\smallskip

The coefficients of an elliptic operator define a correspondence between gradients of solutions and their fluxes. The matrix $\ahom$ describes this correspondence in the homogenized limit. In Part~\ref{part.one} we obtained optimal estimates on the error of this correspondence. The goal of this Part~\ref{part.two} is to obtain the next-order correction. Naturally, we rely heavily on both the results and ideas from~Part~\ref{part.one}. 

\smallskip

By Theorem~\ref{t.main1}, there exists a constant $C(d,\Lambda) < \infty$ such that for every $z \in \Rd$, $r \ge 1$ and $p,q \in B_1$,
\begin{equation*}  
\Ll| \E[J(z,r,p,q)] - \frac 1 2 (p-q) \cdot \ahom (p-q) \Rr| \le C r^{-d} .
\end{equation*}
Moreover, the quantity $J$ is additive, and its dependence on the coefficients local, up to errors we can neglect. This suggests that $J$ satisfies a form of central limit theorem. More precisely, we expect that
\begin{equation}
\label{e.J.clt}
J(z,r,p,q) \simeq \frac 1 2 (q-p) \cdot \ahom (q-p) + \int_{\Phi_{z,r}} \W(\cdot,p,q),
\end{equation}
where $(p,q) \mapsto \W(\cdot,p,q)$ is a quadratic form white noise. Note that by the scaling properties of white noise, the last term in \eqref{e.J.clt} has typical size of order $r^{-\frac d 2}$, and \eqref{e.J.clt} should be interpreted up to an error of lower order. 

\smallskip

By~\eqref{e.J.clt} and \eqref{e.J-energy}, the spatial average of $\nabla v(\cdot,z,r,p,q)$ is close to $(q-p)$, and its flux close to $\ahom(q-p)$, up to an error of order $r^{-\frac d 2}$. We now describe the next-order correction. By~\eqref{e.J.clt} and~\eqref{e.J-energy}, we have
\begin{align*}  
\int_{\Phi_{z,r}} p \cdot \Ll( \ahom - \a \Rr) \nabla v(\cdot, z,r,0,q) & = \frac 1 2 \Ll( J(z,r,p,p+q) - J(z,r,p,p-q) \Rr) \\
& \simeq \frac 1 2 \int_{\Phi_{z,r}} \Ll(\W(\cdot,p,p+q) - \W(\cdot, p, p-q)\Rr).
\end{align*}
Defining $\td \b_r(z)$ to be the symmetric matrix such that for every $p, q \in \Rd$,
\begin{equation*}  
p \cdot \td \b_r(z) q = \frac 1 2 \int_{\Phi_{z,r}} \Ll(\W(\cdot,p,p-q) - \W(\cdot, p, p+q)\Rr),
\end{equation*}
we obtain
\begin{equation*}  
\int_{\Phi_{z,r}} \a \nabla v(\cdot,z,r,0,q) \simeq \Ll(\ahom + \td \b_r(z)\Rr)  \int_{\Phi_{z,r}} \nabla v(\cdot,z,r,0,q).
\end{equation*}
Thus the matrix $\ahom + \td \b_r(z)$ is the ``coarsened" coefficient field, representing the sought-after next-order correspondence between spatial averages of gradients and fluxes of solutions at point $z$ and scale $r$. Variants of~$\td \b_r$ and estimates of its size have already played a central role in the quantitative theory developed in~\cite{AS,AM,AKM} and, of course, in the first part of this paper.

\smallskip

As explained in more details in \cite{GM}, this suggests that the spatial 
average of the corrector $\phi_{e,r}(z) := \int_{\Phi_{z,r}} \phi_e$ satisfies the approximate equation
\begin{equation}
\label{e.corrector.approx.pre}
-\nabla \cdot (\ahom + \td \b_r)(e+ \nabla \phi_{e,r} )\simeq 0.
\end{equation}
The term $\td \b_r \nabla \phi_{e,r}$ being of lower order, this simplifies to
\begin{equation}
\label{e.corrector.approx}
-\nabla \cdot \ahom \nabla \phi_{e,r} \simeq \nabla \cdot \Ll( \td \b_r e \Rr) ,
\end{equation}
which is the equation defining a gradient GFF, up to a convolution with the heat kernel. 
Similarly, if $f$ varies slowly on a scale much larger than $r$, $u$ solves $-\nabla \cdot \a \nabla u = f$, and $\bar u$ solves the homogeneous equation $-\nabla \cdot \ahom \nabla \bar u = f$, then we expect the spatial average of the difference $u_r(z) := \int_{\Phi_{z,r}} (u-\bar u)$ to satisfy
\begin{equation}
\label{e.Poisson.approx}
-\nabla \cdot \ahom \nabla u_{r} \simeq \nabla \cdot\Ll(\td \b_r \bar u\Rr). 
\end{equation}

\smallskip

Versions of~\eqref{e.J.clt} have been proved in \cite{N,rossignol,biskup,N2,GN}, with the quantity $J$ replaced by spatial averages of the energy density of the correctors and approximations to the corrector. A version of \eqref{e.corrector.approx} was proved in \cite{MoO,MN}, while a version of \eqref{e.Poisson.approx} with $u_r(z)$ replaced by $\int_{\Phi_{z,r}}(u - \E[u])$ was proved in \cite{GM}. The asymptotic identity in law \eqref{e.Poisson.approx} was shown to hold jointly over possible right-hand sides $f$ of the equation, see \cite[Remark~2.3]{GM}. Similarly, \eqref{e.corrector.approx} was shown to hold jointly over $e$, see \cite[Remark~1.4]{MN}. In these works, the space is the discrete lattice $\Z^d$, $d\ge 3$, and a key assumption is that the probability measure has an underlying product structure which makes available tools such as concentration inequalities, the Chatterjee-Stein \cite{chat1,chat2} method of normal approximation and the Helffer-Sj\"ostrand representation of correlations \cite{helsjo,sjo,NS2}. Each of these papers makes essential use of, and refines, the optimal quantitative estimates first proved in~\cite{GO1,GO2,GNO}.

\smallskip 

The heuristic argument recalled above is interesting for several reasons. First, it provides us with a very intuitive understanding of the results proved in \cite{MoO,MN,GM} by a less transparent method. Second, it opens the possibility to give a more direct proof of these results and without having to assume the product structure on the probability space. The goal of this second part of the paper is to give such a proof.

\smallskip

Here is the main result of Part~\ref{part.two}. See Figure~\ref{fig} below for an illustration of the result. 

\begin{theorem}
\label{t.main}
There exists a quadratic form white noise $(p,q) \mapsto \W(\cdot,p,q)$ such that for every $z, p,q \in \Rd$, we have $\W(\cdot,p,p) = 0$ and
\begin{equation}
\label{e.limit.J}
r^{\frac d 2} \Ll(J\Ll(r z,r,p,q\Rr) - \frac 1 2 (q-p) \cdot \ahom(q-p) \Rr)\xrightarrow[r \to \infty]{\mathrm{(law)}} \int_{\Phi_{z,1}} \W(\cdot,p,q).
\end{equation}
For every $e \in \Rd$, letting $\V(\cdot,e)$ be the vector white noise such that, for every $p \in \Rd$, 
\begin{equation}
\label{e.def.V}
p \cdot \V(\cdot,e) = \frac 1 2 \Ll[ \W(\cdot,p,p-e) - \W(\cdot,p,p+e) \Rr] 
\end{equation}
and $\nabla \mathbf{\Psi}_e$ be the gradient GFF such that
\begin{equation}
\label{e.def.Psi}
-\nabla \cdot \ahom \nabla \mathbf{\Psi}_e = \nabla \cdot (\V(\cdot, e)),
\end{equation}
we have
\begin{equation}
\label{e.limit.phi}
r^{\frac d 2} \,  (\nabla \phi_e) \Ll( r \  \cdot \,\Rr)  \xrightarrow[r \to \infty]{\mathrm{(law)}} \nabla \mathbf{\Psi}_e,
\end{equation}
with respect to the topology of $\mathcal C^{-\frac d 2 -}_\mathrm{loc}$. Moreover, the convergences in law in \eqref{e.limit.J} and \eqref{e.limit.phi} hold jointly over $z,p,q,e \in \Rd$.
\end{theorem}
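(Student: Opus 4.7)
The overarching strategy is to mimic the classical central limit theorem for sums of i.i.d.\ random variables, exploiting the full content of Theorem~\ref{t.main1}: additivity between scales (i), the correct expectation (ii), CLT-scale fluctuations (iii), and localization (iv). The combination of (i) and (iv) says that, up to negligible errors, $J(rz,r,p,q)$ is a sum of essentially independent contributions from well-separated mesoscopic cubes, and then (iii) tells us the sum has precisely CLT scaling.

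First I would establish the convergence \eqref{e.limit.J} in the sense of finite-dimensional distributions. Given test functions $F_1,\ldots,F_N\in C_c^\infty(\R^d)$ and pairs $(p_i,q_i)$, I would introduce a mesoscopic scale $\ell=r^{1-\eta}$ for small $\eta>0$ and write, using additivity (Theorem~\ref{t.main1}(i)) applied between scale $r$ and scale $\ell$,
\begin{equation*}
r^{d/2}\sum_i \int_{\R^d} F_i(z)\Ll(J(rz,r,p_i,q_i)-\tfrac12(q_i-p_i)\cdot\bar\a(q_i-p_i)\Rr)\,dz
\end{equation*}
as a Riemann-type sum $\sum_{y\in\ell\Z^d} c_{i,y}\,\tilde J^{(\delta)}(y,\ell,p_i,q_i)$ of the centered, localized variables $\tilde J^{(\delta)}:=J^{(\delta)}-\E[J^{(\delta)}]$ from Theorem~\ref{t.main1}(iv), plus additive errors of order $o(1)$. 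Choosing $\delta$ small enough that $\ell^{1+\delta}\ll\ell$-separated cubes yield independent summands by finite range of dependence, the classical Lindeberg CLT (whose Lyapunov condition is verified using the almost-Gaussian stochastic integrability $\O_{2-}$ together with the boundedness of $J$) produces a centered Gaussian limit. Its covariance is computed from Theorem~\ref{t.main1}(ii)--(iii) as a bilinear form in the $F_i$'s that depends only on $\int F_iF_j$ (after convolution with the heat kernel), which identifies the limit as a white noise against the quadratic form $\W$. The fact that $\W(\cdot,p,p)=0$ follows from $\E[J(0,r,p,p)]=0$.

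The convergence \eqref{e.limit.phi} of the correctors to the gradient Gaussian free field is then deduced by identifying $\nabla\phi_e$ with the gradient of the maximizer $v(\cdot,0,r,0,e)$ (via Lemma~\ref{l.firstordercorrect}) and using the fundamental identity \eqref{e.gradient-J}, which expresses spatial averages of $\nabla v$ and of its flux in terms of first-order differences of $J$. Specifically, for any $p\in\R^d$, the combination $\frac12[J(z,r,p,p-e)-J(z,r,p,p+e)]$ equals $\int_{\Phi_{z,r}} p\cdot(\bar\a-\a)\nabla v(\cdot,z,r,0,e)$ modulo deterministic lower-order terms. Passing this through the scaling limit with $p=$ test vector, the right-hand side converges to $p\cdot\V(\cdot,e)$ with $\V$ defined by \eqref{e.def.V}. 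One then shows that $\int_{\Phi_{z,r}}\nabla\phi_e$ approximately solves \eqref{e.corrector.approx}: writing $-\nabla\cdot\a(e+\nabla\phi_e)=0$, convolving with $\Phi_{z,r}$ and recognizing the defect $\int_{\Phi_{z,r}}(\a-\bar\a)(e+\nabla\phi_e)$ as the coarse-grained flux, the equation passes to the limit against smooth test fields to yield \eqref{e.def.Psi}. Tightness for the distributional convergence in $\mcl C^{-d/2-}_{\mathrm{loc}}$ follows from Theorem~\ref{t.firstcorrectors}, which controls arbitrary spatial averages of $\nabla\phi_e$ at CLT scaling uniformly in the averaging mask.

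The main technical obstacle is the \emph{joint} convergence across all the parameters $z,p,q,e$ simultaneously, together with the passage from the CLT on $J$ to the distributional convergence of $\nabla\phi_e$. For the joint law, since all quantities of interest are quadratic forms in $(p,q)$ and linear in $e$, it suffices to prove convergence of the vector of values at a finite basis of $(p,q)$; each component falls under the CLT argument of the second paragraph, and joint Gaussianity follows by the Cram\'er--Wold device applied to arbitrary linear combinations, which are again sums of localized additive contributions. The second difficulty is controlling the error between $\nabla\phi_e$ and its mesoscopic averages when inverting the coarse-grained equation; this is handled by the two-scale expansion developed in Section~\ref{ss.twoscale} (Lemma~\ref{l.matchingutophi}) together with the multiscale Poincar\'e inequality of Lemma~\ref{l.mspoincare}, exactly as in the proofs of Lemmas~\ref{l.parametermatching} and~\ref{l.firstordercorrect}, but now extracting not merely a bound but an identification of the limiting law.
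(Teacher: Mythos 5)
Your second half (defining $\V$ through first differences of $J$, reading the coarse-grained equation for $\int_{\Phi_{z,r}}\nabla\phi_e$ off the flux identity, and testing against smooth fields) is essentially the route the paper takes in Section~\ref{s.scalingcorrectors}. The genuine gap is in the first half, the CLT for $J$. A Lindeberg/Lyapunov CLT for a triangular array of independent summands requires that the sum of the variances converge to an identified limit, and you claim this covariance ``is computed from Theorem~\ref{t.main1}(ii)--(iii)''. It is not: (ii) identifies only the expectation and (iii) is only an \emph{upper bound} $\O_{2s}(Cr^{-d/2})$ on the fluctuations, i.e.\ $\operatorname{Var}\le Cr^{-d}$; nothing in Theorem~\ref{t.main1} asserts that $r^{d}\operatorname{Var}\big(\tilde J^{(\delta)}(0,r,p,q)\big)$ (or any rescaled covariance of the mesoscopic summands) converges. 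Without that, the Lindeberg argument yields at best subsequential Gaussian limits with possibly different variances. This is precisely the obstruction the paper's proof is organized around: Proposition~\ref{p.clt} does not apply a classical CLT but runs a renormalization on the log-Laplace transform (Lemmas~\ref{l.centered} and~\ref{l.approx}), using additivity to compare the law at scale $R$ with a sum of independent copies at scale $R^{\gamma}$, so that the law stabilizes across scales \emph{without} any a priori convergence of the variance. If you insist on the Lindeberg route, you must supply a separate argument (e.g.\ an additivity/stationarity computation showing that $r\mapsto r^{d}\operatorname{Var}\big(\tilde J^{(\delta)}(0,r,p,q)\big)$ is Cauchy along coupled scales); that is a substantive missing step, not bookkeeping.

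A second, smaller but real, defect: your independence claim is wrong as stated. The localized variables $\tilde J^{(\delta)}(y,\ell,p,q)$ attached to adjacent cubes of an $\ell$-partition are $\F\big(B_{\ell^{1+\delta}}(y)\big)$-measurable with $\ell^{1+\delta}>\ell$, so neighbouring summands are \emph{not} independent for any $\delta>0$; ``choosing $\delta$ small enough'' does not help. The fix is the one the paper builds into the definition of $W$ in \eqref{e.def.W}: integrate $\tilde J^{(\delta)}(\cdot,\ell^{1-\eta})$ over trimmed cubes $\cut_\ell$, so that summands over distinct cubes are genuinely independent, and control the boundary layers and the change of mesoscale via Lemmas~\ref{l.trim} and~\ref{l.var.meso}. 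Finally, two points you treat too lightly: tightness in $\mcl C^{-\frac d2-}_{\mathrm{loc}}$ does not follow directly from Theorem~\ref{t.firstcorrectors}, which controls only heat-kernel averages; for general rapidly decaying test fields one needs the Helmholtz splitting $F=-\ahom\nabla h+\mathbf g$ together with the fact that the solenoidal part pairs to zero with $\nabla\phi_e$ (using the sublinearity bound \eqref{e.phiosc}); and the coarse-grained flux identity must be proved with accuracy $O\big(r^{-(\frac d2+\eps)}\big)$, strictly below CLT order, as in Lemma~\ref{l.ahomrz}, for the limit identification to go through.
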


We next explain some concepts and definitions invoked in the statement above. Here and throughout the rest of this paper, we say that the convergence
\begin{equation}  \label{e.convX}
X_r(p) \xrightarrow[r \to \infty]{\mathrm{(law)}} X(p),
\end{equation}
holds jointly over $p$ if for every $p_1, \ldots, p_n$, we have
\begin{equation*}  
\Ll(X_r(p_1), \ldots, X_r(p_n) \Rr) \xrightarrow[r \to \infty]{\mathrm{(law)}} \Ll(X(p_1), \ldots, X(p_n) \Rr).
\end{equation*}
When we also have, for every fixed $q$, that
\begin{equation}  \label{e.convY}
Y_r(q) \xrightarrow[r \to \infty]{\mathrm{(law)}} Y(q),
\end{equation}
we say that the convergences in law in \eqref{e.convX} and \eqref{e.convY} hold jointly over $p,q$ if for every $p_1, \ldots, p_n, q_1,\ldots,q_n$, we have
\begin{multline*}  
\Ll(X_r(p_1), \ldots, X_r(p_n), Y_r(q_1), \ldots,Y_r(q_n) \Rr) \\
\xrightarrow[r \to \infty]{\mathrm{(law)}} \Ll(X(p_1), \ldots, X(p_n) , Y(q_1), \ldots,Y(q_n)\Rr).
\end{multline*}
We let~$\mcl C^\al_\mathrm{loc}$ be the local Besov space with regularity exponent $\al$ and integrability exponents $\infty,\infty$. We say that the convergence \eqref{e.limit.phi} holds for the topology of $\mcl C^{-\frac d 2 -}_\mathrm{loc}$ if, for every~$\al < -\frac d 2$, it holds with respect to the topology of $\mcl C^{\al}_\mathrm{loc}$.

\smallskip

\begin{figure}
\begin{center}
\includegraphics[width=10cm, height=7cm]{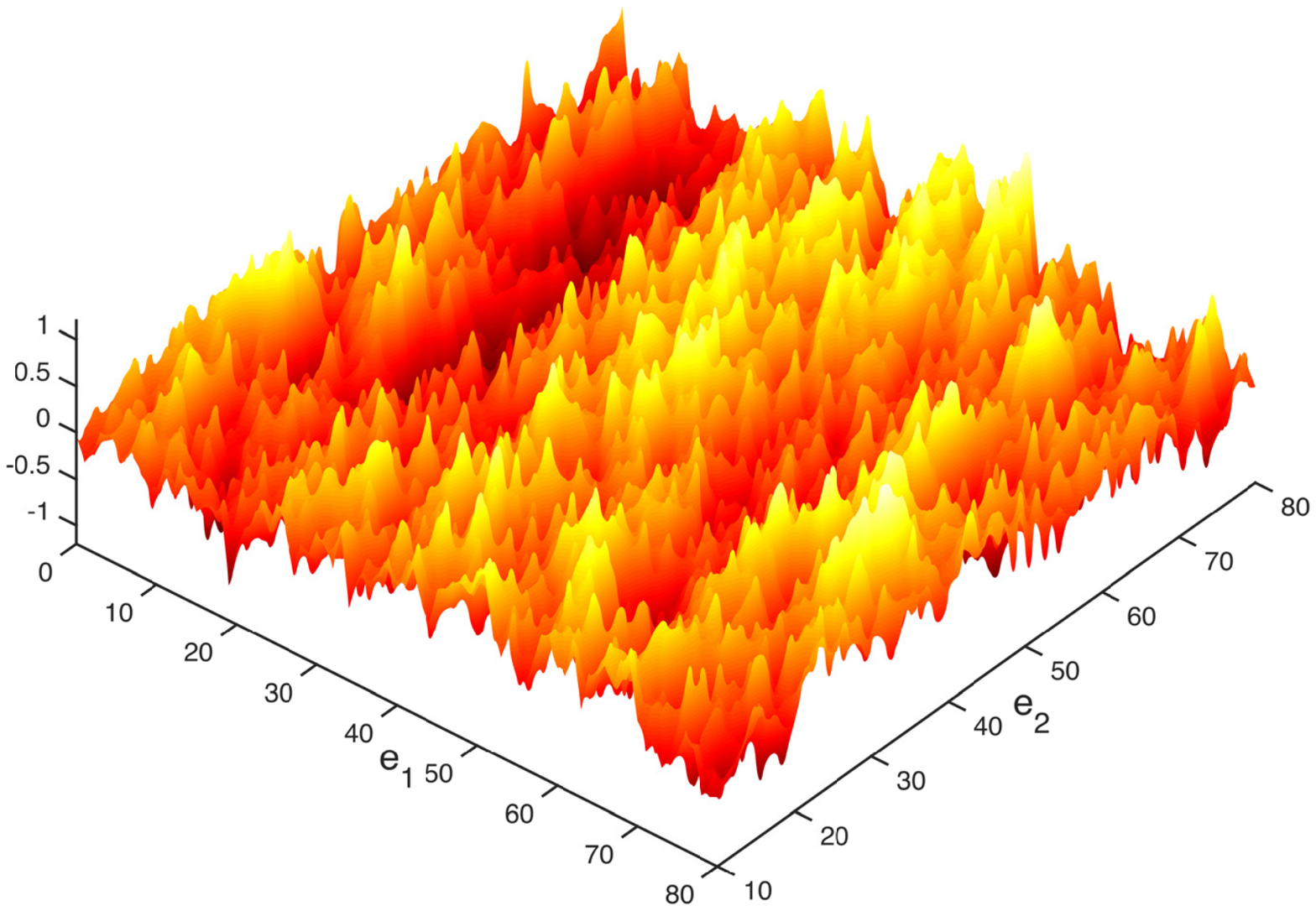}
\end{center}
\begin{center}
\includegraphics[width=10cm, height=7cm]{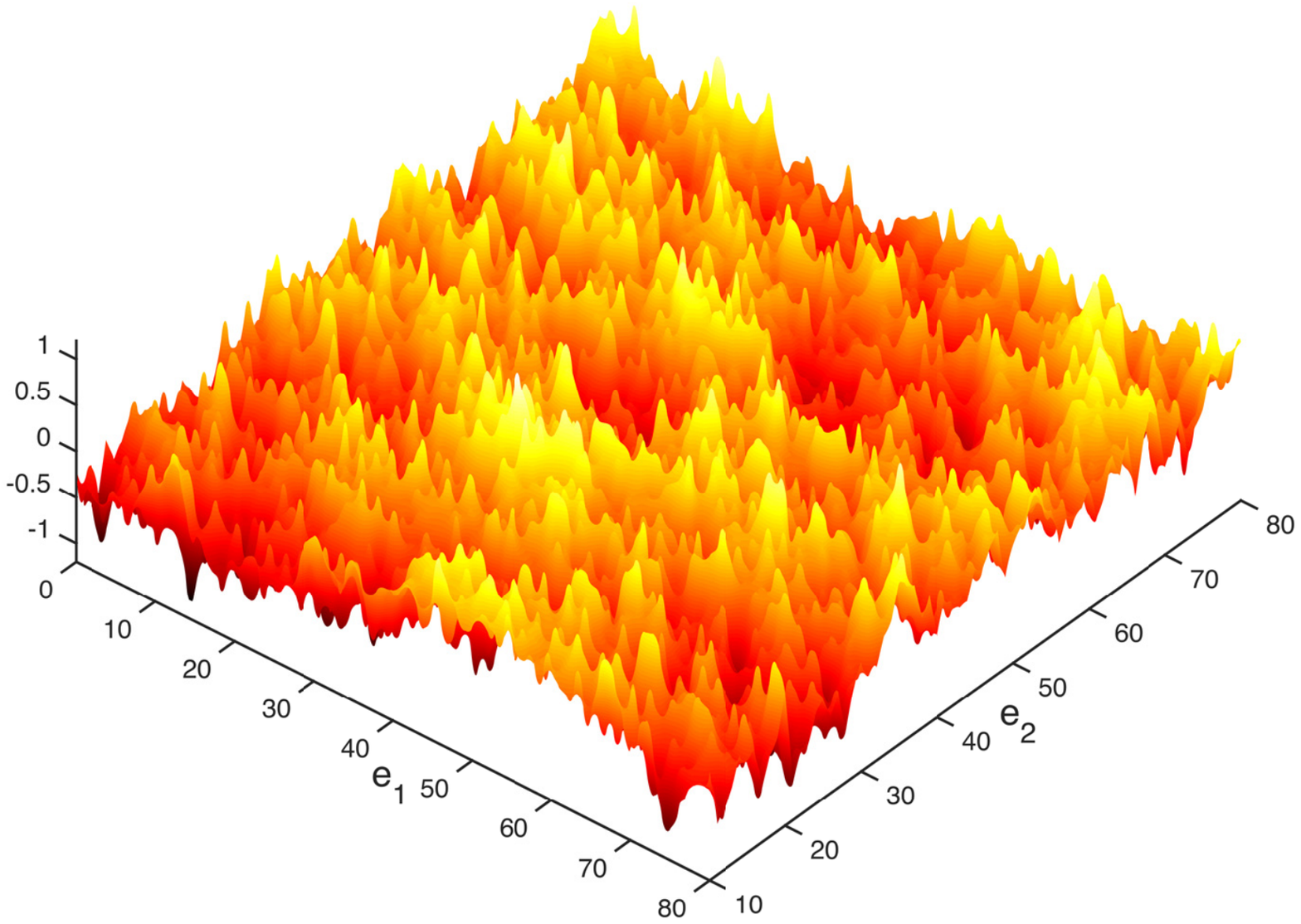}
\end{center}
\caption{{\small Graphs of the correctors $\phi_{e_1}$ and $\phi_{e_2}$ on the top and bottom, respectively, for a random checkerboard model in $d=2$ (with the same realization). The coefficient matrix is diagonal with independent entries equidistributed between $1$ and $10$. Notice that the mountain ranges for $\phi_e$ seem to line up in the orthogonal direction to~$e$. The image is courtesy of Antti Hannukainen (Aalto University).}}
\label{fig}
\end{figure}

In every dimension, the distribution $\nabla \mathbf{\Psi}_e$ can be realized as the gradient of a distribution $\mathbf{\Psi}_e$, as the notation suggests. However, only in dimension $d \ge 3$ can $\mathbf{\Psi}_e$ be realized as a stationary field. In dimension $2$, the field $\mathbf{\Psi}_e$ is only canonically defined up to an additive constant. In every dimension, the field $\mathbf{\Psi}_e$ can be recovered, up to an additive constant, from the knowledge of $\nabla \mathbf{\Psi}_e$, since every compactly supported mean-zero test function is the divergence of a compactly supported vector field (see Lemma~\ref{l.mean-zero}). 

\smallskip

Results partially overlapping with the argument presented here have recently appeared in~\cite{DGO}. There, the randomness of the environment is assumed to have a product structure, so that the Chatterjee-Stein method of normal approximation becomes available. A version of the statement of joint convergence in law of \eqref{e.J.clt}-\eqref{e.corrector.approx}-\eqref{e.Poisson.approx} simultaneously is then obtained. As far as we understand, the notion of ``path-wise theory'' introduced in~\cite{DGO} refers to this joint convergence. Note that an analogue of~$\td \b_r(z)$ is called ``the homogenization commutator" in~\cite{DGO}.

\smallskip

The proof of Theorem~\ref{t.main} is based on the results in the first part of this paper as well as refinements to the arguments there. Throughout, we keep the notation mostly consistent with that of Part~\ref{part.one}. The proof of Theorem~\ref{t.main} is split into two main steps. In the next section, we prove the scaling limit of the energy quantity~$J$, that is,~\eqref{e.limit.J}. The scaling limit of the correctors is then obtained in Section~\ref{s.scalingcorrectors}.

\section{Scaling limit of \texorpdfstring{$J$}{J}}
\label{s.scalingenergies}

According to~Theorem~\ref{t.main1}, for each $s < 2$, there exists $C(s,d,\Lambda) < \infty$ such that, for every $z \in \Rd$, $r \ge 1$ and $p,q \in B_1$,
\begin{equation}
\label{e.bounded.fluc}
J(z,r,p,q) = \frac 1 2 (q-p) \cdot \ahom (q-p) + \O_s\Ll(C r^{-\frac d 2} \Rr). 
\end{equation}
The goal of this section is to refine the estimate \eqref{e.bounded.fluc} and prove a central limit theorem for $J$. 

\smallskip

 We recall first that the expectation of $J(z,r,p,q)$ was identified up to a much smaller error. More precisely, by Theorem~\ref{t.main1}, there exists a constant $C(d,\Lambda) < \infty$ such that for every $z \in \Rd$, $r \ge 1$ and $p,q \in B_1$,
\begin{equation}
\label{e.expectation}
\Ll| \E[J(z,r,p,q)] - \frac 1 2 (q-p) \cdot \ahom (q-p)  \Rr| \le C r^{-d}.
\end{equation}
The estimate \eqref{e.bounded.fluc} then follows from the additivity and locality properties of $J$ stated in Theorem~\ref{t.main1}. We first fix some exponents that will be convenient to work with, and then recall these statements. For the rest of the paper, 
\begin{equation}
\label{e.fixs}
\text{we fix } s \in (1,2) \text{ sufficiently close to } 2,
\end{equation}
then $\de(s) > 0$ such that
\begin{equation}
\label{e.def.delta}
\frac{1+\de}{2} < \frac 1 s ,
\end{equation}
and $\al(s)$ such that 
\begin{equation}
\label{e.def.alpha}
\frac d 2(1+\de) < \al < \frac d s.
\end{equation}
For notational convenience, we think of $\de$, $\al$ as explicit functions of $s$, for instance we may fix $\de(s) = \frac 1 s - \frac 1 2$, etc. Taking $s < 2$ sufficiently close to $2$ brings $\de(s) > 0$ as close to $0$ as desired. 

\smallskip

The additivity property and~Remark~\ref{r.change-s} imply the existence of a constant $C(s,d,\Lambda) < \infty$ such that for every $z \in \Rd$, $R \ge r \ge 1$ and $p,q \in B_1$,
\begin{equation}
\label{e.additivity}
J(z,R,p,q) = \int_{\Phi_{z,\sqrt{R^2 - r^2}}} J(\cdot,r,p,q) + \O_{s}(C r^{-\al}).
\end{equation} 
The locality property implies the existence of a constant $C(s,d,\Lambda) < \infty$ and, for every $z \in \Rd$, $r \ge 1$ and $p, q \in B_1$, of an $\mcl F(\cu_{r^{1+\de}}(z))$-measurable random variable $\Jd(z,r,p,q)$ such that 
\begin{equation}
\label{e.localization}
J(z,r,p,q) = \Jd(z,r,p,q) + \O_s\Ll(C r^{-\al}\Rr).
\end{equation}

\smallskip

Define the centered random variable
\begin{equation}
\label{e.def.tJd}
\tJd(z,r,p,q) := \Jd(z,r,p,q) - \E[\Jd(z,r,p,q)].
\end{equation}
It follows from \eqref{e.additivity} and \eqref{e.localization} that there exists a constant $C(s,d,\Lambda) < \infty$ such that for every $z \in \Rd$, $R \ge r \ge 1$ and $p,q \in B_1$,
\begin{equation}
\label{e.additivity.centered}
\tJd(z,R,p,q) = \int_{\Phi_{z,\sqrt{R^2 - r^2}}} \tJd(\cdot,r,p,q) + \O_{s}(C r^{-\al}).
\end{equation}
We also note for future reference that by \eqref{e.bounded.fluc} and \eqref{e.localization}, for every $z \in \Rd$, $r \ge 1$ and $p, q \in B_1$,
\begin{equation}
\label{e.bound.tJd}
\tJd(z,r,p,q) = \O_s\Ll(C r^{-\frac d 2}\Rr). 
\end{equation}
We give ourselves three exponents $\eta_1(s) > \eta(s) >  \eta_2(s) > 0$ such that
\begin{equation}
\label{e.def.td.eta}
(1-\eta_1) \al > \frac d 2
\end{equation}
and
\begin{equation}
\label{e.def.eta}
(1-\eta_2)(1+\de) < 1.
\end{equation}

We recall from~\eqref{e.really719} that there exists $C(s,d,\Lambda) < \infty$ such that for every $z \in \Rd$, $R \ge 1$, $r \in [R^{1-\eta_1}, R^{1-\eta_2}]$,  $p,q \in B_1$ and (deterministic) $f :\R^d \to \R$ satisfying $\|f\|_{L^\infty} \le 1$,
\begin{equation}  
\label{e.bound.W0}
\fint_{\cu_R(z)} f \, \tJd(\cdot, r,p,q) = \O_s \Ll( C R^{-\frac d 2} \Rr) .
\end{equation}
We define cubes with a trimmed boundary:
\begin{equation}
\label{e.def.cut}
\cut_r(z) := \cu_{r-r^{\be}}(z),
\end{equation}
where the exponent $\be(s)$ is fixed according to
\begin{equation}
\label{e.def.beta}
\frac{1-\eta}{1-\eta_2} \vee (1-\eta_2)(1+\de) < \be < 1,
\end{equation}
and set, for every $z,p,q \in \Rd$ and $r \ge 1$, 
\begin{equation}
\label{e.def.W}
 W(z,r,p,q) :=  r^{-d} \int_{\cut_r(z)} \tJd\Ll(\cdot,r^{1-\eta},p,q\Rr) .
\end{equation}
Since $\be > (1-\eta)(1+\de)$, for $r$ sufficiently large, the random variable $W(z,r,p,q)$ is $\F(\cu_{r-1}(z))$-measurable, and by \eqref{e.bound.W0}, for every $z \in \Rd$, $r \ge 1$ and $p,q \in B_1$, we have
\begin{equation}
\label{e.bound.W}
W(z,r,p,q) = \O_s \Ll( C r^{-\frac d 2} \Rr) .
\end{equation}
This bound provides optimal stochastic tail estimates on the random variable~$W$. The goal of this section is to complement this information with an asymptotically exact description of the law of this random variable. This is achieved in the next proposition. There, a Gaussian quadratic form over $\Rd \times \Rd$ is a random quadratic form such that the entries of its associated $2d$-by-$2d$ matrix are a Gaussian vector.
\begin{proposition}
\label{p.clt}
There exists a Gaussian quadratic form $\msf Q(p,q)$ such that for every $z,p,q \in \Rd$, we have $\msf Q(p,p) = 0$ and
\begin{equation}
\label{e.clt}
r^{\frac d 2} W(z,r,p,q)  \xrightarrow[r \to \infty]{\mathrm{(law)}} \msf Q(p,q).
\end{equation}
Moreover, the convergence in law holds jointly over $p,q \in \Rd$.
\end{proposition}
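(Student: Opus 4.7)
The strategy is to exploit the near-additivity \eqref{e.additivity.centered}, the locality \eqref{e.localization}, and the finite range of dependence to represent $r^{\frac d 2} W(z,r,p,q)$ as an almost-sum of independent mesoscopic contributions and then apply a Lindeberg-type central limit theorem. I will work at a mesoscopic scale $\ell = r^{1-\eta_2}$: by the choice of exponents in \eqref{e.def.delta}, \eqref{e.def.alpha}, \eqref{e.def.td.eta}, \eqref{e.def.eta}, \eqref{e.def.beta}, the random variable $\tJd(\cdot,r^{1-\eta},p,q)$ on a subcube $\cu_\ell(y)$ is $\mcl F(\cu_{\ell + (r^{1-\eta})^{1+\de}}(y))$-measurable with $\ell + (r^{1-\eta})^{1+\de}< \ell + o(\ell)$, while $\ell \gg 1$. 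Partitioning $\cut_r(z)$ (up to a negligible boundary layer, whose contribution is controlled by \eqref{e.bound.W0} exactly as in Step~4 of the proof of Proposition~\ref{p.control-fluct}(ii)) into the cubes $(\cu_\ell(y))_{y \in \mcl Z}$ indexed by $\mcl Z \subset \ell\Z^d$, I write
\[
r^{\frac d 2} W(z,r,p,q) = \frac{\ell^d}{r^{\frac d 2}} \sum_{y \in \mcl Z} X_y(r,p,q) + \bar o(1),
\qquad X_y(r,p,q) := \fint_{\cu_\ell(y)} \tJd(\cdot, r^{1-\eta}, p,q),
\]
where $\bar o(1)$ denotes an error tending to $0$ in probability, coming from the trimmed boundary and from \eqref{e.additivity.centered}. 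After a further $3^d$-coloring as in Lemma~\ref{l.partition}, each color class consists of i.i.d.\ random variables.

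For the identification of the covariance, I define, for $(p,q),(p',q')$,
\[
\Sigma\bigl((p,q),(p',q')\bigr) := \lim_{r \to \infty} r^d \cov\bigl(W(0,r,p,q),W(0,r,p',q')\bigr),
\]
and verify that this limit exists. Existence is an almost routine Cauchy-sequence argument exploiting the near-additivity \eqref{e.additivity.centered}: comparing scales $r$ and $2r$ via the semigroup identity for the heat kernel shows that $r^d \cov(\ldots)$ oscillates by at most $O(r^{-2(\al - \frac d 2)}) \to 0$ as $r \to \infty$. Bilinearity in $(p,q)$ and $(p',q')$ of $J$ propagates to $W$ (since $J$ itself is a quadratic form in $(p,q)$), so $\Sigma$ is bilinear on $(\Rd \times \Rd)\times (\Rd\times \Rd)$, and symmetric and non-negative-definite as a limit of covariances. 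I then let $\msf Q(p,q)$ be the centered Gaussian quadratic form whose covariance is $\Sigma$.

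The CLT itself is then proved by Lindeberg on the i.i.d.\ pieces obtained after coloring, once I verify (i) convergence of the variance of $\ell^d r^{-\frac d 2}\sum_y X_y(r,p,q)$ to $\Sigma((p,q),(p,q))$ (consequence of the Cauchy argument above and the uniform $\O_s$-bound \eqref{e.bound.tJd}, which yields $\ell^{2d}r^{-d}\E[X_y^2] = O(\ell^d r^{-d}\cdot r^{-d(1-\eta)})\cdot |\mcl Z|^{-1}$ so that Lindeberg's truncation condition is trivial in view of $s>1$), and (ii) joint convergence, which by the Cram\'er-Wold device and bilinearity of $(p,q) \mapsto J(z,r,p,q)$ reduces to the same CLT applied to arbitrary real linear combinations. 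Finally, $\msf Q(p,p)=0$ is immediate: by Definition~\ref{d.Jk} with $p=q$ and $u\equiv0$, $J(z,r,p,p)\geq0$, and by \eqref{e.expectation} its expectation is $O(r^{-d})$, whence $J(z,r,p,p)=\O_s(Cr^{-d})$ by~\eqref{e.bounded.fluc} and positivity; hence $\tJd(z,r^{1-\eta},p,p)=\O_s(Cr^{-(1-\eta)d})$, and so $r^{\frac d 2} W(z,r,p,p) = \O_s(Cr^{\frac d 2 - (1-\eta)d})\to 0$ in probability.

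The main technical obstacle is the ``convergence of variance'' step (i) above: this is where the additivity \eqref{e.additivity.centered} must be combined with the $L^2$-bound \eqref{e.bound.tJd} in a quantitative way to show that the discrete variance at scale $\ell$ actually converges, rather than merely stays bounded. The $3^d$-coloring separation of Lemma~\ref{l.partition} is crucial to replace dependent contributions with genuine sums of i.i.d.\ pieces, but one must additionally check that the dependent cross-terms between different colors vanish in the limit. This is achieved by computing $\E[X_yX_{y'}]$ for adjacent $y,y' \in \mcl Z$ using \eqref{e.additivity.centered} applied to an enclosing cube, showing that only the diagonal terms survive at leading order, while the mixing between neighboring mesoscopic boxes contributes at most $o(\ell^{-d})$ per pair, which is negligible after multiplication by the number of pairs.
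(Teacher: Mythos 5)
Your overall architecture (mesoscopic blocking, a Lindeberg--Feller CLT for the block sums, a separate Cauchy-in-scales argument for the variance, Cram\'er--Wold for joint convergence) is a genuinely different route from the paper's, which never invokes a classical CLT: the paper runs a renormalization directly on the log-Laplace transform, using Lemma~\ref{l.centered} to get approximate Gaussianity of block averages on a small window $(-\lambda_1,\lambda_1)$, Lemma~\ref{l.approx} to absorb boundary, additivity and mesoscale errors, and an induction in which the window is multiplied by $|\mcl Z_1|^{1/2}$ at each coarse-graining; in particular the limiting variance is never identified beforehand but emerges from the scheme. Your route can work, but as written it has a genuine gap at the normality step. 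Lindeberg's theorem applies to sums of \emph{independent} variables; after the $3^d$-coloring of Lemma~\ref{l.partition} each color class is an independent sum, but the $3^d$ class sums are mutually dependent, and asymptotic normality of each class together with vanishing cross-covariances between adjacent boxes does \emph{not} imply asymptotic normality of the total: second-moment control of the cross terms identifies the limiting variance but cannot rule out non-Gaussian joint behavior of dependent sums. The repair is to drop the coloring entirely and instead trim each mesoscopic cube by a corridor wider than the dependence range $(r^{1-\eta})^{1+\de}$ of $\tJd(\cdot,r^{1-\eta},p,q)$, exactly as in the definition of $\cut$ and in Step~2 of the paper's proof; the block contributions are then exactly independent, the corridor contribution is $\O_s\bigl(Cr^{-\frac d2-\eps}\bigr)$ by the argument of Lemma~\ref{l.trim}, and Lindeberg--Feller applies directly (your verification of the Lindeberg condition from the uniform $\O_s$ bound \eqref{e.bound.W0} is fine).

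The other substantive step, which you rightly single out, is the convergence of $r^d\cov\bigl(W(0,r,p,q),W(0,r,p',q')\bigr)$, and here your sketch is too optimistic in its mechanism though not in its conclusion. Since $W$ is defined with sharp trimmed cubes and the fixed mesoscale $r^{1-\eta}$, comparing two scales is not a direct use of the heat-kernel semigroup identity and the error is not $O(r^{-2(\al-\frac d2)})$: one must first harmonize mesoscales via Lemma~\ref{l.var.meso}, then decompose the larger trimmed cube into independent trimmed subcubes, and the dominant errors are the boundary-layer terms of Lemma~\ref{l.trim}, of relative size $O(r^{-\eps})$ for a small $\eps(s,d,\Lambda)>0$. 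These are still summable along a geometric sequence of scales, so your Cauchy argument does close; it is in effect a second-moment shadow of the paper's induction. Finally, a small slip in the $\msf Q(p,p)=0$ step: positivity of $J(z,r,p,p)$ together with \eqref{e.expectation} does not yield $J(z,r,p,p)=\O_s(Cr^{-d})$, which is an exponential-moment statement not implied by a bound on the mean; but positivity, \eqref{e.expectation} and the localization error in \eqref{e.localization} do give $\E\bigl|\tJd(\cdot,r^{1-\eta},p,p)\bigr|\le Cr^{-(1-\eta)\al}$ with $(1-\eta)\al>\frac d2$, so Markov's inequality gives $r^{\frac d2}W(z,r,p,p)\to0$ in probability, which is all you need (the paper instead quotes the stronger bound \eqref{e.min.p.p}).
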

We let $(p,q) \mapsto \W(\cdot,p,q)$ denote the quadratic form white noise such that, for every~$f \in L^2(\R^d,\R)$, the random variable $\W(f,p,q)$ has the same law as $\|f\|_{L^2(\Rd)} \msf Q(p,q)$. Abusing notation, we write $
\int_{\Rd} f(x) \W(x,p,q) \, dx := \W(f,p,q)$.
\begin{corollary}
\label{c.clt.convolution}
Let $\td \eta \in (\eta_1,\eta)$. For every $p,q \in \Rd$ and $f : \R^d \to \R$ satisfying
\begin{equation}
\label{e.hyp.f}
\int_{\Rd} \sup_{\cu_1(x)}  \Ll(|f| + |\nabla f|  \Rr)^2 \, dx < \infty,
\end{equation}
we have
\begin{equation}
\label{e.clt.convolution}
r^{-\frac d 2} \int_{\R^d} f\Ll( \tfrac x r \Rr) \, \tJd(x,r^{1-\td \eta}, p,q) \, dx \xrightarrow[r \to \infty]{\mathrm{(law)}} \int_{\Rd} f(x) \W(x,p,q) \, dx.
\end{equation}
Moreover, the convergence holds jointly over $f$ satisfying \eqref{e.hyp.f} and $p,q \in \Rd$. 
In particular, 
\begin{equation}
\label{e.limit.J2}
r^{\frac d 2} \Ll(J\Ll(r z,r,p,q\Rr) - \frac 1 2 (p-q) \cdot \ahom (p-q) \Rr)\xrightarrow[r \to \infty]{\mathrm{(law)}} \int_{\Phi_{z,1}} \W(\cdot,p,q),
\end{equation}
and the convergence holds jointly over $z,p,q \in \Rd$. 
\end{corollary}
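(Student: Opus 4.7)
My plan is to reduce \eqref{e.clt.convolution} to Proposition~\ref{p.clt} via two manipulations: use the additivity estimate \eqref{e.additivity.centered} to convert the small scale of $\tJd$ inside the integral from $r^{1-\td\eta}$ to a mesoscopic scale matching Proposition~\ref{p.clt}, and then carry out a Riemann-sum approximation of the integral against $f(\cdot/r)$ to which that proposition can be applied cube by cube. The identity \eqref{e.limit.J2} will then follow as the special case $f = \Phi_{z,1}$, after using \eqref{e.expectation} and \eqref{e.localization} to convert $J - \tfrac 1 2(q-p)\cdot\ahom(q-p)$ into $\tJd$.

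First I would introduce a mesoscale $\rho = \eps r$ for a small fixed $\eps > 0$ and apply \eqref{e.additivity.centered} to obtain
\begin{equation*}
\tJd(x, r^{1-\td\eta}, p, q) = \int_{\Phi_{x,\sigma_r}} \tJd(\cdot, \rho^{1-\eta}, p, q) + \O_s(C\,\rho^{-\al(1-\eta)}),
\end{equation*}
where $\sigma_r := \sqrt{r^{2(1-\td\eta)} - \rho^{2(1-\eta)}} \sim r^{1-\td\eta}$; the inequality $\rho^{1-\eta} < r^{1-\td\eta}$ holds because $\td\eta \in (\eta_1,\eta)$, and the conditions \eqref{e.def.alpha}--\eqref{e.def.td.eta} give $\al(1-\eta_1) > d/2$, so taking $\eta$ close enough to $\eta_1$ yields $\al(1-\eta) > d/2$. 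Integrating against $f(\cdot/r)$ and applying Lemma~\ref{l.sum-O} with the hypothesis \eqref{e.hyp.f}, the $\O_s$-error contributes $r^{d/2 - \al(1-\eta)} = o_{\P}(1)$ after the $r^{-d/2}$ prefactor. Exchanging integrals, we are reduced to studying
\begin{equation*}
r^{-d/2} \int_{\Rd} \td f(y/r)\, \tJd(y, \rho^{1-\eta}, p, q)\,dy,
\end{equation*}
where $\td f$ is $f$ mollified on a scale that vanishes as $r \to \infty$; under \eqref{e.hyp.f} one verifies $\|\td f - f\|_{L^2(\Rd)} \to 0$, so replacing $\td f$ by $f$ introduces a further negligible error.

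The second step is to discretize: approximate $f$ by the step function $f_\eps := \sum_{y_i \in \rho\Z^d} f(y_i/r)\,\1_{\cu_{\rho}(y_i)}$, controlling the discretization error via the Lipschitz bound \eqref{e.hyp.f} combined with the pointwise estimate \eqref{e.bound.tJd} and Lemma~\ref{l.partition}. The reduced quantity becomes
\begin{equation*}
\eps^{d/2} \sum_i f(y_i/r)\, \rho^{-d/2} \int_{\cu_\rho(y_i)} \tJd(y, \rho^{1-\eta}, p, q)\,dy.
\end{equation*}
By Proposition~\ref{p.clt} applied at scale $\rho$, each summand (after replacing $\cu_\rho(y_i)$ by the trimmed $\cut_\rho(y_i)$, whose boundary contribution has relative volume $\rho^{\be-1}$ and is controlled using \eqref{e.bound.tJd}) converges in law to $\msf Q_i(p,q)$, and the $\F(\cu_\rho(y_i))$-measurability built into $W$ through \eqref{e.def.beta} ensures the $\msf Q_i$ are asymptotically independent across distinct $y_i$. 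The Gaussian Riemann sum $\eps^{d/2} \sum_i f(y_i/r)\,\msf Q_i(p,q)$ has the distribution of $\W(f_\eps,p,q)$, and sending $\eps \to 0$ afterwards recovers $\W(f,p,q)$. Joint convergence in $(f,p,q)$ is automatic by linearity in $f$ and by the joint convergence in $(p,q)$ already supplied by Proposition~\ref{p.clt}. Finally, \eqref{e.limit.J2} follows by choosing $f = \Phi_{z,1}$ (which satisfies \eqref{e.hyp.f} by the heat kernel's exponential decay) and using the identity $\int \Phi_{z,1}(x/r)\,g(x)\,dx = r^d \int \Phi_{rz,r}(y)\,g(y)\,dy$ together with \eqref{e.additivity.centered} to rewrite the left side of \eqref{e.clt.convolution} in this case as $r^{d/2}\,\tJd(rz, r, p, q) + o_{\P}(1)$, which equals $r^{d/2}\bigl(J(rz,r,p,q) - \tfrac 1 2(q-p)\cdot\ahom(q-p)\bigr) + o_{\P}(1)$ by \eqref{e.expectation} and \eqref{e.localization}.

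The main obstacle will be the rigorous execution of the double limit $r\to\infty$ then $\eps\to 0$, especially while preserving joint convergence over $f,p,q$: one must ensure that the $\eps$-dependent errors from both the piecewise-constant approximation of $f$ and the boundary trimming of $\cut_\rho(y_i)$ vanish uniformly in $r$ after dividing by $r^{d/2}$. This requires combining the Lipschitz control on $f$ with $L^2$-type rather than pointwise bounds on $\tJd$ across mesoscopic cubes (precisely the regime where Lemma~\ref{l.partition} is sharp), and verifying that the mollification-induced convergence $\td f \to f$ is strong enough to preserve the white-noise limit of the integrated quantity.
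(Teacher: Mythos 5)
Your argument is correct and follows essentially the same route as the paper's proof: additivity to pass from scale $r^{1-\td\eta}$ down to the mesoscale entering the definition of $W$, a decomposition into mesoscopic cubes on which $f$ is frozen at its center value (error controlled by the gradient bound, the fluctuation estimate and Lemma~\ref{l.partition}), independence of the trimmed-cube contributions, Proposition~\ref{p.clt} applied cube by cube, and a Riemann-sum limit identifying the variance as $\sigma^2\int f^2$, with \eqref{e.limit.J2} deduced exactly as you indicate. The only substantive difference is your mesoscale choice $\rho=\eps r$, which forces the double limit $r\to\infty$ then $\eps\to0$ you flag as the main obstacle; the paper instead takes mesoscale $r^\kappa$ with $\kappa<1$ close to $1$, so the ratio of mesoscale to macroscale vanishes automatically, a single limit suffices, and the quantitative Laplace-transform bookkeeping $\mcl N(\sigma^2,\cc,\bar\lambda)$ from the proof of Proposition~\ref{p.clt} replaces your ``convergence in law plus asymptotic independence'' step, sidestepping the uniformity-in-$r$ issues you would otherwise need to verify.
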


\begin{remark}
\label{r.cond.f}
The condition imposed on $f$ in \eqref{e.hyp.f} is not optimal. For instance, we could weaken the differentiability condition to cover some H\"older continuous functions $f$, as the proof will make clear.
\end{remark}
We may use a trimmed or an untrimmed cube in the definition of $W$, or shift the center point $z$ over a distance smaller than $r$, without changing the random variable variable $W(z,r,p,q)$ significantly. We give a slightly more general version of this idea in the next lemma (recall that $\cut_R = \cu_{R - R^\be}$). 
\begin{lemma}
\label{l.trim}
There exists $C(s,d,\Lambda) < \infty$ such that for every $\be' \in[\be,1)$, $z \in \Rd$, $R \ge 1$, $r \in [R^{1-\eta_1}, R^{1-\eta}]$,  $p,q \in B_1$ and (deterministic) $f :\R^d \to \R$ satisfying $\|f\|_{L^\infty} \le 1$, we have
\begin{equation}  
\label{e.trim}
\fint_{\cu_R(z)} f \, \tJd(\cdot, r,p,q) = R^{-d} \int_{\cu_{R-R^{\be'}}(z)} f \, \tJd(\cdot, r,p,q) + \O_s \Ll( C R^{-\frac d 2-\frac{1-\be'}{2}} \Rr) ,
\end{equation}
and, for $|y-z| \le R^{\be'}$,
\begin{equation}
\label{e.move.z}
\fint_{\cu_R(z)} f \, \tJd(\cdot, r,p,q) = \fint_{\cu_{R}(y)} f \, \tJd(\cdot, r,p,q) + \O_s \Ll( C R^{-\frac d 2-\frac{1-\be'}{2}} \Rr) .
\end{equation}
\end{lemma}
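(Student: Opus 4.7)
\emph{Plan of proof.} Both estimates reduce to the following single assertion: for any measurable set $E \subset \Rd$ with $E \subset \cu_{R+R^{\be'}}(z)$ and $|E| \le C R^{d-1+\be'}$, we have
\begin{equation*}
R^{-d} \int_E f \, \tJd(\cdot,r,p,q) = \O_s\Ll(C R^{-\frac d 2 - \frac{1-\be'}{2}}\Rr).
\end{equation*}
For \eqref{e.trim}, take $E = \cu_R(z) \setminus \cu_{R-R^{\be'}}(z)$, a boundary shell of width $R^{\be'}$. For \eqref{e.move.z}, write the difference of averages as $R^{-d}$ times the integral over $\cu_R(z) \setminus \cu_R(y)$ minus the integral over $\cu_R(y) \setminus \cu_R(z)$; each of these sets has measure at most $C R^{d-1}|y-z| \le C R^{d-1+\be'}$ and is contained in $\cu_{R+R^{\be'}}(z)$ for $|y-z| \le R^{\be'}$.

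To prove the assertion, I would cover $E$ by at most $N \le C R^{(d-1)(1-\be')}$ sub-cubes of side length $R^{\be'}$, indexed by a subset $\mcl Z_0$ of $R^{\be'} \Z^d$. On each sub-cube $\cu_{R^{\be'}}(x)$ with $x \in \mcl Z_0$, I apply \eqref{e.bound.W0} at scale $R^{\be'}$ to the bounded function $f \, \1_E$ (extended by zero outside $E$). The scale hypothesis of \eqref{e.bound.W0} requires $r \in [(R^{\be'})^{1-\eta_1},(R^{\be'})^{1-\eta_2}]$, which is guaranteed by the assumed range $r \in [R^{1-\eta_1},R^{1-\eta}]$, since $\be' \le 1$ handles the lower endpoint and $\be' \ge \be > \frac{1-\eta}{1-\eta_2}$ (see \eqref{e.def.beta}) handles the upper endpoint. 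Consequently, each sub-cube contributes
\begin{equation*}
\int_{\cu_{R^{\be'}}(x) \cap E} f \, \tJd(\cdot,r,p,q) = \O_s\Ll(C R^{\frac{d\be'}2}\Rr).
\end{equation*}

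To sum over $\mcl Z_0$, I use approximate independence: since $\tJd(y,r,p,q)$ is $\mcl F(\cu_{r^{1+\de}}(y))$-measurable and $r^{1+\de} \le R^{(1-\eta)(1+\de)} < R^\be \le R^{\be'}$ (where the strict inequality follows from the second clause of \eqref{e.def.beta} and $\eta > \eta_2$), the contribution of each sub-cube $\cu_{R^{\be'}}(x)$ is $\mcl F$-measurable on a slightly enlarged sub-cube that is still contained in $\cu_{2R^{\be'}}(x)$. The contributions are also centered because $\tJd$ is. Partitioning $\mcl Z_0$ into $3^d$ sub-lattices as in Lemma~\ref{l.partition} renders the contributions within each sub-lattice independent, and applying Lemma~\ref{l.partition} (together with the $\O_s$--$\bar \O_s$ equivalence for centered random variables from Lemma~\ref{l.logL}) then yields
\begin{equation*}
\sum_{x \in \mcl Z_0} \int_{\cu_{R^{\be'}}(x) \cap E} f \, \tJd(\cdot,r,p,q) = \O_s\Ll(C R^{\frac{d\be'}2} \sqrt{N}\Rr) = \O_s\Ll(C R^{\frac{d-1+\be'}2}\Rr).
\end{equation*}
Dividing by $R^d$ gives the required bound $\O_s(C R^{-\frac{d+1-\be'}2}) = \O_s(C R^{-\frac d 2 - \frac{1-\be'}{2}})$.

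The only delicate point is verifying that the choice of $\be' \ge \be$ simultaneously provides two things: it ensures that \eqref{e.bound.W0} can be applied at the sub-cube scale $R^{\be'}$ with our given $r$ (the upper-endpoint scale constraint $\be' \ge (1-\eta)/(1-\eta_2)$), and that the locality radius $r^{1+\de}$ is much smaller than $R^{\be'}$ so that the independence argument is valid (requiring $\be' > (1-\eta)(1+\de)$). Both are built into the definition of $\be$ in \eqref{e.def.beta}, so no additional work beyond this exponent bookkeeping is needed.
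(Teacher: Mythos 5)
Your proposal is correct and follows essentially the same route as the paper: cover the boundary shell (resp.\ the symmetric difference of $\cu_R(z)$ and $\cu_R(y)$) by roughly $R^{(d-1)(1-\be')}$ cubes of side $R^{\be'}$, apply \eqref{e.bound.W0} at that scale using exactly the exponent constraints built into \eqref{e.def.beta}, check that $r^{1+\de} \le R^{\be'}$ for locality, and sum with the CLT scaling of Lemma~\ref{l.partition}. One small caveat: your intermediate assertion is phrased for an arbitrary measurable $E$ with only a measure bound, which by itself does not yield the covering-number bound $N \le C R^{(d-1)(1-\be')}$ (a spread-out set of that measure could require up to $\sim R^{d(1-\be')}$ cubes, losing the extra factor $R^{-\frac{1-\be'}{2}}$); however, the sets you actually use lie within an $O(R^{\be'})$-neighborhood of $\partial \cu_R(z)$, for which the covering bound holds, so the argument as applied is sound.
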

\begin{remark}
\label{r.trim}
When $\be' \in (0,\be)$, the estimates \eqref{e.trim} and \eqref{e.move.z} clearly hold as well if we replace $\O_s \Ll( C R^{-\frac d 2-\frac{1-\be'}{2}} \Rr)$ by $\O_s \Ll( C R^{-\frac d 2-\frac{1-\be}{2}} \Rr)$ in the right-hand sides.
\end{remark}
\begin{proof}[Proof of Lemma~\ref{l.trim}]
Without loss of generality, we assume that $z = 0$. There exists a set $\mcl Z \subset R^{\be'} \Z^d$ of cardinality at most $R^{(d-1)(1-\be')}$ and such that up to a set of null Lebesgue measure, the cubes $(\cu_{R^{\be'}}(x))_{x \in \mcl Z}$ cover the region $\cu_R \setminus \cu_{R-R^{\be'}}$. By the definition of~$\be$ in \eqref{e.def.beta}, we have
\begin{equation*}  
r \in \Ll[R^{\be'(1-\eta_1)}, R^{\be'(1-\eta_2)}\Rr],
\end{equation*}
and therefore, by \eqref{e.bound.W0}, for every $x \in \Rd$, we have
\begin{equation*}  
\fint_{\cu_{R^{\be'}}(x)} f \, \tJd(\cdot,r,p,q) = \O_s \Ll( C R^{- \frac d 2\be'} \Rr) .
\end{equation*}
The random variable above is $\mcl F(\cu_{R^{\be'}+r^{1+\de}})$-measurable, and in view of the definition of $\be$, we have $r^{1+\de} \le R^{\be'}$. Using independence as in~Lemma~\ref{l.partition}, we obtain
\begin{equation*}  
R^{-d} \sum_{x \in \mcl Z} \int_{\cu_{R^{\be'}}(x)} f \, \1_{\cu_R \setminus \cu_{R-R^{\be'}}} \,  \tJd(\cdot,r,p,q) = \O_s \Ll( C R^{\frac{(d-1)(1-\be')}{2}-\frac d 2 \be' } \Rr) R^{-(1-\be') d},
\end{equation*}
which completes the proof of \eqref{e.trim}. The proof of \eqref{e.move.z} is identical.
\end{proof}
We next show that in the definition of $W(\, \cdot \,,R,p,q)$, we can replace the mesoscale $R^{1-\eta}$ appearing there by anything between $R^{1-\eta_1}$ and $R^{1-\eta_2}$, up to an error of lower order. 
\begin{lemma}
\label{l.var.meso}
There exist $\eps(s) > 0$ and $C(s,d,\Lambda) < \infty$ such that for every $z \in \Rd$, $R \ge 1$, $r \in [R^{1-\eta_1}, R^{1-\eta_2}]$ and $p,q \in B_1$, 
\begin{equation*}
\fint_{\cu_R(z)} \tJd(\cdot,r,p,q) = \fint_{\cu_R(z)} \tJd(\cdot,R^{1-\eta_1},p,q) + \O_s \Ll( C R^{-\frac d 2 - \eps} \Rr) .
\end{equation*}
\end{lemma}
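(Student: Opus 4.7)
The strategy is to transfer $\tJd$ at mesoscale $r$ down to mesoscale $R^{1-\eta_1}$ via additivity, and then show that the resulting heat-kernel convolution differs from the plain cube average only by lower-order boundary corrections. I would start by applying \eqref{e.additivity.centered} with ``outer'' scale $r$ and ``inner'' scale $R^{1-\eta_1}$: setting $\rho^2 := r^2 - R^{2(1-\eta_1)}$ (so $\rho\le r\le R^{1-\eta_2}$), this gives, for every $x\in\R^d$,
\begin{equation*}
\tJd(x,r,p,q) \;=\; \int_{\Phi_{x,\rho}} \tJd(\,\cdot\,,R^{1-\eta_1},p,q) \;+\; \O_s\bigl(C R^{-\al(1-\eta_1)}\bigr).
\end{equation*}
By \eqref{e.def.td.eta}, $\al(1-\eta_1)>d/2$, so the error term is already $\O_s(CR^{-d/2-\eps_1})$ with $\eps_1(s)>0$. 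Averaging over $\cu_R(z)$ and swapping the order of integration by Fubini, the main term becomes $\int_{\R^d} G(x)\,\tJd(x,R^{1-\eta_1},p,q)\,dx$, where $G(x):=\fint_{\cu_R(z)}\Phi_\rho(y-x)\,dy$.

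The heart of the argument is to compare $G$ with $R^{-d}\1_{\cu_R(z)}$. Since $\rho\le R^{1-\eta_2}$ and (by \eqref{e.def.beta}) $\be>1-\eta_2$, the Gaussian density $\Phi_\rho$ has essentially all its mass on a scale $\ll R^{\be}$. I would partition $\R^d$ into the deep interior $A_1:=\cu_{R-R^{\be}}(z)$, the boundary shell $A_2:=\cu_{R+R^{\be}}(z)\setminus A_1$, and the exterior $A_3$. On $A_1\cup A_3$, standard Gaussian tail estimates give $|G(x)-R^{-d}\1_{\cu_R(z)}(x)|\le R^{-d}\exp(-c R^{2(\be-(1-\eta_2))})$, which multiplied by the deterministic bound on $\tJd$ (see \eqref{e.bound.tJd}) produces a super-polynomially small contribution.

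The genuine work is on the boundary shell $A_2$. I would cover $A_2$ by a family of cubes $(\cu_{R^{\be}}(x_i))_{x_i\in\mcl Z_2}$ of side length $R^{\be}$ with $|\mcl Z_2|\le C R^{(d-1)(1-\be)+1-\be\cdot 0}\cdot R^{-(d-1)\be}$-type cardinality, i.e.\ $|\mcl Z_2|\le C R^{(d-1)(1-\be)}$. The compatibility condition $R^{1-\eta_1}\in[R^{\be(1-\eta_1)},R^{\be(1-\eta_2)}]$ required to invoke \eqref{e.bound.W0} at scale $R^{\be}$ with mesoscale $R^{1-\eta_1}$ is exactly the estimate $\be\ge (1-\eta_1)/(1-\eta_2)$, which follows from \eqref{e.def.beta} and $\eta_1>\eta$. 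Moreover, since $(1-\eta_1)(1+\de)<(1-\eta_2)(1+\de)<\be$ (again by \eqref{e.def.beta}), each $\fint_{\cu_{R^{\be}}(x_i)}\tJd(\,\cdot\,,R^{1-\eta_1},p,q)$ is $\mcl F(\cu_{2R^{\be}}(x_i))$-measurable. Applying \eqref{e.bound.W0} on each cube yields $\O_s(CR^{-\be d/2})$, and Lemma~\ref{l.partition} then sums these weakly dependent contributions with CLT-scaling to give
\begin{equation*}
R^{-d}\Bigl|\int_{A_2}[G(x)-R^{-d}\1_{\cu_R(z)}(x)]\,\tJd(x,R^{1-\eta_1},p,q)\,dx\Bigr|
\;=\;\O_s\bigl(CR^{-d/2-(1-\be)/2}\bigr).
\end{equation*}

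Combining the three estimates, I obtain
\begin{equation*}
\fint_{\cu_R(z)}\tJd(\,\cdot\,,r,p,q)
\;=\; R^{-d}\int_{\cu_{R-R^{\be}}(z)}\tJd(\,\cdot\,,R^{1-\eta_1},p,q)\;+\;\O_s\bigl(CR^{-d/2-\eps}\bigr),
\end{equation*}
for some $\eps(s)>0$. A final application of Lemma~\ref{l.trim} (with $\be'=\be$) replaces the trimmed integral by the full average $\fint_{\cu_R(z)}\tJd(\,\cdot\,,R^{1-\eta_1},p,q)$ at the cost of another $\O_s(CR^{-d/2-(1-\be)/2})$ error, yielding the claim. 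The main obstacle is the boundary shell $A_2$: every exponent in the scheme ($\eta_1$, $\eta_2$, $\de$, $\be$) must simultaneously satisfy the compatibility constraints so that (i) \eqref{e.bound.W0} applies at the intermediate scale $R^{\be}$, (ii) the locality scale $R^{(1-\eta_1)(1+\de)}$ falls below $R^{\be}$ so Lemma~\ref{l.partition} is available, and (iii) the resulting boundary-shell error is strictly better than $R^{-d/2}$. All three are exactly what \eqref{e.def.delta}--\eqref{e.def.beta} were engineered to provide.
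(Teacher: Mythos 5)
Your proposal is correct and follows essentially the same route as the paper: additivity to drop to the mesoscale $R^{1-\eta_1}$, Fubini producing the smoothed indicator weight, superpolynomially small errors away from $\partial\cu_R(z)$, and a boundary-shell estimate at CLT scaling via \eqref{e.bound.W0} and independence (which is precisely what Lemma~\ref{l.trim} packages; the paper simply cites it with shell width $r^{1+\eps_1}$ rather than redoing the cube decomposition with width $R^\be$). The only blemishes are cosmetic: since your $G$ is already normalized by $\fint_{\cu_R(z)}$, the extra $R^{-d}$ prefactor in your shell display is a bookkeeping slip, and \eqref{e.bound.tJd} is an $\O_s$ bound rather than a deterministic one, but neither affects the final exponent.
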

\begin{proof}
Without loss of generality, we assume $z = 0$, and fix $p,q \in B_1$. Let $r_1 := R^{1-\eta_1}$. By the additivity property \eqref{e.additivity}, we have
\begin{equation*}  
\tJd(\cdot,r,p,q) = \int_{\Phi_{\sqrt{r^2 - r_1^2}}} \tJd (\cdot, r_1,p,q) + \O_s \Ll( C r_1^{-\al} \Rr) .
\end{equation*}
Note that $r_1^\al = R^{(1-\eta_1) \al}$ and that $(1-\eta_1) \al > \frac d 2$. 
Integrating over $\cu_R$, we get
\begin{equation*}  
\fint_{\cu_{R}} \tJd(\cdot,r,p,q) = R^{-d} \int_{\R^d} g \, \tJd(\cdot,r_1,p,q) + \O_s \Ll( C r_1^{-\al} \Rr) ,
\end{equation*}
where 
\begin{equation*}  
g(y) := \int_{\cu_R} \Phi_{\sqrt{r^2 - r_1^2}}(x-y) \, dx.
\end{equation*}
Let $\eps_1 > 0$. For $y \in \cu_{R - r^{1+\eps_1}}$, the quantity $|g(y) - 1|$ is smaller than any negative power of $r$. By \eqref{e.bound.W0}, we have in particular
\begin{equation*}  
R^{-d} \int_{\cu_{R-r^{1+\eps_1}}} (g-1) \, \tJd(\cdot,r_1,p,q) = \O_s \Ll( C R^{-10 d} \Rr) .
\end{equation*}
Similarly, we have
\begin{equation*}  
R^{-d} \int_{\R^d \setminus \cu_{R+r^{1+\eps_1}}} g \, \tJd(\cdot,r_1,p,q) = \O_s \Ll( C R^{-10 d} \Rr) .
\end{equation*}
By Lemma~\ref{l.trim}, if $\eps_1 >0$ is sufficiently small, then there exists $\eps > 0$ such that 
\begin{equation*}  
\int_{\cu_{R+r^{1+\eps_1}} \setminus \cu_{R-r^{1+\eps_1}} } (g - \1_{\cu_R}) \, \tJd(\cdot,r_1,p,q) = \O_s \Ll( C R^{-\frac d 2 - \eps} \Rr),
\end{equation*}
so the lemma is proved.
\end{proof}
For every real random variable $X$ and $\sigma, \cc, \lambda_1 \ge 0$, we write
$$
X = \mcl N(\sigma^2,\cc,\lambda_1)
$$
to mean that for every $\lambda \in (-\lambda_1,\lambda_1)$,
\begin{equation*}  
\Ll|\log \E\Ll[\exp\Ll(\lambda  X\Rr)\Rr] - \frac{\sigma^2 \lambda^2 }{2} \Rr| \le \cc \lambda^2.
\end{equation*}
The statement $X = \mcl N(\sigma^2,0,\infty)$ is equivalent to saying that $X$ is a centered Gaussian random variable of variance $\sigma^2$. By abuse of notation, we may write $\mcl N(\sigma^2,0,\infty)$ to denote the law of this Gaussian random variable. 

The proof of Proposition~\ref{p.clt} can be summarized as follows. On the one hand, if $X$ is a centered random variable with sufficient integrability and $\cc > 0$, then there exists $\lambda_1 > 0$ such that $X = \mcl N(\sigma^2,\cc,\lambda_1)$, where $\sigma^2 = \E[X^2]$. On the other hand, the rescaled sum of $2^d$ independent copies of $X$ satisfies the same estimate with $\lambda_1$ replaced by $2^{\frac d 2} \lambda_1$. Iterating this renormalization step, we see that we can come arbitrarily close to $\mcl N(\sigma^2, 0, \infty)$. 

\smallskip

In our setting, the second step of this argument is not exact. However, the errors become negligible as we move to larger and larger scales, so we simply need to start the induction argument at a large enough scale in order to conclude.  The first step of this argument is formalized in the next lemma. 

\begin{lemma}
\label{l.centered}
There exists $C < \infty$ such that if a random variable $X$ satisfies 
\begin{equation*}  
\E[\exp\left(2|X|\right)] \le 2 \ \text{ and } \ \ \E[X] = 0,
\end{equation*}
then for every $\lambda_1 \in (0,1]$,
\begin{equation*}  
X = \mcl N\Ll(\E\left[X^2 \right], C \lambda_1, \lambda_1\Rr). 
\end{equation*}
\end{lemma}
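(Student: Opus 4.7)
The plan is to compute the moment generating function of $X$ by a direct Taylor expansion, using the exponential integrability hypothesis to control the remainder. Writing $\sigma^2 := \E[X^2]$, the idea is to show
\begin{equation*}
\E\Ll[\exp(\lambda X)\Rr] = 1 + \frac{\sigma^2 \lambda^2}{2} + R(\lambda),
\end{equation*}
with $|R(\lambda)| \le C|\lambda|^3$ for $|\lambda| \le 1$, and then take logarithms.

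First I would bound the moments. From $\E[\exp(2|X|)] \le 2$, expanding the exponential gives the inequality $\frac{2^k}{k!}\E[|X|^k] \le 2$, hence $\E[|X|^k] \le k!\, 2^{-(k-1)}$ for every $k \ge 0$. In particular $\sigma^2 \le 1$. Since $\E[\exp(\lambda X)] \le \E[\exp(|X|)] \le \sqrt{2}$ for $|\lambda| \le 1$, Fubini and the expansion $e^{\lambda X} = \sum_k (\lambda X)^k/k!$ are justified, and using $\E[X]=0$ we get
\begin{equation*}
\E\Ll[\exp(\lambda X)\Rr] = 1 + \frac{\sigma^2\lambda^2}{2} + \sum_{k=3}^\infty \frac{\lambda^k \E[X^k]}{k!},
\end{equation*}
with the tail bounded by $\sum_{k\ge3} |\lambda|^k/2^{k-1} \le C|\lambda|^3$ for $|\lambda|\le 1$. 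Hence $R(\lambda) = O(|\lambda|^3)$, and moreover for $|\lambda| \le \lambda_1 \le 1$,
\begin{equation*}
|R(\lambda)| \le C|\lambda|^3 \le C\lambda_1 \lambda^2.
\end{equation*}

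Next I would take the logarithm. Setting $u := \frac{\sigma^2 \lambda^2}{2} + R(\lambda)$, we have $|u| \le C\lambda^2 \le C\lambda_1^2 \le C$, uniformly in $|\lambda| \le \lambda_1 \le 1$. For $|u|$ bounded away from $1$, $|\log(1+u) - u| \le Cu^2$, so
\begin{equation*}
\Ll|\log \E\Ll[\exp(\lambda X)\Rr] - \frac{\sigma^2\lambda^2}{2}\Rr|
\le |R(\lambda)| + Cu^2
\le C\lambda_1 \lambda^2 + C\lambda^4.
\end{equation*}
Since $|\lambda| \le \lambda_1 \le 1$, the last term satisfies $\lambda^4 \le \lambda_1 \lambda^2$, which gives the desired bound with a possibly larger constant $C$.

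There is essentially no obstacle here; the only point requiring minor care is to ensure the constant $C$ is truly universal, which is automatic since the moment bound $\E[|X|^k] \le k!\,2^{-(k-1)}$ and the restriction $\lambda_1 \le 1$ produce explicit numerical constants at every step.
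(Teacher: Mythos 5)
Your proof is correct. It reaches the same conclusion as the paper but by a slightly different implementation of the same underlying idea (second-order expansion of the log-Laplace transform with a cubic error): the paper works directly with the cumulant generating function $\psi(\lambda)=\log\E[\exp(\lambda X)]$, computes $\psi'$, $\psi''$, $\psi'''$ via the tilted ``Gibbs'' measure $\E_\lambda$, bounds $|\psi'''|$ uniformly on $|\lambda|\le 1$, and Taylor-expands $\psi$ around $0$; you instead expand the moment generating function in moments, using the explicit bound $\E[|X|^k]\le k!\,2^{-(k-1)}$ extracted from $\E[\exp(2|X|)]\le 2$, and then pass to the logarithm by hand. Your route is more elementary and makes all constants explicit, at the cost of the extra step of controlling $\log(1+u)-u$; the paper's route is shorter once the derivative formulas for $\psi$ are written down, since $\psi(0)=\psi'(0)=0$ and $\psi''(0)=\E[X^2]$ give the statement immediately from $|\psi'''|\le C$. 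One small point of phrasing in your log step: what matters is not that $|u|$ is bounded away from $1$ but that $u$ stays away from $-1$; this does hold in your setting, since $u\ge -|R(\lambda)|\ge -\tfrac12$, so $1+u\ge\tfrac12$ and the bound $|\log(1+u)-u|\le Cu^2$ is justified, after which $u^2\le\lambda^4\le\lambda_1\lambda^2$ closes the argument exactly as you say.
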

\begin{proof}
Let $\psi(\lambda) := \log \E\left[\exp(\lambda X)\right]$, and 
\begin{equation*}  
\E_{\lambda}[\cdot] := \frac{\E\left[\, \cdot \, \exp(\lambda X)\right]}{\E\left[\exp(\lambda X) \right]}
\end{equation*}
be the ``Gibbs measure'' associated with $\lambda X$. Since $\partial_\lambda \E_\lambda[F] = \E_\lambda[FX] - \E_\lambda[F] \E_\lambda[X]$, we have
\begin{align*}  
\psi'(\lambda) & = \E_\lambda[X], \\
\psi''(\lambda) & = \E_\lambda\left[X^2\right] - \left(\E_\lambda[X]\right)^2, \\
\psi'''(\lambda) & = \E_\lambda \left[X^3 \right] -3\E_\lambda\left[X^2\right] \E_\lambda[X] + 2 \left(\E_\lambda[X]\right)^3.
\end{align*}
In particular, there exists a constant $C < \infty$ such that for every $|\lambda| \le 1$, we have $|\psi'''(\lambda)| \le C$. The result then follows by a Taylor expansion of $\psi$ around $0$.
\end{proof}
We next give a slight restatement of~Lemma~\ref{l.logL}. 

\begin{lemma}
\label{l.Cheby}
There exists a constant $C(s) < \infty$ such that if $X = \O_s(1)$ and $\E[X] = 0$, then for every $\lambda \in \R$,
\begin{equation*}  
\log \E\left[\exp\left(\lambda C^{-1} X\right)\right] \le \lambda^2 \vee |\lambda|^{\frac s {s-1}} .
\end{equation*}
\end{lemma}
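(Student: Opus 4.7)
The statement is, up to notation, exactly the second implication of Lemma~\ref{l.logL}: writing $\tilde X := C^{-1}X$, we must establish that $\tilde X = \bar{\O}_s(1)$, which is the very definition of the desired bound on the log-Laplace transform. Accordingly, the plan is to reuse the two ingredients that underlie Lemma~\ref{l.logL}---a Taylor expansion of the log-Laplace transform near the origin for small $\lambda$, and a Young-type inequality for large $\lambda$---and to verify that a single constant $C = C(s)$ can be chosen to handle both regimes simultaneously. Note that although the lemma is stated without restriction on $s$, the exponent $s/(s-1)$ is only meaningful for $s>1$, so implicitly we work in the setting $s \in (1,2]$ of Lemma~\ref{l.logL}.

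For the range $|\lambda| \le 1$, I would invoke Lemma~\ref{l.centered} applied to $C^{-1} X$. Its hypothesis $\E[\exp(2 C^{-1} |X|)] \le 2$ must be checked: from $X = \O_s(1)$ and the pointwise identity $|X|^s = X_+^s + X_-^s$ (the supports being disjoint), one has $\E[\exp(|X|^s)] = \E[\exp(X_+^s)] + \E[\exp(X_-^s)] - 1 \le 3$, and the elementary inequality $2y \le y^s + A(s)$ valid for all $y \ge 0$ then yields $\E[\exp(2C^{-1}|X|)] \le e^{A(s)/C}\,\E[\exp(|C^{-1}X|^s)]^{1/C^{s-1}}$, which can be made $\le 2$ by enlarging $C$. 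Lemma~\ref{l.centered} then produces $\log \E[\exp(\lambda C^{-1} X)] \le \E[(C^{-1}X)^2]\lambda^2/2 + C_1(s)|\lambda|^3 \le C_2(s)\lambda^2$ on $|\lambda|\le 1$. A further enlargement of $C$ absorbs the constant $C_2(s)$, leaving $\le \lambda^2$.

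For the range $|\lambda| \ge 1$, I would appeal to the weighted Young inequality $ab \le \eps a^s/s + \eps^{-(q-1)} b^q/q$ with $q = s/(s-1) \in [2,\infty)$, applied with $a = C^{-1}|X|$, $b = |\lambda|$ and $\eps = 1$. This gives the pointwise bound
\[
\exp(\lambda C^{-1} X) \le \exp\!\Ll(|\lambda X|/C\Rr) \le \exp\!\Ll( |C^{-1}X|^{s}/s \Rr)\, \exp\!\Ll( |\lambda|^{q}/q \Rr).
\]
Taking expectation, the first factor is controlled as in the previous paragraph: splitting $|X|^s = X_+^s + X_-^s$, using Cauchy--Schwarz and the concavity of $x \mapsto x^\mu$ for $\mu \in (0,1]$, together with $\E[\exp(X_\pm^s)]\le 2$, yields $\E[\exp(|C^{-1}X|^s/s)] \le A_3(s)$ once $C$ exceeds an explicit threshold. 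Hence $\log \E[\exp(\lambda C^{-1}X)] \le \log A_3(s) + |\lambda|^q/q$. Since $q \ge 2$, replacing $C$ by $KC$ amounts to replacing $|\lambda|^q/q$ by $|\lambda|^q/(K^q q)$ while leaving the additive constant unchanged; choosing $K(s)$ so that $\log A_3(s) \le (1-1/(K^q q))|\lambda|^q$ for all $|\lambda|\ge 1$ (which is possible because $K^q q \to \infty$) gives $\log\E[\exp(\lambda C^{-1}X)] \le |\lambda|^q$ on $|\lambda|\ge 1$.

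The two regimes combine to give the claim with a single constant $C=C(s)$, obtained by taking the maximum of the thresholds required in each step. No new probabilistic input is needed beyond Lemmas~\ref{l.centered} and the two-sided tail control provided by the $\O_s$ notation; the only mildly delicate point---but a purely bookkeeping one---is ensuring that the same $C$ simultaneously meets all the requirements above.
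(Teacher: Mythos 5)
Your argument is correct and follows essentially the same route as the paper: a two-regime split at $|\lambda|\approx 1$, with the small-$\lambda$ bound coming from a Taylor expansion of the log-Laplace transform of the centered variable (the paper invokes Lemma~\ref{l.logL}, you invoke Lemma~\ref{l.centered}, which is the same mechanism) and the large-$\lambda$ bound coming from the exponential moment control encoded in $\O_s$. The only cosmetic difference is that for large $\lambda$ you use a pointwise Young inequality plus Jensen where the paper cites Chebyshev's inequality (i.e.\ \eqref{e.stoch-equiv1}); both yield $\log\E[\exp(\lambda C^{-1}X)]\le C|\lambda|^{\frac{s}{s-1}}$, and your remaining imprecisions (the exact Jensen exponents, and the fact that enlarging $C$ also shrinks the additive constant $\log A_3(s)$, which is what actually rescues the range $1\le|\lambda|\le K$ together with $\lambda^2\le|\lambda|^{\frac s{s-1}}$ there) are routine bookkeeping.
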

\begin{proof}
We use the Lemma~\ref{l.logL} to obtain the result for $\lambda$ in a neighborhood of the origin, and then Chebyshev's inequality for general $\lambda$. 
\end{proof}

We finally formalize a convenient approximation argument in the next lemma, before completing the proof of Proposition~\ref{p.clt}.
\begin{lemma}
\label{l.approx}
Let $\bar \lambda > 0$. There exists $C(s,\bar \lambda) < \infty$ such that for every $\sigma^2,\cc >0$, $\lambda_1 \in [0,\bar \lambda]$ and $\theta \in [0,1]$, if $X_1, X_2$ are two centered random variables satisfying
\begin{equation}
\label{e.X1}
X_1 = \mcl N \Ll( \sigma^2, \cc, \lambda_1 \Rr) \ \ \text{ and } \ \  X_2 = \O_s \Ll( \theta \Rr) ,
\end{equation}
then 
\begin{equation}  
\label{e.X12}
X_1 + X_2 = \mcl N \Ll( \sigma^2, \cc + C \sqrt{\theta}(1+\sigma^2+\cc), (1-\sqrt{\theta})\lambda_1 \Rr) .
\end{equation}
\end{lemma}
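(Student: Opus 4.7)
The plan is to use Hölder's inequality with conjugate exponents chosen to decouple $X_1$ and $X_2$ in the log-Laplace transform, while producing an error of order $\sqrt{\theta}$ rather than $\sqrt{\theta}/(1-\sqrt{\theta})$. The natural guess $p = 1/(1-\sqrt{\theta})$, $q = 1/\sqrt{\theta}$ is unsuitable because the factor $1/(1-\sqrt{\theta})$ blows up as $\theta \uparrow 1$. Instead, I take
$$
p := 1 + \sqrt{\theta}, \qquad q := \frac{1+\sqrt{\theta}}{\sqrt{\theta}},
$$
which are conjugate (since $1/p + 1/q = (1+\sqrt{\theta})^{-1}(1+\sqrt{\theta}) = 1$) and give factors $1/p$, $1/q$, and $q\theta$ that remain bounded uniformly in $\theta \in [0,1]$.

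For the upper bound, Hölder gives $\log \E[\exp(\lambda(X_1+X_2))] \le \tfrac 1 p \log \E[\exp(p\lambda X_1)] + \tfrac 1 q \log \E[\exp(q\lambda X_2)]$. When $|\lambda| \le (1-\sqrt{\theta})\lambda_1$, we have $|p\lambda| \le (1-\theta)\lambda_1 \le \lambda_1$, so the hypothesis $X_1 = \mcl N(\sigma^2,\cc,\lambda_1)$ bounds the first term by $(1+\sqrt{\theta})(\sigma^2/2 + \cc)\lambda^2 = (\sigma^2/2 + \cc)\lambda^2 + \sqrt{\theta}(\sigma^2/2 + \cc)\lambda^2$. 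For the second term, Lemma~\ref{l.Cheby} applied to the centered random variable $\theta^{-1}X_2 = \O_s(1)$ gives $\log \E[\exp(q\lambda X_2)] \le (Cq\theta\lambda)^2 \vee |Cq\theta\lambda|^{s/(s-1)}$; since $q\theta = \sqrt{\theta}(1+\sqrt{\theta}) \le 2\sqrt{\theta}$ and $|\lambda| \le \bar\lambda$, multiplying by $1/q = \sqrt{\theta}/(1+\sqrt{\theta})$ yields a bound of $C(s,\bar\lambda)\sqrt{\theta}\,\lambda^2$ (one uses $s/(s-1) \ge 2$ to absorb the second term into $\lambda^2$ over the bounded range $|\lambda| \le \bar\lambda$). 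Summing the two contributions gives the required upper bound with error $[\cc + C\sqrt{\theta}(1+\sigma^2+\cc)]\lambda^2$.

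For the matching lower bound, I write $e^{\lambda X_1} = e^{\lambda(X_1+X_2)} \cdot e^{-\lambda X_2}$ and apply Hölder with the same exponents, producing
$$
\log \E[e^{\lambda^*(X_1+X_2)}] \ge p \log \E[e^{\lambda^* X_1/p}] - \sqrt{\theta}\, \log \E[e^{-\lambda^* X_2/\sqrt{\theta}}],
$$
where $\lambda^* = p\lambda$. For $|\lambda^*| \le (1-\sqrt{\theta})\lambda_1$ we still have $|\lambda^*/p| \le \lambda_1$, so the lower bound in the hypothesis on $X_1$ applies to the first term, while the second term is estimated exactly as above. Expanding $1/(1+\sqrt{\theta}) = 1 - \sqrt{\theta}/(1+\sqrt{\theta})$ and bounding $\sqrt{\theta}/(1+\sqrt{\theta}) \le \sqrt{\theta}$ produces the matching lower bound $\sigma^2(\lambda^*)^2/2 - [\cc + C\sqrt{\theta}(1+\sigma^2+\cc)](\lambda^*)^2$. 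The only real subtlety is the choice of Hölder exponents that produces a clean $\sqrt{\theta}$ dependence; once that is done the rest is routine Taylor-style bookkeeping.
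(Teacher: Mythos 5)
Your proof is correct and follows essentially the same strategy as the paper's: H\"older decoupling of $X_1$ and $X_2$ with $\theta$-tuned conjugate exponents, the hypothesis on $X_1$ for the main Gaussian term, Lemma~\ref{l.Cheby} for the Laplace transform of the centered variable $X_2$, and the reverse-H\"older trick (writing $e^{\lambda X_1}=e^{\lambda(X_1+X_2)}e^{-\lambda X_2}$) for the matching lower bound. The only difference is your choice $p=1+\sqrt{\theta}$, $q=(1+\sqrt{\theta})/\sqrt{\theta}$ in place of the paper's $\zeta=(1-\sqrt{\theta})^{-1}$, $\zeta'=\theta^{-1/2}$; this is a minor but genuine tidying, since it makes the error term uniform over $\theta\in[0,1]$, whereas the paper's bound $\zeta-1\le C\sqrt{\theta}$ only holds for $\theta$ bounded away from $1$ (which is all that is needed in the applications, where $\theta$ is a small negative power of the scale).
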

\begin{proof}
Write $X := X_1 + X_2$, and let $\zeta, \zeta'\in (1,\infty)$ be such that $\frac 1 \zeta + \frac 1 {\zeta'} = 1$. By H\"older's inequality,
\begin{equation}
\label{e.mydear1}
\log \E \Ll[ \exp \Ll( \lambda  X   \Rr)  \Rr] \le \frac 1 \zeta \log \E \Ll[ \exp \Ll( \zeta \lambda  X_1 \Rr)  \Rr] + \frac 1 {\zeta'} \log \E \Ll[ \exp \Ll(\zeta' \lambda  X_2 \Rr)  \Rr] ,
\end{equation}
and conversely,
\begin{equation}  
\label{e.mydear2}
\log \E \Ll[ \exp \Ll( \lambda  X_1 \Rr)  \Rr] \le \frac 1 \zeta \log \E \Ll[ \exp \Ll( \zeta \lambda  X \Rr)  \Rr] + \frac 1 {\zeta'} \log \E \Ll[ \exp \Ll(-\zeta' \lambda  X_2 \Rr)  \Rr] .
\end{equation}
It follows from the hypothesis that for every $\lambda$ satisfying $|\lambda| <  \zeta^{-1} \, \lambda_1$,
\begin{equation}  
\label{e.whatcanwedo}
\log \E \Ll[ \exp \Ll( \zeta \lambda  X_1 \Rr)  \Rr] \le \Ll( \frac {\sigma^2} 2 + \cc \Rr) (\zeta \lambda)^2.
\end{equation}
Recall from Lemma~\ref{l.Cheby} that there exists $C(s) < \infty$ such that for every $\lambda \in \R$,
\begin{equation*}  
\log \E \Ll[ \exp \Ll(C^{-1}  \zeta' \lambda  X_2 \Rr)  \Rr] \le  (\zeta' \theta \lambda)^2 \vee |\zeta' \theta\lambda|^{\frac s{s-1}}.
\end{equation*}
Fixing $\zeta  = (1-\sqrt{\theta})^{-1}$, and therefore $\zeta' = \theta^{-\frac 1 2}$, we deduce that there exists $C(s,\bar \lambda) < \infty$ such that for every $\lambda$ satisfying $|\lambda| \le \bar \lambda$,
\begin{equation*}  
\log \E \Ll[ \exp \Ll(  \zeta' \lambda  X_2 \Rr)  \Rr]  \le C \Ll(\sqrt{\theta} \lambda\Rr)^2.
\end{equation*}
Using \eqref{e.mydear1} with \eqref{e.whatcanwedo} and the previous display yields that for every $\lambda$ satisfying $|\lambda| < (1-\sqrt{\theta}) \lambda_1$,
\begin{equation*}  
\log \E \Ll[ \exp \Ll( \lambda  X   \Rr)  \Rr] - \frac {\sigma^2 \lambda^2}{2} \le \Ll( (\zeta - 1) \frac{\sigma^2}{2}  +  \cc + C \sqrt{\theta} \Rr) \lambda^2.
\end{equation*}
Since
\begin{equation*}  
\zeta - 1 = (1-\sqrt{\theta})^{-1} -1 \le C \sqrt{\theta},
\end{equation*}
we obtain one side of the two-sided inequality implicit in \eqref{e.X12}. The other inequality is proved in the same way, using \eqref{e.mydear2} instead of \eqref{e.mydear1}.
\end{proof}

\begin{proof}[Proof of Proposition~\ref{p.clt}] 

We begin by noting that $\msf Q(p,p) = 0$ is a consequence of the claimed convergence~\eqref{e.clt}. Indeed, it follows from~\eqref{e.clt} together with $\al(1-\eta) > \frac d 2$ (cf.~\eqref{e.def.td.eta}) and
\begin{equation*}  
r^{-d} \int_{\cut_r(z)} \tJd\Ll(\cdot,r^{1-\eta},p,q\Rr) = \O_{s/2} \Ll( C r^{-\al(1-\eta)} \Rr). 
\end{equation*}
The latter is obtained by~\eqref{e.min.p.p} and the fact that, by~\eqref{e.localization}, there exists $C(s,d,\Lambda) < \infty$ such that for every $z \in \Rd$ and $r \ge 1$,
\begin{equation*}  
\tJd(z,r,p,p) = \O_{s/2} \Ll( C r^{-\al} \Rr).
\end{equation*}

\smallskip

We now prove~\eqref{e.clt}. For every $z,p,q,p',q' \in \Rd$, we denote the bilinear form associated with $W$ by
\begin{equation}  \label{e.def.bilin}
W(z,r,p,q,p',q') := \frac 1 4 \Ll[ W(z,r,p+p',q+q') - W(z,r,p-p',q-q') \Rr] ,
\end{equation}
and similarly, we write $\tJd(z,r,p,q,p',q')$ for the bilinear form associated with~$\tJd(z,r,p,q)$. 

In order to prove Proposition~\ref{p.clt}, it suffices to show that for every $p$, $q$, $p'$, $q' \in B_1$, there exists a centered Gaussian random variable $\msf Q(p,q,p',q')$ such that for every $z \in \Rd$,  
\begin{equation}
\label{e.conv.W}
r^{\frac d 2} W(z,r,p,q,p',q') \xrightarrow[r \to \infty]{\mathrm{(law)}} \msf Q(p,q,p',q').
\end{equation}
Indeed, since $(p,q,p',q') \mapsto W(z,r,p,q,p',q')$ is a linear mapping, this suffices to guarantee that the convergence in law in \eqref{e.conv.W} holds jointly over $p,q,p',q' \in \Rd$ for a family of random variables $\msf Q$ such that $(p,q,p',q') \mapsto \msf Q(p,q,p',q')$ is linear, and thus to yield Proposition~\ref{p.clt}.

\smallskip

We therefore fix the parameters $p,q,p',q' \in B_1$, and ligthen the notation by simply writing $W(z,r)$ and $\tJd(z,r)$ instead of $W(z,r,p,q,p',q')$ and $\tJd(z,r,p,q,p',q')$ respectively. Our goal is to show that there exists $\sigma \in [0,\infty)$ such that for every $z \in \Rd$,
\begin{equation}
\label{e.conv.W2}
r^{\frac d 2} W(z,r) \xrightarrow[r \to \infty]{\mathrm{(law)}} \mcl N(\sigma^2,0,\infty).
\end{equation}
In order to show this, it suffices to prove that for any given $\bar \lambda  < \infty$ and $\cc > 0$, there exists $\sigma \in [0,\infty)$ such that for every $z \in \Rd$ and $r$ sufficiently large,
\begin{equation}
\label{e.clt.lambda}
r^{\frac d 2} W(z,r) = \mcl N\Ll(\sigma^2, \cc, \bar \lambda\Rr).
\end{equation}
Indeed, this ensures the convergence of the Laplace transform of $r^{\frac d 2} W(z,r)$ to that of a centered Gaussian random variable, which is sufficient to conclude. 

\smallskip

We fix $\bar \lambda  < \infty$ and proceed to prove~\eqref{e.clt.lambda}. In the argument, the value of the exponent~$\eps(s) > 0$ and of the constant $C( \bar \lambda,s,d,\Lambda) < \infty$ may vary in each occurrence. 
We break the proof into three steps.

\smallskip

\emph{Step 1.} 
For every $R \ge 1$ and $\sigma, \cc,\lambda_1 \ge 0$, we denote by $\msf A(R,\sigma^2,\cc,\lambda_1)$ the statement that for every $z \in \Rd$, and $r \in [R^{1-\eta_1},R^{1-\eta_2}]$,
$$
R^{\frac d 2} \fint_{\cu_R(z)} \tJd(\cdot,r) = \mcl N(\sigma^2, \cc,\lambda_1). 
$$

\smallskip

In this step, we show that for every $R \ge 1$, there exists $\sigma^2(R) \in [0,C]$ such that 
\begin{equation}
\label{e.init0}
\text{for every }\lambda_1 \in (0,1], \ \ \msf A(R,\sigma^2,C\lambda_1+ C R^{-\eps}, \lambda_1) \ \text{holds}.
\end{equation}
By \eqref{e.bound.W0} and Lemma~\ref{l.centered}, for each $R \ge 1$, there exists $\sigma^2(R) \in [0,C]$ such that for every $\lambda_1 \in (0,1]$,
\begin{equation*}  
R^{\frac d 2} \fint_{\cu_R} \tJd(\cdot,R^{1-\eta_1}) = \mcl N\Ll(\sigma^2, C \lambda_1, \lambda_1 \Rr).
\end{equation*}
By Lemmas~\ref{l.var.meso} and \ref{l.approx}, for every $r \in [R^{1-\eta_1}, R^{1-\eta_2}]$ and $\lambda_1 \in (0,1]$, we have
\begin{equation*}  
R^{\frac d 2} \fint_{\cu_R} \tJd(\cdot,r) = \mcl N\Ll(\sigma^2, C \lambda_1 + C R^{-\eps}, \lambda_1 \Rr).
\end{equation*}
In order to conclude, there remains to justify that this estimate still holds when the domain of integration $\cu_R$ is replaced by $\cu_R(z)$, for every $z \in \Rd$. By $\Z^d$-stationarity, it suffices to consider $z$ ranging in $[0,1)^d$. In this case, the result follows by an application of Lemmas~\ref{l.trim} and \ref{l.approx}.

\smallskip

\emph{Step 2.} We fix $0 < \ga_1 < \ga_2 < 1$ such that
\begin{equation}
\label{e.cond.gamma1}
\ga_1 \be > (1-\eta_1)(1+\de)
\end{equation}
and 
\begin{equation}
\label{e.cond.gamma2}
\ga_1(1-\eta)   > 1-\eta_1.
\end{equation}
In this step, we show that for every $R \ge 1$, $\sigma, \cc, \lambda_1 \ge 0$, and $\ga \in [\ga_1,\ga_2]$, we have
\begin{multline}
\label{e.induc.A0}
\msf A(R^\ga,\sigma^2,\cc,\lambda_1) \\ 
\implies \quad  \msf A\Ll(R,\sigma^2,\cc + CR^{-\eps}(1 + \cc + \sigma^2) ,\bar \lambda \wedge \Ll[\frac{R^{(1-\ga) \frac d 2}}{2} \lambda_1 \Rr] \Rr).
\end{multline}
In view of what needs to be proved and of Lemmas~\ref{l.trim} and \ref{l.approx}, we may replace the assumption of $\msf A(R^\ga,\sigma^2,\cc,\lambda_1)$ by the assumption that for every $z \in \Rd$ and $r \in [R^{\ga(1-\eta_1)}, R^{\ga(1-\eta_2)}]$,
\begin{equation}
\label{e.hyp.step1}
R^{-\ga \frac{d}{2} } \int_{\cut_{R^\ga}(z)} \tJd(\cdot,r) = \mcl N(\sigma^2,\cc,\lambda_1).
\end{equation}
By Lemmas~\ref{l.var.meso} and \ref{l.approx}, it suffices to show that for $r_1 := R^{1-\eta_1}$, we have
\begin{equation}
\label{e.concl.step1a}
R^{\frac d 2} \fint_{\cu_R} \tJd(\cdot,r_1) = \mcl N \Ll( \sigma^2, \cc + CR^{-\eps}(1 + \cc + \sigma^2) ,  \bar \lambda \wedge \Ll[\frac{R^{(1-\ga) \frac d 2}}{2} \lambda_1 \Rr]  \Rr) .
\end{equation}
Note that our choice of exponent $\ga_1$ in \eqref{e.cond.gamma2} ensures that
\begin{equation}
\label{e.gamma.inthere}
r_1 \in \Ll[ R^{\ga(1-\eta_1)}, R^{\ga(1-\eta)} \Rr].
\end{equation}
We pave $\cu_R$ by cubes of size $R^\ga$ that are at distance at least $2$ to one another, plus a remainder. More precisely, we let
\begin{equation*}  
\mcl Z_1 := \{x \in R^\ga \Z^d \text{ s.t. } \cu_{R^\ga}(x) \subset \cu_R\},
\end{equation*}
\begin{equation*}
\mcl B_1 := \cu_R \setminus \bigcup_{x \in \mcl Z} \cu_{R^\ga}(x),
\end{equation*}
and observe that
\begin{equation}  
\label{e.countZ1}
\Ll| \, |\mcl Z_1| - R^{(1-\ga)d} \, \Rr| \le C R^{(1-\ga)(d-1)},
\end{equation}
and
\begin{equation*}  
\int_{\cu_R} \tJd(\cdot,r_1) = \int_{\mcl B_1} \tJd(\cdot,r_1) + \sum_{x \in \mcl Z_1} \int_{\cu_{R^\ga}(x)} \tJd(\cdot,r_1) .
\end{equation*}
Since $\ga > (1-\eta_1)(1+\de)$, the random variable
\begin{equation*}  
\int_{\cu_{R^\ga}(x) \setminus \cut_{R^\ga}(x)} \tJd(\cdot,r_1)
\end{equation*}
is $\mcl F(\cu_{2R^\ga}(x))$-measurable. Moreover, by Lemma~\ref{l.trim} and \eqref{e.gamma.inthere}, we have
\begin{equation*}  
R^{-\gamma d} \int_{\cu_{R^\ga}(x) \setminus \cut_{R^\ga}(x)} \tJd(\cdot,r_1) = \O_s \Ll( R^{-\ga \Ll(\frac d 2 + \frac{1-\be}2\Rr)} \Rr) .
\end{equation*}
By independence (more precisely, by Lemma~\ref{l.partition}) and~\eqref{e.countZ1}, we get
\begin{equation}
\label{e.err.boundary}
R^{-d} \sum_{x \in \mcl Z_1} \int_{\cu_{R^\ga}(x) \setminus \cut_{R^\ga}(x)} \tJd(\cdot,r_1) = \O_s \Ll( C R^{-\frac d 2 - \gamma \frac{1-\be}{2}} \Rr) .
\end{equation}
Similarly, we obtain
\begin{equation}
\label{e.err.mcl.B}
R^{-d} \int_{\mcl B_1} \tJd(\cdot,r_1) = \O_s \Ll( C R^{-\frac d 2 - \frac {1-\ga}{2}} \Rr) .
\end{equation}
We now analyze
\begin{equation*}  
\sum_{x \in \mcl Z_1} \int_{\cut_{R^\ga}(x)} \tJd(\cdot,r_1).
\end{equation*}
By \eqref{e.cond.gamma1}, for $R$ sufficiently large, the summands are $\mcl F(\cu_{R^\ga-1}(x))$-measurable, and therefore independent of each other. By our assumption of \eqref{e.hyp.step1} and \eqref{e.gamma.inthere}, we obtain
\begin{equation}
\label{e.step1.concl}
|\mcl Z_1|^{-\frac 1 2} R^{-\ga \frac d 2} \sum_{x \in \mcl Z_1} \int_{\cut_{R^\ga}(x)} \tJd(\cdot,r_1) = \mcl N\Ll(\sigma^2,\cc, |\mcl Z_1|^\frac 1 2 \lambda_1\Rr).
\end{equation}
The result then follows by an application of Lemma~\ref{l.approx}.

\smallskip 

\emph{Step 3.} We conclude. Let $\cc_0 \in( 0,1]$. By \eqref{e.init0}, there exists $\lambda_1 > 0$ and, for every $R$ sufficiently large, a constant $\sigma^2(R) \in [0,C]$ such that 
\begin{equation*}  
\msf A(R,\sigma^2,\cc_0,\lambda_1) \ \ \text{holds}.
\end{equation*}
By \eqref{e.induc.A0}, for every $R$ sufficiently large, there exists $\sigma^2(R) \in [0,C]$ such that
\begin{equation}  \label{e.start}
\msf A(R,\sigma^2,\cc_0,\bar \lambda) \ \ \text{holds}.
\end{equation}
For $\sigma^2 \in [0,C]$ and $\cc \in [0, 2]$, we can rewrite \eqref{e.induc.A0} in the following form: 
\begin{equation*}  
\msf A(R,\sigma^2,\cc,\bar \lambda) \quad \implies \quad \forall R_1 \in \Ll[R^{1/\ga_2}, R^{1/\ga_1}\Rr], \ \  \msf A(R_1, \sigma^2, \cc + C R^{-\eps}, \bar \lambda).
\end{equation*}
Iterating this with the initialization \eqref{e.start} and $R$ sufficiently large, we obtain that there exists $\sigma^2(R) \in [0,C]$ such that
\begin{equation*}  
\forall R_1 \in \bigcup_{k = 1}^{\infty} \Ll[R^{k/\ga_1}, R^{k/\ga_2}\Rr], \ \ \msf A(R_1,\sigma^2,2\cc_0,\bar \lambda) \ \ \text{holds}.
\end{equation*}
One readily checks that the allowed range of values for $R_1$ contains an interval of the form $[R_2, +\infty)$ for some $R_2 < \infty$, so the proof is complete.
\end{proof}

We can now derive Corollary~\ref{c.clt.convolution} from Proposition~\ref{p.clt}.

\begin{proof}[Proof of Corollary~\ref{c.clt.convolution}]
Recall that $W(z,r,p,q,p',q')$ denotes the bilinear form associated with the quadratic form $W(z,r,p,q)$, see \eqref{e.def.bilin}. We use similar notation for $\tJd(z,r,p,q)$, $\msf Q(p,q)$ and $\W(\cdot,p,q)$. Our goal is to show that for every $p,q,p',q' \in \Rd$ and $f : \Rd \to \R$ satisfying \eqref{e.hyp.f}, we have
\begin{multline}
\label{e.clt.polar}
r^{-\frac d 2} \int_{\R^d} f\Ll( \tfrac x r \Rr) \, \tJd(x,r^{1-\td \eta}, p,q,p',q') \, dx \\
\xrightarrow[r \to \infty]{\mathrm{(law)}} \int_{\Rd} f(x) \W(x,p,q,p',q') \, dx.
\end{multline}
Since the terms in \eqref{e.clt.polar} depend linearly on $f$, $p$, $q$, $p'$ and $q'$, establishing the convergence \eqref{e.clt.polar} suffices to ensure joint convergence over these variables. From now on, we therefore fix the function $f$ satisfying \eqref{e.hyp.f} and $p$, $q$, $p'$, $q' \in B_1$, and focus on proving \eqref{e.clt.polar} for these parameters. 
We lighten the notation and simply write $\tJd(z,r)$ instead of $\tJd(z,r,p,q,p',q')$, and similarly for $W(z,r)$ and $\W(\cdot)$. We let $\sigma^2$ denote the variance of the white noise $\W$. The convergence \eqref{e.clt.polar} we need to prove can be restated as
\begin{equation}
\label{e.clt.conv}
r^{-\frac d 2} \int_{\R^d} f\Ll( \tfrac \cdot r \Rr) \, \tJd(\cdot,r^{1-\td \eta})
\xrightarrow[r \to \infty]{\mathrm{(law)}} \mcl N \Ll( \sigma^2 \int_\Rd f^2,0,\infty \Rr).
\end{equation}

Let $\kappa < 1$ be an exponent that will be chosen sufficiently close to $1$ in the course of the argument. We decompose the domain of integration along cubes of side length $r^\kappa$:
\begin{equation}  \label{e.decomp.cube}
\int_{\R^d} f\Ll( \tfrac \cdot r \Rr) \, \tJd(\cdot,r^{1-\td \eta})  = \sum_{y \in r^\kappa \Z^d} \int_{\cu_{r^\kappa}(y)} f\Ll( \tfrac \cdot r \Rr) \, \tJd(\cdot,r^{1-\td \eta}).
\end{equation}
We first show that we can replace each summand above by
\begin{equation*}  
f\Ll( \tfrac y r \Rr) \int_{\cu_{r^\kappa}(y)}  \tJd(\cdot,r^{1-\td \eta}),
\end{equation*}
up to an error of lower order. 
By \eqref{e.bound.W0}, we have
\begin{multline}  \label{e.repl.err}
  \int_{\cu_{r^\kappa}(y)} \Ll(f\Ll( \tfrac \cdot r \Rr) - f\Ll( \tfrac y r \Rr)\Rr) \, \tJd(\cdot,r^{1-\td \eta}) \\
=   \O_s \Ll( C r^{\kappa \frac d 2 - (1-\kappa)} \Rr) \, \sup_{\cu_{r^\kappa}(y)} \Ll|(\nabla f) \Ll( \tfrac \cdot r \Rr) \Rr|.
\end{multline}
Choosing $\kappa > (1-\eta)(1+\de)$ ensures that the random variable in \eqref{e.repl.err} is $\mcl F(\cu_{2r^\kappa}(y))$-measurable, and therefore, by~Lemma~\ref{l.partition},
\begin{multline*}  
\sum_{y \in r^\kappa \Z^d}  \Ll|\int_{\cu_{r^\kappa}(y)} \Ll(f\Ll( \tfrac \cdot r \Rr) - f\Ll( \tfrac y r \Rr)\Rr) \, \tJd(\cdot,r^{1-\td \eta})\Rr| \\
=   \O_s \Ll( C r^{ \frac d 2 - (1-\kappa) } \Rr) \, \Ll[r^{-d} \sum_{y \in r^\kappa \Z^d} r^{\kappa d} \Ll(\sup_{\cu_{r^\kappa}(y)} \Ll|(\nabla f) \Ll( \tfrac \cdot r \Rr) \Rr|\Rr)^2\Rr]^{\frac 1 2}.
\end{multline*}
By the assumption \eqref{e.hyp.f}, the expression between square brackets above remains bounded as $r$ tends to infinity. This justifies the replacement of $f\Ll( \tfrac \cdot r \Rr)$ by $f\Ll( \tfrac y r \Rr)$ in the right side of \eqref{e.decomp.cube}. By Lemmas~\ref{l.trim} and \ref{l.var.meso} and the same reasoning, we may replace each summand
\begin{equation*}  
f\Ll( \tfrac y r \Rr) \int_{\cu_{r^\kappa}(y)}  \tJd(\cdot,r^{1-\td \eta})
\end{equation*}
by 
\begin{equation*}  
r^{\kappa d} f\Ll( \tfrac y r \Rr) W(y, r^{\kappa}),
\end{equation*}
provided that $\kappa < 1$ is sufficiently close to $1$. 
There remains to analyze the asymptotic behavior of 
\begin{equation*}  
r^{-\frac d 2} \sum_{y \in r^\kappa \Z^d}r^{\kappa d}   f\Ll( \tfrac y r \Rr) \, W(y, r^{\kappa}).
\end{equation*}
Let $\bar \lambda < \infty$ and $\cc > 0$. By Proposition~\ref{p.clt}, the stochastic integrability of $W$ and the fact that $\|f\|_{L^\infty} < \infty$, for every $r$ sufficiently large and $y \in \Rd$, we have
\begin{equation*}  
r^{\kappa \frac d 2} f\Ll( \tfrac y r \Rr) W(y,r^\kappa) = \mcl N\Ll( f^2\Ll( \tfrac y r \Rr)\sigma^2, f^2\Ll( \tfrac y r \Rr)  \cc, \bar \lambda\Rr).
\end{equation*}
For $r$ sufficiently large, the random variable $W(y,r^\kappa)$ is $\mcl F(\cu_{r^\kappa-1}(y))$-measu\-rable. By independence, it thus follows that
\begin{multline*}  
r^{-\frac d 2} \sum_{y \in r^\kappa \Z^d}r^{\kappa d}   f\Ll( \tfrac y r \Rr) \, W(y, r^{\kappa}) \\
= \mcl N\Ll( r^{-d} \sum_{y \in r^\kappa \Z^d}r^{\kappa d}  f^2\Ll( \tfrac y r \Rr)\sigma^2, r^{-d} \sum_{y \in r^\kappa \Z^d}r^{\kappa d}  f^2\Ll( \tfrac y r \Rr)\cc, \bar \lambda\Rr).
\end{multline*}
Since
\begin{equation*}  
r^{-d} \sum_{y \in r^\kappa \Z^d}r^{\kappa d} f^2\Ll( \tfrac y r \Rr) \xrightarrow[r \to \infty]{} \int_\Rd f^2,
\end{equation*}
this completes the proof of \eqref{e.clt.conv}.

\smallskip

We now show \eqref{e.limit.J2}. By \eqref{e.expectation} and the definition of $\tJd$ in \eqref{e.localization}-\eqref{e.def.tJd}, it suffices to show that
\begin{equation*}  
r^{\frac d 2} \, \tJd(rz,r) \xrightarrow[r \to \infty]{\mathrm{(law)}}  \int_{\Phi_{z,1}} \W.
\end{equation*}
By the additivity property in \eqref{e.additivity.centered}, we have
\begin{equation*}  
\tJd(rz,r) = \int_{\Phi_{rz,\sqrt{r^2 - r^{2(1-\td \eta)}}}} \tJd(\cdot,r^{1-\td \eta}) + \O_s \Ll(C r^{-(1-\td \eta)\al} \Rr) .
\end{equation*}
The result follows from \eqref{e.def.td.eta} and \eqref{e.clt.convolution}.
\end{proof}

We record for future reference that the proof given above also enables to estimate the stochastic integrability of convolutions of $\tJd$. 
\begin{lemma}
\label{l.stoch.int}
For each $\td \eta \in (\eta_1,\eta)$, there exists $C(\td \eta,s,d,\Lambda) < \infty$ such that for every $p,q \in B_1$ and every $f : \R^d \to \R$ satisfying
\begin{equation*}
\int_{\Rd} \sup_{\cu_1(x)}  \Ll(|f| + |\nabla f|  \Rr)^2 \, dx  \le 1,
\end{equation*}
we have
\begin{equation*}  
r^{-\frac d 2} \int_{\R^d} f\Ll( \tfrac x r \Rr) \, \tJd(x,r^{1-\td \eta}, p,q) \, dx = \O_s \Ll( C \Rr) .
\end{equation*}
\end{lemma}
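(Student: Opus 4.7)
The plan is to follow the template of the proof of Corollary~\ref{c.clt.convolution}, but instead of tracking convergence in law of Laplace transforms, I will track $\O_s$ (equivalently, $\bar \O_s$) bounds, exploiting the independence structure via Lemma~\ref{l.partition} rather than through Proposition~\ref{p.clt}. Fix $\td\eta \in (\eta_1,\eta)$, $p,q \in B_1$, and a test function $f$ satisfying the normalization. Choose once and for all an exponent $\kappa \in (0,1)$ with $\kappa > (1-\eta)(1+\de)$ and $\kappa$ close enough to $1$ that Lemma~\ref{l.var.meso} applies with $r^{1-\td\eta}$ and $r^{\kappa(1-\eta_1)}$ lying in the same allowed mesoscale window (which is possible because $\td\eta > \eta_1$). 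Decompose
\[
I(r) := r^{-\frac d 2}\int_{\R^d} f\!\Ll(\tfrac x r\Rr)\,\tJd(x,r^{1-\td\eta})\,dx = r^{-\frac d 2}\sum_{y\in r^\kappa\Z^d}\int_{\cu_{r^\kappa}(y)} f\!\Ll(\tfrac x r\Rr)\,\tJd(x,r^{1-\td\eta})\,dx.
\]

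First I would estimate the replacement error obtained by substituting $f(x/r)$ with $f(y/r)$ on each cube $\cu_{r^\kappa}(y)$. On each such cube, the bound \eqref{e.bound.W0} (valid since $r^{1-\td\eta}\in[r^{\kappa(1-\eta_1)},r^{\kappa(1-\eta_2)}]$ for $\kappa$ sufficiently close to $1$) gives
\[
\int_{\cu_{r^\kappa}(y)}\Ll(f(\cdot/r)-f(y/r)\Rr)\tJd(\cdot,r^{1-\td\eta}) = \O_s\Ll(Cr^{\kappa\frac d 2-(1-\kappa)}\sup_{\cu_{r^\kappa}(y)}|(\nabla f)(\cdot/r)|\Rr).
\]
Since $\kappa > (1-\eta)(1+\de) \ge (1-\td\eta)(1+\de)$, this summand is $\F(\cu_{2r^\kappa}(y))$-measurable, hence after partitioning $r^\kappa\Z^d$ into $3^d$ classes of pairwise-independent indices, Lemma~\ref{l.partition} and the normalization hypothesis \eqref{e.hyp.f} yield an aggregate replacement error bounded by $\O_s(Cr^{-(1-\kappa)})$, which is negligible. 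The same reasoning, using Lemmas~\ref{l.trim} and~\ref{l.var.meso} to replace $r^{1-\td\eta}$ by $r^{\kappa(1-\eta)}$ and $\cu_{r^\kappa}(y)$ by $\cut_{r^\kappa}(y)$ up to $\O_s$-errors of lower order, reduces the problem to bounding
\[
r^{-\frac d 2}\sum_{y\in r^\kappa\Z^d} r^{\kappa d}\,f\!\Ll(\tfrac y r\Rr)W(y,r^\kappa).
\]

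Now the key step: for $r$ sufficiently large, each $W(y,r^\kappa)$ is $\F(\cu_{r^\kappa-1}(y))$-measurable, and by \eqref{e.bound.W}, $W(y,r^\kappa)=\O_s(Cr^{-\kappa d/2})$. Since $W$ is built from the centered quantity $\tJd$, Lemma~\ref{l.logL} converts this into $W(y,r^\kappa)=\bar\O_s(Cr^{-\kappa d/2})$. Applying Lemma~\ref{l.partition} to the independent family $\{r^{\kappa d}f(y/r)W(y,r^\kappa)\}_{y\in r^\kappa\Z^d}$ (actually to $3^d$ sub-families as above) with $\theta_y = Cr^{\kappa d/2}|f(y/r)|$ gives
\[
r^{-\frac d 2}\sum_{y\in r^\kappa\Z^d} r^{\kappa d}f\!\Ll(\tfrac y r\Rr)W(y,r^\kappa) = \bar\O_s\Ll(Cr^{-\frac d 2}\Ll[\sum_{y\in r^\kappa\Z^d}r^{\kappa d}\cdot r^{\kappa d}f^2(y/r)\Rr]^{\frac 1 2}\Rr).
\]
The expression between square brackets is a Riemann sum equal to $r^{d}\|f\|_{L^2(\R^d)}^2$ up to lower-order terms, which is controlled by the normalization \eqref{e.hyp.f}. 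The $r^{-d/2}$ and $r^{d/2}$ prefactors cancel, yielding the claimed $\O_s(C)$ bound.

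The only potentially delicate point is verifying that all the replacement steps (removal of the boundary layer via Lemma~\ref{l.trim}, change of mesoscale via Lemma~\ref{l.var.meso}, replacement of $f(x/r)$ by $f(y/r)$) accumulate to a total error of order $\O_s(Cr^{-\eps})$ for some $\eps > 0$; this is a matter of choosing $\kappa$ close enough to $1$ (but still $\kappa > (1-\eta)(1+\de)$) and tracing the exponents, exactly as in the proof of Corollary~\ref{c.clt.convolution}. No new analytic input is needed beyond Lemmas~\ref{l.partition}, \ref{l.trim}, \ref{l.var.meso} and the pointwise bound \eqref{e.bound.W}.
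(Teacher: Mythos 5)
Your argument is correct and follows essentially the same route as the paper: the paper's proof simply invokes the reductions already carried out in the proof of Corollary~\ref{c.clt.convolution} (replacement of $f(x/r)$ by $f(y/r)$, Lemmas~\ref{l.trim} and~\ref{l.var.meso}) and then applies \eqref{e.bound.W} together with Lemma~\ref{l.partition} and the Riemann-sum bound coming from the normalization of $f$, exactly as you do. The only blemish is a typo in your displayed application of Lemma~\ref{l.partition}: with $\theta_y = C r^{\kappa d/2} |f(y/r)|$ the bracket should contain $\sum_y r^{\kappa d} f^2(y/r)$ (a single factor $r^{\kappa d}$), which is indeed what your subsequent Riemann-sum computation and cancellation of the $r^{\pm d/2}$ prefactors use.
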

\begin{proof}
We let $\kappa < 1$ be as in the proof of Corollary~\ref{c.clt.convolution}. In view of this proof, it suffices to show that
\begin{equation}
\label{e.stoch.int}
r^{-\frac d 2} \sum_{y \in r^\kappa \Z^d}r^{\kappa d}   f\Ll( \tfrac y r \Rr) \, W(y, r^{\kappa},p,q) = \O_s \Ll( C \Rr) .
\end{equation}
By \eqref{e.bound.W}, we have
\begin{equation*}  
r^{\kappa \frac d 2}  \, W(y, r^{\kappa},p,q) = \O_s(C).
\end{equation*}
By Lemma~\ref{l.partition}, we deduce that 
\begin{equation*}  
r^{-\frac d 2} \sum_{y \in r^\kappa \Z^d}r^{\kappa d}   f\Ll( \tfrac y r \Rr) \, W(y, r^{\kappa},p,q) = \O_s \Ll( C \Rr) \Ll[ r^{-d} \sum_{y \in r^\kappa \Z^d}r^{\kappa d} f^2 \Ll( \tfrac y r \Rr)\Rr]^\frac 1 2 .
\end{equation*}
By the assumption on $f$, the quantity between square brackets remains bounded as $r$ tends to infinity, so the proof is complete.
\end{proof}

\section{Scaling limit of the correctors}
\label{s.scalingcorrectors}

In this section, we prove the second part of Theorem~\ref{t.main}, which gives the convergence in law of~$\nabla \phi_e$ to a gradient GFF. 

\smallskip

The coefficients of an elliptic operator define a correspondence between gradients of solutions and their fluxes. Our goal therefore is to describe the correspondence between \emph{spatial averages} of gradients and fluxes of solutions. As was demonstrated in Section~\ref{s.additivity}, this information can be conveniently read off from the quantity~$J$. Since we proved a scaling limit for~$J$ in the previous section, we can now revisit the arguments of~Section~\ref{s.additivity} and obtain much more precise information, which enables us to establish convergence to a gradient GFF. 

\smallskip

The exponents $s \in (1,2)$, $\de > 0$, $\eta_1 > \eta > \eta_2 > 0$ are those defined in the previous section. Recall that they are functions of $s$, and that they can be taken arbitrarily close to $0$ by taking $s < 2$ sufficiently close to $2$. We also introduce an exponent $\kappa$, which is allowed to depend on $(s,d,\Lambda)$ and satisfies
\begin{equation}
\label{e.def.kappa}
\kappa \in\left( \frac {1-\eta_1}{1-\eta} ,1 \right),
\end{equation}
and which we may redefine in each instance to be as close to $1$ as we wish.
Throughout, we allow $\ep>0$ to denote a positive exponent which may depend on $(s,d,\Lambda)$ and vary in each occurrence. 

\smallskip

We begin with the following definition, motivated by Lemma~\ref{l.ahomrz} below. 

\begin{definition}[Coarsened coefficients $\ahom_r(z)$]
For each $z\in\Rd$ and $r\geq 1$, we let $\b_r(z)$ denote the matrix representing the bilinear form 
\begin{equation*} \label{}
(p,q) \longmapsto \frac12 \left( \tJd(z,r^{1-\eta},p,p-q)  - \tJd(z,r^{1-\eta},p,p+q) \right).
\end{equation*}
That is, $\b_r(z)$ has the property that, for every $p,q\in\Rd$,  
\begin{equation*} \label{}
p\cdot \b_r(z) q = \frac12 \left( \tJd(z,r^{1-\eta},p,p-q)  - \tJd(z,r^{1-\eta},p,p+q) \right).
\end{equation*}
We then define
\begin{equation*} \label{}
\ahom_r(z):= \ahom + \b_r(z).
\end{equation*}
\end{definition}

We begin by noticing that since $\b_r$ is a linear function of $\tJd$, the results of the previous section give us complete quantitative information about the behavior of $\b_r$. For each $e \in \Rd$, we let $\V(\cdot,e)$ be the vector white noise defined by \eqref{e.def.V}. Notice that the mapping $e \mapsto \V(\cdot,e)$ is linear.

\begin{lemma}
\label{l.brz}
There exists $C(s,d,\Lambda)<\infty$ such that for every $z \in \Rd$ and $r \ge 1$,
\begin{equation} 
\label{e.brzbound}
\left| \b_r(z) \right| = \O_s\left( Cr^{-(1-\eta)\frac d2} \right).
\end{equation}
Moreover, for every $F : \Rd \to \Rd$ satisfying 
\begin{equation}
\label{e.hyp.F}
\int_{\Rd} \sup_{\cu_1(x)}  \Ll(|F| + |\nabla F|  \Rr)^2 \, dx \le 1
\end{equation}
and every $e \in B_1$, we have
\begin{equation}  
\label{e.brlawconv}
r^{-\frac d 2} \int_\Rd F\left(\tfrac xr \right) \cdot \b_{r^\kappa}(x) e \, dx \xrightarrow[r \to \infty]{\mathrm{(law)}} \int F(x) \cdot \V(x,e) \, dx,
\end{equation}
as well as, for every $r \ge 1$,
\begin{equation}
\label{e.brconvbound}
r^{-\frac d 2} \int_\Rd F\left(\tfrac xr \right) \cdot \b_{r^\kappa}(x) e \, dx = \O_s(C).
\end{equation}
Finally, the convergence in~\eqref{e.brlawconv} holds jointly with respect to $F$ and $e$ and jointly with~\eqref{e.limit.J2}.
\end{lemma}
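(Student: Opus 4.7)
The guiding idea is that $\b_r$ is a linear combination of values of the quadratic form $\tJd$ evaluated at deterministic arguments, while $\V$ is the corresponding linear combination of values of the quadratic form white noise $\W$. All four assertions then reduce to the results already proved in Section~\ref{s.scalingenergies}. For \eqref{e.brzbound}, write $p \cdot \b_r(z) q$ for $p,q$ in a canonical basis of $\R^d$ as a combination of two $\tJd(z,r^{1-\eta}, \cdot, \cdot)$ values at deterministic arguments in $B_1$. By \eqref{e.bound.tJd}, each such value is $\O_s(C r^{-(1-\eta)d/2})$; since the operator norm of the matrix $\b_r(z)$ is controlled by these finitely many bilinear pairings, this gives \eqref{e.brzbound}.

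For the central limit theorem \eqref{e.brlawconv}, let $B_{z,r}$ denote the symmetric bilinear form on $(\R^d \times \R^d)^2$ associated to the quadratic form $(p,q) \mapsto \tJd(z,r,p,q)$, and similarly let $B_\W$ be the bilinear form associated to $(p,q) \mapsto \W(\cdot,p,q)$. Using $(F,F-e) = (F,F) + (0,-e)$ and expanding, the definition of $\b_r(z)$ becomes
\[
p \cdot \b_r(z) q = -2\, B_{z,r^{1-\eta}}\bigl((p,p),(0,q)\bigr),
\]
and the definition \eqref{e.def.V} of $\V$ reads $p \cdot \V(\cdot,e) = -2 B_\W((p,p),(0,e))$. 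Expanding $F(x/r) = \sum_{i=1}^d F_i(x/r) e_i$ in the canonical basis and using bilinearity of $B_{x,r^{\kappa(1-\eta)}}$ in each pair argument yields
\[
F(x/r) \cdot \b_{r^\kappa}(x) e
= -\tfrac{1}{2} \sum_{i=1}^d F_i(x/r) \Bigl[ \tJd\bigl(x,r^{\kappa(1-\eta)}, e_i, e_i + e\bigr) - \tJd\bigl(x,r^{\kappa(1-\eta)}, e_i, e_i - e\bigr) \Bigr].
\]
Setting $\td\eta := 1 - \kappa(1-\eta)$, the constraint \eqref{e.def.kappa} places $\td\eta$ in the admissible range for Corollary~\ref{c.clt.convolution}, and the scalar weights $F_i(x/r)$ inherit the hypothesis \eqref{e.hyp.f} from the hypothesis \eqref{e.hyp.F} on $F$. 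Applying Corollary~\ref{c.clt.convolution} to each of the finitely many summands and reassembling the limits via the analogous polarization identity for $\W$ produces the convergence in law to $\int F(x) \cdot \V(x,e)\,dx$.

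Joint convergence over $(F,e)$ and joint convergence with \eqref{e.limit.J2} follow from the corresponding joint assertion in Corollary~\ref{c.clt.convolution} and the Cram\'er--Wold device, since all the random variables under consideration are finite linear combinations of the basic integrals handled there. The bound \eqref{e.brconvbound} is obtained by the same polarization together with Lemma~\ref{l.stoch.int} in place of Corollary~\ref{c.clt.convolution} on each summand. The only technical point is algebraic: because $\tJd(z,r,\cdot,\cdot)$ is a quadratic form in the pair $(p,q)$ rather than bilinear in $p$ and $q$ separately, one must pass through the bilinear form $B_{z,r}$ to peel off the $x$-dependent factor $F(x/r)$ as a deterministic scalar weight while leaving the residual random quantity of the form $\tJd(x, r^{1-\td\eta}, e_i, e_i \pm e)$ with parameters independent of $x$ — precisely the setup of Corollary~\ref{c.clt.convolution} and Lemma~\ref{l.stoch.int}. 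Once the polarization is in place, no further probabilistic work is required.
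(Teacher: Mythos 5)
Your proof is correct and follows exactly the paper's route: \eqref{e.brzbound} from \eqref{e.bound.tJd}, the convergence \eqref{e.brlawconv} from Corollary~\ref{c.clt.convolution} (with $\td\eta = 1-\kappa(1-\eta)$ in the admissible range by \eqref{e.def.kappa}), and \eqref{e.brconvbound} from Lemma~\ref{l.stoch.int}, with the joint convergence inherited from the joint statement of the corollary. The paper compresses all of this into one sentence; your polarization identities simply make explicit the linear-algebraic reduction the paper takes for granted.
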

\begin{proof}
The estimate \eqref{e.brzbound} follows from \eqref{e.bound.tJd}, the convergence in law from Corollary~\ref{c.clt.convolution}, and the bound \eqref{e.brconvbound} from Lemma~\ref{l.stoch.int}.
\end{proof}

It is also useful to notice that~$\b_r$ is continuous on scales smaller than~$r$. 

\begin{lemma}
\label{l.brcont}
There exist $\eps_1(s) > 0$ and $C(s,d,\Lambda) < \infty$ such that,
for every $x,y\in \Rd$ with $|x-y| \leq r^{1-\eta_1}$, we have
\begin{equation*}
\left| \b_r(x) - \b_r(y) \right| = \O_s \Ll(  C|x-y| r^{-(1-\eta_1) \Ll(\frac d 2 + 1\Rr)} +C r^{-\Ll(\frac d2+\eps_1\Rr)}\right). 
\end{equation*}
\end{lemma}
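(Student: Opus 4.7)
By bilinearity of $\b_r$ in its two arguments it suffices to prove, for each $p,q \in B_1$, the estimate
\begin{equation*}
\tJd(x, r^{1-\eta}, p, q) - \tJd(y, r^{1-\eta}, p, q) = \O_s\Ll(C |x-y| r^{-(1-\eta_1)(d/2+1)}\Rr) + \O_s\Ll(C r^{-(d/2+\eps_1)}\Rr).
\end{equation*}
Fix an auxiliary exponent $\td\eta$ strictly between $\eta$ and $\eta_1$, and set $\rho := r^{1-\eta}$, $\rho' := r^{1-\td\eta}$, $\sigma := \sqrt{\rho^2 - \rho'^2} \asymp \rho$. Since $\td\eta < \eta_1$, condition~\eqref{e.def.td.eta} gives $(1-\td\eta)\alpha > d/2$, so errors of size $\rho'^{-\alpha}$ are of the form $\O_s(C r^{-(d/2+\eps_1)})$.

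\textbf{Step 1 (coarsening via additivity).} Applying the centered additivity identity~\eqref{e.additivity.centered} at scales $\rho$ and $\rho'$, for every $z \in \Rd$,
\begin{equation*}
\tJd(z,\rho,p,q) = \int_{\Rd} \Phi_{z,\sigma}(w)\, \tJd(w,\rho',p,q)\, dw + \O_s\Ll(C r^{-(d/2+\eps_1)}\Rr).
\end{equation*}

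\textbf{Step 2 (differencing the heat kernel mask).} Subtracting the two representations and using
\begin{equation*}
\Phi_{x,\sigma}(w) - \Phi_{y,\sigma}(w) = -\int_0^1 (x-y)\cdot \nabla_z \Phi_{z,\sigma}(w)\Big|_{z=x_t}\, dt, \qquad x_t := x+t(y-x),
\end{equation*}
we arrive at a representation of the difference as $|x-y|$ times an average over $t$ of linear functionals of $\tJd(\cdot,\rho',p,q)$ against the kernel $w \mapsto \nabla_z \Phi_{x_t,\sigma}(w)$. Since $\nabla_z\Phi_{z,\sigma}(w) = \frac{\ahom^{-1}(w-z)}{2\sigma^2}\Phi_{z,\sigma}(w)$, this kernel has the rescaled form $\sigma^{-d-1}G\Ll((w-x_t)/\sigma\Rr)$ for a fixed vector-valued Schwartz function $G$.

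\textbf{Step 3 (stochastic integrability of the functional).} The key claim is that, uniformly in $t\in[0,1]$,
\begin{equation*}
\int_{\Rd} \nabla_z \Phi_{x_t,\sigma}(w)\, \tJd(w,\rho',p,q)\, dw = \O_s\Ll(C \sigma^{-1-d/2}\Rr).
\end{equation*}
This is proved by adapting the argument of Lemma~\ref{l.stoch.int} to the scale $\sigma$: we partition $\Rd$ into disjoint cubes of side length $\rho'^{1+\de}$ (so that contributions from distinct cubes are independent, by locality of $\Jd$), on each cube we use the pointwise bound $\tJd(\cdot,\rho') = \O_s(C\rho'^{-d/2})$ provided by~\eqref{e.bound.tJd}, and sum up via Lemma~\ref{l.partition}, using the Schwartz decay of $G$ to control tails and produce the factor $\sigma^{-d/2}$; the extra $\sigma^{-1}$ comes from the $\sigma^{-d-1}$ prefactor in the kernel. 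Multiplying by $|x-y| \le r^{1-\eta_1}$ and using $\sigma \asymp r^{1-\eta}$ together with $\eta_1 > \eta$ gives
\begin{equation*}
\O_s\Ll(C|x-y|\, r^{-(1-\eta)(d/2+1)}\Rr) \le \O_s\Ll(C|x-y|\, r^{-(1-\eta_1)(d/2+1)}\Rr),
\end{equation*}
which is the announced first term.

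\textbf{Main obstacle.} The delicate point is Step 3: Lemma~\ref{l.stoch.int} is phrased in a form which does not immediately match our situation, since the averaging scale $\sigma \asymp r^{1-\eta}$ of the test kernel differs from $r$. The resolution is to revisit its proof and carry out the cube-decomposition at the natural scale $\rho'$, which is made possible by the pointwise $\O_s$-bound on $\tJd(\cdot,\rho')$ together with $\F(\cu_{\rho'^{1+\de}})$-measurability and independence across well-separated cubes. The Schwartz decay of $G$ makes the resulting sum of squared weights comparable to $\sigma^{-d}$, producing the sharp scaling $\sigma^{-1-d/2}$ for the linear functional.
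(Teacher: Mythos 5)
Your Steps 1--2 (coarsening via the centered additivity \eqref{e.additivity.centered} plus differencing the heat-kernel mask) are exactly the paper's strategy. The gap is in your Step 3. The sharp claim $\int \nabla_z \Phi_{x_t,\sigma}(w)\, \tJd(w,\rho',p,q)\, dw = \O_s\Ll(C\sigma^{-1-\frac d2}\Rr)$ does not follow from the decomposition you sketch: partitioning into cubes of side $\ell=\rho'^{1+\de}$, bounding each cube's contribution by the pointwise estimate \eqref{e.bound.tJd} times $\int_{Q_i}|K|$, and summing with Lemma~\ref{l.partition} gives $\Ll(\sum_i \theta_i^2\Rr)^{1/2}$ with $\sum_i\theta_i^2 \le C\rho'^{-d}\ell^{d}\|K\|_{L^2}^2 \approx C\rho'^{\de d}\sigma^{-d-2}$, i.e.\ a bound $\O_s\Ll(C\rho'^{\de d/2}\sigma^{-1-\frac d2}\Rr)$, which loses a factor $r^{C\de}$. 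Whether this lossy bound still fits under $C|x-y|\,r^{-(1-\eta_1)(\frac d2+1)}+Cr^{-(\frac d2+\eps_1)}$ requires something like $\eta_1-\eta\gtrsim \de$, and no such relation is imposed by the standing exponent choices \eqref{e.def.td.eta}--\eqref{e.def.eta}. The genuinely CLT-sharp route would instead use the integrated bound \eqref{e.bound.W0} on cubes of side $\sim\sigma^\kappa$ (as in Lemma~\ref{l.stoch.int}), but then the mesoscale $\rho'$ must lie in the admissible window $[\sigma^{\kappa(1-\eta_1)},\sigma^{\kappa(1-\eta_2)}]$, which again forces unstated relations among $\eta_1,\eta,\eta_2,\de$. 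So the key estimate is not justified as written.

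The repair is simple, and it is what the paper does: no stochastic cancellation is needed at all. Bound the linear functional crudely via Lemma~\ref{l.sum-O}, $\Ll|\int \nabla_z\Phi_{x_t,\sigma}(w)\,\tJd(w,\rho',p,q)\,dw\Rr| = \O_s\Ll(C\rho'^{-\frac d2}\,\big\|\nabla_z\Phi_{x_t,\sigma}\big\|_{L^1}\Rr) = \O_s\Ll(C\rho'^{-\frac d2}\sigma^{-1}\Rr)$, which after multiplying by $|x-y|$ gives $\O_s\Ll(C|x-y|\,r^{-(1-\td\eta)\frac d2-(1-\eta)}\Rr)\le \O_s\Ll(C|x-y|\,r^{-(1-\eta_1)(\frac d2+1)}\Rr)$, using only $\td\eta<\eta_1$ and $\eta<\eta_1$. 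The paper's proof is precisely this: it coarsens to mesoscale $r^{1-\eta_1}$ (error $\O_s(Cr^{-(1-\eta_1)\al})$, admissible by \eqref{e.def.td.eta}), bounds the difference of the two heat kernels pointwise by $\frac{|x-y|}{r^{1-\eta}}$ times Gaussians at scale $r^{1-\eta}$, and integrates against the pointwise $\O_s$-bound \eqref{e.brzbound} on the small-scale quantity, concluding with $\eta_1>\eta$. With your Step 3 replaced by this one-line estimate, your argument coincides with the paper's.
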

\begin{proof}
By the additivity property \eqref{e.additivity} and \eqref{e.localization}, we have 
\begin{equation*}  
\b_r(x) = \int_{\Phi_{x,\sqrt{r^{2(1-\eta)} - r^{2(1-\eta_1)}}}} \b_{r^{1-\eta_1}}(z) \, dz + \O_s \Ll( C r^{-(1-\eta_1) \al} \Rr) .
\end{equation*}
Recall from \eqref{e.def.td.eta} that $(1-\eta_1)\al > \frac d 2$. In order to prove the result, it therefore suffices to show that
\begin{multline}  \label{e.brcont}
\Ll| \int_\Rd \b_{r^{1-\eta_1}} (z) \Ll( \Phi(z-y,r^{2(1-\eta)} - r^{2(1-\eta_1)}) - \Phi(z-x,r^{2(1-\eta)} - r^{2(1-\eta_1)})\Rr)  \Rr| \\
=  \O_s \Ll(  C |x-y| r^{-(1-\eta_1) \Ll(\frac d 2 + 1\Rr)}\Rr).
\end{multline}
There exists a constant $C < \infty$ such that for every $x,y \in \Rd$ satisfying $|x-y| \le r^{1-\eta}$ and $z \in \Rd$, we have
\begin{multline*}  
\Ll( \Phi(z-y,r^{2(1-\eta)} - r^{2(1-\eta_1)}) - \Phi(z-x,r^{2(1-\eta)} - r^{2(1-\eta_1)})\Rr) \\
\le \frac{|x-y|}{r^{1-\eta}} \frac{C}{r^{(1-\eta)d}}\left( \exp \Ll( -\frac{|z-y|^2}{C r^{2(1-\eta)}} \Rr) +  \exp \Ll( -\frac{|z-x|^2}{C r^{2(1-\eta)}} \Rr) \right) .
\end{multline*}
The result then follows from \eqref{e.brzbound} and the fact that $\eta_1 > \eta$.
\end{proof}

We now show that $\ahom_r(z)$ is indeed acting like a coarsening of the coefficients. This is a next-order version of~Lemma~\ref{l.fluxmaps}.

\begin{lemma}
\label{l.ahomrz}
There exists $C(s,d,\Lambda)<\infty$ and $\ep(s,d,\Lambda)>0$ such that, for every $z\in\Rd$ and $r \ge 1$,
\begin{equation}
\label{e.ahomrz} 
\sup_{u\in\A_1(\Phi_{z,r})} 
\Ll| \int_{\Phi_{z,r}}\left( \a - \ahom_r \right) \nabla u  \Rr| 
= \O_s\left( Cr^{-\left( \frac d2+\eps \right) } \right). 
\end{equation}
\end{lemma}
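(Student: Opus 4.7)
Fix $z\in\Rd$, $r\geq 1$ and $u\in\A_1(\Phi_{z,r})$. By Lemma~\ref{l.firstordercorrect}, every element of $\A_1$ is of the form $c+\underline{\phi}^{(1)}(\cdot,e)$; since constants contribute no gradient, by linearity it suffices to treat $u=\underline{\phi}^{(1)}(\cdot,e)$ for some $e\in\Rd$ chosen so that $\|\nabla u\|_{L^2(\Phi_{z,r})}\leq 1$ (which forces $|e| = \O_s(C)$). Set $\rho:=r^{1-\eta}$. The strategy is to test the bound against an arbitrary $p\in B_1$, reduce the problem to a statement at the mesoscale $\rho$ through the first variation and the semi-group property of $\Phi$, and then invoke the very definition of $\b_r(z)$ together with the CLT-type analysis of Section~\ref{s.scalingenergies}.

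Starting from the first variation (Lemma~\ref{l.variations}) applied to $v(\cdot,y,\rho,p,p)\in\A_1$ tested against $u$ we have
\begin{equation*}
p\cdot \int_{\Phi_{y,\rho}} (\a-\ahom)\nabla u
= -\int_{\Phi_{y,\rho}} \nabla u\cdot \a\nabla v(\cdot,y,\rho,p,p).
\end{equation*}
By Lemma~\ref{l.firstordercorrect} (eq.~\eqref{e.snaptouyou} with $\alpha=d/2$ and $t=s$), the gradient $\nabla u$ is close to $\nabla v(\cdot,y,\rho,0,e)$ in $L^2(\Phi_{y,\rho})$ with squared error $\O_{s/2}(C\rho^{-d})$; combined with the complementary first variation
\begin{equation*}
-\int_{\Phi_{y,\rho}} \nabla v(\cdot,y,\rho,0,e)\cdot\a\nabla v(\cdot,y,\rho,p,p)
= \tfrac{1}{2}\Ll[J(y,\rho,p,p-e)-J(y,\rho,p,p+e)\Rr],
\end{equation*}
the definition of $\b_r$ and of $\tJd$, together with the sharp bound $|\E[J(y,\rho,p,q)]-\tfrac12(q-p)\cdot\ahom(q-p)|\leq C\rho^{-d}$ from Theorem~\ref{t.main1}(ii) (which forces the deterministic part of the above difference to be $O(\rho^{-d})$), this yields at each $y$ a representation
\begin{equation*}
p\cdot \int_{\Phi_{y,\rho}} (\a-\ahom)\nabla u
= p\cdot \b_r(y)\, e + \msf{Err}(y,p,e),
\end{equation*}
in which the error term is built from quadratic responses and the localization discrepancy $J-\Jd$. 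The semi-group property of the heat kernel then gives
\begin{equation*}
p\cdot \int_{\Phi_{z,r}} (\a-\ahom)\nabla u
= \int_{\Phi_{z,\sqrt{r^2-\rho^2}}} p\cdot \b_r(y)\, e\,dy
+ \int_{\Phi_{z,\sqrt{r^2-\rho^2}}} \msf{Err}(y,p,e)\,dy.
\end{equation*}

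The first integral is treated by exchanging the order of integration (recalling that $\b_r(y)$ is itself an affine combination of $\tJd(y,\rho,\cdot,\cdot)$) and applying the additivity statement~\eqref{e.additivity.centered}; the continuity estimate of Lemma~\ref{l.brcont} together with a Taylor expansion of the heat kernel shows that $\int_{\Phi_{z,\sqrt{r^2-\rho^2}}} \b_r(y)\,dy$ differs from $\b_r(z)$ by an acceptable error. To pass from $p\cdot\b_r(z)\,e$ to $p\cdot\b_r(z)\int_{\Phi_{z,r}}\nabla u$, one notices that the cross term $\b_r(z)\int_{\Phi_{z,r}}\nabla\phi^{(1)}(\cdot,e)$ is the product of a $\b_r(z)=\O_s(Cr^{-(1-\eta)d/2})$ factor (Lemma~\ref{l.brz}) with a $\int_{\Phi_{z,r}}\nabla\phi^{(1)} =\O_s(Cr^{-d/2})$ factor (Theorem~\ref{t.firstcorrectors}); Remark~\ref{r.multiply} absorbs it in $\O_{s/2}(Cr^{-d/2-\eps})$ provided $\eta>0$ is small enough.

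\textbf{Main obstacle.} The principal difficulty is controlling the integrated error term $\int_{\Phi_{z,\sqrt{r^2-\rho^2}}}\msf{Err}(y,p,e)\,dy$ at the required rate $r^{-d/2-\eps}$. A naive Cauchy--Schwarz bound on $\int_{\Phi_{y,\rho}}(\a-\ahom)(\nabla u-\nabla v(\cdot,y,\rho,0,e))$ gives only $\O_s(C\rho^{-d/2})$ at a single mesoscopic point $y$, and even an optimal use of independence between cubes at distance $\gg\rho^{1+\de}$ via Lemma~\ref{l.partition} produces, after summing over the $(r/\rho)^d$ mesoscales fitting in $\Phi_{z,r}$, a bound of order $\rho^{-d/2}\cdot(\rho/r)^{d/2}=r^{-d/2}$, which is the CLT scaling but not strictly smaller. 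The resolution must therefore exploit the additional centering of $\msf{Err}(y,\cdot,\cdot)$ — it is built from differences of the form $J-\Jd-\E[J-\Jd]$ plus a quadratic response term controlled by the $L^2(\Phi_{y,\rho})$ difference between $\nabla u$ and $\nabla v$ — and treat these two contributions through the bilinear CLT analysis developed in Section~\ref{s.scalingenergies} (in particular through the $\F(\cu_{\rho^{1+\de}}(y))$-measurable approximations and Lemma~\ref{l.stoch.int}), extracting a gain of a small power $r^{-\eps}$ beyond the CLT scaling from the mismatch between the localization scale $\rho^{1+\de}$ and the macroscopic scale $r$. This quantitative strengthening of the additivity of the flux is the technical heart of the argument.
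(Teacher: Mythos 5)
Your proposal contains a genuine gap, and it sits exactly where you flag it. After replacing $\nabla u$ by $\nabla v(\cdot,y,\rho,0,e)$ at each mesoscopic point $y$, the leftover flux error $\int_{\Phi_{y,\rho}}(\a-\ahom)\bigl(\nabla u-\nabla v(\cdot,y,\rho,0,e)\bigr)$ is of size $\O_s(C\rho^{-d/2})\gg r^{-d/2}$ pointwise, it is \emph{not} centered (so Lemma~\ref{l.partition} does not apply to it as is, and even granting centering the independent-summation heuristic only returns the CLT scale $r^{-d/2}$, not $r^{-d/2-\eps}$), and your proposed resolution is only a gesture ("exploit the additional centering\ldots extract a small power"). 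The paper's proof never meets this obstacle because it does not decompose the macroscopic flux average into mesoscopic flux averages at all: the polarization identity \eqref{e.J-energy} gives the \emph{exact} identity $\tfrac12\bigl(J(z,r,p,p-q)-J(z,r,p,p+q)\bigr)=\int_{\Phi_{z,r}}p\cdot(\a-\ahom)\nabla v(\cdot,z,r,0,q)$ at the scale $r$ itself, and then the already-proven additivity and localization of $J$ from Part~\ref{part.one} (\eqref{e.expectation}, \eqref{e.additivity}, \eqref{e.localization}, with error $r^{-(1-\eta)\al}<r^{-\frac d2-\eps}$) convert $J(z,r,\cdot)$ into $\int_{\Phi_{z,r}}p\cdot\b_r q$. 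The "quantitative strengthening of the additivity of the flux" you believe is still missing is precisely the content of Theorem~\ref{t.main1}(i); no new estimate of that type is needed, only the transfer from $J$ to fluxes of its own maximizer, which is algebraic.

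A second, concrete error: the claim that $\int_{\Phi_{z,\sqrt{r^2-\rho^2}}}\b_r(y)\,dy$ differs from $\b_r(z)$ by an acceptable error is false. The single-point value $\b_r(z)$ fluctuates at size $\rho^{-d/2}=r^{-(1-\eta)d/2}$, while the average over a region of size $r$ fluctuates at size $r^{-d/2}$, so their difference is of order $r^{-(1-\eta)\frac d2}\gg r^{-\frac d2-\eps}$; moreover Lemma~\ref{l.brcont} only controls increments over distances $\le r^{1-\eta_1}$, not $\sim r$. This matters because the quantity in the lemma is $\int_{\Phi_{z,r}}\b_r(x)\nabla u(x)\,dx$ with the spatially varying, $\nabla u$-correlated field $\b_r(x)$, and your reduction to the frozen matrix $\b_r(z)$ (so that the cross term becomes a simple product of an $\O_s(r^{-(1-\eta)d/2})$ and an $\O_s(r^{-d/2})$ factor) is not available. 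The paper handles this step (its Step 2) by passing to a mesoscale $r^{1-\ep_3}$, comparing maximizers across scales via Lemma~\ref{l.additivitybitches}, using Lemma~\ref{l.parametermatching} to make $\int_{\Phi_{y,\rho}}(\nabla v-q)$ small, and invoking the continuity of $\b_r$ only over mesoscopic distances; some version of this argument is unavoidable and is absent from your proposal.
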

\begin{proof}
We allow $\ep$ to be a positive exponent depending on $(s,d,\Lambda)$ which may vary in each occurrence. 

\smallskip

\emph{Step 1.} We show that, for every $z\in\Rd$, $r\geq 1$ and $q\in B_1$, 
\begin{equation}
\label{e.kyloren}
\left|  \int_{\Phi_{z,r}} \left( \a - \ahom \right) \nabla v(\cdot,z,r,0,q)  - \int_{\Phi_{z,r}} \b_rq \, \right| = \O_s\left( Cr^{-\left(\frac d2+\ep\right)} \right).
\end{equation}
By the additivity and localization properties, see~\eqref{e.expectation}, \eqref{e.additivity} and \eqref{e.localization}, there exists a constant $C(s,d,\Lambda)<\infty$ such that, for every $z \in \Rd$, $r \ge 1$ and $p,q \in B_1$, 
\begin{multline*}
J(z,r,p,q) \\
= \frac 1 2 (p-q) \cdot \ahom (p-q) + \int_{\Phi_{z,\sqrt{r^2 - r^{2(1-\eta)}}}} \tJd(\cdot,r^{1-\eta},p,q) + \O_s \Ll( C r^{- \al(1-\eta)}\Rr) .
\end{multline*}
By \eqref{e.def.td.eta}, we have $\al(1-\eta) > \frac d 2$. By \eqref{e.bound.W0} and Lemma~\ref{l.sum-O}, replacing $\Phi_{z,\sqrt{r^2 - r^{2(1-\eta)}}}$ by $\Phi_{z,r}$ in the integral above produces an error of $\O_s \Ll( C r^{-\Ll(\frac d 2 + \eps\Rr)} \Rr)$. 
By the definition of $\b_r(z)$, we deduce that
\begin{equation}
\label{e.flux.map}
\frac 1 2 \Ll( J(z,r,p,p-q) - J(z,r,p,p+q) \Rr) \\
=  \int_{\Phi_{z,r}} p \cdot \b_r q + \O_s \Ll( C r^{-\left( \frac d 2 + \ep\right)} \Rr) .
\end{equation}
By~\eqref{e.J-energy}, the left side above is equal to
\begin{equation*} \label{}
\int_{\Phi_{z,r}} p\cdot \left( \a - \ahom \right) \nabla v(\cdot,z,r,0,q),
\end{equation*}
which gives~\eqref{e.kyloren}.

\smallskip

\emph{Step 2.} We show that, for every $z\in\Rd$, $R\geq 1$ and $q\in B_1$, 
\begin{equation}
\label{e.captainphasma}
\left|  \int_{\Phi_{z,R}} \b_R \nabla v(\cdot,z,R,0,q) - \int_{\Phi_{z,R}} \b_R q \,\right| = \O_s \Ll( C R^{-\left( \frac d 2 + \ep\right)} \Rr) .
\end{equation}
This is a refinement of the fact that 
\begin{equation*}
\int_{\Phi_{z,R}} \nabla v(\cdot,z,R,0,q) = q + \O_s \Ll( C R^{-\frac d 2} \Rr),
\end{equation*}
which follows immediately from~\eqref{e.J-energy} and~\eqref{e.bounded.fluc}. The idea is to use additivity to pass to a slightly smaller mesoscale and use the regularity of~$\b_R$ in Lemma~\ref{l.brcont}. For clarity, we prove~\eqref{e.captainphasma} only for $z=0$. 

\smallskip

 Applying Lemmas~\ref{l.additivitybitches} and~\ref{l.identifytheta1} with $\alpha=\frac d2$, we have, for every $C\leq r \leq R/\sqrt{2}$,
\begin{multline*}
\int_{\Phi_{\sqrt{R^2-r^2}}} \left( \int_{\Phi_{z,r} }  \left| \nabla v(x,0,R,0,q) - \nabla v(x,z,r,0,q) \right|^2\,dx \right) \,dz 
= \O_{s/2}\left( Cr^{-d} \right). 
\end{multline*} 
Let $r \leq R/\sqrt{2}$ to be fixed shortly. Using the previous line and~\eqref{e.brzbound} we obtain
\begin{align*}
\lefteqn{
\left|  
\int_{\Phi_{R}} \b_R(x) \nabla v(x,0,R,0,q)\,dx 
-\int_{\Phi_{\sqrt{R^2-r^2}}} \int_{\Phi_{z,r}} \b_R(x) \nabla v(x,z,r,0,q)\,dx\,dz
 \right|  
} \qquad & \\ &
 \leq \left\| \b_R \right\|_{L^2(\Phi_{R})} \left( \int_{\Phi_{\sqrt{R^2-r^2}}}  \int_{\Phi_{z,r} }  \left| \nabla v(x,0,R,0,q) - \nabla v(x,z,r,0,q) \right|^2 \, dx \, dz  \right)^{\frac12}
\\ & \leq 
\O_{s} \left( C R^{-\left( 1-  \eta  \right)\frac{d}{2} } r^{- \frac{d}{2}  } \right)\,.
\end{align*}
We write the second term on the left as
\begin{align} \label{e.buildingcaptainphasma}
\lefteqn{
\int_{\Phi_{\sqrt{R^2-r^2}}} \int_{\Phi_{z,r}} \b_R(x) \nabla v(x,z,r,0,q)\,dx\,dz
} \qquad &  \\ 
\notag & = \int_{\Phi_{\sqrt{R^2-r^2}}} \int_{\Phi_{z,r}} \b_R(x) q \,dx\,dz \\ 
\notag & \qquad + \int_{\Phi_{\sqrt{R^2-r^2}}} \b_R(z) \int_{\Phi_{z,r}}  \left( \nabla v(x,z,r,0,q) - q\right)\,dx\,dz \\ 
\notag & \qquad + \int_{\Phi_{\sqrt{R^2-r^2}}} \int_{\Phi_{z,r}} \left(\b_R(x) - \b_R(z)  \right) \left( \nabla v(x,z,r,0,q) - q\right)\,dx\,dz \,.
\end{align}
The first term on the right appears in~\eqref{e.captainphasma}. We then estimate the two last terms on the right. For the third term we use the almost sure bound from~\eqref{e.bounded-u} which says
\begin{equation*} 
\left\| \nabla v(\cdot,z,r,0,q) \right\|_{L^2(\Phi_{z,r})}^2 \leq C |q|^2\,,
\end{equation*}
and obtain by H\"older's inequality that 
\begin{align*} 
\lefteqn{\left| \int_{\Phi_{z,r}} \left( \b_R(x) - \b_R(z) \right) \left( \nabla v(x,z,r,0,q) - q\right) \,dx  \right| } \qquad &
\\ & \leq C \left\| \b_R(\cdot) - \b_R(z) \right\|_{L^2(\Phi_{z,r})} \left\| \nabla v(\cdot,z,r,0,q) - q\right\|_{L^2(\Phi_{z,r})} 
\\ & \leq C |q| \left\| \b_R(\cdot) - \b_R(z) \right\|_{L^2(\Phi_{z,r})}\,.
\end{align*}
Lemma~\ref{l.brcont} yields
\begin{equation*} 
\left| \b_R(x) - \b_R(z) \right| \leq \O_s\left( C|x-y| R^{-(1-\eta_1) \Ll(\frac d 2 + 1\Rr)} +C R^{-\Ll(\frac d2+\ep_1\Rr)} \right)\,.
\end{equation*}
Setting $\ep_2 := \ep_1 + \eta_1 \left(\tfrac{d}{2} +1 \right)$, we have that the first term is dominant whenever $|x-y| \geq CR^{1-\ep_2}$.
Take thus $r:= R^{1-\ep_3}$ with $\ep_3 \in (\ep_2,1)$. The heat kernel bounds and~\eqref{e.brzbound} then imply
\begin{align*} 
 \left\| \b_R(\cdot) - \b_R(z) \right\|_{L^2(\Phi_{z,r})}  &
 \leq \left( \int_{B_{R^{1-\ep_2}}(z)} \Phi_{z,r}(x)\left| \b_R(x) - \b_R(z) \right|^2 \, dx    \right)^{\frac12}  
 \\ & \qquad + \left( \int_{B_{\R^d \setminus R^{1-\ep_2}}(z)} \Phi_{z,r}(x)\left| \b_R(x) - \b_R(z) \right|^2 \, dx    \right)^{\frac12}
 \\& \leq  \O_s\left( CR^{-\left(\frac d2+\ep_1\right)} + C R^{(\ep_3- \ep_2)d} \exp\left(- c R^{2(\ep_3 -\ep_2)} \right)\right) \,.
\end{align*}
In order to bound the second term on the right side of~\eqref{e.buildingcaptainphasma}, we use~\eqref{e.brzbound} and Lemmas~\ref{l.parametermatching} and~\ref{l.identifytheta1} to see that 
\begin{align*}
\lefteqn{
\left|
 \int_{\Phi_{\sqrt{R^2-r^2}}} \b_R(z) \int_{\Phi_{z,r}} \left( \nabla v(x,z,r,0,q)- q\right)\,dx \,dz
 \right|
 } \qquad & \\
 & \leq \int_{\Phi_{\sqrt{R^2-r^2}}} \left| \b_R(z) \right| \left|  \int_{\Phi_{z,r}} \left( \nabla v(x,z,r,0,q)- q\right)\,dx \right| \,dz  \\
 & = \O_s\left( CR^{-\left(1-\eta\right)\frac d2} r^{-\frac d2} \right). 
\end{align*}
Combining the previous displays we arrive at
\begin{multline*} 
\left|  \int_{\Phi_{z,R}} \b_R \nabla v(\cdot,z,R,0,q) - \int_{\Phi_{z,R}} \b_R q \,\right| 
\\ \leq \O_s\left( C \left( R^{- d + \frac d2 (\eta+\ep_3)}   +  R^{-\left(\frac d2+\ep_1\right)} + R^{(\ep_3- \ep_2)d} \exp\left(- c R^{2(\ep_3 -\ep_2)} \right)  \right) \right)
\end{multline*}
Choosing now $\ep_3$ close enough to $\ep_2$ yields~\eqref{e.captainphasma}. 

\smallskip

\emph{Step 3.} The conclusion. According to the definition of $\ahom_r$, the triangle inequality,~\eqref{e.kyloren} and~\eqref{e.captainphasma}, for every $z\in\Rd$, $r\geq 1$ and $q\in B_1$, 
\begin{multline}
\label{e.snoke}
\left|  \int_{\Phi_{z,r}}\left( \a - \ahom_r \right) \nabla v(\cdot,z,r,0,q)\right| 
\\
 = \left| \int_{\Phi_{z,r}} \left( \a - \ahom \right) \nabla v(\cdot,z,r,0,q) -  \int_{\Phi_{z,r}} \b_r \nabla v(\cdot,z,r,0,q)\, \right|  
 =  \O_s \Ll( C r^{- \left( \frac d2 + \ep \right)} \Rr).
\end{multline}
To conclude, we just need to argue that the set $\{ \nabla v(\cdot,z,r,0,q) \,:\, q\in\Rd\}$ is essentially $\A_1$.  
As argued in the proof of~Lemma~\ref{l.fluxmaps}, there exists a random variable $\mathcal{Y}(z)$ satisfying 
\begin{equation}
\label{e.Yz}
\mathcal{Y}(z) = \O_{s\left( \frac d2+\ep \right)}(C)
\end{equation}
such that, for every $r\geq \mathcal{Y}(z)$ and $u\in \A_1(\Phi_{z,r})$, there exists $q \in B_C$ such that 
\begin{equation*}
\nabla v(\cdot,z,r,0,q) = \nabla u. 
\end{equation*}
Thus~\eqref{e.snoke} implies
\begin{equation*}
\sup_{u\in\A_1(\Phi_{z,r})} 
\Ll| \int_{\Phi_{z,r}}\left( \a - \ahom_r \right) \nabla u  \Rr| \indc_{\{ r\geq \mathcal{Y}(z)\}}
= \O_s\left( Cr^{-\left( \frac d2+\eps \right) } \right).
\end{equation*}
By the almost sure boundedness of the left side of~\eqref{e.ahomrz} and~\eqref{e.Yz} yield
\begin{equation*}
\sup_{u\in\A_1(\Phi_{z,r})} 
\Ll| \int_{\Phi_{z,r}}\left( \a - \ahom_r \right) \nabla u  \Rr| \indc_{\{ r\leq \mathcal{Y}(z)\}}
\leq C \indc_{\{ r\leq \mathcal{Y}(z)\}} = \O_s\left( Cr^{-\left( \frac d2+\eps \right) } \right).
\end{equation*}
The previous two displays yield~\eqref{e.ahomrz}. 
\end{proof}

We next give a more precise version of~Lemma~\ref{l.fluxmaps2} by smuggling a polynomial weight into the statement of the previous lemma. We let $ \mathcal{P}_m(\Phi_{z,r})$ denote the set of $\mathbf{p}=(p_1,\ldots,p_d)\in (\mathcal{P}_m)^d$ such that each $p_i$ is a polynomial of degree at most $m$ and $\| p_i \|_{L^2(\Phi_{z,r})} \leq 1$. 

\begin{lemma}
\label{l.gff}
Fix $m\in\N$.
There exists $C(m,s,d,\Lambda)<\infty$ such that, for every $z\in\Rd$ and $r\geq 1$,
\begin{equation} \label{e.gff}
\sup_{u\in \A_1(\Phi_{z,r})} \,
\sup_{\mathbf{p} \in \mathcal{P}_m(\Phi_{z,r})} \,
\bigg|
\int_{\Phi_{z,r}}  \mathbf{p} \cdot\left( \a - \ahom_r \right) \nabla u
\bigg| 
= \O_s\left( Cr^{-\left( \frac d2+\eps \right)} \right). 
\end{equation}
\end{lemma}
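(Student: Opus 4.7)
The proof follows the strategy of Lemma~\ref{l.fluxmaps2}, which upgraded Lemma~\ref{l.fluxmaps} to allow polynomial weights via Lemma~\ref{l.CKish}; here we perform the analogous upgrade of Lemma~\ref{l.ahomrz}. The new twist is that the coarsened field $\ahom_r$ depends on the scale~$r$, so after reducing the scale we must compare $\ahom_{r_1}$ with $\ahom_r$ at the nearby scale $r_1 := r/\sqrt{2}$.

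\textbf{Step 1 (reduce the scale).} Apply Lemma~\ref{l.CKish} componentwise to obtain $\tilde{\mathbf{p}} \in (\mathcal{P}_m)^d$ with $\|\tilde{\mathbf{p}}\|_{L^2(\Phi_{z,r_1})} \leq C$ such that, for every $F \in L^2(\Phi_{z,r})^d$,
\begin{equation*}
\int_{\Phi_{z,r}} \mathbf{p}\cdot F = \int_{\Phi_{z,r_1}} \tilde{\mathbf{p}}(y)\cdot \int_{\Phi_{y,r_1}} F(x)\,dx\,dy.
\end{equation*}
Applied to $F = (\a - \ahom_{r_1})\nabla u$, Lemma~\ref{l.ahomrz} and homogeneity give, pointwise in $y$,
\begin{equation*}
\Ll|\int_{\Phi_{y,r_1}}(\a - \ahom_{r_1})\nabla u\Rr| \leq \|\nabla u\|_{L^2(\Phi_{y,r_1})}\cdot\O_s\Ll(Cr^{-(\frac d2+\epsilon)}\Rr).
\end{equation*}
Combining with Cauchy-Schwarz, the semigroup identity
\begin{equation*}
\int_{\Phi_{z,r_1}}\|\nabla u\|_{L^2(\Phi_{y,r_1})}^2\,dy = \|\nabla u\|_{L^2(\Phi_{z,r})}^2 \leq 1,
\end{equation*}
and Lemma~\ref{l.sum-O}, one obtains
\begin{equation*}
\Ll|\int_{\Phi_{z,r}} \mathbf{p}\cdot(\a - \ahom_{r_1})\nabla u\Rr| = \O_s\Ll(Cr^{-(\frac d2+\epsilon)}\Rr).
\end{equation*}

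\textbf{Step 2 (compare scales of $\ahom_r$).} There remains to control $\int_{\Phi_{z,r}}\mathbf{p}\cdot(\b_{r_1} - \b_r)\nabla u$. Since $\b_r$ is a bilinear function of $\tJd(\cdot,r^{1-\eta},\cdot,\cdot)$, the additivity property~\eqref{e.additivity.centered}, applied with the two mesoscales $r^{1-\eta}$ and $r_1^{1-\eta}$, gives pointwise in $x$,
\begin{equation*}
\b_r(x) = \int_{\Phi_{x,\rho}}\b_{r_1}(y)\,dy + \O_s\Ll(Cr_1^{-(1-\eta)\alpha}\Rr),\qquad \rho := \sqrt{r^{2(1-\eta)} - r_1^{2(1-\eta)}}.
\end{equation*}
By~\eqref{e.def.td.eta} one has $(1-\eta)\alpha > d/2$, so the pointwise error term contributes $\O_s(Cr^{-(d/2+\epsilon)})$ after being integrated against $\mathbf{p}\nabla u$ and $\Phi_{z,r}$, using $\|\mathbf{p}\|_{L^2(\Phi_{z,r})}\leq 1$ and $\|\nabla u\|_{L^2(\Phi_{z,r})}\leq 1$.

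\textbf{Step 3 (the convolutional cancellation).} The remaining term
\begin{equation*}
\int_{\Phi_{z,r}}\mathbf{p}(x)\cdot\Ll(\b_{r_1}(x) - \int_{\Phi_{x,\rho}}\b_{r_1}(y)\,dy\Rr)\nabla u(x)\,dx
\end{equation*}
is, by Fubini, equal to $\int_{\Rd}\b_{r_1}(y)\cdot[K(y) - (K*\Phi(\cdot,\rho^2))(y)]\,dy$ with $K := \Phi_{z,r}\mathbf{p}\nabla u$. The mean-zero convolutional difference $K - K*\Phi(\cdot,\rho^2)$ has vanishing zeroth Fourier moment; when paired with the localized bilinear fluctuation field $\b_{r_1}$, Lemma~\ref{l.stoch.int} (combined with the independence on scale $r_1^{(1-\eta)(1+\delta)}$ afforded by Lemma~\ref{l.partition}) converts this cancellation into an additional $r^{-\epsilon}$ factor beyond the CLT scaling $r^{-d/2}$. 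To handle the random factor $\nabla u$ in $K$, split it via the two-scale expansion of Lemma~\ref{l.matchingutophi} into its mean slope $q \in \Rd$ plus the corrector gradient $\nabla\phi^{(1)}(\cdot,q)$; the first term is deterministic and is handled directly by Lemma~\ref{l.stoch.int}, while the corrector part is handled by combining the moment bounds of Lemma~\ref{l.firstordercorrect} with the independence structure of~$\b_{r_1}$.

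\textbf{Main obstacle.} Step~3 is the crux: both $\b_{r_1}$ and the test function $K$ are random and fluctuate on scales well below $r$, so naive pointwise estimates only recover the CLT scaling $r^{-d/2}$, matching the typical pointwise size of $\b_r$ itself. To gain the additional $r^{-\epsilon}$, one must exploit the near-orthogonality of $\b_{r_1}$ to mean-zero smoothed-out test functions of scale $\rho \ll r$, most conveniently by recasting the pairing as a bilinear expression in $\tJd$ and invoking the quantitative CLT-type bound of Lemma~\ref{l.stoch.int} after peeling off the random part of $\nabla u$ through the two-scale expansion.
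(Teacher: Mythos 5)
Your Step~1 is, in essence, the paper's entire proof: exactly as in Lemma~\ref{l.fluxmaps2}, one applies Lemma~\ref{l.CKish} to rewrite the weighted integral as an outer integral over $\Phi_{z,r/\sqrt2}$ of inner averages over $\Phi_{x,r/\sqrt2}$, and then bounds each inner average by Lemma~\ref{l.ahomrz} together with H\"older's inequality and Lemma~\ref{l.sum-O}. The paper, however, never changes the coefficient scale: it keeps $F:=(\a-\ahom_r)\nabla u$ and uses the fact that the proof of Lemma~\ref{l.ahomrz} applies verbatim when the averaging scale is $r/\sqrt2$ while the coarsened coefficient is still $\ahom_r$, since that argument only uses $r^{1-\eta}\ll r/\sqrt2$ together with the additivity/localization bounds, \eqref{e.bound.W0}, \eqref{e.brzbound} and Lemma~\ref{l.brcont}. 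By insisting on matching the literal statement of Lemma~\ref{l.ahomrz} at scale $r_1=r/\sqrt2$, you create the extra term $\int_{\Phi_{z,r}}\mathbf p\cdot(\b_{r_1}-\b_r)\nabla u$, and Steps~2--3 are an unnecessary detour to remove it.

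That detour contains a genuine gap in Step~3. First, Lemma~\ref{l.stoch.int} applies to \emph{deterministic} test functions and yields exactly the CLT scaling $r^{-\frac d2}$; it has no mechanism for producing the additional factor $r^{-\eps}$ you need. The claimed gain from the ``vanishing zeroth Fourier moment'' of $K-K*\Phi_\rho$ would require $K$ to be smooth on scales much larger than $\rho\sim r^{1-\eta}$, but $K=\Phi_{z,r}\,\mathbf p\,\nabla u$ oscillates on the unit scale through $\nabla u$ (and, after your two-scale expansion, through $\nabla\phi^{(1)}$), so the high-pass filtered difference $K-K*\Phi_\rho$ retains full amplitude and is not small. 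Second, the corrector part cannot be ``handled by the independence structure of $\b_{r_1}$'': the fields $\b_{r_1}$ and $\nabla\phi^{(1)}$ are both functionals of the coefficient field in overlapping regions and are correlated, so Lemma~\ref{l.partition}-type independence arguments do not apply to their product; in particular $\E\big[\b_{r_1}\,\nabla\phi^{(1)}\big]$ need not vanish, and establishing the required bound along your route would demand covariance estimates between $\tJd$ and the corrector gradient that neither the paper nor your proposal provides. As written, Step~3 therefore does not justify the bound $\O_s\big(Cr^{-(\frac d2+\eps)}\big)$ on the scale-comparison term, whereas the short argument above avoids that term altogether.
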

\begin{proof}
Fix $z\in\Rd$, $r\geq r_0$ and $\mathbf{p} \in \mathcal{P}_m(\Phi_{z,r})$. According to Lemma~\ref{l.CKish}, there exists $C(m,d)<\infty$ and $\tilde{\mathbf{p}} \in (\mathcal{P}_m)^d$ such that $\| \mathbf{p} \|_{L^2(\Phi_{z,r})} \leq C$ and, for every $F\in (L^2(\Phi_{z,r}))^d$,
\begin{equation*}
\int_{\Phi_{z,r}} \mathbf{p} \cdot F = \int_{\Phi_{z,r/\sqrt{2}}} \tilde{\mathbf{p}}(x) \cdot \int_{\Phi_{x,r/\sqrt{2}}} F(y) \,dy\,dx.
\end{equation*}
Applying this with $F:= (\a-\ahom_r) \nabla u$ and then applying the result of Lemma~\ref{l.ahomrz} and the H\"older inequality, we obtain~\eqref{e.gff}. 
\end{proof}

Using the previous lemma, we give a next-order version of~Lemma~\ref{l.coarsenedequation}. This is now quite close to the statement of convergence in law to the GFF and formalizes the part of the heuristic which asserted that ``$\tilde{\mathbf{b}}_r \nabla \phi_{e,r}$ is of lower order" to pass from~\eqref{e.corrector.approx.pre} to~\eqref{e.corrector.approx}.

\begin{lemma}
\label{l.gffH1}
There exist an integer $k(s,d,\Lambda)\in\N$, exponents $\kappa(s,d,\Lambda)\in (0,1)$ and $\ep(s,d,\Lambda)>0$  and a constant~$C\left(s,d,\Lambda\right)<\infty$ such that, for every function $h \in C^k(\Rd)$ satisfying
\begin{equation*}  
\sup_{x\in\Rd} \sup_{j \in \{ 1,\ldots,k\}} \left( (1+|x|)^{d+j} \left| \nabla^j h(x) \right| \right) \le 1,
\end{equation*}
every $e\in  B_1$ and $r\geq 1$, we have 
\begin{equation*}
r^{-d} \left|
\int_{\Rd} \nabla h\left( \tfrac xr \right)
 \cdot \ahom \nabla \phi_e(x)\,dx 
- \int_{\Rd} \nabla h\left( \tfrac xr \right)
 \cdot \b_{r^\kappa}(x) e\,dx
\right| = \O_1\left( Cr^{-\left(\frac d2+\eps \right)} \right). 
\end{equation*}
\end{lemma}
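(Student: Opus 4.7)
The plan starts from the corrector equation $-\nabla \cdot \a(e+\nabla \phi_e) = 0$, which upon testing against $h(\cdot/r)$ and integrating by parts (justified by the polynomial decay of $h$ and its derivatives together with the bounds on $\phi_e$ from Theorem~\ref{t.firstcorrectors}) yields $\int_\Rd \nabla h(x/r)\cdot \a(e+\nabla \phi_e)\,dx = 0$. Combined with $\int_\Rd \nabla h(x/r)\cdot \ahom e\,dx = 0$ (which holds because $\ahom e$ is constant and $\nabla h$ has decaying integral), this gives the identity
\begin{equation*}
\int_\Rd \nabla h(x/r)\cdot \ahom\nabla \phi_e\,dx = -\int_\Rd \nabla h(x/r)\cdot (\a-\ahom)(e+\nabla \phi_e)\,dx.
\end{equation*}
Decomposing $\a - \ahom = (\a - \ahom_{r^\kappa}) + \b_{r^\kappa}$ and peeling off the main term $\int \nabla h(x/r)\cdot \b_{r^\kappa}e\,dx$ (with sign tracked carefully), the remaining quantity to control splits into $E_1 := \int \nabla h(x/r)\cdot (\a-\ahom_{r^\kappa})(e+\nabla \phi_e)\,dx$ and $E_2 := \int \nabla h(x/r)\cdot \b_{r^\kappa}\nabla \phi_e\,dx$.

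To bound $E_1$, I will apply Lemma~\ref{l.gff} after inserting the heat-kernel partition of unity $\int \Phi(x-z,r^{2\kappa})\,dz \equiv 1$, so that $E_1 = \int dz\int_{\Phi_{z,r^\kappa}} \nabla h(x/r)\cdot (\a-\ahom_{r^\kappa})(e+\nabla \phi_e)$. Taylor-expanding $\nabla h(x/r)$ around $x = z$ to order $m$ produces a polynomial weight $\mathbf{p}_z$ with controlled $L^2(\Phi_{z,r^\kappa})$-norm that is eligible for Lemma~\ref{l.gff}, while the Taylor remainder is of order $(r^\kappa/r)^{m+1}$ and becomes negligible once $m$ is chosen large enough in terms of $d,\Lambda,\kappa,\eps$ (this determines the order of smoothness~$k$ appearing in the hypothesis). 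Lemma~\ref{l.gff} applied with $u = e\cdot x + \phi_e \in \A_1$, combined with~\eqref{e.correctorbound} and Remark~\ref{r.multiply}, bounds each $z$-contribution by $\O_t(Cr^{-\kappa(d/2+\eps)})$ with $t>1$ (using $s<2$ sufficiently close to~$2$). Integrating over $z$ against $|\nabla h(z/r)|$ via Lemma~\ref{l.sum-O}(i) then yields $r^{-d}|E_1| = \O_1(Cr^{-d/2-\eps'})$ for some $\eps' > 0$, because $\O_t$ with $t>1$ implies $\O_1$ at the same scale.

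The principal obstacle is controlling $E_2$. The strategy is a block decomposition of $\Rd$ into cubes of side length~$r^\kappa$: on each $\cu_{r^\kappa}(z)$, I will replace $\b_{r^\kappa}(x)$ by $\b_{r^\kappa}(z)$ (controlling the error via Lemma~\ref{l.brcont}) and $\nabla h(x/r)$ by $\nabla h(z/r)$, reducing $E_2$ to a sum of block contributions $\nabla h(z/r)\cdot \b_{r^\kappa}(z)\int_{\cu_{r^\kappa}(z)}\nabla \phi_e$. Since the condition on $\kappa$ makes $\b_{r^\kappa}(z)$ essentially $\F(\cu_{r^{\kappa(1+\delta)}}(z))$-measurable with $r^{\kappa(1+\delta)} \ll r^\kappa$, contributions from well-separated cubes are independent and Lemma~\ref{l.partition} provides CLT-type cancellation. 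Using $\b_{r^\kappa}(z) = \O_s(Cr^{-\kappa(1-\eta)d/2})$ from Lemma~\ref{l.brz} and $\int_{\cu_{r^\kappa}(z)}\nabla \phi_e = \O_s(Cr^{\kappa d/2})$ from Theorem~\ref{t.firstcorrectors}, summing $\sim r^{d(1-\kappa)}$ approximately independent contributions and dividing by~$r^d$ gives a bound of the form $\bar\O_{s/2}(Cr^{-d(1+\kappa)/2 + O(\eta)})$, whose size is far stronger than the target $r^{-d/2-\eps}$ for $\kappa$ close to~$1$ and~$\eta$ close to~$0$.

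The final subtlety is to upgrade this $\bar\O_{s/2}$ (stretched-exponential) bound at the much smaller scale into the claimed $\O_1$ (sub-exponential) bound at scale~$r^{-d/2-\eps}$. Since $r^{-d}|E_2|$ also admits an almost-sure crude bound of size $\O_{2+\eps}(C)$ (using $\|\b_{r^\kappa}\|_\infty \le C$ and~\eqref{e.correctorbound} to bound $\nabla\phi_e$ in the local~$L^2$ sense), the abundance of slack between $r^{-d(1+\kappa)/2}$ and $r^{-d/2-\eps}$ allows a standard truncation argument to trade excess size for improved stochastic integrability, yielding the claimed $\O_1$ bound. The hardest part of the whole proof will be making this block decomposition for $E_2$ completely rigorous: tracking the non-independence between $\b_{r^\kappa}(z)$ and $\int_{\cu_{r^\kappa}(z)}\nabla \phi_e$ (which both depend on~$\a$ in overlapping regions), carefully handling the continuity and boundary errors produced by the local approximations, and verifying that the final upgrade to $\O_1$-integrability survives all these approximations.
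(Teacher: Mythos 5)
Your treatment of the main term $E_1$ coincides with the paper's proof: the paper's Step~1 is exactly your mesoscopic Taylor-polynomial approximation of $\nabla h(\cdot/r)$ in $L^2(\Phi_{z,r^\kappa})$, and its Step~2 is the application of Lemma~\ref{l.gff} to $x\mapsto e\cdot x+\phi_e(x)\in\A_1$ together with~\eqref{e.correctorbound}, Remark~\ref{r.multiply} and Lemma~\ref{l.sum-O}, taking the Taylor order $m$ large and $\kappa$ close to $1$. What the paper leaves implicit is precisely what you make explicit: the identity obtained by testing the corrector equation with $h(\cdot/r)$, and the cross term $E_2=\int\nabla h(\cdot/r)\cdot\b_{r^\kappa}\nabla\phi_e$, which is genuinely needed to close the argument (it is the rigorous form of the heuristic ``$\tilde{\mathbf b}_r\nabla\phi_{e,r}$ is of lower order''), so on this score your plan is more complete than the displayed proof. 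One piece of bookkeeping you should settle rather than defer: since $\int\nabla h(\tfrac\cdot r)\cdot\a(e+\nabla\phi_e)=0$ and $\int\nabla h(\tfrac \cdot r)\cdot\ahom e=0$, your decomposition gives
\begin{equation*}
\int_{\Rd}\nabla h\left(\tfrac xr\right)\cdot\ahom\nabla\phi_e(x)\,dx+\int_{\Rd}\nabla h\left(\tfrac xr\right)\cdot\b_{r^\kappa}(x)e\,dx=-E_1-E_2,
\end{equation*}
so what your argument (and the paper's) actually controls is the combination with a \emph{plus} sign in front of the $\b_{r^\kappa}e$ term, i.e.\ the statement with $\b$ replaced by $-\b$. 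This is harmless downstream, since the limit $\V(\nabla h,e)$ is a centered Gaussian, but you should state the sign your proof delivers instead of leaving ``sign tracked carefully'' unresolved.

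For $E_2$, drop the independence/CLT machinery: it is unnecessary and, as proposed, inapplicable, because the block summands involve $\nabla\phi_e$, a nonlocal function of the coefficients, so Lemma~\ref{l.partition} does not apply and no truncation repairs the lack of independence (a defect you yourself flag). No cancellation across blocks is needed: insert a heat-kernel partition $\int\Phi_{z,\rho}\,dz\equiv1$ at a mesoscale $\rho=r^\gamma$ with $\gamma>0$ small, freeze $\b_{r^\kappa}$ and $\nabla h(\cdot/r)$ at the centers (the freezing error is controlled by Lemma~\ref{l.brcont}, multiplied by the local $L^2$ norm of $\nabla\phi_e$ and integrated, and stays below $r^{-\frac d2-\eps}$ for $\gamma$ small and $\kappa$ near $1$), and bound the frozen terms by the product of $|\b_{r^\kappa}(z)|=\O_s(Cr^{-\kappa(1-\eta)\frac d2})$ with the spatial-average estimate~\eqref{e.gradient} at scale $\rho$, summing absolute values against $|\nabla h(z/r)|$ via Lemma~\ref{l.sum-O}. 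The resulting size is of order $r^{-\kappa(1-\eta)\frac d2-\gamma\frac d2}$, which beats $r^{-\frac d2-\eps}$ with room to spare once $\gamma>1-\kappa(1-\eta)$. For the stochastic integrability, note that a product of two $\O_t$ factors with $t<2$ lands below exponent $1$, and $\O_t$ with $t<1$ at a tiny scale does \emph{not} imply $\O_1$ at a larger scale; rather than your truncation upgrade, imitate the end of the paper's Step~2: retain a small power of the $\O_{2+\ep}$ local corrector norm (interpolating it against the spatial-average bound) so that Remark~\ref{r.multiply} directly yields an exponent at least $1$, which the large slack in the scale easily permits.
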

\begin{proof}
For each $r\geq1$, we denote 
\begin{equation*}
h_r(x):= r^{1-d} h\left( \tfrac xr \right),
\end{equation*}
which is the rescaling of $h$ to length scale $r$, normalized to keep the $L^1$ norm of the gradient unchanged. The argument proceeds by approximating $\nabla h_r$ in mesoscopic regions by the product of a polynomial and $\Phi_r$, which allows us to then recover the conclusion of the lemma from Lemma~\ref{l.gff}. 

\smallskip

\emph{Step 1.} Mesoscopic polynomial approximation. We claim that, for every $m\in\{ 1,\ldots,k-2\}$, there exists $C\left(m,d,\Lambda\right)<\infty$ such that, for every $z\in\Rd$, 
\begin{multline}
\label{e.mesopolyapprox}
\inf_{\mathbf{p} \in (\mathcal{P}_m)^d} \,
\left( \int_{\Phi_{z,r^{\kappa}}} \left| \mathbf{p}(x) - \nabla h_r(x) \right|^2\,dx \right)^{\frac12} \\
\leq C r^{-(d+1)-(1-\kappa) (m+1)/2} \left( 1 + (|z| -r)_+\right)^{-(2+d+m)}.
\end{multline}
Taking $\mathbf{p}_z$ to be the $m$th order Taylor approximation to $\nabla h_r$ centered at the point $z$, we have
\begin{align*}
\sup_{x\in B_{r'}(z)} \left| \mathbf{p}_z(x) - \nabla h_r(x) \right|
& \leq C \left\| \nabla^{m+2} h_r \right\|_{L^\infty(B_{r'}(z))} (r')^{m+1} \\
& = C r^{-d-(m+2)}  \left\| \nabla^{m+2} h \right\|_{L^\infty\left(B_{r'/r}\left(\frac zr\right)\right)} (r')^{m+1}.
\end{align*}
Taking $r':= r^{1-(1-\kappa)/2}$ leads to the bound
\begin{multline*}
\int_{\cu_{r'}(z) } \Phi_{z,r^{\kappa}}(x) \left| \mathbf{p}_z(x) - \nabla h_r(x) \right|^2\,dx \\
\leq C r^{-(d+1) -(1-\kappa)(m+1)/2} \left( 1+(|z|-r')_+ \right)^{-(2+d+m)}.
\end{multline*}
On the other hand, using that $r' = r^{1-(1-\kappa)/2}\gg r^{1-\kappa}$ implies that $\Phi_{z,r^{\kappa}} \leq Cr^{-p}$ on $\Rd \setminus \cu_{r'}(z)$ for any $p$, we find
\begin{equation*}
\int_{\Rd \setminus \cu_{r'}(z) } \Phi_{z,r^{\kappa}}(x) \left| \mathbf{p}_z(x) - \nabla h_r(x) \right|^2\,dx 
\leq 
C r^{-100d} \left( 1 + ( |z| -r )_+ \right)^{-(2+d+m)}.
\end{equation*}
Combining the two previous displays yields~\eqref{e.mesopolyapprox}. In particular, we have 
\begin{equation}
\label{e.pzsize}
\left\|\mathbf{p}_z \right\|_{L^2(\Phi_{z,r^{\kappa}})} \leq Cr^{-d} \left(1+\frac{|z|}{r} \right)^{-(d+1)}.
\end{equation}

\smallskip

\emph{Step 2.} Application of Lemma~\ref{l.gff} and conclusion. 
We compute
\begin{align*}
\lefteqn{ 
\left| 
\int_{\Rd} \nabla h_r
 \cdot\left(  \a-\ahom_{r^\kappa} \right) \left(e+ \nabla \phi_e\right)\,dx 
\right|
} \qquad & \\
& = \left| \int_{\Rd} \int_{\Phi_{y,r^{\kappa}}} \nabla h_r(x)
 \cdot\left(  \a(x)-\ahom_{r^\kappa}(x) \right) \left(e+ \nabla \phi_e(x)\right)\,dx \,dy
\right| \\
& \leq \left| \int_{\Rd} \int_{\Phi_{y,r^{\kappa}}} \mathbf{p}_y(x)
 \cdot\left(  \a(x)-\ahom_{r^\kappa}(x) \right) \left(e+ \nabla \phi_e(x)\right)\,dx \,dy
\right| \\
& \qquad + \int_{\Rd} \int_{\Phi_{y,r^{\kappa}}} \left| \mathbf{p}_y(x) - \nabla h_r(x) \right| \left( 1 + \left| \nabla \phi_e(x)\right| \right) \,dx \,dy.
\end{align*}
By Lemma~\ref{l.gff},~\eqref{e.pzsize} and~Theorem~\ref{t.firstcorrectors}, Remark~\ref{r.multiply} and Lemma~\ref{l.sum-O},
\begin{align*}
\lefteqn{
\left| \int_{\Rd} \int_{\Phi_{y,r^{\kappa}}} \mathbf{p}_y(x)
 \cdot\left(  \a(x)-\ahom_{r^\kappa}(x) \right) \left(e+ \nabla \phi_e(x)\right)\,dx \,dy
\right|  
} \qquad & \\
& \leq \int_{\Rd} \left\| \mathbf{p}_y \right\|_{L^2(\Phi_{y,r^{\kappa}})} \left( 1 +  \left\| \nabla \phi_e \right\|_{L^2(\Phi_{y,r^{\kappa}})} \right)\O_s \left( Cr^{-\kappa\left(\frac d2+\ep \right)} \right) \,dy \\
& \leq Cr^{-d}  \int_{\Rd}\left(1+\frac{|z|}{r} \right)^{-(d+1)}\O_{(2+\ep)(1-\eta)} \left( C \right) \O_s \left( Cr^{-\kappa\left(\frac d2+\ep \right)} \right) \,dy \\
& = \O_1\left( Cr^{-\left(\frac d2+\ep\right)} \right),
\end{align*}
where the last display was obtained by taking $\kappa$ closer to $1$ and redefining $\ep$. Here and below, $\ep$ is assumed to be smaller than the exponent $\ep(d,\Lambda)$ in the statement of~Theorem~\ref{t.firstcorrectors}. We also assume that the exponent $\eta$ is close enough to~$1$ that $(2+\ep)(1-\eta) >2$. A similar computation which uses~\eqref{e.mesopolyapprox} in place of Lemma~\ref{l.gff} yields
\begin{align*}
\lefteqn{
 \int_{\Rd} \int_{\Phi_{y,r^{\kappa}}} \left| \mathbf{p}_y(x) - \nabla h_r(x) \right| \left( 1 + \left| \nabla \phi_e(x)\right| \right) \,dx \,dy
} \qquad & \\
& \leq  \int_{\Rd} \left\| \mathbf{p}_y - \nabla h_r \right\|_{L^2(\Phi_{y,r^{\kappa}})} \left( 1+ \left\| \nabla \phi_e  \right\|_{L^2(\Phi_{y,r^{\kappa}})} \right)\, dy \\
& \leq 
 \int_{\Rd} C r^{-d-(1-\kappa) (m+1)/2} \left( 1 + (|y| -r)_+\right)^{-(1+d+m)}  \O_{\kappa (2+\ep)} \left( C \right)\, dy \\
 & = \O_{\kappa(2+\ep)} \left( C  r^{-(1-\kappa) (m+1)/2}\right).
\end{align*}
Taking $m$ (and therefore $k$) sufficiently large that $(m+1)(1-\kappa) > d+2\ep$ and then combining the previous three displays, we obtain the lemma. 
\end{proof}

We now present the proof of the main result. 

\begin{proof}[{Proof of Theorem~\ref{t.main}}]
We decompose the proof into three steps. 

\smallskip

\emph{Step 1.} We show that the family of random distributions $\{r^{\frac d 2} \,  (\nabla \phi_e)(r \ \cdot \,), r \ge 1\}$ is tight with respect to the topology of $\mathcal C^{-\frac d 2 -}_\mathrm{loc}$. By \cite[Theorem~2.25]{M-tight}, it suffices to verify that, for some large $k\in\N$ and for every $F \in C^k(\Rd;\Rd)$
satisfying
\begin{equation}
\label{e.Ffastdecay}
\forall a\in (1,\infty), \quad \sup_{x\in\Rd} \sup_{1\leq m\leq k} \left( 1 + |x| \right)^a  \left| \nabla^m F(x) \right| < \infty
\end{equation}
and $\zeta \in [1,\infty)$, we have
\begin{equation*}
\sup_{r \ge 1} \, \sup_{\ell \ge 1} \E \Ll[\Ll| \Ll( \ell r \Rr)^{\frac d 2} \int_{\Rd}  F \Ll( {\ell x} \Rr) \cdot \nabla \phi_e(r x)\, dx   \Rr|^\zeta \Rr] < \infty.
\end{equation*}
By a change of variables, this reduces to
\begin{equation}
\label{e.moment-bound}
\sup_{r > 0} \E \Ll[\Ll|  r^{-\frac d 2} \int_{\Rd}  F \Ll( \tfrac{x} r \Rr) \cdot \nabla \phi_e(x)\, dx   \Rr|^\zeta \Rr] < \infty.
\end{equation}
We first show~\eqref{e.moment-bound} under the assumption that~$F$ has zero mean, that is,
\begin{equation} 
\label{e.Fmeanzero}
\int_{\Rd} F(x)\,dx = 0. 
\end{equation}
In this case, we split $F$ by writing 
\begin{equation*} \label{}
F = -\ahom \nabla h + \mathbf g,
\end{equation*}
where $h$ is the unique solution of 
\begin{equation*} \label{}
-\nabla \cdot \ahom \nabla h = \nabla \cdot F \quad \mbox{in} \ \Rd
\end{equation*}
satisfying
\begin{equation*} \label{}
\limsup_{|x|\to \infty} |h(x)| = 0. 
\end{equation*}
The existence of $h$ can be obtained by the Green's formula,  Lemma~\ref{l.mean-zero} below, and an easy density argument, which also gives us the estimate
\begin{equation}  \label{e.decay.h}
\sup_{0 \leq j \leq k } \, \sup_{x \in \Rd} \, (1+|x|)^{d+j+1} \left( \left|\nabla^{j+1} h(x)\right| + \left| \nabla^{j} \mathbf g(x) \right| \right)  < \infty.
\end{equation}
Using Lemma~\ref{l.gffH1} and~\eqref{e.brconvbound}, we obtain, for every $r\geq 1$, 
\begin{equation} 
\label{e.potparth}
r^{-\frac d2} \left| \int_{\Rd} \nabla h\left( \tfrac xr \right) \cdot \ahom \nabla \phi_e(x)\,dx \right| = \O_1\left( C \right). 
\end{equation}
For $r\in (0,1)$, we use the bound, for each $z\in\Rd$,
\begin{equation*}
\fint_{B_r(z)} \left| \nabla \phi_e(x) \right|^2\,dx \leq Cr^{d} \fint_{B_1(z)}   \left| \nabla \phi_e(x) \right|^2\,dx = \O_{2+\ep}\left(Cr^{d} \right),
\end{equation*}
which is a consequence of~\eqref{e.correctorboundpf} and stationarity, to get 
\begin{align*}
r^{-\frac d2} \left| \int_{\Rd} \nabla h\left( \tfrac xr \right) \cdot \ahom \nabla \phi_e(x)\,dx \right| = \O_{2+\ep}(C). 
\end{align*}
Thus we have~\eqref{e.potparth} for every $r>0$. 

\smallskip

Next we turn to the estimate for $\mathbf{g}$. Since $\mathbf{g}(\cdot/r)$ is solenoidal, we have by an integration by parts and~\eqref{e.decay.h} that, for any $R\geq1$,
\begin{equation*} \label{}
\int_{B_R} \mathbf{g} \left(\tfrac xr \right)\cdot \nabla \phi_e(x)\,dx
= \int_{\partial B_R} n(x)\cdot \mathbf{g}\left(\tfrac xr \right) \phi_e(x)\,dx
\leq \left(\frac Rr\right)^{-(d+1)} \int_{\partial B_R} \left| \phi_e(x) \right|\,dx. 
\end{equation*}
Given any $R' \geq1$, we may take $R \in \left[R' , 2R' \right]$ such that 
\begin{equation*} \label{}
 \int_{\partial B_R} \left| \phi_e(x) \right|\,dx  \leq \frac {C}{R'}  \int_{B_{2R'}} \left| \phi_e(x) \right|\,dx= R^{d-1} \O_{2}\left(C\log^{\frac12} R\right),
\end{equation*}
by~Theorem~\ref{t.firstcorrectors}. Combining these gives, for every $R\geq 1$,
\begin{equation*} \label{}
\int_{B_R} \mathbf{g} \left(\tfrac xr \right)\cdot \nabla \phi_e(x)\,dx = \O_2\left( Cr^{d+1} R^{-2} \log^{\frac12} R \right). 
\end{equation*}
Outside of $B_R$, we have 
\begin{equation*} \label{}
\left| \int_{\Rd\setminus B_R} \mathbf{g} \left(\tfrac xr \right)\cdot \nabla \phi_e(x)\,dx \right| \leq Cr^{d+1} \int_{\Rd\setminus B_R} \left| x \right|^{-(d+1)} \O_2\left( C \right) = \O_2(Cr^{d+1}R^{-1}). 
\end{equation*}
Combining these and sending $R\to \infty$ yields, for every $r>0$, 
\begin{equation}
\label{e.boomcha}
\int_{\Rd} \mathbf{g} \left(\tfrac xr \right)\cdot \nabla \phi_e(x)\,dx  = 0 \quad \mbox{$\P$-a.s.}
\end{equation}
Combining this with~\eqref{e.potparth} yields, for every $r>0$ and $F\in C^k(\Rd;\Rd)$ satisfying~\eqref{e.Ffastdecay} and~\eqref{e.Fmeanzero},
\begin{equation*} \label{}
\Ll|  r^{-\frac d 2} \int_{\Rd}  F \Ll( \tfrac{x} r \Rr) \cdot \nabla \phi_e(x)\, dx   \Rr| = \O_1\left( C \right),
\end{equation*}
which is a much stronger bound than we announced in~\eqref{e.moment-bound}. To remove the assumption~\eqref{e.Fmeanzero}, it suffices to exhibit a single function $f\in C^k(\Rd)$ satisfying the decay~\eqref{e.Ffastdecay} for which we can prove~\eqref{e.moment-bound} (because a general $F\in C^k(\Rd;\Rd)$ satisfying the above decay assumption can be written as the sum of a multiple of this function (times each basis vector) and a mean-zero element as above). According to~Theorem~\ref{t.firstcorrectors}, we have~\eqref{e.moment-bound} for the function~$\Phi_1$, which is clearly in the admissible class. This completes the proof of~\eqref{e.moment-bound}. 

\smallskip

\emph{Step 2.} In this step, we show that for every $F \in C^k(\Rd;\Rd)$ satisfying~\eqref{e.Ffastdecay} and $\int_\Rd F = 0$, we have
\begin{equation}  \label{e.conv.F}
r^{-\frac d 2} \int_{\Rd} F \Ll( \tfrac x r \Rr) \cdot \nabla \phi_e(x) \, dx \xrightarrow[r \to \infty]{\mathrm{(law)}} (\nabla \mathbf{\Psi}_e)(F),
\end{equation}
where $\nabla \Psi_e$ denotes the gradient GFF defined by \eqref{e.def.Psi}. 
As above, we decompose $F$ into $F = -\ahom \nabla h + \mathbf g$, where $h$ is the unique function tending to $0$ at infinity and such that $-\nabla \cdot \ahom \nabla h = \nabla \cdot F$. 
By the definition of $\mathbf \Psi_e$, in order to prove \eqref{e.conv.F}, it suffices to show that
\begin{equation}
\label{e.conv.h}
r^{-\frac d 2} \int_{\Rd} \ahom \nabla h \Ll( \tfrac x r \Rr) \cdot \nabla \phi_e(x) \, dx \xrightarrow[r \to \infty]{\mathrm{(law)}} \V(\nabla h,e),
\end{equation}
and
\begin{equation}
\label{e.conv.g}
r^{-\frac d 2} \int_{\Rd} \mathbf g \Ll( \tfrac x r \Rr) \cdot \nabla \phi_e(x) \, dx \xrightarrow[r \to \infty]{\mathrm{(prob.)}} 0.
\end{equation}
The limit \eqref{e.conv.h} follows from Lemmas~\ref{l.gffH1} and \ref{l.brz}, while the limit \eqref{e.conv.h} was already proved (more strongly) in~\eqref{e.boomcha}, above. Thus we have~\eqref{e.conv.F}.

\smallskip

\emph{Step 3.} In this step, we identify the limit of $r^{\frac d 2} \,  (\nabla \phi_e)(r \ \cdot \,)$ as $\nabla \mathbf{\Psi}_e$. More precisely, we show that the convergence \eqref{e.conv.F} holds for test functions $F \in C^k(\Rd;\Rd)$ satisfying~\eqref{e.Ffastdecay} but which are not necessarily of mean zero. For every $m \in [1,\infty)$, we set
\begin{equation*}  
\td F_m(x) := F(x) - \Phi_{m}(x) \int_\Rd F.
\end{equation*}
By definition, the function $\td F_m$ belongs to $C^k(\Rd;\Rd)$, satisfies~\eqref{e.Ffastdecay} and is of mean zero. By the result of the previous step, we have
\begin{equation}
\label{e.conv.td.F}
r^{-\frac d 2} \int_\Rd \td F_m \Ll( \tfrac x r \Rr) \cdot \nabla \phi_e(x) \, dx \xrightarrow[r \to \infty]{\mathrm{(law)}} (\nabla \mathbf{\Psi}_e)\Ll(\td F_m\Rr).
\end{equation}
In order to complete the proof, it suffices to show that for every $\td \eps > 0$, we have
\begin{equation}
\label{e.m1}
\lim_{m \to \infty} \limsup_{r \ge 1} \P \Ll[\Ll|r^{-\frac d 2}  \int_\Rd(F - \td F_m) \Ll( \tfrac x r \Rr) \cdot \nabla \phi_e(x) \, dx\Rr| \ge \td \eps\Rr] = 0,
\end{equation}
and 
\begin{equation}
\label{e.m2}
(\nabla \mathbf{\Psi}_e)\Ll(F - \td F_m\Rr) \xrightarrow[m \to \infty]{\mathrm{(prob.)}} 0,
\end{equation}
see e.g.~\cite[Theorem~3.2]{bil}. According to~Theorem~\ref{t.firstcorrectors}, we have
\begin{align*}  
r^{-\frac d 2}  \int_\Rd(F - \td F_m) \Ll( \tfrac x r \Rr) \cdot \nabla \phi_e(x) \, dx & =  r^{\frac d 2} \Ll(\int_\Rd F \Rr) \cdot \Ll( \int_{\Phi_{mr}} \nabla \phi_e\Rr) \\
& = \Ll(\int_\Rd |F| \Rr) \O_s\Ll(C m^{-\frac d 2}\Rr).
\end{align*}
This implies \eqref{e.m1}. The convergence in \eqref{e.m2} follows from the observation that
\begin{equation*}  
\lim_{m \to \infty} \Ll\| F - \td F_m \Rr\|_{L^2(\Rd)} = 0. 
\end{equation*}
The proof is now complete. 
\end{proof}

In the argument above, we invoked the following simple lemma, the proof of which we recall here for completeness. Denote by $\mcl S$ the Schwartz class of test functions:
\begin{multline}
\label{e.def.S}
\mcl S := \big\{f \in C^\infty(\Rd;\R) \ : \ \text{for every } k \in \N \text{ and } i \in \N^d \\ \sup_{x \in \Rd} (1+|x|)^k |\partial^i f(x)| < \infty \big\}.
\end{multline}

\begin{lemma}
\label{l.mean-zero}
For every $f \in \mcl S$ of mean zero, there exists $F \in \mcl S^d$ such that $\nabla \cdot F = f$. If $f \in C^\infty_c(\Rd;\R)$, then we can further require that $F \in C^\infty_c(\Rd;\Rd)$.
\end{lemma}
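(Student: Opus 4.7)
My plan is to prove the lemma by induction on the dimension $d$, using the fact that in one dimension a mean-zero Schwartz (respectively compactly supported) function has a Schwartz (respectively compactly supported) antiderivative.

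The base case $d=1$ is handled by setting $F(x) := \int_{-\infty}^x f(t)\,dt$. Clearly $F' = f$; the mean-zero assumption gives $F(x) = -\int_x^\infty f(t)\,dt$, and combining the two expressions shows that $F$ decays faster than any polynomial (since $f$ does), and the same holds for derivatives of $F$ because $F^{(k)} = f^{(k-1)}$ for $k\ge 1$. In the compactly supported case, $F$ vanishes outside the support of $f$, again by the mean-zero hypothesis.

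For the inductive step $d-1 \to d$, fix an auxiliary function $\chi \in C_c^\infty(\R)$ with $\int_\R \chi = 1$, and for a given mean-zero $f \in \mcl S(\R^d)$ (respectively $f \in C_c^\infty(\R^d)$), set
\begin{equation*}
g(x') := \int_\R f(x',t)\,dt, \qquad x' = (x_1,\ldots,x_{d-1}).
\end{equation*}
Then $g \in \mcl S(\R^{d-1})$ (respectively $g \in C_c^\infty(\R^{d-1})$) and $\int_{\R^{d-1}} g = \int_{\R^d} f = 0$. By the induction hypothesis, there exist $G_1,\ldots,G_{d-1}$ in the appropriate class with $g = \sum_{i=1}^{d-1} \partial_{x_i} G_i$. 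I then define
\begin{equation*}
F_i(x) := \chi(x_d)\,G_i(x'), \quad i < d, \qquad F_d(x) := \int_{-\infty}^{x_d} \bigl[f(x',t) - \chi(t)\,g(x')\bigr]\,dt.
\end{equation*}
A direct computation yields $\sum_{i=1}^d \partial_{x_i} F_i = \chi(x_d)\,g(x') + f(x,\cdot) - \chi(x_d)\,g(x') = f$, as required. The functions $F_i$ for $i<d$ obviously inherit the right decay/support from $\chi$ and $G_i$. For $F_d$, the crucial point is that the integrand $f(x',t) - \chi(t) g(x')$ has zero mean in $t$ for each fixed $x'$ by construction, so $F_d$ vanishes for $x_d$ outside a fixed interval in the compactly supported case, and one may equally well write $F_d(x) = -\int_{x_d}^\infty [f-\chi g](x',t)\,dt$, yielding rapid decay as $|x_d|\to\infty$ in the Schwartz case.

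The one step that requires genuine care is checking that $F_d$ belongs to $\mcl S(\R^d)$ in the Schwartz case: decay in $x'$ and in $x_d$ separately is immediate from the integral expressions above combined with Schwartz bounds on $f$ and $\chi g$, but to obtain joint decay one uses the bound
\begin{equation*}
|(f-\chi g)(x',t)| \le C_N\,(1+|x'|^2+t^2)^{-N}
\end{equation*}
and integrates from $x_d$ to $+\infty$ (or $-\infty$), splitting according to whether $|t|\le |x_d|/2$ or not, to pick up the missing factor of $(1+x_d^2)^{-M}$ for any prescribed $M$; estimates on derivatives reduce to the same computation since differentiation in $x_i$ ($i<d$) commutes with the integral and differentiation in $x_d$ returns $f - \chi g$. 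This verification is the main technical step but is standard once organized this way.
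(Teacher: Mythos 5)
Your proof is correct and follows essentially the same approach as the paper: the paper's explicit telescoping construction with $g_0,\ldots,g_d$ and $F_i = \int_{-\infty}^{x_i}(g_{i-1}-g_i)$ is exactly what your induction unrolls to, with the bump function $h$ playing the role of your $\chi$. Your extra care with the joint Schwartz decay of $F_d$ fills in a detail the paper dismisses as "a similar argument," and your tail estimate (integrating over $t \ge x_d$ and borrowing decay from $1+|x'|^2+t^2$) does the job.
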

\begin{proof}
We first prove the statement assuming $f \in C^\infty_c(\Rd;\R)$. Without loss of generality, we assume that $\supp f \subset B_1$.  
Let $h \in C^\infty_c(\R;\R)$ be such that $\int_\R h = 1$ and $\supp h \subset B_1$. We define the following functions from $\Rd$ to $\R$:
\begin{align*}  
g_0(x) & := f(x), \\
g_1(x) & := h(x_1)  \int_{\R}f(y_1,x_2,\ldots,x_d) \, dy_1,  \\
g_2(x) & := h(x_1) h(x_2) \int_{\R^2} f(y_1,y_2,x_3,\ldots,x_d) \, dy_1 \, dy_2, \\
& \vdots \\
g_d(x) & := h(x_1) \, \cdots \, h(x_d) \int_{\R^d} f = 0,
\end{align*}
where we write $x = (x_1,\ldots,x_d)$. Note that $g_0, \ldots,g_d \in C^\infty_c(\Rd;\R)$. For every $i \in \{1, \ldots, d\}$, we set
\begin{align*}  
F_i(x) := \int_{-\infty}^{x_i} (g_{i-1} - g_i)(x_1,\ldots,x_{i-1},y,x_{i+1}, \ldots,x_d)\, dy.
\end{align*}
Note that
\begin{equation*}  
\partial_i F_i(x) = g_{i-1}(x) - g_i(x), 
\end{equation*}
so that $\nabla \cdot F = f$. In order to complete the proof, there remains to verify that $F$ is compactly supported. Since $g_0,\ldots,g_d$ are compactly supported, it suffices to check that 
\begin{equation*}  
x_i > 1 \quad \implies \quad F(x) = 0.
\end{equation*}
If $x_i > 1$, then
\begin{align*}  
& \int_{-\infty}^{x_i} g_{i-1}(x_1,\ldots,x_{i-1},y,x_{i+1}, \ldots,x_d)\, dy \\
& \qquad = h(x_1) \, \cdots \, h(x_{i-1}) \int_{\R^i} f(y_1,\ldots,y_i,x_{i+1},\ldots,x_d) \, dy_1 \, \cdots \, dy_i \\
& \qquad = \int_{-\infty}^{x_i} g_{i}(x_1,\ldots,x_{i-1},y,x_{i+1}, \ldots,x_d)\, dy ,
\end{align*}
since $\int_\R h = 1$. This completes the proof under the assumption that $f \in C^\infty_c(\Rd;\R)$. When $f \in \mcl S$, a similar argument shows that $F \in \mcl S^d$. 
\end{proof}

\noindent \textbf{Acknowledgments.} The second author was supported by the Academy of Finland project \#258000. We thank Antti Hannukainen (Aalto University) for performing the numerical computations of the corrector and producing Figure~\ref{fig}.

\small
\bibliographystyle{plain}
\bibliography{additivestructure}

\newcommand{\noop}[1]{} \def\cprime{$'$}
\begin{thebibliography}{10}

\bibitem{AGK}
S.~Armstrong, A.~Gloria, and T.~Kuusi.
\newblock Bounded correctors in almost periodic homogenization.
\newblock {\em Arch. Ration. Mech. Anal.}, 222(1):393--426, 2016.

\bibitem{AKM}
S.~Armstrong, T.~Kuusi, and J.-C. Mourrat.
\newblock Mesoscopic higher regularity and subadditivity in elliptic
  homogenization.
\newblock {\em Comm. Math. Phys.}, 347(2):315--361, 2016.

\bibitem{AM}
S.~N. Armstrong and J.-C. Mourrat.
\newblock Lipschitz regularity for elliptic equations with random coefficients.
\newblock {\em Arch. Ration. Mech. Anal.}, 219(1):255--348, 2016.

\bibitem{AS}
S.~N. Armstrong and C.~K. Smart.
\newblock Quantitative stochastic homogenization of convex integral
  functionals.
\newblock {\em Ann. Sci. \'Ec. Norm. Sup\'er. (4)}, 49(2):423--481, 2016.

\bibitem{AL1}
M.~Avellaneda and F.-H. Lin.
\newblock Compactness methods in the theory of homogenization.
\newblock {\em Comm. Pure Appl. Math.}, 40(6):803--847, 1987.

\bibitem{AL4}
M.~Avellaneda and F.-H. Lin.
\newblock Un th\'eor\`eme de {L}iouville pour des \'equations elliptiques \`a
  coefficients p\'eriodiques.
\newblock {\em C. R. Acad. Sci. Paris S\'er. I Math.}, 309(5):245--250, 1989.

\bibitem{bil}
P.~Billingsley.
\newblock {\em Convergence of probability measures}.
\newblock Wiley Series in Probability and Statistics. John Wiley \& Sons, Inc.,
  New York, second edition, 1999.

\bibitem{biskup}
M.~Biskup, M.~Salvi, and T.~Wolff.
\newblock A central limit theorem for the effective conductance: linear
  boundary data and small ellipticity contrasts.
\newblock {\em Comm. Math. Phys.}, 328(2):701--731, 2014.

\bibitem{chat1}
S.~Chatterjee.
\newblock A new method of normal approximation.
\newblock {\em Ann. Probab.}, 36(4):1584--1610, 2008.

\bibitem{chat2}
S.~Chatterjee.
\newblock Fluctuations of eigenvalues and second order {P}oincar\'e
  inequalities.
\newblock {\em Probab. Theory Related Fields}, 143(1-2):1--40, 2009.

\bibitem{DM1}
G.~Dal~Maso and L.~Modica.
\newblock Nonlinear stochastic homogenization.
\newblock {\em Ann. Mat. Pura Appl. (4)}, 144:347--389, 1986.

\bibitem{DM2}
G.~Dal~Maso and L.~Modica.
\newblock Nonlinear stochastic homogenization and ergodic theory.
\newblock {\em J. Reine Angew. Math.}, 368:28--42, 1986.

\bibitem{DGO}
M.~Duerinckx, A.~Gloria, and F.~Otto.
\newblock The structure of fluctuations in stochastic homogenization,
  \noop{2016}{preprint, 76 pp., arXiv:1602.01717}.

\bibitem{FO1}
J.~Fischer and F.~Otto.
\newblock A higher-order large-scale regularity theory for random elliptic
  operators.
\newblock {\em Comm. Partial Differential Equations}, 41(7):1108--1148, 2016.

\bibitem{GNO2}
A.~Gloria, S.~Neukamm, and F.~Otto.
\newblock An optimal quantitative two-scale expansion in stochastic
  homogenization of discrete elliptic equations.
\newblock {\em ESAIM Math. Model. Numer. Anal.}, 48(2):325--346, 2014.

\bibitem{GNO}
A.~Gloria, S.~Neukamm, and F.~Otto.
\newblock Quantification of ergodicity in stochastic homogenization: optimal
  bounds via spectral gap on {G}lauber dynamics.
\newblock {\em Invent. Math.}, 199(2):455--515, 2015.

\bibitem{GNO3}
A.~Gloria, S.~Neukamm, and F.~Otto.
\newblock A regularity theory for random elliptic operators, \noop{3002}
  {preprint, 84 pp., arXiv:1409.2678}.

\bibitem{GN}
A.~Gloria and J.~Nolen.
\newblock A quantitative central limit theorem for the effective conductance on
  the discrete torus.
\newblock {\em Comm. Pure Appl. Math.}, \noop{2014}{in press, 37 pp.,
  arXiv:1410.5734}.

\bibitem{GO1}
A.~Gloria and F.~Otto.
\newblock An optimal variance estimate in stochastic homogenization of discrete
  elliptic equations.
\newblock {\em Ann. Probab.}, 39(3):779--856, 2011.

\bibitem{GO2}
A.~Gloria and F.~Otto.
\newblock An optimal error estimate in stochastic homogenization of discrete
  elliptic equations.
\newblock {\em Ann. Appl. Probab.}, 22(1):1--28, 2012.

\bibitem{GO5}
A.~Gloria and F.~Otto.
\newblock The corrector in stochastic homogenization: Near-optimal rates with
  optimal stochastic integrability, \noop{2015}{preprint, 55 pp.,
  arXiv:1510.08290 (October 2015)}.

\bibitem{GO6}
A.~Gloria and F.~Otto.
\newblock The corrector in stochastic homogenization: optimal rates, stochastic
  integrability, and fluctuations, \noop{2016}{preprint, 114 pp.,
  arXiv:1510.08290v3 (May 2016)}.

\bibitem{GO3}
A.~Gloria and F.~Otto.
\newblock Quantitative results on the corrector equation in stochastic
  homogenization.
\newblock {\em J. Eur. Math. Soc.}, \noop{3001}{in press, 57 pp.,
  arXiv:1409.0801}.

\bibitem{GM}
Y.~Gu and J.-C. Mourrat.
\newblock Scaling limit of fluctuations in stochastic homogenization.
\newblock {\em Multiscale Model. Simul.}, 14(1):452--481, 2016.

\bibitem{GM2}
Y.~Gu and J.-C. Mourrat.
\newblock {On generalized Gaussian free fields and stochastic homogenization},
  \noop{3001}{preprint, 19 pp., arXiv:1601.06408}.

\bibitem{helsjo}
B.~Helffer and J.~Sj{\"o}strand.
\newblock On the correlation for {K}ac-like models in the convex case.
\newblock {\em J. Statist. Phys.}, 74(1-2):349--409, 1994.

\bibitem{K1}
S.~M. Kozlov.
\newblock Averaging of differential operators with almost periodic rapidly
  oscillating coefficients.
\newblock {\em Mat. Sb. (N.S.)}, 107(149)(2):199--217, 317, 1978.

\bibitem{MO}
D.~Marahrens and F.~Otto.
\newblock Annealed estimates on the {G}reen function.
\newblock {\em Probab. Theory Related Fields}, 163(3-4):527--573, 2015.

\bibitem{M-tight}
J.-C. Mourrat.
\newblock {A tightness criterion in local H\"older spaces of negative
  regularity}, \noop{2015}{preprint, 9 pp., arXiv:1502.07335}.

\bibitem{MN}
J.-C. Mourrat and J.~Nolen.
\newblock {Scaling limit of the corrector in stochastic homogenization}.
\newblock {\em Ann. Appl. Probab.}, \noop{2015}{in press, 11 pp.,
  arXiv:1502.07440}.

\bibitem{MoO}
J.-C. Mourrat and F.~Otto.
\newblock Correlation structure of the corrector in stochastic homogenization.
\newblock {\em Ann. Probab.}, 44(5):3207--3233, 2016.

\bibitem{NS2}
A.~Naddaf and T.~Spencer.
\newblock On homogenization and scaling limit of some gradient perturbations of
  a massless free field.
\newblock {\em Comm. Math. Phys.}, 183(1):55--84, 1997.

\bibitem{NS}
A~Naddaf and T.~Spencer.
\newblock Estimates on the variance of some homogenization problems, 1998,
  {unpublished preprint}.

\bibitem{N}
J.~Nolen.
\newblock Normal approximation for a random elliptic equation.
\newblock {\em Probab. Theory Related Fields}, 159(3-4):661--700, 2014.

\bibitem{N2}
J.~Nolen.
\newblock Normal approximation for the net flux through a random conductor.
\newblock {\em Stoch. Partial Differ. Equ. Anal. Comput.}, 4(3):439--476, 2016.

\bibitem{PV1}
G.~C. Papanicolaou and S.~R.~S. Varadhan.
\newblock Boundary value problems with rapidly oscillating random coefficients.
\newblock In {\em Random fields, {V}ol. {I}, {II} ({E}sztergom, 1979)},
  volume~27 of {\em Colloq. Math. Soc. J\'anos Bolyai}, pages 835--873.
  North-Holland, Amsterdam, 1981.

\bibitem{rossignol}
R.~Rossignol.
\newblock Noise-stability and central limit theorems for effective resistance
  of random electric networks.
\newblock {\em Ann. Probab.}, 44(2):1053--1106, 2016.

\bibitem{Shef}
S.~Sheffield.
\newblock Gaussian free fields for mathematicians.
\newblock {\em Probab. Theory Related Fields}, 139(3-4):521--541, 2007.

\bibitem{sjo}
J.~Sj{\"o}strand.
\newblock Correlation asymptotics and {W}itten {L}aplacians.
\newblock {\em Algebra i Analiz}, 8(1):160--191, 1996.

\bibitem{Y1}
V.~V. Yurinski{\u\i}.
\newblock Averaging of symmetric diffusion in a random medium.
\newblock {\em Sibirsk. Mat. Zh.}, 27(4):167--180, 215, 1986.

\end{thebibliography}

\end{document}